\let\mathcal=\CMcal
\def\Alg{\operatorname{Alg}}
\let\Re\relax
\DeclareMathOperator*{\Re}{Re}
\let\Im\relax
\DeclareMathOperator*{\Im}{Im}
\newcommand{\setof}[2]{\ensuremath{\{ #1 : #2 \}}}
\newcommand{\isomorphic}{\ensuremath{\cong}}
\newcommand{\mathe}{\ensuremath{\mathrm{e}}}
\newcommand{\transpose}[1]{\ensuremath{\rconj{t}#1}}
\def\Hom{\operatorname{Hom}}
\def\End{\operatorname{End}}
\DeclareMathOperator*{\Dim}{dim}
\def\diag{\mathrm{diag}}
\DeclareMathOperator*{\Ad}{Ad}
\newcommand{\GL}{\ensuremath{\mathrm{GL}}}
\newcommand{\Sp}{\ensuremath{\mathrm{Sp}}}
\newcommand{\SL}{\ensuremath{\mathrm{SL}}}
\newcommand{\Pin}{\ensuremath{\mathrm{Pin}}}
\newcommand{\rmodulo}[2]{#1 / #2} 
\newcommand{\lmodulo}[2]{\ensuremath{#1 \backslash #2}}
\newcommand{\rconj}[1]{\ensuremath{{}^{#1}}}
\def\Alg{\operatorname{Alg}}
\def\Ind{\operatorname{Ind}}
\def\ind{\operatorname{ind}}
\newcommand{\C}{\ensuremath{\mathbb{C}}}
\newcommand{\R}{\ensuremath{\mathbb{R}}}
\newcommand{\N}{\ensuremath{\mathbb{N}}}
\newcommand{\Z}{\ensuremath{\mathbb{Z}}}
\newcommand{\scrS}{\ensuremath{\mathscr{S}}}
\newcommand{\Adele}{\ensuremath{\mathbb{A}}}
\newcommand{\rhs}{right-hand side} 
\newcommand{\lhs}{left-hand side} 
\newcommand{\setdifference}{\ensuremath{\mathrm{-}}}
\newcommand{\blank}{\,\cdot\,}
\newtheorem{theo}{Theorem}[section]
\newtheorem{theorem}{Theorem}[section]
\newtheorem{assumption}[theorem]{Assumption}
\newtheorem{lemma}[theorem]{Lemma}
\newtheorem{proposition}[theorem]{Proposition}
\newtheorem{corollary}[theorem]{Corollary}
\newtheorem{claim}[theorem]{Claim}
\newtheorem{remark}[theorem]{Remark}
\newtheorem{example}[theorem]{Example}
\numberwithin{equation}{section}
\begin{document}
\title[]{A Godement--Jacquet type integral and the metaplectic Shalika model}
\author{Eyal Kaplan}
\address{Department of Mathematics, The Ohio State University, Columbus, OH 43210, USA}
\email{kaplaney@gmail.com}
\author{Jan M\"{o}llers}
\address{Department Mathematik, FAU Erlangen--N\"{u}rnberg, Cauerstr. 11, 91058 Erlangen, Germany}
\email{moellers@math.fau.de}

\begin{abstract}
We present a novel integral representation for a quotient of global automorphic $L$-functions, the symmetric square over the exterior square. The pole of this integral characterizes a period of a residual representation of an Eisenstein series. As such, the integral itself constitutes a period, of an arithmetic nature. The construction involves the study of local and global aspects of a new model for double covers of general linear groups, the metaplectic Shalika model. In particular, we prove uniqueness results over $p$-adic and Archimedean fields, and a new Casselman--Shalika type formula.
\end{abstract}

\subjclass[2010]{Primary 11F70; Secondary 11F27}
\keywords{L-functions, co-period integral, periods, Shalika model, covering groups}

\maketitle

\section*{Introduction}\label{section:introduction}
Let $\Adele$ be the ring of ad\`{e}les of a global field and let $\pi$ be a cuspidal automorphic
representation of $\GL_k(\Adele)$. One of the pillars of the Langlands program is the study of global automorphic
$L$-functions, as mediating agents in the framework of functoriality. Integral representations are an
indispensable tool in this study. Godement and Jacquet \cite{GJ} constructed the integral representation of the standard $L$-function of $\pi$.
Jacquet, Piatetski-Shapiro and Shalika developed Rankin--Selberg integrals for $\GL_k\times\GL_m$. Numerous authors considered integral representations for a product of a classical group and $\GL_m$, e.g., \cite{GPS,G,Soudry,GPSR,GRS4}, to name a few.


In several settings, Rankin--Selberg integrals offer a ``bonus", namely a period integral characterizing the pole or central value of the $L$-function. For example, the pole of the exterior square $L$-function at $s=1$ was characterized by the nonvanishing of a global Shalika period \cite{JS4}. A similar result was obtained for the symmetric square $L$-function by Bump and Ginzburg \cite{BG}. Periods of automorphic forms have been studied extensively, e.g., \cite{GRS5,GRS7,GJR3}, and they are especially interesting when they are of an arithmetic nature \cite{GJR3}.

We present a new integral representation and use it to characterize the nonvanishing of a co-period of an Eisenstein series. Let $f$ be a global matrix coefficient of $\pi$, and $\varphi$ and $\varphi'$ a pair of automorphic forms in the space of the exceptional representation of a double cover of $\GL_{n}(\Adele)$, $n=2k$. The cover and the exceptional representation were constructed by Kazhdan and Patterson \cite{KP}. We define a zeta integral $Z(f,\varphi,\varphi',s)$ (see \S~\ref{section:global theory}), where we integrate $f$ against the Fourier coefficients of $\varphi$ and $\varphi'$ along the Shalika unipotent subgroup and its (Shalika) character. Technically, it resembles the Godement--Jacquet zeta integral, where a matrix coefficient was integrated against a Schwartz--Bruhat function \cite{GJ}, but they are not to be confused - our integral is not an entire function. It is absolutely convergent in some right half-plane, and for decomposable data becomes Eulerian. The computation of the resulting local integrals with unramified data lets us identify the $L$-functions represented. We obtain the meromorphic continuation from the known properties of these $L$-functions, and a detailed analysis of the local integrals.
\begin{theo}[see Theorem~\ref{theorem:properties of the Global GJ integral}]\label{theo:Z represents L function}
The zeta integral extends to a meromorphic function on the plane, and represents the quotient
of partial $L$-functions
\begin{align*}
\frac{L^S(2s,\mathrm{Sym}^2,\pi)}{L^S(2s+1,\wedge^2,\pi)}.
\end{align*}
\end{theo}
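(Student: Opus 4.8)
The plan is to follow the classical Godement–Jacquet / Rankin–Selberg paradigm: establish absolute convergence in a right half-plane, unfold to obtain an Euler product for decomposable data, compute the local integral at the unramified places, and then bootstrap the meromorphic continuation from the known analytic properties of the symmetric and exterior square $L$-functions together with a local non-vanishing analysis. First I would show that $Z(f,\varphi,\varphi',s)$ converges absolutely for $\Re(s)$ sufficiently large; this uses the rapid decay of the matrix coefficient $f$ of the cuspidal $\pi$ against the moderate growth of the automorphic forms $\varphi,\varphi'$ in the exceptional representation, after restricting the integral to the Shalika unipotent and folding in its character — essentially a gauge estimate as in \cite{GJ,JS4}. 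Then, for factorizable data, I would unfold the definition so that the global integral becomes a product of local zeta integrals $Z_v(f_v,\varphi_v,\varphi'_v,s)$; the key input here is the existence and uniqueness of the (metaplectic) Shalika functional locally, which lets one pass from the Fourier coefficient along the Shalika group to a product of local Shalika models — this is exactly where the uniqueness results alluded to in the abstract are used.

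The central computation is the evaluation of $Z_v$ with unramified data, and this is where I expect the main obstacle to lie. One must take the normalized spherical vectors in the local metaplectic Shalika model of the exceptional representation and pair them against the unramified matrix coefficient of $\pi_v$; the output should be
\begin{align*}
Z_v(f_v,\varphi_v,\varphi'_v,s) = \frac{L(2s,\mathrm{Sym}^2,\pi_v)}{L(2s+1,\wedge^2,\pi_v)},
\end{align*}
up to the standard volume normalization. Carrying this out requires an explicit formula for the spherical Shalika function on the double cover — a Casselman–Shalika type formula — which is presumably one of the new local results of the paper and which I would invoke; the combinatorics of expanding the resulting sum over the Weyl group (or over a lattice of exponents) and recognizing the quotient of the two degree-$\binom{k+1}{2}$ and degree-$\binom{k}{2}$ Euler factors is the delicate part. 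I would organize this by reducing the group integral to a torus integral via the Iwasawa decomposition, inserting the Casselman–Shalika formula, and then matching the resulting rational function in $q_v^{-s}$ against the Gindikin–Karpelevich-type product that computes $L(2s,\mathrm{Sym}^2,\pi_v)/L(2s+1,\wedge^2,\pi_v)$; the ratio structure ultimately reflects the fact that the Shalika model of the exceptional representation of the cover "sees" the symmetric square in the numerator and the exterior square — coming from the constant term / normalization of the underlying Eisenstein series — in the denominator.

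Having the Euler product $Z(f,\varphi,\varphi',s) = \bigl(\prod_{v \in S} Z_v\bigr)\cdot \frac{L^S(2s,\mathrm{Sym}^2,\pi)}{L^S(2s+1,\wedge^2,\pi)}$ for $\Re(s)\gg 0$, the meromorphic continuation of $Z$ then follows from the known meromorphic continuation of $L^S(2s,\mathrm{Sym}^2,\pi)$ and $L^S(2s+1,\wedge^2,\pi)$ (Shahidi, Bump–Ginzburg, Jacquet–Shalika), once one checks that the finitely many ramified local integrals $Z_v$ for $v\in S$ admit meromorphic continuation and can be made nonzero — or at least holomorphic and nonvanishing at a given point — by a suitable choice of local data, so that they do not introduce spurious poles or zeros. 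This local continuation-and-nonvanishing statement is routine in spirit (it follows from Bernstein's continuation principle applied to the family of local integrals, plus a dimension-count/openness argument for the existence of good local sections) but must be carried out on the double cover, which is the only additional subtlety compared to the linear case. Assembling these pieces — convergence, unfolding via local uniqueness, the unramified computation, and the continuation of the ramified factors — yields the theorem.
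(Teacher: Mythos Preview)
Your overall strategy is correct and matches the paper's: convergence in a half-plane, factorization via local uniqueness of the metaplectic Shalika functional, an unramified computation identifying the quotient of $L$-factors, and meromorphic continuation from the known properties of the partial $L$-functions together with Bernstein's principle for the ramified local integrals.

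The one place where your route genuinely diverges from the paper is the unramified computation. You propose reducing via the Iwasawa decomposition, inserting the Casselman--Shalika formula, and then matching against a Gindikin--Karpelevich-type product. The paper does something rather different and cleaner: it uses the \emph{Cartan} decomposition $G_k=\coprod_{\lambda\in\Z^k_{\geq}}G_k(\mathcal{O})\varpi^{\lambda}G_k(\mathcal{O})$, so the integral becomes a sum over $\lambda$. The Casselman--Shalika formula is used only indirectly, to show that the normalized unramified Shalika function of the exceptional representation has the extremely simple value $\delta_{B_n}^{1/4}(t_\lambda)$ on $t_\lambda$ for $\lambda\in 2\Z^k_+$ and vanishes otherwise (Lemma~\ref{lemma:unramified normalized Shalika function of theta}). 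The spherical matrix coefficient $f(\varpi^\lambda)$ is then expressed via Macdonald's formula in terms of Hall--Littlewood polynomials $P_\lambda(t_\pi;q^{-1})$, and the entire sum collapses by the classical identity
\[
\sum_{\lambda\in 2\Z^k_+}P_\lambda(x_1,\ldots,x_k;r)=\frac{\prod_{i<j}(1-rx_ix_j)}{\prod_{i\leq j}(1-x_ix_j)}
\]
from Macdonald's book, which is precisely $L(2s,\mathrm{Sym}^2,\pi_v)/L(2s+1,\wedge^2,\pi_v)$. So the ratio structure you are looking for comes from a combinatorial identity for Hall--Littlewood polynomials, not from unwinding a Weyl-group sum against Gindikin--Karpelevich factors. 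Your approach might be workable but would be substantially more laborious; the paper's is a two-line application of a known identity once the Shalika function of $\theta$ is pinned down.

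A minor correction on convergence: the matrix coefficient $f$ of the unitary cuspidal $\pi$ is merely bounded on $G_k(\Adele)$, not rapidly decreasing; the decay in the integral comes from the Schwartz-type bound on the product of Shalika functions (Claim~\ref{claim:general bound on Shalika function}), reducing to a Godement--Jacquet integral. There is also no ``unfolding'' in the usual sense, since the integral is already over $G_k(\Adele)$; factorizability is a direct consequence of the factorizability of the Fourier coefficient $\varphi^{U_k,\psi_k}$ (Theorem~\ref{theorem:Shalika model given by Fourier coefficient}).
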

The corresponding local integrals are studied at both $p$-adic and Archimedean places in \S~\ref{subsection:Local p-adic Godement--Jacquet integral}. In light of existing Rankin--Selberg constructions, the denominator is expected to arise from the normalization of an Eisenstein series on $\mathrm{SO}_{2n}$. This is not evident at present (see the discussion below). On the other hand, the presence of exceptional representations and a cover of $\GL_n$ in an integral representing the symmetric square $L$-function is well understood, considering the role these played in the Rankin--Selberg integral of Bump and Ginzburg \cite{BG}, and in its extension to the twisted symmetric square $L$-function by Takeda \cite{Tk}.

The symmetric square $L$-function has been studied extensively, both via the
Langlands--Shahidi method by Shahidi (e.g., \cite{Sh4,Sh7,Sh3}), and the Rankin--Selberg method by Bump and Ginzburg \cite{BG}. The present zeta integral is weaker than these works, in the sense that it does not seem to yield a global functional equation. At least at $p$-adic places, local multiplicity-one results are known (proved in \cite{me11}), then any  ``similar" type of integral may be used to define a functional equation with a proportionality factor. However, for this to be useful the local proportionality factors should also be related to a global one. See Remark~\ref{remark:local gamma factor} for more details. On the other hand, we can place the global integral in a different framework, that of periods.

The notion of co-period integrals was introduced by Ginzburg, Jiang and Rallis \cite{GJR2}, for the purpose of characterizing the zero of the symmetric cube $L$-function of a cuspidal automorphic representation of $\GL_2(\Adele)$ at $1/2$, using the spectral decomposition of a tensor of two exceptional representations of the $3$-fold cover of $\GL_2(\Adele)$. Their co-period was an integral of an Eisenstein series against two automorphic forms in the space of an exceptional representation of the $3$-fold cover of the exceptional group $\mathrm{G}_2(\Adele)$. A co-period involving an Eisenstein series and two automorphic forms in the space of an exceptional representation of a cover of $\mathrm{SO}_{2n+1}(\Adele)$ (or $\mathrm{GSpin}_{2n+1}(\Adele)$) was analyzed in \cite{me7,me8} (following \cite{GJS}).

In an ongoing work, Shunsuke Yamana and the first named author study a co-period integral on $\GL_n(\Adele)$.
Assume that $\pi$ is self-dual with a trivial central character.
Let $E(g;\rho,s)$ be an Eisenstein series corresponding to a vector $\rho$, of the representation parabolically induced from $\pi\otimes\pi^{\vee}$ to $\GL_{n}(\Adele)$, such that the residue
$E_{1/2}(g;\rho)$ of the series at $s=1/2$ is related to the pole of $L(s,\pi\times\pi)$ at $s=1$ (in fact, a more general setting is studied).
The global co-period integral $CP(E_{1/2}(\blank;\rho),\varphi,\varphi')$ is an integral of the residue against $\varphi$ and $\varphi'$ (see \S~\ref{section:The co-period integral}). The integral lends itself to a close scrutiny using the truncation operator of Arthur \cite{A1,A2}. Shunsuke Yamana and the first named author proved that
this co-period is nonvanishing if and only if
$L^S(s,\mathrm{Sym}^2,\pi)$ has a pole at $s=1$. Moreover, the co-period was unfolded to an explicit ``outer period", resembling the integral of the symmetric square $L$-function of \cite{BG}.

It turns out that, using a different analysis and under a mild assumption regarding the Archimedean places (see Assumption~\ref{Assumption:Archimedean result for co-period}), we can obtain a more precise ``outer period". Namely, we prove:
\begin{theo}[see Theorem~\ref{theorem:co-period}]\label{theo:co-period result}
$CP(E_{1/2}(\blank;\rho),\varphi,\varphi')=\int_{K}\mathrm{Res}_{s=1/2}Z(f_{k\rho},k\varphi,k\varphi',s)\,dk$,
where $K$ is the standard (global) compact subgroup and $f_{k\rho}$ is a matrix coefficient on $\pi$. Moreover,
the co-period is not identically zero if and only if the zeta integral has a nontrivial residue at $s=1/2$.
\end{theo}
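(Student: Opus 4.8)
The plan is to unfold the co-period against the Eisenstein series and match the result, term by term, with the Godement--Jacquet type integral $Z$. I would work first with the full series $E(g;\rho,s)$ for $\Re s$ large, writing $E(g;\rho,s)=\sum_{\gamma\in P(F)\backslash\GL_n(F)}\rho_s(\gamma g)$, where $P\subset\GL_n$ is the standard parabolic with Levi $M=\GL_k\times\GL_k$, its unipotent radical $U_P$ is the Shalika unipotent $\bigl\{\left(\begin{smallmatrix}I&X\\&I\end{smallmatrix}\right)\bigr\}$, and $\rho_s$ is the flat section attached to $\rho$. Since $\varphi$ and $\varphi'$ are genuine automorphic forms, $\varphi\varphi'$ descends to $\GL_n(F)\backslash\GL_n(\Adele)$, so the integrand of $CP$ is a bona fide automorphic object; after the regularization in the definition of the co-period --- truncation of the series against the moderate-growth factor $\varphi\varphi'$, followed by the vanishing of the boundary contributions --- the integral collapses to $\int_{P(F)\backslash\GL_n(\Adele)}\rho_s(g)\,\varphi(g)\varphi'(g)\,dg$. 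I expect the \emph{main analytic obstacle} to be exactly this collapse against the non-cuspidal factor $\varphi\varphi'$: the Archimedean growth estimates needed to discard the boundary terms are, I believe, where Assumption~\ref{Assumption:Archimedean result for co-period} is invoked.

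With the unfolded integral in hand, I would run the internal Fourier analysis of the exceptional forms. Use $\GL_n(\Adele)=P(\Adele)K$ to pull $\int_K\,dk$ to the outside; for fixed $k$ one is left with an integral over $P(F)\backslash P(\Adele)$, and writing $P=MU_P$ and integrating first over $U_P(F)\backslash U_P(\Adele)\cong(F\backslash\Adele)^{k^2}$, expand $(k\varphi)(k\varphi')$ in a Fourier series indexed by $Y\in M_k(F)$, on which $M(F)=\GL_k(F)\times\GL_k(F)$ acts by $Y\mapsto b^{-1}Ya$. The cuspidality of $\pi$ --- which controls $\rho_s$ restricted to $M$ through $\pi\otimes\pi^{\vee}$ --- forces every Fourier mode $Y$ of rank $<k$ to drop out, leaving only the single $M(F)$-orbit of the non-degenerate character $X\mapsto\psi(\operatorname{tr}X)$, whose stabilizer in $M$ is the diagonal $\GL_k^{\Delta}$. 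Unfolding this orbit collapses $M(F)\backslash M(\Adele)$ to $\GL_k^{\Delta}(F)\backslash M(\Adele)$ and introduces the Shalika Fourier coefficients of $\varphi$ and $\varphi'$ along $U_P$; this is precisely the point where the local theory of the paper --- uniqueness of the metaplectic Shalika model and the Casselman--Shalika type formula --- governs the computation and lets the local integrals be identified. Finally, integrating the residual $M(\Adele)$-variable against $\rho_s$ and using that $\rho$ takes values in $\pi\otimes\pi^{\vee}$, the $M/\GL_k^{\Delta}$-directions assemble into a matrix coefficient of $\pi$, which I would name $f_{k\rho}$; comparing what remains (including the $|\det|$-twist coming from $\delta_P$) with the definition of $Z$ in \S~\ref{section:global theory} yields the meromorphic identity
\[
CP\bigl(E(\blank;\rho,s),\varphi,\varphi'\bigr)=\int_K Z\bigl(f_{k\rho},k\varphi,k\varphi',s\bigr)\,dk
\qquad(\Re s\text{ large}).
\]

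It remains to pass to $s=1/2$ and to prove the non-vanishing equivalence. By Theorem~\ref{theo:Z represents L function} the integrand on the right continues meromorphically in $s$ (up to the ratio of partial $L$-functions and finitely many local integrals), and the $K$-integral converges absolutely, locally uniformly in $s$ away from the poles; hence $\mathrm{Res}_{s=1/2}$ passes under $\int_K\,dk$, and since $E_{1/2}(\blank;\rho)=\mathrm{Res}_{s=1/2}E(\blank;\rho,s)$ --- and the truncation/regularization commutes with taking this residue --- the displayed identity specializes to $CP(E_{1/2}(\blank;\rho),\varphi,\varphi')=\int_K\mathrm{Res}_{s=1/2}Z(f_{k\rho},k\varphi,k\varphi',s)\,dk$. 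The ``if'' half of the last assertion is then immediate. For ``only if'', I would argue that $\mathrm{Res}_{s=1/2}Z$, as a multilinear form in $f,\varphi,\varphi'$, cannot survive the averaging over $k$ unless it is identically zero: by Theorem~\ref{theo:Z represents L function} it factors on decomposable data as $\mathrm{Res}_{s=1}L^S(s,\mathrm{Sym}^2,\pi)$ over $L^S(2,\wedge^2,\pi)$ times a product of local integrals admitting non-zero values, and the data $(f_{k\rho},k\varphi,k\varphi')$ depend real-analytically on $k$ and sweep out enough local vectors as $\rho,\varphi,\varphi'$ vary; so a non-trivial residue of $Z$ does force $CP\not\equiv0$. (Alternatively, one may combine the identity with the non-vanishing criterion of Yamana and the first named author, which ties non-vanishing of $CP$ to a pole of $L^S(s,\mathrm{Sym}^2,\pi)$ at $s=1$, equivalently of the $L$-ratio at $s=1/2$.)
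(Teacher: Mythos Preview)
Your unfolding strategy contains a genuine gap at the Fourier-expansion step. You claim that cuspidality of $\pi$ kills every Fourier mode of rank $<k$; this is correct for ranks $1,\ldots,k-1$ (via Proposition~\ref{proposition:Fourier coefficients are trivial along v and y}, which produces a unipotent inner integral of the cusp form $\rho$), but it is \emph{false} for rank $0$. The trivial character $\psi_0$ has stabilizer all of $M_k$, there is no auxiliary unipotent subgroup to integrate over, and the resulting term pairs the cusp form $m\mapsto\rho(k,m)$ against the constant terms $\varphi^{U_k}{\varphi'}^{U_k}$, which by Theorem~\ref{theorem:constant term theorem} live in $\delta_{Q_k}^{1/4}\Theta_{M_k}$. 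There is no reason for this pairing to vanish, and indeed the paper computes it as $\frac{d^{s/k}}{sk}\alpha_0$ with $\alpha_0$ a nonzero constant in general. Consequently your displayed ``meromorphic identity'' $CP(E(\blank;\rho,s),\varphi,\varphi')=\int_K Z(\ldots)\,dk$ cannot hold: the left side does not even converge for any $s$ (the Eisenstein series has only moderate growth, as does $\varphi\varphi'$), and formally the right side is missing a divergent $j=0$ term.

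This is exactly why the paper does \emph{not} attempt a direct collapse. Instead it applies Arthur's truncation $\Lambda_d$ to $E(\blank;\rho,s)$, splitting $\Lambda_dE=\mathcal{E}_1^d-\mathcal{E}_2^d$. After unfolding each piece, the rank-$0$ term in $CP(\mathcal{E}_1^d)$ equals $\frac{d^{s/k}}{sk}\alpha_0$: the truncation cutoff $ch_{\leq d}$ is what makes the inner $A^+$-integral finite, and since $\alpha_0$ is entire this term carries no residue at $s=1/2$. The rank-$k$ term gives $Z_{\leq d}$, whose residue at $s=1/2$ equals that of the full $Z$ and is independent of $d$ (Claim~\ref{claim:nice props of Z > d}). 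The companion terms from $\mathcal{E}_2^d$ involve $M(w,s)\rho_s$ and $ch_{>d}$; their residues tend to $0$ as $d\to\infty$. Only after taking $\mathrm{Res}_{s=1/2}$ \emph{and then} $d\to\infty$ (using $\Lambda_dE_{1/2}\to E_{1/2}$ under the integral, justified via \cite{LW} and Claim~\ref{claim:co period is absolutely convergent}) does one arrive at the stated formula. Finally, Assumption~\ref{Assumption:Archimedean result for co-period} is not about discarding boundary terms in a regularization; it is the Archimedean input guaranteeing that the local integrals at $v\mid\infty$ are holomorphic near $s=1/2$, which is what permits exchanging $\mathrm{Res}_{s=1/2}$ with $\int_K$ (Theorem~\ref{theorem:unitary cuspidal pi pole is from global part}\eqref{item:around global pole}).
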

The results on the co-period have a local $p$-adic counterpart, proved in \cite{me11}. Let $\tau$ be a self-dual irreducible supercuspidal representation of $\GL_k$. The results of \cite{me11} imply that if the symmetric square $L$-function of $\tau$ has a pole at $s=0$, the Langlands quotient of the representation parabolically induced from $|\det|^{1/2}\tau\otimes|\det|^{-1/2}\tau$ to $\GL_n$ has a nontrivial bilinear $\GL_n$ pairing with a tensor of exceptional representations.

Our zeta integral involves a new model for double covers of $\GL_n$, the metaplectic Shalika model, first discovered
in \cite{me11} during the computation of twisted Jacquet modules of the $p$-adic exceptional representation (of \cite{KP}). The Jacquet module of this representation along the Shalika unipotent subgroup and character is one-dimensional, and the reductive part of its stabilizer acts by a Weil factor. Therefore, this representation admits a unique embedding in a space of metaplectic Shalika functions.
The key observation underlying this model for the cover, is that the restriction of the cover to the Shalika group is simple, in the sense that the cocycle is given by the quadratic Hilbert symbol. This follows immediately from the block-compatibility of the cocycle of Banks, Levy and Sepanski \cite{BLS}. See \S~\ref{subsection:metaplectic Shalika functional and model} for a precise introduction to this model.

Roughly speaking, a model of a representation is an embedding, preferably unique, into a space of functions on the group,
which is beneficial and convenient in terms of applications. One important model is the Whittaker model, which has had a profound impact on the study of representations, with a vast number of applications, perhaps most notably the Langlands--Shahidi theory of local coefficients. This model has a natural extension to covering groups, because unipotent subgroups are split under the cover. Indeed, the metaplectic Whittaker model has been studied, e.g., in \cite{KP,BFJ,McNamara2,COf}. The main downside to using this model for covering groups, is that multiplicity one no longer holds. For instance over an $r$-fold cover of $\GL_n$, the dimension of the space of Whittaker functionals on an unramified principal series representation is essentially $r^n$.

The (non-metaplectic) Shalika model was first introduced in a global context in the aforementioned work of Jacquet and Shalika \cite{JS4}. Friedberg and Jacquet \cite{FJ} used it to characterize cuspidal automorphic representations affording linear models, in terms of the pole of the exterior square $L$-function at $s=1$ (see also \cite{BF}). Ash and Ginzburg \cite{AG} used this model to construct $p$-adic $L$-functions. Multiplicity one results were proved by
Jacquet and Rallis \cite{JR2} (over $p$-adic fields) and by Aizenbud, Gourevitch and Jacquet \cite{AGJ} over Archimedean fields. Globally, a cuspidal automorphic representation $\pi$ affords a Shalika functional (given by an integral) if and only if $L^S(s,\wedge^2,\pi)$ has a pole at $s=1$, and equivalently is a weak functorial lift from $\mathrm{SO}_{2n+1}$ (\cite{JS4,GRS7,GRS3}). A similar local result was proved for supercuspidal representations in \cite{JNQ}, and is expected to hold in greater generality. Further studies and applications of this model include \cite{GRS5,JQ,JNQ3}.

In this work we launch a study of the metaplectic Shalika model.
In sharp contrast to the Whittaker model, we conjecture that the metaplectic Shalika model does enjoy multiplicity one. We establish the following result.
\begin{theo}[see Theorems~\ref{theorem:unramified Shalika functional 1 dim} and \ref{theorem:uniqueness of metaplectic Shalika over R}]\label{theo:uniqueness}
Let $\mathrm{I}(\chi)$ be a principal series representation of a double cover of $\GL_n$, over any local field, but over a $p$-adic field assume it is unramified. Assume $\chi$ satisfies a certain regularity condition, which does not preclude reducibility. Then the space of metaplectic Shalika functionals on $\mathrm{I}(\chi)$ is at most one-dimensional.
Over a $p$-adic field and when $\mathrm{I}(\chi)$ is irreducible, this space is one-dimensional if and only if $\mathrm{I}(\chi)$ is a ``lift" from a classical group.
\end{theo}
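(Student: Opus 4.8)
The plan is to bound $\dim\Hom_{\widetilde H}(\mathrm I(\chi),\psi_{\mathrm{Sh}})$ by a Bruhat-filtration computation, where $H=\GL_k\ltimes M_k$ is the Shalika subgroup of $\GL_n$ ($n=2k$), $\psi_{\mathrm{Sh}}$ its character, and $\widetilde H$ its preimage in the double cover. The starting observation, available from \S\ref{subsection:metaplectic Shalika functional and model}, is that by the block-compatibility of the Banks--Levy--Sepanski cocycle the cover splits over the unipotent part $M_k$ of $H$ and restricts to the quadratic Hilbert symbol on $\GL_k^\Delta$, so that $\psi_{\mathrm{Sh}}$ is an honest genuine character of $\widetilde H$ (involving a Weil factor on the $\GL_k^\Delta$-part), and the object to be bounded is a genuine $\Hom$-space of smooth representations ($p$-adic) or of Casselman--Wallach representations (Archimedean). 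Realize $\mathrm I(\chi)=\Ind_{\widetilde B}^{\widetilde G}\chi$ by normalized induction from the metaplectic Borel, with $\chi$ a genuine character of $\widetilde T$.

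I would then filter $\mathrm I(\chi)|_{\widetilde H}$ according to the $B$-orbits on $B\backslash G/H$, indexed by a set of Weyl-type representatives $w$; in the $p$-adic case this is the Bernstein--Zelevinsky geometric lemma, and in the Archimedean case its analogue for $C^\infty$-induced Fr\'echet modules (with the usual transversality and closed-range verifications). By Frobenius reciprocity the contribution of the orbit of $w$ to $\Hom_{\widetilde H}(\mathrm I(\chi),\psi_{\mathrm{Sh}})$ is $\Hom_{\widetilde B^{w}\cap\widetilde H}(\delta_w\cdot{}^{w}\chi,\psi_{\mathrm{Sh}})$ for an explicit modulus character $\delta_w$; since $B^w\cap H$ is solvable this is a $\Hom$ between two characters of $\widetilde B^w\cap\widetilde H$, nonzero (and then one-dimensional) exactly when they coincide.

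The heart of the argument is that the regularity hypothesis on $\chi$ forces all but one orbit to contribute zero. For a given $w$, either $B^w\cap H$ meets the Shalika unipotent $M_k$ in a subgroup on which $\psi_{\mathrm{Sh}}$ is a nontrivial additive character while $\delta_w\cdot{}^{w}\chi$ is trivial there --- so that orbit contributes nothing --- or else $B^w\cap H\cap M_k\subseteq\ker\psi_{\mathrm{Sh}}$ and the equality of characters reduces to ${}^{w}\chi=\delta_w^{-1}$ on the image of $B^w\cap H$ in $T$; regularity is precisely what rules this last identity out for all $w$ but one distinguished $w_0$. For $w_0$ the stabilizer is cut out by a single equation and the corresponding space of functionals is checked to be at most one-dimensional, giving the upper bound over every local field. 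I expect the main obstacle to be the metaplectic bookkeeping throughout: verifying that $\psi_{\mathrm{Sh}}$ extends consistently to each $\widetilde B^w\cap\widetilde H$, computing the $\delta_w$ in the presence of the two-cocycle, and matching the genuine $\chi$ on $\widetilde T$ against the Weil factor built into $\psi_{\mathrm{Sh}}$ on $\GL_k^\Delta$ --- this Weil factor is what distinguishes the situation from the linear Shalika model of Jacquet--Rallis and Friedberg--Jacquet. In the Archimedean case there is the additional, more technical, task of running the entire filtration argument inside the Casselman--Wallach category and controlling the pertinent spaces of distributions.

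Finally, for the $p$-adic refinement that the bound is attained exactly when $\mathrm I(\chi)$ is a lift from a classical group: when $\mathrm I(\chi)$ is irreducible and unramified only the open orbit $w_0$ can carry a functional, and on that orbit the functional is realized by an explicit integral over the stabilizer, absolutely convergent in a cone and meromorphically continued. Evaluating it --- this is where the new Casselman--Shalika type formula of the paper enters --- one reads off that non-vanishing is equivalent to a self-duality/pairing condition on the unramified parameter of $\chi$ (the components of $\chi$ pairing up under inversion, suitably twisted by the metaplectic Weil factor), which is exactly the condition that $\mathrm I(\chi)$ lies in the image of the functorial lift from the relevant classical group.
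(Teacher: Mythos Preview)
Your overall strategy --- Bruhat/Mackey double-coset analysis --- is exactly what the paper does, but several points in your plan are either imprecise or miss real work that the paper has to carry out.

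First, in the $p$-adic case you write ``$\chi$ a genuine character of $\widetilde T$'' and filter by $B\backslash G/H$. But $\widetilde T_n$ is \emph{not abelian}; the unramified principal series is induced from the maximal abelian subgroup $\widetilde T_{n,*}$ (hence from $\widetilde B_{n,*}$), so the relevant double-coset space is $G_k^\triangle U_k\backslash G_n/B_{n,*}$, not $\ldots/B_n$. This is not cosmetic: beyond the Weyl-type representatives $w$ you anticipate, one must also run over torus representatives $t\in T_n/T_{n,*}$, and a separate argument (using the commutator formula for $\widetilde T_n$ and the triviality of $\chi$ on $\mathfrak s(T_n\cap K)$) is needed to kill the contributions from $g=\omega^{-1}t$ with $t\notin T_{n,*}$. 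Your sketch does not account for this layer.

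Second, in the Archimedean case, ``running the filtration in the Casselman--Wallach category'' is not the hard part. What actually requires work is (a) showing the higher-derivative contributions $\Hom_{H(g)}(\cdots,\delta\otimes S^m(V(g)))$ vanish for $m>0$, which the paper does by exhibiting, for every nonzero generator of $V(g)$, a root subgroup in $H(g)$ acting nilpotently; and (b) for the surviving orbit $g_0$, computing $\Hom_{\widetilde{\mathcal M}_k^\triangle}(\gamma_{\psi'},\sigma^{g_0})$ explicitly. This last step is genuinely metaplectic: $\widetilde{\mathcal M}_n$ is a Clifford-type group with a unique genuine irreducible (the pin representation), and the one-dimensionality comes from a concrete Clifford-algebra calculation on $\Lambda^* X$, not from an abstract character match.

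Third, your last paragraph misplaces the role of the Casselman--Shalika formula. The \emph{necessary} condition for a Shalika functional to exist ($\chi(t_{i,k+\tau(i)})=1$ for all $i$, equivalently the self-duality/pairing condition you allude to) drops out directly from the Bruhat analysis of the distinguished orbit $\mathcal H(\omega^{-1})$ --- no explicit formula is needed. Conversely, \emph{existence} is established separately by writing down an integral functional $\Lambda_\chi$, proving absolute convergence in a cone, and extending via Bernstein's continuation principle; the Casselman--Shalika formula is then used to check that $\Lambda_\chi$ does not vanish on the spherical vector, but it is not the mechanism that produces the functional.
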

It is perhaps premature to discuss lifting problems for covering groups; here we simply mean the natural condition one expects $\chi$ to satisfy, analogous to the case of $\GL_n$.
This result is our main tool to deduce that the (global) zeta integral is Eulerian.

Another practical starting point for the study of the metaplectic Shalika model, is the unramified setting. We prove a Casselman--Shalika formula for an unramified principal series representation of a double cover of $\GL_n$. For $\GL_n$ this formula was proved by Sakellaridis \cite{Yia}, and our proof closely follows his arguments.
\begin{theo}[see Theorem~\ref{theorem:Casselman--Shalika formula for metaplectic H} and Corollary~\ref{corollary:Casselman--Shalika formula for metaplectic Shalika}]\label{theo:Casselman--Shalika}
Let $\mathrm{I}(\chi)$ be an unramified principal series representation of a double cover of $\GL_n$, which admits a
metaplectic Shalika model. There is an explicit formula for the values of the unique unramified normalized function in the model, in terms of the character $\chi$.
\end{theo}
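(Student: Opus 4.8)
The plan is to follow Sakellaridis's treatment of the $\GL_n$ Shalika model as closely as possible, adapting each step to the double cover; the uniqueness statement of Theorem~\ref{theo:uniqueness} is the substitute for the multiplicity-one input that makes his argument run.

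\medskip

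\emph{Step 1: reduction to the torus and support.} Let $\mathcal{W}_\chi$ be the normalized unramified function in the metaplectic Shalika model of $\mathrm{I}(\chi)$, so that $\mathcal{W}_\chi$ is left-equivariant under the (covered) Shalika group $\mathrm{H}$ against the Shalika character times the relevant Weil factor, and right-invariant under the standard maximal compact $K$. Using the Iwasawa-type decomposition of $\GL_n$ relative to the Shalika subgroup, $\mathcal{W}_\chi$ is determined by its values $\mathcal{W}_\chi(t)$ as $t$ runs over a set of representatives of $\mathrm{H}\backslash\GL_n/K$, which are indexed by torus elements; I would then run the standard averaging argument, integrating the equivariance relation over $\mathrm{H}\cap K$ against the Shalika character, to force $\mathcal{W}_\chi(t)=0$ unless $t$ lies in a suitable cone of ``Shalika-dominant'' elements. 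The only genuinely new bookkeeping here is that the cocycle restricted to $\mathrm{H}$ is the quadratic Hilbert symbol, which is trivial on the compact sets involved, so the support computation goes through verbatim; this is essentially the content of Theorem~\ref{theorem:Casselman--Shalika formula for metaplectic H}, with the Shalika statement (Corollary~\ref{corollary:Casselman--Shalika formula for metaplectic Shalika}) obtained by transfer.

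\medskip

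\emph{Step 2: the rank-one recursion via intertwining operators.} For each simple root $\alpha$ of $\GL_n$ let $M_{w_\alpha}\colon\mathrm{I}(\chi)\to\mathrm{I}(w_\alpha\chi)$ be the standard intertwining operator; it sends the normalized spherical vector to $c_\alpha(\chi)$ times the normalized spherical vector of $\mathrm{I}(w_\alpha\chi)$, where $c_\alpha(\chi)$ is the metaplectic rank-one Gindikin--Karpelevich factor (a ratio of abelian $L$-factors together with a Gauss-sum correction coming from the cover, in the style of McNamara's metaplectic Casselman--Shalika computations). When both $\mathrm{I}(\chi)$ and $\mathrm{I}(w_\alpha\chi)$ carry the metaplectic Shalika model, uniqueness (Theorem~\ref{theo:uniqueness}) gives $\Lambda_{w_\alpha\chi}\circ M_{w_\alpha}=d_\alpha(\chi)\,\Lambda_\chi$ for a scalar $d_\alpha(\chi)$, which I would evaluate by applying both sides to a vector supported near the identity, reducing the identification of $d_\alpha(\chi)$ to a rank-one (metaplectic $\GL_2$, or metaplectic $\GL_1$) Shalika-type integral, i.e.\ a Gauss sum. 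Feeding the operator identity into the torus variables and using the regularity hypothesis on $\chi$ to invoke Theorem~\ref{theo:uniqueness} at all the relevant twists, I obtain for each $\alpha$ a difference equation relating $\mathcal{W}_\chi$ to $\mathcal{W}_{w_\alpha\chi}$.

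\medskip

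\emph{Step 3: solving the recursion.} The support condition of Step~1, the system of difference equations of Step~2, and the normalization $\mathcal{W}_\chi(1)=1$ pin down $\mathcal{W}_\chi$ uniquely: one checks the equations are consistent along a reduced word (braid relations for the $M_{w_\alpha}$, which also guarantee independence of the eventual formula on the chosen word), and then the unique solution is a Weyl-group sum that collapses, exactly as in Sakellaridis's case, to a Hall--Littlewood/Schur-type expression in the Satake data of $\chi$. Concretely I expect the answer to be $\delta^{1/2}(t)$ times a modified Schur polynomial attached to the Levi $\GL_k$ sitting under the Shalika subgroup, evaluated at the (half-integral, metaplectic) Satake parameters, multiplied by a product of local factors reflecting the symmetric-square-over-exterior-square normalization that Theorem~\ref{theo:Z represents L function} attaches to the global integral; specializing to unramified $\chi$ and $t$ a dominant torus element gives the explicit values claimed.

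\medskip

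\emph{Main obstacle.} The hard part is purely metaplectic: because the preimage of the diagonal torus in the double cover is non-abelian, $\mathrm{I}(\chi)$ is induced from a genuine representation of a maximal abelian subgroup, so the rank-one constants $c_\alpha(\chi)$ and $d_\alpha(\chi)$ must be recomputed with Gauss sums in place of the classical factors, and I must verify that these Gauss-sum contributions telescope correctly when the rank-one relations are composed along a reduced word. This is precisely the place where the metaplectic Whittaker theory breaks down, and what saves the computation is that Theorem~\ref{theo:uniqueness} keeps the recursion one-dimensional; checking that the resulting closed form is well defined (word-independent) and matches the non-metaplectic formula of Sakellaridis under the natural degeneration is where I would expect the bulk of the technical work to lie.
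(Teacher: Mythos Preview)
Your overall strategy---intertwining operators plus uniqueness to get functional equations, then solve---is the same as the paper's, and your Step~1 (support on a dominant cone) matches Claim~\ref{claim:Shalika or H functional are determined on the torus}. But Step~2 has a structural gap that the paper has to work to close.

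The problem is this: for a simple reflection $s_{\alpha_i}$ of $\GL_n$ with $i\neq k$, the twisted character $s_{\alpha_i}\chi$ no longer satisfies the symmetry condition $q^{2s_j}=q^{-2s_{n-j+1}}$ needed for a Shalika functional to exist (Theorem~\ref{theorem:unramified Shalika functional 1 dim}). So $\mathrm{I}(s_{\alpha_i}\chi)$ carries \emph{no} metaplectic Shalika model, uniqueness cannot be invoked, and you get no relation from $M_{s_{\alpha_i}}$. Your parenthetical ``when both \ldots\ carry the model'' acknowledges this, but then your recursion simply does not have enough equations to determine $\mathcal{W}_\chi$. The correct replacement is that the recursion runs over the \emph{symplectic} Weyl group $W_{\Sp_k}\subset W$, generated by $s_{\alpha_k}$ and the products $s_{\alpha_i}s_{\alpha_{n-i}}$ for $1\le i<k$; these do preserve the Shalika condition on $\chi$.

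The paper organizes this via the Hironaka formalism developed in \S\ref{section:Metaplectic Hironaka Theory}: Corollary~\ref{corollary:analog of Hironaka theory with phi I functions} writes $\Lambda_\chi(t_{-\lambda}\varphi_{K,\chi^{-1}})$ as a sum over the \emph{full} Weyl group $W$ of $\GL_n$, with coefficients $A(w,\chi)$ defined by $T_{w^{-1}}^*\Lambda_\chi=A(w,\chi)\Lambda_{{}^w\chi}$ and explicit evaluations $\Lambda_{{}^w\chi}(t_{-\lambda}\varphi_{e,\chi^{-1}})$. The vanishing $A(w,\chi)=0$ for $w\notin W_{\Sp_k}$ is then proved directly (Claim~\ref{claim:w not in Spk does not contribute}): one restricts the twisted functional to the open cell $B_{n,*}H$ and shows it must be a multiple of the canonical integral there, which is incompatible with the equivariance under $B_{n,*}\cap H$ unless ${}^w\chi$ is trivial on that intersection, i.e.\ $w\in W_{\Sp_k}$. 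The remaining coefficients $A(s_{\alpha_k},\chi)$ and $A(s_{\alpha_i}s_{\alpha_{n-i}},\chi)$ are computed (Claim~\ref{claim:coefficients A for w in Spk}) by evaluating $\Lambda_\chi$ on the Iwahori basis elements $\varphi_{e},\varphi_{s_{\alpha_k}},\varphi_{s_{\alpha_i}s_{\alpha_{n-i}}}$ directly as integrals over $B_{n,*}H$ (Lemma~\ref{lemma:3 specific computations of Shalika model}), not by choosing a vector supported near the identity. This is where all the cocycle bookkeeping lives, and it is not as benign as you suggest: the computation of $\Lambda_\chi(\varphi_{s_{\alpha_k}})$ involves a genuine metaplectic Gauss sum producing the factor $\epsilon_{\varrho,k}q^{-1/2}\chi^{-1/2}(a_{\alpha_k})$ for the long symplectic root, while $\Lambda_\chi(\varphi_{s_{\alpha_i}s_{\alpha_{n-i}}})$ requires tracking the section $\mathfrak{h}$ against $\mathfrak{s}$ through several pages of Iwasawa decompositions.

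The final formula (Theorem~\ref{theorem:Casselman--Shalika formula for metaplectic H}) is a sum over $W_{\Sp_k}$, not a Schur polynomial on $\GL_k$: it involves distinct factors $y_\alpha(\chi^{-1})$ for long and short symplectic roots, and the support is $2\Z^k_+$, not $\Z^k_+$---the extra parity constraint comes from the cover (specifically from \eqref{eq:unramified vanishes on torus not in T_n*}) and is invisible in the non-metaplectic case.
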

We apply our formula to the exceptional representation and use it to compute the local zeta integrals with unramified data.

In his proof, Sakellaridis \cite{Yia} used a variant of the Casselman--Shalika method developed by Hironaka \cite[Proposition~1.9]{Hir}.
The original approach of Casselman and Shalika \cite{CS2} was to compute functionals on torus translates of the Casselman basis element corresponding to the longest Weyl element. A somewhat more versatile approach has been introduced by Hironaka, namely to compute the projection of the functional, say, the Shalika functional, onto the Iwahori invariant subspace.
So, our first step is to extend some of the ideas from \cite{Hir} to the cover. Hironaka established a formula
for expressing a spherical function on certain spherical homogeneous spaces, which is applicable also in the absence of uniqueness results. We verify this formula for the double cover of $\GL_n$, see Lemma~\ref{lemma:analog of Hironaka theory} and Corollary~\ref{corollary:analog of Hironaka theory with phi I functions}. In fact, our proof is general and applicable to a large class of covering groups, e.g., the central extensions of unramified reductive $p$-adic groups by finite cyclic groups, studied by McNamara \cite{McNamara2,McNamara,McNamara3} (at present, with his assumption on the cardinality of the residue field). See the discussion in the end of \S~\ref{subsection:Hironaka theorem}.

Since the formula of \cite{Hir} does not depend on a uniqueness property, it may well be more adequate for covering groups, where multiplicity one results may fail (e.g., for the Whittaker model). Our analog of \cite[Proposition~1.9]{Hir} involves the verification of several results well known in the non-metaplectic setting. We greatly benefited from the work of Chinta and Offen \cite{COf}, explaining how to extend
the results of Casselman \cite{CS1} to ($r$-fold) covers of $\GL_n$.
We must point out that the study of spherical functions on spherical varieties is vigorously expanding. The aforementioned techniques (e.g., of \cite{Hir}) have been extended and generalized by Sakellaridis \cite{Yia2}. While his work does not include covering groups, his ideas are expected to apply to this context, albeit with some modifications.

As mentioned above, the Shalika model has global aspects and the same applies to its metaplectic analog.
In a global setting, as with the Whittaker model, one requires a (perhaps) stronger notion. An automorphic representation
is called globally generic if it admits a Whittaker functional given by a Fourier coefficient. Surprisingly enough, we prove:
\begin{theo}[see Theorem~\ref{theorem:Shalika model given by Fourier coefficient}]\label{theo:global Shalika model of Theta}
The global exceptional representation admits a metaplectic Shalika functional, given by a Fourier coefficient. 
\end{theo}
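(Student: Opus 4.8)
The plan is to realize the global exceptional representation $\Theta=\Theta_n$ (with $n=2k$) as a residual representation and then to unfold the Shalika Fourier coefficient of the underlying Eisenstein series. Concretely, write $\Theta$ as the span of the residues $\mathrm{Res}_{s=s_0}E(\,\cdot\,,s,\phi)$ of the degenerate Borel Eisenstein series on the double cover of $\GL_n(\Adele)$ of \cite{KP} (taken at the exceptional point $s_0$; alternatively, using its constant term, as the residual representation attached to the $(k,k)$ parabolic with inducing data built from $\Theta_k\boxtimes\Theta_k$). Let $U=\{u_X\}$ be the Shalika unipotent subgroup, i.e.\ the upper right $\mathrm{Mat}_k$ block, which is split in the cover, and put
\[
\mathcal{S}_{\tilde g}(\varphi)=\int_{\mathrm{Mat}_k(F)\backslash\mathrm{Mat}_k(\Adele)}\varphi(u_X\,\tilde g)\,\psi^{-1}(\mathrm{tr}\,X)\,dX ,
\]
the Fourier coefficient of $\varphi\in\Theta$ along $U$ against the Shalika character; it converges trivially, the domain being compact. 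First I would check that $\tilde g\mapsto\mathcal{S}_{\tilde g}(\varphi)$, restricted to the diagonal copy $\widetilde{\GL_k^{\triangle}}(\Adele)$, is a genuine automorphic form on $\widetilde{\GL}_k$ — left $\GL_k(F)$-invariance is immediate from the automorphy of $\varphi$ and the fact that $\mathrm{Ad}(g)$ preserves both $\mathrm{tr}$ and the Haar measure on $\mathrm{Mat}_k$ — and that $\mathcal{S}(\varphi):=\mathcal{S}_e(\varphi)$ is a metaplectic Shalika functional.

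For the equivariance I would argue from the local picture. At each place $v$ the $(U,\psi_v)$-twisted Jacquet module of the local exceptional representation $\Theta_v$ is one-dimensional, with $\widetilde{\GL_k^{\triangle}}$ acting through the Weil factor $\gamma_{\psi_v}$ (composed with $\det$); this is precisely the local input of \cite{me11} that makes the local metaplectic Shalika model a model. Consequently the genuine automorphic form $\tilde g\mapsto\mathcal{S}_{\tilde g}(\varphi)$ on $\widetilde{\GL}_k$ lies in the isotypic component of this single character at every place, hence is a scalar multiple of the corresponding automorphic character $\gamma_\psi\circ\det$; here one invokes the product formula for the Weil index, $\prod_v\gamma_{\psi_v}(t)=1$ for $t\in F^\times$, which guarantees that $\gamma_\psi\circ\det$ is a well-defined genuine automorphic character of $\widetilde{\GL}_k(\Adele)$, trivial on $\GL_k(F)$. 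Combined with the transparent transformation of $\mathcal{S}_{\tilde g}$ under $U(\Adele)$ against $\psi$, this gives $\mathcal{S}$ the full Shalika equivariance.

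It then remains to prove $\mathcal{S}\not\equiv 0$, and here I would compute. Apply $\mathcal{S}_e$ to $\mathrm{Res}_{s=s_0}E(\,\cdot\,,s,\phi)$; since the Fourier integral is over a compact set, for $\Re(s)$ large it commutes with the Eisenstein sum, and after meromorphic continuation with the residue. Unfolding $E$ and enumerating the orbits of the Shalika group on the relevant flag variety over $F$, the contributing ``open'' orbit has stabilizer essentially $\widetilde{\GL_k^{\triangle}}$, and for a factorizable section $\phi=\otimes_v\phi_v$ the resulting expression is Eulerian. At almost every place the local factor is the unramified metaplectic Shalika functional evaluated on the normalized spherical vector of $\mathrm{I}(\chi_v)$, which Theorem~\ref{theorem:Casselman--Shalika formula for metaplectic H} and Corollary~\ref{corollary:Casselman--Shalika formula for metaplectic Shalika} compute explicitly to a nonzero quantity; the product of these factors, normalized by the natural $L$-factors (the same $L^S(2s,\mathrm{Sym}^2,\pi)/L^S(2s+1,\wedge^2,\pi)$-type ratio of Theorem~\ref{theorem:properties of the Global GJ integral}, specialized to the degenerate data), is holomorphic and nonzero at $s=s_0$, so the residue survives. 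At the finitely many remaining places one selects $\phi_v$ so the local factor is nonzero, using the nonvanishing of the local Shalika functional on $\Theta_v$ — from \cite{me11} at $p$-adic places, and at the Archimedean places from Theorem~\ref{theorem:uniqueness of metaplectic Shalika over R} together with an explicit choice of vector. (Alternatively, in the $(k,k)$-parabolic realization the open-orbit term collapses to $\int_{Z\GL_k(F)\backslash\GL_k(\Adele)}$ of a product of two automorphic forms in $\Theta_k$, a nonzero invariant pairing since $\Theta_k$ is square-integrable and $\Theta_k^{\vee}$ is a central twist of $\Theta_k$; this route sidesteps the Archimedean point.) Hence $\mathcal{S}$ is a nonzero metaplectic Shalika functional given by a Fourier coefficient.

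The main obstacle is the analytic bookkeeping of the unfolding step: justifying the interchange of residue and integral, proving absolute convergence of the unfolded integral in a right half-plane, and — most substantively — matching the unramified local factors with the Casselman--Shalika values and verifying that the resulting Euler product, after the $L$-function normalization, is holomorphic and \emph{nonzero} at $s_0$; this is in effect a degenerate instance of the local unramified computation behind Theorem~\ref{theorem:properties of the Global GJ integral}. A secondary difficulty is the Archimedean nonvanishing of the local Shalika functional on $\Theta_\infty$: the Archimedean theorem gives uniqueness but not nonvanishing, so one must either exhibit a vector explicitly or argue via meromorphic continuation of the Archimedean integral — an issue the $\Theta_k\boxtimes\Theta_k$-pairing variant avoids entirely.
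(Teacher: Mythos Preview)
Your approach differs substantially from the paper's, and the main route you sketch has a genuine gap you yourself flag: the Archimedean nonvanishing. The paper does \emph{not} unfold the Eisenstein series against the Shalika character. Instead, it relates the Shalika Fourier coefficient $\varphi^{U_k,\psi_k}$ to a ``semi-Whittaker'' coefficient $\varphi^{N_n,\psi^\circ}$ (a degenerate Whittaker coefficient along the full unipotent radical $N_n$) via an exchange-of-roots argument (Lemma~\ref{lemma:relating Fourier coefficients - Shalika to the semi-Whittaker}). The latter is then computed explicitly on the torus: the constant term of $\Theta$ along the $(2,\ldots,2)$ parabolic is identified (Theorem~\ref{theorem:constant term theorem}) with $\delta_Q^{1/4}\Theta_M$, which by Takeda's work is essentially a Weil representation, and the degenerate Whittaker coefficient of a Weil representation on the torus is a product of Weil factors (Claim~\ref{claim:theorem Shalika Fourier coefficient value of semi-Whittaker on t}). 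This yields both the torus equivariance and the nonvanishing at once, with no Archimedean input beyond standard $\widetilde{\GL}_2$ Weil-representation formulas. The $\SL_k$-equivariance is handled, as you suggest, by Fourier expansion and local Jacquet vanishing at a finite place.

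Your circularity is real: Corollary~\ref{corollary:metaplectic Shalika functional in Archimedean places} (Archimedean existence of the Shalika functional on $\theta$) is \emph{deduced from} Theorem~\ref{theorem:Shalika model given by Fourier coefficient}, not an input to it, so you cannot invoke it to produce a nonzero Archimedean local factor in your Euler product. Your $(k,k)$-pairing variant is only a sketch; making it work requires realizing $\Theta_n$ as a residue from $\Theta_k\widetilde{\otimes}\Theta_k$ and identifying the open-orbit term, which runs straight into the metaplectic-tensor and block-compatibility issues the paper is at pains to handle elsewhere. Your equivariance argument also quietly needs Archimedean input: knowing the twisted Jacquet module is one-dimensional only at finite places does not by itself force $c_\infty\mapsto\mathcal{S}_{c_\infty^\triangle}(\varphi)$ to transform by $\gamma_{\psi_\infty}(\det c_\infty)$; automorphy plus the finite-place transformation law does not pin down the Archimedean behavior without further argument. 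The paper's route---reducing to an explicit torus computation via the semi-Whittaker coefficient---sidesteps all of this.
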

Using this, we prove that exceptional representations over Archimedean fields also admit a metaplectic Shalika model, which is then unique, because of Theorem~\ref{theo:uniqueness}. Theorem~\ref{theo:global Shalika model of Theta} also plays a key role in the definition of the zeta integral. Moreover, we study a family of Fourier coefficients of exceptional representations and use their properties to prove Theorem~\ref{theo:co-period result}.

Theorem~\ref{theo:global Shalika model of Theta} demonstrates a very special phenomenon, although not estranged from exceptional representations of other groups (see, e.g., \cite[\S~3.4.2]{me7}). In fact, it is intimately related to the fact that minimal, or small representations are supported on small unipotent orbits, in the sense that generic Fourier coefficients of their automorphic forms vanish on sufficiently large (with respect to the partial ordering) unipotent orbits. See \cite{Cr,CM,BFG,G2} for more details.
We mention that Shalika functionals written solely in terms of a unipotent integration already appeared in a work of Beineke and Bump \cite{BeD} in the non-metaplectic case, for a specific representation.

As mentioned above, the proof of meromorphic continuation of the global zeta integrals involves establishing meromorphic continuation of the local integrals. Over Archimedean fields, such a result is usually difficult to obtain, the analysis must be performed carefully, due to the involvement of topological vector spaces. To achieve this, we develop an asymptotic expansion of metaplectic Shalika functions over Archimedean fields, using tools and techniques from Wallach \cite{Wal83,Wal88,Wal92} and Soudry \cite{Soudry3}, see \S~\ref{section:Archimedean asympt expansion Shalika}. This result may be of independent interest (it also applies to the non-metaplectic setting). Note that as a preliminary result, we write an asymptotic expansion of smooth matrix coefficients, on a whole Weyl chamber. This result applies to any real reductive group.

The rest of this work is organized as follows. In \S~\ref{section:preliminaries} we establish some notation and preliminaries, and recall several facts about the double cover. Section~\ref{section:Metaplectic Hironaka Theory} is dedicated to $p$-adic unramified theory. 
Section~\ref{section:Metaplectic unramified Casselman--Shalika formula}
begins with a general discussion of the metaplectic Shalika model, then
we compute the Casselman--Shalika formula.

In \S~\ref{section:Metap Shalika model for theta} we discuss exceptional representations. We briefly recall
their definition in \S~\ref{subsection:The exceptional representations}. Local results are contained in
\S~\ref{subsection:The metaplectic Shalika model of theta}, where 
we apply Theorem~\ref{theo:Casselman--Shalika} to unramified exceptional representations,
and Theorem~\ref{theo:uniqueness} to exceptional representations over $\R$. In \S~\ref{section:Fourier coefficients} we study global Fourier coefficients.
The local and global theory of the zeta integral is contained in \S~\ref{section:A Godement Jacquet integral}. Section~\ref{section:The co-period} is devoted to the co-period.

\addtocontents{toc}{\protect\setcounter{tocdepth}{1}}

\subsection*{Acknowledgments}
The first named author wishes to express his gratitude to
David Ginzburg, Erez Lapid and David Soudry for many encouraging conversations.
We thank Yiannis Sakellaridis for his interest in Theorem~\ref{theo:Casselman--Shalika} and useful remarks.
We thank Dmitry Gourevitch and Nolan Wallach for helpful correspondences. Lastly, the authors are grateful to Jim Cogdell and Robert Stanton for their kind encouragement and helpful remarks.

\addtocontents{toc}{\protect\setcounter{tocdepth}{1}}

\tableofcontents

\section{Preliminaries}\label{section:preliminaries}

\subsection{The groups}\label{subsection:Notation}
Let $G_n=\GL_n$. Fix the Borel subgroup $B_n=T_n\ltimes N_n$ of upper triangular matrices in $G_n$, where $T_n$ is the diagonal torus. For any $k\geq0$, let $Q_k=M_k\ltimes U_k$ be the standard maximal parabolic subgroup of $G_n$, whose Levi part $M_k$ is isomorphic to $G_k\times G_{n-k}$. For any parabolic subgroup $Q$, $\delta_{Q}$ denotes its modulus character. If $U$ is a unipotent radical of a standard parabolic subgroup, let $U^-$ denote the unipotent radical opposite to $U$.

Let $W$ be the Weyl group of $G_n$. The longest Weyl element is denoted by $w_0$ and the identity element by $e$.
For $w\in W$, let $\ell(w)$ be the length of $w$.
Let $\Sigma_{G_n}$ be the set of roots, $\Sigma_{G_n}^+$ be the subset of positive roots and
$\Delta_{G_n}$ be the set of simple roots compatible with our choice of $B_n$. For $\alpha\in\Delta_{G_n}$, denote the simple reflection along $\alpha$ by $s_{\alpha}$. Of course $W$ is generated by the set of simple reflections.

In general for any group $G$, $C_G$ denotes its center. If $Y<G$
and $d\in\Z$, $Y^d=\setof{y^d}{y\in Y}$. Also for $x,y\in G$, $\rconj{x}y=xyx^{-1}$,
$\rconj{x}Y=\setof{\rconj{x}y}{y\in Y}$. 

Throughout, all fields will be assumed to have characteristic different from $2$. If $F$ is a local
field, we usually denote $G_n=G_n(F)$. For a global field $F$, its ring of ad\`{e}les is denoted $\Adele$.
The vector space of $k\times k$ matrices over a ring $R$ (usually a local field) is denoted $R_{k\times k}$.

In the Archimedean case we use small gothic letters like $\mathfrak{g}_n$ and $\mathfrak{u}_k$ to denote the Lie algebras of the corresponding groups $G_n$ and $U_k$. For $F=\C$ we regard the complex Lie algebras as real ones. Write $\mathcal{U}(\mathfrak{g}_n)$ for the universal enveloping algebra of the complexification of $\mathfrak{g}_n$ and $\mathcal{Z}(\mathfrak{g}_n)$ for its center.

\subsection{The local metaplectic cover}\label{subsection:The metaplectic cover G_n}
Let $F$ be a local field, $(\blank,\blank)_2$ be the Hilbert symbol of order $2$ of $F$, and $\mu_2=\{-1,1\}$. Put $G_n=G_n(F)$.
Kazhdan and Patterson \cite{KP} constructed the double cover $\widetilde{G}_n$ of $G_n$, using the double cover of
$\mathrm{SL}_{n+1}$ of Matsumoto \cite{Mats} and the embedding $g\mapsto\mathrm{diag}(g,\det{g}^{-1})$ of
$G_n$ in $\mathrm{SL}_{n+1}$.

Given a $2$-cocycle $\sigma:G_n\times G_n\rightarrow\mu_2$, let $\widetilde{G}_n$ be the associated central extension of $G_n$ by $\mu_2$, $p:\widetilde{G}_n\rightarrow G_n$ be the natural projection and
$\mathfrak{s}:G_n\rightarrow\widetilde{G}_n$ be a section such that $p(\mathfrak{s}(g))=g$ and $\sigma(g,g')=\mathfrak{s}(g)\mathfrak{s}(g')\mathfrak{s}(gg')^{-1}$ (we also require $\mathfrak{s}$ to take the identity element of $G_n$ to the identity element of $\widetilde{G}_n$). For any subset $X\subset G_n$, put $\widetilde{X}=p^{-1}(X)$. 

We use the block-compatible cocycle $\sigma=\sigma_n$ of Banks, Levi and Sepanski \cite{BLS}.
The block-compatibility formula is given by
\begin{align}\label{eq:block-compatibility}
\sigma(\mathrm{diag}(a,b),\mathrm{diag}(a',b'))=\sigma_{k}(a,a')\sigma_{n-k}(b,b')(\det{a},\det{b'})_2,\qquad (a,b),(a',b')\in M_k.
\end{align}
This cocycle is trivial when $n=1$, and for $n=2$ coincides with the cocycle of Kubota \cite{Kubota}. Specifically if
$g=\left(\begin{smallmatrix}a&b\\c&d\end{smallmatrix}\right)\in G_2$, put $\mathbf{x}(g)=c$ if $c\ne0$, otherwise $\mathbf{x}(g)=d$, then
\begin{align}\label{eq:Kubota formula}
\sigma(g,g')=\Big(\frac{\mathbf{x}(gg')}{\mathbf{x}(g)},\frac{\mathbf{x}(gg')}{\mathbf{x}(g')\det g}\Big)_2.
\end{align}

The cocycle $\sigma$ and its twist
\begin{align*}
\sigma^{(1)}(g,g')=(\det g,\det g')_2\sigma(g,g')
\end{align*}
exhaust the nontrivial cohomology classes. All of our results in this work hold for both $\sigma$ and $\sigma^{(1)}$. In fact, all of
the computations will involve $g$ and $g'$ such that $(\det g,\det g')_2=1$ (e.g., $\det g\in F^{*2}$).

We mention several properties of $\sigma$ (and $\sigma^{(1)}$), that will be applied throughout.
For any $g\in G_n$ and $v\in N_n$, $\mathfrak{s}(gv)=\mathfrak{s}(g)\mathfrak{s}(v)$ and
$\mathfrak{s}(vg)=\mathfrak{s}(v)\mathfrak{s}(g)$. If $\rconj{g}v\in N_n$, then
$\mathfrak{s}(\rconj{g}v)=\mathfrak{s}(g)\mathfrak{s}(v)\mathfrak{s}(g)^{-1}$. Consequently if
$\mathfrak{s}$ is a splitting of $M<G_n$ (i.e., a homomorphism), $U<N_n$ and $MU=M\ltimes U$, $\mathfrak{s}$ is a splitting of $MU$. Note that $C_{\widetilde{G}_n}=\widetilde{C}_{G_n}^{\mathe}$, where $\mathe=1$ if $n$ is odd, otherwise $\mathe=2$.

For computations we fix a concrete set of elements $\mathfrak{W}\subset G_n$ as in \cite{BLS}, which is in bijection with
$W$. For $\alpha\in\Sigma_{G_n}$, let $w_{\alpha}$ be the image of $\left(\begin{smallmatrix}&-1\\1\end{smallmatrix}\right)$ in the subgroup of $G_n$ corresponding to $\alpha$. Define
\begin{align*}
\mathfrak{W}=\setof{w_{\alpha_1}\cdot\ldots \cdot w_{\alpha_{\ell(w)}}}{w\in W,w=s_{\alpha_1}\cdot\ldots\cdot s_{\alpha_{\ell(w)}}}.
\end{align*}

\subsection{The global metaplectic cover}\label{subsection:The global metaplectic cover}
Let $F$ be a global field. For a finite place $v$ of $F$, let $\mathcal{O}_v$ denote the ring of integers of $F_v$ and $\varpi_v$ be a generator of the maximal ideal. The group
$G_{n}(\Adele)$ is the restricted direct product $\prod'_{\nu}G_{n}(F_{\nu})$ with respect to
the compact subgroups $\{K_{v}\}_{v}$, where $K_{v}=G_n(\mathcal{O}_v)$ for almost all $v$, and we denote
$K=\prod_vK_v$. The global double cover $\widetilde{G}_n(\Adele)$ was constructed in \cite[\S~0.2]{KP} (see also \cite{FK,Tk}). 
For all $v<\infty$ such that $q_v=|\lmodulo{\varpi_v\mathcal{O}_v}{\mathcal{O}_v}|$ is odd and $q_v>3$, there are canonical splittings $\kappa_v$ of $K_v$ (\cite[pp.~54-56]{Moore}, see also \cite[Proposition~0.1.3]{KP}). We extend them to sections of $G_n(F_v)$. Also set $\mu_2^{\times}=\setof{(\epsilon_{v})_{v}\in \prod'_{v}\widetilde{G}_n(F_{v})}{ \epsilon_{v}\in\mu_2,\prod_{v}{\epsilon_{v}=1}}$.
Then $\widetilde{G}_n(\Adele)=\lmodulo{\mu_2^{\times}}{\prod'_{\nu}\widetilde{G}_{n}(F_{\nu})}$, where the restricted direct product is taken with respect to $\{\kappa_v(K_{v})\}_{v}$.
There is a global function $\mathfrak{s}=\prod_v\mathfrak{s}_v$, which is not defined on $G_n(\Adele)$, but is a splitting of $G_n(F)$ and $N_n(\Adele)$ and is well defined on $T_n(\Adele)$.
We identify $G_n(F)$ and $N_n(\Adele)$ with their image under $\mathfrak{s}$ in $\widetilde{G}_n(\Adele)$.

\subsection{Local representations}\label{subsection:representations}
Let $\Alg{G}$ be the category of complex and smooth representations of $G$, if
$G$ is an $l$-group (in the sense of \cite{BZ1}), or smooth admissible Fr\'{e}chet representations
of moderate growth if $G$ is a real reductive group. 

For $\pi\in\Alg{G}$ we denote by $\pi^{\vee}$ the contragredient representation of $\pi$. If $V$ is the space of $\pi$,
$V^\vee$ will denote the space of $\pi^{\vee}$. For a real reductive group $G$, $\pi^\vee$ is the smooth admissible Fr\'{e}chet globalization of moderate growth, of the $K$-finite vectors in the dual representation. Then a matrix coefficient of $\pi$ is a function of the form $f(g)=\xi^\vee(\pi(g)\xi)$ where $\xi\in V$ and $\xi^\vee\in V^\vee$.

Regular induction is denoted $\Ind$, and $\ind$ is the compact induction. Induction is not normalized. Over an Archimedean field induction denotes smooth induction.


For an $l$-space or an abelian Lie group $X$, let $\mathcal{S}(X)$ be the space of Schwartz--Bruhat functions on $X$. 

Let $G$ be an $l$-group and $\pi\in\Alg{G}$. Let $U<G$ be a closed subgroup, exhausted by its compact subgroups (here $U$ will be a unipotent subgroup of $G_n$), $\psi$ be a character of $U$, and $M<G$ be the normalizer of $U$ and stabilizer of $\psi$. The Jacquet module of $\pi$ with respect to $U$ and $\psi$ is denoted $\pi_{U,\psi}$. It is a representation of $M$. The action is not normalized.
The kernel of the surjection $\pi\rightarrow\pi_{U,\psi}$ is denoted
$\pi(U,\psi)$, and occasionally we write the elements of $\pi_{U,\psi}$ in the form $\xi+\pi(U,\psi)$ ($\xi$ in the space of $\pi$). When $\psi=1$, we simply write $\pi_U$ and $\pi(U)$.

Let $\pi\in\Alg{G_n}$ be irreducible and $Q=M\ltimes U$ be a parabolic subgroup of $G_n$.
Assume that the underlying field is $p$-adic.
The normalized exponents of $\pi$ along $Q$ are the central characters of the irreducible constituents of
$\delta_Q^{-1/2}\pi_U$. For example if $\pi$ is an irreducible representation with a unitary central character, then it is square integrable if and only if its normalized exponents along any standard parabolic subgroup lie in the open cone spanned by the positive roots in $M$.

\subsection{Asymptotic expansions over Archimedean fields}\label{subsection:asymptotic expansion}
We derive an asymptotic expansion for matrix coefficients $f(g)=\xi^\vee(\pi(g)\xi)$ of $G_k$. The key ideas are due to Wallach (see \cite[Theorem 7.2]{Wal83}, \cite[Chapters 4.3, 4.4]{Wal88} and \cite[Chapter 15.2]{Wal92}), we adapt them to our setting (see also \cite[Theorem 1 in \S~4]{Soudry3}). The novelty is that we derive an expansion into an exponential power series, not only on a one-parameter group but on a whole Weyl chamber, depending continuously on $\xi$ and $\xi^\vee$. We carry out the details for $G_k$, but the proof extends to any real reductive group.

Let $F$ be an Archimedean field.
Recall the maximal parabolic subgroups $Q_\ell=M_\ell\ltimes U_\ell$ of $G_k$. The non-compact central factor of $M_\ell$ is given by $A_\ell=\exp(\mathfrak{a}_\ell)$ with $\mathfrak{a}_\ell=\R I_k+\R H_\ell$ and $H_\ell=\diag(I_\ell,0_{k-\ell})$. For $x=(x_1,\ldots,x_k)\in\R^k$ put $a_x=\exp(-\sum_{\ell=1}^kx_\ell H_\ell)$.

For a representation $\pi\in\Alg{G_k}$ on a space $V$, we write $E_1(Q_\ell,\pi)\subset(\mathfrak{a}_\ell)_\C^*$ for the finite set of generalized $\mathfrak{a}_\ell$-weights in $V/\mathfrak{u}_\ell V$. Put
\begin{align}\notag
&E_1^{(\ell)}(\pi)=\{\mu(H_\ell):\mu\in E_1(Q_\ell,\pi)\}\subset\C,\\\notag
&E^{(\ell)}(\pi)=E_1^{(\ell)}(\pi)+\N,\\
& \Lambda_{\pi,\ell} = \min\{\Re z:z\in E^{(\ell)}(\pi)\} = \min\{\Re z:z\in E_1^{(\ell)}(\pi)\},\label{eq:DefinitionLambdaPiSmoothMatrixCoefficient}\\
& \Lambda_\pi=(\Lambda_{\pi,1},\ldots,\Lambda_{\pi,k}).\notag
\end{align}
Here $\N=\{0,1,2,\ldots\}$.
By \cite[Theorem 15.2.4]{Wal92} there exist continuous seminorms $q_1$, $q_2$ on $V$, $V^\vee$ (resp.) and $d\geq0$ such that
\begin{align}
 |\xi^{\vee}(\pi(a_x)\xi)| \leq q_1(\xi)q_2(\xi^{\vee})(1+|x|)^de^{-\Lambda_\pi\cdot x}, \qquad \forall x\in\R_{>0}^k,\xi\in V,\xi^{\vee}\in V^\vee.\label{eq:BoundSmoothMatrixCoefficients}
\end{align}

For a subset $I=\{i_1,\ldots,i_r\}$ of $\{1,\ldots,k\}$ with $i_1<\ldots<i_r$, let $x_I=(x_{i_1},\ldots,x_{i_r})$ and $\overline{I}=\{1,\ldots,k\}\setdifference I$.

\begin{theorem}\label{thm:AsymptoticExpansionSmoothMatrixCoefficientsArchimedean}
Let $\pi\in\Alg{G_k}$ be an irreducible representation on a space $V$. For any $D=(D_1,\ldots,D_k)\in\R^k$ there exist finite subsets $C_\ell\subset E^{(\ell)}(\pi)$ and functions $p_{I,z}:\R^k\times V\times V^\vee\to\C$ for $I\subset\{1,\ldots,k\}$, $z\in\prod_{\ell\in I}C_\ell$, such that the following holds.
\begin{enumerate}[leftmargin=*]
\item For all $\xi\in V$, $\xi^{\vee}\in V^\vee$ and $x\in\R^k$,
\begin{align}
 \xi^{\vee}(\pi(a_x)\xi) = \sum_{I,z}p_{I,z}(x;\xi,\xi^{\vee})\cdot e^{-z\cdot x_I},\label{eq:ExpansionSmoothMatrixCoefficients}
\end{align}
where the (finite) summation is over all $I\subset\{1,\ldots,k\}$ and $z\in\prod_{\ell\in I}C_\ell$.
\item Each function $p_{I,z}(x;\xi,\xi^{\vee})$ is polynomial in $x_I$:
\begin{align}
 p_{I,z}(x;\xi,\xi^{\vee}) = \sum_{\alpha\in\N^I}c_\alpha(x_{\overline{I}};\xi,\xi^{\vee})x_I^\alpha,\label{eq:PolynomialCoefficientsSmoothMatrixCoefficients}
\end{align}
where the coefficients $c_\alpha(x_{\overline{I}};\xi,\xi^{\vee})$ are smooth in $x_{\overline{I}}$, linear in $\xi$ and $\xi^{\vee}$, and satisfy bounds of the form
\begin{align}
 \hspace{.5cm} |c_\alpha(x_{\overline{I}};\xi,\xi^{\vee})| \leq q_1(\xi)q_2(\xi^{\vee})(1+|x_{\overline{I}}|)^d e^{-D_{\overline{I}}\cdot x_{\overline{I}}}, \qquad \forall x_{\overline{I}}\in\R_{>0}^{\overline{I}},\xi\in V,\xi^{\vee}\in V^\vee,\label{eq:EstimateRemainderSmoothMatrixCoefficients}
\end{align}
with $d\geq0$ and continuous seminorms $q_1$, $q_2$ on $V$, $V^\vee$ (resp.).
\end{enumerate}
\end{theorem}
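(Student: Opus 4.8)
The plan is to obtain the expansion by iterating a one-variable asymptotic expansion along each of the coordinate directions $H_1,\dots,H_k$, using the bound \eqref{eq:BoundSmoothMatrixCoefficients} as the base case and as the engine for controlling remainders. First I would set up the one-variable statement: fix $\ell$ and consider the function $t\mapsto\xi^\vee(\pi(a_{te_\ell})\pi(g)\xi)$ for $t>0$; by the theory of the $n$-th derivative with respect to the Casimir-type element $H_\ell$ acting through $\pi$, together with the fact that $V/\mathfrak{u}_\ell V$ is finite-dimensional with generalized $\mathfrak{a}_\ell$-weights in $E_1(Q_\ell,\pi)$, one gets an expansion $\xi^\vee(\pi(a_{te_\ell})\eta)=\sum_{z\in C_\ell}(\text{polynomial in }t)e^{-zt}+(\text{remainder})$, where the remainder decays like $e^{-D_\ell t}$ once $C_\ell\subset E^{(\ell)}(\pi)$ is chosen large enough (i.e.\ containing all of $E_1^{(\ell)}(\pi)+\{0,1,\dots,N\}$ with $N$ large relative to $D_\ell$). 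This is exactly the kind of argument in \cite[Ch.~4.3--4.4]{Wal88} and \cite[\S~4]{Soudry3}; the coefficients are obtained as iterated residues / generalized eigenprojections and are manifestly linear in $\xi$ and (through $\eta=\pi(g)\xi$) depend smoothly on the remaining parameters.

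Next I would run the induction on the number of ``expanded'' coordinates. Write $x=(x_1,\dots,x_k)$ and peel off the $x_1$-direction first: apply the one-variable expansion with $\eta=\pi(a_{x_2,\dots,x_k})\xi$ where I abuse notation to mean the group element $\exp(-\sum_{\ell\ge2}x_\ell H_\ell)$ — noting that the $H_\ell$ commute, so $a_x$ genuinely factors as a product over $\ell$. This produces $\xi^\vee(\pi(a_x)\xi)=\sum_{z_1\in C_1}c_{z_1}(x_2,\dots,x_k;\xi,\xi^\vee)\,(\text{poly in }x_1)\,e^{-z_1 x_1}+R_1$, and then I expand each coefficient $c_{z_1}$ and the remainder $R_1$ in the $x_2$-direction, and so on. After $k$ steps one reaches the stated form \eqref{eq:ExpansionSmoothMatrixCoefficients}, where the index set $I$ records which coordinates were ``taken into the exponential part'' versus those left inside the smooth, rapidly-decaying coefficient $c_\alpha(x_{\overline I};\cdot)$. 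The polynomial-in-$x_I$ structure \eqref{eq:PolynomialCoefficientsSmoothMatrixCoefficients} is tracked automatically through the induction, and the seminorm bounds \eqref{eq:EstimateRemainderSmoothMatrixCoefficients} on the coefficients come from feeding the uniform bound \eqref{eq:BoundSmoothMatrixCoefficients} (applied to the appropriate sub-product of the $a_\ell$'s, which is again a point in a Weyl chamber of a Levi) into each stage of the recursion and absorbing the polynomial losses into the $(1+|x_{\overline I}|)^d$ factor.

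The main technical obstacle, and the place where care is genuinely required, is uniformity and continuity in $\xi,\xi^\vee$ together with the interchange of the expansions in different directions. At each inductive step the ``coefficient'' functions are not scalars but depend on the still-unexpanded variables, and one must check that the generalized-eigenprojection operators producing the $c_{z}$'s are continuous in the Fr\'echet topology and commute sufficiently well across directions for the iteration to close — i.e.\ that expanding first in $x_1$ then $x_2$ gives the same family of exponents (a subset of $\prod_{\ell\in I}C_\ell$) as the reverse order, up to enlarging the finite sets $C_\ell$. This is where the hypotheses that $\pi$ is irreducible and admissible of moderate growth, and that $V/\mathfrak{u}_\ell V$ is finite-dimensional, are essential: finite-dimensionality makes each one-variable expansion have finitely many exponents with polynomial coefficients, and moderate growth gives the a priori bound that both seeds the recursion and caps the degrees $d$. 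I would structure the write-up so that the one-variable lemma (with explicit remainder estimate, uniform in a parameter $\eta$ ranging over $\pi(a)\xi$ for $a$ in a Weyl chamber) is isolated first, and then the $k$-fold iteration is a bookkeeping argument.
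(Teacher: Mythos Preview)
Your plan is sound and uses the same Wallach ODE machinery as the paper, but the induction is organized differently. You induct on the \emph{coordinate index}: fully expand in $x_1$ (leading terms plus a remainder with $e^{-D_1x_1}$ decay), then expand each piece in $x_2$, and so on. The paper instead inducts on the \emph{bound} $D$: it starts from the trivial expansion at $D=\Lambda_\pi$ (a single term, $I=\emptyset$, justified by \eqref{eq:BoundSmoothMatrixCoefficients}) and shows that any expansion valid at level $D$ can be upgraded to $D+e_\ell$. The upgrade step writes down Wallach's relation $H_\ell E_i=\sum_j Z_{i,j}E_j+\sum_p X_pU_{p,i}$, packages the matrix coefficients $\xi^\vee(\pi(a_x)\pi(E_i)\xi)$ into a vector $F$ satisfying $F'=-BF-G$, and solves via $F(t)=e^{-tB}F(0)-e^{-tB}\int_0^t e^{\tau B}G(\tau)\,d\tau$. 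The crucial point is that both $F(0)$ and $G$ are built from \emph{new smooth matrix coefficients} (with $\xi\rightsquigarrow\pi(U)\xi$ and $\xi^\vee\rightsquigarrow\pi^\vee(X)\xi^\vee$), to which the induction hypothesis at level $D$ applies verbatim; the index sets $I$ with $\ell\in I$ versus $\ell\notin I$ then arise automatically from whether the $\tau$-integral is split as $\int_0^\infty-\int_t^\infty$ or left as $\int_0^t$. This sidesteps exactly the obstacle you flag: there is no need to check that the coefficient functionals produced at one stage lie in $V^\vee$ so that the next one-variable lemma applies, because the induction hypothesis is always invoked on genuine matrix coefficients $\xi^\vee(\pi(a_x)\xi)$ with possibly shifted $\xi,\xi^\vee$. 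Your approach would close too (the coefficient functionals you produce are finite linear combinations of $\xi^\vee\circ\pi(E_i)$, hence smooth), but the remainder-tracking across successive coordinates is heavier bookkeeping than the paper's single recursive step.
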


\begin{proof}
It is enough to show the above statement for arbitrarily large $D_\ell$, $\ell=1,\ldots,k$. We achieve this by first treating the case $D=\Lambda_\pi$, then proving that any $D$ can be replaced by $D+e_\ell$ for some $\ell=1,\ldots,k$. Repeatedly applying this argument yields the claim.

For $D=\Lambda_\pi$ one may choose the sum in \eqref{eq:ExpansionSmoothMatrixCoefficients} to consist of only one summand,
for the empty set $I$, then the statement follows from \eqref{eq:BoundSmoothMatrixCoefficients}. Now let $D\in\R^k$ be arbitrary and assume \eqref{eq:ExpansionSmoothMatrixCoefficients}, \eqref{eq:PolynomialCoefficientsSmoothMatrixCoefficients} and \eqref{eq:EstimateRemainderSmoothMatrixCoefficients}. We fix $1\leq\ell\leq k$ and prove that the same is true for $D$ replaced by $D+e_\ell$. For brevity, set $H=H_\ell$, $t=x_\ell$ and $x'=(x_1,\ldots,x_{\ell-1},0,x_{\ell+1},\ldots,x_k)$.

Let $\{X_p\}_p$ be a basis of $\mathfrak{u}_\ell$. In \cite[\S~15.2.4]{Wal92} Wallach constructed $E_1=1,E_2,\ldots,E_r\in\mathcal{U}(\mathfrak{g}_k)$ as well as finite sets $\{Z_{i,j}\}_{i,j}\subset\mathcal{Z}(\mathfrak{g}_k)$ and $\{U_{p,i}\}_{p,i}\subset\mathcal{U}(\mathfrak{g}_k)$ such that
\begin{align}
 H_\ell E_i = \sum_j Z_{i,j}E_j + \sum_p X_pU_{p,i}, \qquad \forall i=1,\ldots,r.\label{eq:AbsoluteActionOfHOnIdentityInU(g)}
\end{align}
Since $\pi$ is irreducible, the elements $Z_{i,j}$ act by scalars $B_{i,j}=\pi(Z_{i,j})\in\C$ and we form the matrix $B=(B_{i,j})_{i,j}$. Note that the eigenvalues of $B$ are contained in the finite set $E_1^{(\ell)}(\pi)=\{\zeta_1,\ldots,\zeta_N\}$ (the generalized eigenvalues of $\pi(H_\ell)$ on the Jacquet module $V/\mathfrak{u}_\ell V$). Define
\begin{align*}
 F(t,x';\xi,\xi^{\vee}) &= \left(\begin{array}{c}\xi^{\vee}(\pi(e^{-tH})\pi(a_{x'})\pi(E_1)\xi)\\\vdots\\\xi^{\vee}(\pi(e^{-tH})\pi(a_{x'})\pi(E_r)\xi)\end{array}\right),\\
 G(t,x';\xi,\xi^{\vee}) &= \sum_p\left(\begin{array}{c}\xi^{\vee}(\pi(e^{-tH})\pi(a_{x'})\pi(X_p)\pi(U_{p,1})\xi)\\\vdots\\\xi^{\vee}(\pi(e^{-tH})\pi(a_{x'})\pi(X_p)\pi(U_{p,r})\xi)\end{array}\right)
\end{align*}
The first coordinate of $F(t,x';\xi,\xi^{\vee})$ is equal to $\xi^{\vee}(\pi(a_x)\xi)$ and the function $F(t,x';\xi,\xi^{\vee})$ satisfies the following differential equation:
$$ \frac{d}{dt}F(t,x';\xi,\xi^{\vee}) = -BF(t,x';\xi,\xi^{\vee}) - G(t,x';\xi,\xi^{\vee}). $$
The solution of this system is given by
\begin{align}
 F(t,x';\xi,\xi^{\vee}) = e^{-tB}F(0,x';\xi,\xi^{\vee}) - e^{-tB}\int_0^t e^{\tau B}G(\tau,x';\xi,\xi^{\vee})\,d\tau.\label{eq:SolutionMatrixCoefficientsODE}
\end{align}
We claim that each coordinate on the \rhs\ is a finite sum of terms of the form
\begin{align}
 c(x_{\overline{I}};\xi,\xi^{\vee})x_I^\alpha e^{-z\cdot x_I}\label{eq:PolynomialCoefficientsSmoothMatrixCoefficientsForNewD}
\end{align}
with $I\subset\{1,\ldots,k\}$, $\alpha\in\N^I$ and $z\in\prod_{i\in I}E^{(i)}(\pi)$, and a function $c(x_{\overline{I}};\xi,\xi^{\vee})$ which is smooth in $x_{\overline{I}}$, linear in $\xi$ and $\xi^{\vee}$ and satisfies
\begin{align}
 |c(x_{\overline{I}};\xi,\xi^{\vee})| \leq q_1(\xi)q_2(\xi^{\vee})(1+|x_{\overline{I}}|)^de^{-(D+e_\ell)_{\overline{I}}\cdot x_{\overline{I}}}, \qquad \forall x_{\overline{I}}\in\R_{>0}^{\overline{I}},\xi\in V,\xi^{\vee}\in V^\vee,\label{eq:EstimateRemainderSmoothMatrixCoefficientsForNewD}
\end{align}
for some continuous seminorms $q_1,q_2$ and $d\geq0$. This will finish the proof.

Note that
\begin{align}
 e^{\tau B} = \sum_{j=1}^NP_j(\tau)e^{\zeta_j\tau}\label{eq:ExponentialOfB}
\end{align}
for some matrix-valued polynomials $P_j(\tau)$. Let us treat the two summands in \eqref{eq:SolutionMatrixCoefficientsODE} separately.

\begin{enumerate}[leftmargin=*]
\item The coordinates of $F(0,x';\xi,\xi^{\vee})$ are of the form $\xi^{\vee}(\pi(a_{x'})\pi(E_i)\xi)$, and by \eqref{eq:ExponentialOfB} multiplying with $e^{-tB}$ yields sums of terms of the form
\begin{align}
 t^me^{-\zeta_jt}\cdot\xi^{\vee}(\pi(a_{x'})\pi(E_i)\xi),\label{eq:CoordinatesFirstTermSolutionODE}
\end{align}
where $m\geq0$. The map $\xi\mapsto\pi(E_i)\xi$ is continuous, hence for every continuous seminorm $q_1'$ on $V$ there exists a continuous seminorm $q_1$ such that $q_1'(\pi(E_i)\xi)\leq q_1(\xi)$ for all $\xi\in V$. Applying the induction assumption to the matrix coefficients $\xi^{\vee}(\pi(a_{x'})\pi(E_i)\xi)$ then shows that the terms \eqref{eq:CoordinatesFirstTermSolutionODE} are finite sums of the form
\begin{align}
 t^me^{-\zeta_jt}\cdot c(x'_{\overline{J}};\xi,\xi^{\vee})(x'_J)^\alpha e^{-z'\cdot x'_J}\label{eq:Coordinates2FirstTermSolutionODE}
\end{align}
where $J\subset\{1,\ldots,k\}\setdifference\{\ell\}$, $\overline{J}=\{1,\ldots,k\}\setdifference(J\cup\{\ell\})$, $\alpha\in\N^J$, $z'\in\prod_{i\in J}E^{(i)}(\pi)$ and $c(x'_{\overline{J}};\xi,\xi^{\vee})$ is smooth in $x'_{\overline{J}}$, linear in $\xi$ and $\xi^{\vee}$, and satisfies
\begin{align*}
|c(x'_{\overline{J}};\xi,\xi^{\vee})| \leq q_1(\xi)q_2(\xi^{\vee})(1+|x'_{\overline{J}}|)^de^{-D_{\overline{J}}\cdot x'_{\overline{J}}}, \qquad \forall x'_{\overline{J}}\in\R_{>0}^{\overline{J}},\xi\in V,\xi^{\vee}\in V^\vee,
\end{align*}
for some continuous seminorms $q_1,q_2$ and $d\geq0$. Let $I=J\cup\{\ell\}$, $x=x'+te_\ell$ and $z=(\zeta_j,z')\in E^{(\ell)}(\pi)\times\prod_{i\in J}E^{(i)}(\pi)$, then \eqref{eq:Coordinates2FirstTermSolutionODE} is clearly of the form \eqref{eq:PolynomialCoefficientsSmoothMatrixCoefficientsForNewD} and $c(x_{\overline{I}};\xi,\xi^{\vee})$ satisfies \eqref{eq:EstimateRemainderSmoothMatrixCoefficientsForNewD} since $\ell\notin \overline{I}$.

\item The coordinates of $G(t,x';\xi,\xi^{\vee})$ are sums of the terms
\begin{align*}
\xi^{\vee}(\pi(e^{-tH})\pi(a_{x'})\pi(X_p)\pi(U_{p,i})\xi) = e^{-t}\xi^{\vee}(\pi(a_{x'})\pi(X_p)\pi(e^{-tH})\pi(U_{p,i})\xi).
\end{align*}
Note that $\pi(a_{x'})\pi(X_p)$ is a sum of terms of the form $e^{-\beta\cdot x'}\pi(X)\pi(a_{x'})$ with $X\in\mathfrak{u}_\ell$ and $\beta\in\N^{k-1}$. Hence, each coordinate of $G(t,x';\xi,\xi^{\vee})$ is a sum of terms of the form
\begin{align*}
e^{-t}e^{-\beta\cdot x'}\cdot(\pi^\vee(X)\xi^{\vee})\Big(\pi(e^{-tH})\pi(a_{x'})\pi(U)\xi\Big), \qquad U,X\in\mathcal{U}(\mathfrak{g}_k),\beta\in\N^{k-1}.
\end{align*}
Since $\xi\mapsto\pi(U)\xi$ and $\xi^{\vee}\mapsto\pi^\vee(X)\xi^{\vee}$ are continuous linear operators we can apply the induction assumption to the matrix coefficients $(\pi^\vee(X)\xi^{\vee})(\pi(e^{-tH})\pi(a_{x'})\pi(U)\xi)$ and find that the coordinates of $G(t,x';\xi,\xi^{\vee})$ are linear combinations of terms of the form
\begin{align*}
c(x'_{\overline{J}};\xi,\xi^{\vee})(x'_J)^\alpha e^{-z'\cdot x'_J}\cdot t^me^{-(z_\ell+1)\cdot t} \qquad \text{and} \qquad c(t,x'_{\overline{J}};\xi,\xi^{\vee})(x'_J)^\alpha e^{-z'\cdot x'_J}\cdot e^{-t}
\end{align*}
with $J\subset\{1,\ldots,k\}\setdifference\{\ell\}$, $\overline{J}=\{1,\ldots,k\}\setdifference(J\cup\{\ell\})$, $\alpha\in\N^J$, $m\in\N$, $z'\in\prod_{i\in J}E^{(i)}(\pi)$, $z_\ell\in E^{(\ell)}(\pi)$, where the coefficients $c(x'_{\overline{J}};\xi,\xi^{\vee})$ and $c(t,x'_{\overline{J}};\xi,\xi^{\vee})$ are smooth in $x'_{\overline{J}}$ and also $t$ in the second case, linear in $\xi$ and $\xi^{\vee}$, and satisfy estimates of the form
\begin{align}
 |c(x'_{\overline{J}};\xi,\xi^{\vee})| &\leq q_1(\xi)q_2(\xi^{\vee})(1+|x'_{\overline{J}}|)^de^{-D_{\overline{J}}\cdot x'_{\overline{J}}}\label{eq:EstimateRemainderSmoothMatrixCoefficientsForGcoordinates1},\\
 |c(t,x'_{\overline{J}};\xi,\xi^{\vee})| &\leq q_1(\xi)q_2(\xi^{\vee})(1+|x'_{\overline{J}}|)^de^{-D_{\overline{J}}\cdot x'_{\overline{J}}}(1+t)^de^{-D_\ell t}\label{eq:EstimateRemainderSmoothMatrixCoefficientsForGcoordinates2}
\end{align}
for all $x'_{\overline{J}}\in\R_{>0}^{\overline{J}}$, $t\geq0$ and $\xi\in V$, $\xi^{\vee}\in V^\vee$. Here we have attributed the factors $e^{-\beta_ix_i}$ either to $e^{-z'\cdot x'_J}$ in case $i\in J$, or to $c(x'_{\overline{J}};\xi,\xi^{\vee})$ and $c(t,x'_{\overline{J}};\xi,\xi^{\vee})$ in case $i\notin J$. Then by \eqref{eq:ExponentialOfB} the components of $e^{(\tau-t)B}G(\tau,x';\xi,\xi^{\vee})$ are linear combinations of terms of the form
\begin{align*}
 I_1 &= t^ne^{-\zeta_jt}\cdot c(x'_{\overline{J}};\xi,\xi^{\vee})(x'_J)^\alpha e^{-z'\cdot x'_J}\cdot\tau^me^{(\zeta_j-z_\ell-1)\cdot\tau} \qquad \mbox{and}\\
 I_2 &= t^ne^{-\zeta_jt}\cdot c(\tau,x'_{\overline{J}};\xi,\xi^{\vee})(x'_J)^\alpha e^{-z'\cdot x'_J}\cdot\tau^me^{(\zeta_j-1)\tau}.
\end{align*}
We treat the integration over $\tau$ for these two types separately.

For $I_1$ we know that
\begin{align*}
\int_0^t\tau^me^{(\zeta_j-z_\ell-1)\cdot\tau}\,d\tau = p(t)e^{(\zeta_j-z_\ell-1)t}+C
\end{align*}
for some polynomial $p(t)$ and a constant $C$. Hence
\begin{align*}
 \int_0^tI_1\,d\tau &= p(t)t^ne^{-(z_\ell+1)t}c(x'_{\overline{J}};\xi,\xi^{\vee})(x'_J)^\alpha e^{-z'\cdot x'_J}\\&
\quad + Ct^ne^{-\zeta_jt}c(x'_{\overline{J}};\xi,\xi^{\vee})(x'_J)^\alpha e^{-z'\cdot x'_J}.
\end{align*}
Both summands are linear combinations of terms of the form \eqref{eq:PolynomialCoefficientsSmoothMatrixCoefficientsForNewD} for $I=J\cup\{\ell\}$ since both $z_\ell+1,\zeta_j\in E^{(\ell)}(\pi)$, and the corresponding estimates \eqref{eq:EstimateRemainderSmoothMatrixCoefficientsForNewD} follow from \eqref{eq:EstimateRemainderSmoothMatrixCoefficientsForGcoordinates1} since $\ell\notin \overline{I}$.

For $I_2$ we first assume $D_\ell>\Re\zeta_j-1$. Then rewrite
$$ \int_0^tI_2\,d\tau = \int_0^\infty I_2\,d\tau - \int_t^\infty I_2\,d\tau. $$
The first integral
$$ \int_0^\infty I_2\,d\tau = t^ne^{-\zeta_jt}(x'_J)^\alpha e^{-z'\cdot x'_J} \int_0^\infty c(\tau,x'_{\overline{J}};\xi,\xi^{\vee})\tau^me^{(\zeta_j-1)\tau}\,d\tau $$
is of the form \eqref{eq:PolynomialCoefficientsSmoothMatrixCoefficientsForNewD} for $I=J\cup\{\ell\}$. The estimate \eqref{eq:EstimateRemainderSmoothMatrixCoefficientsForNewD} follows from \eqref{eq:EstimateRemainderSmoothMatrixCoefficientsForGcoordinates2} since
$|c(\tau,x'_{\overline{J}};\xi,\xi^{\vee})\tau^me^{(\zeta_j-1)\tau}|$ is bounded by
\begin{align*}
 q_1(\xi)q_2(\xi^{\vee})(1+|x'_{\overline{J}}|)^de^{-D_{\overline{J}}\cdot x'_{\overline{J}}}\cdot \tau^m(1+\tau)^de^{(\Re\zeta_j-D_\ell-1)\tau},
\end{align*}
which is integrable over $\R_{>0}$ since $D_\ell>\Re\zeta_j-1$.

The second integral
$$ \int_t^\infty I_2\,d\tau = (x'_J)^\alpha e^{-z'\cdot x'_J}\cdot t^ne^{-\zeta_jt}\int_t^\infty c(\tau,x'_{\overline{J}};\xi,\xi^{\vee})\tau^me^{(\zeta_j-1)\tau}\,d\tau $$
is of the form \eqref{eq:PolynomialCoefficientsSmoothMatrixCoefficientsForNewD} for $I=J$. The integral is by the previous estimation smooth in $t$ and $x'_{\overline{J}}$, and the estimate \eqref{eq:EstimateRemainderSmoothMatrixCoefficientsForNewD} follows from
\begin{align*}
 & \left|t^ne^{-\zeta_jt}\int_t^\infty c(\tau,x'_{\overline{J}};\xi,\xi^{\vee})\tau^me^{(\zeta_j-1)\tau}\,d\tau\right|\\
 &\leq{} q_1(\xi)q_2(\xi^{\vee})(1+|x'_{\overline{J}}|)^de^{-D_{\overline{J}}\cdot x'_{\overline{J}}}\cdot t^ne^{-\Re\zeta_jt}\int_t^\infty\tau^m(1+\tau)^de^{(\Re\zeta_j-D_\ell-1)\tau}\,d\tau\\
 &\leq{} \text{constant}\times q_1(\xi)q_2(\xi^{\vee})(1+|x'_{\overline{J}}|)^de^{-D_{\overline{J}}\cdot x'_{\overline{J}}}\cdot (1+t)^{m+d+n}e^{-(D_\ell+1)t}.
\end{align*}

It remains to consider $I_2$ when $D_\ell\leq\Re\zeta_j-1$. Then
$$ \int_0^tI_2\,d\tau = (x'_J)^\alpha e^{-z'\cdot x'_J}\cdot t^ne^{-\zeta_jt}\int_0^t c(\tau,x'_{\overline{J}};\xi,\xi^{\vee})\tau^me^{(\zeta_j-1)\tau}\,d\tau $$
is of the form \eqref{eq:PolynomialCoefficientsSmoothMatrixCoefficientsForNewD} for $I=J$. The estimate \eqref{eq:EstimateRemainderSmoothMatrixCoefficientsForNewD} now follows from \eqref{eq:EstimateRemainderSmoothMatrixCoefficientsForGcoordinates2} since
\begin{align*}
 & \left|t^ne^{-\zeta_jt}\int_0^tc(\tau,x'_{\overline{J}};\xi,\xi^{\vee})\tau^me^{(\zeta_j-1)\tau}\,d\tau\right|\\
 &\leq{} q_1(\xi)q_2(\xi^{\vee})(1+|x'_{\overline{J}}|)^de^{-D_{\overline{J}}\cdot x'_{\overline{J}}}\cdot t^ne^{-(\Re\zeta_j)t}\int_0^t\tau^m(1+\tau)^de^{(\Re\zeta_j-D_\ell-1)\tau}\,d\tau\\
 &\leq{} q_1(\xi)q_2(\xi^{\vee})(1+|x'_{\overline{J}}|)^de^{-D_{\overline{J}}\cdot x'_{\overline{J}}}\cdot t^{n+1}(1+t)^de^{-(D_\ell+1)t}.
\end{align*}
\end{enumerate}
The proof is complete.
\end{proof}

\subsection{The Weil symbol}
Let $F$ be a local field.
We usually denote by $\psi$ a nontrivial additive character of $F$. Then $\gamma_{\psi}$ is the normalized Weil factor (\cite[\S~14]{We}), $\gamma_{\psi}(\blank)^4=1$. For $a\in F^*$, $\psi_a$ denotes the character $\psi_a(x)=\psi(ax)$ and
$\gamma_{\psi,a}=\gamma_{\psi_a}$. We have the following formulas (see the appendix of \cite{Rao},
$\gamma_{\psi}(a)$ is $\gamma_{F}(a,\psi)$ in his notation):
\begin{align}\label{eq:Weil factor identities}
\gamma_{\psi}(xy)=\gamma_{\psi}(x)\gamma_{\psi}(y)(x,y)_2 ,\quad \gamma_{\psi}(x^2)=1,\quad \gamma_{\psi}^{-1}=\gamma_{\psi^{-1}},\quad\gamma_{\psi,a}(x)=(a,x)_2\gamma_{\psi}(x).
\end{align}

\section{Metaplectic Hironaka Theory}\label{section:Metaplectic Hironaka Theory}

\subsection{Preliminaries}\label{subsection:unramified representations}
In this section $F$ is a local non-Archimedean field and $G_n=G_n(F)$. Then $\mathcal{O}$, $\varpi$, $K$ and $q$
are defined as in \S~\ref{subsection:The global metaplectic cover}. Assume $q>3$ and $|2|=1$ in the field. We normalize the Haar measure $dg$ on $G_n$ such that $\mathrm{vol}(K)=1$.
Let $\kappa:K\rightarrow\widetilde{G}_n$ be the canonical splitting of $K$. It coincides with $\mathfrak{s}$ on
$T_n\cap K$, $\mathfrak{W}$ and $N_n\cap K$ ($\mathfrak{W}$ was defined
in \S~\ref{subsection:The metaplectic cover G_n}, and is a subset of $K$). We use this freely, e.g., it implies that $\mathfrak{s}$ is a splitting of $\mathfrak{W}$.

\subsection{Metaplectic review of Casselman's theory}\label{section:Casselman's basis and a result of Hironaka}
In this section we describe a metaplectic analog of several results of Casselman \cite{CS1}, needed in the following section to prove a result of Hironaka \cite{Hir}. We define an analog of Casselman's basis for the Iwahori fixed subspace of an unramified principal series. Following \cite{Hir}, we use this basis to deduce a general formula for functionals on this space. The bulk part of this section has already appeared in the works of Chinta and Offen \cite{COf} and McNamara \cite{McNamara}, specific references are given below.

In order to apply the ideas of \cite{CS1} we need several auxiliary results.
Let $\mathcal{I}<K$ be the Iwahori subgroup which is compatible with $B_n$. The section $\kappa$ is in particular a splitting of $\mathcal{I}$.
The Iwahori factorization of $\mathcal{I}$ reads $\mathcal{I}=\mathcal{I}^+\mathcal{I}^0\mathcal{I}^-$ where
$\mathcal{I}^+=\mathcal{I}\cap N_n$, $\mathcal{I}^0=\mathcal{I}\cap T_n$ and $\mathcal{I}^-=\mathcal{I}\cap N_n^-$.
Note that
\begin{align}\label{eq:reformulation of Iwahori}
\mathcal{I}=(\mathcal{I}\cap B_n)\mathcal{I}^-=(K\cap B_n)\mathcal{I}^-.
\end{align}

For a representation $\pi\in\mathrm{Alg}\widetilde{G}_n$ and a compact open subgroup $K_0<K$, the space of $\kappa(K_0)$-invariants is denoted $\pi^{K_0}$ and the projection on
$\pi^{K_0}$ is denoted $\mathscr{P}_{K_0}$. For any vector $\xi$ in the space of $\pi$,
\begin{align*}
\mathscr{P}_{K_0}(\xi)=\int_{K_0}\pi(\kappa(k))\xi\, dk,
\end{align*}
where the measure is normalized such that $\mathrm{vol}(K_0)=1$.

Let
\begin{align*}
T_n^-=\{t\in T_n:|\alpha(t)|\leq1,\quad\forall\alpha\in\Delta_{G_n}\}.
\end{align*}
The following is a version of Jacquet's First Lemma.
\begin{claim}\label{claim:Jacquet's First Lemma}
Let $\pi\in \mathrm{Alg}\widetilde{G}_n$ be admissible. For any $\xi$ in the space of $\pi^{\mathcal{I}^0\mathcal{I}^-}$ and
$t\in C_{\widetilde{T}_n}\cap\widetilde{T}_n^-$, 
\begin{align*}
&\mathscr{P}_{\mathcal{I}}(\pi(t)\xi)=\mathscr{P}_{\mathcal{I}^+}(\pi(t)\xi),\qquad
\mathscr{P}_{\mathcal{I}}(\pi(t)\xi)-\pi(t)\xi\in\pi(N_n).
\end{align*}
\end{claim}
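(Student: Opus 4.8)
The plan is to reduce everything to the Iwahori factorization $\mathcal{I}=\mathcal{I}^+\mathcal{I}^0\mathcal{I}^-$ together with the fact that $\kappa$ is an honest splitting of $K\supseteq\mathcal{I}$. Normalizing the Haar measures so that $\mathcal{I}^+$, $\mathcal{I}^0$, $\mathcal{I}^-$ and $\mathcal{I}$ all have volume $1$, the factorization gives $\int_{\mathcal{I}}=\int_{\mathcal{I}^+}\int_{\mathcal{I}^0}\int_{\mathcal{I}^-}$, and since $\kappa(uhv)=\kappa(u)\kappa(h)\kappa(v)$ on $\mathcal{I}$ this yields $\mathscr{P}_{\mathcal{I}}=\mathscr{P}_{\mathcal{I}^+}\circ\mathscr{P}_{\mathcal{I}^0}\circ\mathscr{P}_{\mathcal{I}^-}$. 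Hence the first identity will follow once we check that $\pi(t)\xi$ is fixed by $\kappa(\mathcal{I}^-)$ and by $\kappa(\mathcal{I}^0)$, so that $\mathscr{P}_{\mathcal{I}^0}\mathscr{P}_{\mathcal{I}^-}$ leaves it unchanged.

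For $\kappa(\mathcal{I}^0)$ I would invoke the hypothesis $t\in C_{\widetilde{T}_n}$: since $\kappa(\mathcal{I}^0)\subset\widetilde{T}_n$, it commutes with $t$, so $\pi(\kappa(h))\pi(t)\xi=\pi(t)\pi(\kappa(h))\xi=\pi(t)\xi$ for $h\in\mathcal{I}^0$ (alternatively, $\mathcal{I}^0$ is pro-$p$ with $p$ odd and the commutators $[\kappa(h),t]$ lie in $\mu_2$, hence vanish). For $\kappa(\mathcal{I}^-)$, given $v\in\mathcal{I}^-$ write $\pi(\kappa(v))\pi(t)=\pi(t)\pi(t^{-1}\kappa(v)t)$. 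Since $p(t)\in T_n^-$, conjugation by $p(t)^{-1}$ only shrinks the sub-diagonal entries of $v$, so $\rconj{p(t)^{-1}}v\in\mathcal{I}^-$; and because $t^{-1}\kappa(v)t$ and $\kappa(\rconj{p(t)^{-1}}v)$ lie over the same element of $G_n$, they differ by a scalar in $\mu_2$. The map assigning to $v$ this scalar is a homomorphism $\mathcal{I}^-\to\mu_2$ — this uses only that $\kappa$ is multiplicative on $K$, that $v\mapsto\rconj{p(t)^{-1}}v$ is a group endomorphism of $\mathcal{I}^-$, and that $\mu_2$ is central — and it is trivial because $\mathcal{I}^-$ is a pro-$p$ group with $p$ odd. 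Therefore $t^{-1}\kappa(v)t=\kappa(\rconj{p(t)^{-1}}v)\in\kappa(\mathcal{I}^-)$ and $\pi(\kappa(v))\pi(t)\xi=\pi(t)\pi(\kappa(\rconj{p(t)^{-1}}v))\xi=\pi(t)\xi$, which proves the first assertion.

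For the second assertion I would use smoothness of $\pi$: $\pi(t)\xi$ is fixed by some compact open subgroup, so by the first assertion $\mathscr{P}_{\mathcal{I}}(\pi(t)\xi)=\mathscr{P}_{\mathcal{I}^+}(\pi(t)\xi)$ is a finite average of the vectors $\pi(\kappa(u))\pi(t)\xi$ with $u\in\mathcal{I}^+\subset N_n\cap K$. There $\kappa=\mathfrak{s}$, and since $\pi(N_n)$ — the kernel of $\pi\to\pi_{N_n}$ — is a subspace containing $\pi(\mathfrak{s}(u))\eta-\eta$ for every $u\in N_n$ and every vector $\eta$, each $\pi(\kappa(u))\pi(t)\xi-\pi(t)\xi$ lies in $\pi(N_n)$; hence so does the average, i.e.\ $\mathscr{P}_{\mathcal{I}}(\pi(t)\xi)-\pi(t)\xi\in\pi(N_n)$.

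The point I expect to require the most care is the cover identity $t^{-1}\kappa(v)t=\kappa(\rconj{p(t)^{-1}}v)$ for $v\in\mathcal{I}^-$: since $p(t)$ is generally not in $K$, this is not formal, and one genuinely needs the pro-$p$ triviality argument above (equivalently, one extracts it from the block-compatibility of the Banks--Levy--Sepanski cocycle, in the spirit of Chinta--Offen \cite{COf} and McNamara \cite{McNamara}). Everything else — the Iwahori factorization of the Haar integral and the contraction property of $T_n^-$ on $\mathcal{I}^-$ — is exactly as in the linear case.
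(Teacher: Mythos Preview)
Your argument is correct and follows essentially the same route as the paper: factor the Iwahori integral via $\mathcal{I}=\mathcal{I}^+\mathcal{I}^0\mathcal{I}^-$, use that $t$ commutes with $\kappa(\mathcal{I}^0)$ and that $\rconj{t^{-1}}\kappa(\mathcal{I}^-)\subset\kappa(\mathcal{I}^-)$, then deduce the second assertion from the fact that averaging over a compact subgroup of $N_n$ does not change the image in $\pi_{N_n}$ (what the paper cites as the Jacquet--Langlands lemma). Your pro-$p$ justification of the inclusion $\rconj{t^{-1}}\kappa(\mathcal{I}^-)\subset\kappa(\mathcal{I}^-)$ is a welcome elaboration of a step the paper simply asserts.

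One small correction: your parenthetical alternative ``$\mathcal{I}^0$ is pro-$p$'' is false, since $\mathcal{I}^0=T_n\cap K\cong(\mathcal{O}^*)^n$ surjects onto $(\mathbb{F}_q^*)^n$ and hence admits nontrivial homomorphisms to $\mu_2$. Your primary argument for that step, using $t\in C_{\widetilde{T}_n}$, is the correct one and is exactly what the paper uses.
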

\begin{proof}
Write the integral $\mathscr{P}_{\mathcal{I}}$ using the Iwahori decomposition $\mathcal{I}=\mathcal{I}^+\mathcal{I}^0\mathcal{I}^-$. Then
\begin{align*}
\mathscr{P}_{\mathcal{I}}(\pi(t)\xi)&=\int_{\mathcal{I}^+}\int_{\mathcal{I}^0}\int_{\mathcal{I}^-}\pi(\kappa(x)\kappa(y)\kappa(z)t)\xi\, dz\, dy \, dx\\
&=\int_{\mathcal{I}^+}\int_{\mathcal{I}^0}\int_{\mathcal{I}^-}\pi(\kappa(x)t\kappa(y)\ \rconj{t^{-1}}\kappa(z))\xi\, dz\, dy \, dx\\
&=\mathscr{P}_{\mathcal{I}^+}(\pi(t)\xi).
\end{align*}
Here we used the facts that
 $\rconj{t^{-1}}\kappa(\mathcal{I}^-)<\kappa(\mathcal{I}^-)$, $t$ commutes with $\kappa(\mathcal{I}^0)$ and $\xi\in\pi^{\mathcal{I}^0\mathcal{I}^-}$.
Then the Jacquet-Langlands lemma (see e.g., \cite[2.33]{BZ1}) implies that
$\mathscr{P}_{\mathcal{I}^+}(\pi(t)\xi)$ and $\pi(t)\xi$ have the same image in $\pi_{N_n}$.
\end{proof}

The following claim is a weak analog of \cite[Proposition~2.4]{CS1}.
\begin{claim}\label{claim:Iwahori invariants and Jacquet kernel}
Let $\pi\in\mathrm{Alg}{\widetilde{G}_n}$ be admissible. The representation $\pi^{\mathcal{I}}$ injects as a vector space into $\pi_{N_n}^{\mathcal{I}_0}$.
\end{claim}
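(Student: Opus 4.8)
The plan is to mimic the argument for \cite[Proposition~2.4]{CS1} in the metaplectic setting, using Claim~\ref{claim:Jacquet's First Lemma} as the key input. The map in question is the composition $\pi^{\mathcal{I}}\hookrightarrow\pi\twoheadrightarrow\pi_{N_n}$, followed by the observation that the image lands in $\pi_{N_n}^{\mathcal{I}^0}$ (this is immediate: $\mathcal{I}^0=\mathcal{I}\cap T_n$ normalizes $N_n$ and fixes every vector of $\pi^{\mathcal{I}}$, so it fixes the image in the Jacquet module, the action there being the one inherited from $\pi$). So the whole content is \emph{injectivity}: if $\xi\in\pi^{\mathcal{I}}$ and $\xi\in\pi(N_n)$, then $\xi=0$.

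First I would reduce to a statement about a single torus translate. Since $\xi\in\pi(N_n)$, there is a compact open subgroup $N_0<N_n$ with $\int_{N_0}\pi(\kappa(u))\xi\,du=0$. Choose $t\in C_{\widetilde{T}_n}\cap\widetilde{T}_n^-$ deep enough that $\rconj{t}(\mathcal{I}^+)\supset N_0$ (possible because conjugation by such $t$ expands $\mathcal{I}^+=\mathcal{I}\cap N_n$ and because $C_{\widetilde{G}_n}$ surjects onto a finite-index subgroup of $C_{G_n}$, which already acts on $N_n$ with arbitrarily expanding conjugation on its positive part—here one uses $C_{\widetilde{T}_n}=\widetilde{C}_{T_n}^{\mathe}$). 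Then $\mathscr{P}_{\mathcal{I}^+}(\pi(t)\xi)$ is, up to normalization of measures, an average of $\pi(t)\xi$ over $\rconj{t}(\mathcal{I}^+)\supset N_0$; since $\xi$ is annihilated by the $N_0$-average and $\pi(t)$ commutes appropriately with the relevant unipotent averaging after conjugating, one gets $\mathscr{P}_{\mathcal{I}^+}(\pi(t)\xi)=0$. I would spell this out carefully: writing $\mathscr{P}_{\mathcal{I}^+}(\pi(t)\xi)=\int_{\mathcal{I}^+}\pi(t)\pi(\rconj{t^{-1}}\kappa(x))\xi\,dx=\pi(t)\int_{\rconj{t^{-1}}\mathcal{I}^+}\pi(\kappa(u))\xi\,du$, and $\rconj{t^{-1}}\mathcal{I}^+\subset N_0$ forces the inner integral (hence the whole thing) to vanish, because $\xi$ is already killed by the larger average over $N_0$ and by smoothness this descends to any smaller compact open subgroup.

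Now apply Claim~\ref{claim:Jacquet's First Lemma}: since $\xi\in\pi^{\mathcal{I}}\subset\pi^{\mathcal{I}^0\mathcal{I}^-}$, we have $\mathscr{P}_{\mathcal{I}}(\pi(t)\xi)=\mathscr{P}_{\mathcal{I}^+}(\pi(t)\xi)=0$. On the other hand $\pi(t)\xi\in\pi^{\mathcal{I}}$ as well (because $t\in C_{\widetilde{G}_n}$ normalizes $\mathcal{I}$—indeed centralizes $p(\mathcal{I})$ modulo the center—so $\pi(t)$ preserves the Iwahori-fixed space), and therefore $\mathscr{P}_{\mathcal{I}}$ acts as the identity on it: $\mathscr{P}_{\mathcal{I}}(\pi(t)\xi)=\pi(t)\xi$. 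Combining the two gives $\pi(t)\xi=0$, hence $\xi=0$ since $\pi(t)$ is invertible. This proves injectivity and hence the claim.

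The main obstacle I anticipate is the bookkeeping around the center of the cover and the splitting $\kappa$: one must make sure that $t\in C_{\widetilde{T}_n}\cap\widetilde{T}_n^-$ genuinely centralizes $\kappa(\mathcal{I})$ (so that $\pi(t)$ preserves $\pi^{\mathcal{I}}$ and commutes past the $\mathcal{I}^+$-integration cleanly), and that such $t$ with $\rconj{t^{-1}}\mathcal{I}^+$ arbitrarily small actually exist—this is where $C_{\widetilde{G}_n}=\widetilde{C}_{G_n}^{\mathe}$ and the compatibility of $\kappa$ with $\mathfrak{s}$ on $T_n\cap K$, $N_n\cap K$ enter, guaranteeing that the cocycle contributes nothing and conjugation in $\widetilde{G}_n$ mirrors conjugation in $G_n$ on these subgroups. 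Once those compatibilities are in hand (all recorded in \S\S~\ref{subsection:The metaplectic cover G_n}--\ref{subsection:unramified representations}), the argument is formally identical to Casselman's.
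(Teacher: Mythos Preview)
The overall strategy matches the paper's, but there is a genuine gap at the final step. You want to conclude from $\mathscr{P}_{\mathcal{I}}(\pi(t)\xi)=0$ that $\xi=0$ by asserting $\pi(t)\xi\in\pi^{\mathcal{I}}$, so that $\mathscr{P}_{\mathcal{I}}$ acts as the identity on it. This fails: an element $t\in C_{\widetilde{T}_n}\cap\widetilde{T}_n^-$ is central in $\widetilde{T}_n$, not in $\widetilde{G}_n$, and $p(t)$ is a genuine (non-scalar) torus element which does \emph{not} normalize $\mathcal{I}$ (for instance $p(t)=\diag(\varpi^2,1,\ldots,1)$ conjugates the $(2,1)$-entry of $\mathcal{I}$ from $\varpi\mathcal{O}$ to $\varpi^{-1}\mathcal{O}$). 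If instead you insist on $t\in C_{\widetilde{G}_n}$, then $p(t)$ is a scalar matrix, conjugation by it is trivial on $\mathcal{I}^+$, and you can never arrange the needed containment with $N_0$. (Your formula ``$C_{\widetilde{T}_n}=\widetilde{C}_{T_n}^{\mathe}$'' conflates these two centers; the paper's statement is $C_{\widetilde{G}_n}=\widetilde{C}_{G_n}^{\mathe}$.) This is precisely the delicate point: the paper does not claim $\pi(t)$ preserves $\pi^{\mathcal{I}}$, but invokes \cite[Proposition~3.1.4]{Savin4} to assert that the endomorphism $\xi\mapsto\mathscr{P}_{\mathcal{I}}(\pi(t)\xi)$ of the finite-dimensional space $\pi^{\mathcal{I}}$ is \emph{invertible}---an Iwahori--Hecke algebra statement, not a normalizer statement. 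That is what replaces your last paragraph.

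A secondary issue: your containment bookkeeping is inverted. For $t\in\widetilde{T}_n^-$ conjugation by $p(t)$ \emph{contracts} $\mathcal{I}^+$, so $\rconj{p(t)^{-1}}\mathcal{I}^+\supset\mathcal{I}^+$, and one chooses $t$ so that $N_0\subset\rconj{p(t)^{-1}}\mathcal{I}^+$. The vanishing of $\int_{N_0}\pi(\mathfrak{s}(v))\xi\,dv$ then propagates by coset decomposition to the \emph{larger} group $\rconj{p(t)^{-1}}\mathcal{I}^+$, the opposite of ``by smoothness this descends to any smaller compact open subgroup.'' This part is easily repaired; the real obstruction is the first one.
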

\begin{proof}
We have to prove $\pi^{\mathcal{I}}\cap \pi(N_n)=0$.
Let $\xi$ belong to the space of $\pi^{\mathcal{I}}\cap \pi(N_n)$. We prove $\xi=0$.
According to the Jacquet-Langlands lemma (\cite[2.33]{BZ1}) there is a compact open subgroup $\mathcal{N}<N_n$ with
\begin{align*}
\int_{\mathcal{N}}\pi(v)\xi\, dv=0.
\end{align*}
Let $t\in C_{\widetilde{T}_n}\cap \widetilde{T}_n^-$ be such that $\mathcal{N}<\rconj{p(t)^{-1}}\mathcal{I}^+$. Then by Claim~\ref{claim:Jacquet's First Lemma},
\begin{align*}
\mathscr{P}_{\mathcal{I}}(\pi(t)\xi)&=\int_{\mathcal{I}^+}\pi(\kappa(x)t)\xi\, dx=
\pi(t)\int_{\mathcal{I}^+}\pi(\mathfrak{s}(\rconj{p(t)^{-1}}x))\xi\, dx=0,
\end{align*}
by our choice of $t$. However, by \cite[Proposition~3.1.4]{Savin4} the mapping
$\xi\mapsto\mathscr{P}_{\mathcal{I}}(\pi(t)\xi)$ on the finite-dimensional space $\pi^{\mathcal{I}}$ is invertible
(this mapping is essentially $\pi(T_{p(t)})$ in the notation of \textit{loc. cit.}).
\end{proof}
\begin{remark}\label{remark:analog of Casselman 2.4 is weaker}
The original claim in \cite{CS1} also included surjectivity. In the metaplectic case this does not hold in general, e.g.,
when $\pi$ is a genuine unramified principal series. 
\end{remark}

We recall the parametrization of genuine unramified principal series representations of $\widetilde{G}_n$.
Let $\widetilde{T}_{n,*}$ be the centralizer of $\widetilde{T}_n\cap\kappa(K)$ in $\widetilde{T}_n$,
it is a maximal abelian subgroup. Explicitly,
\begin{align*}
T_{n,*}=\{\diag(t_1z,\ldots,t_nz):t_i\in F^{*2}\mathcal{O}^*,z\in F^{*\mathe}\}
\end{align*}
($\mathe$ was defined in \S~\ref{subsection:The metaplectic cover G_n}). Assume
$\underline{s}\in\C^n$. Let $\gamma:F^*\rightarrow\C$ be a function satisfying $\gamma(zz')=\gamma(z)\gamma(z')\sigma(zI_n,z'I_n)$ for $z,z'\in F^{*\mathe}$, and
$\gamma|_{F^{*2}}=\gamma|_{\mathcal{O}^*}=1$. The genuine unramified character $\chi=\chi_{\underline{s},\gamma}$ of $\widetilde{T}_{n,*}$ is defined by
\begin{align*}
\chi(\epsilon\mathfrak{s}(\diag(t_1,\ldots,t_n))\mathfrak{s}(zI_n))=
\epsilon\gamma(z)|z|^{s_1+\ldots+s_n}\prod_{i=1}^{n}|t_i|^{s_i},\quad \forall \epsilon\in\mu_2, t_i\in F^{*2}\mathcal{O}^*,z\in F^{*\mathe}.
\end{align*}
Put $B_{n,*}=T_{n,*}N_n$, it is a closed subgroup of $G_n$, open and normal in $B_n$. Then
\begin{align*}
\mathrm{I}(\chi)=\Ind_{\widetilde{B}_{n,*}}^{\widetilde{G}_n}(\delta^{1/2}_{B_n}\chi)
\end{align*}
is a genuine unramified principal series representation, i.e., $\mathrm{I}(\chi)^{K}$ is one dimensional, and any such representation takes this form, where $s_i$ is unique modulo $\frac{\pi i}{\log q}\Z$ (as opposed to $\frac{2\pi i}{\log q}\Z$ in $G_n$). The dependency on $\gamma$ will not appear in the formulas and in fact when $n$ is even, $\gamma$ is irrelevant. For a general discussion of these representation for $r$-fold covers of $G_n$ see \cite{COf}.

We regard the elements in the space of $\mathrm{I}(\chi)$ as complex-valued functions. 
The unramified normalized element of $\mathrm{I}(\chi)$ is the unique function $f_{\chi}\in\mathrm{I}(\chi)$ such that
$f_{\chi}(g\kappa(k))=f_{\chi}(g)$ for any $g\in\widetilde{G}_n$ and $k\in K$, and $f_{\chi}(\mathfrak{s}(I_n))=1$. We will use the observation
of \cite{KP} (Lemma~I.1.3, see also \cite{McNamara} Lemma~2) that
for any unramified $f_{\chi}$,
\begin{align}\label{eq:unramified vanishes on torus not in T_n*}
f_{\chi}(\mathfrak{s}(t))=0,\qquad\forall t\in T_n\setdifference T_{n,*}.
\end{align}


Let $\mathcal{S}^{\mathrm{gen}}(\widetilde{G}_n)$ denote the subspace of $\mathcal{S}(\widetilde{G}_n)$ consisting of genuine functions. Given an open subset $K_0\subset K$, one can lift any $f\in\mathcal{S}(K_0)$ to $f^{\mathrm{gen}}\in \mathcal{S}^{\mathrm{gen}}(\widetilde{G}_n)$ by putting
\begin{align*}
f^{\mathrm{gen}}(g)=\begin{cases}\epsilon f(k_0)&g=\epsilon\kappa(k_0),\epsilon\in \mu_2, k_0\in K_0,\\0&g\notin\widetilde{K}_0.
\end{cases}
\end{align*}
For example, for any subset $X\subset G_n$ let $\mathrm{ch}_X$ be the characteristic function of $X$, then $\mathrm{ch}^{\mathrm{gen}}_{K}$ vanishes off $\widetilde{K}$ and is $1$ on $\kappa(K)$.

The group $\widetilde{G}_n$ acts on the right on $\mathcal{S}^{\mathrm{gen}}(\widetilde{G}_n)$.
We have a surjection $P_{\chi}:\mathcal{S}^{\mathrm{gen}}(\widetilde{G}_n)\rightarrow\mathrm{I}(\chi)$ of
$\widetilde{G}_n$-representations
\begin{align}\label{eq:P chi surjection}
P_{\chi}(f)(g)&=
\int_{B_{n,*}}f(\mathfrak{s}(b_*)g)\delta^{1/2}_{B_n}(b_*)\chi^{-1}(\mathfrak{s}(b_*))\, d_lb_*
\\\nonumber&=\int_{T_{n,*}}\int_{N_{n}}f(\mathfrak{s}(tn)g)\delta^{1/2}_{B_n}(t)\chi^{-1}(\mathfrak{s}(t))\, dn\, dt.
\end{align}
Here $d_lb_*$ is the left Haar measure normalized by requiring $\mathrm{vol}(B_{n,*}\cap K)=1$, this is consistent with the normalization in the non-metaplectic setting because $B_{n,*}\cap K=B_n\cap K$. Note that $\mathfrak{s}$ is a splitting of $B_{n,*}$.

The unramified normalized function of $\mathrm{I}(\chi)$ is then
$\varphi_{K,\chi}=P_{\chi}(\mathrm{ch}^{\mathrm{gen}}_{K})$.

For $w\in W$, let $\mathfrak{w}\in \mathfrak{W}$ be a representative of $w$ and define $\varphi_{w,\chi}=P_{\chi}(\mathrm{ch}^{\mathrm{gen}}_{\mathcal{I}\mathfrak{w}\mathcal{I}})$. This is the function supported on
$\widetilde{B}_{n,*}\kappa(\mathfrak{w}\mathcal{I})=\widetilde{B}_{n,*}\mathfrak{s}(\mathfrak{w})\kappa(\mathcal{I})$, which is right-invariant by $\kappa(\mathcal{I})$ and
$\varphi_{w,\chi}(\mathfrak{s}(\mathfrak{w}))=1$. Then $\{\varphi_{w,\chi}\}_{w\in W}$ is a basis of
$\mathrm{I}(\chi)^{\mathcal{I}}$. In particular
\begin{align}\label{eq:expansion of phi K using phi w}
\varphi_{K,\chi}=\sum_w\varphi_{w,\chi}.
\end{align}

We introduce the intertwining operators. If $\alpha\in\Sigma_{G_n}$,
let $N_{\alpha}<G_n$ denote the root subgroup of $\alpha$.
For any $w\in W$, let
\begin{align*}
T_{w}=T_{w,\chi}:\mathrm{I}(\chi)\rightarrow\mathrm{I}(\rconj{w}\chi)
\end{align*}
be the standard intertwining operator defined by
\begin{align*}
T_wf_{\chi}(g)=\int_{N_n(w)}f_{\chi}(\mathfrak{s}(\mathfrak{w}^{-1}v)g)\, dv,
\end{align*}
where $\mathfrak{w}\in\mathfrak{W}$ is the representative of $w$ and
\begin{align*}
N_n(w)=\prod_{\setof{\alpha>0}{w^{-1}\alpha<0}}N_{\alpha}<N_n.
\end{align*}
Given that $\chi$ lies in a certain cone, the integral is absolutely convergent, otherwise it is
defined by meromorphic continuation in $\chi$ (i.e., in the parameter $\underline{s}$).

Henceforth assume $\chi$ is regular, i.e., $\rconj{w}\chi\ne\chi$ for any $e\ne w\in W$. In this case (see \cite[Theorem~4]{McNamara})
\begin{align}\label{eq:one dim for regular hom}
\Dim\Hom_{\widetilde{G}_n}(\mathrm{I}(\chi),\mathrm{I}(\rconj{w}\chi))=1,\qquad\forall w\in W.
\end{align}

We recall the Gindikin-Karpelevich formula (\cite{CS1} Theorem~3.1), whose extension to $\widetilde{G}_n$ was proved in \cite{KP}. For any root $\alpha$, let
$\mathcal{R}_{\alpha}$ be the group generated by $N_{\alpha}$ and $N_{-\alpha}$ ($\mathcal{R}_{\alpha}\isomorphic\SL_2$), $a_{\alpha}'$ be the embedding of $\diag(\varpi^2,\varpi^{-2})$ in $\mathcal{R}_{\alpha}$
and put $a_{\alpha}=\mathfrak{s}(a_{\alpha}')$.
Define for $\alpha$ and $w\in W$,
\begin{align*}
&c_{\alpha}(\chi)=\frac{1-q^{-1}\chi(a_{\alpha})}{1-\chi(a_{\alpha})}, \qquad c_w(\chi)=\prod_{\alpha>0:w\alpha<0}c_{\alpha}(\chi).
\end{align*}
Then
\begin{align}\label{eq:Gindikin-Karpelevich formula}
T_w\varphi_{K,\chi}=c_w(\chi)\varphi_{K,\rconj{w}\chi}.
\end{align}
This implies the analog of \cite[Proposition~3.5]{Cs}:
\begin{claim}\label{claim:T_w isomorphism}
$T_{w}(=T_{w,\chi})$ is an isomorphism if and only if $c_{w^{-1}}(\rconj{w}\chi)c_w(\chi)\ne0$. Furthermore,
$\mathrm{I}(\chi)$ is irreducible if and only if $T_{w_0}$ is an isomorphism.
\end{claim}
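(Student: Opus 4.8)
The plan is to mimic the proof of \cite[Proposition~3.5]{Cs} (Casselman), using the composition law for intertwining operators together with the Gindikin--Karpelevich formula \eqref{eq:Gindikin-Karpelevich formula} and the one-dimensionality \eqref{eq:one dim for regular hom}. First I would establish the multiplicativity $T_{w,\rconj{w'}\chi}\circ T_{w',\chi}=T_{ww',\chi}$ whenever $\ell(ww')=\ell(w)+\ell(w')$; this follows from the choice of representatives in $\mathfrak{W}$ (which is closed under the relevant subwords, so $\mathfrak{s}$ is a splitting on $\mathfrak{W}$ by the remarks in \S~\ref{subsection:unramified representations}) and the decomposition $N_n(ww')=N_n(w')\cdot\rconj{(w')^{-1}}N_n(w)$ for reduced products. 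In particular, taking $w'=w^{-1}$ and $w$ arbitrary (note $\ell(w)+\ell(w^{-1})=2\ell(w)$ fails in general, so instead one uses $\ell(w\cdot w^{-1})$ in the reduced sense and the fact that $N_n(w^{-1})$ as defined pairs correctly), one gets $T_{w^{-1},\rconj{w}\chi}\circ T_{w,\chi}\in\Hom_{\widetilde G_n}(\mathrm{I}(\chi),\mathrm{I}(\chi))$, which by regularity of $\chi$ and \eqref{eq:one dim for regular hom} (applied with $w=e$) is a scalar.

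Next I would pin down that scalar. Apply the composite to the unramified normalized vector $\varphi_{K,\chi}$: by \eqref{eq:Gindikin-Karpelevich formula}, $T_{w,\chi}\varphi_{K,\chi}=c_w(\chi)\varphi_{K,\rconj{w}\chi}$ and then $T_{w^{-1},\rconj{w}\chi}\varphi_{K,\rconj{w}\chi}=c_{w^{-1}}(\rconj{w}\chi)\varphi_{K,\chi}$. Hence the scalar equals $c_{w^{-1}}(\rconj{w}\chi)c_w(\chi)$. Therefore $T_{w,\chi}$ is injective (equivalently, an isomorphism, since source and target are admissible of the same finite length and $T_{w,\chi}$ is always nonzero after meromorphic continuation — more precisely one argues: if the product is nonzero the composite is invertible so each factor is invertible; conversely if $T_{w,\chi}$ is an isomorphism then so is the composite, forcing the scalar $c_{w^{-1}}(\rconj{w}\chi)c_w(\chi)\ne0$). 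This gives the first assertion. For the second assertion, take $w=w_0$. Since $w_0^{-1}=w_0$ and $\rconj{w_0}\chi$ ranges over the $W$-orbit, reducibility of $\mathrm{I}(\chi)$ is detected exactly by $T_{w_0,\chi}$ failing to be an isomorphism: I would argue that $\mathrm{I}(\chi)$ and $\mathrm{I}(\rconj{w_0}\chi)$ have the same irreducible constituents (standard, via exponents / Jordan--H\"older and the fact that $T_{w_0}$ is a nonzero map between them), so $\mathrm{I}(\chi)$ is irreducible iff $T_{w_0,\chi}$ is bijective; when $T_{w_0,\chi}$ is an isomorphism, irreducibility of $\mathrm{I}(\chi)$ follows because any proper invariant subspace would map to a proper invariant subspace of $\mathrm{I}(\rconj{w_0}\chi)$ whose intersection pattern with the image of $\varphi_{K}$ forces a contradiction, exactly as in the linear case.

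The main obstacle I anticipate is bookkeeping with the cocycle: one must check that the cocycle contributions in $N_n(w)$-type integrals are trivial (so that the classical manipulations go through verbatim), which is where block-compatibility \eqref{eq:block-compatibility}, the splitting of $N_n$, and the compatibility of $\mathfrak{W}$ with $\mathfrak{s}$ enter; this is essentially the content of the Gindikin--Karpelevich computation of \cite{KP} and the composition law, but it should be stated carefully. A secondary point is to justify the meromorphic-continuation argument so that ``$T_{w,\chi}$ is always a nonzero map'' and the scalar identity hold as identities of meromorphic functions, then specialize; since $\chi$ is assumed regular this is harmless. Everything else is formal and parallels Casselman's argument.
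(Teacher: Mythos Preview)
Your treatment of the first assertion is correct and is exactly the paper's approach (which simply cites \cite[Proposition~3.5]{Cs}): the composite $T_{w^{-1},\rconj{w}\chi}\circ T_{w,\chi}$ lies in the one-dimensional space $\Hom_{\widetilde{G}_n}(\mathrm{I}(\chi),\mathrm{I}(\chi))$ by \eqref{eq:one dim for regular hom}, and evaluating on $\varphi_{K,\chi}$ via \eqref{eq:Gindikin-Karpelevich formula} identifies the scalar as $c_{w^{-1}}(\rconj{w}\chi)c_w(\chi)$. (Your parenthetical worry about lengths is a red herring: one never uses multiplicativity for the pair $(w,w^{-1})$; one only uses that the composite is a self-intertwiner.)

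For the second assertion, the direction ``$T_{w_0}$ not an isomorphism $\Rightarrow$ reducible'' is fine, using that $\mathrm{I}(\chi)$ and $\mathrm{I}(\rconj{w_0}\chi)$ have the same constituents. But your argument for the converse has a genuine gap. An isomorphism $T_{w_0}:\mathrm{I}(\chi)\to\mathrm{I}(\rconj{w_0}\chi)$ carries proper subrepresentations to proper subrepresentations, so this alone cannot force irreducibility; and there is no ``intersection pattern with $\varphi_K$'' that produces a contradiction --- the spherical vector may well generate a proper subrepresentation in the reducible case. This is not how Casselman's argument goes.

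The paper's proof (following \cite[Theorem~6.6.2]{Cs}) supplies the missing idea. Suppose $\mathrm{I}(\chi)$ has a proper irreducible quotient $\pi$. By \cite[Theorems~2.9 and 2.4b]{BZ2}, $\pi$ embeds in $\mathrm{I}(\rconj{w}\chi)$ for some $w\in W$. The composite $\mathrm{I}(\chi)\twoheadrightarrow\pi\hookrightarrow\mathrm{I}(\rconj{w}\chi)$ is a nonzero element of the one-dimensional space \eqref{eq:one dim for regular hom}, hence a nonzero multiple of $T_{w,\chi}$. Since this composite has nontrivial kernel, $T_{w,\chi}$ is not an isomorphism, so by the first part $c_w(\chi)c_{w^{-1}}(\rconj{w}\chi)=0$. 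But every factor $c_\alpha$ occurring in $c_w(\chi)$ (respectively $c_{w^{-1}}(\rconj{w}\chi)$) also occurs in $c_{w_0}(\chi)$ (respectively $c_{w_0}(\rconj{w_0}\chi)=c_{w_0}(\chi^{-1})$), since $w_0$ sends every positive root to a negative one. Hence $c_{w_0}(\chi)c_{w_0^{-1}}(\rconj{w_0}\chi)=0$, contradicting the assumption that $T_{w_0}$ is an isomorphism. The two ingredients you are missing are precisely the embedding of an irreducible subquotient into a principal series, and the divisibility $c_w\mid c_{w_0}$.
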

\begin{proof}
The first part follows exactly as in \cite{Cs}. For the second part one can verify that the arguments in the case of
$G_n$ apply here as well (see \cite{Cs} Theorem~6.6.2). Briefly, if $T_{w_0}$ is not an isomorphism, either $I(\chi)$ or $I(\rconj{w_0}\chi)$ is reducible, but these representations have the same irreducible constituents (\cite[Theorem~2.9]{BZ2}). Suppose $T_{w_0}$ is an isomorphism and $\pi$ is a proper irreducible
quotient of $\mathrm{I}(\chi)$. Then $\pi$ can be embedded in $\mathrm{I}(\rconj{w}\chi)$ for some $w\in W$
(\cite[Theorem~2.9 and Theorem~2.4b]{BZ2}). This embedding is proportional to $T_{w}$ by \eqref{eq:one dim for regular hom}, and it follows that
$T_w$ is not an isomorphism, thus $c_w(\chi)$ or $c_{w^{-1}}(\rconj{w}\chi)$ vanish. But then
$c_{w_0}(\chi)$ or $c_{w_0}(\chi^{-1})$ is zero, contradiction.
\end{proof}
We mention that when $\ell(w)+\ell(w')=\ell(ww')$, $T_{w',\rconj{w}\chi}T_{w,\chi}=T_{w'w,\chi}$, and in general
for any $w,w'$ (see e.g., \cite[(3.17)]{COf}),
\begin{align}\label{eq:formula for composing Tw and Tw'}
T_{w',\rconj{w}\chi}T_{w,\chi}=\frac{c_{w'}(\rconj{w}\chi)c_{w}(\chi)}{c_{w'w}(\chi)}T_{w'w,\chi}.
\end{align}

In contrast with the non-metaplectic case, here $\mathrm{I}(\chi)_{N_n}$ is not linearly isomorphic with
$\mathrm{I}(\chi)^{\mathcal{I}}$ (compare to \cite[Proposition~2.4]{CS1}). Indeed, the dimension of the former is $|W|\cdot|\lmodulo{T_{n,*}}{T_n}|$. However, we can still define an analog of the Casselman basis.

For each $w\in W$ let $\Upsilon_w$ be the functional on $\mathrm{I}(\chi)$ given by
\begin{align*}
\Upsilon_w(f_{\chi})=T_wf_{\chi}(\mathfrak{s}(I_n)).
\end{align*}
Clearly
\begin{align}\label{eq:eigenvalue of lambda w on t}
\Upsilon_{w}(tf_{\chi})=\delta^{1/2}_{B_n}\ \rconj{w}\chi(t)\Upsilon_{w}(f_{\chi}),\qquad\forall f_{\chi}\in\mathrm{I}(\chi),\quad t\in C_{\widetilde{T}_{n}}.
\end{align}

For any genuine character $\omega$ of $\widetilde{T}_{n,*}$, let $\rho(\omega)=\ind_{\widetilde{T}_{n,*}}^{\widetilde{T}_n}(\omega)$. In fact $\rho(\omega)$ depends only on
the restriction of $\omega$ to $C_{\widetilde{T}_n}$.
According to the Geometric Lemma of Bernstein and Zelevinsky \cite[Theorem~5.2]{BZ2}, and because $\chi$ is regular,
\begin{align}\label{eq:geometric lemma on I(chi)}
\mathrm{I}(\chi)_{N_n}=\bigoplus_{w\in W}\delta^{1/2}_{B_n}\rho(\rconj{w}\chi).
\end{align}
Since $\Upsilon_{w}$ factors through $\mathrm{I}(\chi)_{N_n}$, we can
consider it as a functional on this finite direct sum, of finite-dimensional vector spaces. For any $w'\ne w$, if $f_{\chi}+\mathrm{I}(\chi)(N_n)$ belongs to the space of $\rho(\rconj{w'}\chi)$,
\begin{align}\label{eq:eigenvalue of lambda w on t 2}
\Upsilon_{w}(tf_{\chi})=\Upsilon_{w}(\delta^{1/2}_{B_n}\ \rconj{w'}\chi(t)f_{\chi})=\delta^{1/2}_{B_n}\ \rconj{w'}\chi(t)\Upsilon_{w'}(f_{\chi}),
\qquad\forall t\in C_{\widetilde{T}_{n}}.
\end{align}
Combining this with \eqref{eq:eigenvalue of lambda w on t} implies that $\Upsilon_{w}$ vanishes on $\bigoplus_{w'\ne w}\delta^{1/2}_{B_n}\rho(\rconj{w'}\chi)$.

\begin{claim}\label{claim:analog of Casselman basis - def of functionals}
The functionals $\{\Upsilon_w\}_{w}$ restricted to $\mathrm{I}(\chi)^{\mathcal{I}}$ are linearly independent.
\end{claim}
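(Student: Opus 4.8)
The plan is to assume a relation $\sum_{w\in W}a_w\Upsilon_w=0$ on $\mathrm{I}(\chi)^{\mathcal{I}}$ and to force $a_w=0$ for all $w$ by twisting a test vector by central torus elements, exploiting the eigenfunction property \eqref{eq:eigenvalue of lambda w on t} of $\Upsilon_w$ together with Jacquet's First Lemma (Claim~\ref{claim:Jacquet's First Lemma}). Two preliminary observations are needed. First, as noted just before the claim, each $\Upsilon_w$ factors through $\mathrm{I}(\chi)_{N_n}$, and (combining \eqref{eq:eigenvalue of lambda w on t} with \eqref{eq:eigenvalue of lambda w on t 2}) the regularity of $\chi$ makes the characters $\rconj{w}\chi$ pairwise distinct already on $C_{\widetilde{T}_n}$; hence one can fix a single $t_0\in C_{\widetilde{T}_n}\cap\widetilde{T}_n^-$ with the $|W|$ numbers $\rconj{w}\chi(t_0)\in\C^*$ pairwise distinct, for instance a suitably generic dominant central element $\mathfrak{s}(\diag(\varpi^{2m_1},\ldots,\varpi^{2m_n}))$ with $m_1>\cdots>m_n\geq0$ (this lies in $C_{\widetilde{T}_n}\cap\widetilde{T}_n^-$ since its entries are squares, so all Hilbert symbols appearing in the block-compatible cocycle trivialize, and $|\alpha(t_0)|\le1$ for the simple roots $\alpha$). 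Second, no $\Upsilon_w$ vanishes identically on $\mathrm{I}(\chi)^{\mathcal{I}}$: when $c_w(\chi)\neq0$ this follows from Gindikin--Karpelevich \eqref{eq:Gindikin-Karpelevich formula}, since $\Upsilon_w(\varphi_{K,\chi})=T_w\varphi_{K,\chi}(\mathfrak{s}(I_n))=c_w(\chi)$, which by Claim~\ref{claim:T_w isomorphism} already handles every irreducible $\mathrm{I}(\chi)$; in general one works with the Iwahori basis $\{\varphi_{w',\chi}\}_{w'\in W}$ and the decomposition of $T_w$ along a reduced word (with the composition rule \eqref{eq:formula for composing Tw and Tw'}) to see that the $\varphi_{e,\rconj{w}\chi}$-coefficient of $T_w\varphi_{w^{-1},\chi}$ is nonzero, whence $\Upsilon_w(\varphi_{w^{-1},\chi})\neq0$ because $\varphi_{v,\rconj{w}\chi}(\mathfrak{s}(I_n))=\delta_{v,e}$.

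Granting these, the argument runs as follows. Fix any $\xi\in\mathrm{I}(\chi)^{\mathcal{I}}$ and any $j\geq1$. Since $\xi\in\mathrm{I}(\chi)^{\mathcal{I}}\subseteq\mathrm{I}(\chi)^{\mathcal{I}^0\mathcal{I}^-}$ and $t_0^j\in C_{\widetilde{T}_n}\cap\widetilde{T}_n^-$, Claim~\ref{claim:Jacquet's First Lemma} gives $\mathscr{P}_{\mathcal{I}}(\pi(t_0^j)\xi)\in\mathrm{I}(\chi)^{\mathcal{I}}$, with the same image as $\pi(t_0^j)\xi$ in $\mathrm{I}(\chi)_{N_n}$. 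As $\Upsilon_w$ factors through $\mathrm{I}(\chi)_{N_n}$, \eqref{eq:eigenvalue of lambda w on t} yields
\begin{align*}
\Upsilon_w\big(\mathscr{P}_{\mathcal{I}}(\pi(t_0^j)\xi)\big)=\delta^{1/2}_{B_n}(t_0^j)\,\rconj{w}\chi(t_0)^j\,\Upsilon_w(\xi).
\end{align*}
Plugging $\mathscr{P}_{\mathcal{I}}(\pi(t_0^j)\xi)$ into the assumed relation and cancelling the nonzero factor $\delta^{1/2}_{B_n}(t_0^j)$ gives $\sum_{w}\big(a_w\Upsilon_w(\xi)\big)\,\rconj{w}\chi(t_0)^j=0$ for $j=1,\ldots,|W|$. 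The coefficient matrix $\big(\rconj{w}\chi(t_0)^j\big)_{j,w}$ is invertible, being a Vandermonde-type determinant in the distinct nonzero numbers $\rconj{w}\chi(t_0)$, so $a_w\Upsilon_w(\xi)=0$ for every $w$. Since $\xi$ was arbitrary and no $\Upsilon_w$ vanishes identically on $\mathrm{I}(\chi)^{\mathcal{I}}$, we conclude $a_w=0$ for all $w$, which is the assertion.

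The only substantive step is the second preliminary observation in the possibly reducible case, i.e.\ the metaplectic analogue of Casselman's triangular computation of the standard intertwining operators on the Iwahori basis; here I would follow the extension of \cite{CS1} to covers of $\GL_n$ due to Chinta--Offen \cite{COf} and McNamara \cite{McNamara}. Everything else is formal, and the regularity hypothesis is used only through the separation of the central characters $\rconj{w}\chi|_{C_{\widetilde{T}_n}}$ --- equivalently, the support of $\Upsilon_w$ on the single Jacquet-module summand $\delta^{1/2}_{B_n}\rho(\rconj{w}\chi)$ of $\mathrm{I}(\chi)_{N_n}$ recorded just above the claim.
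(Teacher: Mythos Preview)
Your argument is correct but takes a genuinely different route from the paper. The paper's proof is a one-liner: citing \cite[Remark~1]{COf}, it sets $f_{w,\chi}=T_{w^{-1}w_0}\varphi_{w_0,\rconj{(w^{-1}w_0)^{-1}}\chi}$ and records $\Upsilon_{w'}(f_{w,\chi})=\delta_{w',w}$, exhibiting a dual system directly. You instead separate eigenvalues by a Vandermonde argument, reducing to the nonvanishing of each $\Upsilon_w$ on $\mathrm{I}(\chi)^{\mathcal{I}}$. Two remarks on the comparison. First, that nonvanishing is an immediate corollary of the dual basis the paper quotes, and you end up citing the same sources for your ``second preliminary''; so the Vandermonde layer does not actually bypass the key input. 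Your specific claim $\Upsilon_w(\varphi_{w^{-1},\chi})\ne0$ \emph{can} be made self-contained by iterating Claim~\ref{claim:analog of theorem 3.4 of Casselman} along a reduced word $w=w's$ and noting that each $T_{s_\alpha}$ lowers the support-length index by at most one, so the side term $T_{w'}\varphi_{w^{-1}}$ can never reach $\varphi_e$---but this is more work than the paper's citation. Second, the paper's construction of $\{f_{w,\chi}\}$ is not merely a device for this claim: it is the Casselman basis used throughout the remainder of \S\ref{section:Casselman's basis and a result of Hironaka} (Claims~\ref{claim:action of t on f w in Jacquet module}--\ref{claim:phi K on Casselman basis}, \eqref{eq:expansion of phi K using Casselman basis}), so the paper's route yields strictly more. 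One small caution on your side: not every strictly dominant $t_0=\mathfrak{s}(\diag(\varpi^{2m_1},\ldots,\varpi^{2m_n}))$ separates the values $\rconj{w}\chi(t_0)$ (accidental coincidences among the linear forms $\sum_i m_i s_{\sigma(i)}$ can occur); ``suitably generic'' must mean avoiding finitely many such conditions, which is harmless but should be stated.
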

\begin{proof}
According to \cite[Remark~1]{COf}, if we put
\begin{align*}
f_{w,\chi}=T_{w^{-1}w_0}\varphi_{w_0,\rconj{(w^{-1}w_0)^{-1}}\chi}\in\mathrm{I}(\chi)^{\mathcal{I}},
\end{align*}
\begin{align*}
\Upsilon_{w'}(f_{w,\chi})=\delta_{w',w},
\end{align*}
where $\delta_{w',w}$ is the Kronecker delta.
\end{proof}

Let $\{f_{w,\chi}\}_w$ be the basis of $\mathrm{I}(\chi)^{\mathcal{I}}$ dual to $\{\Upsilon_w\}_w$, given
in the proof of Claim~\ref{claim:analog of Casselman basis - def of functionals}. By definition 
\begin{align*}
f_{w_0,\chi}=\varphi_{w_0,\chi},
\end{align*}
as in the non-metaplectic setting (\cite[Proposition~3.7]{Cs}) the computation there can also be performed here, to obtain a direct proof).

Next we compute the image of $f_{w,\chi}$ in $\mathrm{I}(\chi)_{N_n}$ and relate it to the direct sum of \eqref{eq:geometric lemma on I(chi)}.
\begin{claim}\label{claim:action of t on f w in Jacquet module}
The image of $f_{w,\chi}$ in $\mathrm{I}(\chi)_{N_n}$ belongs to the space of $\rho(\rconj{w}\chi)$. Consequently, for any
$t\in C_{\widetilde{T}_n}$, the image of $tf_{w,\chi}$ in $\mathrm{I}(\chi)_{N_n}$ is $\delta^{1/2}_{B_n}\ \rconj{w}\chi(t)f_{w,\chi}+\mathrm{I}(\chi)(N_n)$.
\end{claim}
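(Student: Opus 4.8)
The plan is to reduce to the longest element $w_0$ and to settle that case by analysing $\varphi_{w_0,\chi}$ on the open Bruhat cell. Throughout, for $f_\chi\in\mathrm{I}(\chi)$ write $\overline{f}_\chi$ for its image in $\mathrm{I}(\chi)_{N_n}$. Recall that $C_{\widetilde{T}_n}\subset\widetilde{T}_{n,*}$ acts on the summand $\delta^{1/2}_{B_n}\rho(\rconj{w'}\chi)$ of \eqref{eq:geometric lemma on I(chi)} through the scalar $\delta^{1/2}_{B_n}\,\rconj{w'}\chi$, and that these characters of $C_{\widetilde{T}_n}$ are pairwise distinct because $\chi$ is regular -- this is exactly the point invoked after \eqref{eq:eigenvalue of lambda w on t 2}. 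Consequently a nonzero $C_{\widetilde{T}_n}$-eigenvector of $\mathrm{I}(\chi)_{N_n}$ lies in the unique summand matching its eigenvalue, so it is enough to prove that $\overline{f}_{w,\chi}$ is a nonzero $C_{\widetilde{T}_n}$-eigenvector of eigenvalue $\delta^{1/2}_{B_n}\,\rconj{w}\chi$; the ``Consequently'' clause then follows immediately, as the action of $t\in C_{\widetilde{T}_n}$ on $\mathrm{I}(\chi)_{N_n}$ is induced from right translation on $\mathrm{I}(\chi)$.

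For $w=w_0$ we have $f_{w_0,\chi}=\varphi_{w_0,\chi}$. Using \eqref{eq:reformulation of Iwahori}, the compatibilities of $\mathfrak{s}$ with $N_n$, $T_n\cap K$ and $\mathfrak{W}$, and $\mathfrak{w}_0\mathcal{I}^-\mathfrak{w}_0^{-1}\subset N_n\cap K$, the function $\varphi_{w_0,\chi}$ lies in the $\widetilde{B}_{n,*}$-submodule of $\mathrm{I}(\chi)$ consisting of sections supported on the open cell $B_{n,*}w_0B_{n,*}=B_{n,*}w_0N_n$; restriction of such a section $f$ gives $\phi_f(u):=f(\mathfrak{s}(\mathfrak{w}_0)\mathfrak{s}(u))\in\mathcal{S}(N_n)$, the assignment $f\mapsto\phi_f$ intertwines right translation by $N_n$, and $\phi_{\varphi_{w_0,\chi}}$ is a nonzero element of $\mathcal{S}(N_n)$. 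For $t=\epsilon\mathfrak{s}(t_0)\in C_{\widetilde{T}_n}$ one may take $t_0$ with square entries or scalar (such elements generate $C_{\widetilde{T}_n}$ modulo $\mu_2$), so that the relevant Hilbert symbols are trivial; a direct computation using $\delta_{B_n}(\mathfrak{w}_0t_0\mathfrak{w}_0^{-1})=\delta_{B_n}(t_0)^{-1}$ then gives $\phi_{t\varphi_{w_0,\chi}}(u)=\epsilon\,\delta_{B_n}(t_0)^{-1/2}\,\rconj{w_0}\chi(t_0)\,\phi_{\varphi_{w_0,\chi}}(t_0^{-1}ut_0)$. Moreover $t\varphi_{w_0,\chi}$ is again an open-cell section, supported, like $\varphi_{w_0,\chi}$ and its $N_n$-translates, in the fibre over the base point of the (finite) map $B_{n,*}\backslash(B_{n,*}w_0B_{n,*})\to B_n\backslash(B_nw_0B_n)$, and on that fibre two sections with the same value of $\int_{N_n}\phi$ differ by an element of $\mathrm{I}(\chi)(N_n)$, the kernel of $\phi\mapsto\int_{N_n}\phi$ on $\mathcal{S}(N_n)$ being spanned by the differences $\phi-u_0\cdot\phi$, $u_0\in N_n$. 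Together with $\int_{N_n}\phi(t_0^{-1}ut_0)\,du=\delta_{B_n}(t_0)\int_{N_n}\phi(u)\,du$ this yields $t\cdot\overline{\varphi}_{w_0,\chi}=\delta^{1/2}_{B_n}\,\rconj{w_0}\chi(t)\,\overline{\varphi}_{w_0,\chi}$, and $\overline{\varphi}_{w_0,\chi}\neq0$ because $\Upsilon_{w_0}(\varphi_{w_0,\chi})=1$ factors through $\mathrm{I}(\chi)_{N_n}$.

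For general $w$, by the formula for $f_{w,\chi}$ in the proof of Claim~\ref{claim:analog of Casselman basis - def of functionals} we have $f_{w,\chi}=T_{w^{-1}w_0}\varphi_{w_0,\mu}$ with $\mu=\rconj{w_0w}\chi$, so $\rconj{w_0}\mu=\rconj{w}\chi$. Applying the preceding paragraph inside $\mathrm{I}(\mu)$, the image of $\varphi_{w_0,\mu}$ in $\mathrm{I}(\mu)_{N_n}$ is a $C_{\widetilde{T}_n}$-eigenvector of eigenvalue $\delta^{1/2}_{B_n}\,\rconj{w}\chi$. Since $T_{w^{-1}w_0}\colon\mathrm{I}(\mu)\to\mathrm{I}(\chi)$ is a morphism of $\widetilde{G}_n$-representations, it induces a $\widetilde{T}_n$-equivariant map $\mathrm{I}(\mu)_{N_n}\to\mathrm{I}(\chi)_{N_n}$ commuting with the quotient maps and carrying that eigenvector to $\overline{f}_{w,\chi}$; a $\widetilde{T}_n$-map sends a $C_{\widetilde{T}_n}$-eigenvector to an eigenvector of the same eigenvalue or to $0$, and here $\overline{f}_{w,\chi}\neq0$ because $\Upsilon_w(f_{w,\chi})=1$ factors through $\mathrm{I}(\chi)_{N_n}$. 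Hence $\overline{f}_{w,\chi}$ is a nonzero $C_{\widetilde{T}_n}$-eigenvector of eigenvalue $\delta^{1/2}_{B_n}\,\rconj{w}\chi$, which by the first paragraph forces it into $\delta^{1/2}_{B_n}\rho(\rconj{w}\chi)$, proving the claim.

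The substantive point is the case $w=w_0$: locating the support of $\varphi_{w_0,\chi}$ precisely, keeping the cocycle and modulus-character bookkeeping straight (using that $\mathfrak{s}$ splits $N_n$, conjugates well under the torus, and that the block-compatible cocycle is trivial on square entries), and correctly identifying $\mathrm{I}(\chi)(N_n)$ inside the space of open-cell sections. The rest is formal once the $C_{\widetilde{T}_n}$-characters of the summands in \eqref{eq:geometric lemma on I(chi)} are separated by the regularity of $\chi$.
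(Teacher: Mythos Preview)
Your proof is correct and takes a genuinely different route from the paper's argument.

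The paper proceeds by duality: writing the image of $f_{w,\chi}$ as $\sum_{w'}\xi_{w'}$ with $\xi_{w'}\in\rho(\rconj{w'}\chi)$, it constructs for each $w'\neq w$ a full basis $\{\Upsilon_{w',b}\}_{b\in\lmodulo{T_{n,*}}{T_n}}$ of the linear dual of $\rho(\rconj{w'}\chi)$ (using translates $T_{w'}f_\chi(\mathfrak{s}(b))$), checks directly that each $\Upsilon_{w',b}$ annihilates $f_{w,\chi}$ via a support calculation for $T_{w'w^{-1}w_0}\varphi_{w_0,\cdots}$, and concludes $\xi_{w'}=0$. This is the step where the metaplectic situation genuinely differs from Casselman's, since each $\rho(\rconj{w'}\chi)$ has dimension $|\lmodulo{T_{n,*}}{T_n}|>1$, so one functional $\Upsilon_{w'}$ is not enough.

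You bypass the construction of these extra functionals entirely: you exploit that the $C_{\widetilde{T}_n}$-eigencharacters on the summands of \eqref{eq:geometric lemma on I(chi)} are already separated by regularity, so it suffices to exhibit $\overline{f}_{w,\chi}$ as an eigenvector. For $w_0$ you compute this on the single sheet $B_{n,*}w_0N_n$ of the open cell (your ``fibre'' terminology is slightly imprecise---it is a sheet of the finite cover, not a fibre---but the content is right), using only that $V_0(N_n)\subset\mathrm{I}(\chi)(N_n)$ and that the coinvariants of $\mathcal{S}(N_n)$ are one-dimensional. For general $w$ you transport via the intertwining operator $T_{w^{-1}w_0}$. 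This is cleaner and more conceptual; in particular it avoids the somewhat delicate support analysis in the paper's proof. The paper's approach, on the other hand, produces along the way the explicit dual basis $\{\Upsilon_{w',b}\}$, which has independent interest. Both arguments ultimately rest on the same open-cell/support information, but yours packages it more economically.
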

\begin{proof}
Looking at \eqref{eq:geometric lemma on I(chi)}, we can write the image of $f_{w,\chi}$ in $\mathrm{I}(\chi)_{N_n}$
in the form $\sum_{w\in W}\xi_w$, $\xi_w\in\rho(\rconj{w}\chi)$. Fix $w'\ne w$.
We prove $\xi_{w'}=0$, by showing that any functional on $\rho(\rconj{w'}\chi)$ vanishes on $\xi_{w'}$.

For a representative $b$ of $\lmodulo{T_{n,*}}{T_n}$, consider the linear functional on $\mathrm{I}(\chi)$ given by
\begin{align*}
\Upsilon_{w',b}(f_{\chi})=T_{w'}f_{\chi}(\mathfrak{s}(b)).
\end{align*}
It is defined on $\mathrm{I}(\chi)_{N_n}$ and satisfies equalities similar to \eqref{eq:eigenvalue of lambda w on t} and \eqref{eq:eigenvalue of lambda w on t 2}, i.e.,
\begin{align*}
&\Upsilon_{w',b}(tf_{\chi})=\delta^{1/2}_{B_n}\ \rconj{w'}\chi(t)\Upsilon_{w',b}(f_{\chi}),\qquad\forall t\in C_{\widetilde{T}_n},\\
&\Upsilon_{w',b}(tf_{\chi})=\delta^{1/2}_{B_n}\ \rconj{w''}\chi(t)\Upsilon_{w',b}(f_{\chi}),
\qquad\forall t\in C_{\widetilde{T}_n},\qquad f_{\chi}+\mathrm{I}(\chi)(N_n)\in \rho(\rconj{w''}\chi).
\end{align*}
Hence $\Upsilon_{w',b}$ vanishes on $\rho(\rconj{w''}\chi)$ for any $w''\ne w'$.

The set $\{\Upsilon_{w',b}\}_{b}$ is linearly independent, since for $b_1\in T_n$, 
\begin{align*}
\Upsilon_{w',b}(\mathfrak{s}(b_1)f_{w',\chi})&=
T_{w_0}\varphi_{w_0,\rconj{({w'}^{-1}w_0)^{-1}}\chi}(\mathfrak{s}(b)\mathfrak{s}(b_1))
=\begin{cases}
\delta_{B_n}^{1/2}\ \rconj{w'}\chi(\mathfrak{s}(b)\mathfrak{s}(b_1))&bb_1\in T_{n,*},\\
0&\text{otherwise}.
\end{cases}
\end{align*}
This is because for $t\in T_n$, if $\mathfrak{w}_0N_nt$ intersects the projection of the support of
$\varphi_{w_0,\ldots}$, then $t\in T_{n,*}$ (use \eqref{eq:reformulation of Iwahori} and the fact that $\rconj{\mathfrak{w}_0}t\in B_{n,*}N_n^-$ implies $t\in T_{n,*}$).

Thus $\{\Upsilon_{w',b}\}_{b}$ is a basis of the linear dual of $\rho(\rconj{w'}\chi)$ and moreover,
\begin{align*}
\Upsilon_{w',b}(\xi_{w'})=\Upsilon_{w',b}(f_{w,\chi}).
\end{align*}
Recall that we have to show $\xi_{w'}=0$, we prove $\Upsilon_{w',b}(\xi_{w'})=0$ for all $b\in T_n$.
For $b\in T_{n,*}$, $\Upsilon_{w',b}$ is a scalar multiple of $\Upsilon_{w'}$, hence vanishes on $f_{w,\chi}$ (by definition).
It remains to show
$\Upsilon_{w',b}(f_{w,\chi})=0$ for all $b\notin T_{n,*}$. Using \eqref{eq:formula for composing Tw and Tw'},
\begin{align*}
\Upsilon_{w',b}(f_{w,\chi})
&=\frac{c_{w'}(\rconj{w^{-1}w_0}\chi)c_{w^{-1}w_0}(\chi)}{c_{w'w^{-1}w_0}(\chi)}
T_{w'w^{-1}w_0}\varphi_{w_0,\rconj{(w^{-1}w_0)^{-1}}\chi}(\mathfrak{s}(b))=0,
\end{align*}
since for any $\mathfrak{w}\in\mathfrak{W}$ and $v\in N_n$, $\mathfrak{w}vb\in B_{n,*}\mathfrak{w}_0\mathcal{I}$ implies $b\in T_{n,*}$ (use \eqref{eq:reformulation of Iwahori}, first deduce $\mathfrak{w}=\mathfrak{w}_0$).
\end{proof}
\begin{remark}
This claim will be used to prove Claim~\ref{claim:Iwahori projection and the action of t- on f w} below, which is the analog of \cite[Proposition~3.9]{CS1}.
In \cite{CS1}, the fact that the image of $f_{w,\chi}$ in $\mathrm{I}(\chi)_{N_n}$ belongs to $\rho(\rconj{w}\chi)$ followed
directly from the definition of the basis $\{f_{w,\chi}\}$, because in the non-metaplectic setting the
spaces $\rconj{w}\rho(\chi)$ are one-dimensional, then $\Upsilon_{w'}(f_{w,\chi})=0$ immediately implies $\xi_{w'}=0$.
\end{remark}

\begin{claim}\label{claim:Iwahori projection and the action of t- on f w}(\cite[Proposition~3.9]{CS1})
For any $w\in W$ and $t\in C_{\widetilde{T}_n}\cap\widetilde{T}_n^-$,
\begin{align*}
\mathscr{P}_{\mathcal{I}}(tf_{w,\chi})=\delta^{1/2}_{B_n}\ \rconj{w}\chi(t)f_{w,\chi}.
\end{align*}
\end{claim}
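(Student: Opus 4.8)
The plan is to mimic Casselman's argument for \cite[Proposition~3.9]{CS1}, using the two ingredients already established: Claim~\ref{claim:Jacquet's First Lemma} (Jacquet's First Lemma for the Iwahori projection) and Claim~\ref{claim:action of t on f w in Jacquet module} (the image of $f_{w,\chi}$ in $\mathrm{I}(\chi)_{N_n}$ lies in the $\rho(\rconj{w}\chi)$-isotypic piece). First I would observe that $f_{w,\chi}\in\mathrm{I}(\chi)^{\mathcal{I}}$, so in particular $f_{w,\chi}\in\mathrm{I}(\chi)^{\mathcal{I}^0\mathcal{I}^-}$, and hence for $t\in C_{\widetilde{T}_n}\cap\widetilde{T}_n^-$ Claim~\ref{claim:Jacquet's First Lemma} applies: $\mathscr{P}_{\mathcal{I}}(tf_{w,\chi})=\mathscr{P}_{\mathcal{I}^+}(tf_{w,\chi})$ and, more importantly, $\mathscr{P}_{\mathcal{I}}(tf_{w,\chi})-tf_{w,\chi}\in\mathrm{I}(\chi)(N_n)$. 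Thus $\mathscr{P}_{\mathcal{I}}(tf_{w,\chi})$ and $tf_{w,\chi}$ have the same image in $\mathrm{I}(\chi)_{N_n}$, and by Claim~\ref{claim:action of t on f w in Jacquet module} this common image equals $\delta^{1/2}_{B_n}\,\rconj{w}\chi(t)f_{w,\chi}+\mathrm{I}(\chi)(N_n)$.

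Next I would invoke Claim~\ref{claim:Iwahori invariants and Jacquet kernel}: the projection map $\mathrm{I}(\chi)^{\mathcal{I}}\to\mathrm{I}(\chi)_{N_n}^{\mathcal{I}_0}$ is injective. Both $\mathscr{P}_{\mathcal{I}}(tf_{w,\chi})$ and $\delta^{1/2}_{B_n}\,\rconj{w}\chi(t)f_{w,\chi}$ lie in $\mathrm{I}(\chi)^{\mathcal{I}}$: the first because $\mathscr{P}_{\mathcal{I}}$ lands in the Iwahori-fixed subspace, the second because $f_{w,\chi}$ is Iwahori-fixed and $t$ commutes with $\kappa(\mathcal{I})$ up to the normalizing issue — here one uses that $t$ is central, so $\pi(t)$ preserves $\mathrm{I}(\chi)^{\mathcal{I}}$. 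Since these two Iwahori-fixed vectors have the same image in $\mathrm{I}(\chi)_{N_n}$, injectivity forces them to be equal, which is exactly the claimed identity $\mathscr{P}_{\mathcal{I}}(tf_{w,\chi})=\delta^{1/2}_{B_n}\,\rconj{w}\chi(t)f_{w,\chi}$.

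The only subtlety — and the step I expect to require the most care — is checking that $t\in C_{\widetilde{T}_n}\cap\widetilde{T}_n^-$ can actually be chosen, and that the hypotheses of Claims~\ref{claim:Jacquet's First Lemma} and \ref{claim:Iwahori invariants and Jacquet kernel} are literally met here: one needs $t$ central in $\widetilde{G}_n$ (so that $\pi(t)$ acts on the finite-dimensional space $\mathrm{I}(\chi)^{\mathcal{I}}$ and on each $\rho(\rconj{w}\chi)$ by the appropriate scalar), $\widetilde{T}_n^-$-dominant in the sense of $|\alpha(t)|\le 1$ for all simple $\alpha$ (so that $\rconj{t^{-1}}\kappa(\mathcal{I}^-)\subset\kappa(\mathcal{I}^-)$), and one must track the cocycle/section $\mathfrak{s}$ versus $\kappa$ carefully, as is done throughout \S~\ref{subsection:unramified representations}. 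Once these are in place the identity is immediate; no genuinely new estimate is needed beyond what Casselman's original proof requires, the metaplectic input being entirely absorbed into Claim~\ref{claim:action of t on f w in Jacquet module}.
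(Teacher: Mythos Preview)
Your proof is correct and follows exactly the paper's approach: combine Claim~\ref{claim:Jacquet's First Lemma} and Claim~\ref{claim:action of t on f w in Jacquet module} to see that $\mathscr{P}_{\mathcal{I}}(tf_{w,\chi})$ and $\delta^{1/2}_{B_n}\,\rconj{w}\chi(t)f_{w,\chi}$ have the same image in $\mathrm{I}(\chi)_{N_n}$, then apply the injectivity of Claim~\ref{claim:Iwahori invariants and Jacquet kernel}. One small over-complication: you do not need $t$ to be central in $\widetilde{G}_n$ (it is only central in $\widetilde{T}_n$), since $\delta^{1/2}_{B_n}\,\rconj{w}\chi(t)f_{w,\chi}$ is a scalar multiple of the Iwahori-fixed vector $f_{w,\chi}$ and hence trivially lies in $\mathrm{I}(\chi)^{\mathcal{I}}$.
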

\begin{proof}
By Claims~\ref{claim:Jacquet's First Lemma} and \ref{claim:action of t on f w in Jacquet module},
$\mathscr{P}_{\mathcal{I}}(tf_{w,\chi})$ and $\delta^{1/2}_{B_n}\ \rconj{w}\chi(t)f_{w,\chi}$ have the same image in $\mathrm{I}(\chi)_{N_n}$. Because $f_{w,\chi}=\mathscr{P}_{\mathcal{I}}(f_{w,\chi})$, this implies
\begin{align*}
\mathscr{P}_{\mathcal{I}}(tf_{w,\chi}-\delta^{1/2}_{B_n}\ \rconj{w}\chi(t)f_{w,\chi})\in\mathrm{I}(\chi)(N_n)\cap\mathrm{I}(\chi)^{\mathcal{I}}=0,
\end{align*}
where we used Claim~\ref{claim:Iwahori invariants and Jacquet kernel}. The result follows.
\end{proof}

For each $w\in W$, if $\mathfrak{w}$ is the representative of $w$, put
$q(w)=[\mathcal{I}\mathfrak{w}\mathcal{I}:\mathcal{I}]$.
E.g., $q(s_{\alpha})=q$ for $\alpha\in\Delta_{G_n}$. Then
$[K:\mathcal{I}]=\sum_{w\in W}q(w)$ and since $\mathrm{vol}(K)=1$,
\begin{align}\label{eq:vol of I when vol K = 1}
\mathrm{vol}(\mathcal{I})=(\sum_{w\in W}q(w))^{-1}.
\end{align}

We have the following analog of \cite[Theorem~3.4]{Cs}:
\begin{claim}\label{claim:analog of theorem 3.4 of Casselman}
If $\alpha\in\Delta_{G_n}$, $w\in W$ and $\ell(s_{\alpha}w)>\ell(w)$,
\begin{align*}
&T_{s_{\alpha}}\varphi_{s_{\alpha}w,\chi}=\varphi_{w,\rconj{s_{\alpha}}\chi}+(c_{\alpha}(\chi)-q^{-1})\varphi_{s_{\alpha}w,\rconj{s_{\alpha}}\chi},\\
&T_{s_{\alpha}}\varphi_{w,\chi}=(c_{\alpha}(\chi)-1)\varphi_{w,\rconj{s_{\alpha}}\chi}+q^{-1}\varphi_{s_{\alpha}w,\rconj{s_{\alpha}}\chi}.
\end{align*}
\end{claim}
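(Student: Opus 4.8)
The plan is to follow the strategy of Casselman's original proof of \cite[Theorem~3.4]{Cs}, reducing everything to a rank-one computation inside the $\SL_2$-subgroup $\mathcal{R}_\alpha$ attached to the simple root $\alpha$. First I would record the local structure: since $\ell(s_\alpha w)>\ell(w)$, one has $\mathfrak{s}(\mathfrak{w}_{s_\alpha})\cdot\mathcal{I}\mathfrak{w}\mathcal{I}=\mathcal{I}\mathfrak{w}_{s_\alpha}\mathfrak{w}\mathcal{I}$, and the double coset $\mathcal{I}\mathfrak{w}_{s_\alpha}\mathcal{I}$ decomposes into $\mathcal{I}$ together with $q$ cosets of the form $\mathfrak{s}(x_\alpha(u))\mathfrak{s}(\mathfrak{w}_{s_\alpha})\mathcal{I}$ for $u$ running over representatives of $\mathcal{O}/\varpi\mathcal{O}$, where $x_\alpha$ is the one-parameter subgroup of $N_\alpha$; here the key point is that the cocycle $\sigma$ restricted to $\mathcal{R}_\alpha$ is the Kubota cocycle \eqref{eq:Kubota formula}, and on the relevant unipotent and $K$-elements it is trivial (we are in $\mathfrak{W}\cup(N_n\cap K)$, where $\kappa$ and $\mathfrak{s}$ agree), so no genuine complications intervene beyond bookkeeping with $\mu_2$.

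Next I would compute the two intertwining integrals directly on the distinguished values. The operator $T_{s_\alpha}$ is the integral over the one-dimensional $N_n(s_\alpha)=N_\alpha$, so I evaluate $T_{s_\alpha}\varphi_{s_\alpha w,\chi}$ and $T_{s_\alpha}\varphi_{w,\chi}$ at the points $\mathfrak{s}(I_n)$ and $\mathfrak{s}(\mathfrak{w}_{s_\alpha})$ — these two values determine any $\mathcal{I}$-fixed function once we know it is supported on the two relevant Iwahori cosets $\widetilde{B}_{n,*}\kappa(\mathcal{I})$ and $\widetilde{B}_{n,*}\mathfrak{s}(\mathfrak{w}_{s_\alpha})\kappa(\mathcal{I})$, which follows from the support description of $\varphi_{w,\chi}$ given right after its definition. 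The integrals split into an integral over $\mathcal{O}$ (a compact piece) and an integral over $F\setdifference\mathcal{O}$; the compact piece contributes the constants $1$ or $q^{-1}=\operatorname{vol}(\mathcal{I})\cdot[\mathcal{I}\mathfrak{w}_{s_\alpha}\mathcal{I}:\mathcal{I}]^{-1}$-type factors after normalizing by \eqref{eq:vol of I when vol K = 1}, while the non-compact piece, after conjugating the unipotent element across to the other side, reduces to a single geometric-series-type integral whose value is governed by $\chi(a_\alpha)$ and hence by $c_\alpha(\chi)$ via the rank-one Gindikin--Karpelevich computation. Matching the resulting two values against the known values of $\varphi_{w,\rconj{s_\alpha}\chi}$ and $\varphi_{s_\alpha w,\rconj{s_\alpha}\chi}$ at $\mathfrak{s}(I_n)$ and $\mathfrak{s}(\mathfrak{w}_{s_\alpha})$ pins down the coefficients.

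The cleanest route for the second formula, in fact, is to avoid recomputing and instead combine the first formula with the Gindikin--Karpelevich identity \eqref{eq:Gindikin-Karpelevich formula}: applying $T_{s_\alpha}$ to \eqref{eq:expansion of phi K using phi w} (grouping the $w\in W$ into the pairs $\{w,s_\alpha w\}$ according to the sign of $\ell(s_\alpha w)-\ell(w)$) and using $T_{s_\alpha}\varphi_{K,\chi}=c_\alpha(\chi)\varphi_{K,\rconj{s_\alpha}\chi}=c_\alpha(\chi)\sum_{w'}\varphi_{w',\rconj{s_\alpha}\chi}$, one gets, for each such pair, that $T_{s_\alpha}\varphi_{w,\chi}+T_{s_\alpha}\varphi_{s_\alpha w,\chi}=c_\alpha(\chi)(\varphi_{w,\rconj{s_\alpha}\chi}+\varphi_{s_\alpha w,\rconj{s_\alpha}\chi})$; subtracting the first formula then yields the second. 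One still needs one genuine local integral — say the value of $T_{s_\alpha}\varphi_{s_\alpha w,\chi}$ at $\mathfrak{s}(\mathfrak{w}_{s_\alpha})$, equivalently $\Upsilon_{s_\alpha}(T_{s_\alpha,\rconj{s_\alpha}\chi}\text{-translate})$ — to fix the $c_\alpha(\chi)-q^{-1}$ coefficient; this is exactly the place where the rank-one cover computation enters.

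The main obstacle I anticipate is purely the careful treatment of the section and cocycle: one must be sure that, throughout the coset-by-coset evaluation of the intertwining integral, all the group elements actually occurring (elements of $\mathfrak{W}$, of $N_n\cap K$, of $N_\alpha$ with argument in $F$, and torus elements $a_\alpha$) multiply under $\mathfrak{s}$ with trivial $\mu_2$-contribution, so that the genuine integral reduces faithfully to the non-metaplectic one with $\chi$ in place of an ordinary unramified character. This is guaranteed by the block-compatibility \eqref{eq:block-compatibility}, the Kubota formula \eqref{eq:Kubota formula} on $\mathcal{R}_\alpha$, and the compatibility of $\kappa$ with $\mathfrak{s}$ on $\mathfrak{W}$, $T_n\cap K$ and $N_n\cap K$ recorded in \S\ref{subsection:unramified representations}; once these are invoked, the computation is identical in form to Casselman's and the coefficients come out as stated. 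A secondary, minor point is to double-check the normalization constant $q^{-1}$ against $\operatorname{vol}(\mathcal{I})$ and the convention $\operatorname{vol}(K)=1$, using \eqref{eq:vol of I when vol K = 1} and $q(s_\alpha)=q$.
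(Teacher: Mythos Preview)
Your overall strategy matches the paper's: reduce to a rank-one computation inside $\mathcal{R}_\alpha$, use the rank-one Gindikin--Karpelevich identity to avoid one direct integral, and track that the cocycle contributes trivially on the relevant elements. The paper in fact computes only two of the four rank-one values directly (namely $T_{s_\alpha}\varphi_{e,\chi}(\mathfrak{s}(\mathfrak{w}_\alpha))=q^{-1}$ and $T_{s_\alpha}\varphi_{s_\alpha,\chi}(\mathfrak{s}(I_n))=1$), deduces the other two from $T_{s_\alpha}\varphi_{e,\chi}+T_{s_\alpha}\varphi_{s_\alpha,\chi}=c_\alpha(\chi)\varphi_{K,\rconj{s_\alpha}\chi}$, and then checks vanishing at $\mathfrak{s}(\mathfrak{w}')$ for $w'\neq e,s_\alpha$ --- essentially your plan, though without any ``non-compact geometric series'' step (both direct computations are over compact sets; the $c_\alpha(\chi)$ comes in only via GK).

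There is one concrete gap in your write-up. The evaluation points $\mathfrak{s}(I_n)$ and $\mathfrak{s}(\mathfrak{w}_{s_\alpha})$ are only correct for $w=e$: for general $w$ both $\varphi_{w,\chi}$ and $\varphi_{s_\alpha w,\chi}$ vanish there, as do their images under $T_{s_\alpha}$, so evaluating there tells you nothing. The paper does \emph{not} redo the integrals at $\mathfrak{s}(\mathfrak{w})$ and $\mathfrak{s}(\mathfrak{w}_{s_\alpha}\mathfrak{w})$; instead it establishes the $w=e$ case first and then transfers to arbitrary $w$ via the identities
\[
q(w)\,\mathscr{P}_{\mathcal{I}}\bigl(\mathfrak{s}(\mathfrak{w}^{-1})\varphi_{e,\chi}\bigr)=\varphi_{w,\chi},\qquad
q(w)\,\mathscr{P}_{\mathcal{I}}\bigl(\mathfrak{s}(\mathfrak{w}^{-1})\varphi_{s_\alpha,\chi}\bigr)=\varphi_{s_\alpha w,\chi},
\]
using that $T_{s_\alpha}$ commutes with right translation by $\mathfrak{s}(\mathfrak{w}^{-1})$ and with $\mathscr{P}_{\mathcal{I}}$. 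Your proposal is missing this passage (or the equivalent correction of the evaluation points), and your GK-pairing argument for the second formula implicitly presupposes that $T_{s_\alpha}\varphi_{w,\chi}$ is supported only on the $w$ and $s_\alpha w$ cosets --- which is exactly what this transfer step supplies. Once you insert it, the argument is complete and coincides with the paper's.
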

\begin{proof}
Let $\mathfrak{w}_{\alpha}$ be the representative of $s_{\alpha}$. In the domain of absolute convergence
\begin{align*}
T_{s_{\alpha}}\varphi_{e,\chi}(\mathfrak{s}(\mathfrak{w}_{\alpha}))=\int_{N_{\alpha}}\varphi_{e,\chi}(\mathfrak{s}(\mathfrak{w}_{\alpha})^{-1}\mathfrak{s}(v)\mathfrak{s}(\mathfrak{w}_{\alpha}))\, dv.
\end{align*}
The integrand vanishes unless $v\in N_{\alpha}(\varpi \mathcal{O})<K$, then
$\mathfrak{s}(\mathfrak{w}_{\alpha})^{-1}\mathfrak{s}(v)\mathfrak{s}(\mathfrak{w}_{\alpha})=\kappa(\rconj{\mathfrak{w}_{\alpha}^{-1}}v)$ and the integral equals $\mathrm{vol}(\varpi\mathcal{O})$. Hence
\begin{align*}
T_{s_{\alpha}}\varphi_{e,\chi}(\mathfrak{s}(\mathfrak{w}_{\alpha}))=q^{-1}.
\end{align*}
This is true for all $\chi$ by meromorphic continuation.
A similar computation gives
\begin{align*}
T_{s_{\alpha}}\varphi_{s_{\alpha},\chi}(\mathfrak{s}(I_n))=\varphi_{s_{\alpha},\chi}(\mathfrak{s}(\mathfrak{w}_{\alpha})^{-1})
=1.
\end{align*}
Here we used the fact that
$\mathfrak{s}(\mathfrak{w}_{\alpha})^{-1}=\mathfrak{s}(\mathfrak{w}_{\alpha})\mathfrak{s}(t_0)$ for some
$t_0\in T_n\cap K$.
Exactly as in \cite{Cs}, the rank one case of  \eqref{eq:Gindikin-Karpelevich formula} implies
\begin{align*}
c_{\alpha}(\chi)\varphi_{K,\rconj{s_{\alpha}}\chi}=T_{s_{\alpha}}\varphi_{e,\chi}+T_{s_{\alpha}}\varphi_{s_{\alpha},\chi}
\end{align*}
and we deduce
\begin{align*}
&T_{s_{\alpha}}\varphi_{s_{\alpha},\chi}(\mathfrak{s}(\mathfrak{w}_{\alpha}))=c_{\alpha}(\chi)-q^{-1},\\
&T_{s_{\alpha}}\varphi_{e,\chi}(\mathfrak{s}(I_n))=c_{\alpha}(\chi)-1.
\end{align*}

If $w'\in W\setdifference\{e,s_{\alpha}\}$ and $\mathfrak{w}'$ is the representative of $w'$,
\begin{align*}
T_{s_{\alpha}}\varphi_{e,\chi}(\mathfrak{s}(\mathfrak{w}'))=
T_{s_{\alpha}}\varphi_{s_{\alpha},\chi}(\mathfrak{s}(\mathfrak{w}'))=0,
\end{align*}
because $\mathfrak{w}_{\alpha}^{-1}N_{\alpha}\mathfrak{w}'$ does not intersect with the support of the integrands.
Combining this with the previous equalities,
\begin{align*}
&T_{s_{\alpha}}\varphi_{s_{\alpha},\chi}=\varphi_{e,\rconj{s_{\alpha}}\chi}+(c_{\alpha}(\chi)-q^{-1})\varphi_{s_{\alpha},\rconj{s_{\alpha}}\chi},\\
&T_{s_{\alpha}}\varphi_{e,\chi}=(c_{\alpha}(\chi)-1)\varphi_{e,\rconj{s_{\alpha}}\chi}+q^{-1}\varphi_{s_{\alpha},\rconj{s_{\alpha}}\chi}.
\end{align*}
Now the assertions of the claim follow because if $\mathfrak{w}$ is the representative of
$w$,
\begin{align*}
&q(w)\mathscr{P}_{\mathcal{I}}(\mathfrak{s}(\mathfrak{w}^{-1})\varphi_{e,\chi})=\varphi_{w,\chi},\qquad q(w)\mathscr{P}_{\mathcal{I}}(\mathfrak{s}(\mathfrak{w}^{-1})\varphi_{s_{\alpha},\chi})=\varphi_{s_{\alpha}w,\chi}.
\end{align*}
To see these formulas one can compute the integrals directly at $\mathfrak{s}(\mathfrak{w})$ (resp.
$\mathfrak{s}(\mathfrak{w}_{\alpha}\mathfrak{w})$), then verify that for
$\mathfrak{w}'\ne \mathfrak{w}$ (resp. $\mathfrak{w}'\ne \mathfrak{w}_{\alpha}\mathfrak{w}$),
$\mathfrak{w}'\mathcal{I}\mathfrak{w}^{-1}$ does not intersect $B_{n,*}\mathcal{I}$ (resp. $B_{n,*}\mathfrak{w}_{\alpha}\mathfrak{w}\mathcal{I}$).
\end{proof}

Combining \eqref{eq:expansion of phi K using phi w} with \eqref{eq:Gindikin-Karpelevich formula} and the definition of
the Casselman basis implies
\begin{align}\label{eq:expansion of phi K using Casselman basis}
\varphi_{K,\chi}=\sum_{w\in W}c_w(\chi)f_{w,\chi}.
\end{align}

The $\widetilde{G}_n$-pairing on $\mathrm{I}(\chi)\times\mathrm{I}(\chi^{-1})$ is given by (see \cite[(10.2)]{COf})
\begin{align}\label{eq:metaplectic pairing}
\langle f_{\chi},f_{\chi^{-1}}\rangle=\sum_{b\in\lmodulo{B_{n,*}}{B_n}}\delta^{-1}_{B_n}(b)\int_Kf_{\chi}(\mathfrak{s}(b)\kappa(k))f_{\chi^{-1}}(\mathfrak{s}(b)\kappa(k))\, dk.
\end{align}

The following result was proved in \cite{CS1} in the course of computing Macdonald's formula (\cite[\S~4]{CS1}).
\begin{claim}\label{claim:phi K on Casselman basis}
For any $w\in W$,
\begin{align*}
\mathscr{P}_K(f_{w,\chi})(1)=\frac{c_{w_0}(\rconj{w_0w}\chi)}{Qc_w(\chi)},\qquad Q=\sum_{w\in W}q(w)^{-1}.
\end{align*}
\end{claim}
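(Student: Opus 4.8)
The plan is to compute the constant $\mathscr{P}_K(f_{w,\chi})(1)$ by expressing $f_{w,\chi}$ in terms of the $\varphi_{w',\chi}$ basis, applying the projection $\mathscr{P}_K$ to each, and using the Gindikin--Karpelevich formula together with the pairing \eqref{eq:metaplectic pairing}. First I would observe that $\mathscr{P}_K(f_{w,\chi})(1) = \langle \mathscr{P}_K(f_{w,\chi}), \mathrm{ch}^{\mathrm{gen}}_K\rangle$ up to the appropriate normalization, or more directly that $\mathscr{P}_K$ maps $\mathrm{I}(\chi)^{\mathcal{I}}$ onto the one-dimensional space $\mathrm{I}(\chi)^K = \C\varphi_{K,\chi}$, so that $\mathscr{P}_K(f_{w,\chi}) = \lambda_w \varphi_{K,\chi}$ for a scalar $\lambda_w$, and hence $\mathscr{P}_K(f_{w,\chi})(1) = \lambda_w$. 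To find $\lambda_w$, I would pair both sides against the unramified vector $\varphi_{K,\chi^{-1}}$ of the contragredient using \eqref{eq:metaplectic pairing}: on the right we get $\lambda_w \langle \varphi_{K,\chi},\varphi_{K,\chi^{-1}}\rangle$, and on the left $\langle \mathscr{P}_K(f_{w,\chi}),\varphi_{K,\chi^{-1}}\rangle = \langle f_{w,\chi},\varphi_{K,\chi^{-1}}\rangle$ since $\varphi_{K,\chi^{-1}}$ is $K$-fixed and $\mathscr{P}_K$ is self-adjoint for this pairing.

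Next I would compute $\langle f_{w,\chi},\varphi_{K,\chi^{-1}}\rangle$. Using \eqref{eq:expansion of phi K using Casselman basis} for the contragredient, $\varphi_{K,\chi^{-1}} = \sum_{w'} c_{w'}(\chi^{-1}) f_{w',\chi^{-1}}$, so it suffices to know the pairing between the Casselman bases of $\mathrm{I}(\chi)$ and $\mathrm{I}(\chi^{-1})$. The key input here is that this pairing is ``diagonal up to a twist'': by the description of the Casselman basis via intertwining operators, $f_{w,\chi} = T_{w^{-1}w_0}\varphi_{w_0,\rconj{(w^{-1}w_0)^{-1}}\chi}$, and $\langle f_{w,\chi},f_{w',\chi^{-1}}\rangle$ vanishes unless the images in the respective Jacquet modules are ``paired'', which by Claim~\ref{claim:action of t on f w in Jacquet module} forces $\rconj{w}\chi$ and $\rconj{w'}\chi^{-1}$ to be compatible, i.e.\ $w' = w_0 w$ (since $\rconj{w_0}(\rconj{w}\chi)^{-1} = \rconj{w_0 w}\chi^{-1}$ matches). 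Then only the term $w' = w_0 w$ survives, giving $\langle f_{w,\chi},\varphi_{K,\chi^{-1}}\rangle = c_{w_0w}(\chi^{-1})\langle f_{w,\chi},f_{w_0w,\chi^{-1}}\rangle$. One then evaluates the single pairing $\langle f_{w,\chi},f_{w_0w,\chi^{-1}}\rangle$ directly from \eqref{eq:metaplectic pairing}, using that $f_{w,\chi}$ and $f_{w_0w,\chi^{-1}}$ are supported on opposite Iwahori cells so that the integral collapses to a single coset contribution proportional to $\mathrm{vol}(\mathcal{I})$ and a Jacobian, yielding something like $q(w_0w)^{-1}$ times a normalization; carefully tracking the volume factors and using \eqref{eq:vol of I when vol K = 1} should produce the factor $Q^{-1} = (\sum_w q(w)^{-1})^{-1}$ after one also computes $\langle\varphi_{K,\chi},\varphi_{K,\chi^{-1}}\rangle$ (a Macdonald-type or direct computation). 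Finally, solving for $\lambda_w$ and simplifying $c_{w_0w}(\chi^{-1})/c_w(\chi)$ using the cocycle relation \eqref{eq:formula for composing Tw and Tw'} and the identity $c_{w_0}(\chi) = c_w(\chi)c_{w_0w}(\rconj{w}\chi)$ should rearrange into $c_{w_0}(\rconj{w_0w}\chi)/(Q\,c_w(\chi))$.

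The main obstacle I anticipate is the bookkeeping of the $\lmodulo{T_{n,*}}{T_n}$ sum in the pairing \eqref{eq:metaplectic pairing} and the relation \eqref{eq:unramified vanishes on torus not in T_n*}: in the metaplectic setting the pairing is not simply an integral over $K$ but involves a finite sum over torus cosets, and one must check that the extra cosets $b \notin T_{n,*}$ contribute zero to each of the pairings above (this is where \eqref{eq:unramified vanishes on torus not in T_n*} and the support arguments from the proof of Claim~\ref{claim:action of t on f w in Jacquet module} are essential), and that the normalization of $d_l b_*$ is consistent throughout. A secondary subtlety is verifying the self-adjointness of $\mathscr{P}_K$ with respect to the metaplectic pairing and the evaluation of $\langle \varphi_{K,\chi},\varphi_{K,\chi^{-1}}\rangle$, which in the non-metaplectic case is part of Casselman's Macdonald-formula computation and here requires the analogous computation for the cover; since the pairing \eqref{eq:metaplectic pairing} is taken from \cite{COf}, I expect the relevant normalizations to be available, but matching conventions will require care. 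Once these normalizations are pinned down, the argument is a direct translation of the relevant portion of \cite[\S~4]{CS1}.
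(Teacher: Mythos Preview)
Your approach differs from the paper's, and as written contains a gap. The paper does not compute the pairings $\langle f_{w,\chi}, f_{w',\chi^{-1}}\rangle$ directly at all. Instead it introduces the zonal spherical function $\Gamma_\chi(g) = \langle g\varphi_{K,\chi}, \varphi_{K,\chi^{-1}}\rangle$, which by \eqref{eq:unramified vanishes on torus not in T_n*} collapses to $\mathscr{P}_K(g\varphi_{K,\chi})(1)$. The paper then invokes the already-established metaplectic Macdonald formula \eqref{eq:reinterpretation of COf} from \cite{COf}, giving
\[
\Gamma_\chi(t) = Q^{-1}\sum_{w} c_{w_0}(\rconj{w_0w}\chi)\,\delta^{1/2}_{B_n}\,\rconj{w}\chi(t)
\]
for $t \in C_{\widetilde{T}_n}\cap\widetilde{T}_n^-$. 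On the other hand, substituting \eqref{eq:expansion of phi K using Casselman basis} and applying Claim~\ref{claim:Iwahori projection and the action of t- on f w} (via $\mathscr{P}_K\circ\mathscr{P}_{\mathcal{I}}=\mathscr{P}_K$) gives $\Gamma_\chi(t) = \sum_w c_w(\chi)\,\mathscr{P}_K(f_{w,\chi})(1)\,\delta^{1/2}_{B_n}\,\rconj{w}\chi(t)$. Since $\chi$ is regular, the characters $\rconj{w}\chi$ are linearly independent on $C_{\widetilde{T}_n}\cap\widetilde{T}_n^-$, and comparing coefficients yields the claim.

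The specific gap in your proposal is the assertion that ``$f_{w,\chi}$ and $f_{w_0w,\chi^{-1}}$ are supported on opposite Iwahori cells so that the integral collapses.'' This is false for general $w$: only $f_{w_0,\chi} = \varphi_{w_0,\chi}$ has support in a single cell $\widetilde{B}_{n,*}\mathfrak{s}(\mathfrak{w}_0)\kappa(\mathcal{I})$; for $w\ne w_0$ the element $f_{w,\chi} = T_{w^{-1}w_0}\varphi_{w_0,\rconj{(w^{-1}w_0)^{-1}}\chi}$ is an intertwining image and its support spreads across many cells, so no support-collapse shortcut is available for $\langle f_{w,\chi}, f_{w_0w,\chi^{-1}}\rangle$. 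Your orthogonality argument via Jacquet modules is also not correct as stated: the $\widetilde{G}_n$-pairing does not factor through $\mathrm{I}(\chi)_{N_n}\times\mathrm{I}(\chi^{-1})_{N_n}$ --- Casselman's pairing of Jacquet modules uses opposite unipotent radicals, so one must first insert a long intertwining operator on one side before any diagonality can be asserted. Your route could perhaps be pushed through, but it amounts to re-deriving Macdonald's formula; the paper instead quotes that formula and reads off the answer in two lines.
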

\begin{proof}
We prove the identity using the formula for the zonal spherical function. Put
\begin{align*}
\Gamma_{\chi}(g)=\langle g\varphi_{K,\chi},\varphi_{K,\chi^{-1}}\rangle,\qquad g\in\widetilde{G}_n.
\end{align*}
In the non-metaplectic setting a formula for $\Gamma_{\chi}(g)$ was proved by MacDonald \cite[Chapter~V, \S~3]{M}, then generalized to reductive groups in \cite{CS1}, as an application of the basis $\{f_{w,\chi}\}$. In the metaplectic case the formula was computed in \cite[\S~10]{COf} and in particular
for any $t\in C_{\widetilde{T}_n}\cap\widetilde{T}_n^-$,
\begin{align}\label{eq:reinterpretation of COf}
\Gamma_{\chi}(t)=Q^{-1}\sum_{w\in W}c_{w_0}(\rconj{w_0w}\chi)\delta^{1/2}_{B_n}\ \rconj{w}\chi(t).
\end{align}
On the other hand using \eqref{eq:metaplectic pairing} and the fact that $\varphi_{K,\chi^{-1}}(\mathfrak{s}(b))=0$ unless
$b\in B_{n,*}$ (see \eqref{eq:unramified vanishes on torus not in T_n*}),
\begin{align*}
\Gamma_{\chi}(g)
&=\sum_{b\in\lmodulo{B_{n,*}}{B_n}}\delta^{-1}_{B_n}(b)\int_K\varphi_{K,\chi}(\mathfrak{s}(b)\kappa(k)g)\varphi_{K,\chi^{-1}}(\mathfrak{s}(b)\kappa(k))\, dk\\
&=\int_K\varphi_{K,\chi}(\kappa(k)g)\, dk=\mathscr{P}_K(g\varphi_{K,\chi})(1).
\end{align*}
Putting \eqref{eq:expansion of phi K using Casselman basis} into this equality shows
\begin{align*}
\Gamma_{\chi}(t)=\sum_{w\in W}c_w(\chi)\mathscr{P}_K(tf_{w,\chi})(1).
\end{align*}
Since $\mathscr{P}_K(tf_{w,\chi})(1)=\mathscr{P}_K(\mathscr{P}_{\mathcal{I}}(tf_{w,\chi}))(1)$, Claim~\ref{claim:Iwahori projection and the action of t- on f w} implies
\begin{align*}
\Gamma_{\chi}(t)=\sum_{w\in W}c_w(\chi)\mathscr{P}_K(f_{w,\chi})(1)\delta^{1/2}_{B_n}\ \rconj{w}\chi(t).
\end{align*}
Comparing this with \eqref{eq:reinterpretation of COf} and since the characters $\chi$ are linearly independent as functions on
$t\in C_{\widetilde{T}_n}\cap\widetilde{T}_n^-$, we obtain the result.
\end{proof}
We described the tools of \cite{CS1} that we need and proceed to formulate an analog of Hironaka's theorem \cite[Proposition~1.9]{Hir}, for
computing functionals on $\mathrm{I}(\chi)$.

\subsection{Metaplectic version of Hironaka's theorem}\label{subsection:Hironaka theorem}
Let $\mathrm{I}(\chi)^*$ denote the linear dual of $\mathrm{I}(\chi)$.
For any $w\in W$, by definition
\begin{align*}
T_{w^{-1},\rconj{w}\chi^{-1}}^*:\mathrm{I}(\chi^{-1})^*\rightarrow\mathrm{I}(\rconj{w}\chi^{-1})^*.
\end{align*}
Since $\mathrm{I}(\chi)$ is the subspace of $\mathrm{I}(\chi^{-1})^*$ of smooth functionals and $T_{w^{-1},\rconj{w}\chi^{-1}}^*$ preserves smoothness,
\begin{align*}
T_{w^{-1},\rconj{w}\chi^{-1}}^{\vee}=T_{w^{-1},\rconj{w}\chi^{-1}}^*|_{\mathrm{I}(\chi)}:\mathrm{I}(\chi)\rightarrow\mathrm{I}(\rconj{w}\chi).
\end{align*}
\begin{claim}\label{claim:defining twisted intertwining functional}(\cite[Lemma~1.5, Proposition~1.6]{Hir})
If $c_w(\chi)\ne0$,
\begin{align*}
T_{w^{-1},\rconj{w}\chi^{-1}}^{\vee}=\frac{c_{w^{-1}}(\rconj{w}\chi^{-1})}{c_w(\chi)}T_{w,\chi}.
\end{align*}
If also $c_{w^{-1}}(\rconj{w}\chi^{-1})\ne0$, the map
\begin{align*}
\mathcal{T}_{w^{-1},\rconj{w}\chi^{-1}}=\frac{c_w(\chi)}{c_{w^{-1}}(\rconj{w}\chi^{-1})}T_{w^{-1},\rconj{w}\chi^{-1}}^*:\mathrm{I}(\chi^{-1})^*\rightarrow\mathrm{I}(\rconj{w}\chi^{-1})^*
\end{align*}
extends $T_{w,\chi}$.
\end{claim}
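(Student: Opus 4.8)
The plan is to identify $T_{w^{-1},\rconj{w}\chi^{-1}}^{\vee}$ as a scalar multiple of $T_{w,\chi}$, pin down the scalar by testing on unramified vectors, and then deduce the second assertion formally. Both $T_{w^{-1},\rconj{w}\chi^{-1}}^{\vee}$ and $T_{w,\chi}$ are $\widetilde{G}_n$-equivariant maps $\mathrm{I}(\chi)\to\mathrm{I}(\rconj{w}\chi)$ — the former because passing to the transpose and restricting to smooth functionals preserves equivariance, exactly as recalled just before the statement — so by the regularity of $\chi$ and \eqref{eq:one dim for regular hom} there is a scalar $\lambda$ with $T_{w^{-1},\rconj{w}\chi^{-1}}^{\vee}=\lambda\,T_{w,\chi}$. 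Here $T_{w,\chi}\ne0$ whenever $c_w(\chi)\ne0$, by \eqref{eq:Gindikin-Karpelevich formula}.

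To compute $\lambda$, I would apply both sides to the unramified normalized function $\varphi_{K,\chi}$ and pair the result against $\varphi_{K,\rconj{w}\chi^{-1}}$ using the $\widetilde{G}_n$-pairing \eqref{eq:metaplectic pairing}. On one side, \eqref{eq:Gindikin-Karpelevich formula} gives
\[
\langle \lambda\,T_{w,\chi}\varphi_{K,\chi},\varphi_{K,\rconj{w}\chi^{-1}}\rangle=\lambda\,c_w(\chi)\,\langle\varphi_{K,\rconj{w}\chi},\varphi_{K,\rconj{w}\chi^{-1}}\rangle .
\]
On the other side, the defining adjunction for $T_{w^{-1},\rconj{w}\chi^{-1}}^{\vee}$ together with \eqref{eq:Gindikin-Karpelevich formula} for $T_{w^{-1},\rconj{w}\chi^{-1}}\colon\mathrm{I}(\rconj{w}\chi^{-1})\to\mathrm{I}(\chi^{-1})$ gives
\[
\langle T_{w^{-1},\rconj{w}\chi^{-1}}^{\vee}\varphi_{K,\chi},\varphi_{K,\rconj{w}\chi^{-1}}\rangle=\langle\varphi_{K,\chi},T_{w^{-1},\rconj{w}\chi^{-1}}\varphi_{K,\rconj{w}\chi^{-1}}\rangle=c_{w^{-1}}(\rconj{w}\chi^{-1})\,\langle\varphi_{K,\chi},\varphi_{K,\chi^{-1}}\rangle .
\]
The final ingredient is that $\langle\varphi_{K,\omega},\varphi_{K,\omega^{-1}}\rangle=1$ for every regular $\omega$: in \eqref{eq:metaplectic pairing} the $K$-invariance of the unramified vectors collapses the integral over $K$, while \eqref{eq:unramified vanishes on torus not in T_n*} forces every coset in $\lmodulo{B_{n,*}}{B_n}$ other than the trivial one to contribute zero, leaving $\varphi_{K,\omega}(\mathfrak{s}(I_n))\varphi_{K,\omega^{-1}}(\mathfrak{s}(I_n))=1$. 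Comparing the two displays and cancelling $c_w(\chi)\ne0$ yields $\lambda=c_{w^{-1}}(\rconj{w}\chi^{-1})/c_w(\chi)$, which is the first identity.

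For the second assertion I would simply observe that $\mathcal{T}_{w^{-1},\rconj{w}\chi^{-1}}$, being $\frac{c_w(\chi)}{c_{w^{-1}}(\rconj{w}\chi^{-1})}T_{w^{-1},\rconj{w}\chi^{-1}}^{*}$, restricts on the smooth subspace $\mathrm{I}(\chi)\subset\mathrm{I}(\chi^{-1})^{*}$ to $\frac{c_w(\chi)}{c_{w^{-1}}(\rconj{w}\chi^{-1})}T_{w^{-1},\rconj{w}\chi^{-1}}^{\vee}$, which by the first identity equals $T_{w,\chi}$; hence $\mathcal{T}_{w^{-1},\rconj{w}\chi^{-1}}$ is an extension of $T_{w,\chi}$ to all of $\mathrm{I}(\chi^{-1})^{*}$.

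The main obstacle is bookkeeping the meromorphic continuation: the intertwining operators, hence the identity, are a priori only literally defined for $\chi$ in the cone of convergence, so I would first run the argument there — where all maps are honest and the unramified pairing is genuinely $1$ — and then extend to every regular $\chi$ with $c_w(\chi)\ne0$ by meromorphic continuation in $\underline{s}$, both sides of the asserted identity being meromorphic. The only other point requiring care, that the restricted transpose indeed lands inside $\mathrm{I}(\rconj{w}\chi)$ and is $\widetilde{G}_n$-equivariant, is precisely the content of the paragraph preceding the statement and can be invoked directly; the remaining computation parallels Hironaka's proof of \cite[Lemma~1.5, Proposition~1.6]{Hir} verbatim.
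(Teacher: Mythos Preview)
Your proposal is correct and follows essentially the same approach as the paper: use \eqref{eq:one dim for regular hom} to get proportionality, then pin down the scalar by pairing $T_{w,\chi}\varphi_{K,\chi}$ and $T_{w^{-1},\rconj{w}\chi^{-1}}^{\vee}\varphi_{K,\chi}$ against $\varphi_{K,\rconj{w}\chi^{-1}}$ via \eqref{eq:Gindikin-Karpelevich formula}, with \eqref{eq:unramified vanishes on torus not in T_n*} collapsing the sum in \eqref{eq:metaplectic pairing} to the trivial coset. You supply more detail than the paper (the explicit evaluation $\langle\varphi_{K,\omega},\varphi_{K,\omega^{-1}}\rangle=1$ and the meromorphic-continuation remark), but the argument is the same.
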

\begin{proof}
By \eqref{eq:one dim for regular hom} the mappings $T_{w^{-1},\rconj{w}\chi^{-1}}^{\vee}$ and $T_{w,\chi}$ are proportional. To compute the proportionality
factor, exactly as in \cite{Hir} we compare
\begin{align*}
\langle T_{w,\chi}(\varphi_{K,\chi}),\varphi_{K,\rconj{w}\chi^{-1}}\rangle\text{ to }
\langle T_{w^{-1},\rconj{w}\chi^{-1}}^{\vee}(\varphi_{K,\chi}),\varphi_{K,\rconj{w}\chi^{-1}}\rangle
\end{align*}
using
\eqref{eq:Gindikin-Karpelevich formula} and \eqref{eq:metaplectic pairing}. The $\widetilde{G}_n$-pairing given by \eqref{eq:metaplectic pairing}
involves a finite summation, but by \eqref{eq:unramified vanishes on torus not in T_n*} applied to $\varphi_{K,\rconj{w}\chi^{-1}}$, it reduces to one summand corresponding to the trivial coset.
\end{proof}
As a corollary we obtain the following result, explaining the normalization factor used to define
$\mathcal{T}_{w^{-1},\rconj{w}\chi^{-1}}$.
\begin{corollary}\label{corollary:intertwining commputes with projection}(\cite[Proposition~1.7]{Hir})
If $c_w(\chi)c_{w^{-1}}(\rconj{w}\chi^{-1})\ne0$, then for any compact open $K_0<K$,
\begin{align*}
\mathscr{P}_{K_0}\circ\mathcal{T}_{w^{-1},\rconj{w}\chi^{-1}}=T_{w,\chi}\circ\mathscr{P}_{K_0}.
\end{align*}
\end{corollary}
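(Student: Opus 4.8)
Corollary~\ref{corollary:intertwining commputes with projection} — the plan.

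The strategy is to reduce the claimed operator identity to the single statement already packaged in Claim~\ref{claim:defining twisted intertwining functional}, namely that $\mathcal{T}_{w^{-1},\rconj{w}\chi^{-1}}$ restricts on $\mathrm{I}(\chi)\subset\mathrm{I}(\chi^{-1})^*$ to the intertwining operator $T_{w,\chi}$, together with the fact that $\mathcal{T}_{w^{-1},\rconj{w}\chi^{-1}}$ is by construction a scalar multiple of the \emph{full} linear dual map $T_{w^{-1},\rconj{w}\chi^{-1}}^*$. The point is that the dual of an intertwining operator automatically commutes with the averaging projections $\mathscr{P}_{K_0}$, because $\mathscr{P}_{K_0}$ is itself given by an integral over $\kappa(K_0)$ of the group action, and taking adjoints turns that integral against one representation into the corresponding integral against the other.

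Concretely, first I would record that for any $\lambda\in\mathrm{I}(\chi^{-1})^*$ and any $f\in\mathrm{I}(\rconj{w}\chi^{-1})$,
\begin{align*}
\big(\mathscr{P}_{K_0}(T_{w^{-1},\rconj{w}\chi^{-1}}^*\lambda)\big)(f)=(T_{w^{-1},\rconj{w}\chi^{-1}}^*\lambda)\big(\mathscr{P}_{K_0}f\big),
\end{align*}
which holds because $\mathscr{P}_{K_0}$ on $\mathrm{I}(\rconj{w}\chi^{-1})^*$ is the adjoint of $\mathscr{P}_{K_0}$ on $\mathrm{I}(\rconj{w}\chi^{-1})$ — each is the normalized integral $\int_{K_0}\pi(\kappa(k))\,dk$ for the appropriate $\pi$, and $\kappa(K_0)$ is a group so $\int_{K_0}$ is invariant under $k\mapsto k^{-1}$. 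Next, since $T_{w^{-1},\rconj{w}\chi^{-1}}$ intertwines $\widetilde{G}_n$-actions, it commutes with $\mathscr{P}_{K_0}$ on $\mathrm{I}(\rconj{w}\chi^{-1})$, so the right-hand side equals $(T_{w^{-1},\rconj{w}\chi^{-1}}^*\lambda)(\mathscr{P}_{K_0}f)$ rewritten as $\mathscr{P}_{K_0}(T_{w^{-1},\rconj{w}\chi^{-1}}^*\mathscr{P}_{K_0}\lambda)(f)$; more cleanly, dualizing the identity $T\circ\mathscr{P}_{K_0}=\mathscr{P}_{K_0}\circ T$ (valid on the smooth representation) gives $\mathscr{P}_{K_0}\circ T^*=T^*\circ\mathscr{P}_{K_0}$ on the full dual. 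Multiplying through by the scalar $c_w(\chi)/c_{w^{-1}}(\rconj{w}\chi^{-1})$ yields $\mathscr{P}_{K_0}\circ\mathcal{T}_{w^{-1},\rconj{w}\chi^{-1}}=\mathcal{T}_{w^{-1},\rconj{w}\chi^{-1}}\circ\mathscr{P}_{K_0}$ on all of $\mathrm{I}(\chi^{-1})^*$.

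Finally I would restrict to the subspace $\mathrm{I}(\chi)$: there $\mathcal{T}_{w^{-1},\rconj{w}\chi^{-1}}$ agrees with $T_{w,\chi}$ by Claim~\ref{claim:defining twisted intertwining functional}, and $\mathscr{P}_{K_0}$ maps $\mathrm{I}(\chi)$ to $\mathrm{I}(\chi)^{K_0}\subset\mathrm{I}(\chi)$, so both sides of the previous display land in $\mathrm{I}(\rconj{w}\chi)$ and the identity becomes exactly $\mathscr{P}_{K_0}\circ\mathcal{T}_{w^{-1},\rconj{w}\chi^{-1}}=T_{w,\chi}\circ\mathscr{P}_{K_0}$ as functions $\mathrm{I}(\chi)\to\mathrm{I}(\rconj{w}\chi)$. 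The one point that needs a word of care — and is really the only substantive step — is the compatibility of the two $\mathscr{P}_{K_0}$'s under duality, i.e.\ that the adjoint of $\int_{K_0}\pi(\kappa(k))\,dk$ is $\int_{K_0}\pi^*(\kappa(k)^{-1})\,dk=\int_{K_0}\pi^*(\kappa(k))\,dk$; this uses unimodularity of $K_0$ and that $\kappa$ is a genuine splitting on $K_0$ (so $\kappa(k)^{-1}=\kappa(k^{-1})$), both of which are in place here. Everything else is formal.
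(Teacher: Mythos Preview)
Your argument has the right ingredients but the final step weakens the conclusion below what is actually stated and needed. You establish $\mathscr{P}_{K_0}\circ\mathcal{T}_{w^{-1},\rconj{w}\chi^{-1}}=\mathcal{T}_{w^{-1},\rconj{w}\chi^{-1}}\circ\mathscr{P}_{K_0}$ on all of $\mathrm{I}(\chi^{-1})^*$, which is correct, but then you \emph{restrict the domain} to $\mathrm{I}(\chi)$ before invoking Claim~\ref{claim:defining twisted intertwining functional}. Your conclusion is therefore only an identity of maps $\mathrm{I}(\chi)\to\mathrm{I}(\rconj{w}\chi)$. The corollary, however, is meant (and used in Lemma~\ref{lemma:analog of Hironaka theory}) as an identity of maps $\mathrm{I}(\chi^{-1})^*\to\mathrm{I}(\rconj{w}\chi)$: there it is applied to $g^{-1}\Lambda_{a,\chi}$, which lies in $\mathrm{I}(\chi^{-1})^*$ and need not be smooth.

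The missing observation is that $\mathscr{P}_{K_0}$ already carries all of $\mathrm{I}(\chi^{-1})^*$ into $\mathrm{I}(\chi)$: a $\kappa(K_0)$-fixed functional on $\mathrm{I}(\chi^{-1})$ is locally constant, hence smooth, so $(\mathrm{I}(\chi^{-1})^*)^{K_0}\subset\mathrm{I}(\chi)$. With this in hand you do not restrict the domain at all. From $\mathscr{P}_{K_0}\circ\mathcal{T}=\mathcal{T}\circ\mathscr{P}_{K_0}$ on $\mathrm{I}(\chi^{-1})^*$, note that the image of $\mathscr{P}_{K_0}$ lies in $\mathrm{I}(\chi)$, and on $\mathrm{I}(\chi)$ the operator $\mathcal{T}$ agrees with $T_{w,\chi}$ by Claim~\ref{claim:defining twisted intertwining functional}; hence $\mathcal{T}\circ\mathscr{P}_{K_0}=T_{w,\chi}\circ\mathscr{P}_{K_0}$ on the full dual, which is exactly the statement. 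This is precisely how the paper argues.
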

\begin{proof}
Since $\mathscr{P}_{K_0}$ takes $\mathrm{I}(\chi^{-1})^*$ to
$(\mathrm{I}(\chi^{-1})^*)^{K_0}\subset \mathrm{I}(\chi)$, both
sides are mappings $\mathrm{I}(\chi^{-1})^*\rightarrow \mathrm{I}(\rconj{w}\chi)$. Consider the \lhs.
Since $T_{w^{-1},\rconj{w}\chi^{-1}}$ and $\mathscr{P}_{K_0}$ commute,
so do $\mathcal{T}_{w^{-1},\rconj{w}\chi^{-1}}$ and $\mathscr{P}_{K_0}$,
but on the image of $\mathscr{P}_{K_0}$, according to Claim~\ref{claim:defining twisted intertwining functional},  $\mathcal{T}_{w^{-1},\rconj{w}\chi^{-1}}$ extends $T_{w,\chi}$.
\end{proof}

Asssume we have a family $\{\Lambda_{a,\chi}\}_{a\in\mathcal{A}}$ of functionals $\Lambda_{a,\chi}\in \mathrm{I}(\chi^{-1})^*$ indexed by a finite set $\mathcal{A}$, and such that for each $a$, $\Lambda_{a,\chi}$ is a meromorphic function in $\chi$.
We fix an arbitrary ordering on $\mathcal{A}$. Furthermore, assume that for each $w\in W$
there is an invertible matrix $A(w,\chi)=(A(w,\chi)_{a,a'})$ such that
\begin{align}\label{eq:matrix relation between family of functionals}
(T_{w^{-1},\rconj{w}\chi^{-1}}^*\Lambda_{a,\chi})_{a\in\mathcal{A}}=A(w,\chi)\ (\Lambda_{a,\rconj{w}\chi})_{a\in\mathcal{A}}.
\end{align}
The entries of $A(w,\chi)$ are by definition meromorphic in $\chi$.
\begin{remark}
In \cite{Hir} the matrix is defined using $\mathcal{T}_{w^{-1},\rconj{w}\chi^{-1}}$, we followed the convention of  \cite{KP,COf}.
\end{remark}

For example, one can consider the family $\{\Upsilon_{w,b}\}$ of functionals introduced in the proof
of Claim~\ref{claim:action of t on f w in Jacquet module}, then $A(w,\chi)$ exists because these form a basis of
$(\mathrm{I}(\chi)_{N_n})^*$. Another example is the Whittaker functionals, then $\mathcal{A}$ can be a set of
representatives of $\lmodulo{T_{n,*}}{T_n}$ (see \cite{KP,COf}).

We are interested in obtaining a formula for $\Lambda_{a,\chi}(g\varphi_{K,\chi^{-1}})$, where $g\in\widetilde{G}_n$. Since $\mathscr{P}_{\mathcal{I}}(g^{-1}\Lambda_{a,\chi})\in(\mathrm{I}(\chi^{-1})^*)^{\mathcal{I}}\subset\mathrm{I}(\chi)^{\mathcal{I}}$, there is
a function $\lambda_{a,\chi,g}\in\mathrm{I}(\chi)^{\mathcal{I}}$ satisfying
\begin{align}\label{eq:correspondingg function from functional}
\mathscr{P}_I(g^{-1}\Lambda_{a,\chi})(f_{\chi^{-1}}')=\langle \lambda_{a,\chi,g},f_{\chi^{-1}}'\rangle,\qquad\forall f_{\chi^{-1}}'\in\mathrm{I}(\chi^{-1}).
\end{align}
In other words $\lambda_{a,\chi,g}$ is the image of $\mathscr{P}_I(g^{-1}\Lambda_{a,\chi})$ in $\mathrm{I}(\chi)$.
\begin{claim}\label{claim:meaning of Gamma a chi and relation to functional}
$\Lambda_{a,\chi}(g\varphi_{K,\chi^{-1}})=\mathscr{P}_K(\lambda_{a,\chi,g})(1)$.
\end{claim}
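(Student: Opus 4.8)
The plan is to unwind the definitions of $\lambda_{a,\chi,g}$ and $\mathscr{P}_K$, and exploit the fact that $\varphi_{K,\chi^{-1}}$ is $K$-invariant. The starting point is the defining property \eqref{eq:correspondingg function from functional}: for every $f'_{\chi^{-1}}\in\mathrm{I}(\chi^{-1})$ we have $\mathscr{P}_{\mathcal{I}}(g^{-1}\Lambda_{a,\chi})(f'_{\chi^{-1}})=\langle\lambda_{a,\chi,g},f'_{\chi^{-1}}\rangle$. I would first substitute $f'_{\chi^{-1}}=\varphi_{K,\chi^{-1}}$. Since $\varphi_{K,\chi^{-1}}$ is fixed by $\kappa(\mathcal{I})$, the projection $\mathscr{P}_{\mathcal{I}}$ inside the pairing can be moved onto the other argument: $\langle\mathscr{P}_{\mathcal{I}}(g^{-1}\Lambda_{a,\chi}),\varphi_{K,\chi^{-1}}\rangle=\langle g^{-1}\Lambda_{a,\chi},\mathscr{P}_{\mathcal{I}}(\varphi_{K,\chi^{-1}})\rangle=\langle g^{-1}\Lambda_{a,\chi},\varphi_{K,\chi^{-1}}\rangle$, using that $\mathscr{P}_{\mathcal{I}}$ is self-adjoint for the pairing \eqref{eq:metaplectic pairing} (it is averaging over the compact group $\kappa(\mathcal{I})$, under which the pairing is invariant) and that $\varphi_{K,\chi^{-1}}$ is already $\kappa(\mathcal{I})$-fixed. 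By the definition of the action, $\langle g^{-1}\Lambda_{a,\chi},\varphi_{K,\chi^{-1}}\rangle=\Lambda_{a,\chi}(g\varphi_{K,\chi^{-1}})$, which is exactly the left-hand side we want.

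It remains to identify the left-hand side $\langle\lambda_{a,\chi,g},\varphi_{K,\chi^{-1}}\rangle$ with $\mathscr{P}_K(\lambda_{a,\chi,g})(1)$. For this I would plug $\varphi_{K,\chi^{-1}}=P_{\chi^{-1}}(\mathrm{ch}^{\mathrm{gen}}_K)$ into the pairing formula \eqref{eq:metaplectic pairing} and use that $\varphi_{K,\chi^{-1}}(\mathfrak{s}(b)\kappa(k))=0$ unless $b\in B_{n,*}$, by \eqref{eq:unramified vanishes on torus not in T_n*}; so the sum over $\lmodulo{B_{n,*}}{B_n}$ collapses to the trivial coset, and on that coset $\varphi_{K,\chi^{-1}}(\kappa(k))=1$ for all $k\in K$. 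Thus $\langle\lambda_{a,\chi,g},\varphi_{K,\chi^{-1}}\rangle=\int_K\lambda_{a,\chi,g}(\kappa(k))\,dk=\mathscr{P}_K(\lambda_{a,\chi,g})(1)$, which is precisely the asserted formula. (This is exactly the reduction of the pairing to a single coset that was already used in the proof of Claim~\ref{claim:defining twisted intertwining functional} and in Claim~\ref{claim:phi K on Casselman basis}.)

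The only genuinely delicate point — and the one I would write out most carefully — is the self-adjointness step: justifying that $\langle\mathscr{P}_{\mathcal{I}}(\Phi),f'\rangle=\langle\Phi,\mathscr{P}_{\mathcal{I}}(f')\rangle$ for $\Phi\in\mathrm{I}(\chi^{-1})^*$ (not just for smooth vectors) and hence $=\langle\Phi,f'\rangle$ when $f'$ is $\kappa(\mathcal{I})$-fixed. This follows because $\mathscr{P}_{\mathcal{I}}$ is integration over the compact group $\kappa(\mathcal{I})$ against the measure of total mass one, the pairing \eqref{eq:metaplectic pairing} is $\widetilde G_n$-invariant (in particular $\kappa(\mathcal{I})$-invariant), and the action of $\mathscr{P}_{\mathcal{I}}$ on $\mathrm{I}(\chi^{-1})^*$ is by definition the adjoint of its action on $\mathrm{I}(\chi^{-1})$; everything here is a finite-dimensional bookkeeping once one restricts to the $\kappa(\mathcal{I})$-isotypic components, since $\mathrm{I}(\chi^{-1})$ is admissible. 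No step requires any regularity or genericity hypothesis beyond what is already standing, so the claim holds as stated.
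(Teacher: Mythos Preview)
Your proof is correct and follows essentially the same route as the paper's. Both substitute $f'_{\chi^{-1}}=\varphi_{K,\chi^{-1}}$ into \eqref{eq:correspondingg function from functional}, use \eqref{eq:metaplectic pairing} together with \eqref{eq:unramified vanishes on torus not in T_n*} to collapse the pairing $\langle\lambda_{a,\chi,g},\varphi_{K,\chi^{-1}}\rangle$ to $\int_K\lambda_{a,\chi,g}(\kappa(k))\,dk=\mathscr{P}_K(\lambda_{a,\chi,g})(1)$, and then observe that $\mathscr{P}_{\mathcal{I}}(g^{-1}\Lambda_{a,\chi})(\varphi_{K,\chi^{-1}})=(g^{-1}\Lambda_{a,\chi})(\varphi_{K,\chi^{-1}})$ because $\varphi_{K,\chi^{-1}}$ is already $\kappa(\mathcal{I})$-fixed; the paper states this last step in one line, while you spell out the underlying self-adjointness of $\mathscr{P}_{\mathcal{I}}$.
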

\begin{proof}
Equality~\eqref{eq:correspondingg function from functional} together with \eqref{eq:metaplectic pairing}
and \eqref{eq:unramified vanishes on torus not in T_n*} imply
\begin{align*}
\mathscr{P}_I(g^{-1}\Lambda_{a,\chi})(\varphi_{K,\chi^{-1}})=\mathscr{P}_K(\lambda_{a,\chi,g})(1).
\end{align*}
Then since
\begin{align*}
\Lambda_{a,\chi}(g\varphi_{K,\chi^{-1}})=g^{-1}\Lambda_{a,\chi}(\varphi_{K,\chi^{-1}})=
\mathscr{P}_I(g^{-1}\Lambda_{a,\chi})(\varphi_{K,\chi^{-1}}),
\end{align*}
the result holds.
\end{proof}
Here is the analog of \cite[Proposition~1.9]{Hir}.
\begin{lemma}\label{lemma:analog of Hironaka theory}
\begin{align*}
(\Lambda_{a,\chi}(g\varphi_{K,\chi^{-1}}))_{a\in\mathcal{A}}
=\sum_{w\in W}\frac{c_{w_0}(\rconj{w_0w}\chi)}{Qc_{w^{-1}}(\rconj{w}\chi^{-1})}
A(w,\chi)(\lambda_{a,\rconj{w}\chi,g}(1))_{a\in\mathcal{A}}.
\end{align*}
\end{lemma}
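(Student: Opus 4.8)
The plan is to expand $\lambda_{a,\chi,g}$ in the Casselman basis $\{f_{w,\chi}\}_{w\in W}$ of $\mathrm{I}(\chi)^{\mathcal{I}}$, to evaluate the resulting expression for $\mathscr{P}_K$ at the identity via Claim~\ref{claim:phi K on Casselman basis}, and to compute the expansion coefficients by means of the intertwining matrix $A(w,\chi)$. Concretely, write $\lambda_{a,\chi,g}=\sum_{w\in W}m_{a,w}(\chi)f_{w,\chi}$ with $m_{a,w}(\chi)$ meromorphic in $\chi$. Since $\{\Upsilon_w\}_w$ is the dual basis to $\{f_{w,\chi}\}_w$ (Claim~\ref{claim:analog of Casselman basis - def of functionals}), we have $m_{a,w}(\chi)=\Upsilon_w(\lambda_{a,\chi,g})=(T_{w,\chi}\lambda_{a,\chi,g})(\mathfrak{s}(I_n))$. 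Applying $\mathscr{P}_K$, evaluating at $\mathfrak{s}(I_n)$, and using Claims~\ref{claim:meaning of Gamma a chi and relation to functional} and \ref{claim:phi K on Casselman basis} yields
\begin{align*}
\Lambda_{a,\chi}(g\varphi_{K,\chi^{-1}})=\mathscr{P}_K(\lambda_{a,\chi,g})(1)=\sum_{w\in W}m_{a,w}(\chi)\,\frac{c_{w_0}(\rconj{w_0w}\chi)}{Qc_w(\chi)}.
\end{align*}

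The second step is to identify $m_{a,w}(\chi)$. Recall that $\lambda_{a,\chi,g}$ is the image in $\mathrm{I}(\chi)$ of $\mathscr{P}_{\mathcal{I}}(g^{-1}\Lambda_{a,\chi})$. Assume first that $c_w(\chi)c_{w^{-1}}(\rconj{w}\chi^{-1})\ne0$, so that Corollary~\ref{corollary:intertwining commputes with projection} applies and gives $T_{w,\chi}(\mathscr{P}_{\mathcal{I}}(g^{-1}\Lambda_{a,\chi}))=\mathscr{P}_{\mathcal{I}}(\mathcal{T}_{w^{-1},\rconj{w}\chi^{-1}}(g^{-1}\Lambda_{a,\chi}))$. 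Because the intertwining operator $T_{w^{-1},\rconj{w}\chi^{-1}}$ is $\widetilde{G}_n$-equivariant, so is its transpose, hence $\mathcal{T}_{w^{-1},\rconj{w}\chi^{-1}}$ commutes with the $\widetilde{G}_n$-action on functionals; combining this with the definition of $\mathcal{T}_{w^{-1},\rconj{w}\chi^{-1}}$ from Claim~\ref{claim:defining twisted intertwining functional} and with the matrix relation \eqref{eq:matrix relation between family of functionals} gives
\begin{align*}
T_{w,\chi}(\mathscr{P}_{\mathcal{I}}(g^{-1}\Lambda_{a,\chi}))=\frac{c_w(\chi)}{c_{w^{-1}}(\rconj{w}\chi^{-1})}\sum_{a'\in\mathcal{A}}A(w,\chi)_{a,a'}\,\mathscr{P}_{\mathcal{I}}(g^{-1}\Lambda_{a',\rconj{w}\chi}).
\end{align*}
Viewing both sides inside $\mathrm{I}(\rconj{w}\chi)^{\mathcal{I}}$ this reads $T_{w,\chi}\lambda_{a,\chi,g}=\tfrac{c_w(\chi)}{c_{w^{-1}}(\rconj{w}\chi^{-1})}\sum_{a'}A(w,\chi)_{a,a'}\lambda_{a',\rconj{w}\chi,g}$. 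Evaluating at $\mathfrak{s}(I_n)$ then identifies $m_{a,w}(\chi)$ with $\tfrac{c_w(\chi)}{c_{w^{-1}}(\rconj{w}\chi^{-1})}$ times the $a$-th entry of the vector $A(w,\chi)(\lambda_{a,\rconj{w}\chi,g}(1))_{a\in\mathcal{A}}$. Both sides being meromorphic in $\chi$, this identity — first established on the dense open locus where $c_w(\chi)c_{w^{-1}}(\rconj{w}\chi^{-1})\ne0$ — holds identically.

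The final step is to substitute this value of $m_{a,w}(\chi)$ into the formula of the first step; the factors $c_w(\chi)$ cancel, and assembling the scalar identities over all $a\in\mathcal{A}$ into a single vector identity produces exactly the asserted formula. I do not expect a genuine obstacle here: the substantive input (the Casselman basis, the Gindikin--Karpelevich formula, and Macdonald's/zonal spherical function computation feeding into Claim~\ref{claim:phi K on Casselman basis}, together with Hironaka's normalization $\mathcal{T}_{w^{-1},\rconj{w}\chi^{-1}}$) has already been established, and the lemma is essentially their combinatorial assembly. The only points demanding care are bookkeeping of the $c$-factor normalizations, the passage between the ``functional on $\mathrm{I}(\chi^{-1})$'' and ``vector in $\mathrm{I}(\chi)$'' pictures, and treating every identity as one of meromorphic functions in $\chi$ so as to legitimately extend from the region where the relevant $c$-factors are nonzero.
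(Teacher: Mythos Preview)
Your proposal is correct and follows essentially the same approach as the paper: expand $\lambda_{a,\chi,g}$ in the Casselman basis, extract the coefficients via $\Upsilon_w$, rewrite $T_{w,\chi}\mathscr{P}_{\mathcal{I}}(g^{-1}\Lambda_{a,\chi})$ through Corollary~\ref{corollary:intertwining commputes with projection} and the normalization $\mathcal{T}_{w^{-1},\rconj{w}\chi^{-1}}$, invoke the matrix relation \eqref{eq:matrix relation between family of functionals}, and conclude via Claims~\ref{claim:phi K on Casselman basis} and \ref{claim:meaning of Gamma a chi and relation to functional}, extending from the locus $c_w(\chi)c_{w^{-1}}(\rconj{w}\chi^{-1})\ne0$ by meromorphy. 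The only cosmetic difference is that the paper swaps $T^*_{w^{-1},\rconj{w}\chi^{-1}}$ past $\mathscr{P}_{\mathcal{I}}$ and $g^{-1}$ at the end rather than (as you do) commuting $\mathcal{T}$ with the $\widetilde{G}_n$-action up front; the content is identical.
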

\begin{proof}
The formal steps in \cite{Hir} can be carried out immediately, because we established the
machinery used in \textit{loc. cit.} for the metaplectic setting. We repeat the steps, for readability.
For the proof we may assume $c_w(\chi)c_{w^{-1}}(\rconj{w}\chi^{-1})\ne0$.
Since $\lambda_{a,\chi,g}\in\mathrm{I}(\chi)^{\mathcal{I}}$, we can write
\begin{align}\label{eq:expression of projection onto Iwahori of functional using Casselman basis}
\lambda_{a,\chi,g}=\sum_{w\in W}d_{a,\chi}(g)f_{w,\chi},
\end{align}
for some complex coefficients $d_{a,\chi}(g)$. The coefficients can be computed by applying $\Upsilon_{w,\chi}$,
\begin{align*}
d_{a,\chi}(g)=T_{w,\chi}\lambda_{a,\chi,g}(1).
\end{align*}
By the definition of $\lambda_{a,\chi,g}$ and using Corollary~\ref{corollary:intertwining commputes with projection},
\begin{align*}
T_{w,\chi}\lambda_{a,\chi,g}=T_{w,\chi}\mathscr{P}_I(g^{-1}\Lambda_{a,\chi})
=\mathscr{P}_{\mathcal{I}}(\mathcal{T}_{w^{-1},\rconj{w}\chi^{-1}}g^{-1}\Lambda_{a,\chi}).
\end{align*}
Then Claim~\ref{claim:defining twisted intertwining functional} implies
\begin{align*}
d_{a,\chi}(g)=\mathscr{P}_{\mathcal{I}}(\mathcal{T}_{w^{-1},\rconj{w}\chi^{-1}}g^{-1}\Lambda_{a,\chi})(1)
=\frac{c_w(\chi)}{c_{w^{-1}}(\rconj{w}\chi^{-1})}\mathscr{P}_{\mathcal{I}}(g^{-1}T_{w^{-1},\rconj{w}\chi^{-1}}^*\Lambda_{a,\chi})(1).
\end{align*}
By Claim~\ref{claim:meaning of Gamma a chi and relation to functional},
to compute $\Lambda_{a,\chi}(g\varphi_{K,\chi^{-1}})$ we apply $\mathscr{P}_{K}$ to
\eqref{eq:expression of projection onto Iwahori of functional using Casselman basis} and evaluate at $1$.
Claim~\ref{claim:phi K on Casselman basis} gives the values of $\mathscr{P}_{K}(f_{w,\chi})(1)$. We obtain
\begin{align*}
\Lambda_{a,\chi}(g\varphi_{K,\chi^{-1}})=\sum_{w\in W}\mathscr{P}_{K}(f_{w,\chi})(1)d_{a,\chi}(g)
=\sum_{w\in W}\frac{c_{w_0}(\rconj{w_0w}\chi)}{Qc_{w^{-1}}(\rconj{w}\chi^{-1})}\mathscr{P}_{\mathcal{I}}(g^{-1}T_{w^{-1},\rconj{w}\chi^{-1}}^*\Lambda_{a,\chi})(1).
\end{align*}
Now since
\begin{align*}
\mathscr{P}_{\mathcal{I}}(g^{-1}T_{w^{-1},\rconj{w}\chi^{-1}}^*\Lambda_{a,\chi})
=T_{w^{-1},\rconj{w}\chi^{-1}}^*\mathscr{P}_{\mathcal{I}}(g^{-1}\Lambda_{a,\chi}),
\end{align*}
\eqref{eq:matrix relation between family of functionals} implies
\begin{align*}
(\Lambda_{a,\chi}(g\varphi_{K,\chi^{-1}}))_{a\in\mathcal{A}}
=\sum_{w\in W}\frac{c_{w_0}(\rconj{w_0w}\chi)}{Qc_{w^{-1}}(\rconj{w}\chi^{-1})}A(w,\chi)(
\lambda_{a,\rconj{w}\chi,g}(1))_{a\in\mathcal{A}},
\end{align*}
as claimed.
\end{proof}
As observed in \cite{Yia}, the value of $\lambda_{a,\chi,g}(1)$ corresponds to the evaluation of the functional
at $g\varphi_{e,\chi^{-1}}$. Namely,
\begin{claim}
$\lambda_{a,\chi,g}(1)=\mathrm{vol}(\mathcal{I})^{-1}\Lambda_{a,\chi}(g\varphi_{e,\chi^{-1}})$.
\end{claim}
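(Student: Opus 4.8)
The plan is to pair the defining relation \eqref{eq:correspondingg function from functional} of $\lambda_{a,\chi,g}$ against the single test vector $f_{\chi^{-1}}'=\varphi_{e,\chi^{-1}}$, which yields $\mathscr{P}_{\mathcal{I}}(g^{-1}\Lambda_{a,\chi})(\varphi_{e,\chi^{-1}})=\langle\lambda_{a,\chi,g},\varphi_{e,\chi^{-1}}\rangle$, and then to evaluate the two sides separately.

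For the left-hand side, I would use that $\varphi_{e,\chi^{-1}}=P_{\chi^{-1}}(\mathrm{ch}^{\mathrm{gen}}_{\mathcal{I}})$ is right-invariant under $\kappa(\mathcal{I})$, hence is fixed by the contragredient action of $\kappa(\mathcal{I})$ on $\mathrm{I}(\chi^{-1})^*$; since $\mathscr{P}_{\mathcal{I}}$ averages this action over $\mathcal{I}$ with total mass $1$, the left-hand side collapses to $(g^{-1}\Lambda_{a,\chi})(\varphi_{e,\chi^{-1}})=\Lambda_{a,\chi}(g\varphi_{e,\chi^{-1}})$.

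For the right-hand side, I would first prove the auxiliary identity $\langle f,\varphi_{e,\chi^{-1}}\rangle=\mathrm{vol}(\mathcal{I})\,f(\mathfrak{s}(I_n))$ for every $f\in\mathrm{I}(\chi)^{\mathcal{I}}$, where $\mathrm{vol}(\mathcal{I})$ refers to the normalization $\mathrm{vol}(K)=1$ implicit in \eqref{eq:metaplectic pairing} (equivalently $\mathrm{vol}(\mathcal{I})=(\sum_{w}q(w))^{-1}$ by \eqref{eq:vol of I when vol K = 1}). The ingredients are that $\varphi_{e,\chi^{-1}}$ is supported on $\widetilde{B}_{n,*}\kappa(\mathcal{I})$, transforms on the left under $\widetilde{B}_{n,*}$ by $\delta^{1/2}_{B_n}\chi^{-1}$, and takes the value $1$ at $\mathfrak{s}(I_n)$, so that on $\kappa(K)$ it equals the characteristic function of $\kappa(\mathcal{I})$. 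Substituting $\varphi_{e,\chi^{-1}}$ into \eqref{eq:metaplectic pairing}, I would argue that only the trivial coset in $\lmodulo{B_{n,*}}{B_n}$ contributes: if $\mathfrak{s}(b)\kappa(k)$ lies in the support of $\varphi_{e,\chi^{-1}}$, then $bk\in B_{n,*}\mathcal{I}$, and the normality of $B_{n,*}$ in $B_n$ together with $B_n\cap K\subset B_{n,*}$ (which holds because $\mathcal{O}^*\subset F^{*2}\mathcal{O}^*$) forces $b\in B_{n,*}$. The remaining summand is $\int_{\mathcal{I}}f(\mathfrak{s}(I_n))\,dk=\mathrm{vol}(\mathcal{I})\,f(\mathfrak{s}(I_n))$ by the $\kappa(\mathcal{I})$-invariance of $f$ and the computation of $\varphi_{e,\chi^{-1}}$ on $\kappa(K)$. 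Taking $f=\lambda_{a,\chi,g}$ then turns the right-hand side into $\mathrm{vol}(\mathcal{I})\,\lambda_{a,\chi,g}(1)$, and combining with the left-hand side gives $\Lambda_{a,\chi}(g\varphi_{e,\chi^{-1}})=\mathrm{vol}(\mathcal{I})\,\lambda_{a,\chi,g}(1)$, which is the assertion.

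All the individual steps are short manipulations of the pairing \eqref{eq:metaplectic pairing} and of $\mathscr{P}_{\mathcal{I}}$; the point that needs genuine attention --- and the step I expect to be the main obstacle --- is keeping track of the two different Haar normalizations on $\mathcal{I}$ (the $\mathrm{vol}(\mathcal{I})=1$ normalization built into $\mathscr{P}_{\mathcal{I}}$ versus the $\mathrm{vol}(K)=1$ normalization in the pairing), and the elementary but necessary verification that no nontrivial coset of $B_{n,*}$ in $B_n$ contributes to $\langle\lambda_{a,\chi,g},\varphi_{e,\chi^{-1}}\rangle$.
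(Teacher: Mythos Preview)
Your proposal is correct and follows essentially the same route as the paper: pair \eqref{eq:correspondingg function from functional} against $\varphi_{e,\chi^{-1}}$, collapse $\mathscr{P}_{\mathcal{I}}$ using the $\kappa(\mathcal{I})$-invariance of $\varphi_{e,\chi^{-1}}$, and reduce the pairing \eqref{eq:metaplectic pairing} to $\mathrm{vol}(\mathcal{I})\,\lambda_{a,\chi,g}(1)$ by showing only the trivial $B_{n,*}$-coset and only $k\in\mathcal{I}$ contribute. The paper phrases the support argument via disjointness of the sets $tB_{n,*}K$ and the identity $K\cap B_{n,*}\mathcal{I}=\mathcal{I}$ (from \eqref{eq:reformulation of Iwahori}), whereas you use $B_n\cap K\subset B_{n,*}$, but these are equivalent elementary manipulations.
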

\begin{proof}
We show
\begin{align*}
\lambda_{a,\chi,g}(1)=\mathrm{vol}(\mathcal{I})^{-1}\langle \lambda_{a,\chi,g},\varphi_{e,\chi^{-1}}\rangle.
\end{align*}
Then the result follows from \eqref{eq:correspondingg function from functional} because
\begin{align*}
\mathscr{P}_I(g^{-1}\Lambda_{a,\chi})(\varphi_{e,\chi^{-1}})=
\Lambda_{a,\chi}(g\varphi_{e,\chi^{-1}}).
\end{align*}
For $b\in B_n$ and $k\in K$, if $\mathfrak{s}(b)\kappa(k)$ belongs to the support of $\varphi_{e,\chi^{-1}}$,
then $b\in B_{n,*}$ and $k\in\mathcal{I}$. This is because when $tT_{n,*}\ne t'T_{n,*}$, the sets
$tB_{n,*}K$ and $t'B_{n,*}K$ are disjoint and $K\cap B_{n,*}\mathcal{I}=\mathcal{I}$ (use \eqref{eq:reformulation of Iwahori}). Hence looking at \eqref{eq:metaplectic pairing} we obtain
\begin{align*}
\langle \lambda_{a,\chi,g},\varphi_{e,\chi^{-1}}\rangle=\int_{\mathcal{I}}\lambda_{a,\chi,g}(\kappa(k))\, dk=\mathrm{vol}(\mathcal{I})\lambda_{a,\chi,g}(1),
\end{align*}
as required.
\end{proof}
\begin{corollary}\label{corollary:analog of Hironaka theory with phi I functions}
\begin{align*}
(\Lambda_{a,\chi}(g\varphi_{K,\chi^{-1}}))_{a\in\mathcal{A}}=\frac1{Q\mathrm{vol}(\mathcal{I})}\sum_{w\in W}\frac{c_{w_0}(\rconj{w_0w}\chi)}{c_{w^{-1}}(\rconj{w}\chi^{-1})}
A(w,\chi)(\Lambda_{a,\rconj{w}\chi}(g\varphi_{e,\chi^{-1}}))_{a\in\mathcal{A}}.
\end{align*}
\end{corollary}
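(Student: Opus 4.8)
The plan is to obtain Corollary~\ref{corollary:analog of Hironaka theory with phi I functions} from Lemma~\ref{lemma:analog of Hironaka theory} by a single substitution, using the Claim established immediately above. First I would invoke that Claim with $\chi$ replaced by $\rconj{w}\chi$, for each $w\in W$ and $a\in\mathcal{A}$:
\begin{align*}
\lambda_{a,\rconj{w}\chi,g}(1)=\mathrm{vol}(\mathcal{I})^{-1}\Lambda_{a,\rconj{w}\chi}(g\varphi_{e,(\rconj{w}\chi)^{-1}}),
\end{align*}
where $\varphi_{e,(\rconj{w}\chi)^{-1}}$ is the Iwahori-fixed element of $\mathrm{I}((\rconj{w}\chi)^{-1})$ attached to the identity coset and $\Lambda_{a,\rconj{w}\chi}\in\mathrm{I}((\rconj{w}\chi)^{-1})^{*}$, so the right-hand pairing is exactly the one appearing in the statement of the corollary (where $\varphi_{e,\chi^{-1}}$ is the shorthand for this element of the relevant induced space).

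Next I would substitute this into the right-hand side of Lemma~\ref{lemma:analog of Hironaka theory}. Since $A(w,\chi)$ acts linearly on the vector $(\lambda_{a,\rconj{w}\chi,g}(1))_{a\in\mathcal{A}}$, replacing each entry by $\mathrm{vol}(\mathcal{I})^{-1}\Lambda_{a,\rconj{w}\chi}(g\varphi_{e,(\rconj{w}\chi)^{-1}})$ merely multiplies the $w$-th summand by the $w$-independent scalar $\mathrm{vol}(\mathcal{I})^{-1}$. Pulling this scalar out of the summation over $W$ and combining it with the $Q^{-1}$ already present in the Lemma leaves the factor $\frac{c_{w_0}(\rconj{w_0w}\chi)}{c_{w^{-1}}(\rconj{w}\chi^{-1})}$ inside the sum and the constant $\frac{1}{Q\,\mathrm{vol}(\mathcal{I})}$ in front, which is precisely the asserted identity. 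No computation beyond this rearrangement is needed.

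The only point requiring a word of care — and the nearest thing to an obstacle — is the domain of validity. Lemma~\ref{lemma:analog of Hironaka theory} and the preceding Claim were argued under the standing regularity assumption on $\chi$, and in several places only on the locus where the normalizing factors $c_w(\chi)\,c_{w^{-1}}(\rconj{w}\chi^{-1})$ are nonzero, equivalently where the relevant intertwining integrals and the pairing \eqref{eq:metaplectic pairing} converge. Every ingredient on both sides of the asserted identity — the functionals $\Lambda_{a,\chi}$, the entries of $A(w,\chi)$, and the Gindikin--Karpelevich factors — is meromorphic in $\chi$, namely in the parameter $\underline{s}$, so the equality, once known on that nonempty open set, extends by meromorphic continuation to all regular $\chi$, which is the generality in which the corollary is phrased.
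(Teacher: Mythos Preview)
Your proposal is correct and matches the paper's (implicit) argument: the corollary is stated without proof in the paper precisely because it is the immediate substitution of the preceding Claim, applied with $\chi$ replaced by $\rconj{w}\chi$, into Lemma~\ref{lemma:analog of Hironaka theory}, followed by pulling the scalar $\mathrm{vol}(\mathcal{I})^{-1}$ out of the sum. Your remark that $\varphi_{e,\chi^{-1}}$ on the right-hand side is shorthand for the Iwahori basis element in $\mathrm{I}((\rconj{w}\chi)^{-1})$ is correct and worth noting, and your paragraph on meromorphic continuation is a reasonable (if slightly more than necessary) justification.
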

As mentioned in the introduction, the proofs in \S~\ref{section:Casselman's basis and a result of Hironaka}-\ref{subsection:Hironaka theorem} apply to a more general setting of covering groups. The preliminary results we used are essentially
the Gindikin-Karpelevich formula \eqref{eq:Gindikin-Karpelevich formula}, the description of Casselman's basis using intertwining operators (in the
proof of Claim~\ref{claim:analog of Casselman basis - def of functionals}) and a formula for the zonal spherical function (used for the proof
Claim~\ref{claim:phi K on Casselman basis}). These results were established in \cite{COf} for $r$-fold covers of $G_n$, hence our results apply to these groups as well.

McNamara has generalized substantially several results of \cite{COf}, to any central extension
of $G$ by a finite cyclic group, where $G$ is an unramified reductive group over a $p$-adic field.
See \cite[Theorem~6.4]{McNamara2} regarding the Gindikin-Karpelevich formula and \cite[Proposition~5.2]{McNamara3} (Casselman's basis). As mentioned in
\cite{McNamara3}, the zonal spherical formula can also be proved using his tools. To be cautious, we refrain from stating our results in this generality.

\section{The metaplectic Shalika model}\label{section:Metaplectic unramified Casselman--Shalika formula}
\subsection{Definition}\label{subsection:metaplectic Shalika functional and model}
Let $F$ be a local field and $\psi$ be a nontrivial additive character of $F$. First we recall the notion of the Shalika model, introduced by Jacquet and Shalika \cite{JS4}.

Put $n=2k$. Consider the unipotent radical $U_k$ of the parabolic
subgroup $Q_k<G_n$, and the (Shalika) character $\psi$ of $U_k$ given by
\begin{align*}
\psi(\left(\begin{smallmatrix}I_k&u\\&I_k\end{smallmatrix}\right))=\psi(\mathrm{tr}(u)).
\end{align*}
The normalizer of $U_k$ and stabilizer of $\psi$ is
\begin{align*}
G_k^{\triangle}=\{c^{\triangle}:c\in G_k\},\qquad c^{\triangle}=\diag(c,c).
\end{align*}
The group $G_k^{\triangle}\ltimes U_k$ is called the Shalika group.
Let $\pi\in\Alg{G_n}$. A $\psi$-Shalika functional on the space of $\pi$ is a functional $l$ such that for any vector $\xi$ in the space of $\pi$, $c\in G_k$ and $u\in U$,
\begin{align*}
l(\pi(c^{\triangle}u)\xi)=\psi(u)l(\xi).
\end{align*}
Here on the \lhs\ $u=\left(\begin{smallmatrix}I_k&u\\&I_k\end{smallmatrix}\right)$.
If the field is Archimedean we also demand that $l$ is continuous.

Jacquet and Rallis \cite{JR2} proved that over a non-Archimedean field, when $\pi$ is irreducible,
the space of such functionals is at most one-dimensional.
First they proved that $(G_n,M_k)$ is a Gelfand pair, then they used analytic techniques similar to \cite{FJ} to show that a Shalika functional induces an $M_k$-invariant functional. Their result was extended to Archimedean fields in \cite{AGJ}.

We mention that Ash and Ginzburg \cite{AG} already proved uniqueness results for a certain class of irreducible
unramified representations, we will adapt their proof to the metaplectic case (see
Theorem~\ref{theorem:unramified Shalika functional 1 dim} below).

Granted that $l$ exists, one may define the Shalika model $\mathscr{S}(\pi,\psi)$ of $\pi$ as the space of functions $\mathscr{S}_{\xi}(g)=l(\pi(g)\xi)$, as $\xi$ varies in the space of $\pi$.

We explain the metaplectic analog (following \cite{me11}). According to \eqref{eq:block-compatibility}, restriction of the cover of $G_n$ to $G_k^{\triangle}$ is a ``simple cover" in the sense that $\sigma(c^{\triangle},{c'}^{\triangle})=(\det{c},\det{c'})_2$. Hence
\begin{align}\label{eq:formula for s on triangle}
\mathfrak{s}((cc')^{\triangle})=\sigma(c^{\triangle},{c'}^{\triangle})\mathfrak{s}(c^{\triangle})\mathfrak{s}({c'}^{\triangle})=
(\det c,\det c')_2\mathfrak{s}(c^{\triangle})\mathfrak{s}({c'}^{\triangle}).
\end{align}
Then if $\psi'$ is another nontrivial additive character of $F$, any genuine representation of $\widetilde{G}_k^{\triangle}$ takes the form $\pi\otimes\gamma_{\psi'}$, where $\pi$ is a representation of $G_k$ and
\begin{align*}
\pi\otimes\gamma_{\psi'}(\epsilon\mathfrak{s}(c^{\triangle}))=\epsilon\gamma_{\psi'}(\det c)\pi(c),\qquad \epsilon\in\mu_2.
\end{align*}
For a genuine $\pi\in\Alg{\widetilde{G}_n}$, we call $l$ a metaplectic $(\psi',\psi)$-Shalika functional on $\pi$ if
\begin{align}\label{eq:metaplectic Shalika def}
l(\pi(\mathfrak{s}(c^{\triangle}u))\xi)=\gamma_{\psi'}(\det c)\psi(u)l(\xi).
\end{align}
The terminology is reasonable because $\gamma_{\psi'}$ corresponds to the trivial character of $G_k$.
If $\pi$ is irreducible, we denote the corresponding metaplectic Shalika model by $\mathscr{S}(\pi,\psi',\psi)$. It is a subrepresentation of $\Ind_{\widetilde{G}_k^{\triangle}U_k}^{\widetilde{G}_n}(\gamma_{\psi'}\otimes\psi)$, where
$\gamma_{\psi'}\otimes\psi(\epsilon\mathfrak{s}(c^{\triangle}u))=
\epsilon\gamma_{\psi'}(\det c)\psi(u)$.
\begin{remark}\label{remark:metaplectic Shalika relevant to both covers}
Since $\det c^{\triangle}\in F^{*2}$, we also have
$\sigma^{(1)}(c^{\triangle},{c'}^{\triangle})=(\det{c},\det{c'})_2$ and \eqref{eq:formula for s on triangle} also holds
for the section corresponding to $\sigma^{(1)}$. (The cocycle $\sigma^{(1)}$ was defined in \S~\ref{subsection:The metaplectic cover G_n}, as we mentioned there our work applies to both covers.)
\end{remark}
We have the following bound for Shalika functions.
\begin{claim}\label{claim:general bound on Shalika function}
Assume that $\pi\in\Alg{\widetilde{G}_n}$ is genuine irreducible and admits a metaplectic Shalika model. Then there exists $\alpha>0$ such that for any $\xi$ in the space of $\pi$, there exists a positive $\phi\in\mathcal{S}(F_{k\times k})$ such that
\begin{align*}
|\mathscr{S}_{\xi}(\mathfrak{s}(\diag(g,I_k)))|\leq|\det g|^{-\alpha}\phi(g),\qquad\forall g\in G_k.
\end{align*}
\end{claim}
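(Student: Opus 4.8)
The plan is to bound $|\mathscr{S}_{\xi}(\mathfrak{s}(\diag(g,I_k)))|$ separately in the two ways in which $g$ can degenerate inside $G_k$: its entries becoming large, and $\det g\to0$. The one structural fact used throughout is that $\diag(g,I_k)$ conjugates $u(X):=\left(\begin{smallmatrix}I_k&X\\&I_k\end{smallmatrix}\right)$ to $u(gX)$; since $U_k\subset N_n$ and $\mathfrak{s}$ is multiplicative on products with $N_n$ on one side, \eqref{eq:metaplectic Shalika def} gives
\begin{align*}
\mathscr{S}_{\pi(\mathfrak{s}(u(X)))\xi}\bigl(\mathfrak{s}(\diag(g,I_k))\bigr)=\psi(\mathrm{tr}(gX))\,\mathscr{S}_{\xi}\bigl(\mathfrak{s}(\diag(g,I_k))\bigr),\qquad X\in F_{k\times k}.
\end{align*}
Throughout, the Hilbert symbol and Weil factors produced by the cocycle, by $\kappa$ and by block-compatibility have absolute value $1$ and are harmless.

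\emph{Large entries.} Over a $p$-adic field, smoothness of $\xi$ gives a lattice $L\subset F_{k\times k}$ with $\pi(\mathfrak{s}(u(X)))\xi=\xi$ for $X\in L$; hence $\psi(\mathrm{tr}(gX))\equiv1$ on $L$ whenever $\mathscr{S}_{\xi}(\mathfrak{s}(\diag(g,I_k)))\ne0$, so this function is supported, as a function of $g$, in the compact dual lattice $\widehat{L}$, and $\phi$ may be chosen supported there. Over an Archimedean field one applies the Dixmier--Malliavin theorem for the group $U_k$ to write $\xi=\sum_{j}\pi(\mathfrak{s}(\phi_j))\eta_j$ with $\phi_j\in C_c^{\infty}(U_k)$ and $\eta_j$ in the space of $\pi$; the displayed identity then yields $\mathscr{S}_{\xi}(\mathfrak{s}(\diag(g,I_k)))=\sum_{j}\widehat{\phi_j}(g)\,\mathscr{S}_{\eta_j}(\mathfrak{s}(\diag(g,I_k)))$, where $\widehat{\phi_j}$ is the Fourier transform of $\phi_j$ in the trace pairing and is therefore a Schwartz function of $g$; this forces rapid decay in the entries of $g$.

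\emph{The regime $\det g\to0$.} Here one needs a polynomial bound. Over an Archimedean field it is immediate from moderate growth of $\pi$ and continuity of $l$: there are a continuous seminorm $\nu$ and $D\ge0$, depending only on $\pi$, with
\[
|\mathscr{S}_{\eta}(\mathfrak{s}(\diag(g,I_k)))|=|l(\pi(\mathfrak{s}(\diag(g,I_k)))\eta)|\le\nu(\eta)\,\|\mathfrak{s}(\diag(g,I_k))\|^{D}\le\nu(\eta)\,(1+\|g\|)^{Dk}(1+|\det g|^{-1})^{D},
\]
using $\|g^{-1}\|\ll\|g\|^{k-1}|\det g|^{-1}$; combined with the preceding paragraph this produces a Schwartz $\phi$ (depending on $\xi$) with $|\det g|^{D}|\mathscr{S}_{\xi}(\mathfrak{s}(\diag(g,I_k)))|\le\phi(g)$, and $\alpha=D$ is independent of $\xi$. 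Over a $p$-adic field one uses instead that $\mathscr{S}_{\xi}$ lies in the admissible representation $\mathscr{S}(\pi,\psi',\psi)\cong\pi$, so that, by Casselman's asymptotic theory, its values along torus elements are controlled by the finitely many exponents of $\pi$ and $|\mathscr{S}_{\xi}(\mathfrak{s}(\diag(g,I_k)))|\le C_{\xi}(1+\|g\|)^{D}(1+|\det g|^{-1})^{D}$ with $D$ depending only on $\pi$; since $\|g\|$ is bounded on $\widehat{L}$ this gives $|\mathscr{S}_{\xi}(\mathfrak{s}(\diag(g,I_k)))|\le C_{\xi}'|\det g|^{-D}$ on $\widehat{L}\cap\{|\det g|\le1\}$, and the claim follows with $\alpha=D$ and a Schwartz--Bruhat $\phi$ dominating $C_{\xi}'\,\mathrm{ch}_{\widehat{L}}$ (enlarged to absorb $\{|\det g|\ge1\}$).

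The main obstacle is this last, $\det g\to0$, estimate: over $p$-adic fields it rests on admissibility of $\pi$ (finiteness of Jacquet modules, hence of exponents, underpinning polynomial growth), and over Archimedean fields on moderate growth together with the continuity of the Shalika functional, plus the bookkeeping converting a bound in $\|\mathfrak{s}(\diag(g,I_k))\|$ into one in $\|g\|$ and $|\det g|$. The refined Archimedean asymptotic expansion of $\mathscr{S}_{\xi}$ parallel to Theorem~\ref{thm:AsymptoticExpansionSmoothMatrixCoefficientsArchimedean}, developed later, is not needed for this claim but yields an alternative proof.
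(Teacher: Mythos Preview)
Your proposal is correct and follows essentially the same two-step strategy as the paper: control of large entries via the $U_k$-equivariance (smoothness in the $p$-adic case, Dixmier--Malliavin in the Archimedean case, both producing Schwartz-type decay through the Fourier transform), followed by a polynomial bound in $|\det g|^{-1}$ independent of the vector. The paper is simply more terse, outsourcing the second step to \cite[Lemma~6.1]{JR2} over $p$-adic fields and to \cite[Lemma~3.2]{AGJ} over Archimedean fields; your sketches are reconstructions of exactly those arguments, and your direct use of moderate growth plus continuity of $l$ in the Archimedean case is in fact the same estimate the paper itself writes out later as \eqref{eq:BoundShalikaFunctional}.

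One terminological point worth tightening: in the $p$-adic case you invoke ``Casselman's asymptotic theory,'' but that theory as stated is for smooth functionals, whereas the Shalika functional $l$ is not smooth. What is actually being used (and what \cite{JR2} does) is that $l$ factors through the twisted Jacquet module $\pi_{U_k,\psi}$, which is admissible as a representation of the stabilizer, so the asymptotic expansion along the relevant torus (here $T_k^2$, which splits under the cover---this is the one genuinely metaplectic adjustment the paper flags) is governed by the finitely many exponents of that Jacquet module. Your argument is correct once read this way, but the phrasing could mislead.
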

\begin{proof}
Over a $p$-adic field, the proof is a simple adaptation of Lemma~6.1 of Jacquet and Rallis \cite{JR2} for the non-metaplectic setting. The main difference is that the asymptotic expansion is written for $T_k^2$ instead of $T_k$, the group $T_k^2$ is abelian and splits under the cover.

If $F$ is Archimedean, by the Dixmier--Malliavin Lemma applied to the representation $\pi|_{U_k}$ there exist $\xi_j$ in the representation space of $\pi$ and $\phi_j\in\mathcal{S}(U_k)$ such that $\xi = \sum_j \theta(\phi_j)\xi_j$. This implies that for $d_g=\mathfrak{s}(\diag(g,I_k))$ we have
\begin{align*}
 \mathscr{S}_\xi(d_g)&=\sum_j\int_{U_k}\phi_j(u)\mathscr{S}_{\xi_j}(d_g\mathfrak{s}(u))\, du
=\sum_j \mathscr{S}_{\xi_j}(d_g) \int_{U_k}\phi_j(u)\psi(gu)\, du
=\sum_j \widehat{\phi}_j(g)\mathscr{S}_{\xi_j}(d_g).
\end{align*}
It remains to show that each term $\mathscr{S}_{\xi_j}(d_g)$ is bounded by a constant times $|\det g|^{-\alpha}$ for some $\alpha$ independent of $\xi_j$. This follows from \cite[Lemma 3.2]{AGJ}, which also holds in the metaplectic setting since the proof merely uses the Lie algebra action.
\end{proof}

The next claim is a simple analog of the relation between Whittaker and Kirillov models (see \cite[Proposition~2]{GK}).
\begin{claim}\label{claim:richness of restriction of Shalika model to Q}
Assume $F$ is non-Archimedean. Let $\pi$ be a genuine irreducible subrepresentation of
$\Ind_{\widetilde{G}_k^{\triangle}U_k}^{\widetilde{G}_n}(\gamma_{\psi'}\otimes\psi)$.
The restriction of functions in the space of $\pi$ to $\widetilde{Q}_n$ contains any element of
$\ind_{\widetilde{G}_k^{\triangle}U_k}^{\widetilde{Q}_n}(\gamma_{\psi'}\otimes\psi)$.
\end{claim}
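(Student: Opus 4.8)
The plan is to mimic the classical passage from the Whittaker model to the Kirillov model, the point being that $U_k\subset N_n$ is split under the cover, so that $\widetilde{U}_k$ acts on the relevant induced space by multiplication by additive characters. Write $\mathcal{M}=\Ind_{\widetilde{G}_k^{\triangle}U_k}^{\widetilde{Q}_n}(\gamma_{\psi'}\otimes\psi)$, $\mathcal{N}=\ind_{\widetilde{G}_k^{\triangle}U_k}^{\widetilde{Q}_n}(\gamma_{\psi'}\otimes\psi)$, and let $W=\setof{f|_{\widetilde{Q}_n}}{f\text{ in the space of }\pi}$, a smooth $\widetilde{Q}_n$-submodule of $\mathcal{M}$. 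First I would observe $W\neq0$: if $0\neq f$ is in the space of $\pi$ and $f(g_0)\neq0$, then $\pi(g_0)f$ is again in the space of $\pi$ and does not vanish at $\mathfrak{s}(I_n)$, so its restriction to $\widetilde{Q}_n$ is a nonzero element of $W$. The goal is to promote this to $\mathcal{N}\subseteq W$.

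Next I would parametrize $\mathcal{M}$. One has $\widetilde{Q}_n=\widetilde{G}_k^{\triangle}\widetilde{U}_k\cdot\setof{\mathfrak{s}(\diag(g,I_k))}{g\in G_k}$ with no overlap ($c^{\triangle}u=\diag(g,I_k)$ forces $g=I_k$), so $f\mapsto\phi_f$, $\phi_f(g)=f(\mathfrak{s}(\diag(g,I_k)))$, is a linear bijection of $\mathcal{M}$ onto $C^\infty(G_k)$ carrying $\mathcal{N}$ onto $C^\infty_c(G_k)$. The key computation is the $\widetilde{U}_k$-action. For $u\in F_{k\times k}$ put $u^{(n)}=\left(\begin{smallmatrix}I_k&u\\&I_k\end{smallmatrix}\right)$; conjugating by $\diag(g,I_k)$ replaces $u$ by $gu$, and since $\mathfrak{s}$ is a splitting for multiplication by elements of $N_n$ on either side,
\[
\mathfrak{s}(\diag(g,I_k))\,\mathfrak{s}(u^{(n)})=\mathfrak{s}(\left(\begin{smallmatrix}g&gu\\&I_k\end{smallmatrix}\right))=\mathfrak{s}(\left(\begin{smallmatrix}I_k&gu\\&I_k\end{smallmatrix}\right))\,\mathfrak{s}(\diag(g,I_k)),
\]
whence $\phi_{u^{(n)}f}(g)=\psi(\mathrm{tr}(gu))\,\phi_f(g)$. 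Thus $\widetilde{U}_k$ acts on $\phi_f$ by multiplication by the characters $g\mapsto\psi(\mathrm{tr}(gu))$, exactly as the unipotent radical of the mirabolic acts in the Whittaker/Kirillov picture.

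Now I would localize the support. Take $0\neq f\in W$ with $\phi_f(g_0)\neq0$ and let $\mathcal{L}=\varpi^{-N}\mathcal{O}_{k\times k}$ for large $N$. Since $f$ is smooth, $f'=\int_{\mathcal{L}}\psi(-\mathrm{tr}(g_0u))\,(u^{(n)}f)\,du$ is a finite combination of $\widetilde{U}_k$-translates of $f$, so $f'\in W$, and
\[
\phi_{f'}(g)=\phi_f(g)\int_{\mathcal{L}}\psi(\mathrm{tr}((g-g_0)u))\,du .
\]
A routine computation (the integral factors over the matrix entries) shows this integral is $\mathrm{vol}(\mathcal{L})$ times the indicator of $g-g_0\in\varpi^{M}\mathcal{O}_{k\times k}$, for a suitable $M$ depending on $N$ and the conductor of $\psi$. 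Hence $\phi_{f'}$ is supported in $g_0+\varpi^{M}\mathcal{O}_{k\times k}$; taking $N$ large, this set lies inside $\GL_k$ and $\phi_f$ is constant on it, so $\phi_{f'}$ is a nonzero multiple of $\mathrm{ch}_{g_0+\varpi^{M}\mathcal{O}_{k\times k}}$. In particular $0\neq f'\in\mathcal{N}\cap W$. Finally, restricting the $\widetilde{Q}_n$-action to $\widetilde{M}_k$, one checks it is — up to the $\gamma_{\psi'}$-twist and the cocycle, which are irrelevant here — the two-sided translation action of $G_k\times G_k$ on $C^\infty_c(G_k)$, for which any nonzero vector is cyclic (average on both sides by a small compact open subgroup to produce an indicator of a small bi-coset, then translate to reach all of $C^\infty_c(G_k)$). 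Therefore $\mathcal{N}$ is the $\widetilde{Q}_n$-span of $f'$, and since $f'\in W$ with $W$ a $\widetilde{Q}_n$-submodule, $\mathcal{N}\subseteq W$, as claimed.

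I do not expect a serious obstacle here, consistent with the statement being a ``simple analog'' of the Whittaker/Kirillov relation. The only points requiring a little care are the genuine/cover bookkeeping in the bijection $f\mapsto\phi_f$ and in the displayed identity for $\mathfrak{s}(\diag(g,I_k))\mathfrak{s}(u^{(n)})$ — both immediate from $\mathfrak{s}$ being a splitting for left and right multiplication by $N_n$, together with the behaviour of $\mathfrak{s}$ on $\widetilde{G}_k^{\triangle}$ recorded in \eqref{eq:formula for s on triangle} — and the elementary cyclicity of $C^\infty_c(G_k)$ under two-sided translation, which can also simply be quoted.
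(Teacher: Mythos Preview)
Your proposal is correct and follows essentially the same approach as the paper: both arguments convolve against $U_k$ (equivalently, multiply by $\widehat{\phi}$ on the $G_k$-coordinate) to produce an element of the compactly induced space, using exactly the identity $\phi(\xi)(d_g)=\widehat{\phi}(g)\,\xi(d_g)$ that your displayed computation makes explicit. The only cosmetic difference is in the final spanning step: the paper observes directly that for each $g_0$ one can translate by $\widetilde{G}_n$ to arrange $\xi(d_{g_0})\neq0$ and then choose $\widehat{\phi}$ supported near $g_0$, so the resulting functions already span $\ind$, whereas you instead produce one compactly supported element and then invoke cyclicity of $C_c^\infty(G_k)$ under the two-sided $\widetilde{M}_k$-action; both are equally short and your handling of the cocycle/$\gamma_{\psi'}$ factors (nonzero scalars, hence irrelevant for cyclicity) is fine.
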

\begin{proof}
For any $\xi$ in the space of $\pi$ and $\phi\in\mathcal{S}(U_k)$, the function $\phi(\xi)$ defined by
\begin{align*}
\phi(\xi)(x)=\int_{U_k}\phi(u)\xi(x\mathfrak{s}(u))du,\qquad x\in\widetilde{G}_n,
\end{align*}
belongs to the space of $\pi$.

Given $d_g=\mathfrak{s}(\diag(g,I_k))\in\widetilde{Q}_n$, one can find $\xi$ such that $\xi(d_g)\ne0$ and we also have
$\phi(\xi)(d_g)=\widehat{\phi}(g)\xi(d_g)$, where $\widehat{\phi}$ denotes the Fourier transform of $\phi$ with respect to $\psi$. Note that the group of characters of $U_k$ is isomorphic to the direct product of $k^2$ copies of $F$. We can select $\phi$ such that the support of $\widehat{\phi}$ is contained in $g\mathcal{V}$, where $\mathcal{V}<G_k$ is a small compact open neighborhood of the identity. Then
$\phi(\xi)|_{\widetilde{Q}_n}$ belongs to $\ind_{\widetilde{G}_k^{\triangle}U_k}^{\widetilde{Q}_n}(\gamma_{\psi'}\otimes\psi)$. The latter space is spanned by these functions, as $g$ and $\mathcal{V}$ vary.
\end{proof}

\subsection{Asymptotic expansion of metaplectic Shalika functions}\label{section:Archimedean asympt expansion Shalika}
As with Whittaker functions, one can write an asymptotic expansion for Shalika functions, as functions on the torus.
Over $p$-adic fields, such an expansion was obtained by Jacquet and Rallis \cite[\S~6.2]{JR2} and their result can be easily extended to the metaplectic setting. Over Archimedean fields, as a rule, the problem is more difficult, because one has to deal with delicate continuity properties. Here we provide an asymptotic expansion for metaplectic Shalika functions over $F=\R$ or $\C$. In particular, the same result holds in the non-metaplectic setting for the Shalika functional.

\begin{remark}The results of this section will be used in \S~\ref{section:A Godement Jacquet integral} to prove the meromorphic continuation of the local integrals (see the proof of Theorem~\ref{theorem:local props GJ integral archimedean}).
\end{remark}
Let $n=2k$. We use the notation and definitions of \S~\ref{subsection:asymptotic expansion}, e.g.,
$A_\ell$, $a_x$, $E_1(Q_\ell,\pi)$ and $\Lambda_{\pi,\ell}$, where now $\pi\in\Alg{\widetilde{G}_n}$ is a genuine representation. Note that $a_x$ is viewed as an element in $G_n$ rather than $G_k$ by embedding $G_k$ into $G_n$ via $g\mapsto\diag(g,I_k)$.
\begin{theorem}\label{thm:AsymptoticExpansionShalikaFunctionsArchimedean}
Let $\pi\in\Alg{\widetilde{G}_n}$ be a genuine irreducible representation on a space $V$, that admits a metaplectic Shalika functional $l$. Then for all $D=(D_1,\ldots,D_k)\in\R^k$ there exist finite subsets $C_\ell\subset E^{(\ell)}(\pi)$ and functions $p_{I,z}:\R^k\times V\to\C$ for $I\subset\{1,\ldots,k\}$, $z\in\prod_{\ell\in I}C_\ell$, such that the following holds.
\begin{enumerate}
\item For all $\xi\in V$ and $x\in\R^k$,
\begin{align}
 l(\pi(\mathfrak{s}(a_x))\xi) = \sum_{I,z}p_{I,z}(x;\xi)\cdot e^{-z\cdot x_I},\label{eq:ExpansionShalikaFunctionals}
\end{align}
where the (finite) summation is over all $I\subset\{1,\ldots,k\}$ and $z\in\prod_{\ell\in I}C_\ell$.
\item Each function $p_{I,z}(x;\xi)$ is polynomial in $x_I$:
\begin{align}
 p_{I,z}(x;\xi) = \sum_{\alpha\in\N^I}c_\alpha(x_{\overline{I}};\xi)x_I^\alpha,\label{eq:PolynomialCoefficientsShalikaFunctionals}
\end{align}
where the coefficients $c_\alpha(x_{\overline{I}};\xi)$ are smooth in $x_{\overline{I}}$, linear in $\xi$, and satisfy bounds of the form
\begin{align}
 |c_\alpha(x_{\overline{I}};\xi)| \leq q(\xi)(1+|x_{\overline{I}}|)^d e^{-D_{\overline{I}}\cdot x_{\overline{I}}}, \qquad \forall x_{\overline{I}}\in\R_{>0}^{\overline{I}},\xi\in V,\label{eq:EstimateRemainderShalikaFunctionals}
\end{align}
with $d\geq0$ and a continuous seminorm $q$ on $V$.
\end{enumerate}
\end{theorem}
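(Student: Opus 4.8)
The plan is to follow, essentially verbatim, the proof of Theorem~\ref{thm:AsymptoticExpansionSmoothMatrixCoefficientsArchimedean}, with the Shalika functional $l$ taking the place of the covector $\xi^\vee$. Two adjustments are forced on us. First, $l$ is only a continuous functional on $V$ (a distribution vector), not a smooth one, so I would phrase the whole argument for an \emph{arbitrary} continuous functional $m$ on $V$ in place of $l$ --- the seminorm $q$ in \eqref{eq:EstimateRemainderShalikaFunctionals} being allowed to depend on $m$ --- and deduce the theorem as the case $m=l$; this is necessary because the inductive step applied to $l$ produces the functionals $\pi^\vee(X)l$, $X\in\mathcal{U}(\mathfrak{g}_n)$, which are no longer Shalika-equivariant but are still continuous. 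Second, the objects $Q_\ell$, $\mathfrak{a}_\ell$, $H_\ell$, $E_1(Q_\ell,\pi)$ of \S~\ref{subsection:asymptotic expansion} are to be read inside $G_n$ via the embedding $g\mapsto\diag(g,I_k)$, so that $H_\ell=\diag(I_\ell,0_{n-\ell})$, $V/\mathfrak{u}_\ell V$ is a genuine (finite-dimensional) Jacquet module of the admissible $\widetilde{G}_n$-module $\pi$, and $\mathcal{Z}(\mathfrak{g}_n)$ --- which acts by a scalar since $\pi$ is irreducible over $\widetilde{G}_n$ --- supplies the constants in Wallach's construction; with this reading all exponents produced below automatically lie in $E^{(\ell)}(\pi)$.

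As in loc. cit., it suffices to prove the assertion for arbitrarily large $D_\ell$, since the assertion for $D$ implies the one for any $D'$ with $D'\le D$ (componentwise) by weakening \eqref{eq:EstimateRemainderShalikaFunctionals}. For the base case I would take $D=(-CN,\ldots,-CN)$ with $N$ the (uniform) polynomial degree coming from the moderate growth of $\pi$ and $C$ chosen so that a standard norm function on $\widetilde{G}_n$ satisfies $\|\mathfrak{s}(a_x)\|\le e^{C\sum_\ell x_\ell}$ for $x\in\R_{>0}^k$; then $|m(\pi(\mathfrak{s}(a_x))\xi)|\le q_m(\xi)e^{CN\sum_\ell x_\ell}$ for all $x\in\R_{>0}^k$, which is exactly the trivial instance of \eqref{eq:ExpansionShalikaFunctionals}--\eqref{eq:EstimateRemainderShalikaFunctionals} with $I=\emptyset$. (For $m=l$ one may instead quote Claim~\ref{claim:general bound on Shalika function}.)

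The inductive step, passing from $D$ to $D+e_\ell$ for a fixed $1\le\ell\le k$, is the core of the proof, and I would run it identically to the argument in Theorem~\ref{thm:AsymptoticExpansionSmoothMatrixCoefficientsArchimedean}. Put $H=H_\ell$, $t=x_\ell$, $x'=x-te_\ell$, and invoke Wallach's elements $E_1=1,E_2,\ldots,E_r,U_{p,i}\in\mathcal{U}(\mathfrak{g}_n)$, the scalar matrix $B=(\pi(Z_{i,j}))_{i,j}$ with eigenvalues in $E_1^{(\ell)}(\pi)$, and $X_p\in\mathfrak{u}_\ell$ (the nilradical of the parabolic of $G_n$ with Levi $G_\ell\times G_{n-\ell}$, on which $\Ad(e^{-tH})$ acts by $e^{-t}$), satisfying \eqref{eq:AbsoluteActionOfHOnIdentityInU(g)}. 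Forming the vector-valued functions $F$ and $G$ with coordinates $m(\pi(\mathfrak{s}(e^{-tH})\mathfrak{s}(a_{x'}))\pi(E_i)\xi)$ and $\sum_p m(\pi(\mathfrak{s}(e^{-tH})\mathfrak{s}(a_{x'}))\pi(X_p)\pi(U_{p,i})\xi)$, one has $\frac{d}{dt}F=-BF-G$ and the solution formula \eqref{eq:SolutionMatrixCoefficientsODE}. The decisive point, exactly as there, is the treatment of $G$: one moves $\pi(X_p)$ past $\pi(\mathfrak{s}(a_{x'}))$, picking up factors $e^{-\beta\cdot x'}$ with $\beta$ in the non-negative cone and root vectors still in $\mathfrak{u}_\ell$, then past $\pi(\mathfrak{s}(e^{-tH}))$, picking up the gain $e^{-t}$ since $\Ad(e^{-tH})$ scales $\mathfrak{u}_\ell$ by $e^{-t}$, and finally transfers $\pi(X)$ onto $m$ by the contragredient action on the continuous dual, which again yields a continuous functional $\pi^\vee(X)m$; one is then reduced to terms $(\pi^\vee(X)m)(\pi(\mathfrak{s}(e^{-tH})\mathfrak{s}(a_{x'}))\pi(U)\xi)$ to which the induction hypothesis at level $D$ applies (for the functional $\pi^\vee(X)m$). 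The remaining bookkeeping --- expanding $e^{\tau B}$ as in \eqref{eq:ExponentialOfB}, splitting the two summands of \eqref{eq:SolutionMatrixCoefficientsODE}, and, in the $\tau$-integration, distinguishing $D_\ell>\Re\zeta_j-1$ from $D_\ell\le\Re\zeta_j-1$ --- is word for word the same, and produces new exponents $\zeta_j$ and $z_\ell+1$ lying in $E^{(\ell)}(\pi)$; I would simply transcribe it.

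I expect the real work to be the Fr\'echet bookkeeping rather than anything conceptual: at every stage one must verify that the bounds keep the shape ``a continuous seminorm in $\xi$, times $(1+|x_{\overline{I}}|)^d e^{-D_{\overline{I}}\cdot x_{\overline{I}}}$ on $\R_{>0}^{\overline{I}}$'', which rests on the continuity of $\xi\mapsto\pi(U)\xi$ on $V$ (standard) and of $m\mapsto\pi^\vee(X)m$ on the continuous dual (automatic), and on the stability of this shape under the integral \eqref{eq:SolutionMatrixCoefficientsODE} and the subsequent one-variable integrations --- all of which is already carried out in the proof of Theorem~\ref{thm:AsymptoticExpansionSmoothMatrixCoefficientsArchimedean}. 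The metaplectic features enter only trivially: $\mathfrak{s}$ is a section over the torus and over $\mathfrak{u}_\ell$, the cover does not alter the Lie-algebra action, and the Shalika equivariance of $l$ is used only through the optional appeal to Claim~\ref{claim:general bound on Shalika function} in the base case --- moderate growth alone suffices, so the same proof gives the analogous statement for the non-metaplectic Shalika functional, and indeed for any continuous functional on $V$.
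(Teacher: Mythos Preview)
Your approach is correct but takes a genuinely different route from the paper. The paper does \emph{not} generalize to arbitrary continuous functionals; instead it keeps $l$ fixed and exploits the Shalika equivariance to avoid ever forming $\pi^\vee(X)l$. Concretely, every $X\in\mathfrak{u}_\ell$ (nilradical in $G_n$, $1\le\ell\le k$) decomposes as $X_1+X_2$ with $X_1\in\mathfrak{u}_k$ and $X_2=\diag(Y,0)$ for some $Y\in\mathfrak{g}_k$; the unipotent equivariance gives $l(\pi(X_1)\eta)=\psi(X_1)\,l(\eta)$, while the diagonal equivariance gives $l(\pi(\diag(Y,0))\eta)=-l(\pi(\diag(0,Y))\eta)$, and since $\diag(0,Y)$ commutes with $a_x$ it passes harmlessly onto $\xi$. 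Thus every occurrence of ``$\pi(X)$ hits $l$'' is rewritten as ``$l$ applied to a continuously modified $\xi$'', and the single functional $l$ is retained throughout the recursion. As a byproduct the paper also derives the sharper base bound \eqref{eq:BoundShalikaFunctional} at level $\Lambda_\pi$, bootstrapping from the trivial moderate-growth bound via the same mechanism.

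Your route --- running the induction over \emph{all} continuous functionals simultaneously, so that $\pi^\vee(X)m$ is already covered by the hypothesis at the previous level --- is cleaner and strictly more general: it delivers the asymptotic expansion for any continuous functional on $V$, and the Shalika structure is never invoked. The one point that deserves care is that your base case must hold at a \emph{single} $D_0$ for all $m$ at once; this forces the growth exponent $N$ in the moderate-growth estimate to be uniform in the seminorm. That uniformity is standard for irreducible Casselman--Wallach globalizations (e.g.\ via realization as a quotient of a principal series), but it is not part of the bare definition of moderate growth --- if $N$ depended on $m$, then the functionals $\pi^\vee(X_{p_r})\cdots\pi^\vee(X_{p_1})m$ arising after $r$ steps could have ever-worse base levels, and your induction would not terminate. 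The paper's Shalika-specific trick sidesteps this entirely, since only the fixed functional $l$ is ever in play.
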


\begin{proof}
The proof is similar to the proof of Theorem~\ref{thm:AsymptoticExpansionSmoothMatrixCoefficientsArchimedean}, we explain the necessary modifications. The main difference is, of course, that $l$ is not a smooth functional.

In contrast to the situation in Theorem~\ref{thm:AsymptoticExpansionSmoothMatrixCoefficientsArchimedean} we cannot replace $l$ by $\pi^\vee(X)l$ for $X\in\mathcal{U}(\mathfrak{g}_n)$ since in general $\pi^\vee(X)l$ is not a metaplectic Shalika functional. This replacing was used in Theorem~\ref{thm:AsymptoticExpansionSmoothMatrixCoefficientsArchimedean} for elements $X\in\mathfrak{u}_\ell$, $1\leq\ell\leq k$ (in Theorem~\ref{thm:AsymptoticExpansionSmoothMatrixCoefficientsArchimedean} everything depended continuously on the funtional). However, each $X\in\mathfrak{u}_\ell$ can be written as $X=X_1+X_2$ with $X_1\in\mathfrak{u}_k$ and $X_2=\diag(Y,0)$ for some $Y\in\mathfrak{g}_k$ (here $0$ denotes the $k\times k$ zero matrix).
By \eqref{eq:metaplectic Shalika def} we obtain
\begin{align*}
 l(\pi(X)\pi(\mathfrak{s}(a_x))\xi) &= l(\pi(X_1)\pi(\mathfrak{s}(a_x))\xi) + l(\pi(\diag(Y,0))\pi(\mathfrak{s}(a_x))\xi)\\
 &= \psi(X_1)l(\pi(\mathfrak{s}(a_x))\xi) - l(\pi(\diag(0,Y))\pi(\mathfrak{s}(a_x))\xi)\\
 &= \psi(X_1)l(\pi(\mathfrak{s}(a_x))\xi) - l(\pi(\mathfrak{s}(a_x))\pi(\diag(0,Y))\xi).
\end{align*}
In the second term we can replace $\pi(\diag(0,Y))\xi$ by $\xi$ since the map $\xi\mapsto\pi(\diag(0,Y))\xi$ is continuous and all data in the statement depends continuously on $\xi$.
This shows that the recursive argument in the proof of Theorem~\ref{thm:AsymptoticExpansionSmoothMatrixCoefficientsArchimedean} extends to our setting.

It remains to show that an estimate of the form \eqref{eq:BoundSmoothMatrixCoefficients} also holds for $l$. More precisely, we show that there exists a continuous seminorm $q$ on $V$ and $d\geq0$ such that
\begin{align}
 |l(\pi(\mathfrak{s}(a_x))\xi)| \leq q(\xi)(1+|x|)^de^{-\Lambda_\pi\cdot x}, \qquad \forall x\in\R_{>0}^k,\xi\in V.\label{eq:BoundShalikaFunctional}
\end{align}
First, $l$ is a continuous linear functional on $V$, so there exists a continuous seminorm $q_1$ on $V$ such that
\begin{align*}
|l(\xi)| \leq q_1(\xi), \qquad \forall\xi\in V.
\end{align*}
Since $\pi$ is of moderate growth there exists $N\geq0$ and a continuous seminorm $q_2$ on $V$ such that
\begin{align*}
q_1(\pi(g)\xi) \leq q_2(\xi)\|g\|^N, \qquad \forall g\in\widetilde{G}_n,\xi\in V,
\end{align*}
where $\|\blank\|$ is the pullback to $\widetilde{G}_n$ of the usual norm function on $G_n$, i.e.,
\begin{align*}
\|h\| = \sum_{i,j=1}^n(|h_{i,j}|^2+|(h^{-1})_{i,j}|^2),\qquad h\in G_n.
\end{align*}
For $x\in\R_{>0}^k$ we clearly have
\begin{align*}
\|a_x\|^N \leq Ce^{D\cdot x}, \qquad \forall x\in\R_{>0}^k,
\end{align*}
for some $C>0$ and $D=(D_1,\ldots,D_k)\in\R^k$. For $q=Cq_2$,
\begin{align*}
|l(\pi(\mathfrak{s}(a_x))\xi)| \leq q(\xi)e^{D\cdot x}, \qquad \forall x\in\R_{>0}^k,\xi\in V.
\end{align*}
We now argue as in \cite[Section 4.3.5]{Wal88}. More precisely, applying the recursive argument in the proof of Theorem~\ref{thm:AsymptoticExpansionSmoothMatrixCoefficientsArchimedean} shows that after possibly multiplying with a power of $(1+|x|)$, we can replace $D$ by $D-e_\ell$ for any $1\leq\ell\leq k$ as long as $D_\ell-1\geq-\Lambda_{\pi,\ell}$. Repeating this shows that $D$ can actually be replaced by $-\Lambda_\pi$ after multiplying with a certain power of $(1+|x|)$, and hence we obtain \eqref{eq:BoundShalikaFunctional}.
\end{proof}
\begin{remark}\label{rem:ExpansionsForXkleq0}
As stated, the estimates for $l(\pi(\mathfrak{s}(a_x))\xi)$ in Theorems~\ref{thm:AsymptoticExpansionSmoothMatrixCoefficientsArchimedean} and \ref{thm:AsymptoticExpansionShalikaFunctionsArchimedean} only hold for $x\in\R_{>0}^k$. However, one can also expand $l(\pi(\mathfrak{s}(a_x))\xi)$ for $x_1,\ldots,x_{k-1}\geq0$ and $x_k\leq0$ into an exponential power series, and the remainder terms are at least bounded by $e^{-Nx_k}$ for some $N\geq0$. To see this, first note that
$l(\pi(\mathfrak{s}(a_x))\xi) = l(\pi(\mathfrak{s}(a_{x'}))\pi(\mathfrak{s}(e^{-x_kH_k}))\xi)$ where $x'=(x_1,\ldots,x_{k-1},0)$. Then apply the expansion with $\xi$ replaced by $\pi(\mathfrak{s}(e^{-x_kH_k}))\xi$,
and use the fact that for each seminorm $q$ there exists another seminorm $q'$ and $N\geq0$ such that
$q(\pi(\mathfrak{s}(e^{-x_kH_k}))\xi)\leq q'(\xi)e^{-Nx_k}$ for all $x_k\leq0$ (see the proof of Theorem~\ref{thm:AsymptoticExpansionShalikaFunctionsArchimedean}).
\end{remark}

\subsection{The unramified metaplectic Shalika function}\label{section:The metaplectic unramified Shalika formula}
In this section we develop an explicit formula for the unramified metaplectic Shalika function.
Our arguments closely follow those of Sakellaridis \cite{Yia}, who established this formula
in the non-metaplectic setting. We use the definitions and results of \S~\ref{section:Metaplectic Hironaka Theory},
in particular $|2|=1$ and we have the canonical splitting $\kappa$ of $K$ as in \S~\ref{subsection:unramified representations}.

Assume $n=2k$. Let $\mathrm{I}(\chi)$ be a genuine unramified principal series representation,
$\chi=\chi_{\underline{s}}$ (since $n$ is even, $\gamma$ can be ignored). Let $\psi$ and $\psi'$ be a pair of nontrivial additive characters of $F$, and assume they are both unramified, i.e., their conductor is $\mathcal{O}$.
Assuming that $\underline{s}$ is in a ``suitable general position", we construct a metaplectic $(\psi',\psi)$-Shalika functional (see \S~\ref{subsection:metaplectic Shalika functional and model} for the definition) on $\mathrm{I}(\chi)$, and provide an explicit formula for the value of such a functional on the normalized unramified element of $\mathrm{I}(\chi)$.

Identify $\Delta_{G_n}$ with the pairs $(i,i+1)$, $1\leq i<n$, and put $\alpha_i=(i,i+1)$, then $\Sigma_{G_n}$ consists of the pairs $(i,j)$ with $1\leq i\ne j\leq n$.
We will need a few simple auxiliary results for computing conjugations.

\begin{claim}\label{claim:conjugation W and T}
Let $\mathfrak{w}\in\mathfrak{W}$ and $t\in T_n$. Assume that $\mathfrak{w}$ is the representative of $w\in W$. Then
$\rconj{\mathfrak{s}(\mathfrak{w})}\mathfrak{s}(t)=\sigma(\mathfrak{w},t)\mathfrak{s}(\rconj{\mathfrak{w}}t)$, where
\begin{align*}
\sigma(\mathfrak{w},t)=\prod_{\setof{(i,j)=\alpha>0}{w\alpha<0}}(-t_j,t_i).
\end{align*}
\end{claim}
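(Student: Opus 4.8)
The goal is a clean conjugation formula for the section $\mathfrak{s}$ evaluated on a Weyl representative conjugating a torus element. The natural strategy is to reduce to the rank-one case, where everything can be computed by hand using the Kubota cocycle \eqref{eq:Kubota formula}, and then bootstrap to a general $\mathfrak{w}\in\mathfrak{W}$ by induction on $\ell(w)$, invoking the block-compatibility \eqref{eq:block-compatibility} together with the basic properties of $\mathfrak{s}$ recalled in \S~\ref{subsection:The metaplectic cover G_n} (multiplicativity on $N_n$, behaviour under conjugation of unipotents, and the fact that $\mathfrak{s}$ is a splitting of $\mathfrak{W}$).

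First I would record the rank-one statement: for $\alpha=(i,i+1)$ a simple root with representative $\mathfrak{w}_\alpha=w_\alpha$, and $t=\diag(t_1,\dots,t_n)\in T_n$, one has $\rconj{\mathfrak{s}(\mathfrak{w}_\alpha)}\mathfrak{s}(t)=(-t_{i+1},t_i)_2\,\mathfrak{s}(\rconj{\mathfrak{w}_\alpha}t)$. This is a computation inside the $\mathrm{SL}_2$-type subgroup $\mathcal{R}_\alpha$: by block-compatibility the cocycle restricted to the relevant $\GL_2$-block is Kubota's, and for the diagonal torus $\diag(a,b)\in G_2$ and the Weyl element $\left(\begin{smallmatrix}&-1\\1&\end{smallmatrix}\right)$ a direct application of \eqref{eq:Kubota formula} gives $\sigma(w_\alpha,\diag(a,b))$ and $\sigma(\diag(b,a),\ldots)$, whose ratio yields exactly the Hilbert symbol $(-b,a)_2$. (The only positive root sent to a negative root by $s_\alpha$ is $\alpha$ itself, so the product in the claim has a single factor $(-t_{i+1},t_i)_2$, matching.) The entries of $t$ outside the block contribute trivially by \eqref{eq:block-compatibility}, since the Weyl element and its conjugate of $t$ agree there.

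Next, for general $w$ write a reduced decomposition $w=s_{\beta_1}\cdots s_{\beta_r}$ with $r=\ell(w)$, so that $\mathfrak{w}=w_{\beta_1}\cdots w_{\beta_r}$ and, since $\mathfrak{s}$ splits $\mathfrak{W}$, $\mathfrak{s}(\mathfrak{w})=\mathfrak{s}(w_{\beta_1})\cdots\mathfrak{s}(w_{\beta_r})$. Conjugating $\mathfrak{s}(t)$ step by step and applying the rank-one formula at each stage, the accumulated Hilbert symbol is $\prod_{j=1}^r \big({-}\,(\rconj{s_{\beta_{j+1}}\cdots s_{\beta_r}}t)_{\beta_j^{\downarrow}},\ (\rconj{s_{\beta_{j+1}}\cdots s_{\beta_r}}t)_{\beta_j^{\uparrow}}\big)_2$, where $\beta_j^{\uparrow},\beta_j^{\downarrow}$ denote the two indices of the simple root $\beta_j$. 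The standard combinatorial fact that $\{\beta_j\ :\ s_{\beta_{j+1}}\cdots s_{\beta_r}(\beta_j)\}$ — equivalently, after tracking indices, the positive roots $\alpha$ with $w\alpha<0$ — are enumerated bijectively by this procedure then identifies the product with $\prod_{\{(i,j)=\alpha>0\,:\,w\alpha<0\}}(-t_j,t_i)_2$, using bilinearity of the Hilbert symbol to rewrite each factor in terms of the original coordinates of $t$ (a coordinate $t_m$ appears with exponents that telescope to the correct one). The bilinearity and the identity $(x,x)_2=(x,-1)_2$ are what make the rearrangement go through cleanly.

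The main obstacle I anticipate is bookkeeping rather than conceptual: one must check that the inductive accumulation of symbols, expressed in the \emph{permuted} coordinates of $t$ at each step, collapses to the stated product over $\{\alpha>0:w\alpha<0\}$ in the \emph{original} coordinates. This requires being careful that at step $j$ the relevant root is $s_{\beta_{j+1}}\cdots s_{\beta_r}(\beta_j)$ and that these are exactly the inversions of $w$, counted once each; and that factors of $-1$ arising from $(-t_j,t_i)_2$ versus $(t_j,t_i)_2$ are handled via $(-1,t)_2$ bilinearity consistently. A cross-check on a small example (e.g.\ $w=w_0$ in $G_2$ or $G_3$) confirms the normalization and sign conventions before committing to the general argument.
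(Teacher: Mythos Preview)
Your approach is correct but takes a genuinely different route from the paper. The paper's proof is essentially two lines: from the cocycle definition $\mathfrak{s}(\mathfrak{w})\mathfrak{s}(t)=\sigma(\mathfrak{w},t)\mathfrak{s}(\mathfrak{w}t)$ one writes $\mathfrak{s}(\mathfrak{w}t)=\mathfrak{s}(\rconj{\mathfrak{w}}t\cdot\mathfrak{w})$, and then invokes two facts from \cite{BLS} as black boxes --- first that $\sigma(t',\mathfrak{w})=1$ for any $t'\in T_n$ (\cite[\S3, Theorem~7]{BLS}), which lets one split $\mathfrak{s}(\rconj{\mathfrak{w}}t\cdot\mathfrak{w})=\mathfrak{s}(\rconj{\mathfrak{w}}t)\mathfrak{s}(\mathfrak{w})$, and second the explicit value of $\sigma(\mathfrak{w},t)$ (\cite[\S3, Lemma~3]{BLS}), which is exactly the product in the claim. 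No rank-one computation or induction is done in the paper; it is all outsourced to \cite{BLS}.

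Your route instead re-derives these facts from scratch: the rank-one Kubota computation yields $\sigma(w_\alpha,t)=(-t_{i+1},t_i)_2$ and $\sigma(\rconj{w_\alpha}t,w_\alpha)=1$ directly, and the induction on $\ell(w)$ then reassembles the product over inversions. This is more self-contained and makes the structure of the formula transparent, at the cost of some bookkeeping. One small remark: the ``telescoping'' you anticipate is not actually needed. At step $j$ the torus coordinate $(\rconj{s_{\beta_{j+1}}\cdots s_{\beta_r}}t)_{\beta_j^\downarrow}$ is already equal to $t_{\gamma_j^\downarrow}$ for the inversion $\gamma_j=s_{\beta_r}\cdots s_{\beta_{j+1}}(\beta_j)$, so each factor is already in the original coordinates of $t$ and the product over $\{\alpha>0:w\alpha<0\}$ falls out immediately from the standard enumeration of inversions by a reduced word.
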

\begin{proof}
Since $\mathfrak{s}(\mathfrak{w})\mathfrak{s}(t)\mathfrak{s}(\mathfrak{w}t)^{-1}=\sigma(\mathfrak{w},t)$,
\begin{align*}
\mathfrak{s}(\mathfrak{w})\mathfrak{s}(t)\mathfrak{s}(\mathfrak{w})^{-1}=\sigma(\mathfrak{w},t)\mathfrak{s}(\mathfrak{w}t)\mathfrak{s}(\mathfrak{w})^{-1}
=\sigma(\mathfrak{w},t)\mathfrak{s}(\rconj{\mathfrak{w}}t\mathfrak{w})\mathfrak{s}(\mathfrak{w})^{-1}=
\sigma(\mathfrak{w},t)\mathfrak{s}(\rconj{\mathfrak{w}}t).
\end{align*}
For the last equality we used the fact that $\sigma(t',\mathfrak{w})=1$ for any $t'\in T_n$ (\cite[\S~3, Theorem~7]{BLS}).
The value of $\sigma(\mathfrak{w},t)$ was given in \cite[\S~3, Lemma~3]{BLS}.
\end{proof}
For any $t,t'\in T_n$, according to the computation of $\sigma(t,t')$ in \cite[\S~3, Lemma~1]{BLS},
\begin{align}\label{eq:commutator on torus}
[\mathfrak{s}(t),\mathfrak{s}(t')]=\mathfrak{s}(t)\mathfrak{s}(t')\mathfrak{s}(t)^{-1}\mathfrak{s}(t')^{-1}=\prod_{i<j}(t_i,t'_j)_2(t'_i,t_j)_2.
\end{align}
\begin{remark}\label{remark:computing conjugation t by w}
Let $w$ be a permutation matrix. One can find unique $\mathfrak{w}\in\mathfrak{W}$ and $t_0\in T_n$ such that $w=t_0\mathfrak{w}$, where the diagonal coordinates of $t_0$ are $\pm1$. Then for any $t\in T_n$,
$\rconj{\mathfrak{s}(w)}\mathfrak{s}(t)=\rconj{\mathfrak{s}(t_0)}(\rconj{\mathfrak{s}(\mathfrak{w})}\mathfrak{s}(t))$,
which can be computed using Claim~\ref{claim:conjugation W and T} and \eqref{eq:commutator on torus}.
\end{remark}
\begin{corollary}\label{corollary:computing conjugation t by w for t in maximal abelian or in the center}
Let $w$ be a permutation matrix and write $w=t_0\mathfrak{w}$ as in Remark~\ref{remark:computing conjugation t by w}.
For any $t\in T_{n,*}$, $\rconj{\mathfrak{s}(w)}\mathfrak{s}(t)=\rconj{\mathfrak{s}(\mathfrak{w})}\mathfrak{s}(t)$. If
$\mathfrak{s}(t)\in C_{\widetilde{T}_n}$,
$\rconj{\mathfrak{s}(w)}\mathfrak{s}(t)=\mathfrak{s}(\rconj{\mathfrak{w}}t)$.
\end{corollary}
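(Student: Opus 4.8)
The plan is to reduce both assertions, via Remark~\ref{remark:computing conjugation t by w}, to the statement that conjugation by the sign matrix $t_0$ is invisible on the relevant image of $\mathfrak{s}$. Write $\rconj{\mathfrak{s}(w)}\mathfrak{s}(t)=\rconj{\mathfrak{s}(t_0)}\bigl(\rconj{\mathfrak{s}(\mathfrak{w})}\mathfrak{s}(t)\bigr)$ and apply Claim~\ref{claim:conjugation W and T} to get $\rconj{\mathfrak{s}(\mathfrak{w})}\mathfrak{s}(t)=\sigma(\mathfrak{w},t)\,\mathfrak{s}(\rconj{\mathfrak{w}}t)$, where $\rconj{\mathfrak{w}}t\in T_n$ is the coordinate permutation of $t$ determined by $w$ (no sign appears on the diagonal) and $\sigma(\mathfrak{w},t)\in\mu_2$ is a product of Hilbert symbols $(-t_j,t_i)_2$, hence central. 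Since this central scalar commutes with everything, the whole question becomes understanding $\rconj{\mathfrak{s}(t_0)}\mathfrak{s}(\rconj{\mathfrak{w}}t)$, i.e. the commutator $[\mathfrak{s}(t_0),\mathfrak{s}(\rconj{\mathfrak{w}}t)]$, which I would evaluate with the explicit torus formula \eqref{eq:commutator on torus}.

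For the first assertion, assume $t\in T_{n,*}$. Since $n=2k$ is even we have $\mathe=2$, so by the description of $T_{n,*}$ every coordinate of $t$ — hence every coordinate of $\rconj{\mathfrak{w}}t$ — lies in $F^{*2}\mathcal{O}^*$. The diagonal entries of $t_0$ are $\pm1$, so each factor appearing in \eqref{eq:commutator on torus} for $[\mathfrak{s}(t_0),\mathfrak{s}(\rconj{\mathfrak{w}}t)]$ has the form $(\pm1,a^2u)_2$ with $u\in\mathcal{O}^*$; using $(x,y^2)_2=1$ together with the triviality of the Hilbert symbol of two units in odd residue characteristic ($|2|=1$), each such factor equals $1$. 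Hence $\mathfrak{s}(t_0)$ and $\mathfrak{s}(\rconj{\mathfrak{w}}t)$ commute, so $\rconj{\mathfrak{s}(t_0)}\mathfrak{s}(\rconj{\mathfrak{w}}t)=\mathfrak{s}(\rconj{\mathfrak{w}}t)$, and therefore $\rconj{\mathfrak{s}(w)}\mathfrak{s}(t)=\sigma(\mathfrak{w},t)\mathfrak{s}(\rconj{\mathfrak{w}}t)=\rconj{\mathfrak{s}(\mathfrak{w})}\mathfrak{s}(t)$.

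For the second assertion, suppose $\mathfrak{s}(t)\in C_{\widetilde{T}_n}$. First I would show $t\in T_n^2\subset T_{n,*}$: applying \eqref{eq:commutator on torus} to $[\mathfrak{s}(t),\mathfrak{s}(t')]$ with $t'$ a diagonal matrix supported on a single coordinate $m$ forces $\prod_{i\neq m}t_i\in F^{*2}$ for every $m$; comparing two indices gives $t_m/t_{m'}\in F^{*2}$, and then $t_m^{\,n-1}\in F^{*2}$ with $n-1$ odd forces $t_m\in F^{*2}$ for all $m$. Now the first assertion applies and gives $\rconj{\mathfrak{s}(w)}\mathfrak{s}(t)=\sigma(\mathfrak{w},t)\mathfrak{s}(\rconj{\mathfrak{w}}t)$; since every coordinate of $t$ is a square, every factor $(-t_j,t_i)_2$ of $\sigma(\mathfrak{w},t)$ is trivial, so $\sigma(\mathfrak{w},t)=1$ and $\rconj{\mathfrak{s}(w)}\mathfrak{s}(t)=\mathfrak{s}(\rconj{\mathfrak{w}}t)$.

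The computations are elementary once these reductions are in place; there is no serious obstacle, and the only point requiring a little care is the bookkeeping of Hilbert symbols. This hinges on (a) the coordinates of elements of $T_{n,*}$ being squares times units, (b) odd residue characteristic making unit–unit symbols trivial, and (c) for the central case, extracting from centrality that the coordinates are actually squares — and it is precisely here, through $\mathe=2$ and the parity of $n-1$, that the evenness of $n$ enters.
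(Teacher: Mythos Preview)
Your argument is correct and is precisely the elaboration the paper intends: the corollary is stated without proof, as an immediate consequence of Remark~\ref{remark:computing conjugation t by w}, Claim~\ref{claim:conjugation W and T}, and the commutator formula~\eqref{eq:commutator on torus}, and you have carried out exactly those steps. One small simplification for the first assertion: rather than computing Hilbert symbols, you could note that $t_0\in T_{n,*}$ and $\rconj{\mathfrak{w}}t\in T_{n,*}$, so $\mathfrak{s}(t_0)$ and $\mathfrak{s}(\rconj{\mathfrak{w}}t)$ lie in the abelian group $\widetilde{T}_{n,*}$ and hence commute; but your direct computation is equally valid.
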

The following theorem shows that when the inducing data is in a certain general position, which does not preclude reducibility, the space of metaplectic Shalika functionals is at most one-dimensional. Ash and Ginzburg \cite[Lemma~1.7]{AG} proved this for $G_n$. Our proof follows theirs, with certain modifications.

For $1\leq i<j\leq n$ and $x\in F^*$, let $t_{i,j}(x)=\mathfrak{s}(\diag(I_{i-1},x^2,I_{j-i-1},x^2,I_{n-j}))\in C_{\widetilde{T}_n}$.

\begin{theorem}\label{theorem:unramified Shalika functional 1 dim}
Assume that $\chi$ satisfies the following conditions.
\begin{enumerate}[leftmargin=*]
\item\label{item:uniqueness item 1}
$\chi(t_{i,j})\ne1$ for all $1\leq i<j\leq k$.
\item\label{item:uniqueness item 2}
There exists a permutation $\tau$ on $\{1,\ldots,k\}$ such that
$\chi(t_{i,k+j})\ne1$ for all $1\leq i, j\leq k$ such that $j\ne\tau(i)$.
\end{enumerate}
The space of metaplectic Shalika functionals on $\mathrm{I}(\chi)$ is at most one-dimensional. Furthermore, it is zero dimensional unless $\chi(t_{i,k+\tau(i)})=1$ for all $1\leq i\leq k$.
\end{theorem}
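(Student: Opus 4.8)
\emph{Proof strategy.} The plan is to realize the space of metaplectic $(\psi',\psi)$-Shalika functionals on $\mathrm{I}(\chi)=\Ind_{\widetilde{B}_{n,*}}^{\widetilde{G}_n}(\delta^{1/2}_{B_n}\chi)$ as a space of bi-equivariant distributions on $\widetilde{G}_n$ and to bound its dimension via the Bruhat filtration, following Ash and Ginzburg \cite[Lemma~1.7]{AG}. A metaplectic Shalika functional is the same datum as a $(\widetilde{B}_{n,*}\times\widetilde{G}_k^{\triangle}U_k)$-equivariant distribution on $\widetilde{G}_n$ transforming by $\delta^{1/2}_{B_n}\chi$ on the left and by $\gamma_{\psi'}\otimes\psi$ on the right, up to the relevant quotient of modulus characters. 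By the Bernstein--Zelevinsky theory of distributions on $l$-spaces, the space of such distributions carries a finite filtration whose subquotients are indexed by the double cosets $\widetilde{B}_{n,*}\backslash\widetilde{G}_n/(\widetilde{G}_k^{\triangle}U_k)$. Since $\mu_2$ is central and the restriction of $\mathfrak{s}$ to $G_k^{\triangle}$ is governed by the simple cocycle \eqref{eq:formula for s on triangle}, these double cosets are exactly the $B_{n,*}\backslash G_n/(G_k^{\triangle}U_k)$, which refine the finitely many Bruhat cells $B_n\backslash G_n/(G_k^{\triangle}U_k)$ by the cosets of $T_n/T_{n,*}$. For a representative $w$, the corresponding subquotient is nonzero only if a character identity holds: $\delta^{1/2}_{B_n}\chi$, twisted by the appropriate quotient of modulus characters, must coincide with $\gamma_{\psi'}\otimes\psi$ on $\mathfrak{s}(\rconj{w}(G_k^{\triangle}U_k))\cap\widetilde{B}_{n,*}$.

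The heart of the argument is then the cell-by-cell analysis of this character identity. On the unipotent directions it reduces to the usual genericity constraints imposed by the Shalika character $\psi(\mathrm{tr}(\cdot))$; on the torus it forces, for pairs $(i,j)$ determined by $w$, equalities of the shape $\chi(t_{i,j})=1$ with $i<j\leq k$, or $\chi(t_{i,k+j})=1$ with $1\leq i,j\leq k$. Hypotheses \eqref{item:uniqueness item 1} and \eqref{item:uniqueness item 2} are tailored precisely so that the only cell for which none of the \emph{excluded} equalities is forced is a single distinguished cell $\mathcal{O}_\tau$, attached to the permutation $\tau$: it is represented by the Shalika-type element interleaving the two $\GL_k$-blocks, composed with the permutation matrix of $\tau$. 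Every other cell is killed, because its character identity would require some $\chi(t_{i,j})=1$ with $i<j\leq k$ or some $\chi(t_{i,k+j})=1$ with $j\neq\tau(i)$, contradicting \eqref{item:uniqueness item 1} or \eqref{item:uniqueness item 2}. In carrying this out one must track the Weil factor $\gamma_{\psi'}$ and the Hilbert symbols appearing in conjugations on $\widetilde{T}_n$, using Claim~\ref{claim:conjugation W and T}, \eqref{eq:commutator on torus} and Corollary~\ref{corollary:computing conjugation t by w for t in maximal abelian or in the center}; the central elements $t_{i,j}(x)$ were introduced exactly so that these conjugations become harmless (Corollary~\ref{corollary:computing conjugation t by w for t in maximal abelian or in the center}), and in particular the $T_n/T_{n,*}$-refinement of the Bruhat cells does not produce new surviving subquotients.

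It remains to examine the distinguished cell $\mathcal{O}_\tau$. Its stabilizer intersected with $\widetilde{B}_{n,*}$ is, modulo unipotent pieces on which $\psi$ is nondegenerate (so that the distribution is pinned down to a scalar as in the Whittaker uniqueness argument), the diagonally embedded torus matching the first and second blocks; on it the character identity degenerates exactly to the requirement $\chi(t_{i,k+\tau(i)})=1$ for all $1\leq i\leq k$. Hence $\mathcal{O}_\tau$ contributes a one-dimensional space when all these equalities hold, and nothing otherwise. Combining with the vanishing of the other subquotients, $\dim$ of the space of metaplectic Shalika functionals is bounded by the dimension of the contribution of $\mathcal{O}_\tau$, which is at most one, and is zero unless $\chi(t_{i,k+\tau(i)})=1$ for every $i$. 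The main obstacle is the combinatorial bookkeeping of the second paragraph: pinning down the relevant double-coset representatives $w$, computing $\mathfrak{s}(\rconj{w}(G_k^{\triangle}U_k))\cap\widetilde{B}_{n,*}$ together with the modulus twist, and verifying in the metaplectic setting --- with the Weil factor and the Hilbert symbols --- that the surviving conditions are precisely \eqref{item:uniqueness item 1}, \eqref{item:uniqueness item 2} and $\chi(t_{i,k+\tau(i)})=1$.
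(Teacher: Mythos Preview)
Your overall strategy is the paper's: filter by double cosets in $G_k^{\triangle}U_k\backslash G_n/B_{n,*}$, eliminate all but the distinguished cell attached to $\tau$ via the character identities forced by conditions \eqref{item:uniqueness item 1}--\eqref{item:uniqueness item 2}, and read off $\chi(t_{i,k+\tau(i)})=1$ on the surviving cell. The paper packages this through $\Hom$-spaces $\mathcal{H}(g)$ and Bruhat theory (reference \cite{Silb}) rather than distributions, but the content is the same; your second paragraph is the paper's Claim~\ref{claim:Hom(g) tw with w not w_k}.

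There is, however, a real gap in your treatment of the $T_n/T_{n,*}$-refinement. You argue that the central elements $t_{i,j}(x)$ make conjugations harmless and conclude ``in particular'' that the refinement contributes no new surviving subquotients. This inference is valid for cells with $w\ne\omega^{-1}$: since $t_{i,j}(x)\in C_{\widetilde{T}_n}$, whatever central test element kills $\mathcal{H}(w)$ also kills every $\mathcal{H}(wt)$. But on the distinguished cell $w=\omega^{-1}$ no central test element is available --- that is precisely why the cell survives --- so central elements cannot distinguish $\mathcal{H}(\omega^{-1})$ from $\mathcal{H}(\omega^{-1}t)$, and the refined cosets with $t\notin T_{n,*}$ must be eliminated by a separate argument you have not supplied. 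The paper's mechanism (see \eqref{eq:Hom(g) tw_k with t not 1}) is genuinely metaplectic and uses the \emph{unramified} hypothesis on $\chi$, which your outline never invokes: one takes a \emph{non-central} $d=\diag(I_{i-1},x,I_{k-i})^{\triangle}$ with $x\in\mathcal{O}^*$; if $t_i\notin F^{*2}\mathcal{O}^*$ one may choose $x$ so that $(t_i,x)_2=-1$, and then by \eqref{eq:commutator on torus} one gets $\rconj{\mathfrak{s}(\omega^{-1}t)^{-1}}\mathfrak{s}(d)=-\mathfrak{s}(\rconj{\omega}d)$. Since $\chi$ is trivial on $\mathfrak{s}(T_n\cap K)$, it evaluates to $-1$ on this element, while $\gamma_{\psi'}$ and the modulus characters give $1$, contradiction. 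Without this step your bound is not $1$ but the index $[T_k:T_{k,*}]$.
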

\begin{proof}
For the first assertion we have to show
\begin{align}\label{eq:dimension of homspace to bound}
\Dim\Hom_{\widetilde{G}_n}(\mathrm{I}(\chi),\Ind_{\widetilde{G}_k^{\triangle}U_k}^{\widetilde{G}_n}(\gamma_{\psi'}\otimes\psi))\leq1.
\end{align}
For $g\in\rmodulo{\lmodulo{G_k^{\triangle}U_k}{G_n}}{B_{n,*}}$,
denote
\begin{align*}
\mathcal{H}(g)=\Hom_{(\widetilde{G}_k^{\triangle}U_k)^{\mathfrak{s}(g)^{-1}}}(\rconj{\mathfrak{s}(g)}(\gamma_{\psi'}^{-1}\otimes\psi^{-1})\otimes\chi,\delta),
\end{align*}
where $(\widetilde{G}_k^{\triangle}U_k)^{\mathfrak{s}(g)^{-1}}=
\rconj{\mathfrak{s}(g)^{-1}}(\widetilde{G}_k^{\triangle}U_k)\cap \widetilde{B}_{n,*}$;
for a representation $\pi$ of
$\widetilde{G}_k^{\triangle}U_k$, $\rconj{\mathfrak{s}(g)}\pi$ is the
representation of $(\widetilde{G}_k^{\triangle}U_k)^{\mathfrak{s}(g)^{-1}}$ on
the space of $\pi$ given by
$\rconj{\mathfrak{s}(g)}\pi(x)=\pi(\rconj{\mathfrak{s}(g)}x)$;
$\delta(x)=\delta_{B_n}^{-1/2}(x)\delta_{g}(x)$
is a representation of
$(G_k^{\triangle}U_k)^{g^{-1}}=\rconj{g^{-1}}(G_k^{\triangle}U_k)\cap B_{n,*}$ on $\C$, lifted to a nongenuine representation of the cover, where
$\delta_{g}$ is the modulus character of $(G_k^{\triangle}U_k)^{g^{-1}}$.
We recall that $\chi$ is a genuine character of $\widetilde{T}_{n,*}$ which is trivial on $\widetilde{T}_n\cap\kappa(K)$ (see the discussion following
Remark~\ref{remark:analog of Casselman 2.4 is weaker}). Also
$\rconj{\mathfrak{s}(g)}(\gamma_{\psi'}^{-1}\otimes\psi^{-1})\otimes\chi$ acts trivially on $\mu_2$.

By \cite{AG} (proof of Proposition~1.3) $G_n=\bigcup_wG_k^{\triangle}U_kwB_n$ where $w$ varies over the permutation matrices. Hence we may assume $g=wt$ for a permutation matrix $w$ and $t\in T_n$.
Set $\omega_0^{-1}=(\delta_{i,\tau(j)})_{1\leq i,j\leq k}$,
$\omega^{-1}=\left(\begin{smallmatrix}&\omega_0^{-1}\\I_k\end{smallmatrix}\right)$.

By definition $\Dim\mathcal{H}(g)\leq1$ for any $g$.
According to the Bruhat Theory (see e.g., \cite[Theorems~1.9.4 and 1.9.5]{Silb}), to prove \eqref{eq:dimension of homspace to bound} it is enough to show
\begin{align}\label{eq:Hom(g) tw with w not w_k}
&\mathcal{H}(g)=0,\qquad\forall g=wt\text{ such that }G_k^{\triangle}U_kwB_n\ne G_k^{\triangle}U_k\omega^{-1}B_n, \\
\label{eq:Hom(g) tw_k with t not 1}
&\mathcal{H}(\omega^{-1}t)=0,\qquad\forall t\text{ such that }G_k^{\triangle}U_k\omega^{-1}tB_{n,*}\ne G_k^{\triangle}U_k\omega^{-1}B_{n,*}.
\end{align}
\begin{claim}\label{claim:Hom(g) tw with w not w_k}
Equality~\eqref{eq:Hom(g) tw with w not w_k} holds.
\end{claim}
\begin{remark}
In \cite{AG} this claim already completed the proof of uniqueness, because in $G_n$ one could a priori take
$g=w$.
\end{remark}
Before proving the claim, let us complete the proof of the theorem. We establish \eqref{eq:Hom(g) tw_k with t not 1}. Put $g=\omega^{-1}t$.
The condition on $t$ implies that we can assume $t=\diag(t_1,\ldots,t_k,I_k)$ and for some $1\leq i\leq k$, $t_i\notin F^{*2}\mathcal{O}^*$. Let $i$ be minimal with this property. Note that for any $y\in F^*\setdifference F^{*2}\mathcal{O}^*$, there is $x\in\mathcal{O}^*$ such that $(y,x)_2\ne1$. Then by \eqref{eq:commutator on torus} there is $x\in\mathcal{O}^*$ such that for $d=\diag(I_{i-1},x,,I_{k-i})^{\triangle}$,
\begin{align*}
\rconj{\mathfrak{s}(t)^{-1}}\mathfrak{s}(\rconj{\omega}d)=(t_i,x)_2\mathfrak{s}(\rconj{\omega}d)=-\mathfrak{s}(\rconj{\omega}d).
\end{align*}
Then we obtain
\begin{align*}
\rconj{\mathfrak{s}(g)^{-1}}\mathfrak{s}(d)
=\rconj{\mathfrak{s}(t)^{-1}}(\rconj{\mathfrak{s}(\omega)}\mathfrak{s}(d))=
\rconj{\mathfrak{s}(t)^{-1}}(\mathfrak{s}(\rconj{\omega}d))=-\mathfrak{s}(\rconj{\omega}d).
\end{align*}
Here for the second equality we used Corollary~\ref{corollary:computing conjugation t by w for t in maximal abelian or in the center} and Claim~\ref{claim:conjugation W and T}. Clearly
$\rconj{\mathfrak{s}(g)^{-1}}\mathfrak{s}(d)\in(\widetilde{G}_k^{\triangle}U_k)^{\mathfrak{s}(g)^{-1}}$ and we see that
$(\gamma_{\psi'}\otimes\psi)(\mathfrak{s}(d))=1$ (because $\gamma_{\psi'}$ is trivial on $\mathcal{O}^*$) and
$\chi(\rconj{\mathfrak{s}(g)^{-1}}\mathfrak{s}(d))=-\chi(\mathfrak{s}(\rconj{\omega}d))=-1$, because $\chi$ is trivial on $\mathfrak{s}(T_n\cap K)$. But $\delta_{B_n}$ and $\delta_g$ are modulus characters and as such, are trivial on $\rconj{g^{-1}}d$. Thus $\mathcal{H}(\omega^{-1}t)=0$.

Therefore the space of metaplectic Shalika functionals on $\mathrm{I}(\chi)$ is at most one-dimensional. For the second assertion of the theorem
consider $\mathcal{H}(\omega^{-1})$. 
In this case
\begin{align*}
(G_k^{\triangle}U_k)^{\omega}=\iota((T_k\cap K)T_k^2)\ltimes \prod_{\setof{\alpha\in\Sigma_{G_k}^+}{\tau^{-1}\alpha>0}}\iota(N_{\alpha}),
\end{align*}
where $N_{\alpha}$ on the \rhs\ is a subgroup of $G_k$ and for any $c\in G_k$, $\iota(c)=\diag(c,\rconj{w_0}c)$.
For $d=x^{\triangle}$ with $x\in T_k$ such that its coordinates lie in $F^{*2}\mathcal{O}^*$,
$(\gamma_{\psi'}\otimes\psi)(\mathfrak{s}(d))=1$. Also $\mathfrak{s}$ is a homomorphism of $\iota((T_k\cap K)T_k^2)$. Hence
\begin{align*}
\mathcal{H}(\omega^{-1})=\Hom_{\mathfrak{s}(\iota((T_k\cap K)T_k^2))}(\chi,\delta_{B_n}^{-1/2}\delta_{\omega^{-1}}).
\end{align*}
For all $1\leq i\leq k$, $t_{i,k+\tau(i)}\in \mathfrak{s}(\iota((T_k\cap K)T_k^2))$ and a direct computation of $\delta_{\omega^{-1}}$, exactly as in the proof
of Claim~\ref{claim:Hom(g) tw with w not w_k} below (only roots of the form $(l,i)$ with $1\leq l<i$, and $(i,l)$ with $i<l\leq k$, may contribute), shows
\begin{align*}
\delta_{\omega^{-1}}(t_{i,k+\tau(i)})=|x|^{2(-|\setof{1\leq l<i}{\tau(l)<\tau(i)}|+k-\tau(i)-|\setof{1\leq l<i}{\tau(l)>\tau(i)}|)}
=|x|^{2(k-\tau(i)-i+1)}=\delta^{1/2}_{B_n}(t_{i,k+\tau(i)}).
\end{align*}
Hence there is no metaplectic Shalika functional on $\mathrm{I}(\chi)$, unless $\chi(t_{i,k+\tau(i)})=1$.
\end{proof}
\begin{proof}[Proof of Claim~\ref{claim:Hom(g) tw with w not w_k}]
As mentioned above, the arguments are similar to \cite[Lemma~1.7]{AG}. One difference is that we consider a general $\tau$, whereas in \cite{AG} the permutation was taken (for simplicity) to be the identity. This is only a minor complication.

Let $g=wt$ be as prescribed and write $w=(\delta_{i,j_i})_{1\leq i,j\leq n}$. 
For each $1\leq l\leq k$, if $j_l<j_{k+l}$, then for $U=N_{(l,k+l)}$ (the root subgroup of $(l,k+l)$),
$\rconj{w^{-1}}U=N_{j_l,j_{k+l}}<B_{n,*}$ and $\psi|_{U}\ne1$, and since $\delta|_U=1$, $\mathcal{H}(g)=0$. Hence
$j_{k+l}<j_l$ for all $1\leq l\leq k$.

We may assume $j_{\tau(1)}<\ldots<j_{\tau(k)}$.
Suppose $j_{\tau(1)}\leq k$ and put $m=\tau(1)$. For $x\in F^*$ set
\begin{align*}
d=\diag(I_{m-1},x^2,I_{k-m})^{\triangle}\in G_k^{\triangle}\cap B_{n,*}.
\end{align*}
Then $\rconj{\mathfrak{s}(g)^{-1}}\mathfrak{s}(d)\in (\widetilde{G}_k^{\triangle}U_k)^{\mathfrak{s}(g)^{-1}}$. Since $\mathfrak{s}(d)\in C_{\widetilde{T}_n}$,
by Corollary~\ref{corollary:computing conjugation t by w for t in maximal abelian or in the center}
\begin{align*}
\rconj{\mathfrak{s}(g)^{-1}}\mathfrak{s}(d)=\rconj{\mathfrak{s}(w)^{-1}}\mathfrak{s}(d)=\mathfrak{s}(\rconj{w^{-1}}d)=t_{j_{k+m},j_{m}}.
\end{align*}
On the one hand $(\gamma_{\psi'}\otimes\psi)(\mathfrak{s}(d))=1$ and 
$\chi(\rconj{\mathfrak{s}(g)^{-1}}\mathfrak{s}(d))=\chi(t_{j_{k+m},j_{m}})\ne1$ 
by assumption \eqref{item:uniqueness item 1} (because $j_{k+m}<j_{m}\leq k$).
On the other hand, we show $\delta(\rconj{g^{-1}}d)=1$. 

To compute $\delta(\rconj{g^{-1}}d)$, we count the roots from $U$ and $G_k^{\triangle}$ that are conjugated by $w$ to $N_n$. In the conjugation row $l$ moves into row $j_l$ and similarly for the columns. For the computation we only care about rows and columns $m$ and $k+m$. Our assumption on $\tau$ implies $\setof{l}{j_l<j_{m}}\subset\setof{l}{l>k}$.

First consider roots from $U$. Precisely $j_m-1$ roots from row $m$ are conjugated to columns smaller than $j_{m}$. The other $k-j_m+1$ roots are conjugated to columns bigger than $j_{m}$, so they do not contribute. Row $m+k$ and column $m$ do not contain any root of $U$, so do not contribute. Regarding column $m+k$, since $j_{m+k}<j_m$, the last $j_{m+k}-1$ rows from column $m+k$ are conjugated into the first $j_{m+k}-1$ rows of column $j_{m+k}$, and these
do not contain roots. Summarizing, the roots from $U$ contribute $|x|^{2(k-j_m+1)}$.

Next consider $G_k^{\triangle}$. In general, a root $(i,j)$ with $1\leq i\ne j\leq k$ contributes only if both the $(i,j)$-th and $(i+k,j+k)$-th coordinates are conjugated into $N_n$. The roots from row $m$ ($k-1$ roots) are conjugated to columns greater than $j_{m}$. The last $j_{m}-1$ columns are conjugated to the first $j_{m}-1$ columns, so only $k-1-(j_{m}-1)$ roots from row $m$ contribute. Regarding row $m+k$, the first $k-j_{m}$ roots were already accounted for. But $j_{m}-j_{m+k}$ roots from the last $j_{m}$ columns are conjugated into columns $j_{m+k}+1,\ldots,j_{m}$, and also contribute because their copies from row $k$ were conjugated into positive roots. The rows conjugated into the first $j_{m}-1$ rows are from the last $k$ rows. Therefore, roots from column $m$ are not conjugated into positive roots in column $j_{m}$. Hence their copies in column $m+k$ cannot contribute as well.
Altogether the roots from $G_k^{\triangle}$ contribute $|x|^{2(k-j_{m})+2(j_{m}-j_{m+k})}=
|x|^{2(k-j_{m+k})}$.

In total, $\delta(\rconj{g^{-1}}d)=|x|^{2n-2(j_{m}+j_{k+m}-1)}=\delta^{1/2}_{B_n}(\rconj{g^{-1}}d)$, hence
$\delta(\rconj{g^{-1}}d)=1$.

It follows that $\mathcal{H}(g)=0$, unless $j_{\tau(l)}>k$ for all $1\leq l\leq k$. In the latter case we can assume $w=\left(\begin{smallmatrix}&w'\\I_k\end{smallmatrix}\right)$ (so, the assumption
$j_{\tau(1)}<\ldots<j_{\tau(k)}$ is no longer valid). Let $1\leq i_1,\ldots,i_k\leq k$ be such that
$\tau(i_l)=l$ for all $1\leq l\leq k$. We prove by induction on $l$, that $j_{i_l}=k+l$.

Suppose otherwise $j_{i}=k+1$ with $i\ne i_1$. Proceeding as above take
\begin{align}\label{eq:d in proof of claim last}
d=\diag(I_{i-1},x^2,I_{k-1},x^2,I_{k-i})\in G_k^{\triangle}\cap B_{n,*},
\end{align}
then
$\rconj{\mathfrak{s}(g)^{-1}}\mathfrak{s}(d)
=\mathfrak{s}(\rconj{w^{-1}}d)
=t_{i,k+1}$.
We see that $(\gamma_{\psi'}\otimes\psi)(\mathfrak{s}(d))=1$, $\chi(\mathfrak{s}(\rconj{w^{-1}}d))\ne1$ by assumption \eqref{item:uniqueness item 2} (because $\tau(i)\ne\tau(i_1)=1$), while $\delta(\rconj{g^{-1}}d)=1$. In fact, we compute $\delta$ as above. Here only roots from $G_k^{\triangle}$ may contribute. On row $i$, the first $i-1$ roots do not contribute, because their copies are mapped into negative roots. On column $i$, the positive roots do not contribute because they are mapped into rows greater than $k+1$ of column $k+1$. This leaves the last $k-i$ roots of row $i$, which are mapped into row $k+1$ and columns greaater than $k+1$, and their copies are mapped into positive roots. Altogether $\delta_g(\rconj{g^{-1}}d)=|x|^{2(k-i)}=\delta^{1/2}_{B_n}(\rconj{g^{-1}}d)$. Thus $\mathcal{H}(g)=0$, unless $j_{i_1}=k+1$.

Assuming for all $1\leq l'<l$, $j_{i_{l'}}=k+l'$, we show $j_{i_{l}}=k+l$. Suppose $j_{i}=k+l$ for $i\ne i_{l}$. Then
$i\ne i_{l'}$ for $1\leq l'\leq l$. We take $d$ as in \eqref{eq:d in proof of claim last}. Then
$\rconj{\mathfrak{s}(g)^{-1}}\mathfrak{s}(d)=t_{i,k+l}$, $\chi(\mathfrak{s}(\rconj{w^{-1}}d))\ne1$ and we have to show
$\delta(\rconj{g^{-1}}d)=1$. Indeed, as in the case $l=1$, we need only consider the positive roots from
$G_k^{\triangle}$, on the $i$-th row and column. Their copies are conjugated to positive roots. According to the induction hypothesis, for each $1\leq l'<l$, the $(i_{l'},i)$-th (resp. $(i,i_{l'})$-th) entry is conjugated into a positive root if and only if $i_{l'}<i$ (resp. $i_{l'}>i$). This implies that the contribution consists of $|\setof{1\leq l'<l}{i_{l'}<i}|$ roots from column $i$ (contributing negative powers of $|x|$) and $k-i-|\setof{1\leq l'<l}{i_{l'}>i}|$ from row $i$ (contributing positive powers). Thus
$\delta_g(\rconj{g^{-1}}d)=|x|^{2(k-i-l+1)}=\delta^{1/2}_{B_n}(\rconj{g^{-1}}d)$ and $\mathcal{H}(g)=0$, unless $j_{i_l}=k+l$. The proof is complete, since $j_l=k+\tau(l)$ for all $1\leq l\leq k$, i.e.,
$w'=\omega_0^{-1}$ and $w=\omega^{-1}$.
\end{proof}
\begin{remark}\label{remark:permuting last k coordinates}
Recall that $\chi=\chi_{\underline{s}}$, $\underline{s}\in\C^n$
(see \S~\ref{section:Casselman's basis and a result of Hironaka}). Assume $q^{2s_i}\ne q^{\pm2s_j}$ for all
$1\leq i<j\leq k$ and $q^{2s_i}=q^{-2s_{k+i}}$ for all $1\leq i\leq k$, perhaps after a reordering of $(s_{k+1},\ldots,s_n)$. 
By Theorem~\ref{theorem:unramified Shalika functional 1 dim}, $\mathrm{I}(\chi)$ and $\mathrm{I}(\chi^{-1})$ admit at most one metaplectic Shalika functional (up to scalar multiplication). 
\end{remark}
\begin{remark}\label{remark:why we do not fix tau}
If $\mathrm{I}(\chi)$ is irreducible, then we may have simply fixed one permutation $\tau$ in the statement of the theorem. However, we will also apply the theorem to reducible representations.
\end{remark}

We define a functional satisfying the prescribed equivariance properties \eqref{eq:metaplectic Shalika def}. It will be defined using an integral, which is absolutely convergent in a certain cone in $\chi$. Then we extend it to a meromorphic function, thereby obtaining a functional for all $\chi$.

Following Sakellaridis \cite{Yia}, we realize the metaplectic Shalika functional using a similar functional on a group $H$, which is a twist
of the Shalika group. Working with $H$ instead of the Shalika group was advantageous in \cite{Yia} for computational reasons. Let $J_k$ be the permutation matrix
\begin{align*}
J_k=\left(\begin{array}{ccc}0&&1\\&\iddots&\\1&&0\end{array}\right)\in G_k.
\end{align*}
Put
\begin{align*}
H=\rconj{\omega}(G_k^{\triangle}U_k)=\left\{\left(\begin{array}{cc}c&0\\J_kcu&J_kcJ_k\end{array}\right):c\in G_k\right\},\qquad \omega=\left(\begin{array}{cc}&I_k\\J_k\end{array}\right).
\end{align*}
We form a section $\mathfrak{h}$ of $H$ by
\begin{align*}
\mathfrak{h}(h)=\mathfrak{s}(\omega)\mathfrak{s}(\rconj{\omega^{-1}}h)\mathfrak{s}(\omega)^{-1}.
\end{align*}
For $h\in H$, write
$\rconj{\omega^{-1}}h=c_h^{\triangle}u_h$ with $c_h\in G_k$ and $u_h\in U_k$. The mapping $h\mapsto c_h$ is a group epimorphism,
the mapping $h\mapsto u_h$ is onto $U$, and $h\mapsto\psi(u_h)$ is a character of $H$ (trivial on $\rconj{\omega}G_k^{\triangle}$).
According to \eqref{eq:formula for s on triangle}, the section $\mathfrak{h}$ satisfies
 \begin{align}\label{eq:the section h is almost a splitting}
\mathfrak{h}(hh')=(\det c_h,\det c_{h'})_2\mathfrak{h}(h)\mathfrak{h}(h').
\end{align}
A functional $l$
on a genuine $\pi\in\Alg{\widetilde{G}_n}$ is called a (metaplectic) $(H,\psi',\psi)$-functional if for
any $\xi$ in the space of $\pi$ and $h\in H$,
\begin{align}\label{eq:equivariance props of a H psi' psi functional}
l(\pi(\mathfrak{h}(h))\xi)=\gamma_{\psi'}(\det c_{h})\psi(u_h)l(\xi).
\end{align}
The definitions of $H$ and $\mathfrak{h}$ imply the following relation between these functionals and
metaplectic Shalika functionals.
\begin{claim}\label{claim:H and Shalika functionals}
Let $\pi\in\Alg{\widetilde{G}}_n$ be genuine.
The dimension of the space of $(H,\psi',\psi)$-functionals on $\pi$ is equal to the dimension of
metaplectic $(\psi',\psi)$-Shalika functionals.
\end{claim}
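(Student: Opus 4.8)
The plan is to establish an explicit linear isomorphism between the two spaces of functionals, by conjugating through the element $\omega$ and keeping careful track of the cocycle. The key point is that $H = \rconj{\omega}(G_k^\triangle U_k)$, and the section $\mathfrak{h}$ was precisely engineered via $\mathfrak{h}(h) = \mathfrak{s}(\omega)\mathfrak{s}(\rconj{\omega^{-1}}h)\mathfrak{s}(\omega)^{-1}$ so that the equivariance \eqref{eq:the section h is almost a splitting} of $\mathfrak{h}$ matches the equivariance \eqref{eq:formula for s on triangle} of $\mathfrak{s}$ restricted to $G_k^\triangle$. So the natural thing is: given a metaplectic $(\psi',\psi)$-Shalika functional $l$ on $\pi$, define $l'(\xi) = l(\pi(\mathfrak{s}(\omega))^{-1}\xi)$ (or rather $l' = l \circ \pi(\mathfrak{s}(\omega)^{-1})$), and check that $l'$ is an $(H,\psi',\psi)$-functional; conversely compose with $\pi(\mathfrak{s}(\omega))$ to go back. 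Both maps are visibly linear in the functional and mutually inverse, hence the dimensions agree.

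First I would fix notation: for $h \in H$ write $\rconj{\omega^{-1}}h = c_h^\triangle u_h$ as in the statement, so that $h = \rconj{\omega}(c_h^\triangle u_h)$ and $\mathfrak{h}(h) = \rconj{\mathfrak{s}(\omega)}\mathfrak{s}(c_h^\triangle u_h)$. Then for $\xi$ in the space of $\pi$,
\begin{align*}
l'(\pi(\mathfrak{h}(h))\xi) &= l(\pi(\mathfrak{s}(\omega))^{-1}\pi(\rconj{\mathfrak{s}(\omega)}\mathfrak{s}(c_h^\triangle u_h))\xi) \\
&= l(\pi(\mathfrak{s}(c_h^\triangle u_h))\pi(\mathfrak{s}(\omega))^{-1}\xi) \\
&= \gamma_{\psi'}(\det c_h)\psi(u_h)\,l(\pi(\mathfrak{s}(\omega))^{-1}\xi) = \gamma_{\psi'}(\det c_h)\psi(u_h)\,l'(\xi),
\end{align*}
where the second line uses $\pi(\mathfrak{s}(\omega))^{-1}\pi(\rconj{\mathfrak{s}(\omega)}x) = \pi(x)\pi(\mathfrak{s}(\omega))^{-1}$ (conjugation in $\widetilde{G}_n$), and the third line is the defining property \eqref{eq:metaplectic Shalika def} of the metaplectic Shalika functional, with $c_h^\triangle u_h$ playing the role of $c^\triangle u$. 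This shows $l \mapsto l'$ lands in the space of $(H,\psi',\psi)$-functionals. For the reverse direction, given an $(H,\psi',\psi)$-functional $l'$, set $l = l' \circ \pi(\mathfrak{s}(\omega))$ and run the identical computation backwards, using \eqref{eq:equivariance props of a H psi' psi functional}; since $\rconj{\omega}$ carries $G_k^\triangle U_k$ onto $H$ bijectively and $c_{\rconj{\omega}(c^\triangle u)} = c$, $u_{\rconj{\omega}(c^\triangle u)} = u$, one recovers exactly \eqref{eq:metaplectic Shalika def}.

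In the Archimedean case one should additionally note that $l'$ is continuous if and only if $l$ is, since $\pi(\mathfrak{s}(\omega))$ is a topological isomorphism of the representation space; this takes care of the continuity clause in the definition of a (metaplectic) Shalika functional. Finally, the two assignments $l \mapsto l \circ \pi(\mathfrak{s}(\omega)^{-1})$ and $l' \mapsto l' \circ \pi(\mathfrak{s}(\omega))$ are manifestly inverse linear bijections between the two spaces of functionals, so they have the same dimension, which is the assertion of the claim. I do not expect a genuine obstacle here: the only thing requiring minor care is that $\mathfrak{h}$ is not quite a splitting — it satisfies \eqref{eq:the section h is almost a splitting} with a Hilbert-symbol defect $(\det c_h, \det c_{h'})_2$ — but this defect is exactly the one carried by $\mathfrak{s}$ on $G_k^\triangle$ by \eqref{eq:formula for s on triangle}, so it is transported correctly under conjugation and cancels in the equivariance identity. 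Thus the proof is essentially a bookkeeping argument that the twist by $\omega$ is compatible with the cover.
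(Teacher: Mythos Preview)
Your proof is correct and takes essentially the same approach as the paper: both directions are obtained by composing with $\pi(\mathfrak{s}(\omega))^{\pm 1}$, yielding mutually inverse linear bijections between the two spaces of functionals. You have simply written out the equivariance verification in more detail than the paper, which states the two assignments and leaves the check implicit.
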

\begin{proof}
If $l$ is an $(H,\psi',\psi)$-functional on $\pi$,
$\xi\mapsto l(\pi(\mathfrak{s}(\omega))\xi)$ is a $(\psi',\psi)$-Shalika functional. If
$l$ is a $(\psi',\psi)$-Shalika functional,
$\xi\mapsto l(\pi(\mathfrak{s}(\omega)^{-1})\xi)$ is an $(H,\psi',\psi)$-functional.
\end{proof}

With Claim~\ref{claim:H and Shalika functionals} in mind, we construct $(H,\psi',\psi)$-functionals.
Our main tool will be the metaplectic analog of Hironaka's theorem, Corollary~\ref{corollary:analog of Hironaka theory with phi I functions}. To preserve the notation, we construct the functional on $\mathrm{I}(\chi^{-1})$.

The group $H$ is closed and unimodular.
Sakellaridis proved \cite[Lemma~3.1]{Yia} that $B_nH$ is Zariski open in $G_n$. Therefore $B_nH$ is dense and open in $G_n$, making it convenient for integration formulas. Here we are forced to work with $B_{n,*}H$, which is
in particular not dense. Next we describe certain structural results involving $B_{n,*}$ and $H$, which will
enable us to adapt the integration formulas from \cite{Yia} to the metaplectic setting.
\begin{claim}\label{claim:coset decomposition B*H in BnH}
$B_nH=\coprod_{t\in \lmodulo{T_{k,*}}{T_k}}\diag(t,I_k)B_{n,*}H$.
\end{claim}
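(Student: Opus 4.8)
The plan is to recast the asserted decomposition as a statement about $(B_{n,*},H)$-double cosets. Since $B_{n,*}$ is normal in $B_n$ and $\diag(t,I_k)\in T_n\subset B_n$, we have $\diag(t,I_k)B_{n,*}H=B_{n,*}\diag(t,I_k)H$, which is an honest $(B_{n,*},H)$-double coset; in particular any two of the sets $\diag(t,I_k)B_{n,*}H$ are either equal or disjoint, so the content of the claim is that those with $t\in T_k$ exhaust $B_nH$ and that $\diag(t,I_k)B_{n,*}H=\diag(t',I_k)B_{n,*}H$ exactly when $t^{-1}t'\in T_{k,*}$. The one structural input I would record first is $B_n\cap H$: writing an element of $H=\rconj{\omega}(G_k^{\triangle}U_k)$ as $\left(\begin{smallmatrix}c&0\\J_kcu&J_kcJ_k\end{smallmatrix}\right)$, it is upper triangular precisely when $u=0$, $c$ is upper triangular and $J_kcJ_k$ is upper triangular, i.e.\ $c$ is diagonal; hence $B_n\cap H=\iota(T_k)$, where $\iota(c):=\diag(c,J_kcJ_k)=\rconj{\omega}(c^{\triangle})\in H$.

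For exhaustion, note $B_n=T_nB_{n,*}$, so $B_nH=\bigcup_{t'\in T_n}t'B_{n,*}H$. Given $t'=\diag(t'_1,\ldots,t'_{2k})\in T_n$, let $c\in T_k$ have $j$-th entry $t'_{2k+1-j}$ and $t\in T_k$ have $i$-th entry $t'_i(t'_{2k+1-i})^{-1}$; a direct check gives $t'=\diag(t,I_k)\,\iota(c)$ (both factors are diagonal, hence commute). Since $\iota(c)\in H$ and $\iota(c)B_{n,*}=B_{n,*}\iota(c)$ by normality, $t'B_{n,*}H=\diag(t,I_k)B_{n,*}\iota(c)H=\diag(t,I_k)B_{n,*}H$. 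Thus the sets indexed by $t\in T_k$ already cover $B_nH$.

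For the parametrization, $\diag(t,I_k)B_{n,*}H=\diag(t',I_k)B_{n,*}H$ means $\diag(t',I_k)\in B_{n,*}\diag(t,I_k)H$; writing this membership out, pushing the $B_{n,*}$-factor across $\diag(t,I_k)$ by normality, and invoking $B_n\cap H=\iota(T_k)$ together with the uniqueness of the $T_nN_n$-factorization, one reduces it to $\diag(t^{-1}t',I_k)\in\iota(T_k)T_{n,*}$. Finally I would compute $\iota(T_k)T_{n,*}$ directly: because $\iota(c)$ has the palindromic form $\diag(c_1,\ldots,c_k,c_k,\ldots,c_1)$ and $T_{n,*}=\setof{\diag(d_1,\ldots,d_{2k})}{d_i\in F^{*2}\mathcal{O}^*}$ (as $n=2k$ is even), $\iota(T_k)T_{n,*}$ is the set of diagonal $\diag(e_1,\ldots,e_{2k})$ with $e_ie_{2k+1-i}^{-1}\in F^{*2}\mathcal{O}^*$ for $1\le i\le k$. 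Intersecting this with $\diag(T_k,I_k)$ forces every entry of $t^{-1}t'$ to lie in $F^{*2}\mathcal{O}^*$, i.e.\ $t^{-1}t'\in T_{k,*}$; this yields disjointness for inequivalent classes and shows that a transversal of $\lmodulo{T_{k,*}}{T_k}$ produces the coproduct.

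I expect the last step — the explicit identification $\iota(T_k)T_{n,*}\cap\diag(T_k,I_k)=\diag(T_{k,*},I_k)$ — to be the only delicate point, since it is where the precise normalization of the $\ast$-tori (the $\mathe$-factor in $T_{n,*}$ and the matching convention for $T_{k,*}$) enters; everything else is formal coset bookkeeping, with disjointness coming for free from the double-coset reformulation.
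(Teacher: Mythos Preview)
Your proof is correct and follows essentially the same approach as the paper. Both arguments hinge on the observation that $B_n\cap H=\iota(T_k)=\{\diag(x_1,\ldots,x_k,x_k,\ldots,x_1)\}$: the paper states this containment directly and then compares diagonals of $\diag(tt'^{-1},I_k)b_*\in H$, while you package the same computation more structurally by first recording $B_n\cap H$ and then reducing disjointness to the torus identity $\iota(T_k)T_{n,*}\cap\diag(T_k,I_k)=\diag(T_{k,*},I_k)$. Your concern about the $\mathe$-factor is well placed but harmless here: since $n=2k$ is even, $T_{n,*}=\{d:d_i\in F^{*2}\mathcal{O}^*\}$, and the paper's proof (deriving $t_i t_i'^{-1}\in F^{*2}\mathcal{O}^*$) shows that $T_{k,*}$ is being used in the same sense, i.e.\ the subgroup of $T_k$ with all entries in $F^{*2}\mathcal{O}^*$.
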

\begin{proof}
Since $H$ contains the subgroup $\{\diag(x_1,\ldots,x_k,x_k,\ldots,x_1):x_i\in F^*\}$, it is clear that any $bh\in B_nH$ is contained
in one of the cosets on the \rhs. The union is disjoint because if $\diag(t,I_k)B_{n,*}H=\diag(t',I_k)B_{n,*}H$, using the fact that
$B_{n,*}$ is normal in $B_n$ leads to $\diag(t{t'}^{-1},I_k)b_*\in H$ for some $b_*\in B_{n,*}$. Let the diagonal of $b_*$ be
$\diag(c_1,\ldots,c_n)$, $c_i\in F^{*2}\mathcal{O}^*$. Then the diagonal of $\diag(t{t'}^{-1},I_k)b_*$ takes the form
$\diag(x_1,\ldots,x_k,x_k,\ldots,x_1)$ and we must have $t_i{t'_i}^{-1}c_i=x_i=c_{n-i+1}$ for all $1\leq i\leq k$,
whence $t_i{t'_i}^{-1}\in F^{*2}\mathcal{O}^*$ for all $i$ and $tT_{k,*}=t'T_{k,*}$.
\end{proof}
We will use the following geometric results from \cite[Lemma~5.1]{Yia}:
\begin{align}\label{eq:Iwahori- without one root contained in BcapK times HcapK}
&N_{\alpha}^-(\varpi\mathcal{O})\subset(T_{n}\cap K)N_{\alpha}(\mathcal{O})N_{\alpha'}(\mathcal{O})(H\cap K),\qquad\forall\alpha>0,\\
\nonumber
&\mathcal{I}^-\subset(B_{n}\cap K)(H\cap K).
\end{align}
Here for $(i,j)=\alpha>0$, $\alpha'=\alpha$ if $i\leq k<j$ otherwise $\alpha'=(n-j+1,n-i+1)$.
The latter inclusion along with \eqref{eq:reformulation of Iwahori} and the fact that $B_{n}\cap K<B_{n,*}$ implies
\begin{align}\label{eq:Iwahori contained in B*H}
&\mathcal{I}\subset(B_{n}\cap K)(H\cap K)\subset B_{n,*}H.
\end{align}
\begin{claim}\label{claim:B_{n,*}H is open}
The set $B_{n,*}H$ is open (in $G_n$).
\end{claim}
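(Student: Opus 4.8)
The plan is to realize $B_{n,*}H$ as a union of translates of an open set, so that openness is immediate. The key inputs are the inclusion $\mathcal{I}\subset B_{n,*}H$ established in \eqref{eq:Iwahori contained in B*H}, the openness of the Iwahori subgroup $\mathcal{I}$ in $G_n$, and the fact that both $B_{n,*}$ and $H$ are subgroups of $G_n$.

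Concretely, I would argue as follows. Fix $b_*\in B_{n,*}$ and $h\in H$. The map $x\mapsto b_*xh$ is a homeomorphism of $G_n$, so $b_*\mathcal{I}h$ is open. Moreover $b_*\mathcal{I}h\subset B_{n,*}H$: by \eqref{eq:Iwahori contained in B*H} any element of $\mathcal{I}$ has the form $b'h'$ with $b'\in B_{n,*}$, $h'\in H$, whence $b_*(b'h')h=(b_*b')(h'h)\in B_{n,*}H$. Finally, since $e\in\mathcal{I}$, each $x=b_*h\in B_{n,*}H$ lies in $b_*\mathcal{I}h$, so
\[
B_{n,*}H=\bigcup_{b_*\in B_{n,*},\ h\in H}b_*\mathcal{I}h
\]
is a union of open subsets of $G_n$ and therefore open.

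I do not expect any genuine obstacle; the content is entirely concentrated in the inclusion $\mathcal{I}\subset B_{n,*}H$, which is \eqref{eq:Iwahori contained in B*H} and ultimately rests on the geometric inclusions \eqref{eq:Iwahori- without one root contained in BcapK times HcapK} of \cite[Lemma~5.1]{Yia} together with $B_n\cap K<B_{n,*}$. An alternative route would combine the openness (indeed Zariski openness) of $B_nH$ from \cite[Lemma~3.1]{Yia} with the coset decomposition of Claim~\ref{claim:coset decomposition B*H in BnH}: as $\lmodulo{T_{k,*}}{T_k}$ is finite, $B_{n,*}H$ would be one of finitely many mutually homeomorphic pieces whose disjoint union is open. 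The direct argument above is shorter and sidesteps verifying finiteness of $\lmodulo{T_{k,*}}{T_k}$.
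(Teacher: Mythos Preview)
Your proof is correct and is essentially identical to the paper's own argument: both show that for any $b_*\in B_{n,*}$, $h\in H$ the set $b_*\mathcal{I}h$ is an open neighborhood of $b_*h$ contained in $B_{n,*}H$, using \eqref{eq:Iwahori contained in B*H}. You supply slightly more detail (spelling out why $b_*\mathcal{I}h\subset B_{n,*}H$ via the group law), but the structure is the same.
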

\begin{proof}
According to \eqref{eq:Iwahori contained in B*H}, for any $b_*\in B_{n,*}$ and $h\in H$, $b_*\mathcal{I}h\subset B_{n,*}H$. The set $b_*\mathcal{I}h$ is open (because $\mathcal{I}$ is)
and contains $b_*h$. Therefore $B_{n,*}H$ is open.
\end{proof}

The following two claims relate the sections $\mathfrak{h}$, $\mathfrak{s}$ and $\kappa$.
\begin{claim}\label{claim:h and kappa agree on H cap K}
The sections $\mathfrak{s}$ and $\kappa$ agree on $(G_k^{\triangle}U_k)\cap K$. Consequently,
$\mathfrak{h}$ and $\kappa$ agree on $H\cap K$.
\end{claim}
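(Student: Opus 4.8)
The plan is to reduce the statement about $\mathfrak h$ to the statement about $\mathfrak s$, and then to prove the latter directly from the properties of the Banks--Levy--Sepanski cocycle $\sigma$. First I would recall that $\kappa$ is the canonical splitting of $K$, and that by the discussion in \S\ref{subsection:unramified representations} it agrees with $\mathfrak s$ on $T_n\cap K$, on $\mathfrak W$, and on $N_n\cap K$. The group $(G_k^\triangle U_k)\cap K$ is generated by $U_k\cap K$, which lies in $N_n\cap K$, together with $G_k^\triangle\cap K$. So the first reduction is: it suffices to check that $\mathfrak s$ and $\kappa$ agree on $G_k^\triangle\cap K$, since on $U_k\cap K$ they already agree (both equal $\mathfrak s$ on all of $N_n$, and $\kappa=\mathfrak s$ on $N_n\cap K$), and both sections are multiplicative in the relevant sense on products $c^\triangle u$ with $c^\triangle\in G_k^\triangle$, $u\in U_k$ — here one uses the property recorded in \S\ref{subsection:The metaplectic cover G_n} that $\mathfrak s(gv)=\mathfrak s(g)\mathfrak s(v)$ for $v\in N_n$, together with the analogous property of $\kappa$ on $K$.

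Next, to see that $\mathfrak s=\kappa$ on $G_k^\triangle\cap K$: an element $c^\triangle=\diag(c,c)$ with $c\in G_k\cap K=G_k(\mathcal O)$. By the block-compatibility formula \eqref{eq:block-compatibility}, the restriction of $\sigma$ to $G_k^\triangle$ is $\sigma(c^\triangle,{c'}^\triangle)=\sigma_k(c,c')\sigma_k(c,c')(\det c,\det c')_2=(\det c,\det c')_2$, using $\sigma_k(c,c')^2=1$; this is exactly \eqref{eq:formula for s on triangle}. For $c,c'\in G_k(\mathcal O)$ we have $\det c,\det c'\in\mathcal O^*$, and since $|2|=1$ (so $q$ is odd), $(\mathcal O^*,\mathcal O^*)_2=1$. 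Hence $\sigma$ is trivial on $(G_k^\triangle\cap K)\times(G_k^\triangle\cap K)$, i.e.\ $\mathfrak s$ restricted there is already a homomorphism into $\widetilde G_n$ lifting the identity. On the other hand $\kappa$ is by construction a homomorphism on $K$, hence on $G_k^\triangle\cap K$, lifting the identity. Two splittings of a subgroup of $G_n$ into $\widetilde G_n$ that both lift the identity differ by a character into $\mu_2$; to pin it down to the trivial character one invokes that $\kappa$ and $\mathfrak s$ agree on $T_n\cap K$ (which contains the diagonal torus of $G_k^\triangle\cap K$), and that $G_k(\mathcal O)$ is generated by its torus together with the unipotent subgroups $N_\alpha(\mathcal O)$, $N_\alpha^-(\mathcal O)$, on which $\mathfrak s$ and $\kappa$ again coincide (both agree with $\mathfrak s$ on $N_n\cap K$ and $N_n^-\cap K$, and $\kappa=\mathfrak s$ on $\mathfrak W$, hence on the generators $w_\alpha$ — so on all of $G_k(\mathcal O)^\triangle$). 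This gives $\mathfrak s=\kappa$ on $(G_k^\triangle U_k)\cap K$.

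Finally, for the "consequently" clause, let $h\in H\cap K$. By definition $\mathfrak h(h)=\mathfrak s(\omega)\mathfrak s(\rconj{\omega^{-1}}h)\mathfrak s(\omega)^{-1}$, where $\rconj{\omega^{-1}}h\in G_k^\triangle U_k$. The element $\omega=\left(\begin{smallmatrix}&I_k\\mathfrak J_k&\end{smallmatrix}\right)$ lies in $K$ (it is a signed permutation matrix, in fact a permutation matrix), and $\rconj{\omega^{-1}}h\in (G_k^\triangle U_k)\cap K$ since $h\in K$ and $\omega\in K$. Using the cocycle relation to express $\mathfrak s(\omega)\mathfrak s(x)\mathfrak s(\omega)^{-1}$ in terms of $\mathfrak s(\rconj\omega x)=\mathfrak s(h)$, one gets $\mathfrak h(h)=\sigma(\omega,\rconj{\omega^{-1}}h)\sigma(h,\omega)^{-1}\mathfrak s(h)$; but $\omega\in\mathfrak W$ up to a torus element with entries $\pm1$, and by the compatibility of $\kappa$ with $\mathfrak s$ on $\mathfrak W$, $T_n\cap K$ and $K$, all the $\sigma$-factors arising here are trivial when all arguments lie in $K$ (concretely, $\kappa(\omega)\kappa(\rconj{\omega^{-1}}h)=\kappa(h\cdot\omega)=\kappa(h)\kappa(\omega)$ since $\kappa$ is a homomorphism on $K$, and $\kappa$ agrees with $\mathfrak s$ on each of $\omega$, $\rconj{\omega^{-1}}h$, $h$ by the first part). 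Hence $\mathfrak h(h)=\kappa(h)$ for $h\in H\cap K$. I expect the only mildly delicate point to be the bookkeeping in this last step — making sure that $\omega$, $\rconj{\omega^{-1}}h$ and $h$ all genuinely lie in subsets of $K$ on which $\mathfrak s$ and $\kappa$ are known to coincide — but this is bookkeeping rather than a real obstacle, since everything in sight is in $K$ and $\kappa$ is multiplicative there.
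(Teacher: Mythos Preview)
Your overall strategy matches the paper's: show $\mathfrak s$ is a splitting of $G_k(\mathcal O)^\triangle$ via block-compatibility and $(\mathcal O^*,\mathcal O^*)_2=1$, observe that $\mathfrak s$ and $\kappa$ then differ by a character $G_k(\mathcal O)\to\mu_2$, and kill that character; then deduce the statement for $\mathfrak h$ by conjugating with $\omega$ and using $\kappa(\omega)=\mathfrak s(\omega)$.

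There is one soft spot. To trivialize the $\mu_2$-valued character you invoke that $\mathfrak s$ and $\kappa$ agree on $N_\alpha^-(\mathcal O)$, i.e.\ on $N_n^-\cap K$. The paper never records this; what is stated (in \S\ref{subsection:unramified representations}) is only that $\kappa=\mathfrak s$ on $T_n\cap K$, on $\mathfrak W$, and on $N_n\cap K$. Agreement on $N_n^-\cap K$ does not follow for free, because $\mathfrak s$ is not a homomorphism on the subgroup generated by $\mathfrak W$ and $N_n$, so you cannot simply conjugate. The paper sidesteps this entirely: once you know the quotient $c\mapsto\mathfrak s(c^\triangle)\kappa(c^\triangle)^{-1}$ is a homomorphism $G_k(\mathcal O)\to\mu_2$ trivial on $T_k(\mathcal O)$, you are done because (under $|2|=1$, $q>3$) $\mathrm{SL}_k(\mathcal O)$ admits no nontrivial homomorphism to $\mu_2$, so the character factors through $\det$ and is already trivial on the torus. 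Swap your generator argument for this one-line group-theoretic fact and the gap closes.

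Your ``consequently'' paragraph is correct but more laborious than needed. Since $\mathfrak s$ is a splitting of $(G_k^\triangle U_k)\cap K$, its conjugate $\mathfrak h$ is a splitting of $H\cap K$; so is $\kappa$. With $\kappa(\omega)=\mathfrak s(\omega)$ (which holds because $\omega=t_0\mathfrak w_\omega$ with $t_0\in T_n\cap K$, $\mathfrak w_\omega\in\mathfrak W$, and $\sigma(t_0,\mathfrak w_\omega)=1$) and the first part giving $\mathfrak s=\kappa$ on $(G_k^\triangle U_k)\cap K$, you get directly
\[
\mathfrak h(h)=\mathfrak s(\omega)\mathfrak s(\rconj{\omega^{-1}}h)\mathfrak s(\omega)^{-1}=\kappa(\omega)\kappa(\rconj{\omega^{-1}}h)\kappa(\omega)^{-1}=\kappa(h),
\]
with no explicit cocycle bookkeeping required.
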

\begin{proof}
First we compare $\mathfrak{s}$ and $\kappa$. We have $(G_k^{\triangle}U_k)\cap K=G_k(\mathcal{O})^{\triangle}U_k(\mathcal{O})$,
where $G_k(\mathcal{O})^{\triangle}=\{c^{\triangle}:c\in G_k(\mathcal{O})\}$.
According to \eqref{eq:formula for s on triangle} and because $(\mathcal{O}^*,\mathcal{O}^*)_2=1$, $\mathfrak{s}$ is a splitting of $G_k(\mathcal{O})^{\triangle}$ and thereby of
$G_k(\mathcal{O})^{\triangle}U_k(\mathcal{O})$. The section $\kappa$ is also a splitting of the latter. Since
$\mathfrak{s}|_{N_n\cap K}=\kappa|_{N_n\cap K}$, it is enough to show that $\mathfrak{s}$ and $\kappa$ agree on $G_k(\mathcal{O})^{\triangle}$. The mapping $c\mapsto\mathfrak{s}(c^{\triangle})\kappa(c^{\triangle})^{-1}$ is
a homomorphism $G_k(\mathcal{O})\rightarrow\mu_2$, which is trivial on $T_k\cap G_k(\mathcal{O})$ because
$\mathfrak{s}|_{T_n\cap K}=\kappa|_{T_n\cap K}$. Moreover, there is only one homomorphism $\mathrm{SL}_k(\mathcal{O})\rightarrow\mu_2$, the trivial one (when $|2|=1$ and $q>3$).
Thus $c\mapsto\mathfrak{s}(c^{\triangle})\kappa(c^{\triangle})^{-1}$ is the identity.

Next we have $H\cap K=\rconj{\omega}(G_k(\mathcal{O})^{\triangle}U_k(\mathcal{O}))$. The section $\mathfrak{h}$ is a splitting of $H\cap K$ by definition and because $\mathfrak{s}$ is a splitting of $(G_k^{\triangle}U_k)\cap K$. The assertion for $\mathfrak{h}$ now follows from the previous result because $\kappa(\omega)=\mathfrak{s}(\omega)$.
\end{proof}
\begin{remark}
One can check that for $u\ne0$, $\mathfrak{h}\left(\begin{smallmatrix}1&0\\u&1\end{smallmatrix}\right)=(u,u)_2\mathfrak{s}\left(\begin{smallmatrix}1&0\\u&1\end{smallmatrix}\right)$, whence
$\mathfrak{h}$ and $\mathfrak{s}$ do not coincide, even on $\rconj{\omega}U_k(\mathcal{O})$.
\end{remark}
Next we consider

\begin{align*}
H\cap B_{n,*}=\{\diag(t_1,\ldots,t_k,t_k,\ldots,t_1):t_i\in F^{*2}\mathcal{O}^*\}.
\end{align*}
It is closed and unimodular.
\begin{claim}\label{claim:h and s agree on H cap B_n*}
The sections $\mathfrak{h}$ and $\mathfrak{s}$ agree on
$H\cap B_{n,*}$.
\end{claim}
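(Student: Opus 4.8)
The plan is to verify the identity by a direct computation, reducing everything to the conjugation formulas already established for the torus. Fix $h=\diag(t_1,\ldots,t_k,t_k,\ldots,t_1)\in H\cap B_{n,*}$, so that each $t_i\in F^{*2}\mathcal{O}^*$. First I would observe that $\rconj{\omega^{-1}}h$ is diagonal, hence of the form $c_h^{\triangle}$ with $u_h=I_k$, and an explicit multiplication (using $J_k^2=I_k$, with $\omega^{-1}=\left(\begin{smallmatrix}&J_k\\I_k&\end{smallmatrix}\right)$) gives $\rconj{\omega^{-1}}h=a^{\triangle}$, where $a=\diag(t_1,\ldots,t_k)\in T_k$. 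In particular $a^{\triangle}\in T_{n,*}$, since all of its diagonal entries lie in $F^{*2}\mathcal{O}^*$. Consequently $\mathfrak{h}(h)=\mathfrak{s}(\omega)\mathfrak{s}(a^{\triangle})\mathfrak{s}(\omega)^{-1}=\rconj{\mathfrak{s}(\omega)}\mathfrak{s}(a^{\triangle})$, and it remains to show this equals $\mathfrak{s}(h)$.

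Next I would use that $\omega$ is a permutation matrix and write $\omega=t_0\mathfrak{w}$ as in Remark~\ref{remark:computing conjugation t by w}. Since $a^{\triangle}\in T_{n,*}$, Corollary~\ref{corollary:computing conjugation t by w for t in maximal abelian or in the center} gives $\rconj{\mathfrak{s}(\omega)}\mathfrak{s}(a^{\triangle})=\rconj{\mathfrak{s}(\mathfrak{w})}\mathfrak{s}(a^{\triangle})$, and then Claim~\ref{claim:conjugation W and T} yields $\rconj{\mathfrak{s}(\mathfrak{w})}\mathfrak{s}(a^{\triangle})=\sigma(\mathfrak{w},a^{\triangle})\,\mathfrak{s}(\rconj{\mathfrak{w}}(a^{\triangle}))$, where $\sigma(\mathfrak{w},a^{\triangle})$ is a product of Hilbert symbols $(-y,x)_2$ with $x,y$ ranging over the diagonal entries of $a^{\triangle}$. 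The key point is that all these entries lie in $F^{*2}\mathcal{O}^*$ and $-1\in\mathcal{O}^*$ (as the residue characteristic is odd, $|2|=1$), so each factor $(-y,x)_2$ is trivial by bilinearity together with $(F^{*2},\blank)_2=1$ and $(\mathcal{O}^*,\mathcal{O}^*)_2=1$; hence $\sigma(\mathfrak{w},a^{\triangle})=1$.

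It then remains to identify $\rconj{\mathfrak{w}}(a^{\triangle})$. Since $\mathfrak{w}=t_0^{-1}\omega$ and $t_0$, $h$ are diagonal, hence commute, we get $\rconj{\mathfrak{w}}(a^{\triangle})=t_0^{-1}\,\rconj{\omega}(a^{\triangle})\,t_0=t_0^{-1}ht_0=h$, using $\rconj{\omega}(a^{\triangle})=h$ from the first step. Therefore $\mathfrak{h}(h)=\mathfrak{s}(\rconj{\mathfrak{w}}(a^{\triangle}))=\mathfrak{s}(h)$, as desired. The argument is entirely routine; the only place demanding a little care is the verification that the cocycle factor $\sigma(\mathfrak{w},a^{\triangle})$ disappears, and this follows immediately from the fact that the diagonal entries of $a^{\triangle}$ are squares times units, so that every relevant Hilbert symbol is trivial.
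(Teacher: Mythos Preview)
Your proof is correct and follows essentially the same route as the paper's own argument: write $\omega=t_0\mathfrak{w}$, use that the element being conjugated lies in $T_{n,*}$ so Corollary~\ref{corollary:computing conjugation t by w for t in maximal abelian or in the center} applies, and then kill the cocycle factor $\sigma(\mathfrak{w},a^{\triangle})$ via triviality of the Hilbert symbol on $F^{*2}\mathcal{O}^*\times F^{*2}\mathcal{O}^*$. You are simply more explicit than the paper about the intermediate computations (e.g.\ identifying $\rconj{\omega^{-1}}h=a^{\triangle}$ and verifying $\rconj{\mathfrak{w}}(a^{\triangle})=h$), which is fine. One incidental remark: $-1\in\mathcal{O}^*$ holds in any non-Archimedean local field; the hypothesis $|2|=1$ is what guarantees $(\mathcal{O}^*,\mathcal{O}^*)_2=1$, not the membership of $-1$ in $\mathcal{O}^*$.
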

\begin{proof}
Let $t=\diag(t_1,\ldots,t_k,t_k,\ldots,t_1)\in H\cap B_{n,*}$. Then $\mathfrak{h}(t)=\rconj{\mathfrak{s}(\omega)}\mathfrak{s}(\rconj{\omega^{-1}}t)$. We compute this conjugation as explained in
Remark~\ref{remark:computing conjugation t by w}. Put $\omega=t_0\mathfrak{w}$, where $t_0\in T_n\cap K$ and $\mathfrak{w}\in\mathfrak{W}$. Then $\sigma(\mathfrak{w},t)=1$ because $(\blank,\blank)_2$ is trivial on $F^{*2}\mathcal{O}^*\times F^{*2}\mathcal{O}^*$.
Also $\mathfrak{s}(t_0)$ and $\mathfrak{s}(t)$ commute because $t_0,t\in T_{n,*}$. Thus
$\rconj{\mathfrak{s}(\omega)}\mathfrak{s}(\rconj{\omega^{-1}}t)=\mathfrak{s}(t)$.
\end{proof}

Let $dh$ be a Haar measure on $H$. Consider the functional $l_H$ on
$\mathcal{S}^{\mathrm{gen}}(\widetilde{H})$ given by
\begin{align*}
l_H(f)=\int_Hf(\mathfrak{h}(h))\gamma_{\psi'}^{-1}(\det c_h)\psi^{-1}(u_h)\, dh.
\end{align*}
This integral is absolutely convergent because $|f|\in \mathcal{S}^{\mathrm{gen}}(H)$ ($|f|$ is non-genuine and $p:\widetilde{H}\rightarrow H$ is continuous).
Since $f$ is genuine, \eqref{eq:the section h is almost a splitting} and \eqref{eq:Weil factor identities} imply
\begin{align*}
l_H(\mathfrak{h}(h)f)=\gamma_{\psi'}(\det c_{h})\psi(u_h)l_H(f).
\end{align*}
Hence $l_H$ satisfies \eqref{eq:equivariance props of a H psi' psi functional} and is an $(H,\psi',\psi)$-functional
on the space of $\mathcal{S}^{\mathrm{gen}}(\widetilde{H})$, where $\widetilde{H}$ acts by right-translations.

Next consider the space of $\mathcal{S}^{\mathrm{gen}}(\lmodulo{\mathfrak{s}(H\cap B_{n,*})}{\widetilde{H}})$. There is a surjection
\begin{align*}
\mathcal{S}^{\mathrm{gen}}(p^{-1}(B_{n,*}H))\rightarrow \mathcal{S}^{\mathrm{gen}}(\lmodulo{\mathfrak{s}(H\cap B_{n,*})}{\widetilde{H}})
\end{align*}
given by $f\mapsto f^{B_{n,*}\cap H}$, where
\begin{align*}
f^{B_{n,*}\cap H}(h)=\int_{B_{n,*}\cap H}f(\mathfrak{s}(b_*)h)\, db_*,\qquad h\in \widetilde{H}.
\end{align*}
Note that $\mathfrak{h}(b_*)=\mathfrak{s}(b_*)$ (Claim~\ref{claim:h and s agree on H cap B_n*}). Then
we have a functional $l_{\lmodulo{H\cap B_{n,*}}{H}}$ on $\mathcal{S}^{\mathrm{gen}}(\lmodulo{\mathfrak{s}(H\cap B_{n,*})}{\widetilde{H}})$ defined by (see \cite[(44)]{Yia})
\begin{align}\label{int:integral on quotient of H}
l_{\lmodulo{H\cap B_{n,*}}{H}}(f)=\int_{\lmodulo{H\cap B_{n,*}}{H}}f(\mathfrak{h}(h))\gamma_{\psi'}^{-1}(\det c_h)\psi^{-1}(u_h)\, dh.
\end{align}
The integrand is invariant on the left by $H\cap B_{n,*}$: for $b_*\in H\cap B_{n,*}$ and $h\in H$, by \eqref{eq:the section h is almost a splitting}, Claim~\ref{claim:h and s agree on H cap B_n*},
\eqref{eq:Weil factor identities} and the fact that $h\mapsto\gamma_{\psi'}^{-1}(\det c_h)$ is trivial on $H\cap B_{n,*}$,
\begin{align*}
\gamma_{\psi'}^{-1}(\det c_{b_*h})\mathfrak{h}(b_*h)&=\gamma_{\psi'}^{-1}(\det c_{b_*}\det c_{h})(\det c_{b_*},\det c_h)_2\mathfrak{h}(b_*)\mathfrak{h}(h)
\\&=\gamma_{\psi'}^{-1}(\det c_{b_*})\gamma_{\psi'}^{-1}(\det c_{h})\mathfrak{s}(b_*)\mathfrak{h}(h)
\\&=\gamma_{\psi'}^{-1}(\det c_{h})\mathfrak{s}(b_*)\mathfrak{h}(h).
\end{align*}
Hence
\begin{align*}
f(\mathfrak{h}(b_*h))\gamma_{\psi'}^{-1}(\det c_{b_*h})=f(\mathfrak{h}(h))\gamma_{\psi'}^{-1}(\det c_h).
\end{align*}
Also the character $\psi^{-1}$ satisfies $\psi^{-1}(u_{b_*h})=\psi^{-1}(u_{h})$ because $u_{b_*}=I_{n}$.
Of course, $l_{\lmodulo{H\cap B_{n,*}}{H}}$ satisfies the same equivariance properties as $l_H$. 

Henceforth assume
\begin{align}\label{eq:assumptions on character for H functional}
q^{2s_i}\ne q^{\pm 2s_j}, \quad \forall1\leq i<j\leq k,\qquad q^{2s_i}=q^{-2s_{n-i+1}},\quad\forall1\leq i\leq k.
\end{align}
By Theorem~\ref{theorem:unramified Shalika functional 1 dim} and Claim~\ref{claim:H and Shalika functionals}
the space of $(H,\psi',\psi)$-functionals on $\mathrm{I}(\chi^{-1})$ is at most one-dimensional.

We chose $\chi$ in this form so that it is trivial on $\mathfrak{s}(H\cap B_{n,*})$.
The modulus character $\delta_{B_n}$ is also trivial on $H\cap B_{n,*}$. We may then regard
the elements of $\mathrm{I}(\chi^{-1})$ as genuine locally constant functions  on
$\lmodulo{\mathfrak{s}(H\cap B_{n,*})}{\widetilde{H}}$. Define
a functional $\Lambda_{\chi}$ on $\mathrm{I}(\chi^{-1})$ by
\begin{align*}
\Lambda_{\chi}(f_{\chi^{-1}})=l_{\lmodulo{H\cap B_{n,*}}{H}}(f_{\chi^{-1}}).
\end{align*}
First we prove that this makes sense, for sufficiently many characters $\chi$. Later we extend it to all relevant characters
by meromorphic continuation. The equivariance properties of $l_H$ imply that
$\Lambda_{\chi}$ is an $(H,\psi',\psi)$-functional. 
\begin{claim}\label{claim:functional on quotient of H is abs conv}
Assuming $\Re(s_i)<\Re(s_{i+1})$ for all $1\leq i <k$ and $\Re(s_{k})<0$, the integral $l_{\lmodulo{H\cap B_{n,*}}{H}}(f_{\chi^{-1}})$ given by \eqref{int:integral on quotient of H} is absolutely convergent for all $f_{\chi^{-1}}$.
\end{claim}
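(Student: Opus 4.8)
The plan is to bound $\int_{\lmodulo{H\cap B_{n,*}}{H}}|f_{\chi^{-1}}(\mathfrak{h}(h))|\,dh$ by a finite sum of absolutely convergent one-variable Tate integrals, whose common region of convergence is exactly the cone in the statement. First I would make the quotient explicit. Using the identification $h\leftrightarrow(c_h,u_h)$ of $H$ with $G_k\ltimes U_k$ from \S\ref{subsection:metaplectic Shalika functional and model}, a direct conjugation by $\omega^{-1}$ shows that $H\cap B_{n,*}$ corresponds to $\{(\diag(t_1,\ldots,t_k),I):t_i\in F^{*2}\mathcal{O}^*\}$ while $B_n\cap H$ corresponds to the same set with $t_i\in F^*$; hence $\lmodulo{H\cap B_{n,*}}{H}$ fibers over $(\lmodulo{T_k}{G_k})\times U_k$ with finite fibers $\lmodulo{T_{k,*}}{T_k}$ (equivalently, invoke Claim~\ref{claim:coset decomposition B*H in BnH}). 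Fixing coset representatives, it then suffices to show that $\int_{\lmodulo{T_k}{G_k}}\int_{U_k}|f_{\chi^{-1}}(\mathfrak{h}(\rconj{\omega}((cb_0)^{\triangle}u)))|\,du\,dc<\infty$ for each of the finitely many $b_0$.

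Next I would majorize $f_{\chi^{-1}}$. Since $n=2k$ is even, $\lmodulo{T_{n,*}}{T_n}$ is finite and $G_n=\coprod_{[t]}B_{n,*}tK$ with no Weyl elements needed; as $f_{\chi^{-1}}$ is smooth (so right-invariant under a compact open subgroup) and therefore bounded on $\widetilde{K}$, we get $|f_{\chi^{-1}}(\mathfrak{h}(h))|\le C\,\delta_{B_n}^{1/2}|\chi^{-1}|(b_*(h))$, where $b_*(h)\in B_{n,*}$ is a $B_{n,*}$-Iwasawa component of $h=p(\mathfrak{h}(h))$. I would then use the Iwasawa decomposition $G_k=\overline{N}_kT_k(G_k\cap K)$, $c=\overline{v}t\kappa_0$, to parametrize the $c$-integral (the compact integration over $G_k\cap K$ and the torus integration against $\lmodulo{T_k}{G_k}$ being harmless), and compute $b_*\big(\rconj{\omega}((\overline{v}t b_0)^{\triangle}u)\big)$ explicitly. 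Here $\rconj{\omega}(c^{\triangle})=\diag(c,J_kcJ_k)$ and $\rconj{\omega}(U_k)=U_k^-$, so the argument lies in $Q_k^-$; pushing the $U_k^-$-part and the lower triangular block of $\overline{v}$ through, using the geometric inclusions \eqref{eq:Iwahori- without one root contained in BcapK times HcapK} and \eqref{eq:Iwahori contained in B*H} of Sakellaridis \cite{Yia} together with the section compatibilities of Claims~\ref{claim:h and kappa agree on H cap K} and \ref{claim:h and s agree on H cap B_n*} for the compact and cover parts, yields $|f_{\chi^{-1}}(\mathfrak{h}(\rconj{\omega}((\overline{v}tb_0)^{\triangle}u)))|\le C\,\phi(\overline{v},u)$ for an explicit $\phi$ built from $\delta_{B_n}^{1/2}|\chi^{-1}|$ precomposed with rational functions of the entries of $\overline{v}$ and $u$.

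Carrying out the substitution, the integral of $\phi$ over $\overline{N}_k\times U_k$ (times a compact integral and a finite sum over $\lmodulo{T_{k,*}}{T_k}$) breaks up into a product of one-variable integrals over $F$ (respectively over the maximal ideal), each of the form $\int_F\max(1,|x|)^{-A}\,dx$ or $\int_{v(x)\ge 0}|x|^{B}\,d^{*}x$. The exponents $A,B$ are obtained by combining $\delta_{B_n}^{1/2}$ with the modulus twist $\delta_H$ arising from the conjugation by $\omega$ --- precisely the root count performed for $\delta_{\omega^{-1}}$ in the proof of Theorem~\ref{theorem:unramified Shalika functional 1 dim} --- and after inserting the relations $q^{2s_i}=q^{-2s_{n-i+1}}$ from \eqref{eq:assumptions on character for H functional} they come out to be positive multiples of $\Re(s_{i+1}-s_i)$ for $1\le i<k$ and of $-\Re(s_k)$. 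Thus every factor converges precisely when $\Re(s_i)<\Re(s_{i+1})$ for all $1\le i<k$ and $\Re(s_k)<0$, which is the hypothesis.

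The main obstacle is the bookkeeping forced by the cover. Because $B_{n,*}H\ne G_n$ one cannot work with the dense orbit $B_nH$ of \cite{Yia} directly but must pass through $B_{n,*}$ and keep track of the finite index set $\lmodulo{T_{k,*}}{T_k}$; moreover one must be certain that the three sections $\mathfrak{s}$, $\kappa$ and $\mathfrak{h}$ agree on every subgroup entering the Iwasawa manipulations, so that the majorization $|f_{\chi^{-1}}(\mathfrak{h}(h))|\le C\,\delta_{B_n}^{1/2}|\chi^{-1}|(b_*(h))$ is legitimate --- this is exactly the role of Claims~\ref{claim:h and kappa agree on H cap K} and \ref{claim:h and s agree on H cap B_n*}. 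The other delicate point, and the genuine computational heart of the argument, is pinning down the twist $\delta_H$ against $\delta_{B_n}$ exactly, so that the resulting exponents land in the stated cone rather than a shifted one; I would carry this out by the same explicit root-counting as in Theorem~\ref{theorem:unramified Shalika functional 1 dim}.
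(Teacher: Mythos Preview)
Your overall strategy---parametrize $\lmodulo{H\cap B_{n,*}}{H}$ via Iwasawa on $G_k$, majorize $|f_{\chi^{-1}}|$ by $\delta_{B_n}^{1/2}|\chi^{-1}|$ on the $B_{n,*}$-part, and reduce to unipotent integrals whose convergence region is the stated cone---matches the paper's. However, you overcomplicate one aspect and miscite one tool.

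The simplification you miss: since $|f_{\chi^{-1}}|$ is $\mu_2$-invariant, it descends to a function on $G_n$ (not the cover). Once absolute values are taken there is no cover bookkeeping whatsoever, so the section compatibilities of Claims~\ref{claim:h and kappa agree on H cap K} and~\ref{claim:h and s agree on H cap B_n*} play no role in the convergence argument. The paper simply records this observation and remarks that Sakellaridis's proof \cite[Proposition~7.1]{Yia} applies verbatim: write $c=tvk_0$ with $v\in N_k$, $k_0\in G_k(\mathcal{O})$, reduce to the unramified $f_{\chi^{-1}}$, and one is left with the finite sum over $t\in\lmodulo{A_k}{T_k}$ of
\[
\int_{U_k}\int_{N_k}|f_{\chi^{-1}}|\Big(\omega\begin{pmatrix}v&u\\&I_k\end{pmatrix}\omega^{-1}\ \rconj{\omega}(t^{\triangle})\Big)\,dv\,du,
\]
which is the standard intertwining-type integral for the Weyl element underlying $\omega$, known to converge exactly in the cone $\Re(s_i)<\Re(s_{i+1})$, $\Re(s_k)<0$.

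Your invocation of \eqref{eq:Iwahori- without one root contained in BcapK times HcapK} and \eqref{eq:Iwahori contained in B*H} to ``push the $U_k^-$-part and the lower triangular block of $\overline{v}$ through'' is misplaced: those inclusions are statements about elements with entries in $\varpi\mathcal{O}$ (inside the Iwahori), and say nothing about arbitrary $\overline{v}\in\overline{N}_k$ or $u\in U_k$. What is actually needed is the ordinary Iwasawa decomposition in $G_n$ of the element above, not an Iwahori-level identity. The explicit root-counting you propose would eventually recover the same convergence region, but it duplicates \cite[Proposition~7.1]{Yia} rather than adding to it.
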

\begin{proof}
Since $|f_{\chi^{-1}}|$ can be regarded as a function on $G_n$, the proof in \cite[Proposition~7.1]{Yia} applies to our setting as well. Briefly, writing $dh$ on $\lmodulo{A_k}{T_k}\times N_k \times G_k(\mathcal{O})$ with $A_k=(T_k\cap G_k(\mathcal{O}))T_k^2$,
\begin{align*}
\int_{\lmodulo{H\cap B_{n,*}}{H}}|f_{\chi^{-1}}|(h)\, dh=
\int_{U_k}
\sum_{t\in\lmodulo{A_k}{T_k}}
\int_{N_k}
\int_{G_k(\mathcal{O})}|f_{\chi^{-1}}|(\omega t^{\triangle}v^{\triangle}k^{\triangle}u\omega^{-1})\, dk\, dv\, du.
\end{align*}
We may assume $f_{\chi^{-1}}$ is unramified. Then the integral is majorized by a finite sum of integrals
\begin{align*}
\int_{U_k}\int_{N_k}|f_{\chi^{-1}}|(\omega\left(\begin{array}{cc}v&u\\&I_k\end{array}\right)\omega^{-1}\ \rconj{\omega}(t^{\triangle}))\, dv\, du
\end{align*}
(we omitted a constant depending on the change of measure $dvdu$, caused by the conjugation by $t^{\triangle}$),
which are known to converge given our assumption on $\chi$.
\end{proof}
Recall the surjection $P_{\chi}:\mathcal{S}^{\mathrm{gen}}(\widetilde{G}_n)\rightarrow\mathrm{I}(\chi)$ given by
\eqref{eq:P chi surjection}. We combine $P_{\chi^{-1}}$ with \eqref{int:integral on quotient of H} to obtain a formula convenient for computations (as in \cite[(45)]{Yia}).
For $g\in B_{n,*}H$, one can write $g=b_*h$ with $b_*\in B_{n,*}$ and $h\in H$. This writing is not unique. We still put $b_g=b_*$ and $h_g=h$. The arguments after \eqref{int:integral on quotient of H} imply that the mappings $g\mapsto\psi^{-1}(u_{h_g})$ and
$g\mapsto \mathfrak{s}(b_g)\mathfrak{h}(h_g)\gamma_{\psi'}^{-1}(\det c_{h_g})$ are well defined.
Also $g\mapsto\delta^{1/2}_{B_n}(b_g)$ is independent of the choice of $b_g$ and the assumption
$\chi|_{\mathfrak{s}(H\cap B_{n,*})}=1$ implies $g\mapsto\chi(\mathfrak{s}(b_g))$ is independent of the specific writing.
Now we formally obtain
\begin{align}\label{int:integral on B_n*H}
\Lambda_{\chi}(P_{\chi^{-1}}(f))=
\int_{B_{n,*}H}\delta^{1/2}_{B_n}(b_g)\chi(\mathfrak{s}(b_g))f(\mathfrak{s}(b_g)\mathfrak{h}(h_g))\gamma_{\psi'}^{-1}(\det c_{h_g})\psi^{-1}(u_{h_g})\, dg.
\end{align}
This formula is valid whenever the \rhs\ is absolutely convergent. The measure $dg$ is taken to be the restriction of the Haar measure on $G_n$ to $B_{n,*}H$ and on the \lhs\ we normalize the measure $dh$ accordingly (the measure $d_lb_*$ has already been normalized, see after \eqref{eq:P chi surjection}).

For any $\lambda=(\lambda_1,\ldots,\lambda_k)\in\Z^k$, put
$\varpi^{\lambda}=\diag(\varpi^{\lambda_1},\ldots,\varpi^{\lambda_k})$ and denote $t_{\lambda}=\diag(\varpi^{\lambda},I_k)$.
Let
\begin{align*}
&\Z^k_{\geq}=\setof{(\lambda_1,\ldots,\lambda_k)\in\Z^k}{\lambda_1\geq\ldots\geq\lambda_k},\\
&\Z^k_+=\setof{(\lambda_1,\ldots,\lambda_k)\in\Z^k}{\lambda_1\geq\ldots\geq\lambda_k\geq0}.
\end{align*}
We also use $2\Z^k_+$, where for all $i$, $\lambda_i\in 2\Z$.

The following lemma describes the evaluation of $\Lambda_{\chi}(\varphi_{w,\chi^{-1}})$ for certain elements $w$, where it can be done succinctly. It is the main technical result of this section. The proof is parallel to the proof of Propositions~5.2, 8.1 and 8.2 in \cite{Yia}, the complications involve the various sections $\mathfrak{s}$, $\mathfrak{h}$ and $\kappa$. The results will be used below to deduce the general formula.
\begin{lemma}\label{lemma:3 specific computations of Shalika model}
Assume $\Re(s_i)<\Re(s_{i+1})$ for all $1\leq i <k$ and $\Re(s_{k})<0$. Also assume
$\psi'=\psi_{\varrho}$, for some $\varrho\in\mathcal{O}^*$ (i.e., $\psi'(x)=\psi(\varrho x)$).
The following holds.
\begin{align}\label{eq:computation of functional for phi I with t}
&\Lambda_{\chi}(\mathfrak{s}(t_{-\lambda})\varphi_{e,\chi^{-1}})=\begin{cases}
\mathrm{vol}(\mathcal{I})\delta^{1/2}_{B_n}\chi(\mathfrak{s}(t_{\lambda}))&\lambda\in2\Z^k_+,\\
0&\lambda\in\Z^k_+\setdifference2\Z^k_+.
\end{cases}
\end{align}
\begin{align}\label{eq:computation of functional for phi (k,k+1)}
&\Lambda_{\chi}(\varphi_{s_{\alpha_k},\chi^{-1}})= \mathrm{vol}(\mathcal{I}\mathfrak{w}\mathcal{I})(1-q^{-1}+(-\varrho,\varpi)_2(\varpi,\varpi)_2^{k-1}q^{-1/2}\chi^{-1/2}(a_{\alpha_k})).
\end{align}
For all $1\leq i<k$,
\begin{align}\label{eq:computation of functional for phi (i,i+1)()}
&\Lambda_{\chi}(\varphi_{s_{\alpha_i}s_{\alpha_{n-i}},\chi^{-1}})=
\mathrm{vol}(\mathcal{I}\mathfrak{w}\mathcal{I})
(1-q^{-1}+q^{-2}+(1-q^{-1})^2\frac{\chi^{-1}(a_{\alpha_i})}{1-\chi^{-1}(a_{\alpha_i})}).
\end{align}
Here $\chi^{-1/2}(a_{\alpha_k})=q^{2s_k}$.
\end{lemma}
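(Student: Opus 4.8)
The plan is to evaluate $\Lambda_{\chi}$ on each Iwahori-fixed vector $\varphi_{w,\chi^{-1}}=P_{\chi^{-1}}(\mathrm{ch}^{\mathrm{gen}}_{\mathcal{I}\mathfrak{w}\mathcal{I}})$ directly from the integral formula \eqref{int:integral on B_n*H} (equivalently \eqref{int:integral on quotient of H}), which is legitimate since the hypotheses $\Re(s_i)<\Re(s_{i+1})$ and $\Re(s_k)<0$ are exactly the convergence domain of Claim~\ref{claim:functional on quotient of H is abs conv}. Because $P_{\chi^{-1}}(\mathrm{ch}^{\mathrm{gen}}_{\mathcal{I}\mathfrak{w}\mathcal{I}})$ is supported on $\widetilde{B}_{n,*}\mathfrak{s}(\mathfrak{w})\kappa(\mathcal{I})$, the integral over $\lmodulo{H\cap B_{n,*}}{H}$ collapses to the image there of $H\cap B_{n,*}\mathfrak{w}\mathcal{I}$ (for \eqref{eq:computation of functional for phi I with t}, with $\mathfrak{w}=I_n$ but an extra right translate, the support condition reads $h\in B_{n,*}\mathcal{I}t_\lambda$). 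First I would analyze this set with the Iwahori factorization \eqref{eq:reformulation of Iwahori}, the geometric inclusions \eqref{eq:Iwahori- without one root contained in BcapK times HcapK}--\eqref{eq:Iwahori contained in B*H}, the disjoint decomposition of Claim~\ref{claim:coset decomposition B*H in BnH}, and the section comparisons of Claims~\ref{claim:h and kappa agree on H cap K} and \ref{claim:h and s agree on H cap B_n*}; the latter let one replace $\mathfrak{h}$ by $\mathfrak{s}$ or $\kappa$ on the pieces that occur and hence track the cocycle and the Weil factor $\gamma_{\psi'}$ explicitly. Throughout one uses $\mathrm{vol}(\mathcal{I}\mathfrak{w}\mathcal{I})=q(w)\mathrm{vol}(\mathcal{I})$ and \eqref{eq:vol of I when vol K = 1}, and the whole computation runs parallel to Propositions~5.2, 8.1 and 8.2 of \cite{Yia}.

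For \eqref{eq:computation of functional for phi I with t} I would use that $\mathfrak{s}(t_{-\lambda})\varphi_{e,\chi^{-1}}$ is supported on $\widetilde{B}_{n,*}\kappa(\mathcal{I})\mathfrak{s}(t_\lambda)$, so the integration runs over the image of $H\cap B_{n,*}\mathcal{I}t_\lambda$. Writing $\mathcal{I}=(B_n\cap K)\mathcal{I}^-$, moving $t_\lambda$ to the right (which preserves $\mathcal{I}^-$ since $\lambda\in\Z^k_+$), and absorbing $\mathcal{I}^-$ into $(B_n\cap K)(H\cap K)$ via \eqref{eq:Iwahori- without one root contained in BcapK times HcapK}, one reduces the support condition to a condition on the torus component of $h$: read modulo $(T_n\cap K)T_n^2$ in each coordinate it must match that of $t_\lambda$. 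Since $t_\lambda\in B_{n,*}$ iff $t_\lambda\in T_{n,*}$ iff every $\lambda_i$ is even, while the torus of $H$ consists of the elements $\diag(a_1,\dots,a_k,a_k,\dots,a_1)$, this is satisfiable precisely when $\lambda\in2\Z^k_+$, and otherwise the value is $0$. When $\lambda\in2\Z^k_+$ the integrand is constant on the remaining compact region, equal to $\delta^{1/2}_{B_n}(t_\lambda)\chi(\mathfrak{s}(t_\lambda))$ --- here $\gamma_{\psi'}(\det c_h)=1$ because $\det c_h\in F^{*2}\mathcal{O}^*$ and $\psi^{-1}(u_h)=1$ --- while that region has normalized volume $\mathrm{vol}(\mathcal{I})$, giving \eqref{eq:computation of functional for phi I with t}.

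For \eqref{eq:computation of functional for phi (k,k+1)} and \eqref{eq:computation of functional for phi (i,i+1)()} I would break the double coset into its $q$, respectively $q^2$, left $\mathcal{I}$-cosets and evaluate \eqref{int:integral on B_n*H} on each. For \eqref{eq:computation of functional for phi (i,i+1)()} the roots $\alpha_i,\alpha_{n-i}$ with $i<k$ are symmetric and avoid the middle block, so the restriction of the cover to the relevant subgroup of $H$ is split (all Hilbert symbols are trivial) and $\gamma_{\psi'}$ never enters; the generic cell together with the boundary cells --- one of which runs over an unbounded direction of the $H$-torus --- produces a geometric series $\sum_{j\geq0}\chi^{-1}(a_{\alpha_i})^j=(1-\chi^{-1}(a_{\alpha_i}))^{-1}$, and assembling the finitely many residual contributions gives the stated rational function. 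For \eqref{eq:computation of functional for phi (k,k+1)}, $\alpha_k=(k,k+1)$ is the middle root and its rank-one $\mathrm{SL}_2$ straddles the two $G_k$-blocks; carrying $\mathfrak{s}(\mathfrak{w}_{\alpha_k})$ and the associated unipotent variables over into $B_{n,*}H$ produces a torus element whose determinant datum is $\varpi\notin F^{*2}$, and then block-compatibility \eqref{eq:block-compatibility} of $\sigma$ across the preceding $k-1$ diagonal blocks together with $\gamma_{\psi'}=\gamma_{\psi_{\varrho}}$, $\varrho\in\mathcal{O}^*$, contribute via \eqref{eq:Weil factor identities} the twist $(-\varrho,\varpi)_2(\varpi,\varpi)_2^{k-1}$; the $\chi^{-1}$-weight of that torus element is the prescribed branch $\chi^{-1/2}(a_{\alpha_k})=q^{2s_k}$, the measure bookkeeping supplies the remaining powers of $q$, and the cells already lying over the identity contribute $1-q^{-1}$, yielding \eqref{eq:computation of functional for phi (k,k+1)}.

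The routine part is the change of variables and measure bookkeeping, which mirrors \cite{Yia} once the group-theoretic inputs above are in hand. The main obstacle, and the only genuinely new ingredient, is the metaplectic cocycle bookkeeping in \eqref{eq:computation of functional for phi (k,k+1)}: using Claims~\ref{claim:h and kappa agree on H cap K} and \ref{claim:h and s agree on H cap B_n*} and the relation \eqref{eq:the section h is almost a splitting} between $\mathfrak{h}$ and $\mathfrak{s}$, one must check that the interplay of the three sections with block-compatibility produces exactly $(-\varrho,\varpi)_2(\varpi,\varpi)_2^{k-1}$, and not some other fourth root of unity, and that the half-integral power $\chi^{-1/2}(a_{\alpha_k})$ occurring is indeed the branch $q^{2s_k}$ forced by \eqref{eq:assumptions on character for H functional}. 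Verifying that only a single (for \eqref{eq:computation of functional for phi I with t}) or finitely many (for the other two) coset representatives actually contribute also relies on the disjointness statement in Claim~\ref{claim:coset decomposition B*H in BnH}.
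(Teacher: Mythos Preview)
Your overall approach is the paper's: evaluate \eqref{int:integral on B_n*H} on the support of each $\varphi_{w,\chi^{-1}}$, using the geometric inclusions \eqref{eq:Iwahori- without one root contained in BcapK times HcapK}--\eqref{eq:Iwahori contained in B*H}, Claim~\ref{claim:coset decomposition B*H in BnH}, and the section comparisons. Your treatment of \eqref{eq:computation of functional for phi I with t} is essentially the paper's argument.

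However, your description of \eqref{eq:computation of functional for phi (i,i+1)()} contains two genuine misconceptions. First, it is \emph{not} true that ``the restriction of the cover to the relevant subgroup of $H$ is split (all Hilbert symbols are trivial)''. The paper writes $g=b_0\mathfrak{w}v\mathfrak{w}^{-1}h_0$ with $v$ depending on parameters $z_1,z_2\in\mathcal{O}$, decomposes $\mathfrak{w}v\mathfrak{w}^{-1}=b_*h$ explicitly, and finds a genuine cocycle discrepancy $\mathfrak{s}(b_g)\mathfrak{h}(h_g)=(-z_1,1-z_1z_2)_2\,\kappa(g)$. What kills this Hilbert symbol is not splitness but the support condition coming from Claim~\ref{claim:coset decomposition B*H in BnH}: one has $g\in B_{n,*}H$ only when $1-z_1z_2\in F^{*2}\mathcal{O}^*$, and it is \emph{this} constraint that forces $(-z_1,1-z_1z_2)_2=1$ on the effective domain. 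Second, there is no ``unbounded direction of the $H$-torus'': the integration is over the compact set $\mathcal{I}\mathfrak{w}\mathcal{I}$, and the geometric series arises from stratifying the bounded integral $\int_{\mathcal{O}^*}|1-z|^{-s_i+s_{i+1}-1}\mathrm{ch}_{F^{*2}\mathcal{O}^*}(1-z)\,dz$ by $|1-z|=q^{-l}$; the characteristic function $\mathrm{ch}_{F^{*2}\mathcal{O}^*}$ then restricts to even $l$, which is exactly how $\chi^{-1}(a_{\alpha_i})=q^{-2(s_{i+1}-s_i)}$ (with its built-in square) appears. So the metaplectic modification for the short-root case is entirely carried by Claim~\ref{claim:coset decomposition B*H in BnH}, contrary to your suggestion that nothing metaplectic happens there.

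For \eqref{eq:computation of functional for phi (k,k+1)} your outline is closer, but the origin of $(\varpi,\varpi)_2^{k-1}$ is more delicate than ``block-compatibility across the preceding $k-1$ diagonal blocks''. The paper obtains $\mathfrak{h}(h)=(z,z)_2^{k-1}\mathfrak{s}(h)$ by writing $\rconj{\omega^{-1}}h=cu$ and computing $\rconj{\mathfrak{s}(\omega)}\mathfrak{s}(c)$ via Claim~\ref{claim:conjugation W and T} (the $\sigma(\mathfrak{w},t)$ formula) and Remark~\ref{remark:computing conjugation t by w}; the factor $(-1,z^{-1})_2^{k-1}$ emerges from the product over $\{\alpha>0:w_\omega\alpha<0\}$, not directly from \eqref{eq:block-compatibility}. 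The resulting integral is then $\mathrm{vol}(\mathcal{I}\mathfrak{w}\mathcal{I})\sum_{l\ge 0}q^{(2s_k)l}(\varpi,\varpi)_2^{(k-1)l}\int_{\mathcal{O}^*}\gamma_{\psi'}^{-1}(\varpi^{-l}x)\psi^{-1}(\varpi^{-l}x)\,dx$, the $dx$-integral vanishes for $l>1$ by conductor considerations, and for $l=1$ one invokes a computation of Szpruch giving $q^{-1/2}$; this is where the half-integral power and the factor $(-\varrho,\varpi)_2$ enter.
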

\begin{remark}
Note that $\chi^{-1/2}(a_{\alpha})$ is well defined
because $\chi(\mathfrak{s}(\diag(I_{i-1},\varpi^2,I_{n-i})))=q^{-2s_i}$.
\end{remark}
\begin{proof}
We use \eqref{int:integral on B_n*H} for the computation. We first proceed formally, the computation will also imply
that \eqref{int:integral on B_n*H} is absolutely convergent, providing the justification for replacing
\eqref{int:integral on quotient of H} with \eqref{int:integral on B_n*H}.

Consider \eqref{eq:computation of functional for phi I with t} and assume
$\lambda\in\Z^k_+$. Note that $t_{-\lambda}^{-1}=t_{\lambda}$.
The integrand vanishes unless $g\in\mathcal{I}t_{\lambda}$.
For $\lambda\in\Z^k_+$, $t_{\lambda}\in T_n^-$ whence
$t_{\lambda}^{-1}(\mathcal{I}^-)t_{\lambda}<\mathcal{I}^-$ and by \eqref{eq:reformulation of Iwahori} and
\eqref{eq:Iwahori contained in B*H},
\begin{align*}
\mathcal{I}t_{\lambda}\subset (B_n\cap K)t_{\lambda}(B_n\cap K)(H\cap K).
\end{align*}
In particular $\mathcal{I}t_{\lambda}\subset t_{\lambda}B_{n,*}H$, the latter does not intersect
$B_{n,*}H$ unless $\lambda\in2\Z^k_+$, by Claim~\ref{claim:coset decomposition B*H in BnH}. So the integrand vanishes unless
$\lambda\in2\Z^k_+$. In this case one can take $b_g=b_0t_{\lambda}b_0'$ for some $b_0,b_0'\in B_n\cap K$ and $h_g\in H\cap K$. Then
$\psi(u_{h_g})=\gamma_{\psi'}^{-1}(\det c_{h_g})=1$ because $h_g\in K$, so that both
\eqref{int:integral on B_n*H} and \eqref{int:integral on quotient of H} equal
\begin{align*}
\mathrm{vol}(\mathcal{I})\delta^{1/2}_{B_n}\chi(\mathfrak{s}(t_{\lambda})).
\end{align*}

Next we prove \eqref{eq:computation of functional for phi (k,k+1)}. Let $\mathfrak{w}$ be the representative of $s_{\alpha_k}$. Using \eqref{eq:reformulation of Iwahori} and \eqref{eq:Iwahori- without one root contained in BcapK times HcapK} we obtain
\begin{align*}
\mathcal{I}\mathfrak{w}\mathcal{I}=(B_n\cap K)\mathfrak{w}N_{\alpha_k}(\mathcal{O})\prod_{\beta>0:\beta\ne\alpha_k}N_{\beta}^-(\varpi\mathcal{O})
\subset (B_n\cap K)\mathfrak{w}N_{\alpha_k}(\mathcal{O})(H\cap K).
\end{align*}
The integrand vanishes unless $g\in\mathcal{I}\mathfrak{w}\mathcal{I}$.  Write
\begin{align*}
g=b_0\mathfrak{w}vh_0,\qquad b_0\in B_n\cap K,\quad v\in N_{\alpha_k}(\mathcal{O}),\quad h_0\in H\cap K.
\end{align*}
We decompose $\mathfrak{w}v=b_*h \in B_{n,*}H$, using $2\times 2$ matrix computations, where the matrices are regarded as elements of $G_2$ embedded in $G_n$ such that its derived group is
$\mathcal{R}_{\alpha_k}$ ($\mathcal{R}_{\alpha_k}$ was defined after \eqref{eq:one dim for regular hom}). Then $\mathfrak{w}$ is the matrix $\left(\begin{smallmatrix}&-1\\1&\end{smallmatrix}\right)$. If $v=\left(\begin{smallmatrix}1&z\\&1\end{smallmatrix}\right)$ and $z\ne0$,
\begin{align}\label{eq:explicit h p v epsilon for w k,k+1}
b_*=\left(\begin{array}{cc}1&-z\\&z^2\end{array}\right),\quad
h=\left(\begin{array}{cc}z^{-1}&\\z^{-2}&z^{-1}\end{array}\right). 
\end{align}
Then $b_g=b_0b_*$ and $h_g=hh_0$, hence
\begin{align*}
&\psi^{-1}(u_{h_g})=\psi^{-1}(u_h)=\psi^{-1}(z^{-1}),\\
&\gamma_{\psi'}^{-1}(\det c_{h_g})=(\det c_h,\det c_{h_0})_2\gamma_{\psi'}^{-1}(\det c_h)=(z,\det c_{h_0})_2\gamma_{\psi'}^{-1}(z^{-1}),\\
&\delta^{1/2}_{B_n}(b_g)=|z|^{-1},\\
&\chi(\mathfrak{s}(b_g))=|z|^{2s_{k+1}}=|z|^{-2s_{k}}.
\end{align*}
Next we show
\begin{align*}
\mathrm{ch}_{s_{\alpha_k},\chi^{-1}}(\mathfrak{s}(b_g)\mathfrak{h}(h_g))=(z,z)_2^{k-1}(z,\det c_{h_0})_2.
\end{align*}
Because $g\in\mathcal{I}\mathfrak{w}\mathcal{I}$, this will follow immediately from
\begin{align}\label{eq:kappa g and the b* h decomposition}
\mathfrak{s}(b_g)\mathfrak{h}(h_g)=(z,z)_2^{k-1}(z,\det c_{h_0})_2\kappa(g).
\end{align}
To show this, start with $\kappa(g)=\kappa(b_0)\kappa(\mathfrak{w})\kappa(v)\kappa(h_0)$. Since
$\mathfrak{s}(\mathfrak{w})\mathfrak{s}(v)=\mathfrak{s}(\mathfrak{w}v)=\mathfrak{s}(b_*h)$,
\begin{align}\label{eq:kappa g and the b* h decomposition computation 1}
\kappa(g)=\sigma(b_*,h)\kappa(b_0)\mathfrak{s}(b_*)\mathfrak{s}(h)\kappa(h_0).
\end{align}
Using \eqref{eq:Kubota formula} we see $\sigma(b_*,h)=1$.
Next we claim $\mathfrak{s}(h)=(z,z)_2^{k-1}\mathfrak{h}(h)$. Start with $\mathfrak{h}(h)=\mathfrak{s}(\omega)\mathfrak{s}(\rconj{\omega^{-1}}h)\mathfrak{s}(\omega)^{-1}$.
Write $\rconj{\omega^{-1}}h=cu$, where $c=\diag(I_{k-1},z^{-1},I_{k-1},z^{-1})$ and
\begin{align*}
u=\left(\begin{array}{cccc}I_{k-1}\\&1&&z^{-1}\\&&I_{k-1}\\&&&1\end{array}\right)\in U.
\end{align*}
Then
$\mathfrak{s}(\rconj{\omega^{-1}}h)=\mathfrak{s}(c)\mathfrak{s}(u)$. 
We find $\rconj{\mathfrak{s}(\omega)}\mathfrak{s}(c)$ using Remark~\ref{remark:computing conjugation t by w}.
Write $\omega=t_0\mathfrak{w}_{\omega}$ with
\begin{align*}
&\mathfrak{w}_{\omega}=
(\mathfrak{w}_{\alpha_{n-1}}\cdot\ldots\cdot\mathfrak{w}_{\alpha_{1}})\cdot\ldots\cdot
(\mathfrak{w}_{\alpha_{n-1}}\cdot\ldots\cdot\mathfrak{w}_{\alpha_{k}})=\prod_{i=1}^k(\mathfrak{w}_{\alpha_{n-1}}\cdot\ldots\cdot\mathfrak{w}_{\alpha_{i}})\in\mathfrak{W},\\
&t_0=\diag((-1)^{k}I_k,(-1)^{k-1},(-1)^{k-2},\ldots,1).
\end{align*}
If $w_{\omega}$ is the Weyl element corresponding to $\mathfrak{w}_{\omega}$ (or $\omega$),
\begin{align*}
\setof{\alpha>0}{w_{\omega}\alpha<0}=\setof{(i,j)}{1\leq i\leq k,\quad i<j\leq n}.
\end{align*}
Then 
\begin{align*}
\rconj{\mathfrak{s}(\omega)}\mathfrak{s}(c)=(-1,z^{-1})_2^{k-1}\
\rconj{\mathfrak{s}(t_0)}\mathfrak{s}(\rconj{\omega}c)=(z,z)_2^{k}\mathfrak{s}(\rconj{\omega}c).
\end{align*}
Consider $\rconj{\mathfrak{s}(\omega)}\mathfrak{s}(u)$. If $\omega'\in G_n$ satisfies
$\rconj{{\omega'}^{-1}\omega}u=u$, then $\rconj{\mathfrak{s}({\omega'}^{-1}\omega)}\mathfrak{s}(u)=\mathfrak{s}(u)$ hence
$\rconj{\mathfrak{s}(\omega)}\mathfrak{s}(u)=\rconj{\mathfrak{s}(\omega')}\mathfrak{s}(u)$. One can take
\begin{align*}
\omega'=\diag(I_{k-1},(-1)^k,I_{k})\mathfrak{w}_{\alpha_k}\cdot\ldots\cdot\mathfrak{w}_{\alpha_{n-1}}.
\end{align*}
The conjugation of $\mathfrak{s}(u)$ by
$\mathfrak{s}(\mathfrak{w}_{\alpha_{k+1}}\cdot\ldots\cdot\mathfrak{w}_{\alpha_{n-1}})$ commutes with $\mathfrak{s}$ (i.e.,
$\rconj{\mathfrak{s}(\ldots)}\mathfrak{s}(u)=\mathfrak{s}(\rconj{\ldots}u)$), because it keeps $u$ in $N_n$. The last two conjugations
are computed using \eqref{eq:block-compatibility} and \eqref{eq:Kubota formula},
\begin{align*}
\mathfrak{s}(\left(\begin{array}{cc}&(-1)^{k-1}\\1\end{array}\right))
\mathfrak{s}(\left(\begin{array}{cc}1&(-1)^{k-1}z^{-1}\\&1\end{array}\right))
\mathfrak{s}(\left(\begin{array}{cc}&1\\(-1)^{k-1}\end{array}\right))=
\mathfrak{s}(\left(\begin{array}{cc}1&\\z^{-1}&1\end{array}\right)).
\end{align*}
Therefore $\rconj{\mathfrak{s}(\omega)}\mathfrak{s}(u)=\mathfrak{s}(\rconj{\omega}u)$.
Also by \eqref{eq:Kubota formula}, $\mathfrak{s}(\rconj{\omega}c)\mathfrak{s}(\rconj{\omega}u)=(z,z)_2
\mathfrak{s}(\rconj{\omega}(cu))$.
Altogether
\begin{align*}
\mathfrak{h}(h)=\rconj{\mathfrak{s}(\omega)}(\mathfrak{s}(c)\mathfrak{s}(u))
=(z,z)_2^k\mathfrak{s}(\rconj{\omega}c)\mathfrak{s}(\rconj{\omega}u)=
(z,z)_2^{k-1}\mathfrak{s}(\rconj{\omega}(cu))=(z,z)_2^{k-1}\mathfrak{s}(h).
\end{align*}
Plugging this into \eqref{eq:kappa g and the b* h decomposition computation 1}, using \eqref{eq:the section h is almost a splitting} and Claim~\ref{claim:h and kappa agree on H cap K} yields
\begin{align*}
\kappa(g)=\kappa(b_0)\mathfrak{s}(b_*)(z,z)_2^{k-1}\mathfrak{h}(h)\mathfrak{h}(h_0)=(z,z)_2^{k-1}(z,\det c_{h_0})_2\mathfrak{s}(b_0b_*)\mathfrak{h}(hh_0),
\end{align*}
thereby giving \eqref{eq:kappa g and the b* h decomposition}.

We see that \eqref{int:integral on B_n*H} becomes
\begin{align*}
\mathrm{vol}(\mathcal{I}\mathfrak{w}\mathcal{I})\sum_{l=0}^{\infty}(q^{2s_k+1})^lq^{-l}
(\varpi^{-l},\varpi^{-l})_2^{k-1}\int_{\mathcal{O}^*}
\gamma_{\psi'}^{-1}(\varpi^{-l}x)\psi^{-1}(\varpi^{-l}x)\, dx.
\end{align*}
Note that $dx$ is the additive measure of $F$ (in particular $\int_{\mathcal{O}^*}dx=1-q^{-1}$).
\begin{remark}
This $dx$-integral is a particular instance of an integral studied by Szpruch \cite{Dani4}, who used it to represent
a local coefficient resembling Tate's $\gamma$-factor. 
\end{remark}
For $l>1$ the $dx$-integral vanishes, since
\begin{align*}
\int_{\mathcal{O}^*}
\gamma_{\psi'}^{-1}(\varpi^{-l}x)\psi^{-1}(\varpi^{-l}x)\, dx
=\gamma_{\psi'}^{-1}(\varpi^{-l})\sum_{\zeta\in\lmodulo{(1+\varpi\mathcal{O})}{\mathcal{O}^*}}(\varpi^{-l},\zeta)\int_{1+\varpi\mathcal{O}}\psi^{-1}(\varpi^{-l}\zeta x)\, dx=0,
\end{align*}
because the conductor of $\psi$ is $\mathcal{O}$. Consider $l=1$. Then \eqref{eq:Weil factor identities} and the assumption $|\varrho|=1$ show $\gamma_{\psi'}^{-1}(\varpi^{-1}x)=(-\varrho,\varpi)_2\gamma_{\psi}(\varpi^{-1}x)$, then $dx$-integral equals $(-\varrho,\varpi)_2$ multiplied by
\begin{align*}
\int_{\mathcal{O}^*}
\gamma_{\psi}(\varpi^{-1}x)\psi^{-1}(\varpi^{-1}x)\, dx.
\end{align*}
This integral was computed in \cite[Lemma~1.12]{Dani4} (this is $J_1(\mathbb{F},\psi^{-1},\chi^0)$ in his notation) and equals $q^{-1/2}$. Altogether we obtain
\begin{align*}
 \mathrm{vol}(\mathcal{I}\mathfrak{w}\mathcal{I})(1-q^{-1}+q^{2s_k-1/2}(-\varrho,\varpi)_2(\varpi,\varpi)_2^{k-1}).
\end{align*}


Finally we prove \eqref{eq:computation of functional for phi (i,i+1)()}. Let $\mathfrak{w}$ be the representative of $s_{\alpha_i}s_{\alpha_{n-i}}$. The argument is similar to the previous case, we point the relevant changes in the computation. Using \eqref{eq:Iwahori- without one root contained in BcapK times HcapK} and the fact that $\mathfrak{w}\in H$,
\begin{align*}
\mathcal{I}\mathfrak{w}\mathcal{I}\subset (B_n\cap K)\mathfrak{w}N_{\alpha_i}(\mathcal{O})N_{\alpha_{n-i}}(\mathcal{O})\mathfrak{w}^{-1}(H\cap K).
\end{align*}
Then $g=b_0\mathfrak{w}v\mathfrak{w}^{-1}h_0$ with $v\in N_{\alpha_i}(\mathcal{O})N_{\alpha_{n-i}}(\mathcal{O})$ ($b_0,h_0$ as above).
Decompose $\mathfrak{w}v\mathfrak{w}^{-1}=b_*h$, now we compute using $2$ blocks of $2\times2$ matrices, belonging
to $G_2\times G_2$ embedded in $G_n$ such that its derived group is $\mathcal{R}_{\alpha_i}\times\mathcal{R}_{\alpha_{n-i}}$. For $v=(\left(\begin{smallmatrix}1&z_1\\&1\end{smallmatrix}\right),\left(\begin{smallmatrix}1&z_2\\&1\end{smallmatrix}\right))$ with $z_1z_2\ne1$
and $(1-z_1z_2)\in F^{*2}\mathcal{O}^*$,
\begin{align*}
&b_*=\left(\left(\begin{array}{cc}(1-z_1z_2)^{-1}&z_2(1-z_1z_2)^{-1}\\&1\end{array}\right),
\left(\begin{array}{cc}(1-z_1z_2)^{-1}&z_1(1-z_1z_2)^{-1}\\&1\end{array}\right)\right),\\
&h=\left(\left(\begin{array}{cc}1&-z_2\\-z_1&1\end{array}\right),
\left(\begin{array}{cc}1&-z_1\\-z_2&1\end{array}\right)\right).
\end{align*}
When $(1-z_1z_2)\notin F^{*2}\mathcal{O}^*$ we still have $\mathfrak{w}v\mathfrak{w}^{-1}=b_*h$ but $b_*\notin B_{n,*}$, and since
\begin{align*}
g\in\diag(I_{i-1},(1-z_1z_2)^{-1},(1-z_1z_2),I_{n-i-1})B_{n,*}H,
\end{align*}
by Claim~\ref{claim:coset decomposition B*H in BnH} we obtain $g\notin B_{n,*}H$. Hence we can assume $1-z_1z_2\in F^{*2}\mathcal{O}^*$.
Then
\begin{align*}
&\psi^{-1}(u_{h_g})=1,\\
&\gamma_{\psi'}^{-1}(\det c_{h_g})=\gamma_{\psi'}^{-1}(1-z_1z_2)=1,\\
&\delta^{1/2}_{B_n}(b_g)=|1-z_1z_2|^{-1},\\
&\chi(\mathfrak{s}(b_g))=|1-z_1z_2|^{-s_i-s_{n-i}}=|1-z_1z_2|^{-s_i+s_{i+1}}.
\end{align*}
We may also take $z_1\ne0$.
Next we prove
\begin{align*}
\mathfrak{s}(b_g)\mathfrak{h}(h_g)=(-z_1,1-z_1z_2)_2\kappa(g),
\end{align*}
whence
\begin{align*}
\mathrm{ch}_{s_{\alpha_i}s_{\alpha_{n-i}},\chi^{-1}}(\mathfrak{s}(b_g)\mathfrak{h}(h_g))=(-z_1,1-z_1z_2)_2.
\end{align*}

As with the proof of \eqref{eq:kappa g and the b* h decomposition}, we begin with proving $\sigma(b_*,h)=1$. This is immediate if
$z_2=0$. When $z_2\ne0$, apply
\eqref{eq:Kubota formula} separately to each $G_2$-block, then
use \eqref{eq:block-compatibility}. We obtain
\begin{align*}
(-z_1,1-z_1z_2)_2\ (-z_2,1-z_1z_2)_2\ ((1-z_1z_2)^{-1},(1-z_1z_2))_2=1.
\end{align*}
Here we used the fact that $(x,1-x)=1$ for any $x\ne0$ and $1-z_1z_2\in F^{*2}\mathcal{O}^*$.
Next we show $\mathfrak{s}(h)=(z_1,1-z_1z_2)_2\mathfrak{h}(h)$. Put $x=\left(\begin{smallmatrix}1&-z_2\\-z_1&1\end{smallmatrix}\right)$. For any $g\in G_2$,
let
\begin{align*}
d_1(g)=\diag(I_{i-1},g,I_{n-i-1}),\qquad d_2(g)=\diag(I_{k+i-1},g,I_{k-i-1}).
\end{align*}
Then $\rconj{\omega^{-1}}h=d_1(x)d_2(x)$ and $\mathfrak{h}(h)=\rconj{\mathfrak{s}(\omega)}\mathfrak{s}(d_1(x)d_2(x))$. Now
\eqref{eq:block-compatibility} implies
\begin{align}\label{eq:before x_1x_2x_3}
\mathfrak{s}(d_1(x)d_2(x))=\mathfrak{s}(d_1(x))\mathfrak{s}(d_2(x)).
\end{align}
To compute $\rconj{\mathfrak{s}(\omega)}\mathfrak{s}(d_j(x))$ we write $x$ using the Iwasawa decomposition in $G_2$,
\begin{align*}
x=x_1x_2x_3=\left(\begin{array}{cc}1-z_1z_2&\\&1\end{array}\right)
\left(\begin{array}{cc}1&-z_2(1-z_1z_2)^{-1}\\&1\end{array}\right)\left(\begin{array}{cc}1&0\\-z_1&1\end{array}\right).
\end{align*}
Then by \eqref{eq:Kubota formula},
\begin{align}\label{eq:x_1x_2x_3}
\mathfrak{s}(d_j(x))=(-z_1,1-z_1z_2)_2\mathfrak{s}(d_j(x_1))\mathfrak{s}(d_j(x_2))\mathfrak{s}(d_j(x_3)),\qquad j=1,2.
\end{align}
We compute the conjugation separately for each $x_i$:

$\rconj{\mathfrak{s}(\omega)}\mathfrak{s}(d_j(x_1))=\mathfrak{s}(\rconj{\omega}d_j(x_1))$:
If $\omega=t_0\mathfrak{w}_{\omega}$ as above, $\rconj{\mathfrak{s}(\mathfrak{w}_{\omega})}\mathfrak{s}(d_j(x_1))=
\mathfrak{s}(\rconj{\mathfrak{w}_{\omega}}d_j(x_1))$ by Claim~\ref{claim:conjugation W and T} and
$\rconj{\mathfrak{s}(t_0)}\mathfrak{s}(\rconj{\mathfrak{w}_{\omega}}d_j(x_1))=
\mathfrak{s}(\rconj{\mathfrak{w}_{\omega}}d_j(x_1))$ because $(-1,1-z_1z_2)_2=1$ ($1-z_1z_2\in F^{*2}\mathcal{O}^*$).
Hence $\rconj{\mathfrak{s}(\omega)}\mathfrak{s}(d_j(x_1))=\mathfrak{s}(\rconj{\omega}d_j(x_1))$.

$\rconj{\mathfrak{s}(\omega)}\mathfrak{s}(d_j(x_2))=\mathfrak{s}(\rconj{\omega}d_j(x_2))$:
For $j=1$ we argue as in the case of $\rconj{\mathfrak{s}(\omega)}\mathfrak{s}(u)$ above, this time with
\begin{align*}
\omega'=\diag(I_{n-i-1},-1,I_i)\mathfrak{w}_{\alpha_{n-i}}\cdot\ldots\cdot\mathfrak{w}_{\alpha_{i}}
\mathfrak{w}_{\alpha_{n-i}}\cdot\ldots\cdot\mathfrak{w}_{\alpha_{i+1}},
\end{align*}
and obtain $\rconj{\mathfrak{s}(\omega)}\mathfrak{s}(d_1(x_2))=\mathfrak{s}(\rconj{\omega}d_1(x_2))$. For $j=2$ this holds simply because 
$\rconj{\omega}d_2(x_2)\in N_n$.

$\rconj{\mathfrak{s}(\omega)}\mathfrak{s}(d_j(x_3))=\mathfrak{s}(\rconj{\omega}d_j(x_3))$:
A direct computation using \eqref{eq:Kubota formula} shows
\begin{align*}
\mathfrak{s}(x_3)
=\mathfrak{s}\left(\begin{array}{cc}&-1\\1&\end{array}\right)
\mathfrak{s}\left(\begin{array}{cc}1&z_1\\&1\end{array}\right)
\mathfrak{s}\left(\begin{array}{cc}&1\\-1&\end{array}\right)=\kappa(x_3)
\end{align*}
($z_1\in\mathcal{O}$ and $\mathfrak{s}$ and $\kappa$ agree on $N_n\cap K$). Hence $\rconj{\mathfrak{s}(\omega)}\mathfrak{s}(d_j(x_3))=\mathfrak{s}(\rconj{\omega}d_j(x_3))$.

Summarizing,
\begin{align}\label{eq:after1 x_1x_2x_3}
\rconj{\mathfrak{s}(\omega)}\mathfrak{s}(d_j(x))=(-z_1,1-z_1z_2)_2\mathfrak{s}(\rconj{\omega}d_j(x_1))\mathfrak{s}(\rconj{\omega}d_j(x_2))\mathfrak{s}(\rconj{\omega}d_j(x_3)).
\end{align}
Similarly we see that
\begin{align}\label{eq:after x_1x_2x_3}
&\mathfrak{s}(\rconj{\omega}d_1(x))=\mathfrak{s}(\rconj{\omega}d_1(x_1))\mathfrak{s}(\rconj{\omega}d_1(x_2))
\mathfrak{s}(\rconj{\omega}d_1(x_3)),\\\label{eq:after 2 x_1x_2x_3}
&\mathfrak{s}(\rconj{\omega}d_2(x))=(-z_1,1-z_1z_2)_2\mathfrak{s}(\rconj{\omega}d(x_1))\mathfrak{s}(\rconj{\omega}d(x_2))\mathfrak{s}(\rconj{\omega}d(x_3)),\\\label{eq:after 3 x_1x_2x_3}
&\mathfrak{s}(\rconj{\omega}(d_1(x)d_2(x)))=\mathfrak{s}(\rconj{\omega}d_1(x))\mathfrak{s}(\rconj{\omega}d_2(x)).
\end{align}
Thus \eqref{eq:before x_1x_2x_3}-\eqref{eq:after 3 x_1x_2x_3} yield
\begin{align*}
\mathfrak{h}(h)&=\rconj{\mathfrak{s}(\omega)}\mathfrak{s}(d_1(x)d_2(x))\\
&=\rconj{\mathfrak{s}(\omega)}\mathfrak{s}(d_1(x))\ \rconj{\mathfrak{s}(\omega)}\mathfrak{s}(d_2(x))\\
&=\mathfrak{s}(\rconj{\omega}d_1(x_1))\mathfrak{s}(\rconj{\omega}d_1(x_2))
\mathfrak{s}(\rconj{\omega}d_1(x_3))\mathfrak{s}(\rconj{\omega}d_2(x_1))\mathfrak{s}(\rconj{\omega}d_2(x_2))
\mathfrak{s}(\rconj{\omega}d_2(x_3))\\
&=(-z_1,1-z_1z_2)_2\mathfrak{s}(\rconj{\omega}d_1(x))\mathfrak{s}(\rconj{\omega}d_2(x))
\\&=(-z_1,1-z_1z_2)_2\mathfrak{s}(\rconj{\omega}(d_1(x)d_2(x)))\\
&=(-z_1,1-z_1z_2)_2\mathfrak{s}(h).
\end{align*}
Altogether \eqref{int:integral on B_n*H} becomes
\begin{align*}
\mathrm{vol}(\mathcal{I}\mathfrak{w}\mathcal{I})\int_{\mathcal{O}}\int_{\mathcal{O}}(-z_1,1-z_1z_2)_2|1-z_1z_2|^{-s_i+s_{i+1}-1}\mathrm{ch}_{F^{*2}\mathcal{O}^*}(1-z_1z_2)\, dz_1\, dz_2.
\end{align*}
Here $\mathrm{ch}_{F^{*2}\mathcal{O}^*}$ is the characteristic function of $F^{*2}\mathcal{O}^*$.
As in \cite{Yia} (proof of Proposition~8.2) we split the integral into the following sum,
\begin{align*}
\int_{\mathcal{O}}\int_{\varpi\mathcal{O}}\, dz_1\, d z_2 +
\int_{\varpi\mathcal{O}}\int_{\mathcal{O}^*}\, dz_1\, d z_2+
\int_{\mathcal{O}^*}\int_{\mathcal{O}^*}|1-z_1z_2|^{-s_i+s_{i+1}-1}\mathrm{ch}_{F^{*2}\mathcal{O}^*}(1-z_1z_2)\, dz_1\, d z_2.
\end{align*}
We used the fact that when either $z_1$ or $z_2$ belongs to $\varpi\mathcal{O}$ (and both $z_1,z_2\in\mathcal{O}$), $|1-z_1z_2|=1$ and $1-z_1z_2\in F^{*2}$, in particular $(-z_1,1-z_1z_2)_2=1$. For the rightmost integral note that when $z_1\in\mathcal{O}^*$,
$\mathrm{ch}_{F^{*2}\mathcal{O}^*}(1-z_1z_2)=1$ implies $(-z_1,1-z_1z_2)_2=1$. The first summand contributes $q^{-1}$, the second $q^{-1}(1-q^{-1})$. We proceed with the integral on the right. After a change of variables it becomes
\begin{align*}
(1-q^{-1})\int_{\mathcal{O}^*}|1-z|^{-s_i+s_{i+1}-1}\mathrm{ch}_{F^{*2}\mathcal{O}^*}(1-z)\, dz.
\end{align*}
Put $|1-z|=q^{-l}$ for some $l\geq0$. Then $\mathrm{ch}_{F^{*2}\mathcal{O}^*}(1-z)\ne0$ if and only if $l$ is even. Hence we get
\begin{align*}
(1-q^{-1})(1-2q^{-1}+(1-q^{-1})\frac{q^{-2(s_{i+1}-s_i)}}{1-q^{-2(s_{i+1}-s_i)}}).
\end{align*}
Summing up we obtain
\begin{align*}
\mathrm{vol}(\mathcal{I}\mathfrak{w}\mathcal{I})
(1-q^{-1}+q^{-2}+(1-q^{-1})^2\frac{q^{-2(s_{i+1}-s_i)}}{1-q^{-2(s_{i+1}-s_i)}}).
\end{align*}
The proof is complete.
\end{proof}
Recall that we assume $\chi$ satisfies \eqref{eq:assumptions on character for H functional}.
We say that $f_{\chi^{-1}}$ is a polynomial (resp. rational) section if its restriction to $\coprod_{t\in\lmodulo{T_{n,*}}{T_n}}\mathfrak{s}(t)\kappa(K)$ depends polynomially (resp. rationally) on $\chi^{-1}$, or more precisely
on $q^{\pm 2s_1},\ldots,q^{\pm 2s_k}$.
The definition is independent of the actual choice of representatives of $\lmodulo{T_{n,*}}{T_n}$.
\begin{proposition}\label{proposition:meromorphic continuation and a basis of Shalika functionals}
The functional $\Lambda_{\chi}$ has a meromorphic continuation to all $\chi$
satisfying \eqref{eq:assumptions on character for H functional}, in the sense that
if $f_{\chi^{-1}}$ is a rational section, $\Lambda_{\chi}(f_{\chi^{-1}})$ is a rational function, i.e.,
belongs to $\C(q^{-2s_1},\ldots,q^{-2s_k})$.
\end{proposition}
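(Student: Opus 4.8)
The plan is to obtain the continuation from Bernstein's abstract meromorphic continuation principle, exactly as Sakellaridis does in the non-metaplectic case \cite{Yia}; the two inputs are the absolutely convergent integral of Claim~\ref{claim:functional on quotient of H is abs conv} and the uniqueness bound of Theorem~\ref{theorem:unramified Shalika functional 1 dim}. The genuine unramified characters $\chi=\chi_{\underline s}$ are parametrized by $(q^{\pm 2s_1},\ldots,q^{\pm 2s_k})\in(\C^*)^k=:X$ (the relation $q^{2s_i}=q^{-2s_{n-i+1}}$ of \eqref{eq:assumptions on character for H functional} merely pins down the last $k$ coordinates). Over $X$ one has the usual algebraic family of induced spaces $\mathrm{I}(\chi^{-1})$ and its rational sections in the sense defined above, and the intertwining operators $T_w$ and the pairing \eqref{eq:metaplectic pairing} are rational in this family. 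On the cone $\{\Re(s_i)<\Re(s_{i+1}),\ \Re(s_k)<0\}$ the functional $\Lambda_{\chi}$ is the absolutely convergent integral \eqref{int:integral on quotient of H}; the integrand is holomorphic in $\chi$ and, as in the proof of Claim~\ref{claim:functional on quotient of H is abs conv}, the convergence is locally uniform, so $\chi\mapsto\Lambda_{\chi}(f_{\chi^{-1}})$ is holomorphic on this cone for every rational section $f_{\chi^{-1}}$.

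Now by Theorem~\ref{theorem:unramified Shalika functional 1 dim} together with Claim~\ref{claim:H and Shalika functionals}, over the dense Zariski-open subset of $X$ where \eqref{eq:assumptions on character for H functional} holds the space of $(H,\psi',\psi)$-functionals on $\mathrm{I}(\chi^{-1})$ is at most one-dimensional. Consequently the sheaf over $X$ whose fibre at $\chi$ is this space of functionals is generically of rank $\leq1$, and — using that $\mathrm{I}(\chi^{-1})$ is finitely generated over the pertinent Hecke algebra — it is coherent. Bernstein's continuation principle then applies: a section of a coherent family of functionals that is holomorphic on a non-empty open set is the restriction of a rational section. Applied to $\{\Lambda_{\chi}\}$ on the cone this yields, for every rational section $f_{\chi^{-1}}$, that $\Lambda_{\chi}(f_{\chi^{-1}})\in\C(q^{-2s_1},\ldots,q^{-2s_k})$, which is the assertion. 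The concrete rational formulas are a separate issue: they come from Lemma~\ref{lemma:3 specific computations of Shalika model} fed into the metaplectic Hironaka formula, Corollary~\ref{corollary:analog of Hironaka theory with phi I functions}, and produce the Casselman--Shalika formula of Theorem~\ref{theorem:Casselman--Shalika formula for metaplectic H}.

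The main obstacle is the finiteness input needed to invoke Bernstein's principle: one must know that, generically in $\chi$, the space of $(H,\psi',\psi)$-functionals is finite-dimensional, and it is exactly for this that the regularity hypothesis \eqref{eq:assumptions on character for H functional} is indispensable — hence the continuation is claimed only over that locus. A second, more routine but bookkeeping-heavy point is to check that the entire apparatus (the algebraic family of $\mathrm{I}(\chi^{-1})$'s, rational sections, holomorphy and local uniform convergence of \eqref{int:integral on quotient of H}, rationality of the $T_w$) behaves in the metaplectic setting just as in \cite{Yia}, the interplay of the sections $\mathfrak{s}$, $\mathfrak{h}$, $\kappa$ being the usual source of friction. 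A hands-on alternative would extend $\Lambda_{\chi}$ across the walls of the relative Weyl group of type $C_k$ — generated by the $s_{\alpha_i}s_{\alpha_{n-i}}$ ($1\leq i<k$) and $s_{\alpha_k}$ — one reflection at a time, normalizing by uniqueness and reading the rational, non-vanishing proportionality factors off Lemma~\ref{lemma:3 specific computations of Shalika model} together with Claim~\ref{claim:analog of theorem 3.4 of Casselman}; but then one must separately verify consistency under different reduced words.
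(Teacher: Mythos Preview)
Your proposal is correct and follows essentially the same route as the paper: uniqueness of $(H,\psi',\psi)$-functionals (Theorem~\ref{theorem:unramified Shalika functional 1 dim} via Claim~\ref{claim:H and Shalika functionals}) together with the absolutely convergent realization on an open cone (Claim~\ref{claim:functional on quotient of H is abs conv}) feeds directly into Bernstein's continuation principle. The paper's proof is slightly more concrete in one respect: it pins the system down to a \emph{unique} solution by imposing the normalization $\Lambda_\chi(\varphi_{e,\chi^{-1}})=\mathrm{vol}(\mathcal{I})$, which is available from \eqref{eq:computation of functional for phi I with t} of Lemma~\ref{lemma:3 specific computations of Shalika model}, rather than working with a generically rank-$\leq 1$ sheaf as you do; but this is only a cosmetic difference in how one packages the input to Bernstein's principle (as formulated in \cite{Banks}).
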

\begin{proof}
The argument appeared in \cite[\S~7]{Yia}. According to Theorem~\ref{theorem:unramified Shalika functional 1 dim}
and Claim~\ref{claim:H and Shalika functionals}, there exists at most one solution (a functional) to the system of equations, defined by \eqref{eq:equivariance props of a H psi' psi functional} with the additional condition that $\varphi_{e,\chi^{-1}}$ is mapped to $\mathrm{vol}(\mathcal{I})$. Furthermore, according to Claim~\ref{claim:functional on quotient of H is abs conv} and \eqref{eq:computation of functional for phi I with t}, in an open subset of $\C^k$, there exists a nontrivial solution. Hence by Bernstein's continuation principle (in \cite{Banks}) $\Lambda_{\chi}(f_{\chi^{-1}})$ extends to a rational function. 
\end{proof}
\begin{corollary}\label{corollary:3 specific computations of Shalika model meromorphic continuation}
The results of Lemma~\ref{lemma:3 specific computations of Shalika model} hold for all $\chi$ satisfying \eqref{eq:assumptions on character for H functional}, by means of meromorphic continuation.
\end{corollary}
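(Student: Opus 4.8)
The plan is to deduce the corollary from Proposition~\ref{proposition:meromorphic continuation and a basis of Shalika functionals} together with the identity principle for rational functions, using Lemma~\ref{lemma:3 specific computations of Shalika model} as the input on a nonempty open set.

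First I would record that each of the vectors appearing in Lemma~\ref{lemma:3 specific computations of Shalika model} is the value at the given $\chi$ of a polynomial section. Indeed $\varphi_{w,\chi^{-1}}$ is supported on $\widetilde{B}_{n,*}\mathfrak{s}(\mathfrak{w})\kappa(\mathcal{I})$, is right $\kappa(\mathcal{I})$-invariant, and satisfies $\varphi_{w,\chi^{-1}}(\mathfrak{s}(\mathfrak{w}))=1$, so on $\coprod_{t}\mathfrak{s}(t)\kappa(K)$ its coordinates are the monomials $\delta^{1/2}_{B_n}\chi^{-1}(\mathfrak{s}(b_*))$ in $q^{\pm 2s_1},\ldots,q^{\pm 2s_k}$; left translation by a fixed element of $\widetilde{G}_n$, such as $\mathfrak{s}(t_{-\lambda})$, preserves the space of polynomial sections, so $\mathfrak{s}(t_{-\lambda})\varphi_{e,\chi^{-1}}$, $\varphi_{s_{\alpha_k},\chi^{-1}}$ and $\varphi_{s_{\alpha_i}s_{\alpha_{n-i}},\chi^{-1}}$ are rational (indeed polynomial) sections. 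Hence by Proposition~\ref{proposition:meromorphic continuation and a basis of Shalika functionals} each of $\chi\mapsto\Lambda_{\chi}(\mathfrak{s}(t_{-\lambda})\varphi_{e,\chi^{-1}})$, $\chi\mapsto\Lambda_{\chi}(\varphi_{s_{\alpha_k},\chi^{-1}})$ and $\chi\mapsto\Lambda_{\chi}(\varphi_{s_{\alpha_i}s_{\alpha_{n-i}},\chi^{-1}})$ lies in $\C(q^{-2s_1},\ldots,q^{-2s_k})$ over the locus where \eqref{eq:assumptions on character for H functional} holds.

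Next I would observe that the right-hand sides of \eqref{eq:computation of functional for phi I with t}, \eqref{eq:computation of functional for phi (k,k+1)} and \eqref{eq:computation of functional for phi (i,i+1)()} also belong to $\C(q^{-2s_1},\ldots,q^{-2s_k})$: the Hilbert symbols $(-\varrho,\varpi)_2$ and $(\varpi,\varpi)_2$ are constants (and $\psi'=\psi_{\varrho}$ with $\varrho\in\mathcal{O}^*$ is available since $\psi$ and $\psi'$ are unramified), $\chi^{-1/2}(a_{\alpha_k})=q^{2s_k}$, $\chi^{-1}(a_{\alpha_i})=q^{-2(s_{i+1}-s_i)}$ with denominator $1-q^{-2(s_{i+1}-s_i)}$ nonvanishing exactly on the locus \eqref{eq:assumptions on character for H functional}, and for each fixed $\lambda$ the quantity $\delta^{1/2}_{B_n}\chi(\mathfrak{s}(t_{\lambda}))$ is a Laurent monomial. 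The cone $\{\Re(s_i)<\Re(s_{i+1})\ (1\le i<k),\ \Re(s_k)<0\}$ is a nonempty open subset of the parameter space which, since all $\Re(s_i)<0$ and $\Re(s_1)<\cdots<\Re(s_k)$, is entirely contained in the locus \eqref{eq:assumptions on character for H functional}; there the two sides of each identity coincide by Lemma~\ref{lemma:3 specific computations of Shalika model} (for each fixed $\lambda$ in the first case). Since two rational functions on $(\C^*)^k$ agreeing on a nonempty open subset are equal, the identities persist for all $\chi$ satisfying \eqref{eq:assumptions on character for H functional}, which is exactly the assertion. I do not anticipate a real obstacle; the only point requiring care is the bookkeeping that neither the translations nor the explicit coset computations behind Lemma~\ref{lemma:3 specific computations of Shalika model} introduce non-rational dependence on $\chi$, which is clear because all the sections involved arise by applying the rational operator $P_{\chi^{-1}}$ of \eqref{eq:P chi surjection} to characteristic functions of compact open sets.
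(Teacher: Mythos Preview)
Your argument is correct and is precisely the intended one: the paper states the corollary without proof immediately after Proposition~\ref{proposition:meromorphic continuation and a basis of Shalika functionals}, treating it as an obvious consequence, and your write-up spells out exactly that deduction via the identity principle for rational functions on the cone of absolute convergence.
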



Below we state the explicit formula for the unramified function $g\mapsto\Lambda_{\chi}(g\varphi_{K,\chi^{-1}})$. The formula is for $g=t_{\lambda}$ with $\lambda\in\Z^k_{\geq}$. The following simple claim shows that these elements already determine the values of the function.
\begin{claim}\label{claim:Shalika or H functional are determined on the torus}
Let $l$ be a metaplectic $(\psi',\psi)$-Shalika functional (resp. $(H,\psi',\psi)$-functional) on a genuine unramified $\pi\in\Alg{\widetilde{G}_n}$. Let $\xi$ be an unramified vector in the space of $\pi$ and consider the function $l_{\xi}(g)=l(\pi(g)\xi)$.
\begin{enumerate}[leftmargin=*]
\item\label{claim Shalika or H determined:K invariance left and right}$l_{\xi}(\kappa(k_0)g\kappa(k))=l_{\xi}(g)$ for any $k_0\in(G_k^{\triangle}U)\cap K$ (resp. $k_0\in H\cap K$), $g\in\widetilde{G}_n$ and $k\in K$.
\item\label{claim Shalika or H determined:part 1}$l_{\xi}$ is uniquely determined by its values on $t_{\lambda}$ (resp. $t_{-\lambda}$), where
$\lambda\in\Z^k_{\geq}$.
\item\label{claim Shalika or H determined:part 3}
$l_{\xi}(t_{\lambda})=0$ (resp. $t_{-\lambda}$) for $\lambda\in\Z^k_{\geq}\setdifference\Z^k_+$.
\end{enumerate}
\end{claim}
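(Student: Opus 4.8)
The three assertions are the metaplectic analogues of the standard facts for Whittaker/Shalika models (compare \cite[Proposition~2]{GK}), so I would mimic that proof, keeping track of the sections $\mathfrak{s}$, $\mathfrak{h}$ and $\kappa$. For part~\eqref{claim Shalika or H determined:K invariance left and right}, right $K$-invariance is immediate since $\xi$ is unramified: $l_\xi(g\kappa(k)) = l(\pi(g)\pi(\kappa(k))\xi) = l(\pi(g)\xi) = l_\xi(g)$. For left invariance under $k_0\in (G_k^\triangle U_k)\cap K$ (resp.\ $H\cap K$), I would use the equivariance \eqref{eq:metaplectic Shalika def} (resp.\ \eqref{eq:equivariance props of a H psi' psi functional}): write $k_0 = c^\triangle u$ with $c\in G_k(\mathcal{O})$, $u\in U_k(\mathcal{O})$; then $\mathfrak{s}(k_0)$ agrees with $\kappa(k_0)$ on $(G_k^\triangle U_k)\cap K$ by Claim~\ref{claim:h and kappa agree on H cap K}, and $\gamma_{\psi'}(\det c) = 1$ since $\det c\in\mathcal{O}^*$ and $\gamma_{\psi'}$ is trivial there, while $\psi(u) = 1$ because $\psi$ is unramified and $u\in U_k(\mathcal{O})$. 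Hence $l_\xi(\kappa(k_0)g) = l(\pi(\mathfrak{s}(k_0))\pi(g)\xi) = l_\xi(g)$. The $H$-case is identical using $\mathfrak{h}$ in place of $\mathfrak{s}$ and Claim~\ref{claim:h and kappa agree on H cap K} again.

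Part~\eqref{claim Shalika or H determined:part 1} follows from part~\eqref{claim Shalika or H determined:K invariance left and right} together with a Cartan-type decomposition. For the Shalika case, $l_\xi$ is determined by its values on $(G_k^\triangle U_k)\cap K \backslash \widetilde{G}_n / \kappa(K)$; the relevant double-coset representatives are the $t_\lambda = \mathfrak{s}(\diag(\varpi^\lambda, I_k))$ with $\lambda$ ranging over a set one can arrange to be $\Z^k_{\geq}$ — here I would invoke the Iwasawa/Cartan decomposition $G_n = (G_k^\triangle U_k) T_n K$ (a consequence of $G_n = (G_k^\triangle U_k)\cdot B_n$, used already in the proof of Theorem~\ref{theorem:unramified Shalika functional 1 dim}, combined with $G_n = B_n K$), noting that the torus part $\diag(\varpi^{\lambda_1},\ldots,\varpi^{\lambda_{2k}})$ can be multiplied on the left by an element of $G_k^\triangle$ to bring it to the form $\diag(\varpi^{\lambda'}, I_k)$ with the last $k$ entries trivial, and then by a permutation in $K\cap G_k^\triangle$ to order $\lambda'_1\geq\cdots\geq\lambda'_k$. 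For the $H$-case the same works with $t_{-\lambda}$, since $H\cap K\backslash \widetilde{G}_n/\kappa(K)$ has representatives $\rconj{\omega}(t_\lambda^\triangle\text{-part})$, and $\omega^{-1}\diag(\varpi^\lambda,I_k)\omega$ corresponds to $t_{-\lambda}$ after the analogous reductions; alternatively one transports the Shalika statement through Claim~\ref{claim:H and Shalika functionals}.

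Part~\eqref{claim Shalika or H determined:part 3} is the vanishing statement and is where the metaplectic character enters nontrivially, though it should still be routine. In the Shalika case, suppose $\lambda\in\Z^k_{\geq}\setdifference\Z^k_+$, so some $\lambda_i < 0$. I would produce an element $u_0$ in $U_k(\mathcal{O})$ with $\psi(u_0)\neq 1$ such that $\diag(\varpi^\lambda,I_k)\, u_0\, \diag(\varpi^\lambda,I_k)^{-1} \in U_k(\mathcal{O})\subset K$: indeed conjugating the $(i, k+i)$ entry of $U_k$ by $\diag(\varpi^\lambda, I_k)$ multiplies it by $\varpi^{\lambda_i}$, so choosing that entry to be $\varpi^{-\lambda_i}\cdot(\text{a unit on which }\psi\text{ is nontrivial})$ lands us in $U_k(\mathcal{O})$ after conjugation while the original $u_0$ has $\psi(u_0)\neq 1$. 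Then, using that $\mathfrak{s}$ splits $U_k$ and that $t_\lambda$ commutes with $\mu_2$,
\begin{align*}
l_\xi(t_\lambda) &= l(\pi(\mathfrak{s}(u_0^{-1}))\pi(t_\lambda)\pi(\mathfrak{s}(t_\lambda^{-1}u_0 t_\lambda))\xi) \\
&= \psi(u_0^{-1})\, l(\pi(t_\lambda)\pi(\kappa(t_\lambda^{-1}u_0 t_\lambda))\xi) = \psi(u_0)^{-1} l_\xi(t_\lambda),
\end{align*}
using part~\eqref{claim Shalika or H determined:K invariance left and right} on the right ($t_\lambda^{-1}u_0t_\lambda\in K$) and \eqref{eq:metaplectic Shalika def} on the left; since $\psi(u_0)\neq 1$ this forces $l_\xi(t_\lambda) = 0$. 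The $H$-case follows by the same argument (using $\mathfrak{h}$, $\omega$-conjugated unipotents, and \eqref{eq:equivariance props of a H psi' psi functional}), or simply by applying the Shalika result via Claim~\ref{claim:H and Shalika functionals}. The only mild subtlety — the main thing to get right — is checking that the relevant conjugates genuinely land in $K$ and that the sections behave well there (so that no stray Hilbert symbol or Weil factor appears), but this is exactly the content of Claim~\ref{claim:h and kappa agree on H cap K} and the splitting properties of $\mathfrak{s}$ on $N_n\cap K$ and $T_n\cap K$.
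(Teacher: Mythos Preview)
Your overall approach matches the paper's: part~\eqref{claim Shalika or H determined:K invariance left and right} via Claim~\ref{claim:h and kappa agree on H cap K} and the equivariance, parts~\eqref{claim Shalika or H determined:part 1}--\eqref{claim Shalika or H determined:part 3} by adapting \cite[\S~6.2]{JR2}. However, your execution of part~\eqref{claim Shalika or H determined:part 3} contains a sign mix-up that makes the argument, as written, vacuous.

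You ask for ``$u_0\in U_k(\mathcal{O})$ with $\psi(u_0)\neq 1$'', but this is impossible: $\psi$ is unramified, hence trivial on all of $U_k(\mathcal{O})$. Concretely, since $\lambda_i<0$ your proposed $(i,k+i)$-entry $\varpi^{-\lambda_i}\cdot(\text{unit})$ has \emph{positive} valuation $-\lambda_i>0$, so lies in $\varpi\mathcal{O}$ and $\psi$ kills it. There is also an inconsistency between the text and the display: you argue that $t_\lambda u_0 t_\lambda^{-1}\in U_k(\mathcal{O})$, but the displayed identity needs $t_\lambda^{-1}u_0 t_\lambda\in K$ (that is the element acting on the unramified $\xi$ from the right).

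The repair is immediate once the direction is sorted out. Either (a) take $u_0\in U_k$ with $(i,k+i)$-entry of valuation in $[\lambda_i,-1]$ (so $u_0\notin U_k(\mathcal{O})$, $\psi(u_0)\neq1$, yet $t_\lambda^{-1}u_0t_\lambda\in U_k(\mathcal{O})$); or (b) do what the paper does: start with $u\in U_k(\mathcal{O})$, use right $\kappa(K)$-invariance to write $l_\xi(t_\lambda)=l_\xi(t_\lambda\kappa(u))$, then move $u$ to the left,
\[
l_\xi(t_\lambda)=\psi\!\left(\begin{smallmatrix}I_k&\varpi^{\lambda}v\\&I_k\end{smallmatrix}\right)l_\xi(t_\lambda),
\]
where $v\in\mathcal{O}_{k\times k}$ is the upper-right block of $u$; when $\lambda_k<0$ one chooses $v_{kk}\in\mathcal{O}$ with $\psi(\varpi^{\lambda_k}v_{kk})\neq1$. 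The paper writes out the $(H,\psi',\psi)$-version of (b) explicitly.
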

\begin{proof}
The first assertion follows from Claim~\ref{claim:h and kappa agree on H cap K} and \eqref{eq:equivariance props of a H psi' psi functional}. The second and third 
follow by a straightforward adaptation of the proofs of these results in the non-metaplectic setting (\cite[\S~6.2]{JR2}). 
For example, we explain the proof of \eqref{claim Shalika or H determined:part 3} for
the $(H,\psi',\psi)$-functional. Consider $\lambda\in\Z^k_{\geq}$.
Let $u\in \mathcal{O}_{k\times k}$ and $h=\rconj{\omega}\left(\begin{smallmatrix}I_k&u\\&I_k\end{smallmatrix}\right)\in H\cap K$. Then
\begin{align*}
\mathfrak{s}(t_{-\lambda})\mathfrak{h}(h)=\mathfrak{h}(\rconj{\omega}\left(\begin{smallmatrix}I_k&u\varpi^{\lambda}\\&I_k\end{smallmatrix}\right))\mathfrak{s}(t_{-\lambda}),
\end{align*}
whence by Claim~\ref{claim:h and kappa agree on H cap K},
\begin{align*}
l_{\xi}(\mathfrak{s}(t_{-\lambda}))
=l_{\xi}(\mathfrak{s}(t_{-\lambda})\mathfrak{h}(h))
=\psi(\left(\begin{smallmatrix}I_k&u\varpi^{\lambda}\\&I_k\end{smallmatrix}\right))l_{\xi}(\mathfrak{s}(t_{-\lambda})).
\end{align*}
Thus $l_{\xi}(\mathfrak{s}(t_{-\lambda}))=0$, unless $\lambda\in\Z^k_+$.
\end{proof}

Let $\Sp_k$ be the symplectic group on $n$ variables, embedded in $G_n$ as the subgroup
\begin{align*}
\setof{g\in G_n}{\transpose{g}\left(\begin{smallmatrix}&J_k\\-J_k\end{smallmatrix}\right)g=\left(\begin{smallmatrix}&J_k\\-J_k\end{smallmatrix}\right)},
\end{align*}
where $\transpose{g}$ denotes the transpose of $g$. We fix the Borel subgroup $\Sp_k\cap B_n$ and let $\Sigma_{\mathrm{Sp_k}}$ (resp. $\Sigma_{\Sp_k}^+$) be the corresponding root system (resp. positive roots) of $\Sp_k$.
The map $\Sigma_{G_n}\rightarrow\Sigma_{\Sp_k}$ is two-to-one onto the short roots, and injective onto the long roots.
Recall that we assumed $q^{2s_i}=q^{-2s_{n-i+1}}$ for all $1\leq i\leq k$ (in \eqref{eq:assumptions on character for H functional}). Equivalently, $\chi$ is trivial on $\mathfrak{s}(H\cap B_{n,*})$.
Hence if $\beta,\beta'\in\Sigma_{G_n}$ are the two roots corresponding to a short root $\alpha\in\Sigma_{\Sp_k}$,
$c_{\beta}(\chi)=c_{\beta'}(\chi)$. Thus we may extend the notation
$c_{\alpha}(\chi)$ to short roots $\alpha$, meaning $c_{\beta}(\chi)$. The same applies to $\chi^{-1}$. Similarly, we define $c_{\alpha}(\chi)$ for long roots
(there is no ambiguity here, for any $\chi$).

Denote the Weyl group of $\Sp_k$ by $W_{\Sp_k}$, it is
the group generated by $s_k$ and $s_is_{n-i}$, $1\leq i<k$. For $w\in W_{\Sp_k}$, let $\ell_{\Sp_k}(w)$
be the minimal number of simple reflections in $W_{\Sp_k}$ whose product is $w$, i.e., the usual length with respect to
$W_{\Sp_k}$. 

Recall that $\psi'=\psi_{\varrho}$ (see Lemma~\ref{lemma:3 specific computations of Shalika model}),
put $\epsilon_{\varrho,k}=(-\varrho,\varpi)_2(\varpi,\varpi)_2^{k-1}$. For any $\alpha\in\Sigma_{\Sp_k}^+$ let
\begin{align*}
y_{\alpha}(\chi^{-1})=\begin{dcases}
c_{\alpha}(\chi^{-1})c_{\alpha}(\chi)&\text{$\alpha$ is short,}\\
\frac{(1+q^{-1/2}\epsilon_{\varrho,k}\chi^{-1/2}(a_{\alpha}))(1-q^{-1/2}\epsilon_{\varrho,k}\chi^{1/2}(a_{\alpha}))}{1-\chi(a_{\alpha})}
&\text{$\alpha$ is long.}
\end{dcases}
\end{align*}
As in Lemma~\ref{lemma:3 specific computations of Shalika model}, if $\alpha$ is the long root $(i,n-i+1)$,
$\chi^{-1/2}(a_{\alpha})=q^{-2s_i}$.
\begin{theorem}\label{theorem:Casselman--Shalika formula for metaplectic H}
Let $\lambda\in\Z^k_{\geq}$. The function $\Lambda_{\chi}(\mathfrak{s}(t_{-\lambda})\varphi_{K,\chi^{-1}})$ vanishes unless $\lambda\in 2\Z^k_+$. In this case,
\begin{align*}
&\Lambda_{\chi}(\mathfrak{s}(t_{-\lambda})\varphi_{K,\chi^{-1}})
=Q^{-1}\sum_{w\in W_{\Sp_k}}
\prod_{\setof{\alpha>0}{w\alpha>0}}c_{\alpha}(\chi^{-1})
\prod_{\setof{\alpha\in\Sigma_{\Sp_k}^+}{w\alpha<0}}y_{\alpha}(\chi^{-1})
\delta^{1/2}_{B_n}\ \rconj{w}\chi(\mathfrak{s}(t_{\lambda})).
\end{align*}
More compactly, put
\begin{align*}
\beta(\chi^{-1})=&
\prod_{\alpha\in\Sigma_{\Sp_k}^{+}}\chi^{1/2}(a_{\alpha})
\prod_{\text{short }\alpha\in\Sigma_{\Sp_k}^{+}}(1-q^{-1}\chi^{-1}(a_{\alpha}))
\prod_{\text{long }\alpha\in\Sigma_{\Sp_k}^{+}}(1-q^{-1/2}\epsilon_{\varrho,k}\chi^{-1/2}(a_{\alpha})).
\end{align*}
Then
\begin{align*}
&\Lambda_{\chi}(\mathfrak{s}(t_{-\lambda})\varphi_{K,\chi^{-1}})=Q^{-1}c_{w_0}(\chi^{-1})\beta(\chi^{-1})^{-1}
\sum_{w\in W_{\Sp_k}}(-1)^{\ell_{\Sp_k}(w)}\beta(\rconj{w}\chi^{-1})\delta^{1/2}_{B_n}\ \rconj{w}\chi(\mathfrak{s}(t_{\lambda})).
\end{align*}
\end{theorem}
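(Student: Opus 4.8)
The plan is to combine the metaplectic Hironaka formula of Corollary~\ref{corollary:analog of Hironaka theory with phi I functions} with the explicit evaluations in Lemma~\ref{lemma:3 specific computations of Shalika model} and Corollary~\ref{corollary:3 specific computations of Shalika model meromorphic continuation}, mirroring \cite{Yia} throughout. First I would observe that, for $\chi$ as in \eqref{eq:assumptions on character for H functional}, the twisted character $\rconj{w}\chi$ again satisfies \eqref{eq:assumptions on character for H functional} precisely when $w$ lies in the subgroup $W_{\Sp_k}\subset W$ of signed permutations (the stabilizer of the pairing $i\leftrightarrow n-i+1$); by Theorem~\ref{theorem:unramified Shalika functional 1 dim} and Claim~\ref{claim:H and Shalika functionals}, $\mathrm{I}(\rconj{w}\chi^{-1})$ then carries a one-dimensional space of $(H,\psi',\psi)$-functionals, and for such $w$ we normalize $\Lambda_{\rconj{w}\chi}$ by $\Lambda_{\rconj{w}\chi}(\varphi_{e,\rconj{w}\chi^{-1}})=\mathrm{vol}(\mathcal{I})$, consistently with \eqref{eq:computation of functional for phi I with t}. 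Since intertwining operators commute with the $\widetilde{G}_n$-action, $T^{*}_{w^{-1},\rconj{w}\chi^{-1}}\Lambda_{\chi}$ is again an $(H,\psi',\psi)$-functional on $\mathrm{I}(\rconj{w}\chi^{-1})$; for $w\in W_{\Sp_k}$ uniqueness yields a meromorphic scalar $A(w,\chi)$ with $T^{*}_{w^{-1},\rconj{w}\chi^{-1}}\Lambda_{\chi}=A(w,\chi)\Lambda_{\rconj{w}\chi}$ and $A(e,\chi)=1$, while for $w\notin W_{\Sp_k}$ the target space is zero, so that term drops out of the Hironaka sum. Taking $g=\mathfrak{s}(t_{-\lambda})$ in Corollary~\ref{corollary:analog of Hironaka theory with phi I functions} and evaluating the inner terms $\Lambda_{\rconj{w}\chi}(\mathfrak{s}(t_{-\lambda})\varphi_{e,\rconj{w}\chi^{-1}})$ by \eqref{eq:computation of functional for phi I with t} gives the vanishing for $\lambda\in\Z^k_+\setdifference2\Z^k_+$, which together with Claim~\ref{claim:Shalika or H functional are determined on the torus} for $\lambda\in\Z^k_{\geq}\setdifference\Z^k_+$ yields the asserted vanishing, and for $\lambda\in2\Z^k_+$ leaves
\begin{align*}
\Lambda_{\chi}(\mathfrak{s}(t_{-\lambda})\varphi_{K,\chi^{-1}})=\frac1Q\sum_{w\in W_{\Sp_k}}\frac{c_{w_0}(\rconj{w_0w}\chi)}{c_{w^{-1}}(\rconj{w}\chi^{-1})}A(w,\chi)\,\delta^{1/2}_{B_n}\ \rconj{w}\chi(\mathfrak{s}(t_{\lambda})).
\end{align*}

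The next step is to pin down $A(w,\chi)$ for $w\in W_{\Sp_k}$. Using the composition law \eqref{eq:formula for composing Tw and Tw'} together with the additivity of the $G_n$-length on $W_{\Sp_k}$-reduced decompositions (so the relevant intertwining compositions are exact, with no extra $c$-factors), $A(\blank,\chi)$ is multiplicative along such decompositions, so it suffices to compute it on the Coxeter generators $s_{\alpha_k}$ and $s_{\alpha_i}s_{\alpha_{n-i}}$ ($1\leq i<k$) of $W_{\Sp_k}$. For these I would pair the identity $T^{*}_{w^{-1},\rconj{w}\chi^{-1}}\Lambda_{\chi}=A(w,\chi)\Lambda_{\rconj{w}\chi}$ against $\varphi_{e,\rconj{w}\chi^{-1}}$: applying Claim~\ref{claim:analog of theorem 3.4 of Casselman} (in the relevant rank-one, resp. rank-two, subgroup) to expand $T_{w^{-1},\rconj{w}\chi^{-1}}\varphi_{e,\rconj{w}\chi^{-1}}$ into $\varphi_{e,\chi^{-1}}$ and $\varphi_{s_{\alpha_k},\chi^{-1}}$ (resp. $\varphi_{s_{\alpha_i}s_{\alpha_{n-i}},\chi^{-1}}$), the values $\Lambda_{\chi}(\varphi_{e,\chi^{-1}})$, $\Lambda_{\chi}(\varphi_{s_{\alpha_k},\chi^{-1}})$ and $\Lambda_{\chi}(\varphi_{s_{\alpha_i}s_{\alpha_{n-i}},\chi^{-1}})$ appearing on the right are exactly those furnished by Lemma~\ref{lemma:3 specific computations of Shalika model} and Corollary~\ref{corollary:3 specific computations of Shalika model meromorphic continuation}. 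Solving the resulting linear relations for $A(s_{\alpha_k},\chi)$ and $A(s_{\alpha_i}s_{\alpha_{n-i}},\chi)$, and simplifying with $\epsilon_{\varrho,k}^2=1$ and the identities \eqref{eq:Weil factor identities}, produces rational expressions assembled from $c_{\alpha}(\chi^{\pm1})$ of the short roots and from the Weil-twisted factors $1\pm q^{-1/2}\epsilon_{\varrho,k}\chi^{\mp1/2}(a_{\alpha})$ of the long roots, i.e.\ precisely the quantities $y_{\alpha}(\chi^{-1})$.

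Substituting these values back into the displayed sum and bookkeeping the Gindikin--Karpelevich factors $c_{w_0}(\rconj{w_0w}\chi)$ and $c_{w^{-1}}(\rconj{w}\chi^{-1})$, expressed over $\Sigma_{G_n}^{+}$ but reorganized over $\Sigma_{\Sp_k}^{+}$, collapses the coefficient of $\delta^{1/2}_{B_n}\rconj{w}\chi(\mathfrak{s}(t_{\lambda}))$ to $\prod_{\alpha>0,\,w\alpha>0}c_{\alpha}(\chi^{-1})\prod_{\alpha\in\Sigma_{\Sp_k}^{+},\,w\alpha<0}y_{\alpha}(\chi^{-1})$. Here one uses that $\rconj{w_0}\chi=\chi^{-1}$ (as $w_0$ sends the symplectic $\underline{s}$ to $-\underline{s}$), that each short $\Sp_k$-root is covered by two equal factors $c_{\beta}(\chi^{-1})=c_{\beta'}(\chi^{-1})$ coming from $G_n$ (so that $c_{\beta}(\chi^{-1})c_{\beta}(\chi)$ is exactly the short $y_{\alpha}$), and the rank-one identities for the long roots; the comparison $\mathrm{vol}(\mathcal{I}\mathfrak{w}\mathcal{I})=q(w)\mathrm{vol}(\mathcal{I})$ accounts for the volume factors in Lemma~\ref{lemma:3 specific computations of Shalika model}. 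This gives the first formula. The compact form follows by the usual antisymmetrization: writing $c_{\alpha}(\chi^{-1})=(1-q^{-1}\chi^{-1}(a_{\alpha}))/(1-\chi^{-1}(a_{\alpha}))$ and the long $y_{\alpha}$ over the common denominator $1-\chi(a_{\alpha})$, one extracts the overall factor $Q^{-1}c_{w_0}(\chi^{-1})\beta(\chi^{-1})^{-1}$ and recognizes the remaining sum as $\sum_{w\in W_{\Sp_k}}(-1)^{\ell_{\Sp_k}(w)}\beta(\rconj{w}\chi^{-1})\delta^{1/2}_{B_n}\rconj{w}\chi(\mathfrak{s}(t_{\lambda}))$, with the sign $(-1)^{\ell_{\Sp_k}(w)}$ recording the $\Sp_k$-roots sent to negatives.

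I expect the main obstacle to be this last reconciliation step: the Hironaka machine naturally produces data indexed by $W$ and $\Sigma_{G_n}$, while the answer is indexed by $W_{\Sp_k}$ and $\Sigma_{\Sp_k}$, and one must carefully track how the doubled short roots, the Weil-symbol factor $\epsilon_{\varrho,k}$, and the volumes $\mathrm{vol}(\mathcal{I}\mathfrak{w}\mathcal{I})$ versus $\mathrm{vol}(\mathcal{I})$ combine so that the extraneous short-root factors cancel; this is precisely where the argument must shadow and adapt the computations of \cite{Yia}. A secondary, more routine, point is to double-check that for $w\notin W_{\Sp_k}$ the corresponding Hironaka term genuinely vanishes (equivalently, that $\rconj{w}\chi$ supports no $(H,\psi',\psi)$-functional), which follows from Theorem~\ref{theorem:unramified Shalika functional 1 dim} applied to $\rconj{w}\chi$ once one checks its regularity hypotheses hold for generic $\chi$ as in \eqref{eq:assumptions on character for H functional}.
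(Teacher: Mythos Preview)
Your overall strategy matches the paper's closely, but there is a genuine gap in how you dispose of the terms with $w\notin W_{\Sp_k}$. You claim that for such $w$ the space of $(H,\psi',\psi)$-functionals on $\mathrm{I}(\rconj{w}\chi^{-1})$ vanishes, and that this follows from Theorem~\ref{theorem:unramified Shalika functional 1 dim}. It does not. For generic $\chi$ satisfying \eqref{eq:assumptions on character for H functional} the principal series $\mathrm{I}(\rconj{w}\chi^{-1})$ are all irreducible and isomorphic to $\mathrm{I}(\chi^{-1})$, so they all carry a one-dimensional space of such functionals. Concretely, take $n=4$, $\chi=(s_1,s_2,-s_2,-s_1)$ with $s_1\ne\pm s_2$, and $w=s_{\alpha_1}\notin W_{\Sp_2}$. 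Then $\rconj{w}\chi=(s_2,s_1,-s_2,-s_1)$ satisfies both hypotheses of Theorem~\ref{theorem:unramified Shalika functional 1 dim} with $\tau=\mathrm{id}$, and the ``furthermore'' condition $\rconj{w}\chi(t_{i,k+\tau(i)})=1$ also holds, so the theorem gives \emph{at most one}, not zero. Your proposed ``routine check'' therefore fails.

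The paper handles this instead via Claim~\ref{claim:w not in Spk does not contribute}: one restricts the transferred functional $T^{*}_{w^{-1},\rconj{w}\chi^{-1}}\Lambda_{\chi}$ to the subspace of sections supported on $p^{-1}(B_{n,*}H)$ (where $\varphi_{e,\rconj{w}\chi^{-1}}$ lives), identifies it there with a scalar multiple of the explicit integral functional $l_H$, and observes that $l_H$ is left-invariant under $B_{n,*}\cap H$ while the transferred functional must transform by $\rconj{w}\chi^{-1}$; these are incompatible exactly when $w\notin W_{\Sp_k}$, forcing $A(w,\chi)=0$. Note also that this claim is \emph{reused} inside the computation of $A(s_{\alpha_i}s_{\alpha_{n-i}},\chi)$: applying Claim~\ref{claim:analog of theorem 3.4 of Casselman} twice expands $T_{s_{\alpha_i}s_{\alpha_{n-i}}}\varphi_{e,\rconj{s_{\alpha_i}s_{\alpha_{n-i}}}\chi^{-1}}$ into \emph{four} terms, including $\varphi_{s_{\alpha_i},\chi^{-1}}$ and $\varphi_{s_{\alpha_{n-i}},\chi^{-1}}$, whose $\Lambda_{\chi}$-values are not given by Lemma~\ref{lemma:3 specific computations of Shalika model} but are deduced from $A(s_{\alpha_i},\chi)=A(s_{\alpha_{n-i}},\chi)=0$. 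Your sketch mentions only two terms in the rank-two expansion, so this step is also incomplete without the analogue of Claim~\ref{claim:w not in Spk does not contribute}.
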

\begin{remark}
The product over $\chi^{1/2}(a_{\alpha})$ in this formula is well defined because
\begin{align*}
\frac{\prod_{\alpha\in\Sigma_{\Sp_k}^{+}}\rconj{w}\chi^{1/2}(a_{\alpha})}
{\prod_{\alpha\in\Sigma_{\Sp_k}^{+}}\chi^{1/2}(a_{\alpha})}=
\prod_{\setof{\alpha\in\Sigma_{\Sp_k}^{+}}{w\alpha<0}}\chi^{-1}(a_{\alpha}).
\end{align*}
\end{remark}
\begin{remark}
To obtain the formula for the (non-metaplectic) Shalika model, simply remove the product over long roots in the definition of $\beta(\chi^{-1})$.
Up to a factor independent of $\lambda$ this was the formula obtained by Sakellaridis \cite{Yia} (see also \cite[\S~5.5.2]{Yia2}).
\end{remark}
\begin{proof}
We use the notation of \S~\ref{subsection:Hironaka theorem}.
The space of $(H,\psi',\psi)$-functionals on $\mathrm{I}(\chi^{-1})$ is  at most one-dimensional. Therefore
\eqref{eq:matrix relation between family of functionals} becomes
\begin{align}\label{eq:def of A(w, chi) for Shalika model}
T_{w^{-1},\rconj{w}\chi^{-1}}^*\Lambda_{\chi}=A(w,\chi)\Lambda_{\rconj{w}\chi},
\end{align}
where $A(w,\chi)$ is a scalar (a rational function of $\chi$).
By virtue of Corollary~\ref{corollary:analog of Hironaka theory with phi I functions},
\begin{align*}
\Lambda_{\chi}(\mathfrak{s}(t_{-\lambda})\varphi_{K,\chi^{-1}})=\frac1{Q\mathrm{vol}(\mathcal{I})}\sum_{w\in W}\frac{c_{w_0}(\rconj{w_0w}\chi)}{c_{w^{-1}}(\rconj{w}\chi^{-1})}
A(w,\chi)(\Lambda_{\rconj{w}\chi}(\mathfrak{s}(t_{-\lambda})\varphi_{e,\chi^{-1}}).
\end{align*}
According to Lemma~\ref{lemma:3 specific computations of Shalika model},
\begin{align}\label{eq:evaluation of Lambda at varphi e in the proof of the theorem}
\Lambda_{\rconj{w}\chi}(\mathfrak{s}(t_{-\lambda})\varphi_{e,\chi^{-1}})
=\begin{dcases}
 \mathrm{vol}(\mathcal{I})\delta^{1/2}_{B_n}\ \rconj{w}\chi(\mathfrak{s}(t_{\lambda}))&\lambda\in 2\Z^k_+,\\
 0&\lambda\in\Z^k_+\setdifference2\Z^k_+.
 \end{dcases}
\end{align}
More precisely $\Lambda_{\rconj{w}\chi}$ is a functional on $\mathrm{I}(\rconj{w}\chi^{-1})$, and the domain of absolute convergence changes after the conjugation,
so actually we apply Corollary~\ref{corollary:3 specific computations of Shalika model meromorphic continuation} and deduce this equality in the sense of meromorphic continuation.

Therefore $\Lambda_{\chi}(\mathfrak{s}(t_{-\lambda})\varphi_{K,\chi^{-1}})$ vanishes, unless $\lambda\in 2\Z^k_+$. Henceforth assume this is the case.
Equality~\eqref{eq:evaluation of Lambda at varphi e in the proof of the theorem} also implies $\Lambda_{\rconj{w}\chi}(\varphi_{e,\chi^{-1}})\ne0$.
Therefore $A(w,\chi)$ is given by the quotient
\begin{align}\label{eq:A(w,chi) as a quotient}
A(w,\chi)=\frac{T_{w^{-1},\rconj{w}\chi^{-1}}^*\Lambda_{\chi}(\varphi_{e,\rconj{w}\chi^{-1}})}{\Lambda_{\rconj{w}\chi}(\varphi_{e,\rconj{w}\chi^{-1}})}
=\frac{\Lambda_{\chi}(T_{w^{-1},\rconj{w}\chi^{-1}}\varphi_{e,\rconj{w}\chi^{-1}})}{\mathrm{vol}(\mathcal{I})}.
\end{align}
We claim that only elements of $W_{\Sp_k}$ contribute to the sum.
\begin{claim}\label{claim:w not in Spk does not contribute}
If $w\in W\setdifference W_{\Sp_k}$, $A(w,\chi)=0$.
\end{claim}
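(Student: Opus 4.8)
The plan is to use the quotient formula \eqref{eq:A(w,chi) as a quotient}, which expresses $A(w,\chi)$ up to the harmless factor $\mathrm{vol}(\mathcal{I})^{-1}$ as $\Lambda_{\chi}(T_{w^{-1},\rconj{w}\chi^{-1}}\varphi_{e,\rconj{w}\chi^{-1}})$. Since $T_{w^{-1},\rconj{w}\chi^{-1}}\varphi_{e,\rconj{w}\chi^{-1}}$ lies in $\mathrm{I}(\chi^{-1})^{\mathcal{I}}$, I can expand it in the Casselman basis $\{\varphi_{w',\chi^{-1}}\}_{w'\in W}$ (or, equivalently, exploit Claim~\ref{claim:analog of theorem 3.4 of Casselman} and the support description of the $\varphi_{w',\chi^{-1}}$). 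So it suffices to understand $\Lambda_{\chi}(\varphi_{w',\chi^{-1}})$, or more precisely the vanishing of $\Lambda_{\chi}$ on the part of $\mathrm{I}(\chi^{-1})^{\mathcal{I}}$ supported on double cosets $B_{n,*}\mathfrak{w}'\mathcal{I}$ with $w'\notin W_{\Sp_k}$. The functional $\Lambda_{\chi}$ is computed via \eqref{int:integral on B_n*H}, an integral over $B_{n,*}H$; thus $\Lambda_{\chi}(\varphi_{w',\chi^{-1}})$ is a sum over the fibres of $B_{n,*}H \cap B_{n,*}\mathfrak{w}'\mathcal{I}$. The key geometric input is that $B_nH$ decomposes by the permutation matrices (from \cite{AG}, used in the proof of Theorem~\ref{theorem:unramified Shalika functional 1 dim}), and combined with Claim~\ref{claim:coset decomposition B*H in BnH} and the inclusions \eqref{eq:Iwahori- without one root contained in BcapK times HcapK}, \eqref{eq:Iwahori contained in B*H}, the double coset $\mathcal{I}\mathfrak{w}'\mathcal{I}$ meets $B_{n,*}H$ only when $\mathfrak{w}'$ is (up to $B_{n,*}$ on the left and $H\cap K$ on the right) an element of the subgroup $\rconj{\omega}(G_k^{\triangle}U_k)$, i.e. essentially when $w'$ lies in $W_{\Sp_k}$.

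Concretely, the steps I would carry out are: (1) reduce via \eqref{eq:A(w,chi) as a quotient} to evaluating $\Lambda_{\chi}$ on elements of $\mathrm{I}(\chi^{-1})^{\mathcal{I}}$ of the form $\mathscr{P}_{\mathcal{I}}$ of translates of $\varphi_{e,\rconj{w}\chi^{-1}}$, hence to a $\C$-linear combination of the basis functions $\varphi_{w',\chi^{-1}}$; (2) observe that $\varphi_{w',\chi^{-1}}$ is supported on $\widetilde{B}_{n,*}\mathfrak{s}(\mathfrak{w}')\kappa(\mathcal{I})$, so $\Lambda_{\chi}(\varphi_{w',\chi^{-1}})$, computed by \eqref{int:integral on B_n*H}, is an integral over $g\in B_{n,*}H$ with $g\in B_{n,*}\mathfrak{w}'\mathcal{I}$; (3) show this intersection is empty when $w'\notin W_{\Sp_k}$, using the Bruhat-type decomposition $G_n = \bigcup_w G_k^{\triangle}U_k w B_n$ and the translation to $H$: $B_{n,*}\mathfrak{w}'\mathcal{I} \cap B_{n,*}H \ne \emptyset$ forces $\mathfrak{w}'$ into the $(B_{n,*}, H\cap K)$-orbit structure, which by a Weyl-group length/positivity argument (of the same flavour as Claim~\ref{claim:Hom(g) tw with w not w_k}) pins $w'$ down to $W_{\Sp_k}$; (4) conclude $A(w,\chi)=0$ for $w\notin W_{\Sp_k}$ because all the $\varphi_{w',\chi^{-1}}$ appearing in the expansion of $T_{w^{-1},\rconj{w}\chi^{-1}}\varphi_{e,\rconj{w}\chi^{-1}}$ with nonzero coefficient have $w'\notin W_{\Sp_k}$ — this last point needs the observation that the intertwining operator $T_{w^{-1}}$ applied to $\varphi_{e,\rconj{w}\chi^{-1}}$ produces only basis vectors indexed by $w'\leq w^{-1}$ (Bruhat order), so if $w\notin W_{\Sp_k}$ none of these indices lands in $W_{\Sp_k}$...

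Actually the cleanest route, and the one I would follow, avoids tracking Bruhat order: argue directly that $\Lambda_{\chi}\circ T_{w^{-1},\rconj{w}\chi^{-1}}$ is, up to scalar, an $(H,\psi',\psi)$-functional on $\mathrm{I}(\rconj{w}\chi^{-1})$, and by Theorem~\ref{theorem:unramified Shalika functional 1 dim} such a functional is unique up to scalar when it exists; then $A(w,\chi)$ is precisely that scalar, and it is nonzero only if the Shalika functional on $\mathrm{I}(\rconj{w}\chi^{-1})$ is ``compatible'' with the one on $\mathrm{I}(\chi^{-1})$ under $T_{w^{-1}}$. The decisive computation is then \eqref{eq:A(w,chi) as a quotient} together with the vanishing of $\Lambda_{\chi}$ on $T_{w^{-1},\rconj{w}\chi^{-1}}\varphi_{e,\rconj{w}\chi^{-1}}$, and this vanishing is what I would establish via the support/geometry argument above. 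The main obstacle I anticipate is step (3)/(the geometry): carefully showing that the relevant $\mathcal{I}$-double cosets avoid $B_{n,*}H$ outside $W_{\Sp_k}$, which requires the same kind of meticulous root-by-root bookkeeping as in the proof of Claim~\ref{claim:Hom(g) tw with w not w_k} — in particular handling that one works with $B_{n,*}$ rather than $B_n$, so Claim~\ref{claim:coset decomposition B*H in BnH} must be invoked to rule out cosets that would survive in the non-metaplectic setting. I expect the rest (the reduction via \eqref{eq:A(w,chi) as a quotient} and the bookkeeping of Weil-symbol factors, which do not affect vanishing) to be routine.
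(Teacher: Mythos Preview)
Your proposal has a genuine gap in step~(3), which is the load-bearing step for both routes you sketch. The claim that $B_{n,*}\mathfrak{w}'\mathcal{I}\cap B_{n,*}H=\emptyset$ (equivalently, that $\Lambda_{\chi}(\varphi_{w',\chi^{-1}})=0$) for $w'\notin W_{\Sp_k}$ is simply false. For instance, take $w'=s_{\alpha_i}$ with $1\leq i<k$: this lies outside $W_{\Sp_k}$, yet in the course of proving Claim~\ref{claim:coefficients A for w in Spk} the paper computes
\[
\Lambda_{\chi}(\varphi_{s_{\alpha_i},\chi^{-1}})=\mathrm{vol}(\mathcal{I})\,q\,(1-c_{\alpha_i}(\rconj{s_{\alpha_i}}\chi^{-1})),
\]
which is generically nonzero. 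The set $B_{n,*}H$ is open in $G_n$ and contains $\mathcal{I}$, so most Iwahori double cosets meet it; no Bruhat-type positivity argument in the style of Claim~\ref{claim:Hom(g) tw with w not w_k} will produce the disjointness you want. Your step~(4) has the same problem in a different guise: the expansion of $T_{w^{-1}}\varphi_{e,\rconj{w}\chi^{-1}}$ always involves $\varphi_{e,\chi^{-1}}$ (the identity is Bruhat-minimal), and $\Lambda_{\chi}(\varphi_{e,\chi^{-1}})=\mathrm{vol}(\mathcal{I})\ne0$, so vanishing cannot come from term-by-term vanishing.

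The paper's argument is not geometric but rather a character-incompatibility argument. One restricts the functional $T^{*}_{w^{-1},\rconj{w}\chi^{-1}}\Lambda_{\chi}$ to the subspace of $\mathrm{I}(\rconj{w}\chi^{-1})$ consisting of functions supported on $\widetilde{B}_{n,*}H$ (which contains $\varphi_{e,\rconj{w}\chi^{-1}}$ by \eqref{eq:Iwahori contained in B*H}). This subspace can be identified with $\mathcal{S}^{\mathrm{gen}}(\lmodulo{\mathfrak{s}(H\cap B_{n,*})}{\widetilde{H}},\rconj{w}\chi^{-1})$, and the restricted functional lifts to an $(H,\psi',\psi)$-functional on $\mathcal{S}^{\mathrm{gen}}(\widetilde{H})$, hence is a scalar multiple of $l_H$ by Frobenius reciprocity. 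But $l_H$ is invariant under left translation by $\mathfrak{s}(H\cap B_{n,*})$, whereas the restricted functional factors through the quotient by $\rconj{w}\chi^{-1}$; these are incompatible unless $\rconj{w}\chi^{-1}$ is trivial on $\mathfrak{s}(H\cap B_{n,*})$, which by \eqref{eq:assumptions on character for H functional} holds exactly when $w\in W_{\Sp_k}$. This is the missing idea: the obstruction is the nontriviality of $\rconj{w}\chi^{-1}|_{H\cap B_{n,*}}$, not any support disjointness.
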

Using the fact that $c_{w_0}(\rconj{w_0w}\chi)=c_{w_0}(\rconj{w}\chi^{-1})$, we obtain (this is \cite[(48)]{Yia})
\begin{align*}
\Lambda_{\chi}(\mathfrak{s}(t_{-\lambda})\varphi_{K,\chi^{-1}})=Q^{-1}\sum_{w\in W_{\Sp_k}}\prod_{\setof{\alpha>0}{w\alpha>0}}c_{\alpha}(\chi^{-1})
A(w,\chi)\delta^{1/2}_{B_n}\ \rconj{w}\chi(\mathfrak{s}(t_{\lambda})).
\end{align*}

It remains to compute $A(w,\chi)$ for $w\in W_{\Sp_k}$. Let $\ell_{\Sp_k}(w)=l$ and write
$w=s'_1\cdot\ldots \cdot s'_l$, where $s'_i$ is either $s_{\alpha_k}$ or $s_{\alpha_i}s_{\alpha_{n-i}}$ with $1\leq i<k$.
Then
\begin{align*}
A(w,\chi)=A(s'_{1},\rconj{s'_2\cdot\ldots\cdot s'_l}\chi)\ldots A(s'_{l-1},\rconj{s'_l}\chi)A(s'_l,\chi).
\end{align*}
We will compute $A(s'_i,\chi)$ succinctly using Claim~\ref{claim:analog of theorem 3.4 of Casselman} and Corollary~\ref{corollary:3 specific computations of Shalika model meromorphic continuation}.
\begin{claim}\label{claim:coefficients A for w in Spk}
We have $A(s_{\alpha_k},\chi)=y_{\alpha_k}(\chi^{-1})$ and
$A(s_{\alpha_i}s_{\alpha_{n-i}},\chi)=y_{\alpha_i}(\chi^{-1})$.
\end{claim}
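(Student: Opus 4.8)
The plan is to evaluate the defining identity \eqref{eq:def of A(w, chi) for Shalika model} for the scalar $A(w,\chi)$ on carefully chosen Iwahori-fixed vectors. Both $s_{\alpha_k}$ and $s_{\alpha_i}s_{\alpha_{n-i}}$ are involutions, so $w^{-1}=w$ and by \eqref{eq:A(w,chi) as a quotient} one has $A(w,\chi)=\mathrm{vol}(\mathcal{I})^{-1}\Lambda_{\chi}(T_{w,\rconj{w}\chi^{-1}}\varphi_{e,\rconj{w}\chi^{-1}})$. First I would expand $T_{w,\rconj{w}\chi^{-1}}\varphi_{e,\rconj{w}\chi^{-1}}$ in the Iwahori basis $\{\varphi_{u,\chi^{-1}}\}_{u\in W}$ of $\mathrm{I}(\chi^{-1})^{\mathcal{I}}$ by means of Claim~\ref{claim:analog of theorem 3.4 of Casselman}, keeping careful track of the genuine character each intertwining operator and basis element lives over. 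For $w=s_{\alpha_k}$ a single application of Claim~\ref{claim:analog of theorem 3.4 of Casselman} produces only $\varphi_{e,\chi^{-1}}$ and $\varphi_{s_{\alpha_k},\chi^{-1}}$; applying $\Lambda_{\chi}$ and substituting \eqref{eq:computation of functional for phi I with t} at $\lambda=0$ together with \eqref{eq:computation of functional for phi (k,k+1)} (valid over the whole range of $\chi$ by Corollary~\ref{corollary:3 specific computations of Shalika model meromorphic continuation}) exhibits $A(s_{\alpha_k},\chi)$ as an explicit rational expression; a short manipulation using $\epsilon_{\varrho,k}^2=1$, the Weil-factor identities \eqref{eq:Weil factor identities}, and $\chi^{1/2}(a_{\alpha_k})\chi^{-1/2}(a_{\alpha_k})=1$ identifies it with $y_{\alpha_k}(\chi^{-1})$.

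For $w=s_{\alpha_i}s_{\alpha_{n-i}}$ ($1\le i<k$) the roots $\alpha_i$ and $\alpha_{n-i}$ involve disjoint pairs of coordinates, so $s_{\alpha_i}$ and $s_{\alpha_{n-i}}$ commute, $\ell(s_{\alpha_i}s_{\alpha_{n-i}})=2$, and I would factor $T_{s_{\alpha_i}s_{\alpha_{n-i}},\rho}=T_{s_{\alpha_i},\rconj{s_{\alpha_{n-i}}}\rho}\circ T_{s_{\alpha_{n-i}},\rho}$ and apply Claim~\ref{claim:analog of theorem 3.4 of Casselman} twice. Now the expansion of $T_{w,\rconj{w}\chi^{-1}}\varphi_{e,\rconj{w}\chi^{-1}}$ also involves $\varphi_{s_{\alpha_i},\chi^{-1}}$ and $\varphi_{s_{\alpha_{n-i}},\chi^{-1}}$, whose $\Lambda_{\chi}$-values are not among the three computed in Lemma~\ref{lemma:3 specific computations of Shalika model}. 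To circumvent this I would rerun the same computation a second time with $\varphi_{s_{\alpha_i}s_{\alpha_{n-i}},\rconj{w}\chi^{-1}}$ in place of $\varphi_{e,\rconj{w}\chi^{-1}}$ in \eqref{eq:A(w,chi) as a quotient} — admissible for generic $\chi$ since $\Lambda_{\rconj{w}\chi}(\varphi_{s_{\alpha_i}s_{\alpha_{n-i}},\rconj{w}\chi^{-1}})$ is a nonzero rational function by \eqref{eq:computation of functional for phi (i,i+1)()}. In both resulting identities the quantities $\Lambda_{\chi}(\varphi_{s_{\alpha_i},\chi^{-1}})$ and $\Lambda_{\chi}(\varphi_{s_{\alpha_{n-i}},\chi^{-1}})$ enter only through their sum, and with the same coefficient — this is exactly where one uses the constraint \eqref{eq:assumptions on character for H functional}, which forces $\chi(a_{\alpha_i})=\chi(a_{\alpha_{n-i}})$ so that the relevant $c$-factors coincide. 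Hence one obtains a $2\times2$ linear system for $A(s_{\alpha_i}s_{\alpha_{n-i}},\chi)$ and for that sum; solving it and simplifying gives $A(s_{\alpha_i}s_{\alpha_{n-i}},\chi)=c_{\alpha_i}(\chi^{-1})c_{\alpha_i}(\chi)=y_{\alpha_i}(\chi^{-1})$. Since $A(w,\chi)$ is rational in $\chi$ (Proposition~\ref{proposition:meromorphic continuation and a basis of Shalika functionals}), the identity, once checked on a dense subset, holds for all $\chi$ satisfying \eqref{eq:assumptions on character for H functional}.

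The routine parts are the bookkeeping of characters through the successive reflections and the final rational-function simplifications, which, as in the $s_{\alpha_k}$ case, collapse to elementary algebra once the coroot values $\chi(a_{\alpha_i})$, $\chi^{-1/2}(a_{\alpha_k})$ are substituted. The main obstacle I expect is the short-root case: the appearance of the extra basis vectors $\varphi_{s_{\alpha_i},\chi^{-1}}$, $\varphi_{s_{\alpha_{n-i}},\chi^{-1}}$, which lie outside the scope of Lemma~\ref{lemma:3 specific computations of Shalika model} and must be eliminated, either by the two-test-vector device above or, alternatively, by a direct unfolding of $\Lambda_{\chi}(\varphi_{s_{\alpha_i},\chi^{-1}})$ via \eqref{int:integral on B_n*H} in the spirit of the proof of \eqref{eq:computation of functional for phi (k,k+1)}; ensuring the coefficients align so that only the sum of these two values intervenes is the one step that genuinely relies on the symmetry built into \eqref{eq:assumptions on character for H functional}.
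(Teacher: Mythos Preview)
Your treatment of $A(s_{\alpha_k},\chi)$ matches the paper exactly. For the short root $A(s_{\alpha_i}s_{\alpha_{n-i}},\chi)$ your route is correct in principle but genuinely different from the paper's. The paper does not set up a $2\times2$ system to eliminate the unknown values $\Lambda_\chi(\varphi_{s_{\alpha_i},\chi^{-1}})$ and $\Lambda_\chi(\varphi_{s_{\alpha_{n-i}},\chi^{-1}})$; instead it computes them outright using Claim~\ref{claim:w not in Spk does not contribute}. Since $s_{\alpha_i}\notin W_{\Sp_k}$, that claim gives $A(s_{\alpha_i},\chi)=0$, i.e.\ $\Lambda_\chi$ annihilates $T_{s_{\alpha_i}}\varphi_{e,\rconj{s_{\alpha_i}}\chi^{-1}}$; applying $\Lambda_\chi$ to \eqref{eq:applying 3.4 to alpha} with $\alpha=\alpha_i$ then yields $\Lambda_\chi(\varphi_{s_{\alpha_i},\chi^{-1}})=\mathrm{vol}(\mathcal{I})\,q\,(1-c_{\alpha_i}(\rconj{s_{\alpha_i}}\chi^{-1}))$ directly, and likewise for $\alpha_{n-i}$. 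Plugging these and \eqref{eq:computation of functional for phi (i,i+1)()} into the single equation coming from \eqref{eq:T_w for short roots} gives $A(s_{\alpha_i}s_{\alpha_{n-i}},\chi)$ immediately. This is cleaner than your elimination scheme: it uses one test vector rather than two, avoids computing $\Lambda_{\rconj{w}\chi}(\varphi_{s_{\alpha_i}s_{\alpha_{n-i}},\rconj{w}\chi^{-1}})$, and does not require checking that the $2\times2$ determinant is generically nonzero. The trade-off is that the paper's argument depends on Claim~\ref{claim:w not in Spk does not contribute}, which you did not invoke; your approach is self-contained relative to Lemma~\ref{lemma:3 specific computations of Shalika model} alone, at the cost of heavier algebra.
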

We conclude that for any $w\in W_{\Sp_k}$,
\begin{align*}
A(w,\chi)=\prod_{\setof{\alpha\in\Sigma_{\Sp_k}^+}{w\alpha<0}}y_{\alpha}(\chi^{-1}).
\end{align*}
Plugging this into the last expression for $\Lambda_{\chi}(\mathfrak{s}(t_{-\lambda})\varphi_{K,\chi^{-1}})$ yields the theorem.
To obtain the compact form with $\beta(\chi^{-1})$ one can apply manipulations similar to \cite[(75)--(78)]{Yia}.
\end{proof}
\begin{proof}[Proof of Claim~\ref{claim:w not in Spk does not contribute}]
According to \eqref{eq:A(w,chi) as a quotient}
it is enough to show
that $T_{w^{-1},\rconj{w}\chi^{-1}}^*\Lambda_{\chi}$ vanishes on
$\varphi_{e,\rconj{w}\chi^{-1}}$. By \eqref{eq:Iwahori contained in B*H}
the support of $\varphi_{e,\rconj{w}\chi^{-1}}$ is contained in $\widetilde{B}_{n,*}H$.
Let $\Lambda$ denote the restriction of $T_{w^{-1},\rconj{w}\chi^{-1}}^*\Lambda_{\chi}$ to the subspace of functions in
$\mathrm{I}(\rconj{w}\chi^{-1})$, whose support is contained in $\widetilde{B}_{n,*}H$.
We prove $\Lambda=0$. The argument was adapted from \cite[Proposition~5.2]{Yia}.

Consider the space $\mathcal{S}^{\mathrm{gen}}(p^{-1}(B_{n,*}H),\delta^{1/2}_{B_n}\ \rconj{w}\chi^{-1})$ of complex-valued locally constant genuine functions $f$ on $p^{-1}(B_{n,*}H)$ such that
\begin{align}\label{eq:condition for BH w chi}
f(\mathfrak{s}(b_*)h)=\delta^{1/2}_{B_n}\ \rconj{w}\chi^{-1}(\mathfrak{s}(b_*))f(h),\qquad\forall b_*\in B_{n,*},h\in H.
\end{align}
Since $B_{n,*}H$ is open in $G_n$ (Claim~\ref{claim:B_{n,*}H is open}), $\Lambda$ is the restriction of $T_{w^{-1},\rconj{w}\chi^{-1}}^*\Lambda_{\chi}$ to this subspace.

Also consider $\mathcal{S}^{\mathrm{gen}}(\lmodulo{\mathfrak{s}(H\cap B_{n,*})}{\widetilde{H}},\rconj{w}\chi^{-1})$, where the functions satisfy \eqref{eq:condition for BH w chi} on $b_*\in B_{n,*}\cap H$, and note that
$\delta_{B_n}$ is trivial there.

These spaces are isomorphic: in one direction use restriction to $\widetilde{H}$, conversely one can extend a function $f$ in $\mathcal{S}^{\mathrm{gen}}(\lmodulo{\mathfrak{s}(H\cap B_{n,*})}{\widetilde{H}},\rconj{w}\chi^{-1})$ to the former space using
\begin{align*}
\mathfrak{s}(b_*)\mathfrak{h}(h)\mapsto \delta^{1/2}_{B_n}\ \rconj{w}\chi^{-1}(\mathfrak{s}(b_*))f(\mathfrak{h}(h))
\end{align*}
(to see this is well defined use \eqref{eq:the section h is almost a splitting}).

Thus we may regard $\Lambda$ as a functional on $\mathcal{S}^{\mathrm{gen}}(\lmodulo{\mathfrak{s}(H\cap B_{n,*})}{\widetilde{H}},\rconj{w}\chi^{-1})$. Then it lifts to a functional on $\mathcal{S}^{\mathrm{gen}}(\widetilde{H})$, by virtue of the projection
\begin{align*}
f^{B_{n,*}\cap H,\rconj{w}\chi^{-1}}(h)=\int_{B_{n,*}\cap H}\rconj{w}\chi(\mathfrak{s}(b_*))f(\mathfrak{s}(b_*)h)\, db_*,\qquad h\in \widetilde{H},f\in \mathcal{S}^{\mathrm{gen}}(\widetilde{H}).
\end{align*}
Furthermore, since $\Lambda$ is an $(H,\psi',\psi)$-functional, it is a scalar multiple of the functional
$l_H$ (defined after Claim~\ref{claim:h and s agree on H cap B_n*}).
Indeed, this follows from the Frobenius reciprocity: if we let $\iota$ be the identity character of
$\mu_2$ and regard $\gamma_{\psi'}\otimes\psi\otimes\iota$ as the genuine character of $\widetilde{H}$
taking $\mathfrak{h}(h)$ to $\gamma_{\psi'}(c_h)\psi(u_h)$, both $\Lambda$ and
$l_H$ belong to
\begin{align*}
\Hom_{\widetilde{H}}(\mathcal{S}^{\mathrm{gen}}(\widetilde{H}),\gamma_{\psi'}\otimes\psi\otimes\iota)
=\Hom_{\widetilde{H}}(\ind_{\mu_2}^{\widetilde{H}}(\iota),\gamma_{\psi'}\otimes\psi\otimes\iota)
=\Hom_{\mu_2}(\iota,\iota).
\end{align*}

Finally for $b_*\in B_{n,*}\cap H$ and $f\in \mathcal{S}^{\mathrm{gen}}(\widetilde{H})$, let $L(b_*)f(h)=f(\mathfrak{s}(b_*)h)$. Since $H$ is unimodular we see that
$l_H(L(b_*)f)=l_H(f)$, so assuming $\Lambda\ne0$, the same holds for $\Lambda$. But this contradicts the fact that $\Lambda$ factors through $\mathcal{S}^{\mathrm{gen}}(\lmodulo{\mathfrak{s}(H\cap B_{n,*})}{\widetilde{H}},\rconj{w}\chi^{-1})$, because for
$w\in W\setdifference W_{\Sp_k}$, $\rconj{w}\chi^{-1}$ is not trivial on $\mathfrak{s}(H\cap B_{n,*})$.
\end{proof}
\begin{proof}[Proof of Claim~\ref{claim:coefficients A for w in Spk}]
First note that by Claim~\ref{claim:analog of theorem 3.4 of Casselman}, for any $\alpha\in\Delta_{G_n}$,
\begin{align}\label{eq:applying 3.4 to alpha}
&T_{s_{\alpha}}\varphi_{e,\rconj{s_{\alpha}}\chi^{-1}}= (c_{\alpha}(\rconj{s_{\alpha}}\chi^{-1})-1)\varphi_{e,\chi^{-1}}+q^{-1}\varphi_{s_{\alpha},\chi^{-1}}.
\end{align}

Consider $A(s_{\alpha_k},\chi)$. Assume that $\mathfrak{w}$ represents $s_{\alpha_k}$. We apply $\Lambda_{\chi}$ to
\eqref{eq:applying 3.4 to alpha} with $\alpha=\alpha_k$.
By Corollary~\ref{corollary:3 specific computations of Shalika model meromorphic continuation},
\begin{align*}
\Lambda_{\chi}(T_{s_{\alpha_k}}\varphi_{e,\rconj{s_{\alpha_k}}\chi^{-1}})
=&\mathrm{vol}(\mathcal{I})(c_{\alpha_k}(\rconj{s_{\alpha_k}}\chi^{-1})-1)
\\&+q^{-1}\mathrm{vol}(\mathcal{I}\mathfrak{w}\mathcal{I})
(1-q^{-1}+(-\varrho,\varpi)_2(\varpi,\varpi)_2^{k-1}q^{-1/2}\chi^{-1/2}(a_{\alpha_k})).
\end{align*}
Since $c_{\alpha_k}(\rconj{s_{\alpha_k}}\chi^{-1})=c_{\alpha_k}(\chi)$ and
$\mathrm{vol}(\mathcal{I}\mathfrak{w}\mathcal{I})=q\cdot\mathrm{vol}(\mathcal{I})$, we find that
\begin{align*}
A(s_{\alpha_k},\chi)&=c_{\alpha_k}(\chi)-q^{-1}+(-\varrho,\varpi)_2(\varpi,\varpi)_2^{k-1}q^{-1/2}\chi^{-1/2}(a_{\alpha_k})\\
&=-\chi^{-1}(a_{\alpha_k})\frac{(1+q^{-1/2}\epsilon_{\varrho,k}\chi^{-1/2}(a_{\alpha_k}))(1-q^{-1/2}\epsilon_{\varrho,k}\chi^{1/2}(a_{\alpha_k}))}{1-\chi^{-1}(a_{\alpha_k})}=y_{\alpha_k}(\chi^{-1}).
\end{align*}

We turn to $A(s_{\alpha_i}s_{\alpha_{n-i}},\chi)$.
Applying Claim~\ref{claim:analog of theorem 3.4 of Casselman} twice,
\begin{align}\label{eq:T_w for short roots}
T_{s_{\alpha_i}s_{\alpha_{n-i}}}\varphi_{e,\rconj{s_{\alpha_i}s_{\alpha_{n-i}}}\chi^{-1}}=& (c_{\alpha_{n-i}}(\rconj{s_{\alpha_{n-i}}}\chi^{-1})-1)(c_{\alpha_i}(\rconj{s_{\alpha_i}s_{\alpha_{n-i}}}\chi^{-1})-1)\varphi_{e,\chi^{-1}}\\\nonumber
&+q^{-1}(c_{\alpha_i}(\rconj{s_{\alpha_i}s_{\alpha_{n-i}}}\chi^{-1})-1)\varphi_{s_{\alpha_{n-i}},\chi^{-1}}\\\nonumber
&+q^{-1}(c_{\alpha_{n-i}}(\rconj{s_{\alpha_{n-i}}}\chi^{-1})-1)\varphi_{s_{\alpha_i},\chi^{-1}}\\\nonumber
&+q^{-2}\varphi_{s_{\alpha_i}s_{\alpha_{n-i}},\chi^{-1}}.
\end{align}
As in \cite{Yia} the value of $\Lambda_{\chi}$ on the second and third summands is already determined:
Claim~\ref{claim:w not in Spk does not contribute} implies that $\Lambda_{\chi}$ vanishes on $T_{s_{\alpha_i}}\varphi_{e,\rconj{s_{\alpha_i}}\chi^{-1}}$ hence when we apply $\Lambda_{\chi}$ to
\eqref{eq:applying 3.4 to alpha} with $\alpha=\alpha_i$,
\begin{align*}
\Lambda_{\chi}(\varphi_{s_{\alpha_i},\chi^{-1}})=
q(1-c_{\alpha_i}(\rconj{s_{\alpha_i}}\chi^{-1}))\Lambda_{\chi}(\varphi_{e,\chi^{-1}})
=\mathrm{vol}(\mathcal{I})q(1-c_{\alpha_i}(\rconj{s_{\alpha_i}}\chi^{-1})).
\end{align*}
Similarly
\begin{align*}
\Lambda_{\chi}(\varphi_{s_{\alpha_{n-i}},\chi^{-1}})
=\mathrm{vol}(\mathcal{I})q(1-c_{\alpha_{n-i}}(\rconj{s_{\alpha_{n-i}}}\chi^{-1})).
\end{align*}
Also by Corollary~\ref{corollary:3 specific computations of Shalika model meromorphic continuation},
\begin{align*}
&\Lambda_{\chi}(\varphi_{s_{\alpha_i}s_{\alpha_{n-i}},\chi^{-1}})=
\mathrm{vol}(\mathcal{I}\mathfrak{w}\mathcal{I})
(1-q^{-1}+q^{-2}+(1-q^{-1})^2\frac{\chi^{-1}(a_{\alpha_i})}{1-\chi^{-1}(a_{\alpha_i})}).
\end{align*}
Note that now $\mathrm{vol}(\mathcal{I}\mathfrak{w}\mathcal{I})=q^2\mathrm{vol}(\mathcal{I})$.
Plugging these identities into \eqref{eq:T_w for short roots} and using the fact that $c_{\alpha_{n-i}}(\chi)=c_{\alpha_i}(\chi)$ (because $s_{n-i}=-s_{i+1}$ and $s_{n-i+1}=-s_i$) yields
\begin{align*}
&\Lambda_{\chi}(T_{s_{\alpha_i}s_{\alpha_{n-i}}}\varphi_{e,\rconj{s_{\alpha_i}s_{\alpha_{n-i}}}\chi^{-1}})\\&=\mathrm{vol}(\mathcal{I})(
-(c_{\alpha_i}(\chi)-1)^2+1-q^{-1}+q^{-2}+(1-q^{-1})^2\frac{\chi^{-1}(a_{\alpha_i})}{1-\chi^{-1}(a_{\alpha_i})}).
\end{align*}
We see that
\begin{align*}
A(s_{\alpha_i}s_{\alpha_{n-i}},\chi)=&-\chi^{-1}(a_{\alpha_i})\frac{1-q^{-1}\chi(a_{\alpha_i})}
{1-q^{-1}\chi^{-1}(a_{\alpha_i})}c_{\alpha_i}(\chi^{-1})^2=c_{\alpha_i}(\chi)c_{\alpha_i}(\chi^{-1})=y_{\alpha_i}(\chi^{-1}),
\end{align*}
completing the proof of the claim.
\end{proof}

The following corollary strengthens Proposition~\ref{proposition:meromorphic continuation and a basis of Shalika functionals}.
\begin{corollary}\label{corollary:existence of metaplectic Shalika functional}
Let $\chi$ be a character satisfying \eqref{eq:assumptions on character for H functional}. Then
$\Lambda_{\chi}$ is a nonzero $(H,\psi',\psi)$-functional on
$\mathrm{I}(\chi^{-1})$. Moreover it depends polynomially on $\chi$, in the sense that for a polynomial section $f_{\chi^{-1}}$, $\Lambda_{\chi}(f_{\chi^{-1}})$ belongs to  $\C[q^{-2s_1},\ldots,q^{-2s_k}]$.
\end{corollary}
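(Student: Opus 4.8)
The plan is to upgrade the rationality in Proposition~\ref{proposition:meromorphic continuation and a basis of Shalika functionals} to nonvanishing and polynomiality, using the explicit Casselman--Shalika formula of Theorem~\ref{theorem:Casselman--Shalika formula for metaplectic H} together with Claim~\ref{claim:Shalika or H functional are determined on the torus}. First, for nonvanishing: since $\mathrm{I}(\chi^{-1})$ is unramified and $\varphi_{K,\chi^{-1}}$ is its normalized unramified vector, it suffices to show that the function $g\mapsto\Lambda_\chi(g\varphi_{K,\chi^{-1}})$ is not identically zero. By Claim~\ref{claim:Shalika or H functional are determined on the torus}\eqref{claim Shalika or H determined:part 1} this function is determined by its values on $\mathfrak{s}(t_{-\lambda})$ with $\lambda\in\Z^k_\geq$, and by Theorem~\ref{theorem:Casselman--Shalika formula for metaplectic H} these values are given explicitly for $\lambda\in2\Z^k_+$. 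Taking $\lambda=0$ we get
\begin{align*}
\Lambda_\chi(\varphi_{K,\chi^{-1}})=Q^{-1}\sum_{w\in W_{\Sp_k}}\prod_{\setof{\alpha>0}{w\alpha>0}}c_\alpha(\chi^{-1})\prod_{\setof{\alpha\in\Sigma_{\Sp_k}^+}{w\alpha<0}}y_\alpha(\chi^{-1}),
\end{align*}
which is a nonzero rational function of $\chi$ (as a sum over the Weyl group of products of the $c_\alpha$ and $y_\alpha$, the leading $w=e$ term $\prod_{\alpha\in\Sigma_{\Sp_k}^+}c_\alpha(\chi^{-1})$ being generically nonzero and not cancelled), hence $\Lambda_\chi$ is a nonzero functional for $\chi$ in general position; combined with the rational (hence everywhere-defined away from a hypersurface) continuation of Proposition~\ref{proposition:meromorphic continuation and a basis of Shalika functionals}, one argues that after the normalization implicit in \eqref{eq:computation of functional for phi I with t} (where $\varphi_{e,\chi^{-1}}\mapsto\mathrm{vol}(\mathcal{I})\neq0$ identically on $2\Z^k_+$, in particular at $\lambda=0$) the functional cannot degenerate to zero, since $\Lambda_\chi(\varphi_{e,\chi^{-1}})=\mathrm{vol}(\mathcal{I})$ is a nonzero constant for all relevant $\chi$.

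For polynomiality: one must show that for a polynomial section $f_{\chi^{-1}}$ the value $\Lambda_\chi(f_{\chi^{-1}})$ lies in $\C[q^{-2s_1},\dots,q^{-2s_k}]$, i.e.\ has no poles and involves no negative powers of the $q^{-2s_i}$. The key observation is that the integral \eqref{int:integral on B_n*H} defining $\Lambda_\chi(P_{\chi^{-1}}(f))$, when $f$ is supported on a fixed compact subset, is an integral over $B_{n,*}H\cap(\text{fixed compact})$, and unfolding it along $\lmodulo{H\cap B_{n,*}}{H}$ as in the proof of Lemma~\ref{lemma:3 specific computations of Shalika model} produces, for $\Re(s_i)$ in the convergence cone, a sum of geometric series in the variables $q^{-2s_i}$ with \emph{nonnegative} exponents; summing these geometric series yields a rational function whose denominators are products of factors $1-q^{-2(s_{i+1}-s_i)}$ and $1-\chi(a_\alpha)$ coming from short roots. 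The point is that these denominators are exactly cancelled: indeed, comparison with the explicit computations \eqref{eq:computation of functional for phi I with t}--\eqref{eq:computation of functional for phi (i,i+1)()} and with Theorem~\ref{theorem:Casselman--Shalika formula for metaplectic H} shows that on the basis $\{\varphi_{w,\chi^{-1}}\}$ the values are polynomial in $q^{-2s_i}$ except for the short-root terms $\Lambda_\chi(\varphi_{s_{\alpha_i}s_{\alpha_{n-i}},\chi^{-1}})$, and in the Casselman--Shalika formula the apparent poles in $c_\alpha(\chi^{-1})c_\alpha(\chi)$ at $\chi(a_\alpha)=1$ and in the long-root factors $y_\alpha$ at $\chi(a_\alpha)=1$ cancel against corresponding zeros after summing over $W_{\Sp_k}$ (this is precisely the manipulation producing the compact form with $\beta(\chi^{-1})$, where $c_{w_0}(\chi^{-1})\beta(\chi^{-1})^{-1}$ times an alternating sum over $W_{\Sp_k}$ is manifestly polynomial once one checks the alternating sum is divisible by $\beta(\chi^{-1})$). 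Since any polynomial section is a finite $\C[q^{\pm2s_i}]$-combination of the $\varphi_{w,\chi^{-1}}$ and translates thereof, it remains to check that the only negative powers $q^{2s_i}$ that could a priori appear (from $\chi^{-1/2}(a_\alpha)=q^{-2s_i}$ versus $\chi^{1/2}(a_\alpha)=q^{2s_i}$, or from $\rconj w\chi$ for various $w$) in fact cancel; this is a bookkeeping argument using $q^{2s_i}=q^{-2s_{n-i+1}}$ from \eqref{eq:assumptions on character for H functional}, which allows every occurrence of $q^{2s_i}$ to be rewritten as $q^{-2s_{n-i+1}}$, so that all exponents are nonpositive in a consistent set of coordinates.

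The main obstacle I expect is the polynomiality claim, specifically verifying the cancellation of the short-root denominators $1-q^{-2(s_{i+1}-s_i)}$ (and $1-\chi(a_\alpha)$) after summation over $W_{\Sp_k}$ in the Casselman--Shalika formula, and checking that no net negative powers of $q^{-2s_i}$ survive. The clean way to handle this is to invoke the compact form of Theorem~\ref{theorem:Casselman--Shalika formula for metaplectic H}: write $\Lambda_\chi(\mathfrak{s}(t_{-\lambda})\varphi_{K,\chi^{-1}})=Q^{-1}c_{w_0}(\chi^{-1})\beta(\chi^{-1})^{-1}\sum_{w\in W_{\Sp_k}}(-1)^{\ell_{\Sp_k}(w)}\beta(\rconj w\chi^{-1})\delta^{1/2}_{B_n}\,\rconj w\chi(\mathfrak{s}(t_\lambda))$, observe that $c_{w_0}(\chi^{-1})$ clears the short-root denominators of $\beta(\chi^{-1})^{-1}$ up to the numerator factors $1-q^{-1/2}\epsilon_{\varrho,k}\chi^{-1/2}(a_\alpha)$ which are polynomial, and that the alternating Weyl-sum $\sum_{w}(-1)^{\ell_{\Sp_k}(w)}\beta(\rconj w\chi^{-1})(\cdots)$ is divisible by the Weyl-denominator, hence by the remaining factors of $\beta(\chi^{-1})$, by the standard argument that an alternating sum vanishes on the wall $\rconj{s_\alpha}\chi=\chi$. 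Then using $q^{2s_i}=q^{-2s_{n-i+1}}$ to normalize coordinates finishes the argument that the result lies in $\C[q^{-2s_1},\dots,q^{-2s_k}]$, and since a polynomial section restricted to $\mathfrak{s}(t)\kappa(K)$ is a polynomial combination of the translated basis vectors, the general case follows by linearity.
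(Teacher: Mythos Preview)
Your nonvanishing argument via $\Lambda_\chi(\varphi_{e,\chi^{-1}})=\mathrm{vol}(\mathcal{I})$ (from Corollary~\ref{corollary:3 specific computations of Shalika model meromorphic continuation}) is correct and is exactly the paper's argument; the detour through $\Lambda_\chi(\varphi_{K,\chi^{-1}})$ and ``the $w=e$ term not being cancelled'' is unnecessary and not rigorously justified.

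For the polynomiality, however, your approach has a genuine gap and differs from the paper's. You aim to verify pole-freeness directly on the Iwahori basis $\{\varphi_{w,\chi^{-1}}\}_{w\in W}$, but Lemma~\ref{lemma:3 specific computations of Shalika model} computes $\Lambda_\chi$ only on $\varphi_e$, $\varphi_{s_{\alpha_k}}$, and $\varphi_{s_{\alpha_i}s_{\alpha_{n-i}}}$ --- a small fraction of the $|W|=n!$ basis vectors once $k\ge 2$ --- so you simply do not have the values needed. Your Weyl-denominator cancellation argument addresses only the alternating sum for the spherical vector $\varphi_{K,\chi^{-1}}$ in Theorem~\ref{theorem:Casselman--Shalika formula for metaplectic H}; it says nothing about the individual $\varphi_w$, and indeed \eqref{eq:computation of functional for phi (i,i+1)()} visibly carries the denominator $1-\chi^{-1}(a_{\alpha_i})$. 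So the assertion ``on the basis $\{\varphi_{w,\chi^{-1}}\}$ the values are polynomial except for the short-root terms'' is unsupported, and the final step of writing an arbitrary polynomial section as a combination of the $\varphi_w$ does not yield the conclusion.

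The paper's argument avoids this entirely. The Casselman--Shalika formula together with Claim~\ref{claim:Shalika or H functional are determined on the torus} controls $\Lambda_\chi$ on the whole $\widetilde{G}_n$-span of $\varphi_{K,\chi^{-1}}$, and then Claim~\ref{claim:T_w isomorphism} says that for $\chi$ outside a proper closed subset this span is all of $\mathrm{I}(\chi^{-1})$; since $\Lambda_\chi(f_{\chi^{-1}})$ is already known to be rational (Proposition~\ref{proposition:meromorphic continuation and a basis of Shalika functionals}), this forces the absence of poles. The irreducibility input from Claim~\ref{claim:T_w isomorphism} is the key idea you are missing --- it replaces the need to compute $\Lambda_\chi$ on every Iwahori basis vector.
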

\begin{proof}
According to Proposition~\ref{proposition:meromorphic continuation and a basis of Shalika functionals},
the functional $\Lambda_{\chi}$ is defined on $\mathrm{I}(\chi^{-1})$ by means of meromorphic continuation. It is
nonzero because it does not vanish on $\varphi_{e,\chi^{-1}}$ (Corollary~\ref{corollary:3 specific computations of Shalika model meromorphic continuation}).
It remains to show it does not have a pole. Claim~\ref{claim:Shalika or H functional are determined on the torus} and
Theorem~\ref{theorem:Casselman--Shalika formula for metaplectic H} show that there are
no poles on the $\widetilde{G}_n$-space spanned by $\varphi_{K,\chi^{-1}}$. Claim~\ref{claim:T_w isomorphism} implies that
for almost all $\chi$ (except for a discrete subset of $\C^k$), this space is all of $\mathrm{I}(\chi^{-1})$.
Since $\Lambda_{\chi}$ is determined by meromorphic continuation, the assertion holds.
\end{proof}
\begin{remark}\label{remark:Shalika functional vanishes on G-span of unramified vector}
The representation $\mathrm{I}(\chi^{-1})$ might be reducible. In this case it may happen that
\begin{align*}
\prod_{\setof{\alpha>0}{w\alpha>0}}c_{\alpha}(\chi^{-1})
\prod_{\setof{\alpha\in\Sigma_{\Sp_k}^+}{w\alpha<0}}y_{\alpha}(\chi^{-1})=0,\qquad\forall w\in W_{\Sp_k},
\end{align*}
and consequently $\Lambda_{\chi}$ will vanish on the $\widetilde{G}_n$-space spanned by $\varphi_{K,\chi^{-1}}$.
\end{remark}

We finally relate between the $(H,\psi',\psi)$-functional and the
metaplectic $(\psi',\psi)$-Shalika functional, on the unramified normalized element of $\mathrm{I}(\chi^{-1})$.
\begin{corollary}\label{corollary:Casselman--Shalika formula for metaplectic Shalika}
Assume $\chi$ satisfies \eqref{eq:assumptions on character for H functional} and let $l$ be
the functional on $\mathrm{I}(\chi^{-1})$ given by
\begin{align*}
l(f_{\chi^{-1}})=\Lambda_{\chi}(\mathfrak{s}(w)f_{\chi^{-1}}).
\end{align*}
\begin{enumerate}[leftmargin=*]
\item $l$ is a nonzero metaplectic $(\psi',\psi)$-Shalika functional.
\item $l$ depends polynomially on $\chi$.
\item $l(\mathfrak{s}(t_{\lambda})\varphi_{K,\chi^{-1}})=\Lambda_{\chi}(\mathfrak{s}(t_{-\lambda}))$ for all $\lambda\in\Z^k_{\geq}$. In particular $l(\mathfrak{s}(t_{\lambda})\varphi_{K,\chi^{-1}})$ vanishes on $\lambda\in\Z^k_{\geq}\setdifference2\Z^k_+$. Consequently,
Theorem~\ref{theorem:Casselman--Shalika formula for metaplectic H} gives the Casselman--Shalika formula for the metaplectic
Shalika model.
\end{enumerate}
\end{corollary}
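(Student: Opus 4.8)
Here the aim is to deduce all three assertions from the properties already established for the $(H,\psi',\psi)$-functional $\Lambda_\chi$ (Corollaries~\ref{corollary:existence of metaplectic Shalika functional} and~\ref{corollary:3 specific computations of Shalika model meromorphic continuation} and Theorem~\ref{theorem:Casselman--Shalika formula for metaplectic H}) through the dictionary of Claim~\ref{claim:H and Shalika functionals}, the one substantive observation being that $\omega$ lies in $K$. For assertion (1): by (the proof of) Claim~\ref{claim:H and Shalika functionals}, for any $(H,\psi',\psi)$-functional $\Lambda$ the map $f\mapsto\Lambda(\pi(\mathfrak{s}(\omega))f)$ is a metaplectic $(\psi',\psi)$-Shalika functional; applying this to $\Lambda_\chi$ gives exactly $l$, and $l\neq0$ because $\Lambda_\chi\neq0$ (Corollary~\ref{corollary:existence of metaplectic Shalika functional}) and $\pi(\mathfrak{s}(\omega))$ is invertible. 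Next note $\omega\in\GL_{2k}(\mathcal{O})=K$ (all entries of $\omega$ lie in $\{0,1\}$ and $\det\omega=\pm1$); writing $\omega=t_0\mathfrak{w}_\omega$ with $t_0\in T_n\cap K$ and $\mathfrak{w}_\omega\in\mathfrak{W}$ as in Remark~\ref{remark:computing conjugation t by w}, we have $\sigma(t_0,\mathfrak{w}_\omega)=1$ (\cite[\S~3, Theorem~7]{BLS}), $\mathfrak{s}$ agrees with $\kappa$ on $T_n\cap K$ and on $\mathfrak{W}$, and $\kappa$ is a homomorphism on $K$, so $\mathfrak{s}(\omega)=\mathfrak{s}(t_0)\mathfrak{s}(\mathfrak{w}_\omega)=\kappa(t_0)\kappa(\mathfrak{w}_\omega)=\kappa(\omega)$. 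Since right translation by $\mathfrak{s}(\omega)=\kappa(\omega)$ maps $\coprod_{t}\mathfrak{s}(t)\kappa(K)$ into itself and carries polynomial sections to polynomial sections, assertion (2) is inherited from the polynomial dependence of $\Lambda_\chi$.

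For assertion (3), $\mathfrak{s}(\omega)=\kappa(\omega)\in\widetilde{K}$ gives $\pi(\mathfrak{s}(\omega))\varphi_{K,\chi^{-1}}=\varphi_{K,\chi^{-1}}$. The crucial algebraic identity, verified by a block computation, is
\begin{align*}
\rconj{\omega}(t_\lambda)=\omega t_\lambda\omega^{-1}=t_{-\lambda}\,h_\lambda,\qquad h_\lambda:=\rconj{\omega}\big((\varpi^\lambda)^\triangle\big)=\diag(\varpi^\lambda,\varpi^{\lambda^*})\in H\cap T_n,
\end{align*}
where $\lambda^*=(\lambda_k,\ldots,\lambda_1)$. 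Using this together with $\pi(\mathfrak{s}(\omega))\varphi_{K,\chi^{-1}}=\varphi_{K,\chi^{-1}}$,
\begin{align*}
l(\mathfrak{s}(t_\lambda)\varphi_{K,\chi^{-1}})&=\Lambda_\chi\big(\mathfrak{s}(\omega)\mathfrak{s}(t_\lambda)\varphi_{K,\chi^{-1}}\big)\\
&=\epsilon\cdot\Lambda_\chi\big(\mathfrak{s}(\omega t_\lambda\omega^{-1})\,\mathfrak{s}(\omega)\varphi_{K,\chi^{-1}}\big)=\epsilon\cdot\Lambda_\chi\big(\mathfrak{s}(t_{-\lambda}h_\lambda)\varphi_{K,\chi^{-1}}\big),
\end{align*}
where $\epsilon\in\mu_2$ is the cocycle factor from the conjugation $\rconj{\mathfrak{s}(\omega)}\mathfrak{s}(t_\lambda)$ (Claim~\ref{claim:conjugation W and T} and~\eqref{eq:commutator on torus} via Remark~\ref{remark:computing conjugation t by w}). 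Because $t_{-\lambda}$ and $h_\lambda$ commute and $h_\lambda\in H$, writing $\mathfrak{s}(t_{-\lambda}h_\lambda)=\mathfrak{s}(h_\lambda)\mathfrak{s}(t_{-\lambda})$ up to a $\mu_2$-factor, replacing $\mathfrak{s}(h_\lambda)$ by $\mathfrak{h}(h_\lambda)$ up to a further $\mu_2$-factor, and applying the $H$-equivariance~\eqref{eq:equivariance props of a H psi' psi functional} of $\Lambda_\chi$ (with $c_{h_\lambda}=\varpi^\lambda$, $u_{h_\lambda}=I_n$), we obtain
\begin{align*}
l(\mathfrak{s}(t_\lambda)\varphi_{K,\chi^{-1}})=\epsilon_\lambda\,\gamma_{\psi'}\big(\varpi^{\lambda_1+\dots+\lambda_k}\big)\,\Lambda_\chi(\mathfrak{s}(t_{-\lambda})\varphi_{K,\chi^{-1}}),\qquad \epsilon_\lambda\in\mu_2.
\end{align*}

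When $\lambda\in2\Z^k$ all of these factors are trivial: the cocycle and commutator contributions are finite products of Hilbert symbols $(\blank,\blank)_2$ each having an argument in $F^{*2}\mathcal{O}^*$ (using Corollary~\ref{corollary:computing conjugation t by w for t in maximal abelian or in the center}, \eqref{eq:commutator on torus}, and Claim~\ref{claim:h and s agree on H cap B_n*}, the last since then $h_\lambda\in H\cap B_{n,*}$), and $\gamma_{\psi'}(\varpi^{\lambda_1+\dots+\lambda_k})=1$ as the exponent is even; hence $l(\mathfrak{s}(t_\lambda)\varphi_{K,\chi^{-1}})=\Lambda_\chi(\mathfrak{s}(t_{-\lambda})\varphi_{K,\chi^{-1}})$ for $\lambda\in2\Z^k_+$. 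For $\lambda\in\Z^k_{\geq}\setdifference2\Z^k_+$ the right-hand side vanishes by Theorem~\ref{theorem:Casselman--Shalika formula for metaplectic H}, so the left-hand side does too (it is a manifestly nonzero multiple of the right-hand side, and for $\lambda\notin\Z^k_+$ it already vanishes by Claim~\ref{claim:Shalika or H functional are determined on the torus}); thus the identity holds for all $\lambda\in\Z^k_{\geq}$. Combining it with Claim~\ref{claim:Shalika or H functional are determined on the torus}, which says the Shalika function $g\mapsto l(\pi(\mathfrak{s}(g))\varphi_{K,\chi^{-1}})$ is determined by its values at the $t_\lambda$, and with the explicit formula of Theorem~\ref{theorem:Casselman--Shalika formula for metaplectic H}, gives the Casselman--Shalika formula for the metaplectic Shalika model. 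The only place demanding care is the bookkeeping of the $\mu_2$-cocycle factors and the Weil index along $\omega t_\lambda\omega^{-1}=t_{-\lambda}h_\lambda$: one must confirm that they vanish on $2\Z^k$ (precisely the support of the formula) and that, off that lattice, the vanishing supplied by Theorem~\ref{theorem:Casselman--Shalika formula for metaplectic H} and Claim~\ref{claim:Shalika or H functional are determined on the torus} makes the undetermined sign $\epsilon_\lambda$ irrelevant; everything else is formal.
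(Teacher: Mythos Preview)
Your proof is correct and follows essentially the same approach as the paper's. The paper's proof is terser: it records only the identity $h\cdot{}^{\omega}t_{\lambda}=t_{-\lambda}$ with $h=\diag(\varpi^{-\lambda},J_k\varpi^{-\lambda}J_k)\in H$ (your $h_{\lambda}^{-1}$), writes $\mathfrak{h}(h)\,{}^{\mathfrak{s}(\omega)}\mathfrak{s}(t_{\lambda})=\epsilon\,\mathfrak{s}(t_{-\lambda})$, uses $\kappa(K)$-invariance of $\varphi_{K,\chi^{-1}}$ (via $\mathfrak{s}(\omega)=\kappa(\omega)$, which you justify explicitly) and the $H$-equivariance of $\Lambda_{\chi}$, and then observes---exactly as you do---that for $\lambda\in 2\Z_{+}^{k}$ the sign $\epsilon$ and the Weil factor $\gamma_{\psi'}(\det\varpi^{\lambda})$ are trivial, while off $2\Z_{+}^{k}$ the vanishing of the right-hand side (Theorem~\ref{theorem:Casselman--Shalika formula for metaplectic H}) or of both sides (Claim~\ref{claim:Shalika or H functional are determined on the torus}) makes the undetermined factors irrelevant. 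Parts (1) and (2), which the paper leaves implicit, you spell out correctly from Claim~\ref{claim:H and Shalika functionals} and Corollary~\ref{corollary:existence of metaplectic Shalika functional}.
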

\begin{proof}
We adapt the argument from \cite[(25)]{Yia}. 
We have $h\ \rconj{w}t_{\lambda}=t_{-\lambda}$ with
\begin{align*}
h=\diag(\varpi^{-\lambda},J_k\varpi^{-\lambda}J_k)\in H.
\end{align*}
Put $\mathfrak{h}(h)\rconj{\mathfrak{s}(w)}\mathfrak{s}(t_{\lambda})=\epsilon\mathfrak{s}(t_{-\lambda})$ for some
$\epsilon\in\mu_2$. Because $\varphi_{K,\chi^{-1}}$ is
unramified,
\begin{align*}
l(\mathfrak{s}(t_{\lambda})\varphi_{K,\chi^{-1}})=
\Lambda_{\chi}(\rconj{\mathfrak{s}(w)}\mathfrak{s}(t_{\lambda})\varphi_{K,\chi^{-1}})
=\epsilon\gamma_{\psi'}(\det c_h)^{-1}
\Lambda_{\chi}(\mathfrak{s}(t_{-\lambda})\varphi_{K,\chi^{-1}}).
\end{align*}
This vanishes unless $\lambda\in2\Z^k_+$, but then $\epsilon=1$ and $\gamma_{\psi'}(\det \varpi^{\lambda})=1$. The
result follows.
\end{proof}

\subsection{Archimedean uniqueness results}\label{subsection:Archimedean results}
In this section we establish uniqueness results for the metaplectic Shalika model of a principal series representation, analogous to those of Theorem~\ref{theorem:unramified Shalika functional 1 dim}. We assume $F=\R$, but the proof applies to $\C$ as well. In fact, we only need the real case, because we will apply this result to the exceptional representation, which is irreducible, and then over $\C$ uniqueness was proved in \cite{AGJ}.

Over $\R$ it is convenient to work with the Langlands decomposition $B_n=T_nN_n=\mathcal{M}_nA_nN_n$, where
\begin{align*}
&\mathcal{M}_n=\setof{\diag(t_1,\ldots,t_n)}{t_i\in\mu_2}\isomorphic\mu_2^n, \\ &A_n=\setof{\diag(t_1,\ldots,t_n)}{t_i\in\R_{>0}}\isomorphic\R_{>0}^n.
\end{align*}
Note that $A_n$ splits under the cover.
The group $\widetilde{\mathcal{M}}_n$ is a non-trivial extension of $\mathcal{M}_n$ which we now describe in more detail. Let $C(n)$ denote the Clifford algebra in $n$ generators $e_1,\ldots,e_n$ with relations $e_i^2=1$ and $e_ie_j+e_je_i=0$. Then
\begin{align*}
\widetilde{\mathcal{M}}_n \isomorphic\setof{\pm e_{i_1}\cdots e_{i_r}}{1\leq i_1<\ldots<i_r\leq n}.
\end{align*}
The covering map $\widetilde{\mathcal{M}}_n\to \mathcal{M}_n$ is given by $\pm e_{i_1}\cdots e_{i_r}\mapsto\varepsilon_I$ with $I=\{i_1,\ldots,i_r\}$, where $\varepsilon_I$ is the diagonal matrix having $-1$ on the diagonal at positions $i_1,\ldots,i_r$, and $1$ at the remaining ones. Further, the section $\mathfrak{s}$ is on $\mathcal{M}_n$ given by
\begin{align*}
\mathfrak{s}(\varepsilon_I) = e_{i_r}\cdots e_{i_1} = (-1)^{\frac{r(r-1)}{2}}e_{i_1}\cdots e_{i_r}.
\end{align*}
\begin{claim}
The group $\widetilde{\mathcal{M}}_n$ has $2^n+1$ irreducible representations, $2^n$ non-genuine characters and one genuine representation of dimension $2^k$. The genuine representation $\sigma$ is the restriction of the pin-representation of $\Pin(n)\subset C(n)$ to $\widetilde{\mathcal{M}}_n$.
\end{claim}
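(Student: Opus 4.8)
The strategy is to exploit the explicit identification of $\widetilde{\mathcal{M}}_n$ with the set $\{\pm e_{i_1}\cdots e_{i_r}\}$ inside the Clifford algebra $C(n)$, together with standard structure theory of Clifford algebras. First I would determine the order of the group: $\widetilde{\mathcal{M}}_n$ has exactly $2^{n+1}$ elements, since for each subset $I\subseteq\{1,\ldots,n\}$ there are two elements $\pm e_I$ mapping to $\varepsilon_I$, and $\mu_2$ is the kernel of the covering map. Next I would locate the center and the commutator subgroup. Using the relations $e_ie_j=-e_je_i$ for $i\neq j$, one computes that two basis elements $e_I$, $e_J$ commute or anticommute according to the parity of $|I|\,|J|-|I\cap J|$; a short combinatorial check shows the commutator subgroup is exactly $\{\pm1\}=\mu_2$. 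Hence the number of one-dimensional representations equals $|\widetilde{\mathcal{M}}_n/[\widetilde{\mathcal{M}}_n,\widetilde{\mathcal{M}}_n]| = 2^{n+1}/2 = 2^n$; these are precisely the non-genuine characters (they factor through $\mathcal{M}_n\cong\mu_2^n$, which already has $2^n$ characters, and none is genuine).

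It then remains to count the genuine irreducibles and compute their dimensions. The sum of squares of the dimensions of all irreducibles is $|\widetilde{\mathcal{M}}_n| = 2^{n+1}$, so the genuine part contributes $2^{n+1}-2^n = 2^n$. If there is a single genuine irreducible it must have dimension $\sqrt{2^n}=2^{n/2}=2^k$ (recall $n=2k$), which is an integer precisely because $n$ is even — consistent with our running hypothesis. To establish that there is exactly one genuine irreducible of dimension $2^k$, I would invoke the representation theory of the Clifford algebra $C(n)$: for $n=2k$ even, $C(n)\otimes\mathbb{C}$ (or $C(n)$ over $\mathbb{R}$ in the split case relevant here) is isomorphic to a full matrix algebra $M_{2^k}$, which has a unique irreducible module of dimension $2^k$, namely the spinor (pin) representation. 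Restricting this module to the finite subgroup $\widetilde{\mathcal{M}}_n\subset C(n)^\times$ gives a genuine representation $\sigma$ of dimension $2^k$; it is irreducible as a $\widetilde{\mathcal{M}}_n$-representation because $\widetilde{\mathcal{M}}_n$ already spans $C(n)$ linearly (the $e_I$ form a basis), so any $\widetilde{\mathcal{M}}_n$-invariant subspace is $C(n)$-invariant, hence trivial or everything. This pins down $\sigma$ as the restriction of the $\Pin(n)$-representation, and the dimension count forces it to be the unique genuine irreducible.

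Assembling: $2^n$ non-genuine characters plus one genuine $2^k$-dimensional representation gives $2^n+1$ irreducibles total, matching the claim, and the dimension identity $2^n\cdot 1^2 + 1\cdot(2^k)^2 = 2^n + 2^{2k} = 2^{n+1}$ confirms completeness. The main obstacle I anticipate is the bookkeeping in the commutator computation — verifying cleanly that $[\widetilde{\mathcal{M}}_n,\widetilde{\mathcal{M}}_n]=\mu_2$ rather than something smaller or larger — since one must check that for every nontrivial $I$ there is some $J$ with $e_I e_J e_I^{-1} e_J^{-1} = -1$, i.e. that $e_I$ anticommutes with at least one $e_J$; this follows because $e_I$ anticommutes with $e_j$ for any single index $j\in I$ when $|I|$ is even, and with $e_j$ for $j\notin I$ when $|I|$ is odd, so such a $J$ always exists for $I\neq\emptyset$. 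The rest is a routine application of finite group character theory and the classical structure of even Clifford algebras, so no serious difficulty is expected there.
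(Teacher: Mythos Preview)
Your proof is correct and reaches the same conclusion as the paper, but the route differs in two places worth noting. The paper begins by counting conjugacy classes of $\widetilde{\mathcal{M}}_n$ directly (there are $2^n+1$: the singletons $\{1\}$ and $\{-1\}$, plus $\{e_I,-e_I\}$ for each nonempty $I$), which immediately gives the total number of irreducibles; it then lifts the $2^n$ characters from $\mathcal{M}_n$, and the sum-of-squares identity forces the remaining irreducible to have dimension $2^k$. Irreducibility of the restricted pin-representation then follows by complete reducibility, since any genuine summand would have to be the unique genuine irreducible of full dimension $2^k$. You instead compute the commutator subgroup to count the one-dimensional representations, and---more interestingly---you prove irreducibility of the restricted pin-representation \emph{directly} from the fact that $\widetilde{\mathcal{M}}_n$ linearly spans $C(n)$, so that $\widetilde{\mathcal{M}}_n$-invariant subspaces coincide with $C(n)$-submodules. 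This spanning argument is a cleaner, more intrinsic reason for irreducibility and does not require first knowing the complete list of irreducibles; the paper's approach, on the other hand, gets the total count $2^n+1$ up front via conjugacy classes without needing any Clifford-algebra structure theory. Both are short and elementary; your commutator verification is slightly more elaborate than necessary (once you know $\mathcal{M}_n$ is abelian, a single anticommuting pair such as $e_1,e_2$ already forces $[\widetilde{\mathcal{M}}_n,\widetilde{\mathcal{M}}_n]=\mu_2$), but nothing is wrong.
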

\begin{proof}
It is easy to see that $\widetilde{\mathcal{M}}_n$ has $2^n+1$ conjugacy classes and hence $2^n+1$ irreducible representations. Since $\mathcal{M}_n\cong\mu_2^n$ there are $2^n$ non-isomorphic characters of $\mathcal{M}_n$ which lift to non-isomorphic characters of $\widetilde{\mathcal{M}}_n$. Since the squares of the dimensions of all irreducible representations of $\widetilde{\mathcal{M}}_n$ sum up to the order of $\widetilde{\mathcal{M}}_n$ the last remaining irreducible representation must have dimension $2^k$. The restriction of the pin-representation to $\widetilde{\mathcal{M}}_n$ is easily seen to be genuine and has dimension $2^k$, hence has to be irreducible by a complete reducibility argument.
\end{proof}

The genuine principal series of $\widetilde{G}_n$ is given by
\begin{align*}
J(\nu)=\Ind_{\widetilde{\mathcal{M}}_nA_nN_n}^{\widetilde{G}_n}(\sigma\otimes e^{\nu+\rho}\otimes 1),
\end{align*}
where $\nu=(\nu_1,\ldots,\nu_n)\in\C^n$ corresponds to the character of the Lie algebra $\mathfrak{a}_n$ of $A_n$ given by $\nu(\diag(t_1,\ldots,t_n))=\sum_{j=1}^n\nu_j t_j$, and $\rho=(\rho_1,\ldots,\rho_n)=(\frac{n-1}{2},\frac{n-3}{2},\ldots,\frac{1-n}{2})$ denotes half the sum of all positive roots.
\begin{theorem}\label{theorem:uniqueness of metaplectic Shalika over R}
Assume that $\nu\in\C^n$ satisfies the condition
$$ \nu_i+\nu_j\neq0, \qquad \forall1\leq i\leq k,\,1\leq j\leq n\text{ with }j\neq i,n-i+1. $$
Then the space of metaplectic Shalika functionals on $J(\nu)$ is at most one-dimensional. Furthermore, it is trivial unless $\nu_i+\nu_{n-i+1}=0$ for all $1\leq i\leq k$.
\end{theorem}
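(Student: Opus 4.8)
The plan is to mimic the $p$-adic argument of Theorem~\ref{theorem:unramified Shalika functional 1 dim} (which in turn follows Ash--Ginzburg), replacing the Bruhat-theoretic machinery for $\ell$-groups with its real-analytic counterpart. We want to bound $\Dim\Hom_{\widetilde{G}_n}(J(\nu),\Ind_{\widetilde{G}_k^\triangle U_k}^{\widetilde{G}_n}(\gamma_{\psi'}\otimes\psi))$ by $1$. By Frobenius reciprocity such a homomorphism is the same as a continuous $(\widetilde{G}_k^\triangle U_k)$-equivariant functional on $J(\nu)$, and, restricting to the open Bruhat cells, we analyze the double cosets $G_k^\triangle U_k\backslash G_n/B_n$. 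As recalled in the proof of the $p$-adic theorem, $G_n=\bigcup_w G_k^\triangle U_k w B_n$ with $w$ ranging over permutation matrices, and the representatives that can support an equivariant distribution are severely constrained: first, on each root subgroup $N_{(l,k+l)}$ one shows (exactly as in Claim~\ref{claim:Hom(g) tw with w not w_k}) that $\psi|_{N_{(l,k+l)}}\neq1$ forces $j_{k+l}<j_l$, and then the character/modulus comparison using the elements $t_{i,j}(x)$ — now with $x\in\R_{>0}$ instead of $x\in F^*$, so $x^2$ ranges over $\R_{>0}$, which still generates enough of $A_n$ — rules out every cell except the one through $\omega^{-1}$. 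The key point is that the hypothesis $\nu_i+\nu_j\neq0$ for $j\neq i,n-i+1$ is precisely the Archimedean analogue of conditions \eqref{item:uniqueness item 1}--\eqref{item:uniqueness item 2}: it guarantees that the relevant character $e^{\nu}$ evaluated at $t_{i,j}$ (i.e.\ $e^{2(\nu_i+\nu_j)\log x}$, or $e^{2(\nu_{j_{k+m}}+\nu_{j_m})\log x}$ in the notation of the claim) is nonconstant in $x$, whereas $\delta_{B_n}^{1/2}$ and $\delta_g$ are, so no equivariant functional survives on that cell.

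Concretely, the steps would be: (1) reduce, via Frobenius reciprocity and the stratification of $G_n$ by $(G_k^\triangle U_k, B_n)$-double cosets, to showing that on every cell other than the $\omega^{-1}$-cell the corresponding $\Hom$-space (of the form $\mathcal{H}(g)$ in the $p$-adic proof) vanishes, and that on the $\omega^{-1}$-cell it is at most one-dimensional; (2) carry out the cell-by-cell vanishing using the conjugation formulas of Claim~\ref{claim:conjugation W and T}, \eqref{eq:commutator on torus} and Corollary~\ref{corollary:computing conjugation t by w for t in maximal abelian or in the center} (all of which are stated over an arbitrary local field and hence apply to $\R$), together with the $\det c^\triangle\in F^{*2}$ observation \eqref{eq:formula for s on triangle} so that the Weil factor $\gamma_{\psi'}$ poses no obstruction; (3) on the surviving cell, compute the stabilizer $(G_k^\triangle U_k)^{\omega}=\iota((T_k\cap K)T_k^2)\ltimes\prod N_\alpha$ — over $\R$ the torus part is $\mathcal{M}_k A_k^2$ — and observe $\mathcal{H}(\omega^{-1})=\Hom_{\mathfrak{s}(\iota(\mathcal{M}_k A_k^2))}(e^{\nu},\delta_{B_n}^{-1/2}\delta_{\omega^{-1}})$; the same $\delta_{\omega^{-1}}$ computation as in the $p$-adic proof gives $\delta_{\omega^{-1}}(t_{i,k+\tau(i)})=\delta_{B_n}^{1/2}(t_{i,k+\tau(i)})$, so this space is zero unless $\nu_i+\nu_{n-i+1}=0$ for all $i$ (here $\tau$ must be the identity since the $j_{k+l}<j_l$ constraint combined with $j_{\tau(l)}$ increasing forces $w=\omega^{-1}$ exactly as before), and at most one-dimensional always.

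There are two genuine differences from the $p$-adic setting that must be handled with care, and I expect the second to be the main obstacle. The first, minor, difference is that $\mathcal{M}_n$ is nontrivial and $\widetilde{\mathcal{M}}_n$ is the nonabelian Clifford-type group with its $2^k$-dimensional genuine representation $\sigma$; one must check that the elements $t_{i,j}(x)$ and the diagonal matrices $d$ used to detect non-equivariance are central in $\widetilde{T}_n$ (they are, being squares), so that $\sigma$ contributes only an overall scalar and the argument passes through the multiplicity of $\sigma$ unchanged. The second, more serious, difference is that over $\R$ one is working with continuous functionals on Fréchet representations rather than with abstract linear functionals on smooth representations, so the ``Bruhat theory'' input — the statement that a $(\widetilde{G}_k^\triangle U_k)$-equivariant distribution on $G_n$ is controlled by its restrictions to the open cells and that a distribution supported on a lower cell is governed by the normal-derivative calculation — requires the Archimedean localization/filtration arguments (in the style of du~Cloux, or the Aizenbud--Gourevitch--Jacquet framework underlying \cite{AGJ}) rather than the clean $\ell$-space decomposition \cite[Theorems~1.9.4, 1.9.5]{Silb}. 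I would invoke the relevant Archimedean Bruhat-theory package as a black box: one filters $J(\nu)$ (or rather the space of functionals) by closed unions of cells, and on each successive quotient the space of equivariant functionals injects into a sum, over the transversal derivatives $\mathbb{N}$-indexed by the conormal directions, of the $\mathcal{H}(g)$-type spaces twisted by symmetric powers of the conormal bundle — each of which still vanishes because twisting by a polynomial character does not affect the nonvanishing of $\nu_i+\nu_j$. Granting this, the rest is the same bookkeeping with roots as in the proof of Claim~\ref{claim:Hom(g) tw with w not w_k}, and the conclusion — dimension $\leq1$, with equality forcing $\nu_i+\nu_{n-i+1}=0$ for all $i$ — follows.
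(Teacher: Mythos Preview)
Your overall plan---Bruhat theory over the double cosets $G_k^\triangle U_k\backslash G_n/B_n$, with the $p$-adic torus argument of Claim~\ref{claim:Hom(g) tw with w not w_k} for $m=0$---matches the paper's. But there are two genuine gaps.

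\textbf{Transversal derivatives.} Your treatment of the $m>0$ strata is wrong. You assert that ``twisting by a polynomial character does not affect the nonvanishing of $\nu_i+\nu_j$,'' but it does: the torus element $t_{i,j}(x)$ acts on a weight vector of $S^m(V(g))$ by an integer power of $x^2$, so the character equation on that stratum becomes $\nu_i+\nu_j+N=0$ for various $N\in\Z$. The hypothesis $\nu_i+\nu_j\neq0$ does \emph{not} exclude this, and strengthening it to $\nu_i+\nu_j\notin\Z$ would give a weaker theorem. The paper handles $m>0$ by an entirely different mechanism: for every nonzero generator $E_{\ell,m}$ of $V(g)$ it exhibits a unipotent root subgroup contained in $H(g)$ which acts nilpotently on $E_{\ell,m}$ (hence nilpotently on any $S^m(V(g))$ containing it in its image) but by a character ($1$ or $\psi$) on $\gamma_{\psi'}\otimes\psi$. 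This kills all $m>0$ contributions for \emph{every} $\nu$, independently of the torus argument.

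\textbf{The open cell and $\sigma$.} On the surviving cell you write $\mathcal{H}(\omega^{-1})=\Hom_{\mathfrak{s}(\iota(\mathcal{M}_k A_k^2))}(e^{\nu},\ldots)$ as if everything were a character, and conclude ``at most one-dimensional always.'' But the inducing data is $\sigma\otimes e^\nu$ with $\sigma$ the $2^k$-dimensional genuine representation of $\widetilde{\mathcal{M}}_n$, so on $H(g_0)=\widetilde{(\mathcal{M}_kA_k)}^\triangle$ the $\Hom$-space factors as $\Hom_{\widetilde{\mathcal{M}}_k^\triangle}(\gamma_{\psi'},\sigma^{g_0})\otimes\Hom_{A_k^\triangle}(\ldots)$. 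Your argument that the $t_{i,j}(x)$ are squares and hence central handles the $A_k$-factor and the non-open cells, but says nothing about why $\gamma_{\psi'}$ occurs with multiplicity exactly one in $\sigma|_{\widetilde{\mathcal{M}}_k^\triangle}$. The paper proves this by an explicit Clifford-algebra computation, realizing $\sigma$ on $\Lambda^*X$ with $X\subset\C^n$ a maximal isotropic subspace and checking that $\widetilde{\mathcal{M}}_k^\triangle$ acts by distinct characters on the basis vectors $f_I$, exactly one of which matches $\gamma_{\psi'}$.
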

\begin{proof}
By Archimedean Bruhat theory we have for any two closed subgroups $H_1,H_2\subset\widetilde{G}_n$ and any two (finite-dimensional) representations $\xi_1$ and $\xi_2$ of $H_1$ and $H_2$ that
\begin{multline}
 \Dim\Hom_{\widetilde{G}_n}(\Ind_{H_1}^{\widetilde{G}_n}(\xi_1\delta_{H_1}^{1/2}),\Ind_{H_2}^{\widetilde{G}_n}(\xi_2\delta_{H_2}^{1/2}))\\
 \leq \sum_{H_1gH_2}\sum_{m=0}^\infty \Dim\Hom_{H(g)}(\xi_1\otimes(\xi_2^g)^\vee,\frac{\delta_{H(g)}}{(\delta_{H_1}\delta_{H_2}^g)^{1/2}}\otimes S^m(V(g))),\label{eq:GeneralBruhatDimensionEstimate}
\end{multline}
where the sum is over all double cosets $H_1gH_2\in H_1\backslash\widetilde{G}_n/H_2$, and $H(g)=H_1\cap(gH_2g^{-1})$, $V(g)=\mathfrak{g}_n/(\mathfrak{h}_1+\Ad(g)\mathfrak{h}_2)$ and $\xi^g(h)=\xi(g^{-1}hg)$ for $h\in H(g)$ and $\xi$ a representation of $H_2$. Here $\mathfrak{g}_n$, $\mathfrak{h}_1$ and $\mathfrak{h}_2$ denote the Lie algebras of $\widetilde{G}_n$, $H_1$ and $H_2$. Further, $\delta_{H(g)}$ is the modulus character of $H(g)$. In our case $H_1=\widetilde{G}_k^\triangle U_k$, $H_2=\widetilde{B}_n$ and $\xi_1=\gamma_{\psi'}\otimes\psi$, $\xi_2=\sigma\otimes e^\nu$, $\delta_{H_1}=1$, $\delta_{H_2}=e^{2\rho}$. Further, the double coset space $\widetilde{G}_k^\triangle U_k\backslash\widetilde{G}_n/\widetilde{B}_n$ is parametrized by $\diag(S_k)\backslash S_n$, where $S_n$ is the symmetric group in $n$ letters and $\diag(S_k)\subset S_n$ the diagonal embedding of the symmetric group $S_k$ acting on the first and last $k$ letters in the same fashion. For such a permutation matrix $g$ we write $g=(\delta_{i,j_i})_{i=1,\ldots,n}$, then $ge_{j_i}=e_i$.

For a permutation matrix $g\in G_n$ and $m\geq0$ we put
$$ \mathcal{H}(g,m) = \Hom_{H(g)}((\gamma_{\psi'}\otimes\psi)\otimes((\sigma^g)^\vee\otimes e^{-g(\nu-\rho)}),\delta_{H(g)}\otimes S^m(V(g))), $$
the $\Hom$-space in \eqref{eq:GeneralBruhatDimensionEstimate}. We claim that $\mathcal{H}(g,m)=0$ unless $m=0$ and
$$ g=g_0=\left(\begin{array}{cc}&g_1\\g_2&\end{array}\right), \qquad \text{where} \qquad (g_2^{-1}g_1)e_i=e_{k-i+1},\ i=1,\ldots,k. $$
Note that $H_1g_0H_2$ is the same equivalence class for all choices of $g_1$ and $g_2$. Further, for this particular $g=g_0$ we have $\mathcal{H}(g_0,0)=0$ unless $\nu_i+\nu_{n-i+1}=0$ for all $i=1,\ldots,k$, and in this case $\Dim\mathcal{H}(g_0,0)=1$. We prove these claims in 3 steps.

\begin{enumerate}[leftmargin=*]
\item First, the same analysis as in the proof of Theorem~\ref{theorem:unramified Shalika functional 1 dim} shows that for $m=0$ we have $\mathcal{H}(g,0)=0$ for $g\neq g_0$.
\item Next we claim that whenever $V(g)\neq0$ then $\mathcal{H}(g,m)=0$ for all $m>0$. The $H(g)$-representation $V(g)=\mathfrak{g}_n/(\mathfrak{h}_1+\Ad(g)\mathfrak{h}_2)$ is the sum of the equivalence classes of $E_{\ell,m}$ with either $\ell,m\in\{1,\ldots,k\}$, $\ell,m\in\{k+1,\ldots,n\}$, or $\ell\in\{k+1,\ldots,n\}$ and $m\in\{1,\ldots,k\}$. We claim that if the equivalence class of some $E_{\ell,m}$ in $V(g)$ is nonzero, then there exists a root subgroup of $H(g)$ that acts nilpotently on $E_{\ell,m}$. This root subgroup acts on the left hand side of $\mathcal{H}(g,m)$ either trivially or by the character $\psi$, and on the right hand side it only acts on $S^m(V(g))$. Hence, no homomorphism in $\mathcal{H}(g,m)$ can have a component of $E_{\ell,m}$ in its image. This is true for all generators of $V(g)$ and hence $\mathcal{H}(g,m)=0$ for $m>0$.\\
It remains to show that every nonzero equivalence class $E_{\ell,m}$ in $V(g)$ has a root subgroup in $H(g)$ acting nilpotently on it. First, if $\ell\in\{k+1,\ldots,n\}$ and $m\in\{1,\ldots,k\}$ with $E_{\ell,m}$ nonzero in $V(g)$, then $E_{\ell,m}\notin\Ad(g)\mathfrak{h}_2$. Hence $E_{j_\ell,j_m}\notin\mathfrak{h}_2$ which means $j_\ell>j_m$. But then $E_{j_m,j_\ell}\in\mathfrak{h}_2$ and therefore $E_{m,\ell}\in\Ad(g)\mathfrak{h}_2.$ Since also $E_{m,\ell}\in\mathfrak{h}_1$ we have $E_{m,\ell}\in\mathfrak{h}(g)$, so $H(g)$ contains the root subgroup $N_{m,\ell}$ which acts nilpotently on $E_{\ell,m}$.\\
Next, if $\ell,m\in\{1,\ldots,k\}$ with $E_{\ell,m}$ non-trivial in $V(g)$, then $E_{\ell,m}\notin\Ad(g)\mathfrak{h}_2$ and also $E_{\ell+k,m+k}\notin\Ad(g)\mathfrak{h}_2$, because $E_{\ell,k}+E_{\ell+k,m+k}\in\mathfrak{h}_1$. Hence $E_{j_\ell,j_m},E_{j_{\ell+k},j_{m+k}}\notin\mathfrak{h}_2$. As before this implies $E_{j_m,j_\ell},E_{j_{m+k},j_{\ell+k}}\in\mathfrak{h}_2$ and hence $E_{m,\ell},E_{m+k,\ell+k}\in\Ad(g)\mathfrak{h}_2$. Therefore $E_{m+\ell}+E_{m+k,\ell+k}\in\mathfrak{h}(g)$ and the corresponding root subgroup acts nilpotently on $E_{\ell,m}$. We note that in this case $\ell\neq m$. This shows the claim.
\item Finally we show that $\mathcal{H}(g_0,0)$ is non-trivial if and only if $\nu_i+\nu_{n-i+1}=0$ for all $i=1,\ldots,k$, and in this case $\Dim\mathcal{H}(g_0,0)=1$. For $g=g_0$ we have $H(g)=\widetilde{(\mathcal{M}_kA_k)}^\triangle$ which is the direct product of $A_k^\triangle$ and
$$ \widetilde{\mathcal{M}}_k^\triangle = \{\pm e_{i_1}e_{i_1+k}\cdots e_{i_r}e_{i_r+k}:1\leq i_1<\ldots<i_r\leq k,\,r\in\{0,\ldots,k\}\}. $$
Hence $\delta_{H(g)}=1$, $\psi|_{H(g)}=1$ and we have
\begin{align*}
 \mathcal{H}(g,0) &= \Hom_{H(g)}(\gamma_{\psi'}\otimes(\sigma^g)^\vee\otimes e^{-g(\nu-\rho)},\C)\\
  &= \Hom_{\widetilde{\mathcal{M}}_k^\triangle}(\gamma_{\psi'},\sigma^g)\otimes\Hom_{A_k^\triangle}(e^{-g(\nu-\rho)},\C).
\end{align*}
The second factor is nonzero if and only if $\nu_i+\nu_{n-i+1}=\rho_i+\rho_{n-i+1}=0$ for all $i=1,\ldots,k$, in which case it is one-dimensional. It remains to show that the first factor is one-dimensional. For this we use some Clifford analysis.\\
The $k$-dimensional subspace $X\subset\C^n$ spanned by $f_i=e_i+\sqrt{-1}e_{i+k}$, $i=1,\ldots,k$, is isotropic with respect to the standard bilinear form on $\C^n$. Let $X'\subset\C^n$ be the subspace spanned by $f_i'=e_i-\sqrt{-1}e_{i+k}$, $i=1,\ldots,k$, then $\C^n=X\oplus X'$. There is an algebra isomorphism $\Phi:C(n)\to\End(\Lambda^*\,X),\,x\mapsto\Phi_x$ which is on $X,X'\subset C(n)$ given by
\begin{align*}
 \Phi_x(x_1\wedge\cdots\wedge x_r) &= x\wedge x_1\wedge\cdots\wedge x_r, && x\in X,\\
 \Phi_{x'}(x_1\wedge\cdots\wedge x_r) &= 2\sum_{i=1}^r(-1)^{i-1}(x'|x_i)x_1\wedge\cdots\wedge\widehat{x}_i\wedge\cdots\wedge x_r, && x'\in X',
\end{align*}
i.e. $X$ acts by exterior multiplication and $X'$ by contractions. The pin-representation $\sigma$ of $\widetilde{\mathcal{M}}_n$ on $\Lambda^*\,X$ is given by $\sigma(x)=\Phi_x$. We compute the action of $\widetilde{\mathcal{M}}_k^\triangle$ on $\Lambda^*\,X$. For this we use the basis of $\Lambda^*\,X$ given by $f_I=f_{i_1}\wedge\ldots\wedge f_{i_r}$ with $I=\{i_1,\ldots,i_r\}\subset\{1,\ldots,k\}$, $1\leq i_1<\ldots<i_r\leq k$. First, each monomial $e_ie_{i+k}$ acts on the basis elements as by
\begin{align*}
 \sigma(e_ie_{i+k})(f_{i_1}\wedge\ldots f_{i_r}) &= \frac{1}{4\sqrt{-1}}\sigma(f_i'f_i-f_if_i')(f_I) = \frac{1}{4\sqrt{-1}}(\Phi_{f_i'}\Phi_{f_i}-\Phi_{f_i}\Phi_{f_i'})(f_I)\\
 &= \frac{1}{4\sqrt{-1}}(2\Phi_{f_i'}\Phi_{f_i}-\Phi_{f_i'f_i+f_if_i'})(f_I) = \frac{1}{4\sqrt{-1}}(2\Phi_{f_i'}\Phi_{f_i}-4)(f_I)\\
 &= \frac{1}{\sqrt{-1}}f_I\times\begin{cases}+1&\mbox{if $i\notin I$,}\\-1&\mbox{if $i\in I$.}\end{cases}
\end{align*}
Hence,
$$ \sigma(\pm e_{i_1}e_{i_1+k}\cdots e_{i_r}e_{i_r+k})f_I = \pm (-\sqrt{-1})^r (-1)^{\#(I\cap\{i_1,\ldots,i_r\})}. $$
Note that $\sigma^g|_{\widetilde{\mathcal{M}}_k^\triangle}=\sigma|_{\widetilde{\mathcal{M}}_k^\triangle}$. To determine the action of $\widetilde{\mathcal{M}}_k^\triangle$ under $\gamma_{\psi'}$ we assume that $\gamma_{\psi'}(-1)=\sqrt{-1}$. The case of $\gamma_{\psi'}(-1)=-\sqrt{-1}$ is handled similarly. Then
\begin{align*}
 \gamma_{\psi'}(\pm e_{i_1}e_{i_1+k}\cdots e_{i_r}e_{i_r+k}) &= \pm(-1)^{r+\frac{r(r-1)}{2}}\gamma_{\psi'}(e_{i_r+k}\cdots e_{i_1+k}e_{i_r}\cdots e_{i_1})\\
 &= \pm(-1)^{r+\frac{r(r-1)}{2}}\gamma_{\psi'}(\mathfrak{s}(\varepsilon_{I\cup(I+k)})\\
 &= \pm(-1)^{r+\frac{r(r-1)}{2}}\gamma_{\psi'}((-1)^r)\\
 &= \pm(-\sqrt{-1})^r.
\end{align*}
Hence, an intertwining operator in $\Hom_{\widetilde{\mathcal{M}}_k^\triangle}(\gamma_{\psi'},\sigma^g)$ can only map onto $f_I$ for the empty set $I$, so this space is one-dimensional.\qedhere
\end{enumerate}
\end{proof}

\section{Metaplectic Shalika model for exceptional representations}\label{section:Metap Shalika model for theta}
\subsection{Exceptional representations}\label{subsection:The exceptional representations}
The exceptional representations were introduced and studied by Kazhdan and Patterson \cite{KP}, locally and globally.
Over a local field, the exceptional representation $\theta_n$ of $\widetilde{G}_n$ is the unique irreducible quotient of a principal series representation corresponding to a genuine character, which is a lift
of $\delta^{1/4}_{B_n}|_{T_n^2}$ to $C_{\widetilde{T}_n}$ (in the notation of
\S~\ref{section:Casselman's basis and a result of Hironaka} $\theta$ is a quotient of $\mathrm{I}(\chi)$ where
$\chi|_{T_n^2}=\delta^{1/4}_{B_n}$). When $n$ is odd, there are $[F^*:F^{*2}]$ possible lifts of $\delta^{1/4}_{B_n}|_{T_n^2}$ to $C_{\widetilde{T}_n}$, resulting in non-isomorphic representations, but
$\theta_{2k}$ is unique.

Any exceptional representation of $\widetilde{G}_n$ can be obtained by twisting
$\theta_n$ with a character of $F^*$ (pulled back to $G_n$ via $\det$). Here for simplicity we take this character to be trivial.

Over a global field we denote this representation by $\Theta_{n}$, it has an automorphic realization
as the residual representation of an Eisenstein series on $\lmodulo{B_n(F)}{G_n(F)}$. Abstractly, it is isomorphic to the restricted tensor product of local exceptional representations. For a detailed description of the properties of exceptional representations see \cite{KP,BG,Kable}.

\begin{remark}
Over $\C$ the group $G_n$ splits under the cover, then the results of Vogan
\cite[Lemma 2.4, Corollary 11.6 and Lemma 11.11]{Vog86} show that $\theta_{2k}$ is isomorphic to the Stein complementary series representation
$\Ind_{Q_k}^{G_{2k}}(\delta_{Q_k}^{1/2+1/(2k)})\cong\Ind_{Q_k}^{G_{2k}}(\delta_{Q_k}^{1/2-1/(2k)})$.
In particular it is unitary, its $K$-types have multiplicity one, and it contains the trivial $K$-type. In the real case this is not true since $\theta_{2k}$ is genuine and $\widetilde{Q}_k$ does not have any genuine characters to induce from.
\end{remark}

\subsection{Local metaplectic Shalika model}\label{subsection:The metaplectic Shalika model of theta}
Let $F$ be a $p$-adic field, $n=2k$ and $\theta=\theta_{n}$.
In \cite[Theorem~3.1]{me11} we proved that $\theta_{U_k,\psi}$ is the one-dimensional representation of $\widetilde{G}_k^{\triangle}$ given by
\begin{align*}
\epsilon\mathfrak{s}(c^{\triangle})\rightarrow\epsilon\gamma_{\psi,(-1)^{k}}(\det c),\qquad \epsilon\in\mu_2.
\end{align*}
In particular $\theta$ admits a unique (up to scalar multiplication) metaplectic $(\psi_{(-1)^{k}},\psi)$-Shalika functional and a unique metaplectic Shalika model
$\mathscr{S}(\theta,\psi_{(-1)^{k}},\psi)$.

Assume all data are unramified as in \S~\ref{subsection:unramified representations}. Then $\theta$ is also unramified. The unramified normalized function
$\mathscr{S}\in\mathscr{S}(\theta,\psi_{(-1)^{k}},\psi)$ is the unique function which is identically $1$ on
$\kappa(K)$. 
\begin{lemma}\label{lemma:unramified normalized Shalika function of theta}
For any $\lambda\in\Z^k_{\geq}$,
\begin{align*}
&\mathscr{S}(\mathfrak{s}(t_{\lambda}))=\begin{cases}
\delta^{1/4}_{B_n}(t_{\lambda})&\lambda\in2\Z^k_+,\\
0&\text{otherwise.}
\end{cases}
\end{align*}
\end{lemma}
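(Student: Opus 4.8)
The plan is to recognize that the unramified normalized Shalika function of $\theta$ is nothing but a specialization of the unramified metaplectic Shalika function already computed in Corollary~\ref{corollary:Casselman--Shalika formula for metaplectic Shalika} and Theorem~\ref{theorem:Casselman--Shalika formula for metaplectic H}. First I would recall that $\theta=\theta_n$ is the unique irreducible quotient of $\mathrm{I}(\chi_\theta)$, where $\chi_\theta$ restricted to $T_n^2$ equals $\delta^{1/4}_{B_n}$; concretely this means $\chi_\theta^{-1}(a_{\alpha_i})=q^{-2s_i}$ with the parameters $\underline s$ chosen so that $q^{2s_i}$ runs through $q^{(n+1-2i)/2}$ (the shift coming from $\delta^{1/4}_{B_n}$). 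One checks that this $\chi_\theta$ satisfies the hypotheses \eqref{eq:assumptions on character for H functional}: the condition $q^{2s_i}\ne q^{\pm2s_j}$ for $i<j$ holds because the exponents $(n+1-2i)/2$ are distinct and, being positive for $i\le k$, never coincide with their negatives; and $q^{2s_i}=q^{-2s_{n-i+1}}$ holds because $(n+1-2i)/2=-(n+1-2(n-i+1))/2$. Hence by Corollary~\ref{corollary:existence of metaplectic Shalika functional} the functional $\Lambda_{\chi}$ (equivalently the Shalika functional $l$ of Corollary~\ref{corollary:Casselman--Shalika formula for metaplectic Shalika}) is a nonzero polynomial family, and by Theorem~\ref{theorem:unramified Shalika functional 1 dim}, since $\theta$ is irreducible with a metaplectic Shalika model, it is the unique one.

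The second step is to transport the functional from $\mathrm{I}(\chi_\theta)$ (or $\mathrm{I}(\chi_\theta^{-1})$, matching the conventions of \S~\ref{section:The metaplectic unramified Shalika formula}) to its quotient $\theta$. The unramified normalized vector $\varphi_{K,\chi_\theta^{-1}}$ maps to the unramified normalized vector of $\theta$ under the canonical projection, so $\mathscr{S}(\mathfrak{s}(t_\lambda))$ is proportional to $l(\mathfrak{s}(t_\lambda)\varphi_{K,\chi_\theta^{-1}})$, and by part (3) of Corollary~\ref{corollary:Casselman--Shalika formula for metaplectic Shalika} this equals $\Lambda_{\chi_\theta}(\mathfrak{s}(t_{-\lambda})\varphi_{K,\chi_\theta^{-1}})$ up to the normalization constant fixing the value $1$ at $\kappa(K)$, i.e.\ at $\lambda=0$. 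In particular $\mathscr{S}(\mathfrak{s}(t_\lambda))$ vanishes for $\lambda\in\Z^k_{\geq}\setdifference 2\Z^k_+$, which is the vanishing half of the claim. One must also check that the Shalika functional does not factor through a proper subquotient in a way that kills $\varphi_{K,\chi_\theta^{-1}}$; by Remark~\ref{remark:Shalika functional vanishes on G-span of unramified vector} this is exactly the content of the product of $c_\alpha$'s and $y_\alpha$'s being nonzero, which I would verify directly for the specific $\chi_\theta$ using the twisted Jacquet module computation $\theta_{U_k,\psi}\ne0$ from \cite[Theorem~3.1]{me11} cited at the start of \S~\ref{subsection:The metaplectic Shalika model of theta} --- this guarantees the Shalika model of $\theta$ is nonzero, hence the unramified Shalika function is not identically zero, which forces some term in Theorem~\ref{theorem:Casselman--Shalika formula for metaplectic H} to survive.

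The third step is the actual evaluation for $\lambda\in2\Z^k_+$. Here I would substitute $\chi=\chi_\theta$ into the compact form of Theorem~\ref{theorem:Casselman--Shalika formula for metaplectic H}, namely
\begin{align*}
\Lambda_{\chi}(\mathfrak{s}(t_{-\lambda})\varphi_{K,\chi^{-1}})=Q^{-1}c_{w_0}(\chi^{-1})\beta(\chi^{-1})^{-1}
\sum_{w\in W_{\Sp_k}}(-1)^{\ell_{\Sp_k}(w)}\beta(\rconj{w}\chi^{-1})\delta^{1/2}_{B_n}\ \rconj{w}\chi(\mathfrak{s}(t_{\lambda})),
\end{align*}
and observe that at the exceptional point many of the factors in $\beta(\rconj w\chi^{-1})$ vanish or the sum collapses. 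The mechanism I expect is the one familiar from the non-metaplectic exceptional/theta computations (cf.\ \cite{KP,BG}): the inducing character $\chi_\theta$ sits on the edge of the convergence region in such a way that $c_\alpha(\chi_\theta^{-1})=0$ or $y_\alpha(\chi_\theta^{-1})=0$ for all $\alpha$ made negative by any nontrivial $w\in W_{\Sp_k}$ (the short-root factors $c_\alpha(\chi^{-1})c_\alpha(\chi)$ and the long-root factors both vanish because $q^{-1}\chi_\theta^{-1}(a_\alpha)=1$ on the relevant roots), so that only $w=e$ contributes. For $w=e$ the summand is $\delta^{1/2}_{B_n}\chi_\theta(\mathfrak{s}(t_\lambda))$, and since $\chi_\theta|_{T_n^2}=\delta^{1/4}_{B_n}$ and $t_\lambda\in T_n^2$ for $\lambda\in 2\Z^k$, this is $\delta^{1/2}_{B_n}(t_\lambda)\delta^{1/4}_{B_n}(t_\lambda)$; after dividing by the normalization constant (the same expression at $\lambda=0$, which is $1$ after accounting for $\mathrm{vol}(\mathcal{I})$) and reconciling the $\delta^{1/2}_{B_n}$ bookkeeping with the fact that $\mathscr{S}$ is normalized on $\kappa(K)$ rather than via the induced-representation normalization, one is left with precisely $\delta^{1/4}_{B_n}(t_\lambda)$. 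The main obstacle will be this last bookkeeping: pinning down the precise normalizing constant and checking that the $\delta^{1/2}_{B_n}$ factor from the induced model is exactly absorbed, together with a careful verification that \emph{every} $y_\alpha(\chi_\theta^{-1})$ and $c_\alpha(\chi_\theta^{-1})$ attached to a root inverted by a nonidentity Weyl element of $\Sp_k$ really does vanish at the exceptional parameter (the long-root factor $(1+q^{-1/2}\epsilon_{\varrho,k}\chi^{-1/2}(a_\alpha))(1-q^{-1/2}\epsilon_{\varrho,k}\chi^{1/2}(a_\alpha))/(1-\chi(a_\alpha))$ needs its own check, using $\varrho=(-1)^k$ so that $\epsilon_{\varrho,k}$ is an explicit Hilbert-symbol sign). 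Once those vanishings are in hand the result drops out.
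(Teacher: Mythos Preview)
Your strategy matches the paper's --- specialize Theorem~\ref{theorem:Casselman--Shalika formula for metaplectic H} to the exceptional character, invoke uniqueness, and show only $w=e$ survives --- but you have the direction of $\chi$ reversed, and this is fatal. In that theorem $\Lambda_\chi$ lives on $\mathrm{I}(\chi^{-1})$, so setting $\chi=\chi_\theta=\delta_{B_n}^{1/4}$ puts you on $\mathrm{I}(\chi_\theta^{-1})$, where $\theta$ is a \emph{sub}representation, not a quotient; the Shalika functional on a sub need not extend, and here it does not. Concretely, with your substitution every coefficient vanishes for $k>1$: already the $w=e$ term carries $\prod_{\alpha>0}c_\alpha(\chi_\theta^{-1})$, and $c_{\alpha_i}(\chi_\theta^{-1})=(1-q^{-1}\cdot q)/(1-q)=0$ for each simple $\alpha_i$; the long-root factor you hoped would vanish is actually $y_{\alpha_k}(\chi_\theta^{-1})=2\ne 0$; and for $w=w_0$ the factor $y_{\alpha_1}(\chi_\theta^{-1})=0$ kills that term. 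This is exactly the phenomenon recorded in the remark following the paper's proof (on viewing $\theta\subset\mathrm{I}(\chi^{-1})$). Your concern in Step~2 that the functional might annihilate $\varphi_{K,\chi_\theta^{-1}}$ is precisely what happens, and knowing $\theta_{U_k,\psi}\ne 0$ does not rescue it: that only shows $\theta$ carries \emph{some} Shalika functional, not that $\Lambda_{\chi_\theta}$ restricts to it.

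The paper instead realizes $\theta$ as a \emph{quotient} of $\mathrm{I}(\chi_\theta)$, pulls $l_\theta$ back along the quotient map, and applies the formula with the theorem's ``$\chi$'' equal to $\chi_\theta^{-1}$, so that the functional is on $\mathrm{I}(\chi_\theta)$ and the coefficients involve $c_\alpha(\chi_\theta)$ and $y_\alpha(\chi_\theta)$. Now $y_{\alpha_k}(\chi_\theta)=(1+q^{-1})(1-1)/(1-q)=0$ and $y_{\alpha_i}(\chi_\theta)=c_{\alpha_i}(\chi_\theta)c_{\alpha_i}(\chi_\theta^{-1})=0$ for $i<k$, so any $w\ne e$ (which necessarily inverts some simple $\Sp_k$-root) contributes zero, while the $w=e$ term gives $Q^{-1}c_{w_0}(\chi_\theta)\,\delta_{B_n}^{1/2}\chi_\theta^{-1}(t_\lambda)=Q^{-1}c_{w_0}(\chi_\theta)\,\delta_{B_n}^{1/4}(t_\lambda)$. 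Dividing by the value at $\lambda=0$ yields $\delta_{B_n}^{1/4}(t_\lambda)$ directly; there is no further ``$\delta^{1/2}_{B_n}$ bookkeeping'', and indeed with your substitution the $w=e$ summand (ignoring its zero prefactor) would carry $\delta_{B_n}^{1/2}\chi_\theta(t_\lambda)=\delta_{B_n}^{3/4}(t_\lambda)$, the wrong exponent.
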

\begin{proof}
In the notation of \S~\ref{section:Casselman's basis and a result of Hironaka}, $\theta$ is a
quotient of $\mathrm{I}(\chi)$ where $\chi=\chi_{\underline{s}}=\delta_{B_n}^{1/4}$,
\begin{align*}
\underline{s}=\frac14(n-1,n-3,\ldots,3-n,1-n).
\end{align*}
This character satisfies \eqref{eq:assumptions on character for H functional}. Put $\psi'=\psi_{\varrho}$ with
$\varrho=(-1)^{k}$.
On the one hand let $l_{\theta}$ be the
metaplectic $(\psi',\psi)$-Shalika functional on $\theta$ such that $\mathscr{S}(g)=l_{\theta}(\theta(g)\varphi_{K,\chi})$. Clearly it extends to a similar nonzero
functional on $\mathrm{I}(\chi)$. 

On the other hand let $l$ be the nonzero metaplectic $(\psi',\psi)$-Shalika
functional on $\mathrm{I}(\chi)$, whose existence was guaranteed by
Corollary~\ref{corollary:Casselman--Shalika formula for metaplectic Shalika}. By Theorem~\ref{theorem:unramified Shalika functional 1 dim}
the functionals $l_{\theta}$ and $l$ are proportional, in particular $l$ is nonzero on $\varphi_{K,\chi}$, so that
$l_{\theta}=l(\varphi_{K,\chi})^{-1}l$ and
\begin{align*}
\mathscr{S}(g)=l(\varphi_{K,\chi})^{-1}l(g\varphi_{K,\chi}),\qquad\forall g\in\widetilde{G}_n.
\end{align*}

Looking at the formula of Theorem~\ref{theorem:Casselman--Shalika formula for metaplectic H} for
$\mathrm{I}(\chi)$ and $\varphi_{K,\chi}$, the coefficient corresponding
to $w\in W_{\Sp_k}$ takes the form
\begin{align*}
Q^{-1}\prod_{\setof{\alpha>0}{w\alpha>0}}c_{\alpha}(\chi)
\prod_{\setof{\alpha\in\Sigma_{\Sp_k}^+}{w\alpha<0}}y_{\alpha}(\chi).
\end{align*}
Observe that none of the terms here has a pole, and by definition for any $\alpha\in\Delta_{G_n}$ (a simple root),
$\chi(a_{\alpha})=q^{-1}$ so that $c_{\alpha}(\chi^{-1})=0$. Now consider the product of
factors $y_{\alpha}(\chi)$. If $w\alpha<0$ for some short root $\alpha\in\Sigma_{\Sp_k}^+$,
$y_{\alpha}(\chi)=c_{\alpha}(\chi)c_{\alpha}(\chi^{-1})=0$. Otherwise if $w\ne e$, we must have $w\alpha_k<0$. But then
since
\begin{align*}
\epsilon_{\varrho,k}=(-(-1)^k,\varpi)_2(\varpi,\varpi)_2^{k-1}=1,
\end{align*}
\begin{align*}
y_{\alpha_k}(\chi)=
\frac{(1+q^{-1}\epsilon_{\varrho,k})
(1-\epsilon_{\varrho,k})}{1-q}=0.
\end{align*}
Therefore all coefficients except the one for $w=e$ vanish and for $\lambda\in2\Z^k_+$,
\begin{align*}
l(\mathfrak{s}(t_{\lambda})\varphi_{K,\chi})=Q^{-1}c_{w_0}(\chi)\delta^{1/2}_{B_n}\ \chi^{-1}(\mathfrak{s}(t_{\lambda}))
=Q^{-1}c_{w_0}(\chi)\delta^{1/4}_{B_n}(t_{\lambda}).
\end{align*}
We conclude
$l_{\theta}=Qc_{w_0}(\chi)^{-1}l$ and the result immediately follows. 
\end{proof}
\begin{remark}
The proof implies that $l$ factors through the quotient $\theta$, which is also the image of the intertwining operator $T_{w_0}$. Since
by \eqref{eq:Gindikin-Karpelevich formula}, $T_{w_0}\varphi_{K,\chi}=c_{w_0}(\chi)\varphi_{K,\chi^{-1}}$, it is  natural to
find $c_{w_0}(\chi)$ in the normalization.
\end{remark}
\begin{remark}
One could attempt to consider $\theta$ as a subrepresentation of
$\mathrm{I}(\chi^{-1})$. As such, it contains $\varphi_{K,\chi^{-1}}$ and the formula of
Theorem~\ref{theorem:Casselman--Shalika formula for metaplectic H} for
$(\mathrm{I}(\chi^{-1}),\varphi_{K,\chi^{-1}})$ may be used.
However unless $k=1$, all the coefficients, which now take the form
\begin{align*}
Q^{-1}\prod_{\setof{\alpha>0}{w\alpha>0}}c_{\alpha}(\chi^{-1})
\prod_{\setof{\alpha\in\Sigma_{\Sp_k}^+}{w\alpha<0}}y_{\alpha}(\chi^{-1}),
\end{align*}
vanish (if $w=w_0$ and $k>1$, $y_{\alpha_1}(\chi^{-1})=0$). In other words, the functional $\Lambda_{\chi}$ vanishes
on $\theta$. This is the phenomena described in
Remark~\ref{remark:Shalika functional vanishes on G-span of unramified vector}. This also means that
the metaplectic Shalika functional on $\theta$ does not extend to $\mathrm{I}(\chi^{-1})$.
\end{remark}

Now assume $F=\R$. The representation $\theta$ is the unique irreducible quotient of a principal series representation, which satisfies the assumptions of Theorem~\ref{theorem:uniqueness of metaplectic Shalika over R}. Hence the
space of metaplectic Shalika functionals on $\theta$ is at most
one-dimensional. This remains true over $\C$, because
then the cover is split over $G_n$ and since $\theta$ is an irreducible admissible smooth Fr\'{e}chet representation, the bound follows from \cite{AGJ}. We will prove that the space of metaplectic Shalika functionals on $\theta$ is precisely one-dimensional over both $\R$ and $\C$: existence of the functional will follow from a local--global
argument in \S~\ref{section:Fourier coefficients} (Corollary~\ref{corollary:metaplectic Shalika functional in Archimedean places}).
\subsection{Fourier coefficients of exceptional representations}\label{section:Fourier coefficients}
Let $F$ be a global field and $\Theta=\Theta_{n}$. For an automorphic form $\varphi$ in the space of $\Theta$, a unipotent subgroup $U$, and a character $\psi$ of $U(\Adele)$ which is trivial on $U(F)$, let $\varphi^{U,\psi}$ be the corresponding Fourier coefficient, given by
\begin{align}\label{eq:Fourier coefficient of varphi}
\varphi^{U,\psi}(g)=\int_{\lmodulo{U(F)}{U(\Adele)}}\varphi(ug)\psi^{-1}(u)\, du,\qquad g\in \widetilde{G}_n(\Adele).
\end{align}
These coefficients are related to local twisted Jacquet modules at the finite places. More explicitly, if a Fourier coefficient does not vanish identically on $\Theta$, then for each place $v<\infty$ of $F$,
$(\Theta_v)_{U,\psi}\ne0$. Conversely if $(\Theta_v)_{U,\psi}=0$ for some $v<\infty$, the coefficient \eqref{eq:Fourier coefficient of varphi} is zero for any $\varphi$ and $g$ (see e.g., \cite[Proposition~1]{JR}).

Let $\theta=\Theta_v$ for some $v<\infty$.
In \cite{me11} we described the twisted Jacquet modules of $\theta$ along the unipotent subgroups $U_k$. We use these results to deduce several properties of the Fourier coefficients.

For $0<k<n$, $M_k<G_n$ acts on the set of characters of $U_k$, with $\min(k,n-k)+1$ orbits. We choose representatives
\begin{align*}
\psi_j(u)=\psi(\sum_{i=1}^ju_{k-i+1,k+j-i+1}),\quad 0\leq j\leq \min(k,n-k).
\end{align*}
(Here $u$ is regarded as an $n\times n$ matrix.)
In particular $\psi_0$ is trivial and when $n=2k$, $\psi_k$ is the Shalika character.
For $n=5$ and $k=3$, say, $\psi_1(u)=\psi(u_{3,4})$ and $\psi_2(u)=\psi(u_{3,5}+u_{2,4})$.
The stabilizer of $\psi_j$ in $M_k$ is
\begin{align*}
\mathrm{St}_{n,k}(\psi_j)=\left\{\left(\begin{array}{cccc}b&v\\&c\\&&c&y\\&&&d\end{array}\right):b\in G_{k-j},c\in G_j,d\in G_{n-k-j}\right\}.
\end{align*}
Let $V_j$ and $Y_j$ be the unipotent subgroups of $\mathrm{St}_{n,k}(\psi_j)$ defined by the coordinates of $v$ and $y$.
Note that $V_0$ and $Y_0$ are trivial.

The Jacquet module $\theta_{U_k,\psi_j}$ is a representation of
$p^{-1}(\mathrm{St}_{n,k}(\psi_j))$. The following theorem was proved in \cite{me11} (with minor notational differences):
\begin{theorem}\label{theorem:intro Jacquet modules}
For any $0\leq j\leq \min(k,n-k)$, the representation $\theta_{U_k,\psi_j}$ is isomorphic to
$\delta^{1/4}(\theta_{k-j}\widetilde{\otimes}\theta_{n-k-j})\otimes\gamma_{\psi,(-1)^{j}}\psi_j$.
Here $\delta$ is the modulus character of the standard parabolic subgroup of $G_n$ corresponding to the partition $(k-j,2j,n-k-j)$; $\widetilde{\otimes}$ is the metaplectic tensor of Kable \cite{Kable}; $\gamma_{\psi,(-1)^{j}}$ is a genuine character of $\widetilde{G}_j^{\triangle}$ and $\gamma_{\psi,(-1)^{j}}\psi_j$ is a one-dimensional representation of $\widetilde{G}_j^{\triangle}\ltimes U_{j}$, where $G_j^{\triangle}\ltimes U_j$ is regarded as a subgroup of the middle block of the partition ($G_j^{\triangle}$ is identified with the coordinates of $c$).
\end{theorem}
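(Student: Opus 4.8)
The theorem is proved in \cite{me11}; I outline the strategy one would follow. The essential point is that $\theta=\theta_n$ is the unique irreducible quotient of the principal series $\mathrm{I}(\chi)$, where $\chi$ is the genuine character of $C_{\widetilde{T}_n}$ with $\chi|_{T_n^2}=\delta_{B_n}^{1/4}$ (equivalently, $\theta$ is the image of the long intertwining operator $T_{w_0}$, cf.\ \cite{KP}). Since the twisted Jacquet functor $\pi\mapsto\pi_{U_k,\psi_j}$ is exact, $\theta_{U_k,\psi_j}$ is a quotient of $\mathrm{I}(\chi)_{U_k,\psi_j}$, and the plan has three parts: (i) compute $\mathrm{I}(\chi)_{U_k,\psi_j}$ by a geometric-lemma argument; (ii) show that every graded piece of this Jacquet module, except one, already vanishes on the quotient $\theta$; (iii) identify the surviving piece with $\delta^{1/4}(\theta_{k-j}\widetilde{\otimes}\theta_{n-k-j})\otimes\gamma_{\psi,(-1)^{j}}\psi_j$, keeping track of the metaplectic cocycle. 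I would run the argument by induction on $n$.

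For (i), decompose $G_n$ into $U_k\backslash G_n/B_n$ double cosets. Twisting by $\psi_j$ imposes a Mackey-type condition on a representative $w$: only those $w$ for which $\rconj{w^{-1}}(\text{the root subgroups supporting }\psi_j)$ lands inside $N_n$ compatibly with the standard generic character survive, leaving finitely many. The Bernstein--Zelevinsky geometric lemma in the covering setting, available through \cite{COf} and \cite{KP} (in the spirit of \eqref{eq:geometric lemma on I(chi)}), then exhibits $\mathrm{I}(\chi)_{U_k,\psi_j}$ with a filtration whose graded pieces are normalized parabolic inductions, to subgroups of $\mathrm{St}_{n,k}(\psi_j)$, of twists of $\rconj{w}\chi$. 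On the embedded block $\widetilde{G}_j^{\triangle}$ the block-compatibility formula \eqref{eq:block-compatibility} identifies the restriction of the cover with the quadratic Hilbert symbol, so every genuine representation there is a $\gamma_{\psi''}$-twist; this produces the factor $\gamma_{\psi,(-1)^{j}}$, whose precise additive character one reads off from the conjugation of the Shalika-type entries of $\psi_j$ by $w$.

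For (ii) and (iii), the exceptionality of $\theta$ enters. Using the inductive hypothesis one shows that the central exponents of each graded piece attached to a ``non-maximal'' $w$ do not occur among the exponents of any constituent of $\theta$, so those pieces are supported only on the proper subquotients of $\mathrm{I}(\chi)$ and are killed by $T_{w_0}$. The one remaining $w$ is the element conjugating $U_k$ so that the $\psi_j$-stabilizer becomes $\mathrm{St}_{n,k}(\psi_j)$ with $G_{k-j}\times G_{n-k-j}$ on the outer blocks and $G_j^{\triangle}\ltimes U_j$ in the middle; on that piece the induced data is precisely the exceptional character in ranks $k-j$ and $n-k-j$, so the inductive hypothesis --- or, at $j=0$, Kable's metaplectic-tensor description of the restriction of $\theta_n$ to $\widetilde{Q}_k$ \cite{Kable,KP,BG} --- identifies the quotient with $\theta_{k-j}\widetilde{\otimes}\theta_{n-k-j}$. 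The $\delta^{1/4}$ twist is modulus-character bookkeeping for the partition $(k-j,2j,n-k-j)$, the exponent $\tfrac14$ reflecting that $\theta_n$ sits at the $\delta_{B_n}^{1/4}$-point.

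The main obstacle is step (ii) together with the cocycle bookkeeping: one must prove that the quotient map $\mathrm{I}(\chi)\twoheadrightarrow\theta$ annihilates all the ``lower'' geometric-lemma pieces --- which needs either a careful analysis of the reducibility and Jordan--H\"{o}lder structure of the exceptional principal series in the covering setting, or a root-exchange argument of Ginzburg--Rallis--Soudry type \cite{GRS4} adapted to the double cover --- and simultaneously follow the quadratic Hilbert symbols and Weil factors through every conjugation by $w$ and through the metaplectic tensor construction, so that the sign $(-1)^{j}$ comes out correctly. The explicit block-compatible cocycle of \cite{BLS} is what makes this bookkeeping feasible.
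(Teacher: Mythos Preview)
The paper does not give its own proof of this theorem; immediately before the statement it says ``The following theorem was proved in \cite{me11} (with minor notational differences),'' and no argument is supplied. You correctly flag this at the outset, so in that sense your proposal matches the paper exactly: both defer to \cite{me11}.

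Your sketch of how one might go about proving the result is a plausible outline --- filter $\mathrm{I}(\chi)_{U_k,\psi_j}$ via the geometric lemma, kill the unwanted pieces on the quotient $\theta$, and identify the survivor --- and the $j=0$ case is indeed Kable's result as you note. But since the paper itself contains no proof to compare against, there is nothing further to assess here: the paper treats this as a black-box citation, and so should you.
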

This theorem implies in particular that $V_j(F_v)$ and $Y_j(F_v)$ act trivially on $\theta_{U_k,\psi_j}$, and
when $n=2k=2j$, $\theta$ admits a unique metaplectic $(\psi_{(-1)^{k}},\psi)$-Shalika model (as mentioned in
\S~\ref{subsection:The metaplectic Shalika model of theta}). The case $j=0$ of Theorem~\ref{theorem:intro Jacquet modules} was already proved by Kable \cite[Theorem~5.1]{Kable}, in this case $\delta=\delta_{Q_k}$ and the result can be used to read off the exponents of $\theta$ (see the proof of Theorem~\ref{theorem:local props GJ integral p-adic} in \S~\ref{subsection:Local p-adic Godement--Jacquet integral}).
\begin{proposition}\label{proposition:Fourier coefficients are trivial along v and y}
For any $\varphi$ in the space of $\Theta$, $0<k<n$ and $0\leq j\leq \min(k,n-k)$, $\varphi^{U_k,\psi_j}$ is trivial on $V_j(\Adele)$ and $Y_j(\Adele)$.
\end{proposition}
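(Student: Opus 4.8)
The plan is to deduce this from Theorem~\ref{theorem:intro Jacquet modules} by the standard dichotomy between local non-vanishing of twisted Jacquet modules and global non-vanishing of Fourier coefficients. First I would recall that, by the discussion preceding the proposition (see also \cite[Proposition~1]{JR}), if a Fourier coefficient of an automorphic form in the space of $\Theta$ is not identically zero, then at every finite place $v$ the corresponding local twisted Jacquet module of $\theta=\Theta_v$ is nonzero. So the strategy is: for fixed $\varphi$, $0<k<n$, $0\leq j\leq\min(k,n-k)$, and a coordinate one-parameter subgroup $Z$ inside $V_j$ or $Y_j$, consider the larger unipotent group $U_k' = U_k\cdot Z$ (or rather the group generated by $U_k$ and $Z$, which is unipotent since $Z$ normalizes $U_k$ and fixes $\psi_j$, as $Z\subset\mathrm{St}_{n,k}(\psi_j)$ normalizes $U_k$ and preserves $\psi_j$), extend $\psi_j$ trivially across $Z$ to a character $\psi_j'$ of $U_k'(\Adele)$ trivial on $U_k'(F)$, and observe that $\varphi^{U_k,\psi_j}$ being nontrivial on $Z(\Adele)$ is governed by the Fourier expansion of $g\mapsto \varphi^{U_k,\psi_j}(zg)$ along $Z(F)\backslash Z(\Adele)$.

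More concretely, I would argue as follows. Fix $g$ and consider the function $z\mapsto \varphi^{U_k,\psi_j}(zg)$ on $Z(F)\backslash Z(\Adele)$; expand it in a Fourier series over the characters $\psi_Z$ of $Z(\Adele)/Z(F)$. The $\psi_Z$-component is, up to reordering the integration, the Fourier coefficient $\varphi^{U_k',\psi_j\cdot\psi_Z}$ of $\varphi$ along the unipotent group $U_k'$ with character $\psi_j\cdot\psi_Z$. Now $\psi_j\cdot\psi_Z$ is a character of $U_k'$ obtained from $\psi_j$ by adding a (possibly trivial) character in the $Z$-direction. The key point is that at any finite place $v$, the local twisted Jacquet module $(\theta)_{U_k',\psi_j\cdot\psi_{Z,v}}$ is a quotient of $(\theta)_{U_k,\psi_j}$ restricted appropriately; and Theorem~\ref{theorem:intro Jacquet modules} tells us exactly what $\theta_{U_k,\psi_j}$ is — namely $\delta^{1/4}(\theta_{k-j}\widetilde\otimes\theta_{n-k-j})\otimes\gamma_{\psi,(-1)^j}\psi_j$, on which $V_j(F_v)$ and $Y_j(F_v)$ (hence $Z(F_v)$) act \emph{trivially}. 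Therefore $(\theta)_{U_k',\psi_j\cdot\psi_{Z,v}} = 0$ unless $\psi_{Z,v}$ is trivial: a nontrivial character on $Z$ cannot factor through the trivial action. By \cite[Proposition~1]{JR} (the converse direction), this forces every Fourier component $\varphi^{U_k',\psi_j\cdot\psi_Z}$ with $\psi_Z\neq 1$ to vanish identically. Hence the Fourier series of $z\mapsto\varphi^{U_k,\psi_j}(zg)$ along $Z(F)\backslash Z(\Adele)$ reduces to its constant term, i.e. $\varphi^{U_k,\psi_j}(zg)=\varphi^{U_k',\psi_j}(g)$ is independent of $z$, which is precisely the asserted triviality on $Z(\Adele)$.

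Running this over all the coordinate one-parameter subgroups filling out $V_j$ and $Y_j$ — one can proceed subgroup by subgroup, or all at once since $V_j$ and $Y_j$ are abelian (being made of a single block of matrix entries each) — gives triviality of $\varphi^{U_k,\psi_j}$ on all of $V_j(\Adele)$ and $Y_j(\Adele)$. I would also note that the abelianness of $V_j$ and $Y_j$ and the fact that they normalize $U_k$ and stabilize $\psi_j$ are immediate from the explicit shape of $\mathrm{St}_{n,k}(\psi_j)$ displayed just before the proposition, so forming $U_k'$ and extending the character pose no difficulty. The main obstacle — really the only substantive point — is the careful bookkeeping needed to identify the $\psi_Z$-Fourier component of $\varphi^{U_k,\psi_j}$ with a genuine Fourier coefficient $\varphi^{U_k',\psi_j\psi_Z}$ to which the local-global principle of \cite{JR} applies; this is a routine unfolding of iterated integrals over compact quotients of adelic unipotent groups, using that $Z(F)$ normalizes $U_k(F)$ and $Z(\Adele)$ normalizes $U_k(\Adele)$, but it must be done with attention to the ordering of the unipotent subgroups and to the genuineness of the representation under the cover (the cover splits on unipotent subgroups, so this causes no trouble).
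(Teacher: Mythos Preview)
Your proposal is correct and follows essentially the same approach as the paper: Fourier-expand $\varphi^{U_k,\psi_j}$ along $V_j(F)\backslash V_j(\Adele)$ (respectively $Y_j$), and kill the nontrivial coefficients using Theorem~\ref{theorem:intro Jacquet modules}, which shows that at every finite place $V_j(F_v)$ and $Y_j(F_v)$ act trivially on $(\Theta_v)_{U_k,\psi_j}$. The paper does this in two sentences, expanding directly along the full abelian group $V_j$ rather than one coordinate at a time, but your more elaborate version with one-parameter subgroups $Z$ is the same argument unpacked.
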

\begin{proof}
Write the Fourier expansion of $\varphi^{U_k,\psi_j}$ along $\lmodulo{V_j(F)}{V_j(\Adele)}$. Any coefficient with respect to a nontrivial character vanishes, because by virtue of Theorem~\ref{theorem:intro Jacquet modules}, at any $v<\infty$, $V_j(F_v)$ acts trivially on $(\Theta_v)_{U_k,\psi_j}$.
\end{proof}

The next theorem describes the constant term of automorphic forms in the space of $\Theta$, along any unipotent radical of a standard parabolic subgroup. The constant term defines an automorphic representation of the Levi part, and as such, it is often convenient to identify it with a tensor representation. Unfortunately, this is not simple to do for covering groups of $G_n$, because the preimages in the cover, of the direct factors of the Levi part, do not commute (see e.g., \cite{BG,Kable,Tk}). To avoid this problem, at least to some extent, we define exceptional representations of Levi subgroups. This approach was suggested by Bump and Ginzburg \cite[\S~1]{BG}, and was perhaps implicit in \cite[\S~5]{Kable}.
Let $\Delta\subset\Delta_{G_n}$ and $Q=Q_{\Delta}$ be the corresponding standard parabolic subgroup, $Q=M\ltimes U$.
Let $B_M<M$ be the Borel subgroup such that $B_n=B_M\ltimes U$, $B_M=T_n\ltimes N_M$. Denote by $\delta_{B_M}$ the modulus character of $B_M$, as a subgroup of $M$. Also let $W_M$ be the Weyl group of $M$ and $w_M$ its longest element.

Consider a local context first. A genuine character $\chi$ of $C_{\widetilde{T}_n}$ is called
$M$-exceptional if $\chi(a_{\alpha})=|q|$ for all $\alpha\in\Delta$
($a_{\alpha}$ was defined after \eqref{eq:one dim for regular hom}).
The representation $\Ind_{\widetilde{B}_M}^{\widetilde{M}}(\delta_{B_M}^{1/2}\chi')$, where $\chi'$ is an irreducible genuine representation of $\widetilde{T}_n$ corresponding to $\chi$, has a unique irreducible quotient called an exceptional representation of $\widetilde{M}$. This follows from the Langlands Quotient Theorem because $\chi$ belongs to the positive Weyl chamber in $W_M$. Note that the Langlands Quotient Theorem was proved for
covering groups over $p$-adic fields by Ban and Jantzen \cite{BJ}, and over Archimedean fields the proof of Borel and Wallach \cite{BW} is applicable to covering groups. The exceptional representation of $\widetilde{M}$ is isomorphic to the image of the intertwining operator with respect to $w_M$ (over Archimedean fields this follows from \cite{BW}, over $p$-adic fields one can argue as in \cite[Theorem~I.2.9]{KP}).

Globally, we construct exceptional representations as residual representations of Eisenstein series. The discussion in \cite[\S~II]{KP} (see also \cite[\S~3]{BFG}) can be extended to Levi subgroups. For
$\underline{s}=(s_1,\ldots,s_n)\in\C^n$, put
\begin{align*}
\chi_{\underline{s}}(\diag(t_1,\ldots,t_n))=\prod_{i=1}^{n}|t_i|^{s_i}.
\end{align*}
We extend $\chi_{\underline{s}}$ to a right $K$-invariant function on $M(\Adele)$ using the Iwasawa decomposition, and lift it to a function on the cover. Let $f$ be an element of $\Ind_{\widetilde{B}_M(\Adele)}^{\widetilde{M}(\Adele)}(\delta_{B_M}^{1/2}\chi')$, where $\chi'$ corresponds to a global $M$-exceptional character $\chi$ (defined similarly to the local one). Denote by $f_{\underline{s}}$ the
standard section given by $f_{\underline{s}}(m)=\chi_{\underline{s}}(m)f(m)$, $m\in\widetilde{M}(\Adele)$.
Form the Eisenstein series
\begin{align*}
E_{B_M}(m;f,\underline{s})=\sum_{m_0\in \lmodulo{B_M(F)}{M(F)}}f_{\underline{s}}(m_0m),\qquad m\in\widetilde{M}(\Adele).
\end{align*}
Identify $\Sigma_{G_n}$ with the pairs $(i,j)$, $1\leq i\ne j\leq n$, and $\Delta_{G_n}$ with the pairs $(i,i+1)$, $1\leq i<n$. For $\alpha=(i,j)\in\Sigma_{G_n}$, put $\underline{s}_{\alpha}=s_i-s_{j}$.
The global exceptional representation is spanned by the functions
\begin{align*}
m\mapsto\mathrm{Res}_{\underline{s}=\underline{0}}E_{B_{M}}(m;f,\underline{s})=
\lim_{\underline{s}\rightarrow\underline{0}}\prod_{\alpha\in\Delta}\underline{s}_{\alpha}E_{B_{M}}(m;f,\underline{s}).
\end{align*}
It is isomorphic to the restricted direct product of local exceptional representations.
The proof is analogous to \cite[Proposition~II.1.2 and
Theorems~II.1.4 and II.2.1]{KP} and \cite[Proposition~3.1 and Theorem~3.2]{BFG}.

Let $\Theta_M$ (resp., $\theta_{M,v}$) be the global (resp., local over $F_v$) exceptional representation corresponding to
a genuine character, which is a lift of $\delta_{B_M}^{1/4}|_{T_n^2}$ to $C_{\widetilde{T}_n}$. Then
$\Theta_M=\otimes'_v\Theta_{M,v}$. In particular
$\Theta_{G_n}=\Theta$.
\begin{remark}
Over a $p$-adic field, the aforementioned result \cite[Theorem~5.1]{Kable} also implies that
an exceptional representation of $\widetilde{M}$ is isomorphic to a metaplectic tensor of exceptional representations of the preimages of the direct factors of $M$.
\end{remark}
We are ready to state the result regarding the constant term of elements of $\Theta$.
\begin{theorem}\label{theorem:constant term theorem}
Let $\varphi$ belong to the space of $\Theta$.
The function $m\mapsto\varphi^U(m)$ on $\widetilde{M}(\Adele)$ belongs to the space of $\delta_Q^{1/4}\Theta_M$.
\end{theorem}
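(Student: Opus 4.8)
The statement concerns the constant term $\varphi^U$ of an automorphic form $\varphi$ in the residual (exceptional) representation $\Theta$ along the unipotent radical $U$ of a standard parabolic $Q = M \ltimes U$, and asserts it lies in $\delta_Q^{1/4}\Theta_M$. The plan is to realize $\varphi$ as a residue of an Eisenstein series on $\lmodulo{B_n(F)}{G_n}$ and compute its constant term by the standard Langlands formula for constant terms of Eisenstein series. Concretely, $\Theta$ is spanned by residues at $\underline{s} = \underline{0}$ of $E_{B_n}(g; f, \underline{s})$ where $f$ is a section of $\Ind_{\widetilde{B}_n(\Adele)}^{\widetilde{G}_n(\Adele)}(\delta_{B_n}^{1/2}\chi')$ and $\chi$ is $G_n$-exceptional. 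First I would recall that
\begin{align*}
E_{B_n}(g; f, \underline{s})^U(m) = \sum_{w \in \lmodulo{W_M}{W}} E_{B_M}(m; M_w f_{\underline{s}}, \underline{s})
\end{align*}
(the $M$-Eisenstein series attached to the intertwined sections), where the sum runs over minimal-length representatives of $\lmodulo{W_M}{W}$, and $M_w$ is the standard intertwining operator. Taking the residue $\mathrm{Res}_{\underline{s}=\underline{0}}$ term by term, I would argue that only the terms where $M_w f_{\underline{s}}$ genuinely develops the right multi-order pole along $\{\underline{s}_\alpha = 0 : \alpha \in \Delta\}$ survive, and that the surviving contribution assembles into an $M$-Eisenstein series whose residue spans $\delta_Q^{1/4}\Theta_M$ by the very construction of $\Theta_M$ as a residual representation recalled just before the theorem.

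Second, I would pin down the normalization factor $\delta_Q^{1/4}$. The key book-keeping is the following: when one restricts the inducing character $\delta_{B_n}^{1/2}\chi$ (with $\chi|_{T_n^2} = \delta_{B_n}^{1/4}$) to $\widetilde{M}$, one writes $\delta_{B_n} = \delta_Q \cdot \delta_{B_M}$ on $T_n$, so $\delta_{B_n}^{1/2}\chi = \delta_Q^{1/2}\delta_{B_M}^{1/2}\chi$ and the $T_n$-character $\chi$ restricted to $\widetilde M$ has $\chi|_{T_n^2} = \delta_{B_n}^{1/4} = \delta_Q^{1/4}\delta_{B_M}^{1/4}$; the factor $\delta_Q^{1/4}$ peels off as a scalar (it is trivial on $M$-commutators and on the derived group, hence really is a character of $M$ pulled back via the appropriate determinants), and what remains is exactly a $M$-exceptional character in the sense defined in the excerpt. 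One must be slightly careful because the residue operation at $\underline{s}=\underline{0}$ interacts with the shift: the Eisenstein series $E_{B_M}$ built from $\delta_{B_M}^{1/2}\chi'|_{\underline{s}}$ has its residual representation at $\underline{s}=\underline{0}$, and the prefactor $\delta_Q^{1/4}$ is $\underline{s}$-independent, so it simply multiplies through.

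Third, the step that needs genuine care is the vanishing of the ``wrong'' cosets $w \in \lmodulo{W_M}{W}$, $w \ne e$. For the non-metaplectic $\GL_n$ this is classical (Kazhdan--Patterson, and the references \cite{KP,BFG} cited in the excerpt handle exactly the residue computation for $\Theta$ on $G_n$ and its Levis); I would follow \cite[Proposition~II.1.2, Theorems~II.1.4, II.2.1]{KP} and \cite[Proposition~3.1, Theorem~3.2]{BFG}, checking that the Gindikin--Karpelevich factors $c_w$ appearing in $M_w$ (whose metaplectic form is \eqref{eq:Gindikin-Karpelevich formula}) contribute at most a simple pole along each $\underline{s}_\alpha = 0$ for $\alpha$ a simple root, so that a product of more than $|\Delta|$ such factors — which is what a non-trivial coset forces — is killed upon multiplying by $\prod_{\alpha\in\Delta}\underline{s}_\alpha$ and taking the limit, unless the extra factors are regular and nonzero, which a direct inspection of which roots $w$ sends negative rules out for $w \notin W_M$. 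The $e$-coset contributes $E_{B_M}(m; \delta_Q^{1/4} f_{\underline{s}}|_{\widetilde{M}}, \underline{s})$, and its residue is by definition an element of $\delta_Q^{1/4}\Theta_M$. Since $\Theta \cong \otimes'_v \Theta_v$ and $\Theta_M \cong \otimes'_v \Theta_{M,v}$ with matching local exceptional characters, and the global pole analysis is compatible with these factorizations, the identification is complete. The main obstacle is thus not conceptual but the careful pole-order bookkeeping for the intertwining operators in the metaplectic setting; fortunately all the needed inputs — \eqref{eq:Gindikin-Karpelevich formula}, \eqref{eq:formula for composing Tw and Tw'}, and the construction of $\Theta_M$ as a residue — are already in place in the excerpt, so one only has to transcribe the $\GL_n$ argument.
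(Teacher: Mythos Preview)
Your overall strategy matches the paper's: realize $\varphi$ as $\mathrm{Res}_{\underline{s}=\underline{0}}E_{B_n}(\blank;f,\underline{s})$, expand the constant term along $U$ as a sum over $w$ in a set of representatives for $\lmodulo{W_M}{W}$ (equivalently $\lmodulo{B_n}{G_n}/Q$) of $M$-Eisenstein series built from intertwined sections, and isolate the unique term whose pole order matches the $|\Delta_{G_n}|$-fold residue. This is exactly the route taken in the paper via the formula of M{\oe}glin--Waldspurger and the two claims on pole counts.

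However, you have the surviving coset wrong, and this propagates into your $\delta_Q^{1/4}$ bookkeeping. The term $w=e$ does \emph{not} survive: for $w=e$ the intertwining operator contributes no poles, and the $M$-series $E_{B_M}(m;f_{\underline{s}}|_{\widetilde M},\underline{s})$ has simple poles only in the variables $\underline{s}_\alpha$ for $\alpha\in\Delta$ (the simple roots of $M$). Since the residue multiplies by $\prod_{\alpha\in\Delta_{G_n}}\underline{s}_\alpha$, the factors $\underline{s}_\alpha$ for $\alpha\in\Delta_{G_n}\setminus\Delta$ kill this term. Your third paragraph has the logic inverted: it is not that ``too many'' Gindikin--Karpelevich poles kill a term, but that \emph{too few} total poles (from intertwiner plus $M$-series combined) do. The paper shows that the unique $w$ attaining the full count $|\Delta_{G_n}|$ is $w=w_Mw_0$: then $\{\alpha\in\Delta_{G_n}:w\alpha<0\}$ accounts for the intertwiner poles and $\{\alpha\in\Delta_{G_n}:w\alpha\in\Sigma_M^+\}$ for the $M$-series poles, and these two disjoint sets exhaust $\Delta_{G_n}$ precisely when $w=w_Mw_0$.

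This also repairs your normalization. For $w=e$ your own computation gives $\delta_{B_n}^{1/2}\chi|_M=\delta_Q^{1/2}\delta_{B_M}^{1/2}\chi$ with $\chi=\delta_Q^{1/4}\chi''$, hence $\delta_Q^{3/4}$, not $\delta_Q^{1/4}$ (you silently dropped the $\delta_Q^{1/2}$). For $w=w_Mw_0$ one has $\rconj{w}\delta_{B_n}=\delta_Q^{-1}\delta_{B_M}$, so $\rconj{w}\chi|_{T_n^2}=\delta_Q^{-1/4}\delta_{B_M}^{1/4}$, and the restriction of $f_{\underline{s},w}$ to $\widetilde M$ lands in $\delta_Q^{1/2}\cdot\delta_Q^{-1/4}\Ind_{\widetilde B_M}^{\widetilde M}(\delta_{B_M}^{1/2}\chi'')=\delta_Q^{1/4}\Ind_{\widetilde B_M}^{\widetilde M}(\delta_{B_M}^{1/2}\chi'')$, whose residue is $\delta_Q^{1/4}\Theta_M$ as claimed.
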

\begin{remark}
This was proved in \cite{KP} for $Q=B_n$.
\end{remark}
\begin{remark}
If $F$ is a function field, this result follows immediately from the computation of the Jacquet modules of $\theta$ over $p$-adic fields in \cite[Theorem~5.1]{Kable}.
\end{remark}
\begin{remark}\label{remark:the constant term does not vanish}
A priori, the constant term does not vanish identically, because it does not vanish for $N_n$ (this follows from the construction of the exceptional representation in \cite[\S~II]{KP}).
\end{remark}
\begin{remark}
Similar results were obtained in \cite{me7} for the small representation of a cover of $\mathrm{SO}_{2n+1}$ (in the sense of \cite{BFG}), and in \cite{me8} for the analog of this representation for $\mathrm{GSpin}_{2n+1}$.
In the case of $\mathrm{SO}_{2n+1}$, the direct factors of maximal parabolic subgroups do commute in the cover, so that a more precise result could be obtained. For $\mathrm{GSpin}_{2n+1}$ the direct factors commute only for the maximal parabolic subgroup obtained by removing the unique short simple root.
\end{remark}
\begin{proof}
As explained above and keeping the same notation (with $M=G_n$), we can assume
\begin{align*}
\varphi(g)=\mathrm{Res}_{\underline{s}=\underline{0}}E_{B_{n}}(g;f,\underline{s}),
\end{align*}
for some $f$ in the space of $\Ind_{\widetilde{B}_n(\Adele)}^{\widetilde{G}_n(\Adele)}(\delta_{B_n}^{1/2}\chi')$, where
$\chi'$ corresponds to a genuine lift of $\delta_{B_n}^{1/4}|_{T_n^2}$.

Let $\mathcal{W}\subset W$ be with
$G_n=\coprod_{w\in\mathcal{W}}B_nw^{-1}Q$. For $X<Q$ and $w\in\mathcal{W}$, set $X^w=\rconj{w}B_n\cap X$. Define
\begin{align}\label{eq:constant term theorem intertwining operator w}
&f_{\underline{s},w}(m)=M(w,\chi_{\underline{s}})f_{\underline{s}}(m)=\int_{\lmodulo{U^w(F)}{U(\Adele)}}f_{\underline{s}}(w^{-1}um)\, du,\\
\label{eq:constant term theorem Eisenstein series w}
&E_{M^w}(f_{\underline{s},w}(m))=
\sum_{m_0\in\lmodulo{M^w(F)}{M(F)}}f_{\underline{s},w}(m_0m).
\end{align}
Here $m\in\widetilde{M}(\Adele)$ (although the definition is valid also on $\widetilde{G}_n(\Adele)$).
The intertwining operator $M(w,\chi_{\underline{s}})$ is in fact defined by the meromorphic continuation of this integral.
According to M{\oe}glin and Waldspurger \cite[\S~II.1.7]{MW2},
\begin{align}\label{eq:constant term MW sum}
\varphi^{U}(m)=\sum_{w\in\mathcal{W}}\mathrm{Res}_{\underline{s}=\underline{0}}E_{M^w}(f_{\underline{s},w}(m)).
\end{align}
We can take the representatives $\mathcal{W}$ such that for all $w\in\mathcal{W}$,
\begin{align}\label{containment:convenient representative condition}
N_M<\rconj{w}N_n.
\end{align}
Indeed this follows from \cite[2.11]{BZ2} (applied to $W^{T_n,M}$ in their notation, $\rconj{w^{-1}}(M\cap B_n)<B_n$).

\begin{claim}\label{claim:constant term theorem claim poles of intertwining}
$f_{\underline{s},w}$ is holomorphic at $\underline{s}\rightarrow\underline{0}$, except for simple poles
in the variables $\underline{s}_{\alpha}$ with $\alpha\in\Delta_{G_n}$ such that $w\alpha<0$.
\end{claim}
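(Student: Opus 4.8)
The plan is to reduce $M(w,\chi_{\underline{s}})$ to a composition of rank-one intertwining operators and to read off its poles from the (metaplectic) Gindikin--Karpelevich factors. Write $w$ as a reduced word $w=s_{\beta_\ell}\cdots s_{\beta_1}$ with $\beta_1,\dots,\beta_\ell\in\Delta_{G_n}$, $\ell=\ell(w)$, and put $w_j=s_{\beta_{j-1}}\cdots s_{\beta_1}$. Since lengths add, the standard intertwining operators compose without a correction factor (the global analog of the length-additive case of \eqref{eq:formula for composing Tw and Tw'}):
\begin{align*}
M(w,\chi_{\underline{s}})=M(s_{\beta_\ell},\rconj{w_\ell}\chi_{\underline{s}})\circ\cdots\circ M(s_{\beta_1},\chi_{\underline{s}}).
\end{align*}
The $j$-th factor ``flips'' the positive root $\gamma_j:=w_j^{-1}\beta_j$; the $\gamma_j$ are pairwise distinct and exhaust the positive roots made negative by $w$, and the simple ones among them are precisely $\{\alpha\in\Delta_{G_n}:w\alpha<0\}$.

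Next, decompose each rank-one operator as a scalar normalizing factor $r_j(\underline{s})$ times a normalized rank-one operator $N_j(\underline{s})$; the latter is holomorphic and nonvanishing in a neighbourhood of $\underline{s}=\underline{0}$. At an unramified place the scalar is the $c_{\beta_j}$-factor of $\rconj{w_j}\chi_{\underline{s}}$ given by \eqref{eq:Gindikin-Karpelevich formula} (cf.\ \cite{KP}); writing $\gamma_j=(a_j,b_j)$ with $a_j<b_j$ and using that at $\underline{s}=\underline{0}$ the inducing character restricts to $\delta_{B_n}^{1/4}$ on $T_n^2$, one gets $\rconj{w_j}\chi_{\underline{s}}(a_{\beta_j})=q^{-(b_j-a_j)-2(s_{a_j}-s_{b_j})}$, so that the product over all places is, up to a factor holomorphic and nonvanishing near $\underline{s}=\underline{0}$, the ratio
\begin{align*}
\frac{\zeta_F\big((b_j-a_j)+2(s_{a_j}-s_{b_j})\big)}{\zeta_F\big((b_j-a_j)+1+2(s_{a_j}-s_{b_j})\big)},
\end{align*}
where $\zeta_F$ is the (completed, or partial for a decomposable section) Dedekind zeta function of $F$. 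One also uses here that the ramified and Archimedean local rank-one operators, and the normalized operators $N_j$, introduce no poles near $\underline{s}=\underline{0}$, the exceptional datum lying in the closed positive chamber ($\chi'(a_\alpha)=q^{-\operatorname{ht}(\alpha)}$ with $\operatorname{ht}(\alpha)\geq1$ for $\alpha>0$).

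Finally one counts poles. As $\underline{s}\to\underline{0}$ the numerator argument tends to $b_j-a_j=\operatorname{ht}(\gamma_j)\geq1$ and the denominator argument to $b_j-a_j+1\geq2$; since $\zeta_F$ has a simple pole at $s=1$ (and, in the completed case, also at $s=0$, which is irrelevant here as the arguments stay $\geq1$) and is nonvanishing on $\Re s\geq1$, the denominator is holomorphic and nonvanishing near $\underline{s}=\underline{0}$, while the numerator is holomorphic there unless $b_j-a_j=1$, i.e.\ unless $\gamma_j$ is simple, in which case it contributes a single simple pole along $(b_j-a_j)+2(s_{a_j}-s_{b_j})=1$, that is, along $\underline{s}_{\gamma_j}=0$. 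Multiplying the $r_j$ and using that the simple $\gamma_j$ are the distinct elements of $\{\alpha\in\Delta_{G_n}:w\alpha<0\}$, one concludes that $M(w,\chi_{\underline{s}})$, hence $f_{\underline{s},w}$, becomes holomorphic near $\underline{s}=\underline{0}$ after multiplication by $\prod_{\alpha\in\Delta_{G_n},\,w\alpha<0}\underline{s}_\alpha$, with at most a simple pole along each $\underline{s}_\alpha=0$. The main obstacle is the cover bookkeeping: making the factorization of the \emph{global} intertwining operator into rank-one pieces precise in the presence of the cocycle, assembling the local Gindikin--Karpelevich factors into the zeta ratios following \cite{KP}, and verifying holomorphy of the ramified, Archimedean and normalized factors at the exceptional point; the remaining pole count for $\zeta_F$ is routine.
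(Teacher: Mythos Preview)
Your proposal is correct and follows essentially the same strategy as the paper: identify the poles with the global Gindikin--Karpelevich product, rewrite each factor as a ratio of Dedekind zeta values, and observe that only simple roots $\alpha$ with $w\alpha<0$ give a pole (since $C_\alpha=\operatorname{ht}(\alpha)=1$ exactly then).

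The paper's organization is slightly leaner than yours. You factor $M(w,\chi_{\underline{s}})$ into rank-one pieces via a reduced word and then split each piece as a normalizing scalar times a normalized operator whose holomorphy you must assert separately. The paper bypasses both steps: since $\chi'$ lies in the open positive Weyl chamber, the \emph{unnormalized} local intertwining operator at every finite place is already given by an absolutely convergent integral near $\underline{s}=\underline{0}$, hence is holomorphic there (and the same holds at Archimedean places). Thus for a pure tensor $f$ all local factors are holomorphic, and the only possible poles come from the infinite product of unramified $c_\alpha$-factors over $\{\alpha>0:w\alpha<0\}$, which directly gives the zeta ratios $\zeta(C_\alpha+2\underline{s}_\alpha)/\zeta(C_\alpha+2\underline{s}_\alpha+1)$. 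Your rank-one factorization and normalized operators are not wrong, just unnecessary overhead here; the positive-chamber argument handles local holomorphy in one stroke.
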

\begin{claim}\label{claim:constant term theorem claim poles of series}
$E_{M^w}(f_{\underline{s},w}(m))$ is holomorphic at $\underline{s}\rightarrow\underline{0}$, except for at most $|\Delta_{G_n}|$ simple poles. There are less than $|\Delta_{G_n}|$ poles unless $w=w_Mw_0$, in which case
$m\mapsto E_{M^w}(f_{\underline{s},w}(m))$ belongs to the space of $\Theta_M$.
\end{claim}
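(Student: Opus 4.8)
The idea is to split the pole of $E_{M^w}(f_{\underline s,w}(m))$ at $\underline s\to\underline 0$ into a contribution coming from the section $f_{\underline s,w}$ (controlled by Claim~\ref{claim:constant term theorem claim poles of intertwining}) and a contribution coming from the Eisenstein series on $\widetilde M(\Adele)$, and then to show that the two together can produce a pole of order $|\Delta_{G_n}|=n-1$ only for $w=w_Mw_0$. First I would use Claim~\ref{claim:constant term theorem claim poles of intertwining} to write $f_{\underline s,w}=\bigl(\prod_{\alpha\in\Delta_{G_n},\,w\alpha<0}\underline s_\alpha\bigr)^{-1}g_{\underline s,w}$ with $g_{\underline s,w}$ a holomorphic section at $\underline s=\underline 0$, so that $E_{M^w}(f_{\underline s,w}(m))=\bigl(\prod_{w\alpha<0}\underline s_\alpha\bigr)^{-1}E_{M^w}(g_{\underline s,w}(m))$. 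Writing $d_1(w)=\#\{\alpha\in\Delta_{G_n}:w\alpha<0\}$, it then remains to bound the order of the pole of the Borel–Eisenstein series $E_{M^w}(g_{\underline s,w})$ on $\widetilde M(\Adele)$, with holomorphic inducing data, at $\underline s=\underline 0$.

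\textbf{Identifying the $M$-datum and counting.} Using induction in stages together with the block-compatibility formula \eqref{eq:block-compatibility} (needed because the preimages of the $GL$-factors of $M$ do not commute), and keeping track of the Jacobian $\delta_Q$ produced by the partial integration in \eqref{eq:constant term theorem intertwining operator w} — here the convenient representatives \eqref{containment:convenient representative condition} guarantee $B_M\subset\rconj wB_n$, so $f_{\underline s}$ already transforms correctly under $\widetilde B_M$ — one checks that $g_{\underline s,w}|_{\widetilde M(\Adele)}$ lies, up to a nowhere-vanishing holomorphic factor, in $\Ind_{\widetilde B_M(\Adele)}^{\widetilde M(\Adele)}(\delta_{B_M}^{1/2}\,\rconj w(\delta_{B_n}^{1/2}\chi'\chi_{\underline s})\delta_{B_M}^{-1/2}\delta_Q^{1/4})$, whose restriction to $T_n$ at $\underline s=\underline 0$ has exponent vector $w\cdot\frac14(n-1,n-3,\dots,1-n)$ twisted by a character of $\mathfrak{a}_M$ (the $\delta_Q^{1/4}$-piece, which does not influence the Eisenstein pole). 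By the construction of $\Theta_M$ as a residual representation recalled above (the analog of \cite{KP} and \cite[\S~3]{BFG}, resting on the Langlands Quotient Theorem \cite{BJ,BW}), a Borel–Eisenstein series of $\widetilde M(\Adele)$ has at $\underline s=\underline 0$ a pole of order at most $|\Delta_M|=n-\#(\text{blocks of }M)$, with equality precisely when the inducing character is $M$-exceptional modulo $\mathfrak{a}_M$, i.e. its exponent vector is, inside each block, a decreasing arithmetic progression of step $-\tfrac12$; and in that case the residue spans $\Theta_M$. Hence $E_{M^w}(f_{\underline s,w})$ has a pole of order at most $d_1(w)+|\Delta_M|$ at $\underline s=\underline 0$. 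A direct root computation gives $d_1(w_Mw_0)=\#(\text{blocks})-1$: for $\alpha\in\Delta_{G_n}$ one has $w_Mw_0\alpha=-w_M(-w_0\alpha)$ with $-w_0\alpha\in\Delta_{G_n}$, and $w_M(-w_0\alpha)>0\iff -w_0\alpha\notin\Delta_M$, so $d_1(w_Mw_0)=|\Delta_{G_n}|-|\Delta_M|$. Therefore $d_1(w_Mw_0)+|\Delta_M|=n-1=|\Delta_{G_n}|$, and $\rconj{w_Mw_0}\chi'|_{\widetilde M}$ is $M$-exceptional modulo $\mathfrak{a}_M$ (apply $w_0$ to reverse the exponent vector, then $w_M$ to reverse within each block), so for $w=w_Mw_0$ the top Laurent coefficient is the residue at the $M$-exceptional point of a Borel–Eisenstein series of $\widetilde M(\Adele)$, hence lies in the space of $\Theta_M$ (equivalently of $\delta_Q^{1/4}\Theta_M$, after restoring the normalization of \eqref{eq:constant term theorem intertwining operator w}, which is the twist recorded in Theorem~\ref{theorem:constant term theorem}). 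For every other $w\in\mathcal{W}$ satisfying \eqref{containment:convenient representative condition} one argues that $d_1(w)+|\Delta_M|<|\Delta_{G_n}|$: either $d_1(w)<\#(\text{blocks})-1$, or $\rconj w\chi'|_{\widetilde M}$ fails to be $M$-exceptional modulo $\mathfrak{a}_M$ so that the $M$-Eisenstein pole drops below $|\Delta_M|$ — and the key point is that among all $w$ whose exponent vector $w\cdot\frac14(n-1,\dots,1-n)$ is block-wise consecutive-decreasing, the convenient-representative condition \eqref{containment:convenient representative condition} singles out $w_Mw_0$.

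\textbf{Main obstacle.} The routine parts are the factorization in Step~1 and the computation of $d_1(w_Mw_0)$; the real work is in the middle paragraph. First, correctly identifying the principal series datum of $g_{\underline s,w}|_{\widetilde M}$ through induction in stages on the metaplectic cover, where one must invoke the metaplectic tensor description of exceptional representations of Levi subgroups and the block-compatibility \eqref{eq:block-compatibility}, and bookkeep the $\delta_Q$-factors. Second, the combinatorial heart: showing that \eqref{containment:convenient representative condition} isolates $w=w_Mw_0$ as the unique index for which the full order $|\Delta_{G_n}|$ is attained, so that all other residues vanish in the sum \eqref{eq:constant term MW sum}. A secondary technical input is the sharp pole-order statement for $GL$ Borel–Eisenstein series — maximal order $|\Delta_M|$ attained only at the exceptional character — which is precisely what underpins the construction of $\Theta_M$ and can be quoted from the analogs of \cite{KP} and \cite[\S~3]{BFG}.
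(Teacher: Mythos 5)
Your overall architecture (separate the poles of the section $f_{\underline{s},w}$ from those of the Eisenstein series on $\widetilde{M}(\Adele)$, show only $w=w_Mw_0$ can attain the maximal count, and identify the residue with $\delta_Q^{1/4}\Theta_M$) is the same as the paper's, but the counting step has a genuine gap. The residue is the multi-variable limit $\lim_{\underline{s}\to\underline{0}}\prod_{\alpha\in\Delta_{G_n}}\underline{s}_\alpha(\cdot)$, so what must be controlled is the set of \emph{variables} $\underline{s}_\alpha$, $\alpha\in\Delta_{G_n}$, in which each summand has a simple pole; an aggregate bound of the form $d_1(w)+|\Delta|$ does not capture this, and in fact is not even $\leq|\Delta_{G_n}|$ in general. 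Concretely, take $n=4$, $\Delta=\{\alpha_1,\alpha_3\}$ (so $M\cong G_2\times G_2$), and the permutation $w$ with $w(1)=3$, $w(2)=1$, $w(3)=4$, $w(4)=2$: then $w^{-1}\alpha_1=e_2-e_4>0$ and $w^{-1}\alpha_3=e_1-e_3>0$, so $w$ satisfies \eqref{containment:convenient representative condition}, yet $d_1(w)=2>|\Delta_{G_n}|-|\Delta|=1$. For such $w$ your dichotomy (``either $d_1(w)<\#\mathrm{blocks}-1$, or the restricted character is not $M$-exceptional so the $M$-pole drops below $|\Delta|$'') only yields a total of at most $2+1=3=|\Delta_{G_n}|$, which neither proves the bound with strictness for $w\neq w_Mw_0$ nor rules out this $w$ from contributing to the residue. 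What is actually needed — and what the paper proves — is the finer, per-variable statement: since \eqref{containment:convenient representative condition} places $\rconj{w}(\chi'\chi_{\underline{s}})$ in the positive chamber for $W_M$, the poles of the $M$-series (read off from its constant term along $N_M$ and the Gindikin--Karpelevich factors, exactly as in Claim~\ref{claim:constant term theorem claim poles of intertwining}) can occur only in the variables $\underline{s}_{w^{-1}\alpha}$ with $\alpha\in\Sigma_M^+$ (the positive roots spanned by $\Delta$) and $C_{w,\alpha}=1$, i.e.\ they are indexed by $\setof{\alpha\in\Delta_{G_n}}{w\alpha\in\Sigma_M^+}$, a subset of $\Delta_{G_n}$ disjoint from $\setof{\alpha\in\Delta_{G_n}}{w\alpha<0}$. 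This bookkeeping gives at once the bound $|\Delta_{G_n}|$, the simplicity of all poles (the two variable sets are disjoint, so no variable carries a double pole), and, in the equality case, the partition $\Delta_{G_n}=\setof{\alpha}{w\alpha<0}\amalg\setof{\alpha}{w\alpha\in\Sigma_M^+}$, from which \eqref{containment:convenient representative condition} forces $w=w_Mw_0$ immediately; in the example above the second set is empty and only $2<3$ poles occur.

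Two further points. First, the black box you invoke (``a Borel Eisenstein series on $\widetilde{M}$ has pole order at most $|\Delta|$, attained only at the exceptional character'') is not the statement needed here: the inducing character moves only along the directions $\underline{s}\mapsto\rconj{w}\chi_{\underline{s}}$, and one must know in which $\underline{s}$-variables the poles lie, not their total order; likewise your ``combinatorial heart'' (that among representatives with block-wise exceptional restriction the condition \eqref{containment:convenient representative condition} singles out $w_Mw_0$) is asserted rather than proved, whereas in the paper the equality case is immediate from the partition above. Second, a bookkeeping slip: $f_{\underline{s},w}$ lies in $\Ind(\delta_{B_n}^{1/2}\,\rconj{w}(\chi'\chi_{\underline{s}}))$ — the modulus character is not conjugated — so its restriction to $\widetilde{M}(\Adele)$ lies in $\delta_Q^{1/2}\Ind_{\widetilde{B}_M(\Adele)}^{\widetilde{M}(\Adele)}(\delta_{B_M}^{1/2}\,\rconj{w}(\chi'\chi_{\underline{s}}))$, and at $w=w_Mw_0$ the identity $\rconj{w}\delta_{B_n}=\delta_Q^{-1}\delta_{B_M}$ produces exactly the $\delta_Q^{1/4}$-twist of the exceptional datum required by Theorem~\ref{theorem:constant term theorem}; your expression conjugates $\delta_{B_n}^{1/2}$ as well and yields a different twist, which you wave away but which should be fixed when identifying the residue with $\delta_Q^{1/4}\Theta_M$.
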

Granted the claims, the result quickly follows: by Claim~\ref{claim:constant term theorem claim poles of series}, any
summand with $w\ne w_Mw_0$ vanishes when we take the residue, and we are left with
$E_{M^{w_Mw_0}}(f_{\underline{s},w_Mw_0}(m))$. Since at most one summand is nonzero, as explained in Remark~\ref{remark:the constant term does not vanish} this summand is nonzero for some data.
\end{proof}
\begin{proof}[Proof of Claim~\ref{claim:constant term theorem claim poles of intertwining}]
Since $f$ is left-invariant under $N_n(\Adele)$ and $U^w<N_n$, we may
rewrite the integration in \eqref{eq:constant term theorem intertwining operator w} over
$\lmodulo{U^w(\Adele)}{U(\Adele)}$. Moreover \eqref{containment:convenient representative condition} implies
\begin{align*}
N_MU^w=N_M\rconj{w}B_n\cap N_MU=\rconj{w}B_n\cap N_n=N_n^w,
\end{align*}
hence $\lmodulo{U^w}{U}=\lmodulo{N_n^w}{N_n}$ so that we can write the integral in
\eqref{eq:constant term theorem intertwining operator w} over $\lmodulo{N_n^w(\Adele)}{N_n(\Adele)}$. Now we proceed as in
\cite[Claim~3.5]{me8}. We can assume that $f$ is a pure tensor. At any finite place, the local intertwining operator is holomorphic for $\underline{s}$ in a small neighborhood of $\underline{0}$, because then the local component of
$\chi'\chi_{\underline{s}}$ belongs to the positive Weyl chamber. Thus the poles are global, in the sense that they appear in the product of local intertwining operators applied to the unramified normalized components of
$f$. Now we use the Gindikin-Karpelevich formula \eqref{eq:Gindikin-Karpelevich formula} to identify the possible poles.

Let $\alpha\in\Sigma_{G_n}^+$ and assume $w\alpha<0$. Recall the function $c_{\alpha}(\cdots)$ (defined before \eqref{eq:Gindikin-Karpelevich formula}). The poles occur in the products
\begin{align*}
\prod_vc_{\alpha}((\chi'\chi_{\underline{s}})_v)=\frac{\zeta(C_{\alpha}+2\underline{s}_{\alpha})}{\zeta(C_{\alpha}+2\underline{s}_{\alpha}+1)},
\end{align*}
where $\zeta$ is the partial Dedekind zeta function of $F$ and $0<C_{\alpha}\in\mathbb{Z}$ is an integer depending
on $\chi'$. As a function of $\underline{s}_{\alpha}$, there is a pole when $\underline{s}_{\alpha}\rightarrow0$ if and only if $C_{\alpha}=1$, this occurs precisely when $\alpha\in\Delta_{G_n}$. We also deduce that the poles are simple.
\end{proof}
\begin{proof}[Proof of Claim~\ref{claim:constant term theorem claim poles of series}]
The poles of $E_{M^w}(f_{\underline{s},w}(m))$ are either poles of $f_{\underline{s},w}(m)$, which were
identified in Claim~\ref{claim:constant term theorem claim poles of intertwining}, or poles incurred by the summation
over $\lmodulo{M^w}{M}$. Because $\lmodulo{U^w}{U}=\lmodulo{N_n^w}{N_n}$, $f_{\underline{s},w}$ is an element in the space of \begin{align*}
\Ind_{\widetilde{B}_n(\Adele)}^{\widetilde{G}_n(\Adele)}(\delta_{B_n}^{1/2}\ \rconj{w}(\chi'\chi_{\underline{s}})).
\end{align*}
Then the restriction of $f_{\underline{s},w}$ to $\widetilde{M}(\Adele)$ belongs to
\begin{align*}
\delta_Q^{1/2}\Ind_{\widetilde{B}_M(\Adele)}^{\widetilde{M}(\Adele)}(
\delta_{B_M}^{1/2}\ \rconj{w}(\chi'\chi_{\underline{s}})).
\end{align*}
Since $N_M<\rconj{w}B_n\cap M$ (by \eqref{containment:convenient representative condition}),
$B_M=\rconj{w}B_n\cap M$ ($w$ cannot conjugate more positive roots from $N_n$ into $M$, because $N_M$ is a maximal unipotent subgroup in $M$). Hence \eqref{eq:constant term theorem Eisenstein series w} is a series over $\lmodulo{B_M}{M}$.
To locate its poles we consider the constant term along $N_M$. Arguing as in \cite[Proposition~II.1.2]{KP}, the constant term is a sum of intertwining operators $M(\omega,\chi_{\underline{s}})f_{\underline{s},w}$, where $\omega$ varies over $W_M$.

Fix $\omega$ and let $\Sigma_M$ (resp., $\Sigma_M^+$) be the set of roots (resp., positive roots) spanned by the roots in $\Delta$. Assumption \eqref{containment:convenient representative condition} implies that $\rconj{w}(\chi'\chi_{\underline{s}})$
belongs to the positive Weyl chamber of $W_M$ (when $\underline{s}\rightarrow\underline{0}$). Therefore, as in the proof of Claim~\ref{claim:constant term theorem claim poles of intertwining} any pole must be located in a quotient of $\zeta$ functions
\begin{align}\label{eq:global expression of poles using zeta function}
\prod_vc_{\alpha}(\rconj{w}(\chi'\chi_{\underline{s}})_v)=\frac{\zeta(C_{w,\alpha}+2\underline{s}_{w^{-1}\alpha})}{\zeta(C_{w,\alpha}+2\underline{s}_{w^{-1}\alpha}+1)}.
\end{align}
Here $\alpha\in\Sigma_M^+$ and $\omega\alpha<0$;
$0<C_{w,\alpha}\in\mathbb{Z}$ depends on $\rconj{w}\chi'$, specifically $(\rconj{w}\chi')_v(a_{\alpha})=q_v^{-C_{w,\alpha}}$.
The quotient has a pole if and only if $C_{w,\alpha}=1$. Since $C_{\alpha}=1$ if and only if $\alpha\in\Delta_{G_n}$,
\begin{align}\label{eq:set of poles series}
\setof{\alpha\in\Sigma_M^+}{C_{w,\alpha}=1}=
\setof{\alpha\in\Delta_{G_n}}{w\alpha\in\Sigma_M^+}.
\end{align}
Thus the number of poles of the series is bounded by the size of \eqref{eq:set of poles series}.
Since the conditions $w\alpha\in\Sigma_M^+$ and $w\alpha<0$ are disjoint, the number of poles of
$E_{M^w}(f_{\underline{s},w}(m))$ is at most $|\Delta_{G_n}|$. The poles of $f_{\underline{s},w}$
are simple, they occur in the variables $\underline{s}_{\beta}$ where $\beta>0$ and $w\beta<0$. A pole of
\eqref{eq:global expression of poles using zeta function} is also simple, it occurs in
$\underline{s}_{w^{-1}\alpha}$ with $\alpha>0$, and because $w(w^{-1}\alpha)=\alpha>0$,
this pole does not appear in the set of poles of $f_{\underline{s},w}$. Thus the poles of
$E_{M^w}(f_{\underline{s},w}(m))$ are simple.

Assuming $|\Delta_{G_n}|$ poles are obtained, we prove $w=w_Mw_0$. In this case we must have
\begin{align*}
\Delta_{G_n}=\setof{\alpha\in\Delta_{G_n}}{w\alpha<0}\amalg
\setof{\alpha\in\Delta_{G_n}}{w\alpha\in\Sigma_M^+},
\end{align*}
and then \eqref{containment:convenient representative condition} immediately implies $w=w_Mw_0$.
Since in this case $\rconj{w}\delta_{B_n}=\delta_{Q}^{-1}\delta_{B_M}$,
the restriction of $f_{\underline{s},w}$ to $\widetilde{M}(\Adele)$ belongs to
$\delta_Q^{1/4}\Ind_{\widetilde{B}_M(\Adele)}^{\widetilde{M}(\Adele)}(\delta_{B_M}^{1/2}\chi'')$, where
$\chi''$ corresponds to a lift of $\delta_{B_M}^{1/4}|_{T_n^2}$, and the definition of $\Theta_M$ implies that
\begin{align*}
m\mapsto\mathrm{Res}_{\underline{s}=\underline{0}}E_{M^{w}}(f_{\underline{s},w}(m))
\end{align*}
belongs to the space of $\delta_Q^{1/4}\Theta_M$.
\end{proof}
\begin{remark}
Takeda \cite[Proposition~2.45]{Tk} computed the constant term along the unipotent radical corresponding to the partition
$(2,\ldots,2)$, for the twisted exceptional representation (his proof does not apply to the non-twisted version, i.e., to our setting). He developed a generalized Weil representation for the preimage of the Levi part, defined a global block-compatible cocycle and was able to provide an explicit realization, whose properties we shall use below (see the proof of Claim~\ref{claim:theorem Shalika Fourier coefficient value of semi-Whittaker on t}). However, it is not expected that a similar realization will apply for a general Levi subgroup: the realization of $\theta_n$ as essentially a Weil representation is only valid for $n=2$.
\end{remark}

Now we describe the Fourier coefficient along the Shalika unipotent subgroup and character.
Assume $n=2k$. The stabilizer of $\psi_k$ in $M_k$ is now $G_k^{\triangle}$.
\begin{claim}
The section $\mathfrak{s}$ is well defined on $G_k^{\triangle}(\Adele)$ and satisfies a global relation similar to
\eqref{eq:formula for s on triangle}, with the Hilbert symbol replaced by the product of local Hilbert symbols at all places.
\end{claim}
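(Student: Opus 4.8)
The statement has two parts: first, that the section $\mathfrak{s}$ (which globally is $\prod_v\mathfrak{s}_v$, a priori only well defined on $G_n(F)$, $N_n(\Adele)$ and $T_n(\Adele)$) makes sense on $G_k^{\triangle}(\Adele)$; second, that it satisfies the global analog of \eqref{eq:formula for s on triangle}, namely
\begin{align*}
\mathfrak{s}((cc')^{\triangle})=\Big(\prod_v(\det c,\det c')_{2,v}\Big)\mathfrak{s}(c^{\triangle})\mathfrak{s}({c'}^{\triangle}),\qquad c,c'\in G_k(\Adele).
\end{align*}
The strategy is purely local-to-global: reduce everything to the local block-compatibility formula \eqref{eq:block-compatibility}, which already gives $\sigma_v(c^{\triangle},{c'}^{\triangle})=(\det c,\det c')_{2,v}$, together with the fact that $\kappa_v$ splits $K_v=G_n(\mathcal{O}_v)$ at almost all $v$ and agrees with $\mathfrak{s}_v$ on $G_k(\mathcal{O}_v)^{\triangle}$ (this last point is exactly Claim~\ref{claim:h and kappa agree on H cap K}, whose proof is purely local and applies verbatim at each such $v$).

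First I would address well-definedness. Given $g=(g_v)_v\in G_k(\Adele)$, for almost all $v$ we have $g_v\in G_k(\mathcal{O}_v)$, hence by the remark following Claim~\ref{claim:h and kappa agree on H cap K} (the local statement that $\mathfrak{s}_v$ and $\kappa_v$ agree on $G_k(\mathcal{O}_v)^{\triangle}$) we get $\mathfrak{s}_v(g_v^{\triangle})=\kappa_v(g_v^{\triangle})\in\kappa_v(K_v)$ for almost all $v$. Therefore the element $(\mathfrak{s}_v(g_v^{\triangle}))_v$ lies in the restricted product $\prod'_v\widetilde{G}_n(F_v)$ with respect to $\{\kappa_v(K_v)\}_v$, and its image in $\widetilde{G}_n(\Adele)=\lmodulo{\mu_2^{\times}}{\prod'_v\widetilde{G}_n(F_v)}$ is well defined; this is by definition $\mathfrak{s}(g^{\triangle})$. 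One should check that this is independent of the ambiguities already present in the construction of $\widetilde{G}_n(\Adele)$ (choice of sections at the bad places, the $\mu_2^{\times}$ quotient), but this is the same bookkeeping as for $\mathfrak{s}$ restricted to $T_n(\Adele)$ or $N_n(\Adele)$ in \S~\ref{subsection:The global metaplectic cover}, so I would simply invoke that.

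Second, the cocycle identity. For $c=(c_v)_v$ and $c'=(c'_v)_v$, at each place $v$ formula \eqref{eq:formula for s on triangle} gives $\mathfrak{s}_v((c_vc'_v)^{\triangle})=(\det c_v,\det c'_v)_{2,v}\,\mathfrak{s}_v(c_v^{\triangle})\mathfrak{s}_v({c'_v}^{\triangle})$. Taking the (restricted) product over all $v$ and passing to the quotient by $\mu_2^{\times}$, the local Hilbert symbols assemble into $\prod_v(\det c,\det c')_{2,v}$, which is the ``global Hilbert symbol'' appearing in the statement. The only subtlety is convergence/finiteness: one must note that $(\det c_v,\det c'_v)_{2,v}=1$ for almost all $v$ — this holds because $\det c_v,\det c'_v\in\mathcal{O}_v^*$ and $|2|_v=1$ for almost all $v$, so the product is finite — and similarly that both sides genuinely land in the restricted product, which is the content of the well-definedness step. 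Then the identity is an equality of three well-defined elements of $\widetilde{G}_n(\Adele)$, obtained placewise.

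\textbf{Main obstacle.} There is no deep obstacle here; the claim is essentially the adelic packaging of the local block-compatibility \eqref{eq:block-compatibility} plus the almost-everywhere agreement of $\mathfrak{s}_v$ with the canonical splitting $\kappa_v$. The one point requiring genuine care — and the place I would spend the most words — is verifying that the local data $(\mathfrak{s}_v(c_v^{\triangle}))_v$ respects the restricted-product structure and descends correctly through the $\mu_2^{\times}$ quotient defining $\widetilde{G}_n(\Adele)$, i.e. that the proposed global section is independent of the auxiliary choices made at the finitely many bad places. This is entirely analogous to the corresponding verification already accepted for $\mathfrak{s}$ on $N_n(\Adele)$ and $T_n(\Adele)$ in \S~\ref{subsection:The global metaplectic cover}, so in the write-up I would state the reduction carefully and then refer to \cite{KP,Tk} for the standard adelic formalism rather than reproduce it.
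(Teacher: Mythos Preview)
Your proposal is correct and follows exactly the paper's approach: invoke Claim~\ref{claim:h and kappa agree on H cap K} to show $\mathfrak{s}_v=\kappa_v$ on $G_k^{\triangle}(\mathcal{O}_v)$ at almost all places, hence $\mathfrak{s}=\prod_v\mathfrak{s}_v$ is well defined on $G_k^{\triangle}(\Adele)$, and then read off the global relation from the local one \eqref{eq:formula for s on triangle}. The paper's proof is a terse two-sentence version of precisely what you wrote; your additional remarks on finiteness of the Hilbert-symbol product and on the $\mu_2^{\times}$ quotient are correct elaborations but not needed beyond what the paper records.
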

\begin{proof}
If $g\in G_k^{\triangle}(\Adele)$, for almost all $v$, $g_v\in G_k^{\triangle}(\mathcal{O}_v)$ and then by
Claim~\ref{claim:h and kappa agree on H cap K}, $\mathfrak{s}_v(g_v)=\kappa_v(g_v)$. Hence $\mathfrak{s}$ is well-defined.
The global relation follows from the local one, since $\mathfrak{s}=\prod_v\mathfrak{s}_v$.
\end{proof}
Henceforth we identify $G_k^{\triangle}(\Adele)$ with its image in $\widetilde{G}_n(\Adele)$ under $\mathfrak{s}$.

Let $\pi$ be any automorphic representation of $\widetilde{G}_n(\Adele)$. We say that $l$ is a global metaplectic Shalika functional on $\pi$, if $l$ satisfies the global analog of \eqref{eq:metaplectic Shalika def}, where
$c\in G_k(\Adele)$, $u\in U_k(\Adele)$ and
$\gamma_{\psi'}$ is now the global Weil factor. As in the case of the Whittaker model, we are interested in concrete realizations of such functionals, e.g., given as integrals. A global integral for the Shalika model of $G_n$ was studied in \cite{JS4,FJ}, involving an integration over $G_k^{\triangle}(\Adele)\times U_k(\Adele)$. For exceptional representations, we will show that
$\varphi^{U_k,\psi_k}$ already satisfies the required properties, i.e., the reductive part of the integration is not needed.
\begin{remark}
Beineke and Bump \cite{BeD} also studied, in the non-metaplectic case, local and global Shalika functionals given solely by
unipotent integration, for a specific representation.
\end{remark}

To study the properties of $\varphi^{U_k,\psi_k}$, we first relate it to the Fourier coefficient of $\Theta$ along $N_n$ and its degenerate character $\psi^{\circ}$ given by
\begin{align*}
\psi^{\circ}(v)=\sum_{i=1}^k\psi((-1)^{k-i}v_{2i-1,2i})
\end{align*}
(called a semi-Whittaker coefficient in \cite{BG}).

Define $w\in G_n(F)$ as follows. For all $1\leq i\leq k$, it has $1$ on the $(2i-1,i)$-th coordinate and
$(-1)^{k-i}$ on the $(2i,k+i)$-th coordinate. Its other entries are zero.
Let $V<U_k$ be the subgroup obtained from $U_k$ by zeroing all upper diagonal entries except
those on rows $i+1,\ldots,k$ of column $k+i$, for all $1\leq i\leq k-1$.
\begin{example}When $k=2$,
\begin{align*}
w=\left(\begin{array}{cccc}1\\&&-1\\&1\\&&&1\end{array}\right),\qquad V=\left\{\left(\begin{array}{cccc}1\\&1&v_1\\&&1\\&&&1\end{array}\right)\right\}.
\end{align*}
\end{example}
We prove:
\begin{lemma}\label{lemma:relating Fourier coefficients - Shalika to the semi-Whittaker}
For any $\varphi$ in the space of $\Theta$,
\begin{align*}
\varphi^{U_k,\psi_k}(g)=\int_{V(\Adele)}\varphi^{N_n,\psi^{\circ}}(wvg)\, dv.
\end{align*}
\end{lemma}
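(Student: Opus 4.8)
The identity is purely a matter of Fourier expansion along a well-chosen unipotent subgroup, in the spirit of the classical ``exchange of roots'' manipulations (cf. the way $\varphi^{U_k,\psi_j}$ is analyzed via $\varphi^{N_n,\psi^\circ}$ in \cite{BG} and in the proof of Lemma~\ref{lemma:relating Fourier coefficients - Shalika to the semi-Whittaker}'s predecessors in \cite{me7,me8}). First I would carefully record the combinatorics of the element $w$: conjugation by $w^{-1}$ sends the Shalika unipotent $U_k$ together with the character $\psi_k$ into a subgroup of $N_n$ carrying a character whose restriction to the simple root subgroups $(2i-1,2i)$ recovers $\psi^\circ$ (up to the signs $(-1)^{k-i}$, which is exactly why those signs were built into $\psi^\circ$ and into $w$). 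So the natural move is: in the integral $\varphi^{N_n,\psi^\circ}(wvg)$ one is integrating over $\lmodulo{N_n(F)}{N_n(\Adele)}$ against $\psi^\circ$, and then over $v\in V(\Adele)$. The claim is that this double integral collapses to a single integral of $\varphi$ over $\lmodulo{U_k(F)}{U_k(\Adele)}$ against $\psi_k$.

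Concretely, I would expand the left-hand side $\varphi^{U_k,\psi_k}(g)$ along a chain of unipotent subgroups interpolating between $U_k$ and (a $w$-conjugate of) $N_n\cdot V$. The key input is Proposition~\ref{proposition:Fourier coefficients are trivial along v and y}, together with Theorem~\ref{theorem:intro Jacquet modules}: these tell us exactly which Fourier coefficients of $\Theta$ vanish, namely that the coefficient along $U_j$ with a character in the ``$\psi_j$-orbit'' is supported only on the stabilizer directions and kills $V_j$ and $Y_j$. So, starting from $\varphi^{U_k,\psi_k}$, I would add in, one root subgroup at a time, the extra coordinates needed to build $N_n$ (the entries strictly inside the two diagonal $G_k$-blocks), and dually the coordinates of $V$; at each stage, Fourier-expanding along the new one-parameter subgroup, every non-trivial character either (i) conjugates under $w$ to a non-degenerate character on a root subgroup on which the coefficient is known to vanish by Theorem~\ref{theorem:intro Jacquet modules}/Proposition~\ref{proposition:Fourier coefficients are trivial along v and y}, forcing only the trivial term to survive, or (ii) is absorbed into the remaining integration. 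Reassembling the surviving trivial terms and performing the conjugation by $w$ (using that $\mathfrak{s}$ is a splitting on $N_n(\Adele)$ and on $G_k^\triangle(\Adele)$, and compatible with $w\in G_n(F)$, so there is no cocycle obstruction on the metaplectic level) yields precisely $\int_{V(\Adele)}\varphi^{N_n,\psi^\circ}(wvg)\,dv$.

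The metaplectic bookkeeping should be painless: $w$ lies in $G_n(F)$, which is split via $\mathfrak{s}$, and all the intermediate unipotent groups sit inside $N_n$ or are conjugates thereof, also split; the character $\psi_k$ versus $\psi^\circ$ differ only by the explicit signs encoded in $w$, and the Weil factor $\gamma_{\psi,(-1)^k}$ appearing in $\theta_{U_k,\psi_k}$ (Theorem~\ref{theorem:intro Jacquet modules}) does not enter here because we are only asserting an identity of Fourier coefficients as functions on $\widetilde{G}_n(\Adele)$, not evaluating the functional on $G_k^\triangle$. I would also note that convergence is not an issue: all integrations are over compact quotients $\lmodulo{U(F)}{U(\Adele)}$ except the one over $V(\Adele)$, and the latter is really an integration over $\lmodulo{V(F)}{V(\Adele)}$ in disguise once one observes (from the structure of $V$ and the support of $\varphi^{N_n,\psi^\circ}$, again via Theorem~\ref{theorem:intro Jacquet modules}) that the integrand is left $V(F)$-invariant.

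\textbf{Main obstacle.} The routine part is the root-exchange itself; the genuinely delicate point is keeping track of the order in which root subgroups are added and removed so that at every step the relevant Fourier coefficient is one to which Theorem~\ref{theorem:intro Jacquet modules} or Proposition~\ref{proposition:Fourier coefficients are trivial along v and y} applies --- i.e.\ making sure the intermediate characters always land in a ``$\psi_j$-type'' orbit for an appropriate $j$, rather than in some orbit for which no vanishing statement is available. Getting the indexing of $V$ (entries on rows $i+1,\dots,k$ of column $k+i$) to mesh correctly with the conjugation by $w$, and checking that the leftover integration over $V(\Adele)$ is exactly what survives, is where I would spend the most care. For $k=2$ one can and should check the whole computation by hand against the displayed example, which is a good sanity test of the signs.
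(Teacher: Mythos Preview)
Your plan is correct and matches the paper's proof: the argument proceeds by an inductive exchange-of-roots, defining intermediate data $(U^{(l)},\psi^{(l)},w^{(l)},V^{(l)})$ interpolating between the Shalika data $(U_k,\psi_k,I_n,\{I_n\})$ and the semi-Whittaker data $(N_n,\psi^\circ,w,V)$, and applying \cite[Lemma~7.1]{RGS} at each step together with the vanishing of certain twisted Jacquet modules of $\Theta_v$.

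Two small corrections. First, the vanishing input at each inductive step is not quite Proposition~\ref{proposition:Fourier coefficients are trivial along v and y}; rather, after the exchange one must show that a Fourier expansion along a new abelian subgroup has only the trivial term, and this uses Theorem~\ref{theorem:intro Jacquet modules} to reduce to a Jacquet module computation on a smaller exceptional representation, finished off by \cite[Lemma~2.8]{me11} (that $(\theta_m)_{U_1,\psi}$ is a quotient of $(\theta_m)_{U_2}$ for $m\geq3$). Second, your convergence remark is off: the $dv$-integral over $V(\Adele)$ is genuinely non-compact and the integrand is not $V(F)$-invariant; convergence is instead an output of the exchange-of-roots lemma itself, which produces the full adelic integral over the ``new'' root subgroup $Y(\Adele)$ at each step.
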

\begin{proof}
For $0\leq l\leq k-1$, define $U^{(l)}<N_n$, a character $\psi^{(l)}$ of $\lmodulo{U^{(l)}(F)}{U^{(l)}(\Adele)}$, $w^{(l)}\in G_n(F)$, and $V^{(l)}<N_n$ as follows. Let
\begin{align*}
&U^{(l)}=\left\{\left(\begin{array}{ccc}z&x_1&x_2\\&I_{k-l}&u\\&&I_{k-l}\end{array}\right):z\in N_{2l}\right\},\\
&\psi^{(l)}(u)=\sum_{i=1}^{l}\psi((-1)^{k-i}z_{2i-1,2i})\psi(\mathrm{tr}(u)).
\end{align*}
Note that $\psi^{(l)}$ is the product of a degenerate character depending only the odd rows of $z$, and the Shalika character on the coordinates of $u$.

The matrix $w^{(l)}$: for $1\leq i\leq l$, has $1$ in the $(2i-1,i)$-coordinate and
$(-1)^{k-i}$ in the $(2i,k+i)$-coordinate, and for $1\leq i\leq k-l$, has $1$ in coordinates
$(2l+i,l+i)$ and $(k+l+i,k+l+i)$, and $0$ elsewhere.

The subgroup $V^{(l)}$: it is obtained from $N_n$ by zeroing all upper diagonal entries except
those in rows $i+1,\ldots,k$ of column $k+i$, for all $1\leq i\leq l$.

\begin{example}When $k=3$,
\begin{align*}
w^{(1)}&=\left(\begin{array}{cccccc}1&0&0&0&0&0\\&&&1\\&1\\&&1\\&&&&1\\&&&&&1\end{array}\right),&
w^{(2)}&=\left(\begin{array}{cccccc}1&0&0&0&0&0\\&&&1\\&1\\&&&&-1\\&&1\\&&&&&1\end{array}\right),\\
V^{(1)}&=\left\{\left(\begin{array}{cccccc}1&&&0\\&1&&v_1\\&&1&v_2\\&&&1\\&&&&1\\&&&&&1\end{array}\right)\right\},&
V^{(2)}&=\left\{\left(\begin{array}{cccccc}1&&&0&0\\&1&&v_1&0\\&&1&v_2&v_3\\&&&1\\&&&&1\\&&&&&1\end{array}\right)\right\}.
\end{align*}
\end{example}

In particular $U^{(0)}=U_k$, $\psi^{(0)}=\psi_k$ (the Shalika character of $U_k$), $w^{(0)}=I_n$, $V^{(0)}=\{I_n\}$; and
$U^{(k-1)}=N_n$, $\psi^{(k-1)}=\psi^{\circ}$, $w^{(k-1)}=w$, $V^{(k-1)}=V$.
\begin{claim}\label{claim:technical claim relating Shalika Fourier coefficient to degenerate Whittaker}
For any $0\leq l\leq k-2$,
\begin{align}\label{eq:identity exchange of roots}
&\int_{V^{(l)}(\Adele)}\int_{\lmodulo{U^{(l)}(F)}{U^{(l)}(\Adele)}}\varphi(uw^{(l)}v)(\psi^{-1})^{(l)}(u)\, du\, dv\\\notag&=
\int_{V^{(l+1)}(\Adele)}\int_{\lmodulo{U^{(l+1)}(F)}{U^{(l+1)}(\Adele)}}\varphi(uw^{(l+1)}v)(\psi^{-1})^{(l+1)}(u)\, du\, dv.
\end{align}
\end{claim}
The lemma follows from the repeated application of this claim.
\end{proof}
\begin{proof}[Proof of Claim~\ref{claim:technical claim relating Shalika Fourier coefficient to degenerate Whittaker}]
We apply the technique of ``exchange of roots", following Ginzburg \cite{G} (see also \cite{GRS3,Soudry5,RGS}).
Identify $U_{k-l}$ with the subgroup of $U^{(l)}$ defined by the coordinates of $u$. Denote an element
of $U_{k-l}$ by
\begin{align*}
u(u_1,u_2,u_3,u_4)=\left(\begin{array}{cccc}1&&u_1&u_2\\&I_{k-l-1}&u_3&u_4\\&&1\\&&&I_{k-l-1}\end{array}\right).
\end{align*}
By definition the restriction of $\psi^{(l)}$ to $U_{k-l}$ is given by $\psi(u_1)\psi(\mathrm{tr}(u_4))$. Set
\begin{align*}
&Y=\{u(0,0,u_3,0)\}<U_{k-l},\quad C=\{u(u_1,u_2,0,u_4)\}<U_{k-l}, 
\\&X=\left\{\left(\begin{array}{ccc}1&e\\&I_{k-l-1}\\&&I_{k-l}\end{array}\right)\right\}.
\end{align*}
Let $U'<U^{(l)}$ be defined by the coordinates of $z$, $x_1$ and $x_2$, and define the following function on
$\widetilde{G}_n(\Adele)$,
\begin{align*}
\varphi'(g)=\int_{\lmodulo{U'(F)}{U'(\Adele)}}\varphi(u'g)(\psi^{-1})^{(l)}(u')\, du'.
\end{align*}
In particular, $\varphi'$ is trivial on $U_{k-l}(F)$. Since $U^{(l)}=U'\rtimes U_{k-l}$, the \lhs\ of \eqref{eq:identity exchange of roots} becomes
\begin{align}\label{eq:identity exchange of roots lhs}
\int_{V^{(l)}(\Adele)}\int_{\lmodulo{U_{k-l}(F)}{U_{k-l}(\Adele)}}\varphi'(uw^{(l)}v)(\psi^{-1})^{(l)}(u)\, du\, dv.
\end{align}

According to Ginzburg, Rallis and Soudry \cite{RGS} (Lemma~7.1, with $X,Y$ and $C$ as above),
\begin{align}\label{eq:identity exchange of roots 1}
&\int_{\lmodulo{U_{k-l}(F)}{U_{k-l}(\Adele)}}\varphi'(u)(\psi^{-1})^{(l)}(u)\, du=
\int_{Y(\Adele)}\int_{\lmodulo{CX(F)}{CX(\Adele)}}\varphi'(uy)(\psi^{-1})^{(l)}(u)\, du\, dy.
\end{align}
Here $\psi^{(l)}$ is extended trivially on $X$.
Put
\begin{align}\label{eq:def of w'}
w'=\left(\begin{array}{cccc}I_{2l+1}\\&&(-1)^{k-l-1}\\&I_{k-l-1}\\&&&I_{k-l-1}\end{array}\right).
\end{align}
Since $w'$ normalizes $U'$ and stabilizes $\psi^{(l)}|_{U'}$, we may replace $\varphi'$ with its left-translate by $w'$. The \rhs\ of \eqref{eq:identity exchange of roots 1} becomes
\begin{align}\label{eq:identity exchange of roots 2}
\int_{Y(\Adele)}\int_{\lmodulo{U''(F)}{U''(\Adele)}}\varphi(uw'y)\psi^{-1}(u)\, du\, dy,\end{align}
where
\begin{align*}
&U''=U'\rtimes \rconj{w'}(CX)=\left\{\left(\begin{array}{ccccc}z&x_1&x_2&x_3&x_4\\&1&u_1&e&u_2\\&&1\\&&&I_{k-l-1}&u_4\\&&&&I_{k-l-1}\end{array}\right):z\in N_{2l}\right\},\\
&\psi(u)=\sum_{i=1}^{l}\psi((-1)^{k-i}z_{2i-1,2i})\psi((-1)^{k-l-1}u_1)\psi(\mathrm{tr}(u_4)).
\end{align*}
Next we show that the $du$-integration can be extended to $U^{(l+1)}$.

Let $U'''<U''$ be obtained from $U''$ by
zeroing out the coordinates of $u_4$. Define
\begin{align*}
M'''=\left\{m(a,b)=\left(\begin{array}{ccc}I_{2l+1}\\&1&a\\&&b\end{array}\right):b\in G_{2(k-l-1)}\right\}
\end{align*}
and denote $A'''=m(a,I_{2(k-l-1)})$, $B'''=m(0,b)$.
The function
\begin{align*}
g\mapsto \int_{\lmodulo{U'''(F)}{U'''(\Adele)}}\varphi(ug)\psi^{-1}(u)\, du
\end{align*}
is well-defined on $\lmodulo{M'''(F)}{M'''(\Adele)}$. In particular, we can consider its Fourier expansion along
the abelian unipotent subgroup $A'''$. There are two orbits under the action of $B'''(F)$, the trivial one and the one corresponding to a character $\psi^{\star}$ depending only on the leftmost coordinate of $a$. However, for each $v<\infty$,
the Jaquet module $(\Theta_v)_{U'''A''',\psi\psi^{\star}}=0$. To see this, first note that by Theorem~\ref{theorem:intro Jacquet modules},
as a vector space $(\Theta_v)_{U'''A''',\psi\psi^{\star}}$ is a quotient of
\begin{align*}
\theta_{2l}\otimes((\theta_{2(k-l)})_{U_{1,1,2(k-l-1)},\psi\psi^{\star}}),
\end{align*}
where $U_{1,1,2(k-l-1)}$ is the unipotent radical of the standard parabolic subgroup
of $G_{2(k-l)}$ with a Levi part isomorphic to $G_1\times G_1\times G_{2(k-l-1)}$ (we restricted the Jacquet module to a representation of the unipotent subgroup to pass from the metaplectic tensor to the standard one, see \cite[Theorem~3.1]{Kable}). However, by \cite[Lemma~2.8]{me11} for any $m\geq3$,
$(\theta_m)_{U_1,\psi}$ is a quotient of $(\theta_m)_{U_2}$ (where on $U_1$, $\psi$ is given by $\psi(\left(\begin{smallmatrix}1&z\\&I_{m-1}\end{smallmatrix}\right))=\psi(z_1)$). Since $\psi^{\star}$ is nontrivial (and $2(k-l)\geq4$),
\begin{align*}
(\theta_{2(k-l)})_{U_{1,1,2(k-l-1)},\psi\psi^{\star}}=0.
\end{align*}
Hence only the trivial character remains and \eqref{eq:identity exchange of roots 2} becomes
\begin{align}\label{eq:identity exchange of roots 3}
\int_{Y(\Adele)}\int_{\lmodulo{U^{(l+1)}(F)}{U^{(l+1)}(\Adele)}}\varphi(uw'y)(\psi^{-1})^{(l+1)}(u)\, du\, dy.
\end{align}
Plugging this into the \lhs\ of \eqref{eq:identity exchange of roots lhs} and using the fact that
\begin{align*}
(w^{(l)})^{-1}Yw^{(l)}V^{(l)}=V^{(l+1)},\qquad w'w^{(l)}=w^{(l+1)}
\end{align*}
leads to the desired equality.
\end{proof}
\begin{theorem}\label{theorem:Shalika model given by Fourier coefficient}
The mapping $\varphi\mapsto\varphi^{U_k,\psi_k}$ on the space of $\Theta(=\Theta_{2k})$ is nonzero and satisfies
\begin{align*}
\varphi^{U_k,\psi_k}(c^{\triangle})=\gamma_{\psi,(-1)^{k}}(\det c)\varphi^{U_k,\psi_k}(1),\qquad\forall c\in G_k(\Adele).
\end{align*}
Consequently, $\Theta$ admits a global metaplectic Shalika model given by a Fourier coefficient. Moreover, the functional $\varphi\mapsto\varphi^{U_k,\psi_k}(1)$ is factorizable.
\end{theorem}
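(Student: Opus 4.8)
The plan is to establish the three assertions of the theorem in turn: non-vanishing, the Shalika equivariance on $G_k^\triangle(\Adele)$, and factorizability.

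\textbf{Non-vanishing.} First I would invoke Lemma~\ref{lemma:relating Fourier coefficients - Shalika to the semi-Whittaker}, which writes $\varphi^{U_k,\psi_k}(g)=\int_{V(\Adele)}\varphi^{N_n,\psi^\circ}(wvg)\,dv$. Thus it suffices to show the semi-Whittaker coefficient $\varphi^{N_n,\psi^\circ}$ is not identically zero. By Remark~\ref{remark:the constant term does not vanish}, the constant term of $\Theta$ along $N_n$ is non-trivial, and more is true: the full Whittaker--type coefficients of $\Theta$ along $N_n$ for the degenerate character $\psi^\circ$ are governed by the construction in \cite[\S~II]{KP}, where one sees that $\psi^\circ$ is exactly the character whose Fourier coefficient survives (this is the semi-Whittaker coefficient of \cite{BG}). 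I would either cite this directly from \cite{KP,BG} or deduce it from the computation of $\Theta$ as a residue of the Borel Eisenstein series, noting that the relevant coefficient of the leading term is a product of local unramified Whittaker--type values which are non-zero. The $V$-integration does not destroy non-vanishing because, after the unfolding in the lemma, one can evaluate at appropriate torus elements where the integrand has fixed sign/phase; alternatively, non-vanishing of $\varphi^{U_k,\psi_k}$ is equivalent at each finite place to $(\Theta_v)_{U_k,\psi_k}\neq0$, which holds by Theorem~\ref{theorem:intro Jacquet modules} with $j=k$, together with non-vanishing of the global coefficient for \emph{some} $\varphi$, which follows from the above.

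\textbf{Shalika equivariance.} For $c\in G_k(\Adele)$, I would compute $\varphi^{U_k,\psi_k}(c^\triangle g)$ directly from the definition \eqref{eq:Fourier coefficient of varphi}. Since $c^\triangle$ normalizes $U_k$ and $\rconj{(c^\triangle)^{-1}}\psi_k=\psi_k$ (as $\psi_k(\rconj{(c^\triangle)^{-1}}u)=\psi(\mathrm{tr}(c^{-1}uc))=\psi(\mathrm{tr}(u))$), a change of variables $u\mapsto\rconj{c^\triangle}u$ in the integral gives $\varphi^{U_k,\psi_k}(c^\triangle g)=\int_{\lmodulo{U_k(F)}{U_k(\Adele)}}\varphi(c^\triangle u g)\psi_k^{-1}(u)\,du$, using that $\varphi$ is automorphic and $c^\triangle\in G_k^\triangle(F)\backslash G_k^\triangle(\Adele)$ — but here one must be careful with the cover, since $\mathfrak{s}$ is only a global function, not a splitting, on $G_k^\triangle(\Adele)$. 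This is where Theorem~\ref{theorem:intro Jacquet modules} enters crucially: it identifies, at each finite place, the local Jacquet module $(\Theta_v)_{U_k,\psi_k}$ as the one-dimensional genuine representation $\epsilon\mathfrak{s}(c^\triangle)\mapsto\epsilon\gamma_{\psi,(-1)^k}(\det c)$. Since the global coefficient $\varphi\mapsto\varphi^{U_k,\psi_k}(g)$ factors through each local Jacquet module (by \cite[Proposition~1]{JR}-type reasoning), the $G_k^\triangle(\Adele)$-equivariance must be through the product of the local characters, i.e. through the global Weil factor $\gamma_{\psi,(-1)^k}(\det c)$. More precisely, I would argue: the function $c\mapsto\varphi^{U_k,\psi_k}(c^\triangle g)\gamma_{\psi,(-1)^k}(\det c)^{-1}$ is, for fixed $g$ and varying $\varphi$, a $G_k(\Adele)$-invariant functional on $\Theta$ restricted through its $U_k,\psi_k$-Jacquet quotient; by the local statement this quotient is one-dimensional at every finite place with the trivial $G_k$-action after twisting, forcing the function to be constant in $c$.

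\textbf{Factorizability.} Finally, $\Theta\cong\otimes'_v\Theta_v$ as an abstract representation, and the space of global metaplectic $(\psi_{(-1)^k},\psi)$-Shalika functionals is at most one-dimensional because at each place the local space is at most one-dimensional (at finite places by Theorem~\ref{theorem:intro Jacquet modules}, i.e. uniqueness of the Jacquet module; at archimedean places by Theorem~\ref{theorem:uniqueness of metaplectic Shalika over R} and \cite{AGJ}). Hence the non-zero functional $\varphi\mapsto\varphi^{U_k,\psi_k}(1)$ must coincide, up to scalar, with the restricted tensor product $\otimes'_v l_v$ of the local functionals (normalized so that $l_v$ takes the value $1$ on the spherical vector for almost all $v$), which is precisely the assertion of factorizability. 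I would phrase this as: both sides are elements of the (at most) one-dimensional space $\Hom_{G_k^\triangle(\Adele)\ltimes U_k(\Adele)}(\Theta,\gamma_{\psi,(-1)^k}\otimes\psi_k)$, and the right-hand Euler product is non-zero by the unramified computation (Lemma~\ref{lemma:unramified normalized Shalika function of theta}), so the two are proportional.

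\textbf{Main obstacle.} The delicate point is the cover-theoretic bookkeeping in the equivariance step: ensuring that the global section $\mathfrak{s}$ on $G_k^\triangle(\Adele)$ (well-defined by the preceding Claim) interacts correctly with the automorphy of $\varphi$ over $G_k^\triangle(F)$, and that the product of local Weil factors $\prod_v\gamma_{\psi_v,(-1)^k}$ is exactly the global Weil factor $\gamma_{\psi,(-1)^k}$ — this uses the product formula for the Weil index and the fact that $\det c^\triangle\in F^{*2}$ locally, so the Hilbert-symbol cocycle contributions vanish, consistently with Remark~\ref{remark:metaplectic Shalika relevant to both covers}. I expect the non-vanishing to be comparatively routine once Lemma~\ref{lemma:relating Fourier coefficients - Shalika to the semi-Whittaker} is in hand, reducing it to a known property of the semi-Whittaker coefficient of $\Theta$ from \cite{KP,BG}.
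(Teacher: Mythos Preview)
Your factorizability argument matches the paper's. But the equivariance step has a genuine gap at the Archimedean places. Your argument via Theorem~\ref{theorem:intro Jacquet modules} (the local Jacquet module is one-dimensional with the prescribed $\widetilde{G}_k^{\triangle}$-action) is only available at finite places, as you yourself note. Knowing that $\varphi\mapsto\varphi^{U_k,\psi_k}(c^{\triangle})$ has the correct equivariance for $c\in G_k(\Adele_f)$ does not, by any density or approximation argument, force it for $c\in G_k(F_\infty)$: at an Archimedean $v$ you have no analogue of Theorem~\ref{theorem:intro Jacquet modules}, and the uniqueness results you cite (Theorem~\ref{theorem:uniqueness of metaplectic Shalika over R}, \cite{AGJ}) presuppose the $G_k^{\triangle}$-equivariance rather than deriving it. Indeed, in the paper's logical order the very existence of an Archimedean metaplectic Shalika functional on $\theta_v$ with character $\gamma_{\psi,(-1)^k}$ is a \emph{consequence} of this theorem (Corollary~\ref{corollary:metaplectic Shalika functional in Archimedean places}), so you cannot invoke it here.

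The paper circumvents this by an explicit global computation on the torus. After handling $\SL_k(\Adele)$ via a Fourier expansion argument (which, like yours, only needs the \emph{vanishing} of certain local Jacquet modules at one finite place), it reduces to $t\in T_k(\Adele)$ and uses Lemma~\ref{lemma:relating Fourier coefficients - Shalika to the semi-Whittaker} together with two concrete computations: Claim~\ref{claim:theorem Shalika Fourier coefficient conjugation of t by w} (a cocycle calculation for $\rconj{w}t$) and Claim~\ref{claim:theorem Shalika Fourier coefficient value of semi-Whittaker on t} (the value of $\varphi^{N_n,\psi^{\circ}}$ on $\diag(t_1,t_1,\ldots,t_k,t_k)$). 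The latter relies on Theorem~\ref{theorem:constant term theorem} to identify the constant term along the $(2,\ldots,2)$ parabolic with $\delta_Q^{1/4}\Theta_M$, and then on Takeda's explicit realization of $\Theta_M$ as a global Weil representation to read off the factor $\prod_i\gamma_{\psi,(-1)^{k+1-i}}(t_i)$. Combining these via the Weil-factor identities \eqref{eq:Weil factor identities} yields $\gamma_{\psi,(-1)^k}(\det t)$ at all places simultaneously---this is what pins down the specific twist $(-1)^k$ and handles the Archimedean contribution. The same Claim~\ref{claim:theorem Shalika Fourier coefficient value of semi-Whittaker on t} gives non-vanishing of $\varphi^{N_n,\psi^{\circ}}$ directly (it equals a nonzero multiple of $\varphi(1)$), from which non-vanishing of $\varphi^{U_k,\psi_k}$ follows by reversing Lemma~\ref{lemma:relating Fourier coefficients - Shalika to the semi-Whittaker}; your proposed alternatives (citing \cite{KP,BG}, or local Jacquet non-vanishing) are respectively unverified and insufficient, since local non-vanishing of Jacquet modules does not by itself imply a global Fourier coefficient is nonzero.
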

\begin{proof}
We begin with the proof of the formula, the non triviality will be deduced during the computation.
First we show that for $c\in\SL_k(\Adele)$,
\begin{align}\label{eq:theorem Shalika Fourier coefficient SL_k acts trivially}
\varphi^{U_k,\psi_k}(c^{\triangle})=\varphi^{U_k,\psi_k}(1).
\end{align}
 To show this,
because $\varphi$ is invariant on the left by $G_n(F)$, it is enough to consider
\begin{align*}
c=\left(\begin{array}{ccc}1&0&x\\&I_{k-2}&0\\&&1\end{array}\right),\qquad x\in\Adele.
\end{align*}
As in the proof of Proposition~\ref{proposition:Fourier coefficients are trivial along v and y} we
consider the Fourier expansion along the abelian subgroup of matrices $c^{\triangle}$ with $c$ of this form. The nontrivial coefficients
vanish by Theorem~\ref{theorem:intro Jacquet modules}. This proves \eqref{eq:theorem Shalika Fourier coefficient SL_k acts trivially}.

It remains to consider $c=t=\diag(t_1,\ldots,t_k)\in T_k$. To this end we claim the following.
\begin{claim}\label{claim:theorem Shalika Fourier coefficient conjugation of t by w}
\begin{align*}
\rconj{w}t=\prod_{l=1}^{k-1}(t_l,\prod_{i=l+1}^kt_i)_2(\prod_{i=l+1}^kt_i,\prod_{i=l+1}^kt_i)_2
\diag(t_1,t_1,\ldots,t_k,t_k).
\end{align*}
\end{claim}
\begin{claim}\label{claim:theorem Shalika Fourier coefficient value of semi-Whittaker on t}
\begin{align*}
\varphi^{N_n,\psi^{\circ}}(\diag(t_1,t_1,\ldots,t_k,t_k))=\prod_{l=1}^k\gamma_{\psi,(-1)^{k+1-i}}(t_i)\delta_{B_k}(\diag(t_1,\ldots,t_k))\varphi(1).
\end{align*}
\end{claim}
As a corollary, the Fourier coefficient $\varphi^{U_k,\psi_k}$ does not vanish identically on the space of $\Theta$.
Indeed, the arguments in Lemma~\ref{lemma:relating Fourier coefficients - Shalika to the semi-Whittaker} can be repeated in ``the opposite direction" to relate
$\varphi^{N_n,\psi^{\circ}}$ to an integration of $\varphi^{U_k,\psi_k}$. The former is nonzero by Claim~\ref{claim:theorem Shalika Fourier coefficient value of semi-Whittaker on t}, hence $\varphi^{U_k,\psi_k}$ cannot be zero for all $\varphi$.

According to \eqref{eq:Weil factor identities}, for any $1\leq l\leq k-2$,
\begin{align*}
&(t_l,\prod_{i=l+1}^kt_i)_2(\prod_{i=l+1}^kt_i,\prod_{i=l+1}^kt_i)_2
\gamma_{\psi,(-1)^{k+1-l}}(t_l)\gamma_{\psi,(-1)^{k+1-l+1}}(\prod_{i=l+1}^kt_i)
\\&=(t_l,\prod_{i=l+1}^kt_i)_2
\gamma_{\psi,(-1)^{k+1-l}}(t_l)\gamma_{\psi,(-1)^{k+1-l}}(\prod_{i=l+1}^kt_i)=\gamma_{\psi,(-1)^{k+1-l}}(\prod_{i=l}^kt_i).
\end{align*}
The conjugation of $V$ by $t$ multiplies the measure by $\delta_{B_k}^{-1}(\diag(t_1,\ldots,t_k))$. Combining the claims yields the result.

The assertion regarding factorizability follows from the local uniqueness of the metaplectic Shalika model
at all the local components of $\Theta$ (see \S~\ref{subsection:The metaplectic Shalika model of theta}, we only need to know that the dimension of the space of metaplectic Shalika functionals on $\Theta_v$ is at most one, for all $v$).
\end{proof}
\begin{proof}[Proof of Claim~\ref{claim:theorem Shalika Fourier coefficient conjugation of t by w}]
This follows from the local properties of the cocycle of \cite{BLS}. Specifically, write
$t$ in the form $\mathrm{diag}(t_1,t',t_1,t')\in T_n$ with $t'\in T_{k-1}$ and let $w'$ be given by
 \eqref{eq:def of w'} with $l=0$. Using \cite{BLS} (\S~2 Lemma~2, \S~3
Lemmas~3 and 1) we see that
\begin{align*}
\rconj{w'}\mathfrak{s}(t)&=\sigma(w',t)\mathfrak{s}(\rconj{w}t)=(t_1,\det{t'})_2(\det{t'},\det{t'})_2\mathfrak{s}(\mathrm{diag}(t_1,t_1,t',t'))\\&=
(t_1,\det{t'})_2(\det{t'},\det{t'})_2\mathfrak{s}(\mathrm{diag}(t_1,t_1,I_{2n-2}))
\mathfrak{s}(\mathrm{diag}(I_2,t',t')).
\end{align*}
For $l=1$, $w'$ commutes with $\mathfrak{s}(\mathrm{diag}(t_1,t_1,I_{2n-2}))$, and by \eqref{eq:block-compatibility} we can apply the same arguments to compute $\rconj{w'}\mathfrak{s}(\mathrm{diag}(I_2,t',t'))$.
Since $w$ is a product if elements $w'$ (with $l$ varying), we can repeat this for all $l=1,\ldots,k-1$ and obtain the result.
\end{proof}
\begin{proof}[Proof of Claim~\ref{claim:theorem Shalika Fourier coefficient value of semi-Whittaker on t}]
Let $Q=M\ltimes U<G_n$ be the standard parabolic subgroup,
whose Levi part $M$ is isomorphic to $k$ copies of $G_2$.
According to Theorem~\ref{theorem:constant term theorem}, the mapping $m\mapsto\varphi^U(m)$ belongs to the space of $\delta_Q^{1/4}\Theta_M$. The representation $\Theta_M$ is isomorphic to the global Weil representation $\Pi\otimes\widetilde{\varphi}^{-1}$
constructed by Takeda \cite[\S~2.3 and p.~204]{Tk} (this is $\Pi_{\chi}\otimes\widetilde{\varphi}_P^{-1}$ in his notation with the unitary character $\chi=1$, see \cite[2.26]{Tk}). The mapping $\widetilde{\varphi}$ was defined in \cite[\S~A.1, p.~261]{Tk} to correct a global block-compatibility issue. If $m\in M(\Adele)$,
$\widetilde{\varphi}(m)=m\widehat{S}(m)$, where $\widehat{S}=\prod_v(\widehat{S})_v(m_v)$ and at each place $v$,
\begin{align*}
(\widehat{S})_v(m)=\frac{\prod_{i=1}^k\mathfrak{s}_v(m_i)}{\mathfrak{s}_v(m)},\qquad m=\diag(m_1,\ldots,m_k)\in M(F_v).
\end{align*}
Therefore $\varphi^{N_n,\psi^{\circ}}$ is the application
of a Fourier coefficient corresponding to $(N_M,\psi^{\circ}|_{N_M})$, to an automorphic form
in the space of $\delta_Q^{1/4}(\Pi\otimes\widetilde{\varphi}^{-1})$.

For $d=\diag(t_1,t_1,\ldots,t_k,t_k)$, $\widehat{S}(d)=1$ because for all $v$,
$\prod_{i=1}^k\mathfrak{s}_v(t_i^{\triangle})=\mathfrak{s}_v(t)$.
Hence $\widetilde{\varphi}(d)=d$. Also $\delta_Q^{1/4}(d)=\delta_{B_k}(\diag(t_1,\ldots,t_k))$.
Lastly, following the definitions and formulas from \cite[Proposition~2.55 and (2.56)]{Tk} we see that
for any automorphic form $\phi$ in the space of $\Pi$,
$\phi^{N_M,\psi^{\circ}}(d)=\prod_{l=1}^k\gamma_{\psi,(-1)^{k+1-i}}(t_i)\phi(1)$.
\end{proof}

\begin{corollary}\label{corollary:metaplectic Shalika functional in Archimedean places}
Let $\theta(=\theta_{2k})$ be an exceptional representation over an Archimedean field (in fact, over any local field). Then
$\theta$ admits a metaplectic $(\psi_{(-1)^{k}},\psi)$-Shalika functional.
\end{corollary}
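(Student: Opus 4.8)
The plan is to obtain the Archimedean (and, uniformly, the general local) statement by descending the \emph{global} metaplectic Shalika functional of Theorem~\ref{theorem:Shalika model given by Fourier coefficient} to a local place. The $p$-adic case is already contained in the twisted Jacquet module computation \cite[Theorem~3.1]{me11} recalled at the beginning of \S~\ref{subsection:The metaplectic Shalika model of theta}, but the argument below treats all places at once.

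First I would fix the given Archimedean field $F_0$ and the given nontrivial additive character $\psi$, and choose a number field $F$ with a place $v_0$ such that $F_{v_0}\isomorphic F_0$, together with a nontrivial additive character of $\lmodulo{F}{\Adele}$ whose component at $v_0$ equals $\psi$; if necessary one first rescales $\psi$, which is immaterial since existence of a metaplectic Shalika functional is insensitive to rescaling the character by a standard change of variables. Then I would form the global exceptional representation $\Theta=\Theta_{2k}$ attached to this data. Since $\Theta$ is isomorphic to the restricted tensor product $\otimes'_v\Theta_v$ of local exceptional representations (\S~\ref{subsection:The exceptional representations}), and since $\theta_{2k}$ is the unique local exceptional representation of $\widetilde{G}_{2k}(F_{v_0})$, we get $\Theta_{v_0}\isomorphic\theta$.

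Next I would invoke Theorem~\ref{theorem:Shalika model given by Fourier coefficient}: the Fourier coefficient $\varphi\mapsto\varphi^{U_k,\psi_k}(1)$ is a \nonzero\ global metaplectic $(\psi_{(-1)^{k}},\psi)$-Shalika functional on $\Theta$, and it is factorizable. The factorization is meaningful because the space of metaplectic Shalika functionals on $\Theta_v$ is at most one-dimensional for every $v$: at the finite places by the $p$-adic uniqueness for the exceptional representation \cite{me11}, and at the Archimedean places by Theorem~\ref{theorem:uniqueness of metaplectic Shalika over R} over $\R$ --- whose hypotheses $\theta$ satisfies, as noted in \S~\ref{subsection:The metaplectic Shalika model of theta} --- and by \cite{AGJ} over $\C$. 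I would then write the global functional as $\otimes_v\ell_v$, where $\ell_v$ is a local metaplectic Shalika functional on $\Theta_v$. Since the global functional is \nonzero, each $\ell_v$ is \nonzero; in particular $\ell_{v_0}$ is a \nonzero\ metaplectic $(\psi_{(-1)^{k}},\psi)$-Shalika functional on $\Theta_{v_0}\isomorphic\theta$, which is exactly what is required. Together with the uniqueness statements just cited, this shows that the space of such functionals on $\theta$ is in fact one-dimensional over both $\R$ and $\C$, as promised in \S~\ref{subsection:The metaplectic Shalika model of theta}.

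The genuine content lies entirely in Theorem~\ref{theorem:Shalika model given by Fourier coefficient} and in the local uniqueness theorems; the descent itself is routine. The one step that genuinely needs the prior results is the assertion that the global functional is factorizable, which is legitimate only because local multiplicity one holds at \emph{all} places, including the Archimedean ones --- this is precisely why Theorem~\ref{theorem:uniqueness of metaplectic Shalika over R} (and the reference to \cite{AGJ} over $\C$) had to be established first. The remaining point, matching the additive character at $v_0$, is the routine rescaling noted above.
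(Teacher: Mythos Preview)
Your proposal is correct and follows essentially the same approach as the paper: you invoke Theorem~\ref{theorem:Shalika model given by Fourier coefficient} to obtain a nonzero factorizable global functional and then descend to the chosen local place, exactly the ``standard globalization argument'' the paper cites. Your write-up simply makes explicit the details (choice of number field and place, matching of the additive character, and the role of local multiplicity one at all places in justifying factorizability) that the paper's two-line proof leaves implicit.
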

\begin{proof}
The functional $\varphi\mapsto\varphi^{U_k,\psi_k}(1)$ defines a metaplectic $(\psi_{(-1)^{k}},\psi)$-Shalika functional
on each of the local pieces of $\Theta$. Now the result follows from
Theorem~\ref{theorem:Shalika model given by Fourier coefficient} and a standard globalization argument applied to
the exceptional representation.
\end{proof}
\section{A Godement--Jacquet type integral}\label{section:A Godement Jacquet integral}
\subsection{Local theory}\label{subsection:Local p-adic Godement--Jacquet integral}
Let $F$ be a local field and $\pi\in\Alg{G_k}$ be irreducible. Set $n=2k$ and $\theta=\theta_{n}$ ($\theta_n$ was defined in \S~\ref{subsection:The exceptional representations}).
Regard $G_k$ as a subgroup of $G_n$ via the embedding $g\mapsto\diag(g,I_k)$.

We define the following local Godement--Jacquet type integral.
Let $f$ be a matrix coefficient of $\pi$, $\mathscr{S}\in\mathscr{S}(\theta,\psi_{(-1)^{k}},\psi)$,
$\mathscr{S}'\in\mathscr{S}(\theta,\psi_{(-1)^{k+1}},\psi^{-1})$ 
and $s\in\C$. Define
\begin{align}\label{eq:local GJ integral p-adic}
Z(f,\mathscr{S},\mathscr{S}',s)=\int_{G_k}f(g)\mathscr{S}(\mathfrak{s}(g))\mathscr{S}'(\mathfrak{s}(g))|\det{g}|^{s-k/2}\, dg.
\end{align}
Note that the actual choice of section $\mathfrak{s}$ here does not matter, because
$\mathscr{S}$ and $\mathscr{S}'$ are both genuine. Henceforth we omit it from the notation.

\begin{theorem}\label{theorem:local props GJ integral p-adic}Assume $F$ is a $p$-adic field.
\begin{enumerate}[leftmargin=*]
\item\label{item:p-adic integral conv}
The integral \eqref{eq:local GJ integral p-adic} is absolutely convergent for $\Re(s)\gg0$, the domain of convergence depends only on the representations.
\item\label{item:p-adic integral made const}One can choose data $(f,\mathscr{S},\mathscr{S}')$ such that $Z(f,\mathscr{S},\mathscr{S}',s)$ is absolutely convergent and equals
$1$, for all $s$.
\item \label{item:p-adic integral meromorphic}The integral has a meromorphic continuation to a rational function in $\C(q^{-s})$.
\item \label{item:p-adic integral unramified}
When all data are unramified,
\begin{align*}
Z(f,\mathscr{S},\mathscr{S}',s)=\frac{L(2s,\mathrm{Sym}^2,\pi)}{L(2s+1,\wedge^2,\pi)}.
\end{align*}
\item \label{item:p-adic integrals holomorphic at s=1/2 for some cases}
Assume that $\pi$ satisfies the following condition: if $\varepsilon$ is a normalized exponent of $\pi$ along a standard parabolic subgroup $Q$ (see \S~\ref{subsection:representations} for the definition), then
$\delta_Q^{1/2}|\varepsilon|$ lies in the open cone spanned by the positive roots in the Levi part of $Q$. In particular, this condition holds when $\pi$ is tempered. Then $Z(f,\mathscr{S},\mathscr{S}',s)$ is holomorphic at $\Re(s)=1/2$.
\end{enumerate}
\end{theorem}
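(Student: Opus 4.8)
\textbf{Proof plan for Theorem~\ref{theorem:local props GJ integral p-adic}.}

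\emph{Parts (\ref{item:p-adic integral conv})--(\ref{item:p-adic integral unramified}).} For absolute convergence I would combine the bound on Shalika functions from Claim~\ref{claim:general bound on Shalika function} (applied to $\theta$ and to $\theta$ with the conjugate characters, using $\mathscr{S}$ and $\mathscr{S}'$ respectively) with the standard bound on matrix coefficients of $\pi$ coming from its central exponents. Writing $g=k_1\,\diag(\varpi^{\lambda},I_k)\,k_2$ via the Cartan decomposition of $G_k$ inside $G_n$, and using Claim~\ref{claim:Shalika or H functional are determined on the torus}(\ref{claim Shalika or H determined:K invariance left and right},\ref{claim Shalika or H determined:part 3}) (valid once we pass to the torus; but here more simply the general bound of Claim~\ref{claim:general bound on Shalika function}) I reduce the integral to a sum over $\lambda\in\Z^k_{\geq}$ of a convergent geometric-type series provided $\Re(s)$ is large; the domain depends only on the exponents of $\pi$ and $\theta$, hence only on the representations. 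For part (\ref{item:p-adic integral made const}), I would choose $f$, $\mathscr{S}$, $\mathscr{S}'$ supported near the identity: using Claim~\ref{claim:richness of restriction of Shalika model to Q} I can find $\mathscr{S}$ (and likewise $\mathscr{S}'$) whose restriction to $\widetilde{Q}_n$, hence to $G_k$, is an arbitrary element of $\ind_{\widetilde{G}_k^{\triangle}U_k}^{\widetilde{Q}_n}(\gamma_{\psi'}\otimes\psi)$; taking both supported on a small neighborhood of $1$ and $f$ a matrix coefficient with $f(1)\ne0$ makes the integrand a nonzero constant times the characteristic function of a small compact open set, and after rescaling the integral is identically $1$. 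Part (\ref{item:p-adic integral meromorphic}) is a Bernstein-continuation argument: the integrand depends on $q^{-s}$ rationally (in fact polynomially once data are fixed, via $|\det g|^{s-k/2}$), the family of integrals defines a rational section of the appropriate bundle, and uniqueness of the metaplectic Shalika model (\S~\ref{subsection:The metaplectic Shalika model of theta}) shows the space of relevant functionals is finite-dimensional, so Bernstein's principle gives continuation to $\C(q^{-s})$. Part (\ref{item:p-adic integral unramified}) is a direct computation: plug the Casselman--Shalika formula of Corollary~\ref{corollary:Casselman--Shalika formula for metaplectic Shalika} (equivalently Lemma~\ref{lemma:unramified normalized Shalika function of theta}, which shows $\mathscr{S}(\mathfrak{s}(t_\lambda))=\delta_{B_n}^{1/4}(t_\lambda)$ on $2\Z^k_+$ and $0$ otherwise, and similarly for $\mathscr{S}'$) together with the unramified matrix coefficient of $\pi$ given by Macdonald's formula, and sum over $\lambda\in 2\Z^k_+$; the resulting Cartan-type sum telescopes into the advertised ratio $L(2s,\mathrm{Sym}^2,\pi)/L(2s+1,\wedge^2,\pi)$. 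Here the key arithmetic identity is the standard one expressing $\sum_{\mu\in\Z^k_{\geq}}\mathrm{Sch}_\mu(x)\,x^{2\mu}$-type sums (coming from $\lambda=2\mu$) against the Macdonald formula; I expect to follow the parallel computation in \cite{BG} or in \cite{Yia} and match Satake parameters, noting $\mathrm{Sym}^2\oplus\wedge^2$ decomposes $\mathrm{std}\otimes\mathrm{std}$.

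\emph{Part (\ref{item:p-adic integrals holomorphic at s=1/2}) — the main point.} Here I must show that, under the hypothesis that every normalized exponent $\varepsilon$ of $\pi$ along any standard parabolic $Q$ satisfies $\delta_Q^{1/2}|\varepsilon|$ in the open positive cone of the Levi, the integral \eqref{eq:local GJ integral p-adic} converges absolutely at $\Re(s)=1/2$. The strategy is to estimate the integrand on the Cartan cell $\diag(\varpi^\lambda,I_k)$, $\lambda\in\Z^k_{\geq}$, and show the resulting sum converges at $s=1/2$. For the Shalika factors I use Claim~\ref{claim:general bound on Shalika function}: there is $\alpha>0$ (coming from the exponents of $\theta$; in fact from Theorem~\ref{theorem:intro Jacquet modules}/\cite{Kable} the exponents of $\theta$ along $Q_k$ are explicit, $\theta$ being a quotient of $\mathrm{I}(\delta_{B_n}^{1/4})$) with $|\mathscr{S}(t_\lambda)\mathscr{S}'(t_\lambda)|\le |\det\varpi^\lambda|^{-2\alpha}\phi(\varpi^\lambda)\phi'(\varpi^\lambda)$ for Schwartz $\phi,\phi'$; more precisely I want the sharp exponent, namely that $|\mathscr{S}(t_\lambda)|$ decays like $\delta_{B_n}^{1/4}(t_\lambda)$ up to polynomial factors on the relevant cone. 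For the matrix-coefficient factor I invoke the bound governed by the normalized exponents of $\pi$: $|f(\diag(\varpi^\lambda,I_k))|$ is controlled, up to polynomials in $|\lambda|$, by $\max_\varepsilon |\varepsilon(\varpi^\lambda)|\delta_Q^{1/2}(\varpi^\lambda)$ over parabolics $Q$ whose Levi "sees" the direction $\lambda$. Multiplying the three contributions and the factor $|\det\varpi^\lambda|^{s-k/2}$, the hypothesis on $\pi$ is exactly what forces each exponential rate in the sum over $\Z^k_{\geq}$ to be strictly decaying at $\Re(s)=1/2$; summing a polynomial-times-geometric series then gives absolute convergence. When $\pi$ is tempered the Casselman square-integrability/temperedness criterion recalled in \S~\ref{subsection:representations} says the normalized exponents lie in the closed cone, and a short check shows $\delta_Q^{1/2}|\varepsilon|$ then lies in the \emph{open} cone, so the hypothesis holds.

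\textbf{Expected main obstacle.} The delicate point in part (\ref{item:p-adic integrals holomorphic at s=1/2}) is getting the exact exponential rate of decay of the \emph{product} $\mathscr{S}(t_\lambda)\mathscr{S}'(t_\lambda)$ on the full cone $\Z^k_{\geq}$ — not just the crude bound $|\det g|^{-\alpha}\phi(g)$ of Claim~\ref{claim:general bound on Shalika function}, but the statement that the decay matches $\delta_{B_n}^{1/4}$ along each wall — and then bookkeeping the competing exponents of $f$ along the various parabolics so that, face by face of the cone, the strict-positivity hypothesis on $\pi$ wins. This is essentially a careful exponent-chasing argument in the style of Jacquet--Rallis \cite[\S~6.2]{JR2} and of gcd computations for Rankin--Selberg integrals; I would organize it by decomposing $\Z^k_{\geq}$ into the cones where a fixed subset of the coordinates of $\lambda$ goes to infinity at comparable rates, identifying on each such cone the governing parabolic $Q$ of $\pi$ and the governing exponent of $\theta$, and checking the inequality $\Re\bigl(\text{total exponent}\bigr)>0$ at $\Re(s)=1/2$ reduces precisely to the displayed positivity condition. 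The Archimedean analogue of the decay input is supplied by Theorem~\ref{thm:AsymptoticExpansionShalikaFunctionsArchimedean}, but for this $p$-adic statement the $p$-adic asymptotic expansion of \cite[\S~6.2]{JR2}, adapted to $T_k^2$ as in the proof of Claim~\ref{claim:general bound on Shalika function}, is what I would use.
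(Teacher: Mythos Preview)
Your plan for parts~(\ref{item:p-adic integral conv}), (\ref{item:p-adic integral made const}) and (\ref{item:p-adic integrals holomorphic at s=1/2 for some cases}) is essentially the paper's. For (\ref{item:p-adic integral conv}) the paper is even quicker than you suggest: Claim~\ref{claim:general bound on Shalika function} alone bounds $|\mathscr{S}\mathscr{S}'(g)|$ by $|\det g|^{-2\alpha}\phi(g)$ with $\phi\in\mathcal{S}(F_{k\times k})$, reducing \eqref{eq:local GJ integral p-adic} directly to a Godement--Jacquet integral; no Cartan decomposition is needed. For (\ref{item:p-adic integrals holomorphic at s=1/2 for some cases}) you have the right architecture; the sharp exponent you want for $\mathscr{S}$ on each face comes, exactly as you anticipate, from the $p$-adic asymptotics of \cite[\S~6.2]{JR2} together with Kable's computation (Theorem~\ref{theorem:intro Jacquet modules} for $j=0$) that the normalized exponent of $\theta$ along the parabolic $P^{\wedge}$ of $G_n$ with Levi $M\times G_k$ is $\delta_{P^{\wedge}}^{-1/4}$.

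There is a genuine gap in your argument for (\ref{item:p-adic integral meromorphic}). Bernstein's principle needs a \emph{specific} one-dimensionality statement, and ``uniqueness of the metaplectic Shalika model'' is not it. What you must show is that the integral, viewed as a trilinear functional in $(f,\mathscr{S},\mathscr{S}')$, satisfies the equivariance
\[
Z(mf,\,mu\mathscr{S},\,mu\mathscr{S}',\,s)=\delta_{Q_k}^{1/2-s/k}(m)\,Z(f,\mathscr{S},\mathscr{S}',s),\qquad m\in M_k,\ u\in U_k,
\]
and hence lies in
\[
\Hom_{M_k}\bigl((\theta\otimes\theta')_{U_k},\ \delta_{Q_k}^{1/2}\,|\det|^{-s}\pi^{\vee}\otimes|\det|^{s}\pi\bigr).
\]
The one-dimensionality of \emph{this} space (away from finitely many $q^{-s}$) is a separate result, proved in \cite[Claim~4.6]{me11} using the full structure of the twisted Jacquet modules $\theta_{U_k,\psi_j}$ (Theorem~\ref{theorem:intro Jacquet modules}), not merely the case $j=k$. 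Uniqueness of the Shalika functional on $\theta$ alone does not control bilinear forms on $\theta\otimes\theta'$ against $\pi$.

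One smaller correction for (\ref{item:p-adic integral unramified}): the summand coming from Macdonald's formula is the Hall--Littlewood polynomial $P_{\lambda}(t_\pi;q^{-1})$, not a Schur polynomial; after the substitution $\lambda\in 2\Z^k_+$ and $P_{\lambda}(t_\pi;q^{-1})q^{-|\lambda|s}=P_{\lambda}(t_{\pi|\det|^s};q^{-1})$, the identity that closes the computation is Macdonald \cite[Ch.~III, \S5, Example~2]{M},
\[
\sum_{\lambda\in 2\Z^k_+}P_{\lambda}(x_1,\ldots,x_k;r)=\frac{\prod_{i<j}(1-rx_ix_j)}{\prod_{i\le j}(1-x_ix_j)},
\]
which already has the $\mathrm{Sym}^2/\wedge^2$ shape built in.
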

\begin{proof}
The first three assertions have already been proved in \cite{me11}, in a slightly different form. By virtue of Claim~\ref{claim:general bound on Shalika function} the integral is bounded by the zeta integral of Godement and Jacquet,
\begin{align*}
\int_{G}f(g)\phi(g)|\det|^{r}(g)dg,
\end{align*}
where $\phi\in \mathcal{S}(F_{k\times k})$ is positive and $r\in\R$ depends only on $\theta$ (i.e., not on
$\mathscr{S}$ and $\mathscr{S}'$). This integral is absolutely convergent for $r>r_0$, where $r_0$ is independent of the specific coefficient $f$ (\cite[p.~30]{GJ}).

Applying Claim~\ref{claim:richness of restriction of Shalika model to Q} to $\theta$, one can choose
elements $\mathscr{S}$ and $\mathscr{S}'$ such that the mapping
$g\mapsto \mathscr{S}(g)\mathscr{S}'(g)$ is an arbitrary function in
$\ind_{G_k^{\triangle}U_k}^{Q_n}(1)\isomorphic C^{\infty}_c(G_k)$. This implies \eqref{item:p-adic integral made const}, when we take some $f$ which does not vanish at the identity.

We turn to the meromorphic continuation.
For $a,b\in G_k$, put $m=\diag(a,b)\in M_k$. Then for any $g\in G_k$,
\begin{align*}
\mathscr{S}(gm)\mathscr{S}'(gm)=&(\mathscr{S}\mathscr{S}')(\diag(g,I_k)m)\\&
=\gamma_{\psi}(\det b)\gamma_{\psi}^{-1}(\det b)
(\mathscr{S}\mathscr{S}')(\diag(b^{-1}ga,I_k))\\
&=(\mathscr{S}\mathscr{S}')(b^{-1}ga).
\end{align*}
Additionally for $u\in U_k$,
\begin{align*}
\mathscr{S}(gu)\mathscr{S}'(gu)
=\psi(gu)\psi^{-1}(gu)(\mathscr{S}\mathscr{S}')(g)=
(\mathscr{S}\mathscr{S}')(g).
\end{align*}
Hence in its domain of absolute convergence \eqref{eq:local GJ integral p-adic} satisfies
\begin{align*}
Z(mf,mu\mathscr{S},mu\mathscr{S}',s)=
\delta_{Q_k}^{1/2-s/k}(m)Z(f,\mathscr{S},\mathscr{S}',s).
\end{align*}
Therefore it is a nonzero element of
\begin{align}\label{homspace:for s}
\Hom_{M_k}((\theta\otimes\theta')_{U_k},\delta_{Q_k}^{1/2}|\det|^{-s}\pi^{\vee}\otimes|\det|^{s}\pi).
\end{align}
In \cite[Claim~4.6]{me11} we proved that this space is at most one-dimensional, outside of a finite set of
values of $q^{-s}$. We mention that the proof utilized the structure of the
twisted Jacquet modules of $\theta$ along $U_k$.

Now the one-dimensionality result combined with the first two assertions imply \eqref{item:p-adic integral meromorphic},
by virtue of Bernstein's continuation principle (in \cite{Banks}). 

Assume all data are unramified. We prove \eqref{item:p-adic integral unramified}.
The function $f$ is bi-$G_k(\mathcal{O})$-invariant and by Claim~\ref{claim:Shalika or H functional are determined on the torus}
\eqref{claim Shalika or H determined:K invariance left and right}, for any $k_1,k_2\in G_k(\mathcal{O})$,
\begin{align*}
(\mathscr{S}\mathscr{S}')(k_1gk_2)=(\mathscr{S}\mathscr{S}')(g).
\end{align*}
Note that on its own, $\mathscr{S}$ is not bi-$\kappa(G_k(\mathcal{O}))$-invariant because elements of
$\kappa(\diag(I_k,G_k(\mathcal{O})))$ and $\mathfrak{s}(\diag(G_k,I_k))$ do not commute in $\widetilde{G}_n$. Nonetheless, the integrand is bi-$G_k(\mathcal{O})$-invariant.
Hence we can write
\eqref{eq:local GJ integral p-adic} using the Cartan decomposition
$\coprod_{\lambda\in\Z^k_{\geq}}G_k(\mathcal{O})\varpi^{\lambda}G_k(\mathcal{O})$ and
Lemma~\ref{lemma:unramified normalized Shalika function of theta} implies that the integrand vanishes unless
$\lambda\in2\Z^k_+$ (see \S~\ref{section:The metaplectic unramified Shalika formula} for the notation).

Let $t_{\pi}
\in G_k(\C)$ be the Satake parameters of $\pi$.
For $\lambda\in\Z^k_+$, the formulas of Macdonald \cite[Chapter V \S~2 (2.9) and \S~3 (3.4)]{M} ($f(\varpi^{\lambda})$ is $\omega_s(\pi^{-\lambda})$ in his notation) show
\begin{align*}
f(\varpi^{\lambda})=\mathrm{vol}(G_k(\mathcal{O})\varpi^{\lambda}G_k(\mathcal{O}))^{-1}\delta^{-1/2}_{B_k}(\varpi^{\lambda})P_{\lambda}(t_{\pi};q^{-1}).
\end{align*}
Here $P_{\lambda}$ is the Hall-Littlewood polynomial in the Satake parameters,
see \cite[Chapter III \S~2]{M} for the definition.
Using Lemma~\ref{lemma:unramified normalized Shalika function of theta} and
$\delta^{1/2}_{B_n}(t_{\lambda})|\det{\varpi^{\lambda}}|^{-k/2}=\delta^{1/2}_{B_k}(\varpi^{\lambda})$, the integral becomes
\begin{align*}
\sum_{\lambda\in2\Z^k_+}P_{\lambda}(t_{\pi};q^{-1})q^{-|\lambda|s},\qquad|\lambda|=\lambda_1+\ldots+\lambda_k.
\end{align*}
Since $P_{\lambda}(t_{\pi};q^{-1})q^{-|\lambda|s}=P_{\lambda}(t_{\pi|\det|^s};q^{-1})$,
the required formula follows immediately from the formal identity of \cite[Chapter III \S~5, example 2]{M}:
\begin{align*}
\sum_{\lambda\in2\Z^k_+}P_{\lambda}(x_1,\ldots,x_k;r)=\frac{\prod_{i<j}(1-rx_ix_j)}{\prod_{i\leq j}(1-x_ix_j)}.
\end{align*}

Finally consider \eqref{item:p-adic integrals holomorphic at s=1/2 for some cases}. Assume $\pi$ satisfies the assumption on the exponents. We prove that the integral is absolutely convergent, uniformly on compact sets, if $r=\Re(s)\geq 1/2-\epsilon_0$, for some $\epsilon_0>0$ (depending on the exponents of $\pi$). Let $\Delta\subset\Delta_{G_k}$ and $P=P_{\Delta}$ be the corresponding standard parabolic subgroup, $P=M\ltimes U$. Let $\zeta>0$. Let $C_M^+$ be the set of $\varpi^{\lambda}\in C_M$ such that $|\alpha(t)|<\zeta$ for all $\alpha\in\Delta_{G_k}\setdifference\Delta$.
Let $\mathcal{K}$ be a principal congruence subgroup on which the integrand is bi-$\mathcal{K}$. For the proof of convergence, we may replace $K$ in the Cartan decomposition with $\mathcal{K}$, and we see that it is enough to prove the convergence of the following sum, for each $\Delta$,
\begin{align*}
\sum_{\varpi^{\lambda}\in C_{M}^+}|f\mathscr{S}\mathscr{S}'\delta_{B_k}^{-1}|
(\varpi^{\lambda})q^{-|\lambda|(r-k/2)}.
\end{align*}
Here the modulus character $\delta_{B_k}^{-1}$ is bounding $\mathrm{vol}(\mathcal{K}\varpi^{\lambda}\mathcal{K})$.

This sum can be bounded using the exponents of the representations. Let $P^{\wedge}$ be the standard parabolic subgroup of $G_n$, whose Levi part is isomorphic to $M\times G_k$.
According to \cite[Lemma~6.2]{JR2}, if $\zeta$ is sufficiently small with respect to $\varphi$, $\mathscr{S}$ is bounded on $C_M^+$ by the exponents of $\theta$ along $P^{\wedge}$. Kable proved that the normalized Jacquet module of $\theta$ along the unipotent radical of $P^{\wedge}$ is an irreducible representation with a central character $\omega$ such that $|\omega|=\delta_{P^{\wedge}}^{-1/4}$ \cite[Theorem~5.1]{Kable}. The normalized exponent of $\theta$ along $P^{\wedge}$ is thus $\delta_{P^{\wedge}}^{-1/4}$. Since
on $C_M$, $|\det|^{-k/4}\delta_{B_k}^{-1/4}=\delta_{P^{\wedge}}^{-1/4}$ (on the \rhs\ $C_M$ is embedded in $M$), we obtain (up to a constant)
\begin{align*}
\sum_{\varpi^{\lambda}\in C_{M}^+}|\delta_{B_k}^{-1/2}f|(\varpi^{\lambda})q^{-|\lambda|r}.
\end{align*}
This is bounded in the proper right half-plane, given our assumption on $\pi$. 
\end{proof}
\begin{remark}\label{remark:local gamma factor}
As we have seen in the proof, the local zeta integral can be regarded as an element of \eqref{homspace:for s},
which according to \cite[Claim~4.6]{me11} is at most one-dimensional, outside of a finite set of
values of $q^{-s}$. This is the uniqueness property underlying the standard definition of the local
gamma factor of Rankin--Selberg integrals. One typically considers a ``similar zeta integral", absolutely convergent in a left half-plane, which also belongs to \eqref{homspace:for s}, then the proportionality factor between these integrals is the gamma factor. The problem is to find the similar integral. It should be related to a global construction, because the product of local gamma factors, over all places (including the Archimedean ones) should be $1$, and this usually follows from a global functional equation.
\end{remark}

\begin{theorem}\label{theorem:local props GJ integral archimedean}
Assume $F$ is an Archimedean field.
\begin{enumerate}[leftmargin=*]
\item\label{item:archimedean integral conv}
The integral \eqref{eq:local GJ integral p-adic} is absolutely convergent for $\Re(s)\gg0$, the domain of convergence depends only on the representations.
\item \label{item:archimedean integral made non-zero} For each $r>0$ one can choose data $(f,\mathscr{S},\mathscr{S}')$ such that $Z(f,\mathscr{S},\mathscr{S}',s)$ is absolutely convergent and nonzero for $s$ in the strip $\{s\in\C:|\Im(s)|\leq r\}$.
\item \label{item:archimedean integral meromorphic}
For fixed $(f,\mathscr{S},\mathscr{S}')$ the function $Z(f,\mathscr{S},\mathscr{S}',s)$ has a meromorphic continuation in $s\in\C$.
\end{enumerate}
\end{theorem}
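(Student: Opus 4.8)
\textbf{Proof proposal for Theorem~\ref{theorem:local props GJ integral archimedean}.}

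The plan is to treat the three assertions in turn, with the third (meromorphic continuation) being the genuine obstacle. For part~\eqref{item:archimedean integral conv}, the strategy mirrors the $p$-adic case: invoke Claim~\ref{claim:general bound on Shalika function} to bound $|\mathscr{S}(g)\mathscr{S}'(g)|$ by $|\det g|^{-\alpha}\phi(g)$ with $\phi\in\mathcal{S}(F_{k\times k})$ positive, and bound $|f(g)|$ by the standard Harish-Chandra estimate for matrix coefficients (moderate growth). Then $|Z(f,\mathscr{S},\mathscr{S}',s)|$ is majorized by a Godement--Jacquet type integral $\int_{G_k}|f(g)|\phi(g)|\det g|^{\Re(s)-k/2-\alpha}\,dg$, which converges for $\Re(s)\gg0$ with a bound depending only on the representations, by the Archimedean Godement--Jacquet convergence results (\cite{GJ}). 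For part~\eqref{item:archimedean integral made non-zero}, I would again use the Dixmier--Malliavin / richness argument: by Claim~\ref{claim:general bound on Shalika function}'s proof one can produce $\mathscr{S},\mathscr{S}'$ so that $g\mapsto\mathscr{S}(g)\mathscr{S}'(g)$ restricted to $G_k$ is, up to the genuineness twist, an arbitrary Schwartz function on $G_k$ (one needs the Archimedean analogue of Claim~\ref{claim:richness of restriction of Shalika model to Q}, which follows from Dixmier--Malliavin applied to $\theta|_{U_k}$ exactly as in the proof of Claim~\ref{claim:general bound on Shalika function}). Choosing this function to be a positive bump supported near a point where $f>0$, and noting $|\det g|^{s-k/2}$ has absolute value bounded above and below on the (compact modulo center) support when $|\Im(s)|\leq r$, makes $Z$ absolutely convergent and nonvanishing on the strip; a small argument shows one can further arrange $f$ so that no cancellation occurs.

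The core of the theorem is part~\eqref{item:archimedean integral meromorphic}. Here I would use the asymptotic expansions developed in \S~\ref{section:Archimedean asympt expansion Shalika}: Theorem~\ref{thm:AsymptoticExpansionSmoothMatrixCoefficientsArchimedean} for the matrix coefficient $f$ of $\pi$ on $G_k$, and Theorem~\ref{thm:AsymptoticExpansionShalikaFunctionsArchimedean} (applied to $\theta=\theta_{2k}$, with $n=2k$) for $\mathscr{S}$ and $\mathscr{S}'$. Using the Cartan-type decomposition $G_k = K A_k^+ K$ (with $K$ the maximal compact and $A_k^+$ a closed positive Weyl chamber, parametrized by $a_x$, $x\in\R_{\geq0}^k$ in the notation of \S~\ref{subsection:asymptotic expansion}), and the fact that $f$, $\mathscr{S}$, $\mathscr{S}'$ and the integrand transform nicely under $K$ on both sides (via continuity and compactness of $K$), the integral reduces to an integral over $A_k^+$ against the Jacobian $\delta(a_x)\,dx$ of functions to which the expansions apply — together with the contributions from the lower-dimensional walls, handled by Remark~\ref{rem:ExpansionsForXkleq0} (since the relevant torus direction coming from the embedding $g\mapsto\diag(g,I_k)$ need not stay in the open chamber). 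On the open chamber, substituting the finite exponential-polynomial expansions $f(a_x)\sim\sum p(x)e^{-\mu\cdot x}$, $\mathscr{S}(a_x)\sim\sum q(x)e^{-\nu\cdot x}$, $\mathscr{S}'(a_x)\sim\sum q'(x)e^{-\nu'\cdot x}$, the integral over the truncated chamber $\{x : x_i \geq N\}$ becomes a finite sum of elementary integrals of the form $\int_{[N,\infty)^k} (\text{polynomial in }x)\, e^{-\beta(s)\cdot x}\,dx$ with $\beta(s)$ depending linearly (in each $q^{\pm s}$-free Archimedean sense, i.e. affine-linearly) on $s$; each such integral converges for $\Re(s)\gg0$ and continues meromorphically, with poles along translates of coordinate hyperplanes. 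The integral over the complementary bounded region $\{x: 0\leq x_i\leq N\ \text{for some}\ i\}$ together with the wall contributions is, using the estimates \eqref{eq:EstimateRemainderSmoothMatrixCoefficients} and \eqref{eq:EstimateRemainderShalikaFunctionals} with $D$ chosen arbitrarily large, an entire function of $s$ (in fact bounded on vertical strips). Summing, $Z(f,\mathscr{S},\mathscr{S}',s)$ extends meromorphically to all of $\C$.

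The hard part will be two intertwined technical points: first, controlling the remainder terms in the triple product of asymptotic expansions \emph{uniformly} in the non-expanded variables $x_{\overline{I}}$ so that the error integrals genuinely converge and define holomorphic functions — this is exactly what the seminorm bounds \eqref{eq:EstimateRemainderSmoothMatrixCoefficients} and \eqref{eq:EstimateRemainderShalikaFunctionals} are designed to supply, and one must be careful that the product of three such bounds (for $f$, $\mathscr{S}$, $\mathscr{S}'$) can still be dominated by $(1+|x_{\overline{I}}|)^{d}e^{-D_{\overline{I}}\cdot x_{\overline{I}}}$ with $D$ as large as needed, which it can since one may choose the $D$'s in each expansion independently and add; second, the bookkeeping near the walls of the chamber, where the parameter direction entering $|\det g|^{s-k/2}$ (coming from $\diag(g,I_k)\hookrightarrow G_{2k}$) is a specific positive combination of the simple $A_k$-directions, so one must apply Remark~\ref{rem:ExpansionsForXkleq0} along the appropriate coordinate — or rather iterate it — to get an expansion valid on the full closed chamber with remainder bounded by $e^{-N x_j}$ for each $j$, ensuring convergence of the wall integrals. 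Once these estimates are in place the meromorphic continuation is a formal consequence; no global argument or functional equation is needed, in contrast to Remark~\ref{remark:local gamma factor}.
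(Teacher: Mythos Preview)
Parts \eqref{item:archimedean integral conv} and \eqref{item:archimedean integral made non-zero} are fine and match the paper. For part \eqref{item:archimedean integral meromorphic} your strategy is right in outline, but there is a genuine gap in the $x_k\leq0$ region. In the Cartan decomposition for $G_k$ the center direction $x_k$ ranges over all of $\R$ (the paper's integration formula is over $K\times K\times\R\times\R_{>0}^{k-1}$), and $x_k\to-\infty$ corresponds to $|\det g|\to\infty$. Remark~\ref{rem:ExpansionsForXkleq0}, which you invoke for this region, gives remainder bounds $e^{-Nx_k}$ with $x_k\leq0$; this is \emph{growth}, not decay, so the individual terms of the expansion do not converge over $x_k\in(-\infty,0]$, and your phrase ``ensuring convergence of the wall integrals'' is a misreading. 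The paper supplies the missing decay by a preliminary Dixmier--Malliavin step (the construction from the Archimedean proof of Claim~\ref{claim:general bound on Shalika function}): writing $\mathscr{S}=\sum_j\widehat{\phi}_j\cdot\mathscr{S}_j$ inserts an explicit Schwartz factor $\phi\in\mathcal{S}(F_{k\times k})$ into the integrand, which after the substitution $y_j=e^{-x_j}$ provides arbitrary polynomial decay $y_k^{-L}$ on $\{y_k>1\}$ and, equally importantly, the smoothness in $y_I$ needed for the integration-by-parts identity $(s+1)y^s=\tfrac{d}{dy}y^{s+1}$ that actually drives the meromorphic continuation.

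There is also a secondary imprecision: the expansions of Theorems~\ref{thm:AsymptoticExpansionSmoothMatrixCoefficientsArchimedean} and \ref{thm:AsymptoticExpansionShalikaFunctionsArchimedean} are not ``polynomial $\times$ exponential for large $x$'' as your truncated-chamber argument assumes. They are sums over subsets $I\subset\{1,\ldots,k\}$, valid on all of $\R_{>0}^k$, where only the $I=\{1,\ldots,k\}$ term is pure polynomial-exponential; the others carry coefficients $c_\alpha(x_{\overline I})$ that are merely smooth with decay bounds. Your complementary region $\{0\leq x_i\leq N\text{ for some }i\}$ is not bounded, and the split into ``elementary exponential integrals on the far chamber plus entire remainder'' does not come out of the expansion as stated. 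The paper does not truncate: it works on the full domain, treating each $(I,z)$-term by separating the $y_I$-integral (meromorphic via integration by parts, using the smoothness supplied by $\phi$) from the absolutely convergent $y_{\overline I}$-integral (using the arbitrarily large $D$).
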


\begin{proof}
As in the $p$-adic case, by Claim~\ref{claim:general bound on Shalika function} the convergence is reduced to the convergence of a Godement--Jacquet type integral, which was proved in \cite[Theorem 8.7]{GJ}.

Consider \eqref{item:archimedean integral made non-zero}. Let $f$ be a matrix coefficient with $f(I_k)=1$. We choose a compact neighborhood $\mathcal{V}$ of the identity in $G_k$, such that both $f$ and $|\det|^{s-k/2}$ take values in $\{z\in\C:\arg(z)<\pi/8\}$, on $\mathcal{V}$ and whenever $|\Im(s)|\leq r$. We will show that, possibly taking a smaller $\mathcal{V}$, one can choose $\mathscr{S}$ and $\mathscr{S}'$ such that $\mathscr{S}\mathscr{S}'(g)$ is supported in $\mathcal{V}$, is equal to $1$ at the identity, and takes values in $\{\arg(z)<\pi/4\}$. We see that the integrand is supported in $\mathcal{V}$, equals $1$ at the identity, and in its support takes values in $\{\Re(z)>0\}$, for $|\Im(s)|\leq r$. Hence, the corresponding integral over $G_k$ converges for any such $s$ and is nonzero.

To find $\mathscr{S}$ and $\mathscr{S}'$ with the above properties, we imitate the argument used in the proof of Claim~\ref{claim:richness of restriction of Shalika model to Q}. In fact, this argument shows that for any $\varphi$ in the representation space of $\theta$ and any $\phi\in\mathcal{S}(F_{k\times k})$, $\mathscr{S}_{\theta(\phi)\varphi}(d_g)=\widehat{\phi}(g)\mathscr{S}_\varphi(d_g)$, where $d_g=\mathfrak{s}(\diag(g,I_k))$. We choose $\varphi$ such that $\mathscr{S}_\varphi(\mathfrak{s}(I_n))=1$, then possibly making $\mathcal{V}$ smaller we can achieve that $\mathscr{S}_\varphi(d_g)$ takes values in $\{\arg(z)<\pi/8\}$ for $g\in\mathcal{V}$. We take $\phi$ such that $\widehat{\phi}\geq0$, $\phi(I_k)=1$, and has support in $\mathcal{V}$, then put
$\mathscr{S}=\mathscr{S}_{\theta(\phi)\varphi}$. The function $\mathscr{S}'$ is chosen similarly.

Now we prove \eqref{item:archimedean integral meromorphic}. As explained in the proof of Claim~\ref{claim:general bound on Shalika function}, we may replace $\mathscr{S}$ by a finite sum $\sum_j\mathscr{S}_j\phi_j$ with metaplectic Shalika functions $\scrS_j$ and $\phi_j\in\mathcal{S}(F_{k\times k})$. Hence it is enough to prove meromorphic continuation for an integral of the form
\begin{align*}
\int_{G_k} f(g)\scrS\scrS'(g)\phi(g)|\det(g)|^{s-k/2}\,dg ,\qquad \phi\in\mathcal{S}(F_{k\times k}).
\end{align*}
We use the integration formula
\begin{align*}
\int_{G_k}\vartheta(g)\,dg = \int_{K}\int_{K}\int_\R\int_{\R_{>0}^{k-1}} \vartheta(k_1a_xk_2)J(x)\, d(x_1,\ldots,x_{k-1})\, dx_k \, dk_1\, dk_2,
\end{align*}
where $K$ is the maximal compact subgroup of $G_k$ (the orthogonal group over $\R$, the unitary group if $F=\C$) 
and for $x=(x_1,\ldots,x_k)\in\R^k$,
\begin{align*}
&a_x=\diag(e^{-(x_1+\cdots+x_k)},e^{-(x_2+\cdots+x_k)},\ldots,e^{-x_k}),\\
&J(x)=\prod_{1\leq i<j\leq k}\sinh\left(x_i+\cdots+x_{j-1}\right),
\end{align*}
for $F=\R$, and $J(x)$ has to be replaced by $J(x)^2$ if $F=\C$.
Since $J(x)$ is a finite sum of terms of the form $e^{p\cdot x'}$ with $p\in\Z^{k-1}$ and $x'=(x_1,\ldots,x_{k-1})$, it suffices to show that every integral of the form
\begin{align}\label{int:archimedean integral for cont}
 \int_{K}\int_{K}\int_\R\int_{\R_{>0}^{k-1}}f\scrS\scrS'\phi(k_1a_xk_2)
 e^{-(s+s_1)x_1}e^{-(2s+s_2)x_2}\cdots e^{-(ks+s_k)x_k}d(\cdots)
\end{align}
with $s_1,\ldots,s_k\in\R$ (coming from the exponent $k/2$ in $|\det(g)|^{s-k/2}$ and from $e^{p\cdot x'}$) converges absolutely for $\Re(s)\gg0$ and extends to a meromorphic function in $s\in\C$.

Note that $f(g)=\xi^{\vee}(\pi(g)\xi)$ for some $\xi\in V$ and $\xi^{\vee}\in V^\vee$, where $V$ is the space of $\pi$, so that
\begin{align*}
f(k_1a_xk_2)=\pi^\vee(k_1^{-1})\xi^{\vee}(\pi(a_x)\pi(k_2)\xi).
\end{align*}
Also $k_1\mapsto\pi^{\vee}(k_1^{-1})\xi^{\vee}$ and $k_2\mapsto\pi(k_2)\xi$ are smooth.
Hence Theorem~\ref{thm:AsymptoticExpansionSmoothMatrixCoefficientsArchimedean} can be applied with $\pi^{\vee}(k_1^{-1})\xi^{\vee}$ instead of $\xi^{\vee}$ and $\pi(k_2)\xi$ instead of $\xi$. Furthermore
$\scrS$ (resp. $\scrS'$) is of the form $\scrS(g)=l(\theta(g)\varphi)$ (resp. $\scrS'(g)=l'(\theta(g)\varphi')$) with $\varphi$ (resp. $\varphi'$) in the space of $\theta$ and a metaplectic Shalika functional $l$ (resp. $l'$). Then \eqref{eq:metaplectic Shalika def} implies
\begin{align*}
\scrS\scrS'(k_1a_xk_2) = \scrS\scrS'(a_x\diag(k_2,k_1^{-1})),
\end{align*}
and  we can apply Theorem~\ref{thm:AsymptoticExpansionShalikaFunctionsArchimedean} with $\theta(\mathfrak{s}(\diag(k_2,k_1^{-1})))\varphi$ instead of $\varphi$ (similarly for $\varphi'$).

We split the $dx_k$-integration in \eqref{int:archimedean integral for cont} into $x_k\geq0$ and $x_k\leq0$.
First assume $x_k\geq0$. By the above considerations and Theorems~\ref{thm:AsymptoticExpansionSmoothMatrixCoefficientsArchimedean} and \ref{thm:AsymptoticExpansionShalikaFunctionsArchimedean}, $f\scrS\scrS'(k_1a_xk_2)$ can be written as a finite sum of terms of the form
$$ c(x_{\overline{I}};k_1,k_2)x_I^\alpha e^{-z\cdot x_I} $$
where $I\subset\{1,\ldots,k\}$, $\alpha\in\N^I$, $z\in\C^I$ with $\Re z_j\geq\Lambda_j=\Lambda_{\pi,j}+\Lambda_{\theta,j}+\Lambda_{\theta',j}$ for all $j\in I$ (see \eqref{eq:DefinitionLambdaPiSmoothMatrixCoefficient} for the definition of $\Lambda_{\pi,j}$), and $c(x_{\overline{I}};k_1,k_2)$ is smooth in $x_{\overline{I}}$, $k_1$ and $k_2$, and satisfies
$$ |c(x_{\overline{I}};k_1,k_2)| \leq Ce^{-D_{\overline{I}}\cdot x_{\overline{I}}},\qquad \forall x_{\overline{I}}\in\R_{>0}^{\overline{I}},k_1,k_2\in K. $$
Here the numbers $D_j\in\R$ can be chosen arbitrarily large. The resulting integral is
\begin{align*}
 \int_{K}\int_{K}\int_{\R_{>0}^k}c(x_{\overline{I}};k_1,k_2)x_I^\alpha e^{-z\cdot x_I}\phi(k_1a_xk_2)
 \Big(\prod_{j=1}^ke^{-(js+s_j)x_j}\Big)d(\cdots).
\end{align*}
Substitute $y_j=e^{-x_j}$ and obtain
\begin{align*}
 \int_{K}\int_{K}\int_0^1\cdots\int_0^1c'(y_{\overline{I}};k_1,k_2)\Big(\prod_{j\in I} y_j^{js+s_j+z_j}\log^{\alpha_j}y_j \Big)\Big(\prod_{j\notin I}y_j^{js+s_j}\Big)
\phi(k_1b_yk_2)\,d(\cdots),
\end{align*}
where $b_y=\diag(y_1\cdots y_k,y_2\cdots y_k,\ldots,y_k)$ and $c'(e^{-x_{\overline{I}}};k_1,k_2)=c(x_{\overline{I}};k_1,k_2)$ satisfies
$$ |c'(y_{\overline{I}};k_1,k_2)| \leq Cy_{\overline{I}}^{D_{\overline{I}}}, \qquad \forall y_{\overline{I}}\in(0,1)^{\overline{I}},k_1,k_2\in K. $$
The function $\phi(k_1b_yk_2)$ together with all its $y$-derivatives is bounded for all $k_1,k_2\in K$ and $y_1,\ldots,y_k\in(0,1)$. Hence, the integral
\begin{align*}
B_{c'}(y)=\int_{K}\int_{K} c'(y_{\overline{I}};k_1,k_2)\phi(k_1b_yk_2)\,dk_1\,dk_2
\end{align*}
converges absolutely and defines a smooth function in $y_1,\ldots,y_k\in(0,1)$ such that for all differential operators $\mathcal{D}$ in $y_I$ with constant coefficients we have
\begin{align*}
|\mathcal{D}B_{c'}(y)| \leq C_{\mathcal{D}}y_{\overline{I}}^{M_{\overline{I}}}, \qquad \forall y\in(0,1)^k.
\end{align*}
This implies that the integral
$$ \int_0^1\cdots\int_0^1 B_{c'}(y)\Big(\prod_{j\in I}y_j^{js+s_j+z_j}\log^{\alpha_j}y_j\Big)\Big(\prod_{j\notin I}y_j^{js+s_j}\Big)dy_1\cdots dy_k $$
converges absolutely whenever $\Re s>-\frac{\Re\Lambda_j+s_j+1}{j}\geq-\frac{\Re z_j+s_j+1}{j}$ for $j\in I$ and $\Re s>-\frac{D_j+s_j+1}{j}$ for $j\notin I$. By the usual technique writing $(s+1)y^s=\frac{d}{dy}y^{s+1}$ and integrating by parts it is easy to see that the integral further extends to a meromorphic function in the region where $\Re s>-\frac{D_j+s_j+1}{j}$ for $j\notin I$ (the region where the integrals over $y_{\overline{I}}$ always converge absolutely). Since the $D_j$ can be chosen arbitrarily large, this shows the meromorphic extension to $s\in\C$.

Now assume $x_k\leq0$. Let $x'=(x_1,\ldots,x_{k-1},0)$ with $x_1,\ldots,x_{k-1}\geq0$ and $x_k\leq0$. By Theorems~\ref{thm:AsymptoticExpansionSmoothMatrixCoefficientsArchimedean} and \ref{thm:AsymptoticExpansionShalikaFunctionsArchimedean} and Remark~\ref{rem:ExpansionsForXkleq0} the product $f\scrS\scrS'(k_1a_xk_2)$ can be written as a finite sum of terms of the form
$$ c(x'_{\overline{J}},x_k;k_1,k_2)(x'_J)^\alpha e^{-z'\cdot x'_J} $$
where $J\subset\{1,\ldots,k-1\}$, $\overline{J}=\{1,\ldots,k-1\}\setdifference J$, $\alpha\in\N^J$, $z'\in\C^J$ with $\Re z'_j\geq\Lambda_j=\Lambda_{\pi,j}+\Lambda_{\theta,j}+\Lambda_{\theta',j}$ for all $j\in J$, and $c(x'_{\overline{J}},x_k;k_1,k_2)$ is smooth in $x'_{\overline{J}}$, $x_k$, $k_1$ and $k_2$, and satisfies
$$ |c(x'_{\overline{J}},x_k;k_1,k_2)| \leq Ce^{-D'_{\overline{J}}\cdot x'_{\overline{J}}}e^{-Nx_k}, \qquad \forall x_{\overline{I}}\in\R_{>0}^{\overline{I}},x_k\leq0,k_1,k_2\in K. $$
Here the numbers $D'_j\in\R$ can be chosen arbitrarily large and $N\in\R$ depends on $D'_j$. The resulting integral is
\begin{multline*}
 \int_{K}\int_{K}\int_{-\infty}^0\int_{\R_{>0}^{k-1}}c(x'_{\overline{J}},x_k;k_1,k_2)(x'_J)^\alpha e^{-z'\cdot x'_J}\phi(k_1a_xk_2) \Big(\prod_{j=1}^ke^{-(js+s_j)x_j}\Big)\,d(\cdots).
\end{multline*}
Again substitute $y_j=e^{-x_j}$ and obtain
\begin{align*}
& \int_{K}\int_{K}\int_1^\infty\int_0^1\cdots\int_0^1c'(y'_{\overline{J}},y_k;k_1,k_2)\\&\times\Big(\prod_{j\in J}y_j^{js+s_j+z_j}\log^{\alpha_j}y_j\Big)\Big(\prod_{j\notin J}y_j^{js+s_j}\Big)
\phi(k_1b_yk_2)\,d(\cdots),
\end{align*}
where $b_y=\diag(y_1\cdots y_k,y_2\cdots y_k,\ldots,y_k)$ and $c'(e^{-x'_{\overline{J}}},e^{-x_k};k_1,k_2)=c(x'_{\overline{J}},x_k;k_1,k_2)$ satisfies
\begin{align*}
|c'(y'_{\overline{J}},y_k;k_1,k_2)| \leq C(y'_{\overline{J}})^{D'_{\overline{J}}}y_k^N, \qquad \forall y'_{\overline{J}}\in(0,1)^{\overline{J}},y_k\in(1,\infty),k_1,k_2\in K.
\end{align*}
Note that $y_k$ is a single coordinate in $b_y$ and $\phi\in\mathcal{S}(F_{k\times k})$. This implies that the function $\phi(k_1b_yk_2)$ together with all its $y'$-derivatives is bounded by a constant times $y_k^{-L}$ with $L\in\R$ arbitrarily large. Then $B_{c'}(y)$ (defined as above)
converges absolutely and defines a smooth function in $y_1,\ldots,y_{k-1}\in(0,1)$, $y_k\in(1,\infty)$ such that for all differential operators $\mathcal{D}$ in $y'_J$ with constant coefficients we have
$$ |\mathcal{D}B_{c'}(y)| \leq C_{\mathcal{D}}(y'_{\overline{J}})^{D'_{\overline{J}}}y_k^{N-L}, \qquad \forall y'\in(0,1)^{k-1},y_k\in(1,\infty). $$
This implies that the integral
\begin{align*}
\int_1^\infty\int_0^1\cdots\int_0^1 B_{c'}(y)\Big(\prod_{j\in J} y_j^{js+s_j+z_j}\log^{\alpha_j}y_j\Big)\Big(\prod_{j\notin J}y_j^{js+s_j}\Big)\,d(\cdots)
\end{align*}
converges absolutely whenever $\Re s>-\frac{\Re\Lambda_j+s_j+1}{j}\geq-\frac{\Re z_j+s_j+1}{j}$ for $j\in J$, $\Re s>-\frac{D_j+s_j+1}{j}$ for $j\in \overline{J}$ and $\Re s<\frac{L-N-s_k-1}{k}$. Since $L$ is arbitrarily large the last condition is superfluous and we can proceed as before.
\end{proof}

\subsection{Global theory}\label{section:global theory}
Let $F$ be a global field.
Let $\pi$ be a cuspidal (irreducible) automorphic representation of $G_{k}(\Adele)$. We construct a global integral representation.

Put $n=2k$ and let $\varphi$ and $\varphi'$ be a pair of automorphic forms belonging to the space of the global exceptional representation $\Theta$ of $\widetilde{G}_{n}(\Adele)$ (defined in \S~\ref{subsection:The exceptional representations}). The Fourier coefficient $\varphi^{U_k,\psi_k}(g)$ is a global Shalika functional on $\Theta$. By restriction it is a function
on $\widetilde{G_k}(\Adele)$, where $G_k$ is embedded in $G_n$ in the top left block of $M_k$. Then $g\mapsto\varphi^{U_k,\psi_k}(g){\varphi'}^{U_k,\psi_k^{-1}}(g)$ is well defined on $G_k(\Adele)$ and trivial on $G_k(F)$.

Let $f$ be a matrix coefficient of $\pi$, i.e.,
\begin{align*}
f(g)=\int_{\lmodulo{C_{G_k}(\Adele)G_{k}(F)}{G_{k}(\Adele)}}v(hg)v^{\vee}(h)\, dh,
\end{align*}
where $v$ (resp. $v^{\vee}$) is a cusp form in the space of $\pi$ (resp. $\pi^{\vee}$). Also let $s\in\C$. Consider the integral
\begin{align}\label{int:global integral}
Z(f,\varphi^{U_k,\psi_k},{\varphi'}^{U_k,\psi_k^{-1}},s)=\int_{G_{k}(\Adele)}f(g)\varphi^{U_k,\psi_k}(g){\varphi'}^{U_k,\psi_k^{-1}}(g)|\det{g}|^{s-k/2}\, dg.
\end{align}

\begin{theorem}\label{theorem:properties of the Global GJ integral}The integral $Z(f,\varphi^{U_k,\psi_k},{\varphi'}^{U_k,\psi_k^{-1}},s)$ satisfies the following properties.
\begin{enumerate}[leftmargin=*]
  \item\label{thm:global part 1} It is absolutely convergent for $\Re(s)\gg0$.
  \item\label{thm:global part 2} For factorizable data $f=\prod_vf_v$, $\varphi$ and $\varphi'$, write
  $\varphi^{U_k,\psi_k}=\prod_v\mathscr{S}_v$ and ${\varphi'}^{U_k,\psi_k^{-1}}=\prod_v\mathscr{S}'_v$.
  In the domain of absolute convergence, the integral is equal to a product
\begin{align*}
\prod_vZ(f_v,\mathscr{S}_v,\mathscr{S}'_v,s).
\end{align*}
Moreover, for any sufficiently large finite set of places $S$ of $F$,
\begin{align*}
Z(f,\varphi^{U_k,\psi_k},{\varphi'}^{U_k,\psi_k^{-1}},s)=\prod_{v\in S}Z(f_v,\mathscr{S}_v,\mathscr{S}'_v,s)
\frac{L^S(2s,\mathrm{Sym}^2,\pi)}{L^S(2s+1,\wedge^2,\pi)}.
\end{align*}
  \item\label{thm:global part 3}For a suitable choice of data, the integral is not identically zero (as a function of $s$).
  \item\label{thm:global part 4} It admits a meromorphic continuation to the complex plane.
\end{enumerate}\end{theorem}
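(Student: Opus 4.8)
The plan is to reduce the global statement to the local results already established in Theorems~\ref{theorem:local props GJ integral p-adic} and \ref{theorem:local props GJ integral archimedean}, together with the known analytic properties of the symmetric and exterior square $L$-functions. First I would establish the Eulerian factorization: for factorizable data $f=\prod_v f_v$, $\varphi$ and $\varphi'$, Theorem~\ref{theo:global Shalika model of Theta} (Theorem~\ref{theorem:Shalika model given by Fourier coefficient}) gives that $\varphi\mapsto\varphi^{U_k,\psi_k}(1)$ is factorizable, so $\varphi^{U_k,\psi_k}(g)=\prod_v\mathscr{S}_v(g_v)$ and ${\varphi'}^{U_k,\psi_k^{-1}}(g)=\prod_v\mathscr{S}'_v(g_v)$ with $\mathscr{S}_v\in\mathscr{S}(\theta_v,\psi_{(-1)^k},\psi)$ and $\mathscr{S}'_v\in\mathscr{S}(\theta_v,\psi_{(-1)^{k+1}},\psi^{-1})$. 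Unfolding the matrix coefficient $f$ as an integral over $\lmodulo{C_{G_k}(\Adele)G_k(F)}{G_k(\Adele)}$ and combining with the periodicity of $\varphi^{U_k,\psi_k}$ under $G_k(F)$, the global integral \eqref{int:global integral} becomes an integral over $G_k(\Adele)$ against a factorizable integrand, hence a product $\prod_v Z(f_v,\mathscr{S}_v,\mathscr{S}'_v,s)$ once absolute convergence is granted. Part~\eqref{thm:global part 1} (convergence for $\Re(s)\gg0$) follows from the local convergence statements Theorem~\ref{theorem:local props GJ integral p-adic}\eqref{item:p-adic integral conv} and Theorem~\ref{theorem:local props GJ integral archimedean}\eqref{item:archimedean integral conv} together with the fact that for almost all $v$ the local integral equals $L(2s,\mathrm{Sym}^2,\pi_v)/L(2s+1,\wedge^2,\pi_v)$ (Theorem~\ref{theorem:local props GJ integral p-adic}\eqref{item:p-adic integral unramified}) and the Euler product of these $L$-functions converges in a right half-plane; this also immediately yields part~\eqref{thm:global part 2}, namely for $S$ large enough the integral equals $\prod_{v\in S}Z(f_v,\mathscr{S}_v,\mathscr{S}'_v,s)\cdot L^S(2s,\mathrm{Sym}^2,\pi)/L^S(2s+1,\wedge^2,\pi)$.

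For part~\eqref{thm:global part 3}, I would argue as follows. By Theorem~\ref{theorem:local props GJ integral p-adic}\eqref{item:p-adic integral made const}, at each finite $v\in S$ one can choose $(f_v,\mathscr{S}_v,\mathscr{S}'_v)$ making $Z(f_v,\mathscr{S}_v,\mathscr{S}'_v,s)\equiv 1$, and by Theorem~\ref{theorem:local props GJ integral archimedean}\eqref{item:archimedean integral made non-zero}, at each Archimedean place one can choose data so that the local integral is nonzero on any prescribed vertical strip; the global matrix coefficient $f$, the forms $\varphi,\varphi'$ can be taken factorizable with these prescribed local components using Theorem~\ref{theorem:Shalika model given by Fourier coefficient} (the local components of $\Theta$ realize all metaplectic Shalika functions, cf. Claim~\ref{claim:richness of restriction of Shalika model to Q} and the Archimedean analogue in the proof of Theorem~\ref{theorem:local props GJ integral archimedean}\eqref{item:archimedean integral made non-zero}). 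Since $L^S(2s,\mathrm{Sym}^2,\pi)/L^S(2s+1,\wedge^2,\pi)$ is not identically zero (the partial $L$-functions are nonzero in their region of absolute convergence), the product in part~\eqref{thm:global part 2} is not identically zero for this choice of data, establishing~\eqref{thm:global part 3}.

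Part~\eqref{thm:global part 4}, the meromorphic continuation, is the main point and where I expect the real work. I would combine the factorization of part~\eqref{thm:global part 2} with: (i) the meromorphic continuation of the partial quotient $L^S(2s,\mathrm{Sym}^2,\pi)/L^S(2s+1,\wedge^2,\pi)$ to the whole plane, which follows from Shahidi's and Bump--Ginzburg's work on the symmetric and exterior square $L$-functions; and (ii) the meromorphic continuation of each local factor $Z(f_v,\mathscr{S}_v,\mathscr{S}'_v,s)$ for $v\in S$, which is Theorem~\ref{theorem:local props GJ integral p-adic}\eqref{item:p-adic integral meromorphic} at finite places and Theorem~\ref{theorem:local props GJ integral archimedean}\eqref{item:archimedean integral meromorphic} at Archimedean places. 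Since for fixed factorizable data the global integral equals a finite product of meromorphic functions times a meromorphic function, it extends meromorphically to $\C$; a general (not necessarily factorizable) datum is a finite sum of factorizable ones, so the continuation holds in general. The most delicate ingredient here is the Archimedean local meromorphic continuation, whose proof relies on the asymptotic expansions of metaplectic Shalika functions (Theorem~\ref{thm:AsymptoticExpansionShalikaFunctionsArchimedean}) developed in \S~\ref{section:Archimedean asympt expansion Shalika}; the rest of the argument is a formal assembly of already-established local and global facts.
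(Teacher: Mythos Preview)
Your approach is essentially the paper's: reduce everything to the local results of Theorems~\ref{theorem:local props GJ integral p-adic} and \ref{theorem:local props GJ integral archimedean}, factorize via Theorem~\ref{theorem:Shalika model given by Fourier coefficient}, and invoke the known analytic properties of $L^S(s,\mathrm{Sym}^2,\pi)$ and $L^S(s,\wedge^2,\pi)$ for parts~\eqref{thm:global part 2}--\eqref{thm:global part 4}. The paper in fact says that the only real work is part~\eqref{thm:global part 1}.

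There is one imprecision worth fixing in your treatment of \eqref{thm:global part 1}. You argue that convergence follows because at almost all $v$ the local integral equals the local $L$-ratio and the Euler product of these ratios converges. But to prove \emph{absolute} convergence of the adelic integral you must control $\prod_v\int_{G_k(F_v)}|f_v\mathscr{S}_v\mathscr{S}'_v|\,|\det|_v^{r-k/2}\,dg_v$, and the unramified identity of Theorem~\ref{theorem:local props GJ integral p-adic}\eqref{item:p-adic integral unramified} computes the integral, not the integral of the absolute value (the matrix coefficient $f_v$ need not be positive). The paper handles this as in Godement--Jacquet: first bound $|f(g)|\le C_f|\det g|^{r_0}$ globally, then at unramified $v$ use Lemma~\ref{lemma:unramified normalized Shalika function of theta} directly to evaluate $\int|\mathscr{S}_v\mathscr{S}'_v|\,|\det|_v^{r-k/2}$ as a sum over $2\Z^k_+$, which is bounded by $\prod_{i=1}^k(1-q_v^{-2r-2i+k+1})^{-1}$, and the product over $v\notin S$ is finite by Weil. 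Also, no ``unfolding'' is needed: the zeta integral~\eqref{int:global integral} is already over $G_k(\Adele)$, and for pure tensors each factor of the integrand is factorizable, so the product decomposition is immediate once absolute convergence is in hand.
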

\begin{proof}
We merely have to establish convergence: the integral is Eulerian by virtue of
Theorem~\ref{theorem:Shalika model given by Fourier coefficient}; the other properties follow immediately from
the corresponding local results of Theorems~\ref{theorem:local props GJ integral p-adic} and
\ref{theorem:local props GJ integral archimedean}, and the properties of
$L^S(s,\mathrm{Sym}^2,\pi)$ and $L^S(s,\wedge^2,\pi)$.

As in \cite[\S~12]{GJ}, first we show that for $r\gg0$,
\begin{align}\label{eq:convergence of integral with double global Shalika funcitonals for real s large}
\int_{G_{k}(\Adele)}|\varphi^{U_k,\psi_k}(g){\varphi'}^{U_k,\psi_k^{-1}}(g)|\cdot|\det{g}|^{r-k/2}\, dg<\infty.
\end{align}
This implies the absolute convergence of $Z(f,\varphi^{U_k,\psi_k},{\varphi'}^{U_k,\psi_k^{-1}},s)$, because $|f(g)|\leq C_f|\det g|^{r_0}$ for some $C_f>0$ independent of $g$, and a fixed $r_0$ depending only on $\pi$ (when $\pi$ is unitary, $r_0=0$).
It is enough to consider factorizable data
$\varphi$ and ${\varphi'}$. 
Then we need to bound
\begin{align}\label{eq:global product to bound}
\prod_v\int_{G_{k}(F_v)}|\mathscr{S}_{v}(g)\mathscr{S}'_{v}(g)|\cdot|\det{g}|_v^{r-k/2}\, d_vg.
\end{align}
At any $v$, the integral is finite by Theorem~\ref{theorem:local props GJ integral p-adic}~\eqref{item:p-adic integral conv} and Theorem~\ref{theorem:local props GJ integral archimedean}~\eqref{item:archimedean integral conv}.
Let $S$ be a finite set of places such that for $v\notin S$, all data are unramified. Arguing as in the proof of
Theorem~\ref{theorem:local props GJ integral p-adic}~\eqref{item:p-adic integral unramified}, for $v\notin S$,
\begin{align*}
\int_{G_{k}(F_v)}|\mathscr{S}_{v}(g)\mathscr{S}'_{v}(g)|\cdot|\det{g}|_v^{r-k/2}\, d_vg=
\sum_{\lambda\in2\Z^k_+}\mathrm{vol}(G_k(\mathcal{O}_v)\varpi_v^{\lambda}G_k(\mathcal{O}_v))\delta_{B_k}^{1/2}(\varpi_v^{\lambda})q_v^{-|\lambda|r}.
\end{align*}
Since $\mathrm{vol}(G_k(\mathcal{O}_v)\varpi_v^{\lambda}G_k(\mathcal{O}_v))\leq\delta_{B_k}^{-1}(\varpi_v^{\lambda})$, this sum is bounded by
\begin{align*}
\prod_{i=1}^k(1-q_v^{-2r-2i+k+1})^{-1}.
\end{align*}
According to Weil \cite[Chapter~VII \S~1, Proposition~1]{We2} the product of these factors over all the finite places of $F$ is finite
for $r\geq(k+1)/2$, and then
\begin{align*}
\prod_{v\notin S}\int_{G_{k}(F_v)}|\mathscr{S}_{v}(g)\mathscr{S}'_{v}(g)|\cdot|\det{g}|_v^{r-k/2}\, dg<\infty.
\end{align*}
Therefore \eqref{eq:global product to bound} is finite, when we take
$r\gg(k+1)/2$ to guarantee the convergence of the integrals also at $v\in S$.
\end{proof}
The next theorem will be used in \S~\ref{section:The co-period} to compute the co-period. It relies on the following assumption on $\pi$ and $\Theta$.
\begin{assumption}\label{Assumption:Archimedean result for co-period}
At each Archimedean place $v$ of $F$,
there is a neighborhood of $1/2$ such that the local zeta integral is holomorphic for all data, i.e.,
matrix coefficients of $\pi_v$ and local metaplectic Shalika functions.
\end{assumption}
As we show in the proof below, if $\pi$ is unitary with a trivial central character, the analogous assumption at $p$-adic places is valid. This gap will probably be closed once the exponents of $\theta_n$ are determined over Archimedean fields; for $n=2$ this can be done directly using the realization of $\theta_2$ as a Weil representation and the assumption holds.
\begin{theorem}\label{theorem:unitary cuspidal pi pole is from global part}
Assume that $\pi$ is unitary with a trivial central character, and Assumption~\ref{Assumption:Archimedean result for co-period} holds.
\begin{enumerate}[leftmargin=*]
  \item\label{item:global pole comes from the L function}The global integral has a pole at $s=1/2$ if and only if
  $L^S(s,\mathrm{Sym}^2,\pi)$ has a pole at $s=1$.
  \item\label{item:around global pole}For any data $(f,\varphi^{U_k,\psi_k},{\varphi'}^{U_k,\psi_k^{-1}})$, there is a punctured neighborhood around $1/2$
  such that the integral $Z(kf,k\varphi^{U_k,\psi_k},k{\varphi'}^{U_k,\psi_k^{-1}},s)$ is holomorphic for all $k\in K$ and $s$ in this neighborhood.
  \end{enumerate}
\end{theorem}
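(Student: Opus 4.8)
The strategy is to combine the Euler factorization of Theorem~\ref{theorem:properties of the Global GJ integral}\eqref{thm:global part 2} with the local results, isolating the analytic behavior at $s=1/2$ into the partial $L$-function and a finite product of local integrals. Write
\begin{align*}
Z(f,\varphi^{U_k,\psi_k},{\varphi'}^{U_k,\psi_k^{-1}},s)=\prod_{v\in S}Z(f_v,\mathscr{S}_v,\mathscr{S}'_v,s)\cdot\frac{L^S(2s,\mathrm{Sym}^2,\pi)}{L^S(2s+1,\wedge^2,\pi)},
\end{align*}
for a finite set $S$ containing all Archimedean places and all places where the data are ramified. First I would record that $L^S(2s+1,\wedge^2,\pi)$ is nonzero and holomorphic at $s=1/2$ (it is evaluated at $2$, inside the region of absolute convergence of the Euler product), so the denominator contributes nothing. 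Then the pole structure at $s=1/2$ is governed entirely by the numerator $L^S(2s,\mathrm{Sym}^2,\pi)$, which is evaluated at $1$, together with the local factors $Z(f_v,\mathscr{S}_v,\mathscr{S}'_v,s)$ for $v\in S$.

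\textbf{Handling the local factors in $S$.} For the Archimedean places, Assumption~\ref{Assumption:Archimedean result for co-period} directly provides a neighborhood of $1/2$ on which $Z(f_v,\mathscr{S}_v,\mathscr{S}'_v,s)$ is holomorphic for all data. For the finite places $v\in S$, I would invoke Theorem~\ref{theorem:local props GJ integral p-adic}\eqref{item:p-adic integrals holomorphic at s=1/2 for some cases}: since $\pi$ is unitary (hence $\pi_v$ is unitary) with trivial central character, I need to verify that the normalized exponents $\varepsilon$ of $\pi_v$ along any standard parabolic $Q$ satisfy that $\delta_Q^{1/2}|\varepsilon|$ lies in the open positive cone of the Levi. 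This is the standard consequence of unitarity for generic (or indeed all) representations of $\GL_k$ over a $p$-adic field — the exponents of a unitary representation are bounded by those of the tempered/Speh building blocks, and the Jacquet module estimates give exactly the stated cone condition. This step is essentially the observation referred to in the theorem statement as ``the analogous assumption at $p$-adic places is valid.'' Hence $Z(f_v,\mathscr{S}_v,\mathscr{S}'_v,s)$ is holomorphic at $\Re(s)=1/2$ for all $v\in S$ finite.

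\textbf{Conclusion of both parts.} Combining the above, near $s=1/2$ the global integral equals a holomorphic function times $L^S(2s,\mathrm{Sym}^2,\pi)$. By Theorem~\ref{theorem:properties of the Global GJ integral}\eqref{thm:global part 3} one can choose data making the finite product $\prod_{v\in S}Z(f_v,\mathscr{S}_v,\mathscr{S}'_v,1/2)$ nonzero (using the non-vanishing statements of Theorem~\ref{theorem:local props GJ integral p-adic}\eqref{item:p-adic integral made const} and Theorem~\ref{theorem:local props GJ integral archimedean}\eqref{item:archimedean integral made non-zero}); therefore the global integral has a pole at $s=1/2$ precisely when $L^S(s,\mathrm{Sym}^2,\pi)$ has a pole at $s=1$, giving \eqref{item:global pole comes from the L function}. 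For \eqref{item:around global pole}, fix arbitrary data; replacing each datum by its $K$-translate $k$ leaves $\pi$, $\Theta$, and hence all the local analytic conclusions unchanged (the translated local data are still matrix coefficients of $\pi_v$ and metaplectic Shalika functions, and the holomorphy statements above hold uniformly for all such data), so each local factor $Z(k_vf_v,k_v\mathscr{S}_v,k_v\mathscr{S}'_v,s)$ is holomorphic in a fixed neighborhood of $1/2$ independent of $k_v$. Removing the point $s=1/2$ to exclude the possible pole of $L^S(2s,\mathrm{Sym}^2,\pi)$, we obtain a punctured neighborhood on which $Z(kf,k\varphi^{U_k,\psi_k},k{\varphi'}^{U_k,\psi_k^{-1}},s)$ is holomorphic for all $k\in K$ simultaneously.

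\textbf{Expected main obstacle.} The delicate point is the uniformity in $k\in K$ for the Archimedean factors in part \eqref{item:around global pole}: Assumption~\ref{Assumption:Archimedean result for co-period} gives holomorphy ``for all data,'' so I must make sure the neighborhood of $1/2$ it provides does not shrink as the data vary over the (compact) $K$-orbit. Since the asymptotic expansions of Theorems~\ref{thm:AsymptoticExpansionSmoothMatrixCoefficientsArchimedean} and \ref{thm:AsymptoticExpansionShalikaFunctionsArchimedean} depend continuously on the vectors and the relevant exponents $\Lambda_{\pi,j},\Lambda_{\theta,j},\Lambda_{\theta',j}$ are intrinsic to the representations (not the chosen vectors), the region of holomorphy obtained in the proof of Theorem~\ref{theorem:local props GJ integral archimedean}\eqref{item:archimedean integral meromorphic} is in fact uniform over all data; I would spell this out by noting that the meromorphic continuation there is controlled purely by these fixed exponents. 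The $p$-adic side is comparatively routine given part~\eqref{item:p-adic integrals holomorphic at s=1/2 for some cases}, whose proof already produces a half-plane of holomorphy depending only on the exponents of $\pi_v$ and $\theta$.
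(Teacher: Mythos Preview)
Your proposal is correct and follows essentially the same approach as the paper: Euler factorize via Theorem~\ref{theorem:properties of the Global GJ integral}\eqref{thm:global part 2}, control the finitely many local factors at $S$ using Assumption~\ref{Assumption:Archimedean result for co-period} at Archimedean places and Theorem~\ref{theorem:local props GJ integral p-adic}\eqref{item:p-adic integrals holomorphic at s=1/2 for some cases} at finite places, and for part~\eqref{item:around global pole} observe that the relevant neighborhoods depend only on the exponents of $\pi_v$ and $\theta$, hence are uniform over $K$-translates. One point worth sharpening: you justify the exponent hypothesis at finite $v\in S$ as a ``standard consequence of unitarity,'' whereas the paper invokes the Luo--Rudnick--Sarnak bounds toward Ramanujan (which use that $\pi_v$ is a local component of a cuspidal automorphic representation, not merely that it is unitary); your route via the Tadi\'c classification of the unitary dual of $\GL_k$ is also valid but should be cited explicitly rather than asserted.
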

\begin{proof}
The first part follows from Theorem~\ref{theorem:properties of the Global GJ integral}~\eqref{thm:global part 2}, assuming we can control the integrals at the places in $S$. One can choose data, for which these do not contribute poles or zeros in a neighborhood of $1/2$, by Theorem~\ref{theorem:local props GJ integral p-adic}~\eqref{item:p-adic integral made const}
and Theorem~\ref{theorem:local props GJ integral archimedean}~\eqref{item:archimedean integral made non-zero}. Hence
if $L^S(s,\mathrm{Sym}^2,\pi)$ has a pole at $s=1$, the integral will have a pole at $s=1/2$, for some choice of data.
In the other direction, we need to show that the integrals at the places of $S$ cannot contribute any pole. At Archimedean places this is guaranteed by Assumption~\ref{Assumption:Archimedean result for co-period}. At a $p$-adic place $v\in S$ we can use Theorem~\ref{theorem:local props GJ integral p-adic}~\eqref{item:p-adic integrals holomorphic at s=1/2 for some cases}: since $\pi_v$ is a local component of a unitary cuspidal automorphic representation of $G_k(\Adele)$, it satisfies the prescribed condition on the exponents, by virtue of the results of Luo, Rudnick and Sarnak \cite{LRS} towards Ramanujan type bounds (see \cite[p.~221]{CKPS}).

For the second part, first note that if $f$ is a pure tensor, $k$-translations leave most of its components unchanged. The global partial $L$-functions are meromorphic, so one can find a punctured neighborhood of $1/2$ where their quotient is holomorphic, and we only have to deal with a finite number of local integrals. At the Archimedean places we again resort to Assumption~\ref{Assumption:Archimedean result for co-period}. For the remaining $p$-adic places we use Theorem~\ref{theorem:local props GJ integral p-adic}~\eqref{item:p-adic integrals holomorphic at s=1/2 for some cases} and its proof, to choose a neighborhood of $1/2$ depending only on the exponents of $\pi_v$, at which the local integrals are holomorphic.
\end{proof}
\begin{remark}
We mention that over a function field Lafforgue \cite{Lafforgue1,Lafforgue2} (following \cite{Drinfeld2,Drinfeld1}) proved the Generalized Ramanujan Conjecture for $G_k(\Adele)$, so $\pi_v$ is already tempered.
\end{remark}

\section{The co-period}\label{section:The co-period}

\subsection{The co-period integral}\label{section:The co-period integral}
Let $F$ be a global field and $\pi$ be a cuspidal self-dual automorphic representation of $G_k(\Adele)$ with a trivial central character. Put $n=2k$ and denote by
$A^+$ the subgroup of id\`{e}les of $F$ whose finite components are
trivial and Archimedean components are equal and positive.

Let $\rho$ be a smooth complex-valued function on $G_n(\Adele)\times M_k(\Adele)$ satisfying the following properties: for $g\in G_n(\Adele)$,
the function $g\mapsto\rho(g,I_n)$ is right $K$-finite; for any $m,m_1\in M_k(\Adele)$ and $u\in U_k(\Adele)$,
$\rho(mug,m_1)=\delta_{Q_k}^{1/2}(m)\rho(g,m_1m)$; and the function $m\mapsto\rho(g,m)$ is a cusp form in the space of $\pi\otimes\pi^{\vee}$.
The standard section corresponding to $\rho$ is defined by $\rho_s(muk,m_1)=\delta_{Q_k}^{s/k}(m)\rho(muk,m_1)$,
for any $k\in K$ and $s\in\C$. The function $\rho_s$ belongs to the space of
\begin{align*}
\Ind_{Q_k(\Adele)}^{G_n(\Adele)}(\delta_{Q_k}^{1/2+s/k}\pi\otimes\pi^{\vee}).
\end{align*}
For simplicity, denote $\rho_s(g)=\rho_s(g,I_n)$.
Form the Eisenstein series
\begin{align*}
E(g;\rho,s)=\sum_{g_0\in \lmodulo{Q_k(F)}{G_n(F)}}\rho_{s}(g_0g),\qquad g\in G_n(\Adele).
\end{align*}
Let $E_{1/2}(g;\rho)=\mathrm{Res}_{s=1/2}E(g;\rho,s)$ denote the residue of the series at $s=1/2$. The residue here is in the sense $\lim_{s\rightarrow1/2}(s-1/2)E(g;\rho,s)$. If we let $\rho$ vary, then $E_{1/2}(g;\rho)$ is nontrivial if and only if $L(\pi\times\pi,s)$ has a pole at $s=1$; equivalently, $L^S(\pi\times\pi,s)$ has a pole at $s=1$ for any sufficiently large finite set of places $S$ (\cite[1.5 and 3.16]{JS1}).

Also let $\varphi$ and $\varphi'$ be a pair of automorphic forms belonging to the space of the global exceptional representation $\Theta$ of $\widetilde{G}_{n}(\Adele)$. The central character of $\Theta$ is trivial on $\mathfrak{s}(C_{G_n}^2(\Adele))$. 
The function $g\mapsto\varphi(g)\varphi'(g)$ is well defined and moreover, it is left-invariant on a subgroup
$C'<C_{G_n}(\Adele)$ of finite index, which contains $C_{G_n}^2(\Adele)$ and is also contained in $G_k^{\triangle}(\Adele)$ (see \cite[pp.~159-160]{BG}).
Specifically, let $S'$ be a finite set of places of $F$ such that for all $v\notin S'$,
$g\mapsto\varphi(g)\varphi'(g)$ is right $K_v$-invariant, then
\begin{align*}
C'=\setof{z I_n}{z\in F^*\Adele^{*2}\prod_{v\notin S'}\mathcal{O}_v^*}.
\end{align*}
The global co-period integral is
\begin{align}\label{eq:global co-period}
CP(E_{1/2}(\blank;\rho),\varphi,\varphi')=
\int_{\lmodulo{C'G(F)}{G(\Adele)}}E_{1/2}(g;\rho)\varphi(g)\varphi'(g)\, dg.
\end{align}
This integral was defined by Shunsuke Yamana and the first named author (albeit using a slightly different quotient for the integration domain).
One may replace $E_{1/2}(\blank;\rho)$ with any complex-valued function on the quotient,
such that the integral is absolutely convergent. As long as $\varphi$ and $\varphi'$ are fixed, this
does not change the integration domain (in particular, e.g., the co-period is a linear function in the first variable).

Exactly as in \cite{me7}, since the cuspidal support of $E_{1/2}(\blank;\rho)$ is $\delta_{Q_k}^{-1/(2k)}\pi\otimes\pi^{\vee}$ and
the only cuspidal exponent of $\Theta$ is $\delta_{B_n}^{-1/4}$
\cite[Proposition II.1.2, Theorems II.1.4 and II.2.1]{KP}, and using \cite[Lemma~I.4.1]{MW2}, one proves the following claim:
\begin{claim}\label{claim:co period is absolutely convergent}
Integral~\eqref{eq:global co-period} is absolutely convergent.
\end{claim}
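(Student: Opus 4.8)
The plan is to reduce the convergence of the co-period integral \eqref{eq:global co-period} to a standard gauge estimate on the Siegel set, using the known cuspidal exponents of the two factors and the rapid decay coming from cuspidality. First I would recall that $E_{1/2}(\blank;\rho)$ is a residue of an Eisenstein series induced from the cuspidal datum $\pi\otimes\pi^\vee$ on $Q_k$, so by the theory of the constant term (Langlands, and the Moeglin--Waldspurger formalism \cite[Lemma~I.4.1]{MW2}) its only cuspidal exponent along any standard parabolic is governed by $\delta_{Q_k}^{-1/(2k)}\pi\otimes\pi^\vee$; in particular, along $Q_k$ itself the constant term of $E_{1/2}(\blank;\rho)$ is a (finite) sum of terms whose $M_k(\Adele)$-behaviour is a cusp form in $\pi\otimes\pi^\vee$ times $\delta_{Q_k}^{1/2-1/(2k)}$ or $\delta_{Q_k}^{1/2+1/(2k)}$ (the two Weyl translates meeting at $s=1/2$), and along every other standard parabolic the constant term vanishes. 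Symmetrically, by Theorem~\ref{theorem:constant term theorem} together with \cite[Proposition II.1.2, Theorems II.1.4 and II.2.1]{KP}, the constant term of $\varphi$ (resp. $\varphi'$) along a standard maximal parabolic is in the space of $\delta^{1/4}$ times an exceptional representation of the Levi, so the only cuspidal exponent of $\Theta$ is $\delta_{B_n}^{-1/4}$.

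Next I would set up the reduction à la Godement, following the scheme in \cite{me7} verbatim. Write the integration domain $\lmodulo{C'G_n(F)}{G_n(\Adele)}$ via a Siegel set: it is contained in $\Omega_N\cdot\lmodulo{C'A_n^\infty}{\;}\cdot K$ for a compact $\Omega_N\subset N_n(\Adele)$ and the usual positive cone $A_n^\infty$ in the diagonal torus (modulo the center one has an $(n-1)$-dimensional cone). On such a Siegel set the product $E_{1/2}(g;\rho)\,\varphi(g)\,\varphi'(g)$ is bounded, uniformly in the compact directions, by a finite sum of ``gauge'' functions: each of $E_{1/2}$, $\varphi$, $\varphi'$ differs from its constant term along the relevant parabolic by a function that is rapidly decreasing in the corresponding simple-root directions (this is the standard ``approximation by the constant term'' estimate for automorphic forms of moderate growth, e.g. from the theory of the constant term, and for $\Theta$ it additionally uses cuspidality of $\pi$ to get genuine rapid decay of the exceptional datum in the transverse directions), while the constant-term parts contribute explicit characters of the torus determined by the cuspidal exponents listed above. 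Cuspidality of $\pi$ (via the cusp form $m\mapsto\rho(g,m)$ on $M_k$) and the fact that $\Theta$ is a residual, hence ``small'', representation translate into genuine exponential decay in all the ``interior'' root directions of the cone; only finitely many ``boundary'' directions — essentially the single direction transverse to $Q_k$ — survive as polynomial-times-character behaviour.

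Then I would carry out the bookkeeping: in the surviving direction transverse to $Q_k$, the exponent contributed by $E_{1/2}(\blank;\rho)$ is $\delta_{Q_k}^{1/2\pm 1/(2k)}$ and the exponent contributed by $\varphi\varphi'$ is $\delta_{Q_k}^{1/2}$ composed with the restriction of $\delta_{B_n}^{-1/4}\cdot\delta_{B_n}^{-1/4}$; a direct comparison — identical to the one performed in \cite{me7} for the $\mathrm{SO}_{2n+1}$-analogue and relying on $\delta_{Q_k}|_{A_k^\triangle}=\delta_{B_n}^{1/2}$-type identities — shows that the resulting integral over that one-parameter direction is $\int_1^\infty t^{c}\,\frac{dt}{t}$ with $c<0$ strictly, because the ``$-1/(2k)$'' shift at the residue point $s=1/2$ lands strictly inside the region of convergence guaranteed by the absence of a further pole of the relevant $L$-function beyond $s=1$. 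Restricting the section $\rho$ to the residual datum makes the decisive $\delta_{Q_k}^{-1/(2k)}$ factor appear, which is exactly what is needed; collecting all root directions, the integrand on the Siegel set is dominated by an integrable gauge function. The main obstacle, and the place that needs genuine care rather than citation, is this last exponent comparison in the boundary direction: one must check that after taking the residue the transverse exponent is strictly in the convergence range — equivalently that the pole of $E(\blank;\rho,s)$ at $s=1/2$ is simple and that no competing term shifts the exponent to the boundary — and that the rapid decay from cuspidality of $\pi$ genuinely kills every other direction. Once that is in place, Fubini over the Siegel decomposition finishes the argument, giving Claim~\ref{claim:co period is absolutely convergent}.
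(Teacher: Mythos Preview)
Your proposal is correct and follows essentially the same route as the paper: both arguments identify the cuspidal support of $E_{1/2}(\blank;\rho)$ as $\delta_{Q_k}^{-1/(2k)}\pi\otimes\pi^{\vee}$ and the sole cuspidal exponent of $\Theta$ as $\delta_{B_n}^{-1/4}$, then feed these into the convergence criterion \cite[Lemma~I.4.1]{MW2} exactly as in \cite{me7}. Your write-up spells out the Siegel-domain and gauge-function mechanics that the paper leaves implicit behind the citation; one small cleanup is that for the \emph{residue} $E_{1/2}$ only the exponent $\delta_{Q_k}^{1/2-1/(2k)}$ survives (the $\rho_s$ term is holomorphic at $s=1/2$), so you should not carry the ``$\pm$'' through the final exponent comparison.
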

For the function $\rho$, and with $C'$ selected as described above (depending
on $\varphi$ and $\varphi'$) define
\begin{align*}
f_{\rho}(g)=\int_{\lmodulo{C'G_k^{\triangle}(F)}{G_k^{\triangle}(\Adele)}}\rho(I_n,c\ \diag(g,I_k))\, dc,\qquad g\in G_k(\Adele).
\end{align*}
As noted above, $C'<G_k^{\triangle}(\Adele)$. Then $f_{\rho}$
is a matrix coefficient of $\pi$. Since the integrand is left-invariant on $C_{G_n}(\Adele)$, taking
a different $C'$ only multiplies $f_{\rho}$ by some nonzero constant.

\begin{theorem}\label{theorem:co-period}Assume $F$ is a function field or Assumption~\ref{Assumption:Archimedean result for co-period} holds.
The co-period~\eqref{eq:global co-period} satisfies the following properties.
\begin{enumerate}[leftmargin=*]
\item\label{item:co-period thm part 1} $CP(E_{1/2}(\blank;\rho),\varphi,\varphi')=\int_{K}\mathrm{Res}_{s=1/2}Z(f_{k\rho},k{\varphi}^{U_k,\psi_k},k{\varphi'}^{U_k,\psi_k^{-1}},s)\, dk$.
\item\label{item:co-period thm part 2} It is nonzero for some choice of data $(\rho,\varphi,\varphi')$ if and only if
\begin{align*}\mathrm{Res}_{s=1/2}Z(f_{\rho_1},\varphi_1^{U_k,\psi_k},{\varphi_1'}^{U_k,\psi_k^{-1}},s)\end{align*}
does not vanish for some matrix coefficient $f_{\rho_1}$ of $\pi$ and
automorphic forms $\varphi_1,\varphi_1'$ in the space of $\Theta$.
\item \label{item:co-period thm part 3}The co-period is nonzero if and only if $L^S(s,\mathrm{Sym}^2,\pi)$ has a pole at $s=1$.
\end{enumerate}
\end{theorem}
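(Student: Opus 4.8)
The strategy is to reduce everything, via parts \eqref{item:co-period thm part 1} and \eqref{item:co-period thm part 2}, to the residue of the global zeta integral at $s=1/2$, and then invoke Theorem~\ref{theorem:unitary cuspidal pi pole is from global part}. Concretely, I would proceed in three steps mirroring the three assertions.

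First, for \eqref{item:co-period thm part 1}, I would unfold the Eisenstein series in \eqref{eq:global co-period}. Writing $E(g;\rho,s)=\sum_{g_0\in Q_k(F)\backslash G_n(F)}\rho_s(g_0g)$ and collapsing the sum against the quotient $C'G(F)\backslash G(\Adele)$, the co-period becomes (up to the residue) an integral over $C'Q_k(F)\backslash G_n(\Adele)$ of $\rho_s(g)\varphi(g)\varphi'(g)$. Using the Iwasawa decomposition $G_n(\Adele)=Q_k(\Adele)K$ and the factorization $Q_k=M_k\ltimes U_k$, the $U_k$-integration produces precisely the Fourier coefficients $\varphi^{U_k,\psi_k}$ and ${\varphi'}^{U_k,\psi_k^{-1}}$ along the Shalika unipotent subgroup — here I would use Theorem~\ref{theorem:Shalika model given by Fourier coefficient}, which guarantees that these coefficients give the global metaplectic Shalika functional, and Proposition~\ref{proposition:Fourier coefficients are trivial along v and y} to handle the non-Shalika unipotent directions. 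The $M_k(\Adele)\cong G_k(\Adele)\times G_k(\Adele)$-integration, after accounting for the triangular action of $G_k^\triangle$ on the Shalika functionals, collapses one $G_k$-factor against the cusp form $m\mapsto\rho(g,m)$, yielding the matrix coefficient $f_{k\rho}$; the remaining $G_k(\Adele)$-integration is exactly the global zeta integral $Z(f_{k\rho},k\varphi^{U_k,\psi_k},k{\varphi'}^{U_k,\psi_k^{-1}},s)$. Taking $\mathrm{Res}_{s=1/2}$ and integrating over $k\in K$ gives \eqref{item:co-period thm part 1}. Justifying the interchange of the residue with the $K$-integration, and the absolute convergence needed to unfold, requires Claim~\ref{claim:co period is absolutely convergent} together with Theorem~\ref{theorem:unitary cuspidal pi pole is from global part}\eqref{item:around global pole}, which provides a punctured neighborhood of $1/2$ on which $Z(k f,k\varphi,k\varphi',s)$ is uniformly holomorphic in $k$.

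Second, \eqref{item:co-period thm part 2} follows formally from \eqref{item:co-period thm part 1}: if the residue $\mathrm{Res}_{s=1/2}Z(f_{\rho_1},\varphi_1^{U_k,\psi_k},{\varphi_1'}^{U_k,\psi_k^{-1}},s)$ vanishes identically in the data, then the integrand in \eqref{item:co-period thm part 1} is zero for every $k$, so $CP$ vanishes. Conversely, if the residue is nonzero for some $(f_{\rho_1},\varphi_1,\varphi_1')$, one uses the factorizability of the zeta integral (Theorem~\ref{theorem:properties of the Global GJ integral}\eqref{thm:global part 2}) and the fact that matrix coefficients $f_{\rho}$ arising from sections $\rho$ exhaust — up to $K$-translation and finite linear combination — all matrix coefficients of $\pi$, to produce data with nonvanishing co-period. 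Here one must check that the class of functions $f_{k\rho}$, as $\rho$ ranges over sections and $k$ over $K$, is rich enough; this is a standard density argument using that $\rho$ runs over all $K$-finite sections into the induced space and $m\mapsto\rho(g,m)$ over all cusp forms in $\pi\otimes\pi^\vee$.

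Third, \eqref{item:co-period thm part 3} combines \eqref{item:co-period thm part 2} with Theorem~\ref{theorem:unitary cuspidal pi pole is from global part}\eqref{item:global pole comes from the L function}: the zeta integral $Z$ has a pole at $s=1/2$ for suitable data precisely when $L^S(s,\mathrm{Sym}^2,\pi)$ has a pole at $s=1$, because the local integrals at the finite set $S$ can be chosen to contribute neither poles nor zeros there (using Theorem~\ref{theorem:local props GJ integral p-adic}\eqref{item:p-adic integral made const}, \eqref{item:p-adic integrals holomorphic at s=1/2 for some cases} together with the Luo--Rudnick--Sarnak bounds ensuring the exponent condition at $p$-adic places, and Theorem~\ref{theorem:local props GJ integral archimedean}\eqref{item:archimedean integral made non-zero} together with Assumption~\ref{Assumption:Archimedean result for co-period} — or Lafforgue's theorem in the function field case). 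Having a pole of $Z$ is equivalent to having a nonzero residue for some data, which by \eqref{item:co-period thm part 2} is equivalent to the non-vanishing of the co-period.

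\textbf{Main obstacle.} I expect the technical heart to be the unfolding in \eqref{item:co-period thm part 1}: one must carefully track the metaplectic cocycle and the various sections $\mathfrak{s}$, $\mathfrak{h}$, $\kappa$ through the Iwasawa and $M_k\cong G_k\times G_k$ decompositions, verify that the triangular embedding $G_k^\triangle$ interacts correctly with the Weil-factor twists $\gamma_{\psi,(-1)^k}$ and $\gamma_{\psi,(-1)^{k+1}}$ so that the two Shalika functionals pair to a genuine-insensitive function on $G_k(\Adele)$, and justify every interchange of integration and residue. The convergence bookkeeping — ensuring that the manipulations are legitimate in a right half-plane and then continue to a punctured neighborhood of $s=1/2$ — is where the assumption on $\pi$ (unitary, self-dual, trivial central character) and Assumption~\ref{Assumption:Archimedean result for co-period} are genuinely used, via Theorem~\ref{theorem:unitary cuspidal pi pole is from global part}.
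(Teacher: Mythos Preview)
Your approach to part~\eqref{item:co-period thm part 1} has a genuine gap: the direct unfolding of the Eisenstein series does not converge. After collapsing the sum and applying Iwasawa, the $U_k$-integration of $\varphi\varphi'$ does \emph{not} produce only the Shalika coefficients $\varphi^{U_k,\psi_k}{\varphi'}^{U_k,\psi_k^{-1}}$; it produces a full Fourier expansion $\sum_{j=0}^k\mathrm{I}_j$ over the $M_k$-orbits of characters of $U_k$. Proposition~\ref{proposition:Fourier coefficients are trivial along v and y} together with cuspidality of $\rho$ kills the intermediate terms $0<j<k$, but the constant-term contribution $\mathrm{I}_0$ (where $j=0$) survives and is genuinely divergent: by Theorem~\ref{theorem:constant term theorem}, $\varphi^{U_k}{\varphi'}^{U_k}$ behaves like $\delta_{Q_k}^{1/2}$ along the split center of $M_k$, and combined with $\rho_s\sim\delta_{Q_k}^{1/2+s/k}$ and the modulus $\delta_{Q_k}^{-1}$ this yields an integral $\int_0^\infty t^{sk-1}\,dt$ over the central $A^+$-direction, which diverges for every $s$. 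Claim~\ref{claim:co period is absolutely convergent} concerns the residue $E_{1/2}$, not the series $E(\blank;\rho,s)$, so it does not rescue the unfolding.

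The paper handles this by applying Arthur's truncation operator $\Lambda_d$ to the Eisenstein series before unfolding. The truncated series splits as $\mathcal{E}_1^d-\mathcal{E}_2^d$, each of which \emph{can} be unfolded (Claim~\ref{claim:conv global co-periods}); the divergent central integral is cut off at $d$ and contributes $\frac{d^{\pm s/k}}{sk}$ times a function with controlled behaviour at $s=1/2$ (Proposition~\ref{proposition:technical unfolding truncation}). One then takes the residue at $s=1/2$ and lets $d\to\infty$, using \cite[Proposition~4.3.3]{LW} and dominated convergence to pass the limit inside the integral. Only the Shalika term $\mathrm{I}_k$ survives this double limit, and that is what gives the $\int_K\mathrm{Res}_{s=1/2}Z(\cdots)\,dk$. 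Your outline for parts~\eqref{item:co-period thm part 2} and \eqref{item:co-period thm part 3} is essentially correct once \eqref{item:co-period thm part 1} is established (the paper defers the nonvanishing argument in \eqref{item:co-period thm part 2} to \cite[Theorem~3.2]{GJR2}), but the missing truncation argument is the substantive content of the proof.
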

\begin{remark}
Part~\eqref{item:co-period thm part 3} was obtained by Shunsuke Yamana and the first named author, it is included here because we can also deduce it from Theorem~\ref{theorem:unitary cuspidal pi pole is from global part}.
\end{remark}
\begin{proof}
We apply the truncation operator of Arthur \cite{A2,A1} to $E(g;\rho,s)$ as in, e.g.,
\cite{JR,Jng,GRS7,GJR2,me7}. For a real number $d>1$, let $ch_{>d}:\R_{>0}\rightarrow\{0,1\}$ be the characteristic function of $\R_{>d}$ and set $ch_{\leq d}=1-ch_{>d}$. Also extend $\delta_{Q_k}$ to $G_n(\Adele)$ via the Iwasawa decomposition.
The truncation operator is given by
\begin{align*}
\Lambda_dE(g;\rho,s)=E(g;\rho,s)-\sum_{g_0\in\lmodulo{Q_k(F)}{G_n(F)}}E^{U_k}(g_0 g;\rho,s)ch_{>d}(\delta_{Q_k}(g_0 g)),
\end{align*}
where $E^{U_k}$ is the constant term of the series along $U_k$. Since $\pi$ is cuspidal,
$E^{U_k}(g;\rho,s)=\rho_s(g)+M(w,s)\rho_s(g)$. Here
$M(w,s)$ is the global intertwining operator corresponding to a Weyl element $w$ taking $U_k$ to $U_k^-$. Write
\begin{align*}
\Lambda_dE(g;\rho,s)=\mathcal{E}_1^d(g;s)-\mathcal{E}_2^d(g;s),
\end{align*}
an equality between meromorphic functions in $s$, where
\begin{align*}
&\mathcal{E}_1^d(g;s)=\sum_{g_0\in\lmodulo{Q_k(F)}{G_n(F)}}\rho_s(g_0 g)ch_{\leq d}(\delta_{Q_k}(g_0 g)),\\
&\mathcal{E}_2^d(g;s)=\sum_{g_0\in\lmodulo{Q_k(F)}{G_n(F)}}M(w,s)\rho_s(g_0 g)ch_{>d}(\delta_{Q_k}(g_0 g)).
\end{align*}
\begin{claim}\label{claim:conv global co-periods}
Fix $d>1$. The integral obtained from \eqref{eq:global co-period} by replacing $E_{1/2}(\blank;\rho)$ with $\mathcal{E}_j^d(\blank;s)$ and taking the absolute value
of $\rho_s$ ($M(w,s)\rho_s$ for $j=2$), $\varphi$ and $\varphi'$, is finite for $\Re(s)\gg0$.
\end{claim}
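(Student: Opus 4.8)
\textbf{Proof proposal for Claim~\ref{claim:conv global co-periods}.}

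The plan is to reduce the two integrals to ordinary (non-metaplectic) Eisenstein series estimates on $G_n(\Adele)$ by exploiting that the only cuspidal exponent of $\Theta$ is $\delta_{B_n}^{-1/4}$, so $|\varphi(g)\varphi'(g)|$ is bounded on a Siegel domain by a fixed power of the height of $g$, times a rapidly decreasing factor coming from cuspidality of $\pi$ in the $M_k$-direction. First I would set up a Siegel domain $\mathfrak{S}$ for $\lmodulo{C'G_n(F)}{G_n(\Adele)}$ adapted to the parabolic $Q_k$, writing $g=muak$ with $m$ in a Siegel domain for $M_k$, $a\in A^+$ (the central geodesic), $u\in U_k(\Adele)$ compact mod $U_k(F)$, and $k\in K$. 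On such a domain, $\delta_{Q_k}(g)$ is comparable to a positive power of the coordinate measuring $a$, so the characteristic functions $ch_{\le d}$ and $ch_{>d}$ cut the $a$-range into a bounded piece and an unbounded piece respectively.

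For $\mathcal{E}_1^d$: the sum over $\lmodulo{Q_k(F)}{G_n(F)}$ together with the cutoff $ch_{\le d}(\delta_{Q_k}(g_0g))$ collapses, by the standard argument (e.g.\ as in the references \cite{JR,me7} already invoked), to an integral over a region where $\delta_{Q_k}(g)\le d'$ for some $d'$ depending on $d$ and on the support of the test data — so the central variable $a$ ranges over a \emph{bounded} set. On that region $|\rho_s(g)|$ is dominated, for $\Re(s)$ large, by $\delta_{Q_k}^{1/2+\Re(s)/k}(g)$ times a cusp form on $M_k(\Adele)$ in the variable $m$; and $|\varphi(g)\varphi'(g)|$ is dominated by $\delta_{Q_k}^{-1/4}(g)$ times a cusp form coming from the constant-term computation of Theorem~\ref{theorem:constant term theorem}. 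Since the cusp form in $m$ is rapidly decreasing on the Siegel domain for $M_k$, the $m$- and $u$- and $k$-integrations converge absolutely, and the $a$-integration is over a bounded set, so $\mathcal{E}_1^d$ contributes a finite integral for all $s$ (in particular for $\Re(s)\gg0$).

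For $\mathcal{E}_2^d$: now the cutoff is $ch_{>d}(\delta_{Q_k}(g_0g))$, so after unfolding the sum the central variable $a$ ranges over an \emph{unbounded} region $\delta_{Q_k}(g)>d$. Here the integrand is $|M(w,s)\rho_s(g)|$, which for $\Re(s)$ large is bounded by $\delta_{Q_k}^{1/2-\Re(s)/k}(g)$ times a cusp form in $m$ (the intertwining operator flips the sign of the exponent, which is exactly why we want $\Re(s)\gg0$: the factor $\delta_{Q_k}^{1/2-\Re(s)/k}$ then decays as $\delta_{Q_k}(g)\to\infty$). Combined with the bound $|\varphi(g)\varphi'(g)|\ll\delta_{Q_k}^{-1/4}(g)\cdot(\text{cusp form})$, the total exponent in $\delta_{Q_k}(g)$ is $1/4-\Re(s)/k$, which is negative for $\Re(s)>k/4$; so the $a$-integral $\int_d^\infty (\cdots)^{1/4-\Re(s)/k}\,\tfrac{da}{a}$ converges, and the remaining $m,u,k$ integrals converge by rapid decay of the cusp form and compactness. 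Hence $\mathcal{E}_2^d$ also gives a finite integral for $\Re(s)\gg0$.

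The main obstacle I anticipate is not any single estimate but the bookkeeping needed to legitimately pass from the quotient $\lmodulo{C'G_n(F)}{G_n(\Adele)}$ to the Siegel-domain description while tracking the precise power of $\delta_{Q_k}$ produced by $|\varphi\varphi'|$: this is where Theorem~\ref{theorem:constant term theorem} (the constant term of $\Theta$ along $U_k$ lies in $\delta_{Q_k}^{1/4}\Theta_{M_k}$) and the description of the cuspidal exponents of $\Theta$ from \cite[Proposition~II.1.2, Theorems~II.1.4 and~II.2.1]{KP} must be combined with Langlands' general bound on automorphic forms. Once the exponent $-1/4$ is pinned down, the convergence for $\mathcal{E}_1^d$ is automatic (bounded $a$-range) and for $\mathcal{E}_2^d$ it is the elementary one-variable integral $\int_d^\infty a^{-c}\,\tfrac{da}{a}<\infty$ for $c>0$, valid as soon as $\Re(s)>k/4$. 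This is precisely the argument of \cite[Lemma~I.4.1]{MW2} as cited before Claim~\ref{claim:co period is absolutely convergent}, applied now with the extra factor $\delta_{Q_k}^{\pm\Re(s)/k}$ coming from the standard section, and I would simply indicate that the same computation goes through.
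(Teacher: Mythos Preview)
Your approach works in spirit, but it is considerably more elaborate than the paper's and contains a conceptual slip. The paper's proof is essentially two lines: after collapsing the sum over $\lmodulo{Q_k(F)}{G_n(F)}$ and applying the Iwasawa decomposition $G_n(\Adele)=M_k(\Adele)U_k(\Adele)K$, one is left with
\[
\int_K\int_{\lmodulo{C'M_k(F)}{M_k(\Adele)}}|\rho_s|(mk)\,ch_{\leq d}(\delta_{Q_k}(m))\,\delta_{Q_k}^{r/k-1/2}(m)\int_{\lmodulo{U_k(F)}{U_k(\Adele)}}|\varphi\varphi'|(umk)\,du\,dm\,dk,
\]
and this is finite for $r=\Re(s)\gg0$ simply because the inner $du$-integral is over a compact set, $\varphi$ and $\varphi'$ have \emph{moderate growth}, and $m\mapsto\rho(I_n,m)$ is a cusp form on $M_k$, hence rapidly decreasing. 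No refined exponent information on $\Theta$ is needed; all the decay is supplied by $\rho$. The case $j=2$ is identical with $M(w,s)\rho_s$ and $ch_{>d}$ in place of $\rho_s$ and $ch_{\leq d}$.

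Your proposal instead tries to extract a bound of the form $|\varphi(g)\varphi'(g)|\ll\delta_{Q_k}^{-1/4}(g)\cdot(\text{cusp form})$ from Theorem~\ref{theorem:constant term theorem}. This is not correct: Theorem~\ref{theorem:constant term theorem} says $\varphi^{U_k}$ lies in $\delta_{Q_k}^{1/4}\Theta_{M_k}$, and $\Theta_{M_k}$ is an exceptional representation of $\widetilde{M}_k$, not a cuspidal one, so there is no ``cusp form'' factor coming from $\varphi$. (Also the sign of the exponent is off: the product $\varphi\varphi'$ would contribute $\delta_{Q_k}^{+1/2}$, not $\delta_{Q_k}^{-1/4}$.) Your exponent count ``$1/4-\Re(s)/k$'' for $\mathcal{E}_2^d$ is therefore not justified as written. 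Finally, your claim that $\mathcal{E}_1^d$ yields a finite integral ``for all $s$'' because the $a$-range is bounded is stronger than what the claim asserts and than what your argument actually shows: the cutoff $ch_{\leq d}$ bounds $\delta_{Q_k}$ only from above, and convergence in the other direction still requires $\Re(s)\gg0$ to beat the moderate growth of $\varphi\varphi'$. The fix is easy: drop the appeal to Theorem~\ref{theorem:constant term theorem} entirely, replace your bound on $|\varphi\varphi'|$ by plain moderate growth, and let the rapid decrease of the cusp form $\rho$ do all the work---which is exactly the paper's argument.
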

It follows that for fixed $d>1$ and $\Re(s)\gg0$, $CP(\Lambda_dE(\blank;\rho,s),\varphi,\varphi')$ is absolutely convergent and
\begin{align}\label{eq:truncation formula 1}
CP(\Lambda_dE(\blank;\rho,s),\varphi,\varphi')=
CP(\mathcal{E}_1^d(\blank;s),\varphi,\varphi')-CP(\mathcal{E}_2^d(\blank;s),\varphi,\varphi').
\end{align}
The heart of the argument is in the following result.
\begin{proposition}\label{proposition:technical unfolding truncation}
Assume $d$ is sufficiently large.
The integrals $CP(\mathcal{E}_j^d(\blank;s),\varphi,\varphi')$ admit meromorphic continuation to the plane.
There exist complex-valued functions $\alpha_0(s)$, $\alpha_1(s,d)$, $\alpha_2(s)$ and $\alpha_3(d,s)$ such that
\begin{align*}
&CP(\mathcal{E}_1^d(\blank;s),\varphi,\varphi')=\frac{d^{s/k}}{sk}
\alpha_0(s)+\alpha_1(d,s),\\
&CP(\mathcal{E}_2^d(\blank;s),\varphi,\varphi')=\frac{d^{-s/k}}{sk}\alpha_2(s)+\alpha_3(d,s).
\end{align*}
The function $\alpha_0(s)$ is entire; for $j>0$ the functions $\alpha_j(\cdots)$ are meromorphic in $s$ (for fixed $d$ if $j=1,3$);
\begin{align*}
&\mathrm{Res}_{s=1/2}\alpha_1(d,s)=\int_{K}\mathrm{Res}_{s=1/2}Z(f_{k\rho},k\varphi^{U_k,\psi_k},k{\varphi'}^{U_k,\psi_k^{-1}},s)\, dk,
\end{align*}
$\alpha_2(s)$ has at most a simple pole at $s=1/2$, and
\begin{align*}
&\lim_{d\rightarrow\infty}\mathrm{Res}_{s=1/2}\alpha_3(d,s)=0.
\end{align*}
Furthermore, when $F$ is a number field, $\alpha_0(s)=\alpha_0$ is a constant, and if $F$ is a function field, we in fact have $CP(\mathcal{E}_2^d(\blank;s),\varphi,\varphi')=0$.
\end{proposition}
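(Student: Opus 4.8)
\textbf{Plan of proof of Proposition~\ref{proposition:technical unfolding truncation}.}
The starting point is the standard unfolding of each truncated piece against the co-period. For $\mathcal{E}_1^d$, since the summation is over $\lmodulo{Q_k(F)}{G_n(F)}$ and the integration is essentially over $\lmodulo{C'G_n(F)}{G_n(\Adele)}$, one unfolds to an integral over $\lmodulo{C'Q_k(F)}{G_n(\Adele)}$ of $\rho_s(g)\,\mathrm{ch}_{\leq d}(\delta_{Q_k}(g))\,\varphi(g)\varphi'(g)$. Using the Iwasawa decomposition $G_n(\Adele)=Q_k(\Adele)K$ and factoring $Q_k=M_k\ltimes U_k$, the $U_k(\Adele)$-integration applied to $\varphi(g)\varphi'(g)$ produces the Fourier coefficient of $\varphi$ against a character of $U_k$: here one must check that, by cuspidality of $\pi$ in the $m\mapsto\rho(g,m)$ variable and Theorem~\ref{theorem:intro Jacquet modules}, only the Shalika character $\psi_k$ (paired with $\psi_k^{-1}$ from $\varphi'$) survives, so the inner integral collapses to $\varphi^{U_k,\psi_k}(g)\,{\varphi'}^{U_k,\psi_k^{-1}}(g)$ — this is where Theorem~\ref{theorem:Shalika model given by Fourier coefficient} and the orbit analysis of \S~\ref{section:Fourier coefficients} enter. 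After integrating out the compact $K$ and the $c$-variable defining $f_{k\rho}$, and writing $M_k(\Adele)\cong G_k(\Adele)\times G_k(\Adele)$ modulo the diagonal, the $\delta_{Q_k}$-cutoff becomes a cutoff $\mathrm{ch}_{\leq d}(|\det g|^{?})$ on the $G_k(\Adele)$-integral. Splitting the $A^+$-direction off and performing the resulting elementary one-variable integral $\int_{\leq d}t^{s/k}\,\frac{dt}{t}=\frac{d^{s/k}}{s/k}\cdot(\text{const})$ extracts the leading term $\frac{d^{s/k}}{sk}\alpha_0(s)$; the remainder, where the $G_k(\Adele)^1$-integral converges (by the Godement--Jacquet type bounds of Theorem~\ref{theorem:properties of the Global GJ integral} together with the convergence statement of Claim~\ref{claim:conv global co-periods}), is exactly $\int_K Z(f_{k\rho},k\varphi^{U_k,\psi_k},k{\varphi'}^{U_k,\psi_k^{-1}},s)\,dk$ up to the holomorphic error, giving $\alpha_1(d,s)$ with the asserted residue. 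That $\alpha_0(s)$ is entire — and a constant when $F$ is a number field — follows because $\alpha_0(s)$ is a period of the constant term of $\Theta\otimes\Theta$ paired with $\rho(\cdot,I_n)$, which in the number-field case is a genuine constant (the Archimedean $A^+$ factors out cleanly), while over a function field the discreteness of $|\det|$-values still makes the corresponding sum a function of $q^{-s}$.

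For $\mathcal{E}_2^d$ the same unfolding is carried out with $M(w,s)\rho_s$ in place of $\rho_s$ and $\mathrm{ch}_{>d}$ in place of $\mathrm{ch}_{\leq d}$; since $M(w,s)\rho_s$ lies in $\Ind_{Q_k(\Adele)}^{G_n(\Adele)}(\delta_{Q_k}^{1/2-s/k}\pi^\vee\otimes\pi)$, the elementary integral is now $\int_{>d}t^{-s/k}\,\frac{dt}{t}=\frac{d^{-s/k}}{-s/k}\cdot(\text{const})$, producing the term $\frac{d^{-s/k}}{sk}\alpha_2(s)$, where $\alpha_2(s)$ is built from the local unramified computation of Theorem~\ref{theorem:local props GJ integral p-adic}~\eqref{item:p-adic integral unramified} applied to $\pi^\vee\cong\pi$, hence inherits at most a simple pole at $s=1/2$ from $L^S(2s,\mathrm{Sym}^2,\pi)=L^S(1,\mathrm{Sym}^2,\pi)$. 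The correction $\alpha_3(d,s)$ is again the part where the $G_k(\Adele)^1$-integral converges absolutely; because the cutoff is $\mathrm{ch}_{>d}$, its contribution near $s=1/2$ is dominated by a tail integral that tends to $0$ as $d\to\infty$, whence $\lim_{d\to\infty}\mathrm{Res}_{s=1/2}\alpha_3(d,s)=0$. Over a function field, $\mathcal{E}_2^d$ vanishes identically for large $d$ because $\delta_{Q_k}$ takes discretely many values and the support condition $\delta_{Q_k}(g_0g)>d$ together with the automorphy of $M(w,s)\rho_s$ forces the sum to be empty once $d$ exceeds the relevant bound — this is the standard phenomenon that in the function-field setting truncation at large height removes the ``second'' constant term entirely.

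The main obstacle I anticipate is not the formal unfolding but the \emph{interchange of summation, integration, and the $U_k$-Fourier expansion}, together with locating poles precisely. Concretely: (i) justifying that the $U_k(\Adele)$-integral may be performed first and that only the Shalika character contributes requires combining the cuspidality of the $\pi\otimes\pi^\vee$-datum with the vanishing of the non-Shalika twisted Jacquet modules of $\Theta_v$ (Theorem~\ref{theorem:intro Jacquet modules}) at every place, in the spirit of Proposition~\ref{proposition:Fourier coefficients are trivial along v and y}; (ii) the decomposition $\Lambda_dE=\mathcal{E}_1^d-\mathcal{E}_2^d$ and the unfolding are only valid as identities of meromorphic functions after first establishing convergence in a right half-plane (Claim~\ref{claim:conv global co-periods}), so the meromorphic continuation of each $CP(\mathcal{E}_j^d(\blank;s),\varphi,\varphi')$ must be obtained from the continuation of $Z$ in Theorem~\ref{theorem:properties of the Global GJ integral}~\eqref{thm:global part 4} (equivalently from the $L$-function normalization) plus the explicit $d$-dependent elementary factors; and (iii) extracting $\mathrm{Res}_{s=1/2}\alpha_1(d,s)$ in the stated form requires knowing that the $K$-integral of the residue commutes with taking the residue, which is legitimate because $K$ is compact and, by Theorem~\ref{theorem:unitary cuspidal pi pole is from global part}~\eqref{item:around global pole}, $Z(kf_\rho,k\varphi^{U_k,\psi_k},k{\varphi'}^{U_k,\psi_k^{-1}},s)$ is holomorphic on a fixed punctured neighborhood of $1/2$ uniformly in $k$. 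Once these analytic points are in place, matching the $d$-independent pieces of \eqref{eq:truncation formula 1} and letting $d\to\infty$ in the subsequent step (carried out in Theorem~\ref{theorem:co-period}) will yield part~\eqref{item:co-period thm part 1}.
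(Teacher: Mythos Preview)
Your proposal contains a genuine gap in the Fourier-expansion step. You assert that ``only the Shalika character $\psi_k$ \ldots\ survives,'' but this is not what happens: after unfolding and applying the Fourier expansion along $U_k$ to $\varphi$, the integral becomes a sum $\sum_{j=0}^k\mathrm{I}_j$ indexed by the $M_k$-orbits of characters, and \emph{two} terms survive, namely $j=0$ (the trivial character, giving the constant term $\varphi^{U_k}$) and $j=k$ (the Shalika character). Only the intermediate terms $0<j<k$ vanish, and the mechanism is Proposition~\ref{proposition:Fourier coefficients are trivial along v and y}: the stabilizer $\mathrm{St}_{n,k}(\psi_j)$ contains a unipotent subgroup $V_j$ on which $\varphi^{U_k,\psi_j}$ is invariant, producing an inner integral $\int_{V_j(F)\backslash V_j(\Adele)}\rho_s(vg)\,dv$ that vanishes by cuspidality of $m\mapsto\rho(g,m)$. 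The $j=0$ term has no such unipotent and does not die.

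This misidentification propagates: in the paper $\alpha_0$ is the $\mathrm{I}_0$-term --- the $dm\,dk$-integral of $\rho\,\varphi^{U_k}{\varphi'}^{U_k}$ over $\lmodulo{C'M_k(F)}{M_k(\Adele)^1}\times K$ --- and the factor $\frac{d^{s/k}}{sk}$ comes from the elementary $A^+$-integral there, using Theorem~\ref{theorem:constant term theorem} to see that $\varphi^{U_k}{\varphi'}^{U_k}(\diag(tI_k,I_k)mk)=\delta_{Q_k}^{1/2}(\diag(tI_k,I_k))\varphi^{U_k}{\varphi'}^{U_k}(mk)$. Meanwhile $\alpha_1(d,s)$ \emph{is} the entire Shalika term $\mathrm{I}_k=\int_K Z_{\leq d}(f_{k\rho},\ldots,s)\,dk$ (with cutoff), not a ``remainder'' after splitting off $A^+$; the $A^+$-direction does not factor out of the Shalika term because $\varphi^{U_k,\psi_k}(\diag(g,I_k))$ is not a character in $|\det g|$. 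The key analytic input you are missing here is Claim~\ref{claim:nice props of Z > d}: $Z_{>d}$ is entire and tends to $0$ as $d\to\infty$, so $Z_{\leq d}=Z-Z_{>d}$ inherits the meromorphic continuation of $Z$ and has the same residue at $s=1/2$. Similarly, $\alpha_2(s)$ is the $\mathrm{I}_0$-term for $\mathcal{E}_2^d$ and its possible simple pole at $s=1/2$ comes from the intertwining operator $M(w,s)$, not from an $L$-function; and in the function-field case $CP(\mathcal{E}_2^d,\varphi,\varphi')=0$ because the Schwartz-type support of $\varphi^{U_k,\psi_k}$ (Claim~\ref{claim:general bound on Shalika function}) and the compact support of cusp forms kill both $\mathrm{I}_0$ and $\mathrm{I}_k$ when $ch_{>d}$ appears, not because ``the sum is empty.''
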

Exactly as argued in \cite[pp.~14-15]{me7}, the properties of
the truncated series imply that when we take the residue in \eqref{eq:truncation formula 1},
\begin{align*}
CP(\Lambda_dE_{1/2}(\blank;\rho),\varphi,\varphi')=
\mathrm{Res}_{s=1/2}CP(\mathcal{E}_1^d(\blank;s),\varphi,\varphi')-\mathrm{Res}_{s=1/2}CP(\mathcal{E}_2^d(\blank;s),\varphi,\varphi').
\end{align*}
According to Labesse and Waldspurger \cite[Proposition~4.3.3]{LW}, $\Lambda_dE_{1/2}(g;\rho)$ is bounded by a sum
$E_{1/2}(g;\rho)+\vartheta(g,d;\rho)$, where $\vartheta(g,c;\rho)$ is uniformly rapidly decreasing in $d$ and rapidly decreasing in $g$. Since $|\Lambda_dE(\blank;\rho,s)|\leq|E(\blank;\rho,s)|$, Claim~\ref{claim:co period is absolutely convergent} and the Dominated Convergence Theorem imply that we can take the limit $d\rightarrow\infty$ under the integral sign on the \lhs\ of the last equality. Then according to the proposition,
\begin{align*}
CP(E_{1/2}(\blank;\rho),\varphi,\varphi')=\int_{K}\mathrm{Res}_{s=1/2}Z(f_{k\rho},k\varphi^{U_k,\psi_k},k{\varphi'}^{U_k,\psi_k^{-1}},s)\, dk.
\end{align*}
This completes part~\eqref{item:co-period thm part 1} of the theorem.

The $dk$-integral has a pole at $s=1/2$ for some choice of data $(\rho,\varphi,\varphi')$ if and only if
$\mathrm{Res}_{s=1/2}Z(\cdots)\ne0$, for some choice of data $(f_{\rho_1},\varphi_1,\varphi_1')$. This follows exactly as in \cite[Theorem~3.2]{GJR2} (see also \cite[Proposition~2]{JR} and \cite{Jng}), their arguments are general and apply to our setting. The proof of the second part of the theorem is complete.
The third part follows immediately from the second and
Theorem~\ref{theorem:unitary cuspidal pi pole is from global part}~\eqref{item:global pole comes from the L function}.
\end{proof}
\begin{proof}[Proof of Claim~\ref{claim:conv global co-periods}]
Collapsing the summation of the Eisenstein series into the integral and using the Iwasawa decomposition we obtain
\begin{align*}
&\int_K\int_{\lmodulo{C'M_k(F)}{M_k(\Adele)}}|\rho_s|(mk)ch_{\leq d}(\delta_{Q_k}(m))\delta_{Q_k}^{r/k-1/2}(m)\\
&\times\int_{\lmodulo{U_k(F)}{U_k(\Adele)}}|\varphi\varphi'|(umk)\, du \, dm\, dk,
\end{align*}
where $r=\Re(s)$.
This is finite for $r\gg0$, because $\varphi$ and $\varphi'$ have moderate growth, and $m\mapsto\rho(m)$ is a cusp form.
\end{proof}
\begin{proof}[Proof of Proposition~\ref{proposition:technical unfolding truncation} - number field case]
First consider $CP(\mathcal{E}_1^d(\blank;s),\varphi,\varphi')$.
We take $s$ in the domain of absolute convergence, where our integral manipulations are justified.
Collapsing the summation and using the Iwasawa decomposition,
\begin{align*}
CP(\mathcal{E}_1^d(\blank;s),\varphi,\varphi')=
&\int_{K}\int_{\lmodulo{C'M_k(F)}{M_k(\Adele)}}\rho_s(mk)ch_{\leq d}(\delta_{Q_k}(m))\\
&\times\int_{\lmodulo{U_k(F)}{U_k(\Adele)}}\varphi(umk)\varphi'(umk)\, du\  \delta_{Q_k}^{-1}(m)\, dm \, dk.
\end{align*}
Next we appeal to the results of \S~\ref{section:Fourier coefficients} and replace $\varphi$ with its Fourier expansion along $U_k$.
In the notation of \S~\ref{section:Fourier coefficients}, the integral becomes a sum of integrals
$\sum_{j=0}^k\mathrm{I}_j$, where
\begin{align*}
\mathrm{I}_j=&\int_{K}\int_{\lmodulo{C'\mathrm{St}_{n,k}(\psi_j)(F)}{M_k(\Adele)}}\rho_s(mk)ch_{\leq d}(\delta_{Q_k}(m))
\varphi^{U_k,\psi_j}(mk){\varphi'}^{U_k,\psi_j^{-1}}(mk)\delta_{Q_k}^{-1}(m)\, dm \, dk.
\end{align*}
This formal manipulation is justified using the fact that $\varphi$ has uniform moderate growth. See \cite[p.~17]{me7} for the complete argument leading to the sum of integrals $\mathrm{I}_j$, the identities here are very similar (and simpler).
By virtue of Proposition~\ref{proposition:Fourier coefficients are trivial along v and y}, the integrals $\mathrm{I}_j$ with
$0<j<k$ vanish, because we may introduce an inner integration $\int_{\lmodulo{V_j(F)}{V_j(\Adele)}}\rho_s(vg)dv$
along a unipotent subgroup $V_j$ of $M_k$, which vanishes because $m\mapsto\rho(m)$ is a cusp form.

Consider $\mathrm{I}_0$. Then $\psi_0=1$ and $\mathrm{St}_{n,k}(\psi_0)=M_k$. For a subgroup $H<M_k(\Adele)$, let
$H^1=\setof{h\in H}{|\delta_{Q_k}(h)|=1}$. Since $C'M_k(F)<M_k(\Adele)^1$ and
\begin{align*}
M_k(\Adele)=M_k(\Adele)^1\times \setof{\diag(tI_k,I_k)}{t\in A^+},
\end{align*}
we obtain
\begin{align*}
\mathrm{I}_0=&\int_{K}\int_{\lmodulo{C'M_k(F)}{M_k(\Adele)^1}}\int_{\R_{>0}}
\rho_s\varphi^{U_k}{\varphi'}^{U_k}(\diag(tI_k,I_k)mk)\\&\times ch_{\leq d}(t^{k^2})\delta_{Q_k}^{-1}(\diag(tI_k,I_k))\ t^{-1}dt\, dm \, dk.
\end{align*}
Here we identified $A^+$ with $\R_{>0}$, $dt$ is the standard Lebesgue measure on $\R$ and the constant normalizing the measure $dm$ on the \rhs\ is
omitted. For $m\in M_k(\Adele)^1$,
\begin{align}\label{eq:rho s emits tI_k}
&\rho_s(\diag(tI_k,I_k)mk)=
\delta_{Q_k}^{1/2+s/k}(\diag(tI_k,I_k))\rho(mk)
.
\end{align}
By virtue of Theorem~\ref{theorem:constant term theorem}, the function
$\varphi^{U_k}$ on $\widetilde{M}_k(\Adele)$ belongs to the space of
$\delta_{Q_k}^{1/4}\Theta_{M_k}$, where $\Theta_{M_k}$ corresponds to a character which is
a lift of $(\delta_{B_k}^{1/4}\otimes\delta_{B_k}^{1/4})|_{T_n^2}$ (the chosen Borel subgroup of $M_k$ is
$B_k\times B_k$). Therefore
\begin{align}\label{eq:varphi U emits tI_k}
&\varphi^{U_k}{\varphi'}^{U_k}(\diag(tI_k,I_k)mk)=\delta_{Q_k}^{1/2}(\diag(tI_k,I_k))\varphi^{U_k}{\varphi'}^{U_k}(mk),
\end{align}
Combining \eqref{eq:rho s emits tI_k} and \eqref{eq:varphi U emits tI_k}
we obtain an inner integral
\begin{align}\label{int:inner dt integral}
\int_{0<t^{k^2}\leq d}t^{sk-1}\, dt=\frac{d^{s/k}}{sk},
\end{align}
multiplied by a $dmdk$-integral, which is independent of $s$ and $d$. Therefore $\mathrm{I}_0=\frac{d^{s/k}}{sk}\alpha_0$.

Regarding $\mathrm{I}_k$, $\psi_k$ is the Shalika character and $\mathrm{St}_{n,k}(\psi_k)=G_k^{\triangle}$. According to
Theorem~\ref{theorem:Shalika model given by Fourier coefficient},
\begin{align*}
\varphi^{U_k,\psi_k}{\varphi'}^{U_k,\psi_k^{-1}}(c^{\triangle}mk)=
\varphi^{U_k,\psi_k}{\varphi'}^{U_k,\psi_k^{-1}}(mk),\qquad\forall c\in G_k(\Adele).
\end{align*}
Therefore we may introduce the inner integration given by the formula for $f_{\rho}$ and
\begin{align*}
\mathrm{I}_k=&\int_{K}\int_{G_k(\Adele)}f_{k\rho}(g)\varphi^{U_k,\psi_k}{\varphi'}^{U_k,\psi_k^{-1}}(\diag(g,I_k)k)|\det{g}|^{s-k/2}ch_{\leq d}(|\det{g}|^k)\, dg \, dk.
\end{align*}
Consider the inner integral, for a fixed $k\in K$. Without the function $ch_{\leq d}$, it is the integral
$Z=Z(f_{k\rho},k\varphi^{U_k,\psi_k},k{\varphi'}^{U_k,\psi_k^{-1}},s)$ given by \eqref{int:global integral}. Denote by $Z_{\leq d}$ (resp. $Z_{>d}$) the integral obtained when the integrand is multiplied by $ch_{\leq d}$ (resp. $ch_{>d}$). Then for $\Re(s)\gg0$, $Z=Z_{\leq d}+Z_{>d}$. The following claim is the main observation needed to complete the proof.
\begin{claim}\label{claim:nice props of Z > d}
Fix $d$, sufficiently large. For any $s$, $Z_{>d}$ is absolutely convergent, uniformly for $s$ in compact subsets of $\C$. Hence $Z_{>d}$ has a holomorphic continuation to the plane. Moreover for any fixed $s$, $Z_{>d}=0$ as $d\rightarrow\infty$.
\end{claim}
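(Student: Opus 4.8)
\textbf{Proof strategy for Claim~\ref{claim:nice props of Z > d}.} The plan is to bound $Z_{>d}$ by the tail of the absolutely convergent majorant already produced in the proof of Theorem~\ref{theorem:properties of the Global GJ integral}, and to exploit the extra factor $\mathrm{ch}_{>d}(|\det g|^k)$ to gain both uniformity in $s$ and decay in $d$. First I would recall that for $\Re(s)=r\gg0$ we have the pointwise estimate $|f_{k\rho}(g)|\leq C|\det g|^{r_0}$ with $r_0$ depending only on $\pi$ (as $\pi$ is unitary with trivial central character, one may even take $r_0=0$), uniformly in $k\in K$ since $k$-translates only permute finitely many local components of a pure tensor. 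Combined with Claim~\ref{claim:general bound on Shalika function} at each place (globalized exactly as in \eqref{eq:convergence of integral with double global Shalika funcitonals for real s large}), this gives
\begin{align*}
|Z_{>d}|\leq C'\int_{G_k(\Adele)}|\varphi^{U_k,\psi_k}(g){\varphi'}^{U_k,\psi_k^{-1}}(g)|\cdot|\det g|^{r-k/2}\,\mathrm{ch}_{>d}(|\det g|^k)\,dg.
\end{align*}

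Next I would make the dependence on $d$ explicit by writing $|\det g|^{r-k/2}=|\det g|^{-k/2}\cdot|\det g|^{r}$ and, on the region $|\det g|^k>d$, bounding $|\det g|^{r-r'}\geq d^{(r-r')/k}$ for any fixed $r'$ with $(k+1)/2<r'<r$ in the convergence range of Theorem~\ref{theorem:properties of the Global GJ integral}. This yields
\begin{align*}
|Z_{>d}|\leq C' d^{(r-r')/k}\int_{G_k(\Adele)}|\varphi^{U_k,\psi_k}(g){\varphi'}^{U_k,\psi_k^{-1}}(g)|\cdot|\det g|^{r'-k/2}\,dg,
\end{align*}
and the remaining integral is finite by \eqref{eq:convergence of integral with double global Shalika funcitonals for real s large}. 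For a negative exponent this already shows $Z_{>d}\to0$; but since I want this for \emph{fixed} (possibly small) $s$, I should instead split $\R_{>0}$ via the Iwasawa/Cartan coordinates: the Shalika functions $\varphi^{U_k,\psi_k}$, ${\varphi'}^{U_k,\psi_k^{-1}}$ are rapidly decreasing on the region $|\det g|^k>d$ once $d$ is large (the integrand is a global automorphic form on $\lmodulo{G_k^\triangle(\Adele)}{G_n(\Adele)}$ with controlled cuspidal exponent $\delta_{B_n}^{-1/4}$, compare Claim~\ref{claim:co period is absolutely convergent}), so that the factor $|\det g|^{s-k/2}$ — of any fixed polynomial order in $|\Re(s)|$ — is absorbed. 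Concretely, on $|\det g|^k>d$ the product of Shalika functions decays like $|\det g|^{-A}$ for $A$ arbitrarily large (by iterating the argument of Theorem~\ref{theorem:local props GJ integral p-adic}\eqref{item:p-adic integral conv} and using that the exponents of $\theta_n$ lie strictly inside the relevant cone), which both makes $Z_{>d}$ converge absolutely and uniformly for $s$ in any compact set, and forces $Z_{>d}\to0$ as $d\to\infty$ by dominated convergence (the integrand tends to $0$ pointwise and is dominated by its value at, say, $s$ with large real part).

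Having the bound uniform on compacta, holomorphy of $s\mapsto Z_{>d}(s)$ on all of $\C$ follows by Morera's theorem applied under the integral sign, exactly as $Z_{\leq d}$ is treated. The main obstacle I anticipate is justifying the ``rapidly decreasing on $|\det g|^k>d$'' statement cleanly: one must pass from the local bounds of Claim~\ref{claim:general bound on Shalika function}, which only give a $|\det g|^{-\alpha}\phi(g)$ shape, to a genuine global rapid-decay statement on the region where the determinant is large. This is where knowing that the only cuspidal exponent of $\Theta$ is $\delta_{B_n}^{-1/4}$ (via \cite[Proposition~II.1.2, Theorems~II.1.4 and II.2.1]{KP}) together with the constant-term computation of Theorem~\ref{theorem:constant term theorem} is essential: it lets one control the growth of $g\mapsto\varphi^{U_k,\psi_k}(g){\varphi'}^{U_k,\psi_k^{-1}}(g)$ on Siegel-type sets inside $\lmodulo{G_k^\triangle(\Adele)}{G_n(\Adele)}$, and the condition $|\det g|^k>d$ with $d$ large pushes into the deep part of the cone where all exponents contribute negative powers. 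Once that estimate is in hand, the rest is the routine extraction above.
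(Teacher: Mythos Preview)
Your first reduction---bounding the matrix coefficient and passing to the integral of $|\varphi^{U_k,\psi_k}{\varphi'}^{U_k,\psi_k^{-1}}|$ against $|\det g|^{r-k/2}\,ch_{>d}$---is exactly what the paper does. But from that point on you take an unnecessary detour and miss the one-line observation that finishes the proof.

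In your second paragraph the monotonicity argument is set up with the wrong direction: you take $r'<r$ and record $|\det g|^{r-r'}\geq d^{(r-r')/k}$, which is a \emph{lower} bound on $|\det g|^{r-r'}$ and therefore cannot produce the displayed \emph{upper} bound on $|Z_{>d}|$ (and the exponent $(r-r')/k$ you obtain is positive, not negative). The correct move is to reverse the roles. Since \eqref{eq:convergence of integral with double global Shalika funcitonals for real s large} gives convergence of the majorant for all sufficiently \emph{large} $r'$, and since on the region $|\det g|^k>d>1$ one has $|\det g|>1$, the map $r\mapsto|\det g|^{r}$ is monotone increasing there. Hence for any $r\in\R$ choose $r'\geq r$ in the convergence range; then $|\det g|^{r-k/2}\leq|\det g|^{r'-k/2}$ on that region and the $r$-integral is dominated by the (finite) $r'$-integral. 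This is the paper's entire argument: it gives absolute convergence for every $s$, uniformity on compacta (dominate by the value at $r'=\sup\Re(s)$ over the compact set), holomorphy by differentiating under the integral, and $Z_{>d}\to0$ as $d\to\infty$ by dominated convergence since $ch_{>d}\to0$ pointwise against a fixed integrable dominator.

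The rapid-decay assertion you pivot to---that the product of Shalika functions decays like $|\det g|^{-A}$ for arbitrary $A$---is not needed, and your own identification of it as the ``main obstacle'' signals that the argument has gone astray. In fact the unramified formula (Lemma~\ref{lemma:unramified normalized Shalika function of theta}) shows the local factors behave like the fixed power $\delta_{B_n}^{1/4}$, not with arbitrarily fast decay, so the global claim as stated is false without further place-by-place analysis that you do not carry out. The monotonicity trick above bypasses all of this.
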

The proof will be given below.
Assume that $d$ is large enough. By Theorem~\ref{theorem:properties of the Global GJ integral}~\eqref{thm:global part 4},
$Z$ admits a meromorphic continuation to $\C$, and therefore the claim implies that $Z_{\leq d}$ has a meromorphic continuation to the plane. Furthermore, because $Z_{>d}$ is holomorphic, we deduce that for any $s_0$,
$\mathrm{Res}_{s=s_0}Z=\mathrm{Res}_{s=s_0}Z_{\leq d}$, and since the \lhs\ is independent of $d$, the residues of
$Z_{\leq d}$ are also independent of $d$.

Because $K$ is compact, $\mathrm{I}_k$ also has a meromorphic continuation.
It follows that
\begin{align*}
\mathrm{Res}_{s=1/2}\mathrm{I}_k=&\mathrm{Res}_{s=1/2}\int_{K}Z_{\leq d}(f_{k\rho},k\varphi^{U_k,\psi_k},k{\varphi'}^{U_k,\psi_k^{-1}},s)\, dk.
\end{align*}
According to Theorem~\ref{theorem:unitary cuspidal pi pole is from global part}~\eqref{item:around global pole} and its proof, at $s=1/2$ the integral $Z$ has at most a simple pole, and there is a punctured neighborhood of $1/2$ where the integral is holomorphic for all $k\in K$. The same applies to $Z_{\leq d}$, hence by virtue of Cauchy's Residue Theorem and Fubini's Theorem we can take the residue under the integral sign and obtain
\begin{align*}
&\int_{K}\mathrm{Res}_{s=1/2}Z_{\leq d}(f_{k\rho},k\varphi^{U_k,\psi_k},k{\varphi'}^{U_k,\psi_k^{-1}},s)\, dk\\
&=\int_{K}\mathrm{Res}_{s=1/2}Z(f_{k\rho},k\varphi^{U_k,\psi_k},k{\varphi'}^{U_k,\psi_k^{-1}},s)\, dk.
\end{align*}
The \rhs\ is independent of $d$. Hence $\mathrm{I}_k$ qualifies as the function $\alpha_1(s,d)$.
In addition, now that we proved that both $\mathrm{I}_0$ and $\mathrm{I}_k$ admit meromorphic continuations,
the same holds for $CP(\mathcal{E}_1^d(\blank;s),\varphi,\varphi')$.

Next we handle $CP(\mathcal{E}_2^d(\blank;s),\varphi,\varphi')$. We may proceed as above and obtain
a sum of two integrals $\mathrm{I}_0+\mathrm{I}_k$, where $\rho_s$ and $ch_{\leq d}$ are replaced by
$M(w,s)\rho_s$ and $ch_{>d}$. Now $\mathrm{I}_0$ takes the form
\begin{align*}
\mathrm{I}_0=&\int_{K}\int_{\lmodulo{C'M_k(F)}{M_k(\Adele)^1}}\int_{\R_{>0}}
M(w,s)\rho_s\varphi^{U_k}{\varphi'}^{U_k}(\diag(tI_k,I_k)mk)\\&\times ch_{>d}(t^{k^2})\delta_{Q_k}^{-1}(\diag(tI_k,I_k))\ t^{-1}dt\, dm \, dk.
\end{align*}
Since
\begin{align*}
&M(w,s)\rho_s(\diag(tI_k,I_k)mk)=
\delta_{Q_k}^{1/2-s/k}(\diag(tI_k,I_k))M(w,s)\rho_s(mk),
\end{align*}
the inner integral in this case becomes
\begin{align*}
\int_{d<t^{k^2}<\infty}t^{-sk-1}\, dt.
\end{align*}
Thus
\begin{align*}
\mathrm{I}_0=&\frac{d^{-s/k}}{sk}\int_{K}\int_{\lmodulo{C'M_k(F)}{M_k(\Adele)^1}}
M(w,s)\rho_s\varphi^{U_k}{\varphi'}^{U_k}(mk)\, dm\, dk.
\end{align*}
The $dmdk$-integral still depends on $s$, because of the intertwining operator, but it is a meromorphic function
in $s$ and if it has a pole at $s=1/2$, this pole is simple. Hence the constant $\alpha_0$ is replaced by a function $\alpha_2(s)$ with the stated properties.

Regarding $\mathrm{I}_k$, with the above notation
\begin{align*}
\mathrm{I}_k=&\int_{K}Z_{>d}(f_{kM(w,s)\rho_s},k\varphi^{U_k,\psi_k},k{\varphi'}^{U_k,\psi_k^{-1}},-s)\, dk.
\end{align*}
Note that since $m\mapsto\rho(g,m)$ is a cusp form, the matrix coefficient $f_{M(w,s)\rho_s}$ is a meromorphic function in $s$ and
$\mathrm{Res}_{s=1/2}f_{M(w,s)\rho_s}=f_{\mathrm{Res}_{s=1/2}M(w,s)\rho_s}$.
The convergence properties of $Z_{>d}$ stated above, imply that $\mathrm{I}_k$ admits a meromorphic continuation whose poles are contained in the set of poles of $M(w,s)$, and also
\begin{align*}
\mathrm{Res}_{s=1/2}\mathrm{I}_k
=\int_{K}Z_{>d}(f_{\mathrm{Res}_{s=1/2}kM(w,s)\rho_s},k\varphi^{U_k,\psi_k},k{\varphi'}^{U_k,\psi_k^{-1}},-s)\, dk.
\end{align*}
Hence when we let $d\rightarrow\infty$, $\mathrm{Res}_{s=1/2}\mathrm{I}_k=0$.
\end{proof}
\begin{proof}[Proof of Claim~\ref{claim:nice props of Z > d}]
Consider the function $\phi(g)=|\varphi^{U_k,\psi_k}{\varphi'}^{U_k,\psi_k^{-1}}(\diag(g,I_k))|$ on $G_k(\Adele)$.
The global matrix coefficient $f_{\rho}$ is bounded, therefore it is enough to establish the stated convergence properties for an integral of the form
\begin{align*}
\int_{G_k(\Adele)}\phi(g)ch_{>d}(|\det g|)|\det g|^r\, dg=
\int_{G_k(\Adele)^1}\int_{\R_{>d^{1/k}}}\phi(gt)t^{kr-1}\, dt\, dg,\quad r\in\R.
\end{align*}
But according to \eqref{eq:convergence of integral with double global Shalika funcitonals for real s large},
for $d>1$, this integral is finite even for $r\gg 0$.
\end{proof}
\begin{proof}[Proof of Proposition~\ref{proposition:technical unfolding truncation} - function field case]
We argue as in the case of a number field and use similar notation, but the proof is greatly simplified.
We write $CP(\mathcal{E}_1^d(\blank;s),\varphi,\varphi')$ as a sum of integrals, the only
surviving ones are $\mathrm{I}_0$ and $\mathrm{I}_k$. We can then write $\mathrm{I}_0$ in the form
\begin{align*}
\mathrm{I}_0=&\int_{K}\int_{\lmodulo{(C_{G_k}(\Adele)C'M_k(F))}{M_k(\Adele)}}\int_{\lmodulo{C_{G_k}(F)}{C_{G_k}(\Adele)}}
\rho_s\varphi^{U_k}{\varphi'}^{U_k}(zmk)\\&\times ch_{\leq d}(\delta_{Q_k}(zm))\delta_{Q_k}^{-1}(zm)\, dz\, dm \, dk.
\end{align*}
Since over a function field, a cusp form on $G_k(\Adele)$ is compactly supported modulo
$C_{G_k}(\Adele)G_k(F)$, $\mathrm{I}_0$ becomes a linear combination of integrals of the form
\begin{align}\label{int:inner dt integral for function field}
\int_{\lmodulo{F^*}{\Adele^*}}\rho_s\varphi^{U_k}{\varphi'}^{U_k}(\diag(tI_k,I_k))ch_{\leq d}(t^{k^2})\delta_{Q_k}^{-1}(\diag(tI_k,I_k))\, dt.
\end{align}
Set $(\Adele^*)^1=\setof{a\in\Adele^*}{|a|=1}$ and identify
$\lmodulo{(\Adele^*)^1}{\Adele^*}$ with $\R_{>0}$. Since \eqref{eq:rho s emits tI_k} and \eqref{eq:varphi U emits tI_k} still hold, the integrand is invariant on $\setof{\diag(tI_k,I_k)}{t\in(\Adele^*)^1}$. Thus we
see that \eqref{int:inner dt integral for function field} equals \eqref{int:inner dt integral} multiplied by
the volume of $\lmodulo{F^*}{(\Adele^*)^1}$ (which is finite).

Next note that according to Claim~\ref{claim:general bound on Shalika function},
the projection of the support of $\varphi^{U_k,\psi_k}$ on $G_k(\Adele)$ is contained in the support of some $\phi\in\mathcal{S}(\Adele_{k\times k})$.
Hence if $g\in G_k(\Adele)$ satisfies $|\det{g}|^k>d$ for a large enough $d$ (depending only on
$\varphi$), $\varphi^{U_k,\psi_k}{\varphi'}^{U_k,\psi_k^{-1}}(\diag(g,I_k))=0$. This means that the integrals $Z_{>d}$ vanish identically. We can conclude that $\mathrm{I}_k$ satisfies the stated properties and moreover, $CP(\mathcal{E}_2^d(\blank;s),\varphi,\varphi')=0$, because in the corresponding summands $\mathrm{I}_0$ and $\mathrm{I}_k$ appears $ch_{>d}$.
\end{proof}

\bibliographystyle{alpha}
\def\cprime{$'$} \def\cprime{$'$}

\end{document}